\def\v{v}
\def\R{\mathbb{R}}
\def\Z{\mathbb{Z}}
\def\d{|\nabla|}
\def\C{\mathbb{C}^{ }}
\def\p{\partial}
\def\h{\frac{1}{2}}
\def\be{\begin{equation}}
\def\ee{\end{equation}}
\newtheorem{theorem}{Theorem}[section]
\newtheorem{lemma}{Lemma}[section]
\newtheorem{proposition}{Proposition}[section]
\theoremstyle{definition}
\newtheorem{definition}{Definition}[section]
\theoremstyle{remark}
\newtheorem{remark}{Remark}[section]
\numberwithin{equation}{section}
\begin{document}
\title[Massive Einstein-Vlasov system]{ Global stability of the Minkowski spacetime for the Einstein-Vlasov system}
 
\author{Xuecheng Wang}
\address{YMSC, Tsinghua University \& BIMSA\\ Beijing, China 100084}
\email{xuecheng@tsinghua.edu.cn, xuechengthu@outlook.com}
\thanks{Acknowledgement:\qquad  On various stages of this work, the author was benefited a lot from many discussions with Alexandru Ionescu and Benoit Pausader. The author is supported partially by NSFC-12141102, and MOST-2020YFA0713003. }
\maketitle 
\begin{abstract}
 We prove   global stability of  the  Minkowski spacetime  in the wave  coordinates  system for the massive Einstein-Vlasov system. In particular, compared with previous results by Lindblad-Taylor \cite{taylor}, in which the Vlasov part is assumed to have compact support assumption, and Fajman-Joudioux-Smulevci \cite{FJS}, in which the spacetime is assumed to be exact Schwarzschild in the exterior region, 
 we do not impose  any compact support condition for the Vlasov part and   allow a   large class of non-isotropic perturbations for the metric part, which decay at rate $\langle r\rangle^{-1+}$ towards space infinity. 

\end{abstract}
\tableofcontents
\section{Introduction}

We are interested in  global stability problem for the Einstein field equations. One of   the   astrophysically important matter fields is dust like collisionless matter, which is described by the Vlasov equation. There is a vast literature on this topic, we refer readers to   Andr\'easson \cite{Andreasson},  Rendall \cite{Rendall}, and Stewart \cite{Stewart} for more on the backgrounds.

Mathematically speaking,  we consider the stability of the Minkowski spacetime $(\R^{1+3}, \eta)$ in the presence of the massive collisionless  particles, where $\eta=\textup{diag}(-1,1,1,1)$. The   unknown Lorentzian manifold   $(M,g)$   has signature $(-,+,+,+)$.  We assume that the particle system satisfies classical equations of motion (called on shell), which means that   the energy-momentum relation as   defined in (\ref{massshell}) holds for the four-momentum $(v^0,v^1, v^2, v^3).$ The \textit{mass shell} $\mathcal{M}$, which is a subbundle of the tangent space $TM$,  is defined as follows,
\be\label{massshell}
\mathcal{M}:=\{(t,x^i, v^0, v^i)\in TM: (v^0, v^i) \textup{\, is future directed},  v^{\nu} v_{\nu}=-1\},
\ee
 where, as in the standard notation,  Greek indices run over $0,1,2,3$ and Latin indices run over $1,2,3$. In the definition of mass shell, we have already  normalized   the speed of light $c$    and the mass of    particles   $m$ to be $1$.

The  Einstein-Vlasov system,  which describes the dynamics of spacetime and the distribution function of particles, reads as follows, 
\be\label{EV}
\left\{\begin{array}{l}
\textup{Ric}(g) - \frac{1}{2}R(g) g = T(f)\\ 
\\
T_{g}f =0,
\end{array}\right.
\ee 
where $\textup{Ric}(g)$ is the Ricci curvature tensor of  $g$, $R(g)$ is the scalar curvature of $g$,  $T(f)$ is the energy-momentum tensor associated with the distribution of particles $f: \mathcal{M} \longrightarrow \R_{+}$, $T_g$ is    the generator of the geodesic flow   of the Lorentzian manifold  $(M, g)$.

 The main goal of this paper   is to prove the global stability of $(\eta, 0)$ in wave  coordinates  for the {massive} Einstein-Vlasov system for {unrestricted} initial data. Compared with previous results by  Lindblad-Taylor \cite{taylor}, in which the metric is assumed to be asymptotically flat  and the Vlasov part has compact support assumption, and Fajman-Joudioux-Smulevci \cite{FJS}, in which the  metric is assumed to be    exact Schwarzchild spacetime in the exterior region and there is no compact support assumption on the Vlasov part, we do not require any compact support assumption for the Vlasov part and   allow a   large class of non-isotropic perturbations. Indeed, our assumptions on the metric on the initial slice are  essentially of the  following type
\be
g_{\alpha\beta}= \eta_{\alpha\beta} + \epsilon \mathcal{O}(\langle r\rangle^{-1+}), \quad \p g_{\alpha\beta}= \epsilon \mathcal{O}(\langle r\rangle^{-2+}), 
\ee
which is slightly better than the asymptotically flat assumption of the metric.

We use the framework  developed by Ionescu-Pausader  \cite{IP} in   the study of the Einstein-Klein-Gordon system,  which allows us to combine the strength of both the vector fields method and the Fourier method. The Fourier analysis is mainly used in the  analysis of resonant sets  and a special “designer” norm ($Z$-norm), to prove the decay estimate over time for both the metric part and the density-type function for particles  in lower space regularity norms.

\subsection{The PDE formulation in wave coordinates}

Due to the differeomorphism  covariance  of the Einstein-Vlasov system,  we  choose the so-called wave coordinates.  By working in this wave coordinate system, the Einstein equations are reduced into a system of quasilinear wave equations. We refer readers to \cite{IP} and   Wald \cite{Wald} for related discussions.

Recall that  the  wave  coordinate system $\{x^\mu\}_{\mu},$  satisfies the following equation
\be\label{wavecoordinate}
\square_g x^{\mu}=0.
\ee 
 
From (\ref{wavecoordinate}), we have  
\be\label{wavecoordinatecondition}
\Gamma_{\mu}= g^{\alpha \beta} g_{\nu \mu} \Gamma_{\alpha\beta}^{\nu}=g^{\alpha\beta} \Gamma_{\mu\alpha\beta}=\h g^{\alpha\beta}\big(\p_{\alpha} g_{\beta\mu} + \p_{\beta} g_{\alpha \mu} - \p_{\mu} g_{\alpha \beta} \big)=0. 
\ee
Recall that the connection coefficients are defined as follow,
\[
\Gamma_{\mu\alpha \beta}:=g(\p_{\mu}, \nabla_{\p_\beta} \p_{\alpha} )= \h\big(\p_{\alpha} g_{\beta\mu} + \p_{\beta} g_{\alpha \mu} - \p_{\mu} g_{\alpha \beta} \big),
\]
where $\nabla$ is the torsion free  covariant derivative. In the wave coordinates system, we have
\be\label{riccicurvature}
2R_{\alpha\beta}= -g^{\gamma\mu}\p_{\gamma}\p_{\mu}g_{\alpha\beta} + Q_{\alpha\beta} + P_{\alpha\beta}, 
\ee
where
\[
Q_{\alpha\beta}=    \frac{1}{2}g^{\gamma\mu}g^{\lambda\nu}\big[ 
  \big(\p_{\lambda} g_{\alpha\mu}\p_{\nu} g_{\gamma\beta}+ \p_{\mu}g_{\lambda\alpha} \p_{\gamma} g_{\nu\beta} \big) 
+ 2 \big(  \p_{\alpha} g_{\mu\nu} \p_{\lambda} g_{\gamma\beta} -  \p_{\lambda} g_{\mu\nu} \p_{\alpha} g_{\gamma\beta} 
\]
\[
 +   \p_{\beta} g_{\mu\nu} \p_{\lambda} g_{\gamma\alpha} -  \p_{\lambda} g_{\mu\nu} \p_{\beta} g_{\gamma\alpha}\big)+ \big( \p_{\mu} g_{\lambda\nu} \p_{\alpha} g_{\gamma\beta}  +    \p_{\mu} g_{\lambda\nu} \p_{\beta} g_{\gamma\alpha} -\p_{\alpha} g_{\lambda\nu} \p_{\mu} g_{\gamma\beta}  -    \p_{\beta} g_{\lambda\nu} \p_{\mu} g_{\gamma\alpha} \big) \]
\be\label{nullstructure} 
 + \big( \p_{\nu} g_{\alpha\lambda} \p_{\mu} g_{\gamma\beta}- \p_{\mu} g_{\lambda\alpha} \p_{\nu}g_{\gamma\beta}  +     \p_{\nu } g_{\beta\lambda} \p_{\mu} g_{\gamma\alpha}
   -\p_{\nu} g_{\alpha\gamma} \p_{\mu}g_{\lambda\beta}  \big)\big],
\ee
\be\label{weaknull} 
P_{\alpha\beta}=   \frac{1}{2}g^{\gamma\mu}g^{\lambda\nu}\big[ \big( -\p_{\alpha}g_{\mu\lambda} \p_{\beta} g_{\gamma\nu} \big)+\h \p_{\alpha}g_{\mu\gamma} \p_{\beta}g_{\lambda\nu}\big]. 
\ee

We consider the Vlasov part in (\ref{EV}).  Let $\pi:\mathcal{M}\longrightarrow M$ denote  the canonical projection. The energy-momentum tensor in local coordinate system is given as follows, 
 \be\label{march19eqn6}
T_{\mu\nu}(f)(x^\alpha) = \int_{\pi^{-1}(x^\alpha)} f(x^\alpha, v^\alpha) v_{\mu} v_{\nu} \frac{1}{\sqrt{| \det g|} v^0} d v_i,
 \ee
 where the volume form  $1/\big({\sqrt{| \det g|} v^0}\big) d v_i$ is the induced volume form of the hypersurface $\pi^{-1}(x^\alpha)\subset T_{ x^\alpha }M $,   the tangent space $ T_{ x^\alpha }M $ is endowed with the metric $g_{\mu\nu}(x^\alpha) d v^{\mu} d v^\nu$. 

We derive a precise formula for the geodesic spray $T_g$.  Let  $(U, x^\alpha)  $ be a local  coordinate system on $M$,  we define
 the  conjugate coordinate system $(x^\alpha, v^\alpha)$ on $TM$, where $(x^\alpha, v^\alpha)$ denotes the point $ v^\alpha \p_{x^\alpha}|_{(x^\alpha)}\in TM.$   Note that massive particles travel along timelike  geodesics. Suppose that  the geodesic is parametrized by $x^\alpha: (-\varepsilon, \varepsilon) \longrightarrow M$ for some $\varepsilon \in\R_{+}$,    we have 
\[
f(x^\alpha(s), v^1(s), v^2(s), v^3(s)) = f(x^\alpha(0), v^1(0), v^2(0), v^3(0)).
\]
Therefore, we have
\be\label{vlasovequation}
 \frac{d x^\alpha}{d t} \p_{x^\alpha} f + \frac{d v^i(t)}{d t} \p_{v^i} f=0, \qquad \Longrightarrow \big( v^\alpha \p_{x^\alpha} - \Gamma^i_{\alpha\beta} v^{\alpha} v^{\beta}\p_{v^i} \big) f =0, 
\ee
and 
 the geodesic spray $T_g$  can be written as follows, 
 \be\label{vlasovup}
T_g= v^{\alpha}\p_{x^\alpha}- v^\alpha v^\beta \Gamma_{\alpha \beta}^i \p_{v^i}, \quad \Gamma^i_{\alpha \beta}= g^{\mu i } \Gamma_{\mu\alpha\beta}= \h g^{\mu i }\big(\p_{\alpha} g_{\mu \beta}  + \p_\beta g_{\mu\alpha}- \p_{\mu} g_{\alpha\beta} \big). 
 \ee

 To find a  favorable decomposition  inside the acceleration term in \eqref{vlasovequation}, which is  stated in Lemma \ref{innerproduct},  we choose to work in the cotangent bundle   and change coordinates system as follows, 
\[
(x^\alpha, v^i) \longrightarrow (x^\alpha, v_i), \qquad \tilde{f}(x^\alpha, v_i):= f(x^\alpha, v^i(x^\alpha, v_j)),
\]
where $v^i(x^\alpha, v_j):=  g^{i\alpha} v_\alpha.$  In the new coordinates system $(x^\alpha, v_i)$, the Vlasov equation (\ref{vlasovequation}) reads as follows, 
\be\label{vlasov}
v^\alpha \p_{x^\alpha} \tilde{f}- \h v_\alpha \v_\beta \p_{x^j} g^{\alpha \beta}\p_{v_j} \tilde{f}=0. 
\ee
Without causing any confusion,  we still use $f$ instead of $\tilde{f}$ to denote the distribution function of particles in terms of the coordinate system $(x^\alpha, v_i)$.

Moreover, from (\ref{EV}) and (\ref{march19eqn6}),   we derive the following equation for the scalar curvature, 
\be\label{april3eqn3}
-R= g^{\mu\nu} T_{\mu\nu}(f) = - \int_{\pi^{-1}(x^\alpha)} f(x^\alpha, v_i)  \frac{1}{\sqrt{| \det g|} v^0} d v_i:=T(f). 
\ee

 \subsubsection{The initial data set and the constraint equations}

 Assume that initially we have a spacelike hypersurface $(\Sigma, \bar{g}, k)$, where $\bar{g}$ is the  Riemannian metric on $\Sigma$, $k$ is the second fundamental form of the surface $\Sigma$, and initial particle distribution function $f_0: \pi^{-1}(\Sigma)\longrightarrow \R_{+}, $  where  $\pi: \mathcal{M}\longrightarrow M$ is the natural projection. The following constraint equations must hold on $\Sigma$,  
\be\label{constraintofini}
\overline{div} k_j -(d \overline{tr} k)_j = T_{0j}(f_0), \qquad \overline{R}+ (\overline{tr} k)^2 - |k|_{\bar{g}}^2 =2 T_{00}(f_0) , 
\ee
where $\overline{div}$, $\overline{tr}$, $\overline{R}$ denote the divergence operator, trace operator, and the scalar curvature of $\bar{g}$ respectively.

 

\subsection{Previous results}

Global stability of physical solutions is an important topic in General Relativity. The ground breaking work of  Christodoulou-Klainerman \cite{Christodoulou1}  shows  the global nonlinear stability of the Minkowski spacetime for the Einstein-vacuum equation,   see also  Klainerman-Nicol\`o \cite{Klainerman15}, Lindblad-Rodnianski \cite{Lindblad2,Lindblad3}, Bieri\cite{Bieri1}, Speck  \cite{Speck} for various extensions on this line of research for the Einstein-vacuum equation. Moreover, from the work of    Lindblad-Rodnianski \cite{Lindblad2,Lindblad3}, in which they gave an alternative and  conceptually   simpler proof of the stability of Minkowski spacetime in the  wave coordinates system, we know that    Einstein equations in harmonic coordinates satisfy  weak null condition. This observation plays a key role in later study of   Einstein-scalar field systems.

More recently,   global stability  of the Minkowski spacetime have also been proved for other coupled Einstein field equations, e.g.,  Zisper \cite{Zipser} considered   the Einstein-Maxwell system,  Speck \cite{Speck}  considered  the 
 Einstein-nonlinear electromagnetic system, Lefloch-Ma \cite{LeFloch1,LeFloch2} and Wang \cite{Qian} considered the   Einstein-Klein-Gordon system for the restricted data, which coincides with the Schwarzschild in the exterior region, Ionescu-Pausader \cite{IP} considered the   Einstein-Klein-Gordon system for the \emph{unrestricted} data.

 For the Einstein-Vlasov system, Bigorgne-Fajman-Joudioux-Smulevici-Thaller    \cite{Bigorgne} considered the massless case for general initial data, in which the particles travel at the same speed of light. For the massive case, Lindblad-Taylor \cite{taylor} considered the initial data with   the compact support in Vlasov part and asymptotically flat metric,  Fajman-Joudioux-Smulevci \cite{FJS} considered the initial data with the assumption  of the exact Schwarzschild in the exterior region and without compact support assumption on the Vlasov part. 

 Due to the finite speed of propagation, the use of  restricted initial data  coupled with the hyperbolic foliation method, leads to significant simplifications of the global analysis, particularly at the level of proving decay estimates.

 Our goal in this paper are to   extend the scope of the framework developed by  Ionescu-Pausader in \cite{IP} and to   work with \emph{unrestricted} initial data for the massive case. 

\subsection{The Main result   }

To state main results of this paper, we first introduce several function spaces. Let $N_0=200, \gamma=  10^{-22} $, we   define the energy spaces $E_{ l},  l\in[0, N_0]\cap \Z$, by the following norms,  
\be\label{energyespace}
\| f\|_{E_{ l}}:= \big(\sum_{k\in \Z} 2^{-k_{-} +2 \gamma k_{-}+2 l k_{+}} \|   P_k (f) \|_{L^2}^2\big)^{1/2}.
\ee
 
We   state the main theorem as follows.
\begin{theorem}\label{maintheorem}
Let $\delta:=10^{-20}$, $N_0=200, N_1=10^3N_0$, $\Sigma=\{(t,x)\in \R^4: t=0\}$,  $ ( \bar{g}, k,f_0)$ be an initial data of the Einstein-Vlasov system  \eqref{EV}  on $\Sigma$ such that the constrain equations in \eqref{constraintofini}  are satisfied. There exists a sufficiently small constant $\epsilon_0$ such that if the following smallness condition holds, 
\[
\sum_{n, l\in \Z_+, n+l\leq N_0}\sum_{\Gamma\in \{S, L_i, \Omega_{ij}\},|\alpha|\leq n  }\| (\p_t + i \d )\Gamma^\alpha (g_{\alpha\beta}-\eta_{\alpha\beta})\big|_{t=0} \|_{E_{ l}}   
\]
\[
+ \sum_{|\alpha_1|+|\alpha_2|\leq N_0} \int_{\R_x^3} \int_{\R_v^3} (1+|x|+|v|^{1/\delta})^{2N_1} |\nabla_x^{\alpha_1} \nabla_v^{\alpha_2} f_0(x,v)|^2  d x d v 
\leq \epsilon_0,
\]
then there exist  globally defined wave coordinates $\{x^{\mu} \}$ and  a unique future geodesically complete solution $(g_{\alpha\beta}, f)$ of the Einstein-Vlasov system  \eqref{EV}  in $M:=\{(t,x)\in \R^4, t\geq 0\}$, with the given initial data $(  \bar{g}, k,f_0)$ on $\Sigma$.    
\end{theorem}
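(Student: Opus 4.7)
I would run a continuity/bootstrap argument on a time slab $[0,T]$ for the pair $(h_{\alpha\beta}, f)$, where $h_{\alpha\beta}:= g_{\alpha\beta}-\eta_{\alpha\beta}$ and $f=f(x^\alpha,v_i)$ in the cotangent coordinates of \eqref{vlasov}. The bootstrap quantities split into three layers: (a) top-order weighted energies at level $N_0$ for $\Gamma^\alpha h_{\alpha\beta}$ with $\Gamma\in\{S,L_i,\Omega_{ij}\}$, i.e.\ controlling the norms $\|\cdot\|_{E_l}$ of \eqref{energyespace} after applying the half-wave projection $(\partial_t+i|\nabla|)$; (b) weighted phase-space energies for $f$, measured by $\nabla_x^{\alpha_1}\nabla_v^{\alpha_2}f$ against the weight $(1+|x|+|v|^{1/\delta})^{2N_1}$, together with vector-field-commuted versions of $f$ using Lie-transported Killing fields adapted to $T_g$; (c) a designer low-regularity Fourier-based $Z$-norm in the spirit of Ionescu-Pausader \cite{IP}, giving sharp pointwise decay of $h$ and of the moments $T_{\mu\nu}(f)$. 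The improvement of (a) and (b) is driven by the vector-field method exploiting the weak null structure visible in \eqref{nullstructure}--\eqref{weaknull}; the improvement of (c) requires multilinear Fourier analysis with resonance analysis.

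\textbf{The wave sector.} Using \eqref{riccicurvature}, the Einstein equations in wave coordinates become a system of quasilinear wave equations for $h_{\alpha\beta}$ with right-hand side $Q_{\alpha\beta}+P_{\alpha\beta}+2T_{\alpha\beta}(f)+g_{\alpha\beta}T(f)$. The quadratic null form $Q_{\alpha\beta}$ is handled by the standard null frame decomposition, while $P_{\alpha\beta}$ supplies only weak null structure and produces the logarithmic modifications characteristic of Lindblad--Rodnianski. Top-order energies are propagated with a small polynomial loss $\langle t\rangle^\gamma$ via Hardy inequalities and conformal weights; pointwise decay at rate $\langle u_-\rangle^{-1}\langle r\rangle^{-1+}$ for the good components of $h$ is extracted from the $Z$-norm of $(\partial_t+i|\nabla|)h_{\alpha\beta}$, coupled with a paradifferential splitting of the quasilinear term $g^{\gamma\mu}\partial_\gamma\partial_\mu$.

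\textbf{The Vlasov sector.} For $f$ I would integrate along characteristics of \eqref{vlasov}. To exploit the asymptotically-Minkowski geometry, introduce ``straightened'' trajectories converging to affine lines $x(t)=x_\infty+t\hat v_\infty$ with $\hat v=v/v^0$, and build a family of modified vector fields that commute with the Vlasov operator up to acceptable error controlled by the $h$-bootstrap. The velocity moment $T_{\mu\nu}(f)$ from \eqref{march19eqn6} is then estimated in $L^2_x$ and $Z$-norm by writing the $v$-integral as an oscillatory/stationary-phase expression with phase $x\cdot\xi - t(v^0)^{-1}v_i\xi^i$, using the velocity weight $(1+|v|^{1/\delta})^{N_1}$ to compensate for the missing compact support hypothesis. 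The output is fed back as source into the wave equation, closing the loop.

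\textbf{Main obstacle.} I expect the hardest step to be closing the $Z$-norm estimate for the coupled system. On the metric side, the $\langle r\rangle^{-1+}$ tail together with the weak null structure leaves essentially no margin, and on the Vlasov side the transport phase above is degenerate in directions where $v$ is large or tangential to $\xi$, in sharp contrast to the strictly hyperbolic Klein--Gordon phase of \cite{IP}. The gain needed to close the resonance analysis must therefore be extracted jointly from the $v$-weight $(1+|v|^{1/\delta})^{N_1}$ and from the slow dispersion of the $h$-field, uniformly over $t\in[0,T]$. Making this exchange between $x$-regularity, $v$-regularity, and Fourier-side cancellation work simultaneously, while keeping the top-order energies only polynomially growing so that the bootstrap improvements in layers (a)--(b)--(c) are genuinely better than the assumptions, is where I expect the delicate multilinear work to concentrate.
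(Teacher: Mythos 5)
Your high-level scheme (bootstrap on weighted energies for $h$ and $f$, commuting vector fields, a $Z$-norm layer for sharp decay, exploiting the weak null structure of \eqref{nullstructure}--\eqref{weaknull}) is in the right spirit and matches the paper's general architecture. However, there is a concrete structural gap: you treat the energy--momentum tensor $T_{\mu\nu}(f)$ as an additional source to be estimated and ``fed back'' into the wave equation, without recognizing that this source is \emph{linear} in $f$, which is a qualitatively new feature compared with the Einstein--Klein--Gordon system of \cite{IP}. If you define the half-wave profile $U^{h_{\alpha\beta}}:=(\partial_t - i|\nabla|)h_{\alpha\beta}$ as you propose, then schematically $(\partial_t + i|\nabla|)U^{h_{\alpha\beta}} = \rho(f) + (\textup{quadratic})$ with $\rho(f)=\int f\,dv$ linear in $f$. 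Because the Vlasov energies are allowed to grow (and indeed must be allowed to grow faster than the metric energies to close), this linear feedback would contaminate the metric bootstrap at every order and you cannot set up a consistent hierarchy of growth rates. The paper resolves this with the \emph{modified profile}
$\widehat{\widetilde{V}}(t,\xi)=\widehat{V}(t,\xi)-\int e^{it(|\xi|-\hat v\cdot\xi)}\,\widehat{f}(t,\xi,v)\,(i(|\xi|-\hat v\cdot\xi))^{-1}\,dv$
(cf.\ \eqref{2020april1eqn4}), which absorbs the whole linear density contribution so that $(\partial_t+i|\nabla|)\widetilde U$ is purely quadratic-and-higher. This is what permits a bootstrap in which the Vlasov side grows like $\langle t\rangle^{\delta_0}$ while the modified metric profile grows only like $\langle t\rangle^{\delta_1}$ with $\delta_1\ll\delta_0$. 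Your proposal has no mechanism to produce this decoupling, and without it the estimates at the interface do not close.

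A second, smaller gap: you describe ``modified vector fields that commute with the Vlasov operator up to acceptable error,'' which is closer to a geometric/hyperboloidal approach. The paper instead uses two clean algebraic devices: the vector fields $\mathcal{P}$ in \eqref{april9eqn112} that commute \emph{exactly} with $\partial_t+\hat v\cdot\nabla_x$, plus the identity of Lemma~\ref{innerproduct} expressing $\nabla_x\phi\cdot\nabla_v f$ entirely in terms of these commutable vector fields. This identity (rather than geometric straightening) is what removes the non-commuting $\nabla_v$ derivative from the acceleration term and is essential to prevent derivative loss. Finally, while you correctly identify the degenerate transport phase as an obstacle, the paper's resolution (lower bound $|\,|\xi|-\mu|\eta|-\hat v\cdot(\xi-\eta)\,|\gtrsim |\xi|\langle v\rangle^{-2}$ for the massive particle, enabling a normal form transformation at low wave frequency) and the pointwise bound $\langle |t|-|x+t\hat v|\rangle\langle x\rangle\langle v\rangle^2\gtrsim\langle t\rangle$ at higher frequency are specific quantitative inputs that your sketch does not supply.
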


\subsection{Main ideas of the proof}

The mechanism of the small data global regularity for the Einstein-Vlasov system   consists of two parts:
\begin{enumerate}
\item[(i)] Control of the $L^2$-type high order energy, which involves the high order Sobolev norm and the vector fields. For the Vlasov part, it also involves well-chosen weight functions to be propagated adapted to the hierarchy of space derivatives and vector fields.

\item[(ii)] Prove   almost sharp $L^\infty_x$-type decay estimates for the perturbed metric and the density type function of the Vlasov part. 

\end{enumerate}

We  introduce  some main ideas used in the above two main steps.

\subsubsection{Energy estimate for the wave part}

As in \cite{IP}, to exploit the benefit of the harmonic gauge condition, which allows us to identify clearly the  null structure inside the Einstein's equation,   we also use the double Hodge-decomposition variables $\{F, \underline{F}, \omega_j, \Omega_j, \rho, \vartheta_{mn}\}$.

Comparing with the Einstein-Klein-Gordon system studied in \cite{IP}, an extra benefit is that the scaling vector field $S$ commutes with the Einstein-Vlasov system.  We exploit this benefit and  set the hierarchy for the energy functionals based on the total number of space derivatives and the total number of the vector fields applied on the profiles of the metric part.

To keep the discussion concise and highlight the main difficulties in the Einstein-Vlasov system,  we refer readers to \cite{IP} for more details on the discussion of the   wave-wave type interaction  that comes from purely the Einstein-vacuum equations.

We focus on the  discussion of the main ideas used in the wave-Vlasov type interaction in the Einstein's equation, which mainly comes from the energy-momentum tensor determined by the Vlasov part. 

Firstly, due to the linear density type function in the energy-momentum tensor, we modify the profile of the metric, which roughly defined as follows $U:=(\p_t -i\d) h_{\alpha\beta}$. Roughly  speaking, we have
\[
(\p_t+i \d) U = \rho(f) + \textup{nonlinear}, \quad \rho(f):= \int_{\R^3} f(t,x,v) d v,
\]
where $\rho(f)$ is a  density type function that is linear in $f$.

To capture  the nonlinear effect of $\rho(f)$  over time, we modified the profile $U$ as follows 
\[
\widetilde{U}:= U + T(f), \quad T(f):=- \int_{\R^3} \int_{\R^3} e^{i x\cdot \xi} \frac{\widehat{f}(t, \xi, v)}{i(|\xi|- \hat{v}\cdot \xi)}d \xi d v, \quad \hat{v}:= \frac{v}{\sqrt{1+|v|^2}}. 
\]
As a result, we remove the linear density and have 
\[
(\p_t + i \d) \widetilde{U}= \textup{nonlinear}. 
\]

In certain sense, we view   $\widetilde{U}$ as the metric part  less effected by the Vlasov part and view $ U $ to be the sum of two parts: (i) the modified profile part $\widetilde{U}$; (ii) the linear density type part, which depends purely on the distribution function of the Vlasov part.

Motivated by this idea, we allow the Vlasov part to grow faster than the the modified profile in our bootstrap assumption. Roughly speaking,  the modified profile grows at rate $\langle t \rangle^{\delta_1}$ and  the Vlasov part grows  at rate $\langle t\rangle^{ \delta_0}$, where $\delta_1\ll \delta_0. $

There are two main ideas used in the energy estimate of the wave-Vlasov type interaction, which are also the main reasons why we can let the Vlasov part grows faster.

  Firstly, thanks to the polynomial decay rate of the distribution function, we know that either the distribution function is small, i.e., $|x|+|v|$ large,  or the distance with respect to the light cone is large, in which case the   metric part decays faster. More precisely, we have
\[
\langle |t|-|x+t\hat{v}|\rangle  \langle |x |\rangle  \langle|v|\rangle^{ 2}\gtrsim \langle t \rangle. 
\]
Therefore, if the frequency of the wave part is not too small,   the decay rates are improved such that they are not near the critical ``$1/\langle t \rangle$'' rate. 

Secondly, for the very low frequency of the wave part case, in which  there is no gain  from the distance to the light cone, we observe that this is not the time resonance case. Intuitively speaking,  we have the following type of phase associated with the wave-Vlasov type interaction, 
\[
|\xi|-\mu|\eta|-\hat{v}\cdot(\xi-\eta), \quad \mu\in\{+,-\}.
\]
Note that, let $t\in [2^{m-1}, 2^m], $ $\langle v \rangle \leq 2^{m/10}, |\eta|\approx 2^{-m}, |\xi|\approx 1,$ we have 
\[
 \big| |\xi|-\mu|\eta|-\hat{v}\cdot(\xi-\eta)\big|\gtrsim |\xi|\langle v \rangle^{-2}.
\]
Therefore, we can do form transformation to  exploit the non-time-resonance property.  As a result, we gain at least $\langle t\rangle^{-1/2}$, which is sufficient.

\subsubsection{Energy estimate for the Vlasov  part}

For the Vlasov part, we use the framework used by the   author in the study of the Vlasov-Nordstr\"om system \cite{wang} and the Vlasov-Maxwell system \cite{wang2}. We propagate   polynomially weighted $L^2_{x,v}$-type norm of  two sets of vector fields for the Vlasov part.

 The first set of vector fields    commutes with the Einstein-Vlasov system. The second set of vector fields, which is only applied to the Vlasov part, helps to control the $\nabla_v f $, which is the main difficulty of energy estimate for the Vlasov equation because $\nabla_v$ does not commute with the linear transport operator $\p_t +\hat{v}\cdot \nabla_x$. We should point out that, when the second set of   vector fields unavoidably acts on the wave part, it can be  reduced to the classical vector fields we propagate for the wave part, see Lemma \ref{decompositionofderivatives}. 

 Moreover,  based on the total number of space derivatives and the total number of the vector fields applied on the profiles of the Vlasov part, we also set   the hierarchy for the energy of the Vlasov part. 

To be less technical,   one key   observation is  that   there exists null structure for the nonlinearity of the Vlasov part. Roughly speaking, we encounter the following type of term in the energy estimate of the Vlasov part, 
\[
\int_{t_1}^{t_2}\int_{\R^3} \int_{\R^3}  {u}_1(t,x,v) \omega(x,v) U^{\mu}(t, x+t\hat{v}) u_2(t,x,v) d x d  v, \quad \forall \mu\in\{+, -\},
\]
where $\omega(x,v)$ is a weight function, $u_1(t,x,v)$ is the profile of the Vlasov part with some vector field,   $u_2(t,x,v)$ is the profile of the Vlasov part with some other vector field,  $U(t,\cdot)$ is the profile of the metric part, and $U^{+}:=U, U^{-}:=\overline{U}.$  

Let 
\[
\mathcal{N}(t,x,v):= u_1(t,x,v) \omega(x,v) u_2(t,x,v), \quad V:=e^{i t \d} U. 
\]
On the Fourier side, we have
\[
\int_{t_1}^{t_2}\int_{\R^3} \int_{\R^3}  {u}_1(t,x,v) \omega(x,v) U^{\mu}(t, x+t\hat{v}) u_2(t,x,v) d x d  v 
\]
\[
=\int_{t_1}^{t_2}\int_{\R^3} \int_{\R^3} e^{-i t\mu |\xi| + i t\hat{v}\cdot \xi} \widehat{V^{\mu}}(t, \xi) \widehat{\mathcal{N}}(t,-\xi,v)  d \xi dv d t. 
\]
 Therefore, the time resonance set is given by $\{(\xi, v): |\xi|- \mu \hat{v}\cdot \xi =0\}$.

 Roughly speaking, for the acceleration force in the Vlasov equation, see (\ref{vlasov}),    we have 
 \be 
   \hat{v}_{\mu}\hat{v}_{\nu} P_k(   h_{\mu\nu}) = \sum_{U\in\{ \textup{profiles of }F, \underline{F}, \omega_l,  \vartheta_{mn}\}}
  \sum_{\mu\in\{+,-\}} \big(T_{k;\mu}^{\tilde{h}}(v) U    \big)^{\mu}+  \textup{error terms}
  \ee
where   the symbol $T_{k;\mu}^{\tilde{h}}(v)(\xi), \tilde{h}\in \{F, \underline{F}, \omega_l, \vartheta_{mn}\},   $ of the Fourier multiplier operator $T_{k;\mu}^{\tilde{h}}(v)$   vanishes if $|\xi|- \mu \hat{v}\cdot \xi =0$, see \eqref{april26eqn2} for   details. 

Therefore, we can do normal form transformation to exploit the benefit of null structure.

\subsubsection{  $Z$-norm estimates}

The Z-norm method, with different choices of the norm itself, depending on the problem, was used  in several small data global regularity problems, see \cite{Deng,Deng2,Guo1,IP,IP1,IP2,IP3}.  The $Z$-norm method essentially depends on identifying the ``correct'' designer $Z$-norm   to prove sharp decay estimates for the metric part and the density-type function.

 For the good parts, i.e., $\{F, \omega_j, \vartheta_{ij}\}$, we show that their $Z$-norms do not grow over time, which implies that the good parts decay sharply at $1/\langle t\rangle $ rate over time. 
 For the bad part $\underline{F}$, it grows at most  at rate $\langle t\rangle^{\delta_1}$. 

 We show that the wave-Vlasov type interaction plays minor role in the $Z$-norm analysis. As a result, the modified scattering property of the metric part obtained in \cite{IP} is also valid for the Einstein-Vlasov system.

 For the Vlasov part, as suggested from the linear decay estimate of the density type function, see Lemma \ref{decayestimateofdensity},   the $Z$-norm we used for the Vlasov part  is same as in  \cite{wang,wang2}. 

 Roughly speaking, we show that the zero-frequency of the profile as well as its $v$-derivatives do not grow over time. As a result, the density function and its derivatives decay sharply.

Because  of the zero-output-frequency,  which ensures that the size of the input-frequency of the metric part and  the size of the input-frequency of the Vlasov part are same, the loss from the normal form transformation can be recovered by the symbol of  the Vlasov part.   As a result, the decay rate of nonlinearity are not at $1/\langle t \rangle$ critical rate. Hence     the  $Z$-norm of  the Vlasov part is uniformly bounded over time.

\subsection{Notation  }

For any $x\in \cup_{n}\R^n$, we use the Japanese bracket $\langle x \rangle$ to denote $(1+|x|^2)^{1/2}$.  We  fix an even smooth function $\tilde{\psi}:\R \rightarrow [0,1]$, which is supported in $[-3/2,3/2]$ and equals to ``$1$'' in $[-5/4, 5/4]$. For any $k\in \mathbb{Z}$, we define the cutoff functions $\psi_k, \psi_{\leq k}, \psi_{\geq k}:\cup_{n=1,3}\R^n\longrightarrow \R$ as follows, 
\[
\psi_{k}(x) := \tilde{\psi}(|x|/2^k) -\tilde{\psi}(|x|/2^{k-1}), \quad \psi_{\leq k}(x):= \tilde{\psi}(|x|/2^k)=\sum_{l\leq k}\psi_{l}(x), \quad \psi_{\geq k}(x):= 1-\psi_{\leq k-1}(x).
\]

Let $P_k$, $P_{\leq k}$, and $P_{\geq k}$ be the Fourier multiplier operators with symbols $\psi_{k}(\xi),\psi_{\leq k}(\xi), $ and $\psi_{\geq k}(\xi)$ respectively.  We use $f^{+}$ to denote $f$ and use $f^{-}$ to denote $\bar{f}$. For $k\in\Z$, we use $f_k$ to denote $P_k f$.

  For an integer $k\in\mathbb{Z}$, we use $k_{+}$ to denote $\max\{k,0\}$ and  use $k_{-}$ to denote $\min\{k,0\}$.    For any unit vectors $u, v\in \mathbb{S}^2$, we use $\angle(u, v)$ to denote the angle between $u$ and $v$ and use the convention that   $\angle(u, v)\in[0,\pi] $.   For any $k\in \Z$, we define the following set of integers, 
\be\label{2020april10eqn1}
\begin{split} 
\chi_k^1&:=\{(k_1,k_2): k_1, k_2\in \Z, |k_1-k_2|\leq 10, k\leq k_1+10\}, \\ 
\chi_k^2&:=\{(k_1,k_2): k_1, k_2\in \Z, k_1\leq k_2-10,  |k-k_2|\leq 5\}, \\ 
\chi_k^3&:=\{(k_1,k_2): k_1, k_2\in \Z, k_2\leq k_1-10,  |k-k_1|\leq 5\}.
\end{split}
\ee 

Given a symbol $a=a(x, \xi):\R^3\times \R^3\longrightarrow \C $, we define the Weyl quantization operator $T_a$ as follows, 
\be\label{weylquantization}
T_a f(x):= \int_{\R^3} \int_{\R^3} e^{i x\cdot \xi} \mathcal{F}_{x}[a](\xi-\eta, \frac{\xi+\eta}{2}) \hat{f}(\eta) \psi_{\leq-100}(\frac{|\xi-\eta|}{|\xi+\eta|}) d \eta d \xi.
\ee

For any $a\in \R$, we define  the  $\mathcal{S}^a$-class of symbols as follows, 
\be\label{symbolclas}
\mathcal{S}^a:=\{m(\xi): \sup_{\alpha\in \Z_+^3} \sup_{k\in \Z} 2^{-a k + |\alpha| k } \| \mathcal{F}^{-1}_{\xi\rightarrow x}[\nabla_\xi^\alpha m(\xi)\psi_k(\xi)](x)\|_{L^1_x}< \infty \}.
\ee

For any $k\in\mathbb{Z}_{+}$, we use $\Lambda_{k}[\mathcal{N}]$ to denote the $k$-th order term of the nonlinearity $\mathcal{N}$, where the Taylor expansion are in terms of the perturbed metric $h_{\alpha\beta}$ and the distribution function $f$ of particles. Similarly, we use $\Lambda_{\geq k}[\mathcal{N}]:=\sum_{l\geq k } \Lambda_{l}[\mathcal{N}]$ to denote the $k$-th and higher order terms of the nonlinearity $\mathcal{N}$.

\subsection{Organization}

The plan of this paper is organized as follows, 
\begin{enumerate}
  \item[$\bullet$] In section \ref{prel}, we introduce the set-up of the problem, classify the nonlinearities of both the metric part and the Vlasov part, and state our main bootstrap assumption. 
\item[$\bullet$]In section \ref{fixedtimeestimate}, we  estimate $L^2$-type and $L^\infty$-type norms for some basic quantities and reduce the weighted norm estimate to the energy estimates of the wave part and the Vlasov part. 
\item[$\bullet$]   
In section \ref{energyestimatewave}, we estimate the increment of energy for the perturbed metric components. 
 \item[$\bullet$]  In section \ref{Znormestimate}, we estimate the increment of $Z$-norm of the metric component. 
 \item[$\bullet$]   In section \ref{energyestimateVlasov}, we estimate the increment of energy for the Vlasov part  

\item[$\bullet$] In section \ref{ZnormVlas}, we estimate the increment of  $Z$-norm for the Vlasov part.
\end{enumerate}

\section{Preliminary}\label{prel}

 \subsection{Vector fields for the Einstein-Vlasov system}

 For the wave-Vlasov  type coupled system,  we have the following joint vector fields, 
\be\label{april9eqn111}
P :=\{S:=t\p_t + x\cdot \nabla_x, \Omega_{i,j}:=x^i\p_{x^j}-x^j\p_{x^i} , L_i:= t\p_{x^i} + x^i \p_t  \},
\ee
\be\label{april9eqn112}
\mathcal{P}  :=\{t\p_t + x\cdot \nabla_x,\tilde{\Omega}_{i,j}:= x^i\p_{x^j}-x^j\p_{x^i} +v_i\p_{v_j}-v_j\p_{v_i}, \tilde{L}_i:= t\p_{x^i} + x^i \p_t+ \sqrt{1+|v|^2}\p_{v_i} \},
\ee
where vector fields in $P$ are classic vector fields commute with the wave operator and vector fields in $\mathcal{P}$ commute with the linear operator $\p_t +\hat{v}\cdot\nabla$ of the Vlasov part.

As stated in the following Lemma, by using the above defined vector fields, we observe a favorable structure for the nonlinearity of the Vlasov equation (\ref{vlasov}).

\begin{lemma}\label{innerproduct}
The following decomposition holds for the following special form, 
\be\label{march31eqn1}
 \nabla_x \phi(t,x)\cdot \nabla_v f(t,x,v)= \big(1+|v|^2 \big)^{-1/2}\big(\p_{x^i}\phi\tilde{L}_i f -   L_i \phi \p_{x^i}f+    \p_t \phi S f   -   S\phi \p_{t}f\big).
\ee 
\end{lemma}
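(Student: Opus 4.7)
The plan is a direct computation exploiting the two basic identities
\[
\tilde{L}_i = L_i + \sqrt{1+|v|^2}\,\p_{v_i}, \qquad L_i\phi = t\p_{x^i}\phi + x^i\p_t\phi, \qquad S\phi = t\p_t\phi + x^j\p_{x^j}\phi,
\]
which follow immediately from the definitions of $\tilde{L}_i$, $L_i$, and $S$ in \eqref{april9eqn111}--\eqref{april9eqn112}. No analytic input beyond arithmetic manipulation is required; the content of the lemma is purely algebraic.

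First I would solve for $\p_{v_i}f$ using the first identity, which gives
\[
\p_{v_i}f = (1+|v|^2)^{-1/2}\bigl(\tilde{L}_i f - t\p_{x^i}f - x^i\p_t f\bigr),
\]
and substitute into $\nabla_x\phi\cdot\nabla_v f = \p_{x^i}\phi\,\p_{v_i}f$. This produces the desired $\p_{x^i}\phi\,\tilde{L}_i f$ contribution up to the factor $(1+|v|^2)^{-1/2}$, and leaves the expression
\[
-(1+|v|^2)^{-1/2}\bigl(t\,\p_{x^i}\phi\,\p_{x^i}f + x^i\,\p_{x^i}\phi\,\p_t f\bigr)
\]
to be rewritten. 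The second step is to eliminate the explicit factors of $t$ and $x^i$ in this remainder. Using $t\,\p_{x^i}\phi = L_i\phi - x^i\p_t\phi$ from the second identity, the first term becomes $L_i\phi\,\p_{x^i}f - x^i\p_t\phi\,\p_{x^i}f$. Using $x^j\p_{x^j}\phi = S\phi - t\p_t\phi$ from the third identity, the second term becomes $(S\phi - t\p_t\phi)\p_t f$. A symmetric application of the third identity to $f$ itself rewrites $x^i\p_t\phi\,\p_{x^i}f = \p_t\phi\,(Sf - t\p_t f)$, so the two stray $t\p_t\phi\,\p_t f$ contributions cancel, leaving precisely $L_i\phi\,\p_{x^i}f - \p_t\phi\, Sf + S\phi\,\p_t f$. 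Collecting everything gives \eqref{march31eqn1}.

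There is no real obstacle here; the only point to be careful about is the bookkeeping of signs and the dual use of the identity $x^j\p_{x^j} = S - t\p_t$ on both $\phi$ (for the second summand) and $f$ (for the mixed term), which is what causes the two $t\p_t\phi\,\p_t f$ terms to cancel and produces the clean null-type structure advertised in the statement. This algebraic cancellation is the reason the right-hand side organizes into pairs built from the commuting vector fields $\{S, L_i, \tilde{L}_i\}$, which is exactly the structure that will be exploited later when handling the acceleration term in \eqref{vlasov}.
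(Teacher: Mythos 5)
Your proof is correct and follows essentially the same route as the paper: substitute $\p_{v_i}f = (1+|v|^2)^{-1/2}(\tilde{L}_i f - t\p_{x^i}f - x^i\p_t f)$ and then eliminate the explicit $t$- and $x$-weights using $t\p_{x^i}\phi = L_i\phi - x^i\p_t\phi$ and $x\cdot\nabla_x = S - t\p_t$ (on $\phi$ and on $f$), with the two $t\p_t\phi\,\p_t f$ terms cancelling. The only cosmetic difference is that the paper groups $x^i\p_{x^i}f + t\p_t f = Sf$ at the end, whereas you cancel the $t\p_t\phi\,\p_t f$ pair explicitly; the algebra is identical.
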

\begin{proof}
Note that
\[
\nabla_x \phi(t,x)\cdot \nabla_v f(t,x,v)= \p_{x^i}\phi \p_{v_i} f =  \big(1+|v|^2 \big)^{-1/2}\big( \p_{x^i}\phi\tilde{L}_i f  -  \p_{x^i}\phi(t\p_{x^i}+x^i\p_t)f \big)\]
\[
=  \big(1+|v|^2 \big)^{-1/2}\big( \p_{x^i}\phi\tilde{L}_i f -  t \p_{x^i}\phi \p_{x^i}f -  x^i\p_{x^i}\phi \p_{t}f \big)=\big(1+|v|^2 \big)^{-1/2}\big( \p_{x^i}\phi\tilde{L}_i f -  L_i \phi \p_{x^i}f 
\]
\[
+    \p_t \phi x^i \p_{x^i}f -  S\phi \p_{t}f   
 +  t\p_t\phi \p_{t}f \big)
=  \big(1+|v|^2 \big)^{-1/2}\big(  \p_{x_i}\phi\tilde{L}_i f -   L_i \phi \p_{x_i}f-  S\phi \p_{t}f +   \p_t \phi S f     \big). 
\]
Hence finishing the proof of the desired equality (\ref{march31eqn1}). 
\end{proof}

The key point of the decomposition (\ref{march31eqn1}) is that, as $\nabla_v $ does not commute with the transport operator $\p_t + \hat{v}\cdot \nabla_x $, generally speaking, we can not control $\nabla_v f$ directly. However, thanks to the decomposition \eqref{march31eqn1}, all terms on the right hand side of \eqref{march31eqn1} depends only on the commutable  vector fields.

\begin{definition}\label{vectorfieldscla}
For any non-negative integer $n\in \mathbb{Z}_{+}$, we define the set of vector fields $P^{}_{n}$ as follows, 
\be\label{july13eqn1}
\begin{split}
P^{ }_{n}&:=\{ \Gamma^{\alpha} , \quad \Gamma\in \{\Omega_{ij},S, L_i, i,j\in\{1,2,3\}\} ,   |\alpha| \leq  n \},\\
  \mathcal{P}^{ }_{n}&:=\{ \tilde{\Gamma}^{\alpha} , \quad \tilde{\Gamma}\in \{\tilde{\Omega}_{ij},S, \tilde{L}_i, i,j\in\{1,2,3\}\} ,   |\alpha| \leq  n \}.
\end{split}
\ee

For $L_1=\nabla_x^{\alpha_1}\Gamma_1^{\beta_1}  , L_2=\nabla_x^{\alpha_2} \Gamma_2^{\beta_2} \in P_{n}^{   }$, we define $L_1\prec L_2$ ($L_1\preceq   L_2$) if    $|\alpha_1| \leq |\alpha_2| $,  $|\beta_1| \leq|\beta_2| $, and $|\alpha_1|+|\beta_1|<  |\alpha_2|+|\beta_2|$($|\alpha_1|+|\beta_1|\leq  |\alpha_2|+|\beta_2|$). For any $L=\nabla_x^\alpha \Gamma^\beta$, we let $|L|:=|\beta|$ and $\|L\|:=|\alpha|+|\beta|$.  

We can define $\mathcal{L}_1\prec\mathcal{L}_2(\mathcal{L}_1\preceq\mathcal{L}_2),$ $\mathcal{L}_i\in \mathcal{P}_{n}^{  },i=1,2 $, and $|\mathcal{L}|$ in a similar fashion. For any $L\in P^{ }_{n}$, there exists a uniquely determined vector field $\mathcal{L}\in \mathcal{P}^{ }_{n}$ such that  for any smooth function $h(t,x)$, we have $L h = \mathcal{L} h $.  We use the convention that $L$ and its uniquely determined  vector field $\mathcal{L}$ are treated same in the $\prec$ and $\preceq$ relations. For   convenience  in notation, we define $P_0:=\{Id\}$ and $ \mathcal{P}_0:=\{Id\}$.
\end{definition}

Now, we compute the equations satisfied by the distribution function with vector fields act on it.  Recall (\ref{vlasov}). We have
\be\label{july5eqn41}
(\p_t + \hat{v}\cdot \nabla_x ) f  =  - \Lambda_{\geq 1}[(v^0)^{-1}] v\cdot\nabla_ x f  -  \h  (v^0)^{-1}  v_{\alpha} v_{\beta} \p_{j} g^{\alpha\beta} \p_{v_j} f. 
\ee  
Note that,  for any $\mathcal{L}\in \mathcal{P}_n$, we have
\[
\mathcal{L}\big(       \p_{j} g^{\alpha\beta  } \p_{v_j} f\big) = \sum_{\mathcal{L}_1, \mathcal{L}_2 \in \mathcal{P}_a^b, \mathcal{L}_1 \circ \mathcal{L}_2 \preceq   {\mathcal{L}}}  c_{\mathcal{L}_1\mathcal{L}_2 }^{   {\mathcal{L}};j,l}(v)  \p_{j} \mathcal{L}_1 g^{\alpha\beta  } \p_{v_l}\mathcal{L}_2 f. 
\]

Alternatively, 
from the equality (\ref{march31eqn1}) in Lemma \ref{innerproduct}, the following equality holds for some uniquely determined coefficients,
\[
\mathcal{L}\big(       \p_{j} g^{\alpha\beta  } \p_{v_j} f\big) =  \mathcal{L}\big(      \big(1+|v|^2 \big)^{-1/2}\big(\p_{x^i}  g^{\alpha\beta } \tilde{L}_i f -   L_i g^{\alpha\beta  }  \p_{x^i}f+    \p_t  g^{\alpha\beta }  S f   -   S g^{\alpha\beta }  \p_{t}f\big)
\]
\be\label{2020april7eqn1}
=   \sum_{
\begin{subarray}{c}
 \mathcal{L}_1 \circ \mathcal{L}_2 \preceq  {\mathcal{L}},  
  ( \Gamma_1, \Gamma_2)\in P\times\{\p_{\alpha}\} \cup \{\p_{\alpha}\}\times \mathcal{P}
 \end{subarray}} c_{\mathcal{L}_1\mathcal{L}_2 }^{ \mathcal{L};\Gamma_1,\Gamma_2}(v) \Gamma_1 \mathcal{L}_1 f   \Gamma_2 \mathcal{L}_2 g^{\alpha\beta}, \quad | c_{\mathcal{L}_1\mathcal{L}_2 }^{ \mathcal{L};\Gamma_1,\Gamma_2}(v)  |\lesssim (1+|v|)^{-1}. 
\ee 
 Moreover,  to highlight the role of extra derivatives in quasilinear terms, we point out that the following equalities hold, 
\be\label{2020april7eqn2}
   c_{id\mathcal{L}  }^{   {\mathcal{L}};j,l}(v)  \p_{j}   g^{\alpha\beta } \p_{v_l}\mathcal{L}  f= \sum_{ \Gamma_1\in P\cup\{\p_{\alpha}\}, \Gamma_2\in \mathcal{P}\cup\{\p_{\alpha} \} }c_{  id   {\mathcal{L}}}^{  {\mathcal{L}};\Gamma_1,\Gamma_2}(v) \Gamma_1   \mathcal{L}   f  \Gamma_2   g^{\alpha\beta} =  \nabla_x g^{\alpha\beta}\cdot\nabla_v  {\mathcal{L}} f , 
\ee
\be\label{2020april7eqn3}
c_{\mathcal{L} id }^{   {\mathcal{L}};j,l}(v)  \p_{j} \mathcal{L}  g^{\alpha\beta } \p_{v_l}  f=\sum_{ \Gamma_1\in P\cup\{\p_{\alpha}\}, \Gamma_2\in \mathcal{P}\cup\{\p_{\alpha}\}}c_{      {\mathcal{L}}id}^{   {\mathcal{L}};\Gamma_1,\Gamma_2}(v) \Gamma_1 f  \Gamma_2 \mathcal{L}  g^{\alpha\beta}=  \nabla_x  {\mathcal{L}} g^{\alpha\beta}\cdot\nabla_v  f .
\ee
Therefore, from the above equalities   (\ref{july5eqn41}--\ref{2020april7eqn3}),     we can write the Vlasov equation schematically as follows, 
\be\label{2020april2eqn1}
   (\p_t + \hat{v}\cdot \nabla_v) \mathcal{L} f =   \nabla_x\cdot \mathcal{N}_1^{\mathcal{L}}(t,x,v) +\nabla_v \cdot \mathcal{N}_2^{\mathcal{L}}(t,x,v)+ \mathcal{N}_3^{\mathcal{L}}(t,x,v)+ \mathcal{N}_4^{\mathcal{L}}(t,x,v) , 
\ee
where
\be\label{2020april7eqn4}
\mathcal{N}_1^{\mathcal{L}} = - \Lambda_{\geq 1}[(v^0)^{-1}] v  \mathcal{L}f, \quad \mathcal{N}_2^{\mathcal{L}}(t,x,v)= -  \h  (v^0)^{-1}v_{\alpha} v_{\beta} \nabla_x g^{\alpha\beta}  \mathcal{L} f,  
\ee
\[
 \mathcal{N}_3^{\mathcal{L}}= -  \h  (v^0)^{-1} v_{\alpha} v_{\beta}   \nabla_x \mathcal{L} g^{\alpha\beta}\cdot \nabla_v  f =\nabla_v \cdot \widetilde{ \mathcal{N}_3^{\mathcal{L};1} }+ \widetilde{ \mathcal{N}_3^{\mathcal{L};2} },\quad  \widetilde{ \mathcal{N}_3^{\mathcal{L};1} }:= -  \h  (v^0)^{-1} v_{\alpha} v_{\beta}   \nabla_x \mathcal{L} g^{\alpha\beta}  f 
\]
\[
 \widetilde{ \mathcal{N}_3^{\mathcal{L};2} }=  \h  \nabla_v\big((v^0)^{-1} v_{\alpha} v_{\beta}\big)  \cdot \nabla_x \mathcal{L} g^{\alpha\beta}  f,
\]
\[
   \mathcal{N}_4^{\mathcal{L}}  =\sum_{ \mathcal{L}_1 \circ \mathcal{L}_2\preceq  {\mathcal{L}}, \mathcal{L}_2\prec \mathcal{L}}   \tilde{c}_{\mathcal{L}_1\mathcal{L}_2}^{\mathcal{L}}(v) \Lambda_{\geq 1}[\mathcal{L}_1(v^0)^{-1}]  \cdot \nabla_x \mathcal{L}_2 f +v\cdot \nabla_x \Lambda_{\geq 1}[(v^0)^{-1}](t, x )  \mathcal{L}f (t,x,v) \]
 
\be\label{oct1eqn21}
 +\sum_{
\begin{subarray}{c}
 \mathcal{L}_1 \circ \mathcal{L}_2\circ \mathcal{L}_3 \preceq  {\mathcal{L}}, 
  \mathcal{L}_2\prec {\mathcal{L}} ,   ( \Gamma_1, \Gamma_2)\in P\times\{\p_{\alpha}\} \cup \{\p_{\alpha}\}\times \mathcal{P}
 \end{subarray}} \tilde{c}_{\mathcal{L}_1\mathcal{L}_2\mathcal{L}_3 }^{  {\mathcal{L}};\Gamma_1,\Gamma_2}(v)     \Gamma_1 \mathcal{L}_1 g^{\alpha\beta}\Gamma_2 \mathcal{L}_2 f  \mathcal{L}_3\big(  ({2v^0})^{-1}  v_{\alpha} v_{\beta}  \big),
\ee
 where the coefficients $ \tilde{c}_{\mathcal{L}_1\mathcal{L}_2}^{\mathcal{L}}(v)$ and $ \tilde{c}_{\mathcal{L}_1\mathcal{L}_2\mathcal{L}_3 }^{  {\mathcal{L}};\Gamma_1,\Gamma_2}(v)  $ satisfy the following estimate, 
\be\label{2020april22eqn21}
(1+|v|)| \tilde{c}_{\mathcal{L}_1\mathcal{L}_2\mathcal{L}_3 }^{  {\mathcal{L}};\Gamma_1,\Gamma_2}(v)  | +|\tilde{c}_{\mathcal{L}_1\mathcal{L}_2}^{\mathcal{L}}(v)|\lesssim 1. 
\ee

  For   any vector $\mathcal{L}\in \mathcal{P}^{ }_n$, we define
$u^{\mathcal{L}}(t, x, v):= { \mathcal{L}}  f (t, x+\hat{v}t, v )$ to be  the profile of the Vlasov part, which is the pull-back of the nonlinear solution along the linear flow.

Recall   (\ref{2020april2eqn1}). As a result of direct computations,   we rewrite the nonlinearity of $ \p_t u^{\mathcal{L}}(t, x,v)$  in terms of   profiles as follows,
\be\label{2020april7eqn6}
 \p_t u^{\mathcal{L}}(t, x,v)=  \nabla_x \cdot \mathfrak{N}_1^{\mathcal{L
 }}(t,x,v) +  D_v\cdot  \mathfrak{N}_2^{\mathcal{L
 }}(t,x,v) +   \mathfrak{N}_3^{\mathcal{L
 }}(t,x,v) +   \mathfrak{N}_4^{\mathcal{L
 }}(t,x,v),  
\ee
where
 \be\label{2020july9eqn5}
 D_v:=\nabla_v -t \nabla_v \hat{v}\cdot\nabla_x,\quad \mathfrak{N}_i^{\mathcal{L
 }}(t,x,v)=  \mathcal{N}_i^{\mathcal{L}}(t,x+t\hat{v},v), \quad i\in\{1,\cdots,4\}.
 \ee

To control the energy of  $\nabla_v^\alpha u^{\mathcal{L}}(t,x,v)$,  which provides the control of the $L^\infty_x$-norm of the density type functions, 
we  also utilize a  second set of vector fields for the Vlasov equation, see \cite{wang} for more details about the   the construction of the   second set  of vector fields. More precisely, we define
\be
K_v:= \nabla_v - \sqrt{1+|v|^2} \omega(x,v)\nabla_v  \hat{v}\cdot \nabla_x , 
\ee
\be\label{eqq10}
 S^v: = \tilde{v} \cdot \nabla_v,\quad  S^x:= \tilde{v}  \cdot \nabla_x,  \quad \Omega^v_i= \tilde{V}_i \cdot \nabla_v, \quad   \Omega^x_i= \tilde{V}_i \cdot \nabla_x, 
\ee
\be\label{sepeqq2}
   \widehat{S}^v:= \tilde{v}\cdot K_v= S^v - \frac{\omega(x,v)}{ {1+|v|^2}} S^x, \quad \widehat{\Omega}^{v}_i:=  \tilde{V}_i \cdot K_v = \Omega_i^v - \omega(x,v) \Omega_i^x,\quad  K_{v_i}:=K_v \cdot e_i, 
\ee
where
\be\label{eqn16}
\omega(x, v)=  \psi_{\geq 0}(|x|^2 +(x\cdot v)^2)\big(x\cdot v +\sqrt{(x\cdot {v})^2  + |x|^2 }\big), 
\ee
\be 
  \tilde{V}_i=e_i\times \tilde{v}, \quad \tilde{v}:=\frac{v}{|v|}, \quad \tilde{v}_i:=\tilde{v}\cdot e_i, \quad \hat{v}_i:=\hat{v}\cdot e_i,\quad e_1:=(1,0,0),  e_2:=(0,1,0),   e_3:=(0,0,1). 
\ee
With the above defined vector fields, the following favorable decomposition holds, 
\be\label{2020april21eqn11}
D_v=\sum_{\iota\in \mathcal{S}, |\iota|=1} c_{\iota}(t,x,v) \Lambda^{\iota}, \quad | c_{\iota}(t,x,v)|\lesssim (1+|v|)| |t|-|x+t\hat{v}||. 
\ee

We use the following notation to   represent the above vector fields uniformly, 
\be\label{dec26eqn5}
\begin{split}
 &  \Gamma_1= \psi_{\geq 1}(|v|) \widehat{S}^v, \quad \Gamma_2:=\psi_{\geq 1}(|v|)S^x, \quad   \Gamma_{i+2}:=\psi_{\geq 1}(|v|)\widehat{\Omega}_i^v, \\ 
 & \Gamma_{i+5}:=\psi_{\geq 1}(|v|) \Omega^x_i,\quad   \Gamma_{i+8}:=\psi_{\leq 0}(|v|) K_{v_i},\quad \Gamma_{i+11}:=\psi_{\leq 0}(|v|) \p_{x_i}, \quad i=1,2,3, \\ 
  & \mathcal{K}:=\{\vec{e}:\,\,\vec{e}\in \{0,1\}^{14}, |\vec{e}|=0,1\},\quad \vec{0}:=(0,\cdots, 0), 
\vec{e}_i:=(0,\cdots,\overbrace{1}^{\text{$i$-th} },\cdots,0),\\
 & \textit{if\,\,} \vec{0}, \vec{e}_i\in \mathcal{K}, \quad  
\Lambda^{\vec{ {0}}}:=Id,              \quad \Lambda^{\vec{e}_i}:=\Gamma_i,\quad \mathcal{S}:=\cup_{k\in\mathbb{N}_{+}}\mathcal{K}^{k},\quad  \forall e, f\in \mathcal{S}, \Lambda^{e\circ f}:= \Lambda^{e}\Lambda^{f}.
\end{split} 
\ee

For    any $\beta\in \mathcal{S}$, we have $\beta\thicksim \iota_1\circ \cdots \iota_{|\beta|},  \iota_i\in \mathcal{K}/\{\vec{0}\}$. We let $|\beta|$ to denote total number of derivatives. Moreover, we define  the index ``$c_{ }(\beta)$''  to distinguishes the velocity derivative in radial direction from the  the velocity derivative in rotational  directions and define the index ``$\tilde{c}(\beta)$'' to count the total number of derivatives in $v$ in $\Lambda^\beta$  as follow, 
\be\label{countingnumber}
 \begin{split}
 &c_{ }(\beta)= \sum_{i=1,\cdots, |\beta|}c_{ }(\iota_i), \quad   c_{ }(\iota) =\left\{\begin{array}{ll}
1 & \textup{if\,} \Lambda^\iota \thicksim  \psi_{\geq 1}(|v|) \widehat{S}^v  \\
-1 & \textup{if\,} \Lambda^\iota \thicksim  \psi_{\geq 1}(|v|) \widehat{\Omega}^v_i, i\in\{1,2,3\} \\
0 & \textup{otherwise} \\
\end{array}\right. ,  \iota\in  \mathcal{K}/\{\vec{0}\}, \\ 
&\tilde{c}(\beta):=|\{ \iota_i: \,\, \beta\sim \iota_1\circ\iota_2\circ\cdots \iota_{|\beta|},   \iota_i\sim   \psi_{\geq 1}(|v|) \widehat{S}^v, \textup{or},\, \psi_{\geq 1}(|v|) \widehat{\Omega}_i^v, \textup{or},\,\psi_{\leq 0}(|v|) K_{v_j}, j\in\{1,2,3\} \}|. 
\end{split}
\ee

To compute the   equation satisfied by $\Lambda^\alpha u^{\mathcal{L}}$, by using  the following Lemma, we know that, if the second set of vector fields act on the perturbed metric component, they can be represented by  the classic vector fields in $P$, see \eqref{april9eqn111}.

\begin{lemma}\label{decompositionofderivatives}
The following identity holds for any $\rho \in \mathcal{S},  $  
\be\label{sepeqn610}
\Lambda^{\rho}\big( f  (t,x+\hat{v}t  )\big)=\sum_{L\in P_{|\rho|}}  {c}_{ \rho}^{L} (t,x,v) L f (t,x+\hat{v} t) ,
  \ee
where  the coefficients $ {c}_{ \rho}^{L} (x,v) $, $ L\in  P_{|\rho|}$,  satisfy the following estimate,
\be\label{sepeqn88}
|t\p_t  {c}_{ \rho}^{L} (t,x,v)|  + |  {c}_{ \rho}^{L} (t,x,v) |  \lesssim  (1+|x|)^{|\rho|-|L|}  (1+|v|)^{  |\rho|-| L|-c_{ }(\rho)}.  
\ee
Moreover, the following rough estimate holds for any $\kappa\in \mathcal{S}, $
\be\label{noveq781}
| \Lambda^{\kappa}\big( {c}_{ \rho}^{L} (t,x,v)\big)| \lesssim (1+|x|)^{|\kappa|+|\rho|-|L| } (1+|v|)^{|\kappa|+ |\rho|-|L| -c_{ }(\rho) }   .
\ee
 \end{lemma}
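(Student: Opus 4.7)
The plan is to induct on $|\rho|$. The case $|\rho|=0$ is tautological (take $L=Id$, $c_{\vec 0}^{Id}\equiv 1$), so the substance lies in the case $|\rho|=1$, which must be verified by direct computation for each of the fourteen generators $\Gamma_k$ listed in \eqref{dec26eqn5}. For the seven purely spatial generators $\Gamma_2$, $\Gamma_{i+5}$, $\Gamma_{i+11}$, the chain rule through the composition $x\mapsto x+\hat v t$ is trivial and the decomposition follows with coefficients depending only on $v$ that automatically satisfy \eqref{sepeqn88} (here $c(\rho)=0$). For the seven velocity-based generators $\Gamma_1=\psi_{\ge 1}\widehat S^v$, $\Gamma_{i+2}=\psi_{\ge 1}\widehat\Omega^v_i$, $\Gamma_{i+8}=\psi_{\le 0}K_{v_i}$, the chain rule produces a dangerous $t$ factor through $\partial_{v_i}[f(t,x+\hat v t)]=t(\partial_{v_i}\hat v_j)(\partial_{x_j}f)(t,x+\hat v t)$. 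Here the whole purpose of the weight $\omega(x,v)$ from \eqref{eqn16} enters: the subtracted spatial correction in the definitions of $\widehat S^v$, $\widehat\Omega^v_i$ and $K_v$, together with the identity $(L_j f)(t,x+\hat v t)=t(\partial_{x_j}f)(t,x+\hat v t)+(x^j+t\hat v^j)(\partial_t f)(t,x+\hat v t)$ and its analogue for $(Sf)(t,x+\hat v t)$, cancels this factor and reduces the coefficient of each $(Lf)(t,x+\hat v t)$, $L\in P_1$, to the form required by \eqref{sepeqn88}. The sharp power $(1+|v|)^{-c(\rho)}$ comes from the identity $\tilde v\cdot\nabla_v\hat v=\tilde v/(1+|v|^2)^{3/2}$ for $\widehat S^v$ and its analogue for $\widehat\Omega^v_i$.

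For the inductive step, write $\rho=\iota\circ\rho'$ with $|\iota|=1$ and apply the inductive hypothesis to obtain $\Lambda^{\rho'}[f(t,x+\hat v t)]=\sum_{L'\in P_{|\rho'|}}c_{\rho'}^{L'}(t,x,v)(L'f)(t,x+\hat v t)$. Applying $\Lambda^\iota$ and using Leibniz yields
\[
\Lambda^{\rho}[f(t,x+\hat v t)]=\sum_{L'}\bigl(\Lambda^\iota c_{\rho'}^{L'}\bigr)(L'f)(t,x+\hat v t)+\sum_{L'}c_{\rho'}^{L'}\,\Lambda^\iota\bigl[(L'f)(t,x+\hat v t)\bigr].
\]
For the first sum, one uses the rough bound \eqref{noveq781}, which is established by the same induction. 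For the second sum, the base case applied to the smooth function $L'f$ in place of $f$ produces terms of the form $c_\iota^{L''}(t,x,v)(L''L'f)(t,x+\hat v t)$ with $L''\in P_1$; the closedness of the Lie algebra generated by $\{\partial_\alpha,S,L_i,\Omega_{ij}\}$ under commutation then allows us to expand the composition $L''L'$ as a finite linear combination with integer coefficients of monomials in $P_{|\rho|}$. Multiplying the coefficient bounds and using the additivity $c(\rho)=c(\iota)+c(\rho')$ produces \eqref{sepeqn88}. The estimate on $t\partial_t c_\rho^L$ is obtained by running the same induction with $t\partial_t$ commuted through via $t\partial_t=S-x\cdot\nabla_x$ together with the closed commutator relations between $S$ and the fourteen second-generation vector fields.

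The main technical obstacle is the base case verification for the three modified velocity families $\widehat S^v$, $\widehat\Omega^v_i$, $K_v$: one must check explicitly that the weight $\omega(x,v)$ together with the normalizations $\sqrt{1+|v|^2}$ and $1/(1+|v|^2)$ are precisely those required to eliminate any residual $t$-dependence from the coefficients in the base-case decomposition, while simultaneously producing the sharp gain $(1+|v|)^{-c(\rho)}$. This amounts to an explicit but delicate algebraic computation relying on the explicit form \eqref{eqn16} of $\omega$ and on standard identities for $\nabla_v\hat v$ and $\nabla_v|\hat v|^2$.
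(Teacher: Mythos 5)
Your inductive skeleton is reasonable, but the base case cancellation mechanism as stated does not work, and the inductive step using the rough bound \eqref{noveq781} alone does not close.

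In the base case for the modified velocity generators, take $\widehat{S}^v$. The chain rule together with $\tilde v\cdot\nabla_v\hat v=\tilde v(1+|v|^2)^{-3/2}$ gives
\[
\widehat{S}^v\bigl[f(t,x+\hat vt)\bigr]=(1+|v|^2)^{-3/2}\bigl(t-\sqrt{1+|v|^2}\,\omega(x,v)\bigr)\,\tilde v\cdot(\nabla_xf)(t,x+\hat vt).
\]
You claim the $L_j$ identity and ``its analogue for $S$'' absorb the $t$-factor, but they do not on their own: solving the $L_j$ identity for $t\partial_{x_j}f$ produces a $y_j\,\partial_tf$ term whose coefficient $\tilde v\cdot(x+\hat vt)$ still grows like $t$, and eliminating $\partial_tf$ through $S$ reintroduces $\nabla_xf$ circularly with a singular $1/t$. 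The actual mechanism also needs the $\Omega_{ij}$, packaged as $(t^2-|y|^2)\partial_{x_j}f(t,y)=t\,L_jf(t,y)-y_j\,Sf(t,y)-y_k\,\Omega_{kj}f(t,y)$ with $y=x+\hat vt$, combined with the crucial algebraic fact you do not identify: $\sqrt{1+|v|^2}\,\omega(x,v)$ is precisely the positive root $t_0(x,v)$ of the quadratic $t\mapsto |x+\hat vt|^2-t^2$, so that $t^2-|x+\hat vt|^2=(1+|v|^2)^{-1}(t-t_0)(t-t_1)$ with $t_1\le 0$ and $t-t_1\gtrsim t+|x|$. Only after dividing by $(t-t_1)$ do the coefficients in front of $L_jf,Sf,\Omega_{kj}f$ become bounded by $(1+|v|)^{-1}$, matching \eqref{sepeqn88} with $c(\rho)=1$; without this factorization the cancellation is invisible.

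For the inductive step, the first Leibniz sum contributes $\Lambda^\iota c_{\rho'}^{L'}$ to the new coefficient of $L'$. Applying \eqref{noveq781} with $\kappa=\iota$ gives the bound $(1+|x|)^{1+|\rho'|-|L'|}(1+|v|)^{1+|\rho'|-|L'|-c(\rho')}$, whereas \eqref{sepeqn88} for $\rho=\iota\circ\rho'$ requires the exponent of $(1+|v|)$ to be $1+|\rho'|-|L'|-c(\iota)-c(\rho')$. When $c(\iota)=+1$, i.e.\ the outermost factor is $\psi_{\geq 1}(|v|)\widehat{S}^v$, the rough bound overshoots by a factor $(1+|v|)$, because the rough estimate does not track the sign of $c(\iota)$. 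So the induction does not close as written; one must prove the sharper statement that a single application of $\widehat{S}^v$ to each coefficient $c_{\rho'}^{L'}$ gains a factor $(1+|v|)^{-1}$, which requires a finer structural hypothesis on the coefficients than the one you carry. Note also that the sum in \eqref{sepeqn610} as the paper actually uses it (e.g.\ in \eqref{2022may16eqn1}) ranges over $\cup_\alpha\nabla_x^\alpha P_l$, not just $P_{|\rho|}$, which is what your treatment of the purely spatial generators implicitly assumes.
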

\begin{proof}
See \cite{wang}[Lemma 4.1]
\end{proof}

From the equation satisfied by $u^{\mathcal{L}}$ in (\ref{2020april7eqn6}) and the equality (\ref{sepeqn610}) in Lemma \ref{decompositionofderivatives}, we  classify the equation satisfied by $\Lambda^{\rho}u^{\mathcal{L}}$  as follows, 
\be\label{2020april8eqn31}
\p_t \Lambda^{\rho}u^{\mathcal{L}}(t,x,v)= \nabla_x \cdot \mathfrak{N}_{\rho,1}^{\mathcal{L}}(t,x,v) + D_v\cdot  \mathfrak{N}_{\rho,2}^{\mathcal{L}}(t,x,v) +  \mathfrak{N}_{\rho,3}^{\mathcal{L}}(t,x,v) +  \mathfrak{N}_{\rho,4}^{\mathcal{L}}(t,x,v),
\ee
where $\mathfrak{N}_{\rho,1}^{\mathcal{L}}(t,x,v)  $ and $  \mathfrak{N}_{\rho,2}^{\mathcal{L}}(t,x,v)$ reveal the symmetric structure of the equation $\p_t \Lambda^{\rho}u^{\mathcal{L}}(t,x,v)$, in which all vector fields act on the profile of the Vlasov part, $\mathfrak{N}_{\rho,3}^{\mathcal{L}}(t,x,v) $ denotes terms in which all  vector fields act on the metric part, lastly, $\mathfrak{N}_{\rho,4}^{\mathcal{L}}(t,x,v)$ denotes  the rest terms. 

More precisely, we have 
\be\label{2020april8eqn32}
\mathfrak{N}_{\rho,1}^{\mathcal{L}}(t,x,v) = - \Lambda_{\geq 1}[(v^0)^{-1}](t, x+t\hat{v}) v  \Lambda^{\rho} u^{\mathcal{L}}(t,x,v),
\ee
\be\label{2020april8eqn33}
 \mathfrak{N}_{\rho,2}^{\mathcal{L
 }}(t,x,v) =  -  \h \big( (v^0)^{-1}   v_{\alpha} v_{\beta} \nabla_x g^{\alpha\beta}\big)(t, x+t\hat{v})  \Lambda^{\rho}u^{\mathcal{L}}(t,x,v) ,
\ee
\be\label{2022may16eqn1}
 \mathfrak{N}_{\rho,3}^{\mathcal{L
 }}(t,x,v) =  \sum_{\begin{subarray}{c}
 \tilde{L}\in \cup_{\alpha\in \Z_+^3, l\in \Z_+}\nabla_x^\alpha P_l\\  |\tilde{L}|\leq  \tilde{c}(\rho),  |
 \alpha|\leq |\rho|- \tilde{c}(\rho) 
 \end{subarray}} 
  \h  c^{\tilde{L} }_\rho(x,v) \big( (v^0)^{-1}   v_{\alpha} v_{\beta} \nabla_x \tilde{L} \mathcal{L} g^{\alpha\beta}\big)(t, x+t\hat{v}) \cdot D_v u^{ }(t,x,v), 
 \ee
\[
 \mathfrak{N}_{\rho,4}^{\mathcal{L
 }}(t,x,v) =  \sum_{\begin{subarray}{c}
   \rho_1 \circ \rho_2\preceq \rho,   \tilde{L}\in \cup_{ \alpha\in \Z_+^3, l\in \Z_+}\nabla_x^\alpha P_l,    
  \mathcal{L}_1 \circ \mathcal{L}_2\preceq  {\mathcal{L}}  \\ 
\mathcal{L}_1\prec \mathcal{L},  |\tilde{L}|\leq  \tilde{c}(\rho_2), |\alpha|\leq |\rho_2|-\tilde{c}(\rho_2)
 \end{subarray}}   \tilde{c}_{\mathcal{L}_1\mathcal{L}_2;\tilde{L}}^{\mathcal{L};\rho,\rho_1,\rho_2}(x,v)  \cdot \nabla_x \Lambda^{\rho_1}  u^{\mathcal{L}_1}(t,x,v) 
 \]
 \[
 \times  \Lambda_{\geq 1}[\tilde{L}\mathcal{L}_2(v^0)^{-1}](t, x+t\hat{v},v) \]
 \[
   +\sum_{
\begin{subarray}{c}
 \mathcal{L}_1 \circ \mathcal{L}_2\circ \mathcal{L}_3 \preceq  {\mathcal{L}}, 
  \rho_1\circ\rho_2\circ \rho_3\preceq \rho,  \tilde{L}_i  \in \cup_{ \alpha_i\in \Z_+^3, l\in \Z_+}\nabla_x^{\alpha_i} P_l,  \\
   |\tilde{L}_i|\leq \tilde{c}(\rho_{i+1}), |\alpha_i|\leq  |\rho_{i+1}|-\tilde{c}(\rho_{i+1}), i\in\{1,2\},\\ 
   |\tilde{c}(\rho_i)|+|\mathcal{L}_i|<|\rho|+|\mathcal{L}|, ( \Gamma_1, \Gamma_2)\in P\times\{\p_{\alpha}\} \cup \{\p_{\alpha}\}\times \mathcal{P}\\ 
 \end{subarray}} c_{\mathcal{L}_1\mathcal{L}_2\mathcal{L}_3;\tilde{L}_1, \tilde{L}_2}^{  {\mathcal{L}};\Gamma_1,\Gamma_2;\rho, \rho_1, \rho_2}(x,v)    \tilde{L}_1 \Gamma_2 \mathcal{L}_2 g^{\alpha\beta}(t,x+t\hat{v}) 
 \]
 \[
   \times \Lambda^{\rho_1} u^{\Gamma_1 \mathcal{L}_1}(t,x,v)   \tilde{L}_2 \mathcal{L}_3\big(  ({2v^0})^{-1} { v_{\alpha} v_{\beta}} \big)(t,x+t\hat{v}, v)  + v\cdot \nabla_x \Lambda_{\geq 1}[(v^0)^{-1}](t, x+t\hat{v})  
 \]
\be\label{2020april8eqn35}
\times \Lambda^{\rho} u^{\mathcal{L}}(t,x,v)  +\h D_v\cdot\big(\big( (v^0)^{-1}   v_{\alpha} v_{\beta} \nabla_x g^{\alpha\beta}\big)(t, x+t\hat{v})\big)  \Lambda^{\rho}u^{\mathcal{L}}(t,x,v) , 
\ee 
where, from the estimates (\ref{sepeqn88}) and (\ref{noveq781})  in Lemma \ref{decompositionofderivatives}, the coefficients satisfy the following point-wise estimates, 
\be\label{2020july30eqn1}
\begin{split}
 |c^{\tilde{L} }_\rho(x,v) |&\lesssim (1+|x|)^{|\rho |-|\tilde{L}|}(1+|v|)^{|\rho |-|\tilde{L}|- c(\rho )}, \\
 | \tilde{c}_{\mathcal{L}_1\mathcal{L}_2;\tilde{L}}^{\mathcal{L};\rho,\rho_1,\rho_2}(x,v)  | &\lesssim (1+|x|)^{|\rho_2|-|\tilde{L}|}(1+|v|)^{1+|\rho_2|-|\tilde{L}|- c(\rho_2)},\\
 | c_{\mathcal{L}_1\mathcal{L}_2\mathcal{L}_3;\tilde{L}_1, \tilde{L}_2}^{  {\mathcal{L}};\Gamma_1,\Gamma_2;\rho, \rho_1, \rho_2,\rho_3}(x,v)    | & \lesssim (1+|x|)^{|\rho_2|+|\rho_3|-|\tilde{L}_1|-|\tilde{L}_2|}(1+|v|)^{ |\rho_2|+|\rho_3|-|\tilde{L}_1|-|\tilde{L}_2|-c(\rho_2)-c(\rho_3)}. 
\end{split}
\ee
\subsection{Double Hodge decomposition of the metric tensor}

Firstly, we rewrite the Einstein equations in terms of the perturbation with respect to the Minkowski spacetime. Define
\be\label{march24eqn11}
h_{\alpha\beta}= g_{\alpha\beta} - \eta_{\alpha\beta},\qquad \Longrightarrow   g^{\alpha\beta}= \eta^{\alpha\beta} - \eta^{\alpha\gamma} h_{\gamma\lambda} \eta^{\lambda\beta} +\,\, \textup{quadratic and higher order terms}.  
\ee
Hence, from (\ref{riccicurvature}), we have
\be\label{april3eqn2}
\square h_{\alpha\beta}=H_{i\gamma} \p_i\p_{\gamma} h_{\alpha\beta} + \mathcal{N}_{\alpha\beta}^{wa} + \mathcal{N}_{\alpha\beta}^{vl},\quad \square:= \p_t^2 - \Delta, 
\ee 
where $\mathcal{N}_{\alpha\beta}^{wa}$ denotes the semilinear nonlinearity of wave-wave type interaction and  $\mathcal{N}_{\alpha\beta}^{vl}$ denotes the semilinear nonlinearity  of wave-Vlasov type interaction. More precisely,   the detailed formulas of $H_{i\gamma}$, $\mathcal{N}_{\alpha\beta}^{wa} $, and $\mathcal{N}_{\alpha\beta}^{vl} $ are given as follows, 
\be\label{nov23eqn2}
 H_{i\gamma} =\left\{\begin{array}{cc}
 2 (g^{00})^{-1}  \Lambda_{\geq 1}[g^{\gamma i}] & \textup{if}\,\, \gamma=0\\ 
 &\\ 
 \Lambda_{\geq 1}[(g^{00})^{-1}]\delta_i^{\gamma}+ (g^{00})^{-1}\Lambda_{\geq 1}[g^{ i\gamma}] &\textup{if} \,\,\gamma\in\{1,2,3\},\\
 \end{array}\right.
\ee
 \be\label{2020april3eqn1}
\mathcal{N}_{\alpha\beta}^{wa} =- \big(g^{00} \big)^{-1}(P_{\alpha\beta}+Q_{\alpha\beta}), \quad \mathcal{N}_{\alpha\beta}^{vl} =  \big(g^{00} \big)^{-1} ( T_{\alpha\beta}(f) - \h g_{\alpha\beta} T(f)).  
 \ee

  For any $ {L}\in\cup_{\alpha\in \Z^3,n \in \Z_{+}}\nabla_x^\alpha  P_n$, recall (\ref{april3eqn2}), we have 
\be\label{sep28eqn26}
\square L h_{\alpha\beta} =  \sum_{\tilde{L}\in \nabla_x^\alpha P_c^d, \tilde{L}\preceq L} c_{L\tilde{L}}\tilde{L}\big(   H_{i\gamma} \p_i\p_{\gamma} h_{\alpha\beta} + \mathcal{N}_{\alpha\beta}^{wa} + \mathcal{N}_{\alpha\beta}^{vl}\big).
\ee

To exploit the harmonic gauge condition,   following  the study of  the  Einstein-Klein-Gordon system in \cite{IP}, for any $L\in P_n, n\in \Z_+$,   we define  
\begin{multline}\label{doublehodge1}
F^{ L }:=\h\big( L h_{00} + R_i R_j  L  h_{ij}\big),\quad \underline{F}^{  L}:= \h\big(  L h_{00}- R_i R_j L   h_{ij}\big),\quad \rho^{  }:= R_i L  h_{0i}, \\
\omega_{j}^{  }:= \varepsilon_{jkl} R_k  {L } h_{0l}, \quad \Omega_j^{ {L}}:=  \varepsilon_{jkl} R_k R_m   {L} h_{lm}, \quad \vartheta_{jk}^{  }:= \varepsilon_{jmp}\varepsilon_{knq} R_m R_n L  h_{pq}.
\end{multline}
 We can recover the perturbation of metric $L h_{\alpha\beta}$ from the above defined double Hodge decomposition, 
 \begin{multline}\label{april1eqn1}
L  h_{00}^{ }= F^{L }+\underline{F}^{ L}, \quad L h_{0j} = -R_{j} \rho^{ L} + \varepsilon_{jkl} R_k \omega_l^{ L} \\ 
 L h_{jk}^{ }= R_jR_k\big(F^{L }-\underline{F}^{L } \big)- \big( \varepsilon_{klm} R_j + \varepsilon_{jlm}R_k \big) R_l \Omega_m^{L } + \varepsilon_{jpm}\varepsilon_{kqn} R_p R_q \vartheta_{mn}^{ L}. 
 \end{multline}

To represent the connection between the perturbed metric and the double Hodge decomposition variables schematically, we know that there exist  uniquely determined zero order operators $ R_{\alpha\beta}^{\tilde{h}},  R^{\alpha\beta}_{\tilde{h}}\in \{R_i, R_iR_j\}\times \C$ such that the following equalities holds, 
 \be\label{connection}
 L h_{\alpha\beta} =\sum_{\tilde{h}\in\{F, \underline{F}, \omega_j, \Omega_j, \rho, \vartheta_{mn} \}} R_{\alpha\beta}^{\tilde{h}} \tilde{h}^L, \quad \tilde{h}^L= R^{\alpha\beta}_{\tilde{h}} L_{\alpha\beta}, \quad \tilde{h}\in\{F, \underline{F}, \omega_j, \Omega_j, \rho, \vartheta_{mn} \}. 
 \ee

 Due to the harmonic coordinate gauge condition,  the variables $\rho$, $\Omega_j$ can be recovered from the other defined variables. Hence,  it would be sufficient to control $\{F, \underline{F}, \omega_{j}, \vartheta_{mn}\}$ over time  to control the perturbed metric. More precisely, we have

\begin{lemma}\label{linearrelationhodge}
Let $ R_0:=\p_t \d^{-1}$. For any  ${L}\in   P_n, n\in \Z_{+}$, we define 
\begin{multline}\label{sep28eqn79}
E^{comm}_{ {L}, \mu}:= \eta^{\alpha\beta} [\p_\alpha,  {L}] h_{\beta \mu} - \h \eta^{\alpha\beta} [\p_{\mu},  {L}] h_{\alpha\beta}=  \sum_{\tilde{L}\prec L} c_{L;\tilde{L}}^{\mu, \nu;\alpha, \beta} \p_{\nu} \tilde{L} h_{\alpha\beta}, \\
 E_{\mu}:=\eta^{\alpha\beta} \p_{\alpha} h_{\beta\mu} - \h \eta^{\alpha\beta}\p_{\mu} h_{\alpha\beta}=-\Lambda_{\geq 2}[\Gamma_\mu].
\end{multline}
Then the following equalities hold for the variables $\rho^{ {L}}$ and $\omega_j^{ {L}}$,
\begin{multline}\label{april1eqn11}
\rho^{ {L}} = R_0\big(\underline{F}^{ {L}} + \tau^{ {L}}\big) +\d^{-1}  \big( {L} E_{0} + E^{comm}_{ {L},0}\big),\quad \tau^{ {L}} = -\h \delta_{jk} \vartheta^{ {L}}_{jk}, \\ 
 \Omega_j^{ {L}}:= R_0 \omega_j^{ {L}} + \varepsilon_{jlk}\d^{-1} R_l \big(  {L} E_k +   E_{ {L},k}^{comm} \big),
\end{multline}
where  $\tau^{ {L}}$ satisfies the following equalities 
\be\label{aug25eqn1}
2\d^2 \tau^{ {L}} = \p_{\alpha}  {L} E_{\alpha} +\p_{\alpha} E^{ comm}_{ {L}, \alpha}-\h\big(\square  {L} h_{00} + R_i R_j \square  {L} h_{ij}\big) + \h \delta_{jk}  \varepsilon_{jmp}\varepsilon_{knq} R_m R_n \square  {L} h_{pq}, \ee
\be\label{aug25eqn2}
2 \d \p_t \tau^{ {L}} =- \d  {L} E_0 + R_k\p_t {L} E_k -\d E_{ {L},0}^{com} + \p_t R_k E_{ {L},k}^{comm} - R_i \square  {L} h_{0i}. 
\ee
\end{lemma}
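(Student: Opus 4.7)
\medskip
\noindent\textbf{Proof proposal.} The proof is a direct computation exploiting the linearized harmonic gauge condition. Setting $g_{\alpha\beta} = \eta_{\alpha\beta} + h_{\alpha\beta}$ and expanding $\Gamma_\mu = 0$ from \eqref{wavecoordinatecondition}, the linear part produces exactly $\eta^{\alpha\beta}\p_\alpha h_{\beta\mu}-\tfrac{1}{2}\eta^{\alpha\beta}\p_\mu h_{\alpha\beta}$, while the higher-order terms constitute $-\Lambda_{\geq 2}[\Gamma_\mu]$, giving the second identity in \eqref{sep28eqn79}. Applying $L$ and commuting past the partial derivatives produces a commutator $E^{comm}_{L,\mu}$ of the schematic form already written in \eqref{sep28eqn79}, so that
\[
\eta^{\alpha\beta}\p_\alpha Lh_{\beta\mu} - \tfrac{1}{2}\eta^{\alpha\beta}\p_\mu Lh_{\alpha\beta} = LE_\mu + E^{comm}_{L,\mu}.
\]
I will split this identity into the $\mu=0$ and $\mu=i$ components and evaluate both sides in the double Hodge variables via \eqref{april1eqn1}.

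\medskip
\noindent\emph{Formula for $\rho^L$.} Specializing $\mu=0$ yields $-\tfrac{1}{2}\p_t Lh_{00} + \p_j Lh_{0j} - \tfrac{1}{2}\p_t Lh_{jj} = LE_0 + E^{comm}_{L,0}$. The term $\p_j Lh_{0j}$ equals $|\nabla|\rho^L$ by \eqref{doublehodge1}. The key algebraic input is the trace identity $\delta_{jk}\vartheta_{jk}^L = -Lh_{pp} - R_iR_jLh_{ij}$, which follows from $\varepsilon_{jmp}\varepsilon_{jnq} = \delta_{mn}\delta_{pq}-\delta_{mq}\delta_{pn}$ and $R_mR_m=-1$; combined with $2F^L = Lh_{00} + R_iR_jLh_{ij}$ and $2\underline{F}^L = Lh_{00} - R_iR_jLh_{ij}$, this gives $Lh_{00} + Lh_{pp} = 2\underline{F}^L + 2\tau^L$ with $\tau^L = -\tfrac{1}{2}\delta_{jk}\vartheta_{jk}^L$. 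Substituting into the gauge identity and dividing by $|\nabla|$ yields the stated expression for $\rho^L$ in \eqref{april1eqn11}.

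\medskip
\noindent\emph{Formula for $\Omega_j^L$.} Apply $\varepsilon_{jlk}R_l$ to the $\mu=k$ gauge identity $-\p_t Lh_{0k} + \p_m Lh_{mk} + \tfrac{1}{2}\p_k Lh_{00} - \tfrac{1}{2}\p_k Lh_{mm} = LE_k + E^{comm}_{L,k}$. The two gradient-type terms vanish because $\varepsilon_{jlk}R_lR_k = 0$ by antisymmetry. The remaining left side reorganizes as $-\p_t\omega_j^L + |\nabla|\Omega_j^L$, thanks to the symmetry $Lh_{mk}=Lh_{km}$ and the definitions in \eqref{doublehodge1}. Dividing by $|\nabla|$ and using $R_0 = \p_t|\nabla|^{-1}$ gives the claimed formula.

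\medskip
\noindent\emph{Formulas \eqref{aug25eqn1}--\eqref{aug25eqn2} for $\tau^L$.} From the trace identity above I have $2\tau^L = Lh_{pp} + R_iR_jLh_{ij}$, so $2|\nabla|^2\tau^L = -\Delta Lh_{pp} + \p_i\p_j Lh_{ij}$ (using $|\nabla|^2 R_iR_j = \p_i\p_j$). The plan is to produce this combination by taking $\p_t$ of the $\mu=0$ gauge and $\p_i$ (summed) of the $\mu=i$ gauge and adding, converting $\p_t^2$ into $\square + \Delta$ so that the $\Delta$ pieces combine into $-\Delta Lh_{pp}$. The remaining metric terms are exactly $\tfrac{1}{2}\square Lh_{00}$ and $\tfrac{1}{2}\square Lh_{jj}$, which I rewrite in the Hodge form $-\tfrac{1}{2}(\square Lh_{00} + R_iR_j\square Lh_{ij}) + \tfrac{1}{2}\delta_{jk}\varepsilon_{jmp}\varepsilon_{knq}R_mR_n\square Lh_{pq}$ using the same trace identity applied to $\square Lh$. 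The second equation \eqref{aug25eqn2} is obtained analogously, by applying $\p_t$ to $2|\nabla|\tau^L = |\nabla|Lh_{pp} + R_iR_jLh_{ij}|\nabla|$ -- more concretely, using $\p_t$ of the $\mu=0$ gauge and $\p_k$ of the $\mu=k$ gauge in the combination that produces $R_k\p_t$ and $-|\nabla|$ coefficients, after which the metric terms reorganize as $-R_i\square Lh_{0i}$.

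\medskip
\noindent\emph{Main obstacle.} The substantive content is entirely algebraic; no PDE estimates are needed. The only point requiring care is bookkeeping of signs and indices when combining the two gauge identities and when invoking $\square = \p_t^2 - \Delta$, since several $\Delta$-type terms must cancel to isolate $\square Lh$ factors. The commutator term $E^{comm}_{L,\mu}$ is defined precisely so that all structural terms produced by $[\p_\alpha, L]$ acting on $h_{\beta\mu}$ are absorbed, and the schematic form $\sum c_{L;\tilde L}^{\mu,\nu;\alpha,\beta}\p_\nu \tilde L h_{\alpha\beta}$ with $\tilde L\prec L$ follows automatically from the fact that $L\in P_n$ has coefficients at most linear in $(t,x)$, so that $[\p_\alpha, L]$ produces lower-order vector fields acting on $\p h$.
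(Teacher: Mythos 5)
Your approach is the right one: the paper itself only cites \cite{IP}[Lemma 6.7], and what IP does is exactly the direct computation you sketch --- commute the linearized harmonic gauge identity with $L$, then evaluate in the double Hodge variables. Your derivations of the $\rho^L$ and $\Omega^L_j$ formulas are correct. In particular the key trace identity $\delta_{jk}\vartheta^L_{jk} = -Lh_{pp} - R_iR_j Lh_{ij}$ (which uses $R_mR_m = -1$, the convention forced on the paper by the consistency of $Lh_{0j} = -R_j\rho^L + \varepsilon_{jkl}R_k\omega_l^L$) and the cancellation $\varepsilon_{jlk}R_l R_k = 0$ deliver \eqref{april1eqn11} exactly as you describe, and the intermediate identity
\begin{equation*}
2\d^2\tau^L \;=\; \p_t\tilde E_0 + \p_k\tilde E_k + \tfrac12\square Lh_{00} + \tfrac12\square Lh_{jj},
\qquad \tilde E_\mu := LE_\mu + E^{comm}_{L,\mu},
\end{equation*}
which you obtain on the way to \eqref{aug25eqn1}, is also correct.

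The gap is in your final ``Hodge rewrite'' for \eqref{aug25eqn1}. You assert that $\tfrac12\square Lh_{00} + \tfrac12\square Lh_{jj}$ equals the stated combination $-\tfrac12(\square Lh_{00} + R_iR_j\square Lh_{ij}) + \tfrac12\delta_{jk}\varepsilon_{jmp}\varepsilon_{knq}R_mR_n\square Lh_{pq}$. But with $R_mR_m = -1$ the second term expands (by the very trace identity you quote) to $-\tfrac12\square Lh_{pp} - \tfrac12 R_mR_n\square Lh_{mn}$, so the whole expression equals $-\tfrac12\square Lh_{00} - \tfrac12\square Lh_{jj} - R_iR_j\square Lh_{ij} = -\square F^L - \square\tau^L$. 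This is \emph{not} your intermediate $\tfrac12\square Lh_{00} + \tfrac12\square Lh_{jj} = \square\underline F^L + \square\tau^L$; the two differ by $\square(F^L + \underline F^L + 2\tau^L) = \square(Lh_{00} + Lh_{pp} + R_iR_jLh_{ij})$, which is not identically zero. The same kind of sign mismatch appears in your sketch of \eqref{aug25eqn2}: carrying out $-\d\tilde E_0 + R_k\p_t\tilde E_k$ cleanly produces $2\d\p_t\tau^L - R_j\square Lh_{0j}$, so one is led to $+R_j\square Lh_{0j}$ on the right of \eqref{aug25eqn2}, not $-R_i\square Lh_{0i}$. So the ``sign bookkeeping'' you flagged as the main obstacle is precisely where the proposal currently fails: your correctly derived intermediate form does not match the lemma's displayed right-hand side under the paper's own $R_mR_m = -1$ convention, and the rewrite you claim does not hold. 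You should re-derive the displayed forms of \eqref{aug25eqn1}--\eqref{aug25eqn2} directly against the source \cite{IP}[Lemma 6.7] (and its Riesz-transform convention) rather than back-fitting to the expression printed here, since it is possible the printed formula contains transcription errors; but as written your step does not go through.
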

\begin{proof}
See \cite{IP}[Lemma 6.7]. 
\end{proof}
\begin{remark}
 Recall (\ref{aug25eqn1}) and  (\ref{aug25eqn2}).  Unlike   the Einstein-Klein-Gordon system  studied in \cite{IP}, a key difference in the  Einstein-Vlasov system  is that $\tau^{\mathcal{L}}$ and $\p_t \tau^{\mathcal{L}}$ in the Einstein-Vlasov system contain  linear density type term which depends on the Vlasov part.  More precisely, 
 \[
  \Lambda_{1}[\tau^{\mathcal{L}}] =  \frac{1}{4}\d^{-2}\Lambda_{1}[\big(- \big(\square  \mathcal{L} h_{00}  + R_i R_j \square \mathcal{L} h_{ij} \big) +   \delta_{jk}  \varepsilon_{jmp}\varepsilon_{knq} R_m R_n \square \mathcal{L} h_{pq} \big) ]
 \]
 \be\label{2022march19eqn1}
  =    \sum_{   \mathcal{L} \preceq  L   }\d^{-2} M_{\alpha\beta}\big(  \int_{\R^3} {b}^{L\mathcal{L} }_{\alpha\beta}(v)    \mathcal{L} f   (t,x, v) d v\big),
 \ee
\be\label{sep26eqn21}
 \Lambda_{1}[\p_t \tau^{\mathcal{L}}] =  -\h\d^{-1} R_i  \Lambda_{1}[\square \mathcal{L} h_{0i}]  =  \sum_{     \mathcal{L} \preceq  L     }\d^{-1} R_i\big(  \int_{\R^3} {b}^{L\mathcal{L} }_{0i }(v) \mathcal{L} f   (t,x, v) d v\big),
\ee
where $M_{\alpha\beta}\in \{c_1 R_i, c_2R_iR_j, i,j\in\{1,2,3,\}, c_1,c_2\in \C\} .$
\end{remark}
Recall the double Hodge decomposition in (\ref{doublehodge1}). From the equation (\ref{sep28eqn26}), we classify schematically the nonlinearities of $\square \tilde{h}^{L}$ as follows, 
\[
\square \tilde{h}^{L} = R_{\alpha\beta}^{\tilde{h}^L}(\square Lh_{\alpha\beta})= \sum_{\tilde{L}\in \nabla_x^\alpha P_c^d, \tilde{L}\preceq L} c_{L\tilde{L}} R_{\alpha\beta}^{\tilde{h}^L}\big(\tilde{L}\big(   H_{i\gamma} \p_i\p_{\gamma} h_{\alpha\beta} + \mathcal{N}_{\alpha\beta}^{wa} + \mathcal{N}_{\alpha\beta}^{vl}\big)\big) 
\]
\be\label{aug11eqn21}
=  H_{i\gamma}\p_{i}\p_{\gamma}\tilde{h}^{L} +   N_{vl}^{  {\tilde{h} ;L }}   + R_{mt }^{\tilde{h};L }  + Q_{mt }^{ \tilde{h};L} ,\quad \tilde{h}\in \{F, \underline{F}, \omega_j, \vartheta_{mn}\},
\ee
where the uniquely determined linear operator $ R_{\alpha\beta}^{\tilde{h}^L}\in \{R_i, R_iR_j, i,j\in\{1,2,3\}\}\times\mathbb{C}$,   $ N_{vl}^{  {\tilde{h} ;L }}$ denotes the nonlinearity that depends on the Vlasov part, $Q_{mt }^{ \tilde{h} ;L }$ denotes the quadratic terms of the  wave-wave type, and $R_{mt }^{ \tilde{h};L  } $ denotes the cubic and higher order nonlinearity which only depends on the metric component.

 \subsection{Modified profile for the perturbed metric}

Since the non-linearity $N_{vl}^{  {\tilde{h} ;L }}$  in (\ref{aug11eqn21}) contains  linear level terms which depend  only on the Vlasov part, to capture the linear effect, in this subsection, we define corresponding modified profile of the perturbed metric. 

Roughly speaking, the \emph{modified} profile of the perturbed metric has similar property of the original (un-modified) profile of the perturbed metric in the Einstein-Klein-Gordon system \cite{IP} as it takes out the leading effect of the Vlasov part on the perturbed metric.

Firstly, for  $L\in P_n$, $   \tilde{h}\in \{F, \underline{F},\rho, \Omega_j, \omega_j, \vartheta_{mn}\}$, we define  the profiles of $\tilde{h}^L$ and $Lh_{\alpha\beta}$ as follows,
\be\label{aug11eqn151}
V^{ \tilde{h}^L }=e^{  it \d} U^{   \tilde{h}^L }, \quad   U^{   \tilde{h}^L }:= (\p_t - i \d) \tilde{h}^{ L}, \quad \Longrightarrow \p_t V^{ \tilde{h}^L } =  e^{ it \d}\big(\square \tilde{h}^{ L} \big),
\ee
\be\label{2020april15eqn35}
V^{ L h_{\alpha\beta} }=e^{  it \d} U^{   L h_{\alpha\beta} }, \quad   U^{    L h_{\alpha\beta} }:= (\p_t - i \d)  L h_{\alpha\beta}, \quad \Longrightarrow \p_t V^{ L h_{\alpha\beta} } =  e^{ it \d}\big(\square  L h_{\alpha\beta} \big),
\ee 
\be\label{aug12eqn1}
    \tilde{h}^L= \sum_{\mu\in\{+,-\}}   c_{\mu}\d^{-1}\big(    U^{L \tilde{h}}(t)\big)^{\mu}, \quad \p_t  \tilde{h}^L= \sum_{\mu\in\{+,-\}}     \big(    U^{L \tilde{h}}(t)\big)^{\mu}, \quad c_{\mu}:=  i \mu/2.
\ee

Next,  we identify the linear  density type functions in $ N_{vl}^{  {\tilde{h};L}}$. Recall (\ref{aug11eqn21}).   
   As a typical example, we compute the linear term of $N_{vl}^{F;L}$  as follows, 
  \be\label{aug12eqn1100d}
  \begin{split}
  \Lambda_{1}[N_{vl}^{F;L} ]=&  \sum_{\begin{subarray}{c}
  \hat{L}\in \cup_{  n\in \Z_+}  P_n,   \hat{L}\preceq L 
  \end{subarray}} c_{L\hat{L}}\big[\hat{L}\big( \Lambda_{1}[ -  T_{00}(f) + \h g_{00} T(f)\big)] \\ & + R_iR_j\big(\hat{L}\big( \Lambda_{1}[ -  T_{ij}(f) + \h g_{ij} T(f)\big)] \big)\big]. 
  \end{split}
  \ee 
 Note  that, for any $L\in P_{n}^{  }$ and the corresponding vector field $\mathcal{L}\in \mathcal{P}_n^{ }$, the following equality of general form holds for some determined coefficients, 
\be\label{jan29eqn51}
L-\mathcal{L}= \sum_{\tilde{\mathcal{L}} \prec \mathcal{L}, |\alpha|+|\tilde{\mathcal{L}}|\leq |\mathcal{L}| } a_{\alpha\tilde{\mathcal{L}}}^{L}(v) \nabla_v^{\alpha}  \tilde{\mathcal{L}},\quad | a_{\alpha\tilde{\mathcal{L}}}^{L}(v) |\lesssim \langle |v|\rangle^{|\alpha|}.
\ee
 After using the above  equality  (\ref{jan29eqn51}) and doing integration by parts in ``$v$'',  the following equality holds for any fixed $L\in P_{n}$,
\begin{multline}\label{april8eqn71}
L\big(  - \Lambda_{1}[ T_{\alpha\beta}(f)]  \big) 
= L\big(-\int_{\R^3} f(t,x, v ) \frac{\Lambda_{0}[v_\alpha] \Lambda_{0}[v_\beta]}{\sqrt{1+|v|^2}} d v  d v  \big)\\ 
=  \sum_{\tilde{\mathcal{L}} \preceq \mathcal{L}, |\mu|+|\tilde{\mathcal{L}}|\leq |\mathcal{L}| }    -\int_{\R^3}  a_{\mu \tilde{\mathcal{L}} }^{L}(v)  \nabla_v^{\mu} \tilde{\mathcal{L}} f(x^\alpha, v ) \frac{\Lambda_{0}[v_\alpha] \Lambda_{0}[v_\beta]}{\sqrt{1+|v|^2}} d v =   \sum_{\tilde{\mathcal{L}} \preceq \mathcal{L}} \int_{\R^3}  \tilde{a}^{L \tilde{\mathcal{L}}}_{\alpha\beta}(v) \tilde{\mathcal{L}} f   (t,x, v) d v    ,
\end{multline}
where  ``$  \tilde{a}^{L \tilde{\mathcal{L}}}_{\alpha\beta}(v) $''   are some determined coefficients that satisfy the following estimate, 
\be\label{july11eqn78}
\sum_{|\alpha|\in \mathbb{Z}_{+}}    \|\langle |v|\rangle^{|\alpha|-10}\nabla_v^\alpha  \tilde{a}^{L \tilde{\mathcal{L}}}_{\alpha\beta}(v) \|_{L^\infty_v}\lesssim 1. 
\ee
The reduction we did in (\ref{april8eqn71}) is standard and it will be used directly later when we apply the macroscopic vector fields to the density type variables associated with the Vlasov part.  With minor modifications, we know that the following equality holds for some uniquely  determined coefficients $   {b}^{L\mathcal{L} }_{\alpha\beta}(v)$, 
\be\label{sep20eqn61}
\Lambda_{1}[L \mathcal{N}_{vl}^{ h_{\alpha\beta}} ]=  \sum_{ \mathcal{L} \preceq L  }  \int_{\R^3} {b}^{L\mathcal{L} }_{\alpha\beta}(v) \mathcal{L} f   (t,x, v) d v,\quad  \sum_{|\alpha|\in \mathbb{Z}_{+}} \| \langle |v|\rangle^{|\alpha|-10}\nabla_v^\alpha    {b}^{L\mathcal{L} }_{\alpha\beta}(v)\|_{L^\infty_v}  \lesssim 1. 
\ee
 
    Similar to the process we did for the linear term in (\ref{april8eqn71}), recall (\ref{2020april3eqn1}) and (\ref{march19eqn6}), we can formulate the quadratic term of $L\big( \mathcal{N}_{\alpha\beta}^{vl}\big)$ as follows for  some uniquely determined coefficients   $ {c}^{L {L}_1\mathcal{L}_2 }_{\alpha\beta;\gamma\kappa}(v)$,
    \be\label{2020april14eqn51}
\Lambda_{2}[ L\big( \mathcal{N}_{\alpha\beta}^{vl}\big) ]=  \sum_{  | {L}_1| + |\mathcal{L}_2|\leq |L| }  L_1  {h}_{\gamma\kappa} \big( \int_{\R^3} {c}^{L {L}_1\mathcal{L}_2 }_{\alpha\beta;\gamma\kappa}(v)  \mathcal{L}_2 f   (t,x, v) d v \big),
\ee
where the following estimate holds for the coefficients ${c}^{L {L}_1\mathcal{L}_2 }_{\alpha\beta;\gamma\kappa}(v)  $, 
\be
  \sum_{|\alpha|\in \mathbb{Z}_{+}}    \|  \langle |v|\rangle^{|\alpha|-10}\nabla_v^\alpha   {c}^{L {L}_1\mathcal{L}_2 }_{\alpha\beta;\gamma\kappa}(v)\|_{L^\infty_v} \lesssim 1.
\ee

Lastly, we are ready to define the     modified profiles of the perturbed metric. To exploit  the fact that the speed of particle is strictly less than the speed of light and balance out  the linear effect caused by the Vlasov part in the equations satisfied by the perturbed metric, $ \forall \tilde{h}\in \{F, \underline{F}, \rho, \Omega_j, \omega_j, \vartheta_{mn}\}$ we define   
\be\label{2020april1eqn4}
\begin{split}
 \widehat{ \widetilde{   V^{  \tilde{h}^L } }}(t, \xi)&:= \widehat{    V^{  \tilde{h}^L }   }(t, \xi) -   \sum_{    \mathcal{L} \preceq  L    }   \int_{\R^3} e^{ i t |\xi|- i t\hat{v}\cdot \xi} {b}^{L\mathcal{L}  }_{ \tilde{h}}(v) \big(i (|\xi|-\hat{v}\cdot\xi )\big)^{-1}   {k_{ \tilde{h}}(\xi) \widehat{u^{\mathcal{L}}}(t, \xi, v)}  d v,\\
\widehat{ \widetilde{   V^{ L h_{\alpha\beta} }  } }(t, \xi)&:= \widehat{    V^{   L h_{\alpha\beta} }   }(t, \xi) -   \sum_{   \mathcal{L} \preceq  L    }   \int_{\R^3} e^{ i t |\xi|- i t\hat{v}\cdot \xi} {b}^{L\mathcal{L} }_{\alpha\beta}(v) \big(i (|\xi|-\hat{v}\cdot\xi )\big)^{-1}{  \widehat{u^{\mathcal{L}}}(t, \xi, v)}  d v.
\end{split}
\ee

  Recall the equation satisfied by $\tilde{h}^L$ in (\ref{aug11eqn21}). From the equalities (\ref{aug11eqn151}) and  (\ref{2020april1eqn4}),  we highlight  the quasilinear  terms, which are responsible for losing derivatives,  as follows, 
\be\label{2020june30eqn1}
\begin{split}
 e^{-it\d} \p_t  \widetilde{   V^{  \tilde{h}^L} }& =   H_{i\gamma}\p_{i} {\p_{\gamma}\tilde{h}^{L}}  
 +   \Lambda_{\geq 2}[N_{vl}^{  {\tilde{h} ;L }}]   + R_{mt }^{\tilde{h};L }  + Q_{mt }^{ \tilde{h};L} + \mathcal{N}_{vl;m}^{  {\tilde{h} ;L }}, \\
 \mathcal{N}_{vl;m}^{  {\tilde{h} ;L }}& = \sum_{\mathcal{L}\in \mathcal{P}_{a}^{b },|\mathcal{L}|\leq |L|   }  - \mathcal{F}_{\xi}^{-1}\big[ \int_{\R^3} e^{ - i t\hat{v}\cdot \xi} {b}^{L\mathcal{L} }_{\alpha\beta}(v) \frac{ k^{\tilde{h}}_{\alpha\beta}(\xi)  \p_t \widehat{u^{\mathcal{L}}}(t, \xi, v)}{i2\big(|\xi|-\hat{v}\cdot\xi\big)} d  v \big]  .
\end{split}
\ee

Correspondingly, we define a modified half wave variables and the modified perturbed metric  as follows,  
\be\label{2022march19eqn11}
\widetilde{   U^{L h_{\alpha\beta} } }(t, x):= \mathcal{F}_{\xi}^{-1}[ e^{-it |\xi|} \widehat{ \widetilde{   V^{L h_{\alpha\beta}} }}(t, \xi) ](x),
\ee
\be\label{2022march19eqn12}
\forall \tilde{h}\in \{F, \underline{F}, \rho, \Omega_j,\omega_j, \vartheta_{ij}\}, \quad\widetilde{   U^{ \tilde{h}^L } }(t, x):= \mathcal{F}_{\xi}^{-1}[ e^{-it |\xi|} \widehat{ \widetilde{   V^{ \tilde{h}^L  } }}(t, \xi) ](x), 
\ee
\be\label{2022march19eqn13}
 \widetilde{ Lh_{\alpha\beta}}(t):= \sum_{\mu\in\{+,-\}} c_{\mu}\d^{-1} \big(   \widetilde{   U^{L h_{\alpha\beta} } }(t)  \big)^{\mu},\quad \widetilde{\tilde{h}^{L}}(t):= \sum_{\mu\in\{+,-\}} c_{\mu}\d^{-1}  \big( \widetilde{   U^{ \tilde{h}^L } }(t)  \big)^{\mu},
\ee 
\be\label{2022march19eqn14}
  \widetilde{ \p_t Lh_{\alpha\beta}}(t):= \sum_{\mu\in\{+,-\}} \frac{ \big(   \widetilde{   U^{L h_{\alpha\beta} } }(t)  \big)^{\mu} }{2}   \qquad
 \widetilde{\p_t \tilde{h}^{L}}(t):= \sum_{\mu\in\{+,-\}}  \frac{\big( \widetilde{   U^{ \tilde{h}^L } }(t)  \big)^{\mu}}{2} ,
\ee
\be\label{2022march19eqn16}
\tilde{\rho}^{h_{\alpha\beta}}_{L}(f)(t):= U^{Lh_{\alpha\beta}}(t)- \widetilde{U^{Lh_{\alpha\beta}}}(t),\quad \tilde{\rho}^{\tilde{h}}_{L}(f)(t):= U^{ \tilde{h}^L}(t)- \widetilde{U^{ \tilde{h}^L}}(t),
\ee 
\be\label{modifiedperturbedmetric}
  \widetilde{\rho}^{h_{\alpha\beta}}_{L}(f)(t):=     \sum_{  \mathcal{L} \preceq  L     }  \mathcal{F}^{-1}\big[ \int_{\R^3} e^{  - i t\hat{v}\cdot \xi} {b}^{L\mathcal{L} }_{\alpha\beta}(v)    
 \big(i|\xi|-i\hat{v}\cdot\xi\big)^{-1} \big(  \widehat{u^{\mathcal{L}}}(t, \xi, v) \big)  d v\big](t,x),  
\ee
\be\label{2022march19eqn3}
 \widetilde{\rho}^{ \tilde{h}}_{L}(f)(t):=     \sum_{  \mathcal{L} \preceq  L     }  \mathcal{F}^{-1}\big[ \int_{\R^3} e^{  - i t\hat{v}\cdot \xi}  {b}^{L\mathcal{L}  }_{ \tilde{h}}(v)    k_{ \tilde{h}}(\xi)\big(i|\xi|-i\hat{v}\cdot\xi\big)^{-1} \big(  \widehat{u^{\mathcal{L}}}(t, \xi, v) \big)  d v\big](t,x).
\ee

We remark that,  from the above definition, the modified perturbed metric components are all real valued functions. In particular,  although $\widetilde{\p_t \tilde{h}^{L}}(t)$ and $\p_t \widetilde{  \tilde{h}^{L}}(t)$ are different,  they are same at the linear level.  From (\ref{modifiedperturbedmetric}), as a result of direct computation, we have
\be\label{2022may17eqn33}
\p_t \widetilde{  \tilde{h}^{L}}(t)-\widetilde{\p_t \tilde{h}^{L}}(t)= \sum_{\mu\in\{+,-\}} c_{\mu}\d^{-1}  \big( (\p_t +  i \d) \widetilde{   U^{ \tilde{h}^L } }(t)  \big)^{\mu} = \textup{Im}\big[ \d^{-1} e^{-i t\d}  \p_t \widetilde{   V^{ \tilde{h}^L } }(t) \big],
\ee
\be\label{2020june30eqn32}
\begin{split}
\p_t \widetilde{\p_t \tilde{h}^{L}}(t) &=  \sum_{\mu\in\{+,-\}} \frac{1}{2} \big( - i \d \widetilde{   U^{ \tilde{h}^L } }(t)  \big)^{\mu}   + \frac{1}{2}  \big( e^{-i t\d}  \p_t \widetilde{   V^{ \tilde{h}^L } }(t)  \big)^{\mu} \\
& =   \Delta \widetilde{   \tilde{h}^{L}}(t)+ \sum_{\mu\in\{+,-\}} \frac{1}{2}   \big( e^{-i t\d}  \p_t \widetilde{   V^{ \tilde{h}^L } }(t)  \big)^{\mu} . 
\end{split}
 \ee

 From the equality in  (\ref{connection}) and results in   Lemma \ref{linearrelationhodge},  we can reformulate the   profile $   {   U^{L h_{\alpha\beta} } }(t)$  in terms of the modified profile of the essential parts $\widetilde{   U^{ \tilde{h}^L } }(t, x)$, $\tilde{h}\in \{F, \underline{F}, \omega_j, \vartheta_{mn}\}$ as follows,
\be\label{2020june1eqn1}
   {   U^{L h_{\alpha\beta} } }(t)=\sum_{\tilde{h}\in\{F, \underline{F}, \omega_j,  \vartheta_{mn} \}} \tilde{R}_{\alpha\beta}^{\tilde{h}}      \widetilde{   U^{ \tilde{h}^L } }(t) + \sum_{\tilde{L}\prec L } R_{\alpha\beta}^{\gamma\kappa} \widetilde{U^{\tilde{L}h_{\gamma\kappa} }}  + Err_{\alpha\beta}^{L;wa}(t) + Err_{\alpha\beta}^{L;vl}(t),
\ee
where the low order error type terms comes from the commutators $E_{ {L},\alpha}^{comm}$ in Lemma \ref{linearrelationhodge}. More precisely, 
 \be\label{2020july6eqn61}
 \tilde{R}_{\alpha\beta}^{\tilde{h}}    = \left\{\begin{array}{ll} 
  {R}_{\alpha\beta}^{\tilde{h}}    & \textup{if\,\,} \tilde{h}\in \{F, \vartheta_{mn}\}\\ 
 {R}_{\alpha\beta}^{\underline{F}} R_{\gamma} + i R_{\alpha\beta}^{\rho}   & \textup{if\,\,} \tilde{h}=\underline{F} \\
   {R}_{\alpha\beta}^{\omega_j}R_{\gamma} + i  R_{\alpha\beta}^{\Omega_j}   &  \textup{if\,\,} \tilde{h}=\omega_j,   \\ 
  \end{array}\right.  
  \ee
  \[
     Err_{\alpha\beta}^{L;wa}(t)=  R_{\alpha\beta}^{\rho}\big(  \d^{-1}\Lambda_{\geq 2}[\square \underline{F}^{L}]  + \d^{-1}(\p_t+ i \d) ( {L} E_{0}) + \d^{-1}\big(\Lambda_{\geq 2}[\square \tau^L]+i\Lambda_{\geq 2}[(\p_t + i\d)\tau^L ] \big) \big)  
  \]
  \be\label{2022may18eqn1}
  + R_{\alpha\beta}^{\Omega_j}\big(  \d^{-1}\Lambda_{\geq 2}[\square \omega_j^{L}] + \d^{-1} (\p_t+ i \d) \varepsilon_{jlk} R_l (  {L} E_k) \big), 
  \ee
  \[
 Err_{\alpha\beta}^{L;vl}(t)=      \sum_{\tilde{h}\in\{F, \underline{F},\rho, \Omega_j, \omega_j , \vartheta_{mn} \}} \sum_{\tilde{L}\preceq L} R_{\alpha\beta}^{\tilde{h}} \tilde{\rho}^{\tilde{h}}_{\tilde{L}}(f)(t)+R_{\alpha\beta}^{\rho}\big(\d^{-1}(\Lambda_{1}[ \square( \underline{F}^{L}+  \underline{\tau}^{L})] 
  \]
  \be
    + i\Lambda_{1}[(\p_t + i\d)\tau^L ] \big) 
  +\d^{-1}R_{\alpha\beta}^{\Omega_j} \Lambda_{1}[\square \omega_j^{L} ]  .
\ee
\subsection{Classification of quadratic terms of the perturbed metric}
 For   convenience in later argument, in  this subsection, we elaborate and  classify the quadratic terms of the equations satisfied by perturbed metric component in (\ref{aug11eqn21}). Depending on the roughness  allowed or the delicacy required to close the bootstrap argument, in different scenarios, we use different strategies to decompose the quadratic terms.  
Recall (\ref{april3eqn2}), (\ref{2020april3eqn1}), the detailed formulas of $  P_{\alpha\beta} $ and $ Q_{\alpha\beta} $ in (\ref{nullstructure}) and  (\ref{weaknull}). For any $ {L}\in\cup_{\alpha\in \Z^3, n\in \Z_+}\nabla_x^\alpha  P_n,$ we have
\begin{multline}\label{2020april8eqn1}
 {L}(\Lambda_{2}[ \mathcal{N}_{\alpha\beta}^{wa}])+ L(\Lambda_{2}[  H_{i\gamma} \p_i\p_{\gamma} h_{\alpha\beta}]  ):=- L\big( \Lambda_{2}[ P_{\alpha\beta}] +\Lambda_{2}[ Q_{\alpha\beta}] \big) + L \Lambda_{2}[S_{\alpha\beta}], 
 \\
L(\Lambda_2[K_{\alpha\beta}])=  \sum_{  L_1\circ L_2\preceq L,  k\in \mathbb{Z}, (k_1,k_2)\in \chi_k^1\cup \chi_k^2\cup \chi_k^3} c_{L_1L_2}^L QK_{\alpha\beta;k,k_1,k_2}^{L;L_1,L_2} ,\quad K\in\{S, P, Q\}. 
\end{multline}

In particular,  we   single out   the term that is responsible for losing derivatives, which is the case all vector fields hit on the quasilinear part, i.e.,  $L_2=L$,  
\be
  c_{IdL}^{L} QS_{\alpha\beta;k,k_1,k_2}^{L;Id,L}=  \Lambda_{1}[H_{i\gamma}] \p_i\p_{\gamma} Lh_{\alpha\beta}.
\ee
    The detailed formulas of $QK_{\alpha\beta;k,k_1,k_2}^{L;L_1,L_2}, K\in\{S, Q, P\}$, are given as follow, 
\be\label{nov5eqn1}
QS_{\alpha\beta;k,k_1,k_2}^{L;L_1,L_2} = \left\{\begin{array}{cc}
P_k\big[P_{k_1}( L_1  h_{00}  ) P_{k_2}( \Delta L_2 h_{\alpha\beta}) -2 P_{k_1}( L_1  h_{0i}  ) P_{k_2}( \p_t \p_i L_2 h_{\alpha\beta}) &\\
 +  P_{k_1}( L_1  h_{ij}  ) P_{k_2}( \p_i \p_j L_2 h_{\alpha\beta}) \big] & \textup{if}\,\, |L_1|+|L_2|=|L| \\ &\\
c^{\mu\nu}_{ i \gamma} P_k\big(P_{k_1}(  L_1  h_{\mu\nu} ) P_{k_2}(\p_{i} \p_{\gamma}  L_2 h_{\alpha\beta})\big) & \textup{if}\,\,  |L_1|+|L_2| \leq |L|, \\ 
\end{array}\right. 
\ee 
\be\label{nov5eqn4}
QQ_{\alpha\beta;k,k_1,k_2}^{L;L_1,L_2}  = \left\{\begin{array}{ll}
 c^{\alpha\beta;\mu\nu}_{\gamma,\kappa;\iota, \tau,1} P_k\big( Q_{\mu,\nu}( P_{k_1}(L_1 h_{\gamma\kappa}), P_{k_2}(L_2 h_{\iota\tau})) \big) &  \textup{if}\,\,  |L_1|+|L_2|=|L| \\   
 &\\ 
  c^{\alpha\beta;\mu\nu}_{\gamma,\kappa;\iota, \tau,2}P_k\big( \p_{\mu} P_{k_1} (L_1 h_{\gamma\kappa}) \p_{\nu} P_{k_2} ( L_2 h_{\iota\tau}) \big) &  \textup{if}\,\,  |L_1|+|L_2| \leq |L|, \\ 
 \end{array}\right. 
\ee
\be\label{nov5eqn5}
QP_{\alpha\beta;k,k_1,k_2}^{L;L_1,L_2}  = \left\{\begin{array}{cc}
   \frac{1}{4}\eta^{\gamma\mu}\eta^{\lambda\nu}P_k\big[2 \big( -\p_{\alpha}P_{k_1}(L_1h_{\mu\lambda}) P_{k_2} (\p_{\beta} L_2 h_{\gamma\nu}) \big) &\\ 
   +  \p_{\alpha}P_{k_1}(L_1 h_{\mu\gamma}) \p_{\beta}P_{k_2}(L_2 h_{\lambda\nu})\big] &  \textup{if}\,\,  |L_1|+|L_2|=|L| \\   
 &\\ 
  c^{\alpha\beta;\mu\nu}_{\gamma,\kappa;\iota, \tau,3}P_k\big( \p_{\mu} P_{k_1} (L_1 h_{\gamma\kappa}) \p_{\nu} P_{k_2} ( L_2 h_{\iota\tau}) \big) &  \textup{if}\,\,  |L_1|+|L_2| \leq |L|, \\ 
 \end{array}\right. 
\ee
and  the null bilinear operators $Q_{\alpha, \beta}(\cdot, \cdot), \alpha, \beta\in\{0,1,2,3\}, $ are defined as follows, 
\be\label{nov5eqn6}
Q_{0,0}(f,g):=\p_t f \p_t g - \nabla_x f \cdot \nabla_x g, \quad Q_{\alpha,i}(f,g) = \p_{\alpha} f \p_i g -\p_{i} f \p_{\alpha} g. 
\ee

Recall the relation in (\ref{connection}) and the equation satisfied by $\tilde{h}^L$ in  (\ref{aug11eqn21}). We have
\be\label{2020june1eqn11}
Q_{mt }^{ \tilde{h};L}=\sum_{k\in \Z, (k_1,k_2)\in \chi_k^1\cup\chi_k^2 \cup \chi_k^3} \sum_{\begin{subarray}{c}
L_1\circ L_2\preceq L,
\tilde{L}_1\circ \tilde{L}_2 \preceq L , L_2\prec L, H\in\{  P, Q\}\\ 
\end{subarray}} R^{\alpha\beta}_{\tilde{h} }  QS_{\alpha\beta;k,k_1,k_2}^{L;L_1,L_2} +     R^{\alpha\beta}_{\tilde{h} }   QH_{\alpha\beta;k,k_1,k_2}^{L;\tilde{L}_1,\tilde{L}_2}.
\ee

In our bootstrap argument, we allow the Vlasov part to grow faster than   the modified metric part. Due to different growth rates,  to better measure the growth rate of quadratic terms, we split the perturbed metric into the modified perturbed metric and the density type function. As a result, generally speaking, we  have the following structural decomposition, 
\be\label{nov7eqn21}
QK_{\alpha\beta;k,k_1,k_2}^{L;L_1,L_2}=  QK_{\alpha\beta;k,k_1,k_2}^{ess;L,L_1,L_2} +   QK_{\alpha\beta;k,k_1,k_2}^{vl;L,L_1,L_2}     +  QK_{\alpha\beta;k,k_1,k_2}^{err;L,L_1,L_2},  
\ee
where $ QK_{\alpha\beta;k,k_1,k_2}^{ess;L,L_1,L_2}  $ denotes the essential part of the wave-wave type interaction, in which the inputs are modified perturbed metric instead of the original perturbed metric,  $ QK_{\alpha\beta;k,k_1,k_2}^{vl;L,L_1,L_2}  $ denotes    the wave-Vlasov type interaction, and  $ QK_{\alpha\beta;k,k_1,k_2}^{err;L,L_1,L_2}  $ denotes the error type term.

 The detailed formulas of the above decomposition change depend on the size of $|L_1|+|L_2|$ and the roughness allowed.  For the rough  cases, we use the modified perturbed metric variables   in  (\ref{modifiedperturbedmetric}). As a result, for any $K\in \{S, P, Q\}$, we have
\be\label{2020june18eqn1}
QK_{\alpha\beta;k,k_1,k_2}^{ess;L,L_1,L_2}=  \sum_{ \mu, \nu\in \{+, -\}}P_kQK_{\alpha\beta; \mu\nu;ess  }^{L;L_1,L_2 }( (\widetilde{U_{k_1}^{L_1 h_{\iota\tau}}})^{\mu} ,(\widetilde{U_{k_2}^{L_2 h_{\gamma\kappa}}})^{\nu} ), \quad QK_{\alpha\beta;k,k_1,k_2}^{err;L,L_1,L_2}=0,
\ee
\[
QK_{\alpha\beta;k,k_1,k_2}^{vl;L,L_1,L_2}=  \sum_{ \mu, \nu\in \{+, -\}}P_kQK_{\alpha\beta; \mu\nu;vl }^{L;L_1,L_2 } ( (\widetilde{U_{k_1}^{L_1 h_{\iota\tau}}})^{\mu}, P_{k_2}( \tilde{\rho}_{L_2}^{h_{\gamma\kappa}})^{\nu} )
\]
\be\label{2020june18eqn2}
 + P_kQK_{\alpha\beta; \mu\nu ;vl}^{L;L_1,L_2 } ( P_{k_1}( \tilde{\rho}_{L_1}^{h_{\iota\tau}}))^{\mu},( {U_{k_2}^{L_2 h_{\gamma\kappa}}} )^{\nu}).
\ee
 
Alternatively, for some delicate cases, we use the modified double Hodge decomposition variables.  Recall    the detailed formulas in  (\ref{nov5eqn1}), (\ref{nov5eqn4}), and (\ref{nov5eqn5}).  We substitute the perturbed metric by  its corresponding half wave defined in (\ref{2020april15eqn35}) and using    the equality (\ref{2020june1eqn1}),  the following decomposition holds for any $K\in\{P, Q, S\},$
\be\label{2020june17eqn21}
QK_{\alpha\beta;k,k_1,k_2}^{ess;L,L_1,L_2}  = \sum_{\tilde{h}_1, \tilde{h}_2\in \{F,\underline{F}, \omega_j, \vartheta_{mn}  \}}  P_k QK_{\alpha\beta;\mu\nu}^{\tilde{h}_1, \tilde{h}_2} (    (\widetilde{   U^{ \tilde{h}_1^{L_1} }_{k_1} }(t))^{\mu} , ( \widetilde{   U_{k_2}^{ \tilde{h}_2^{L_2} } }(t))^{\nu} ) ),
\ee
\[
QK_{\alpha\beta;k,k_1,k_2}^{err;L,L_1,L_2}:=\sum_{\tilde{h}   \in \{F,\underline{F}, \omega_j, \vartheta_{mn}\}}\sum_{ \mu, \nu\in \{+, -\}}  P_k  QK_{\alpha\beta;\mu\nu}^{\tilde{h}_1, \tilde{h}_2} \big( (   ( {   U_{k_1}^{ \tilde{h}^{L_1} } }(t))^{\mu} , P_{k_2}( Err_{\alpha'\beta'}^{L_2;wa}(t))^{\nu} )\big)
\]
\[
+\sum_{\tilde{L}_i\preceq L_i,|\tilde{L}_1|+ |\tilde{L}_2|< |L_1|+|L_2|  }  P_k QK_{\alpha\beta;\mu\nu}^{\tilde{L}_1,\tilde{L}_2;\gamma_1\kappa_1,\gamma_2\kappa_2  } (    (\widetilde{   U^{ \tilde{L}_1 h_{\gamma_1\kappa_1} }_{k_1} }(t))^{\mu} , ( \widetilde{   U_{k_2}^{ \tilde{L}_2 h_{\gamma_2\kappa_2} } }(t))^{\nu} ) )
\]
\be\label{2020june17eqn22}
+ P_k QK_{\alpha\beta;\mu\nu}^{\tilde{h}_1, \tilde{h}_2}   \big(   P_{k_1}( Err_{\alpha'\beta'}^{L_1;wa}(t))^{\mu} ),   (\widetilde{   U_{k_2}^{ \tilde{h}^{L_2} } }(t))^{\nu}  \big), 
\ee
\[
QK_{\alpha\beta;k,k_1,k_2}^{vl;L,L_1,L_2}:=\sum_{\begin{subarray}{c}
\tilde{h}   \in \{F,\underline{F}, \omega_j, \vartheta_{mn}\}\\ 
\mu, \nu\in \{+, -\}
\end{subarray}
}  \sum_{\tilde{L}_i\preceq L_i,|\tilde{L}_1|+ |\tilde{L}_2|< |L_1|+|L_2|  } P_k QK_{\alpha\beta;\mu\nu}^{\tilde{h}_1, \tilde{h}_2} \big( (   ( {   U_{k_1}^{ \tilde{h}^{L_1} } }(t))^{\mu} , P_{k_2}( Err_{\alpha'\beta'}^{L_2;vl}(t))^{\nu} )\big)
\]
\[
+ P_k QK_{\alpha\beta;\mu\nu}^{\tilde{h}_1, \tilde{h}_2}   \big(   P_{k_1}( Err_{\alpha'\beta'}^{L_1;vl}(t))^{\nu} ),   (\widetilde{   U_{k_2}^{ \tilde{h}^{L_2} } }(t))^{\nu}  \big)
\]
\[
+ P_k QK_{\alpha\beta;\mu\nu}^{\tilde{L}_1,\tilde{L}_2;\gamma_1\kappa_1,\gamma_2\kappa_2  } (    (P_{k_1}( \widetilde{\rho}^{h_{\gamma_1\kappa_1}}_{\tilde{L}_1}(f)(t)))^{\mu} , ( \widetilde{   U_{k_2}^{ \tilde{L}_2 h_{\gamma_2\kappa_2} } }(t))^{\nu} ) )
\]
\be\label{2020june17eqn23}
+ P_k QK_{\alpha\beta;\mu\nu}^{\tilde{L}_1,\tilde{L}_2;\gamma_1\kappa_1,\gamma_2\kappa_2  } (     ( {   U^{ \tilde{L}_1 h_{\gamma_1\kappa_1} }_{k_1} }(t))^{\mu} , ( P_{k_2}( \widetilde{\rho}^{h_{\gamma_2\kappa_2}}_{\tilde{L}_2}(f)(t)))^{\nu} ) ).
\ee

\subsection{Null structure of the Einstein's equations}
The  null structure inside the Einstein equations, in particular,  its form on the Fourier side has been studied in details by  Ionescu-Pausader in \cite{IP}.  We record   main conclusions here and refer readers to \cite{IP} for more details.   Let 
 \begin{multline}\label{zeroordersymbol}
\mathcal{M}^{ }:=\{m: (\R^3/\{0\})^2  \rightarrow \mathbb{C}:\quad m(x,y)=m_1(x,y)m'(x+y), \quad m':\R^3\longrightarrow \C, \\ 
|x|^\alpha |\nabla_x^\alpha m'(x)|\lesssim_{\alpha} 1, |x|^\alpha |y|^\beta |\nabla_x^\alpha \nabla_y^\beta m_1(x,y)|\lesssim_{\alpha, \beta} 1 , x, y\in \R^3, \alpha, \beta\in \Z_+^3  \}.
\end{multline}
Moreover, we define the following classes of null multipliers,
\be\label{nullmutipliers} 
\begin{split}
\mathcal{M}_{\pm}^{null}&:=\{n:(\R^3/\{0\})^2: n(x,y):= (\frac{x_i}{|x|} \mp \frac{y_i}{|y|} ) m(x,y), m\in\mathcal{M}, \textup{for some \,} i \in \{1,2,3\}\},\\
\tilde{\mathcal{M}}_{\pm}^{null}&:=\{n:(\R^3/\{0\})^2: n(x,y):=(\frac{x_i+y_i}{|x+y|} \mp \frac{y_i}{|y|}) m(x,y),   m\in\mathcal{M}, \textup{for some \,} i \in \{1,2,3\}\}.
\end{split}
\ee
For $a,b,c,d\in \R, k,k_1,k_2\in \Z$, we define
\be\label{2022may22eqn31}
\begin{split}
\mathcal{S}^\infty_{k,k_1,k_2}&:=\{m:\R^3_\xi \times \R^3_\eta \longrightarrow \C, \|m(\xi-\eta, \eta)\|_{\mathcal{S}^\infty_{k,k_1,k_2}}:=\sup_{\alpha, \beta\in \Z_{+}^3  }  2^{|\alpha|k_1+|\beta|k_2}\\
& \times \|\mathcal{F}^{-1}_{\xi, \eta}[\nabla_{\xi}^\alpha \nabla_\eta^\beta m(\xi-\eta, \eta ) \psi_k(\xi) \psi_{k_1}(\xi-\eta)\psi_{k_2}(\eta)](x,y)\|_{L^1_{x, y}}< +\infty \}
 \\
 P_v^d \mathcal{M}^{a;b,c}& :=\{m:\R^3_\xi \times \R^3_\eta\times  \R_v^3 \longrightarrow \C, \| m(\xi-\eta, \eta,v)\|_{P_v^d \mathcal{M}^{a;b,c} }:=\sup_{k,k_1,k_2\in \Z_+^3} \sum_{ \alpha, \beta\in \Z_{+}^3} 2^{-ak-bk_1-c k_2} \\ 
 &\times  \| \langle v \rangle^{-d} \mathcal{F}^{-1}_{\xi, \eta}[\nabla_{\xi}^\alpha \nabla_\eta^\beta m(\xi-\eta, \eta,v)\psi_k(\xi) \psi_{k_1}(\xi-\eta)\psi_{k_2}(\eta)](x,y)\|_{ L^\infty_v  L^1_{x,y}} < \infty \}. 
\end{split}
 \ee
  As summarized in the  following Lemma \ref{nullstructurefact}, all quadratic terms have null structure except the $ Q(\vartheta_{mn}, \vartheta_{pq})$ type quadratic term in the equation satisfied by the component $\underline{F}.$ 
    \begin{lemma}\label{nullstructurefact}
Let $qk_{\alpha\beta;\mu\nu}^{\tilde{h}_1, \tilde{h}_2}(\xi-\eta, \eta), k\in \{s,p,q\}$, be the symbol of the bilinear operators $QK_{\alpha\beta;\mu\nu}^{\tilde{h}_1, \tilde{h}_2} $ in \textup{(\ref{2020june17eqn22})}. For any $\tilde{h}_1, \tilde{h}_2\in \{F,\underline{F}, \omega_j, \vartheta_{mn}\}, \mu, \nu\in \{+, -\}$, we have  $ qs_{\alpha\beta;\mu\nu}^{\tilde{h}_1, \tilde{h}_2}(\xi-\eta, \eta)\in |\xi-\eta||\eta|^{-1} \mathcal{M}_{\mu\nu}^{null} \cup |\xi-\eta|^{-1}|\eta| \mathcal{M}_{\mu\nu}^{null},$ and $  qq_{\alpha\beta;\mu\nu}^{\tilde{h}_1, \tilde{h}_2} \in \mathcal{M}_{\mu\nu}^{null} .$ Moreover, if $\tilde{h}_1, \tilde{h}_2\notin \{\vartheta_{mn}\}$, we have $  qp_{\alpha\beta;\mu\nu}^{\tilde{h}_1, \tilde{h}_2}(\xi-\eta, \eta) \in \mathcal{M}_{\mu\nu}^{null}.$ If $\tilde{h}\neq \underline{F}$ and $\tilde{h}_1, \tilde{h}_2\in \{\vartheta_{mn}\}$, we have $R_{\tilde{h}}^{\alpha\beta}(\xi)qp_{\alpha\beta;\mu\nu}^{\tilde{h}_1, \tilde{h}_2}(\xi-\eta, \eta) \in  \tilde{\mathcal{M}}_{-\nu}^{null}$ where $R_{\tilde{h}}^{\alpha\beta}(\xi)$ is the symbol of the Fourier multiplier operator $R_{\tilde{h}}^{\alpha\beta}$ defined in \textup{(\ref{connection})}. 
\end{lemma}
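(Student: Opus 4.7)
The plan is to verify each claim by direct symbol computation on the Fourier side. Substituting the double Hodge decomposition (\ref{april1eqn1}) into the explicit formulas (\ref{nov5eqn1}), (\ref{nov5eqn4}), (\ref{nov5eqn5}) expresses every quadratic term as a sum of products of Riesz multipliers acting on the modified half-wave profiles $\widetilde{U}^{\tilde{h}^L}$. Since each Riesz symbol $\xi_j/|\xi|$ belongs to $\mathcal{M}$, the task reduces to identifying the residual symmetry-breaking factor in each bilinear symbol and recognizing it as an element of the appropriate null class.

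For the $Q$-form contribution, the Klainerman null form $Q_{a,b}(f,g)$ with half-wave inputs of signs $\mu,\nu$ produces a symbol proportional to $\mu\nu|\xi-\eta||\eta| - (\xi-\eta)\cdot\eta$ (the $Q_{0,0}$ case) or $\mu(\xi-\eta)_a \nu\eta_b - \mu(\xi-\eta)_b \nu\eta_a$ (the mixed case). Both are $|\xi-\eta||\eta|$ times a factor that vanishes on $\{(\xi-\eta)/|\xi-\eta|=\mu\nu\,\eta/|\eta|\}$, and using the identity
\[
1 - \mu\nu \frac{(\xi-\eta)\cdot \eta}{|\xi-\eta||\eta|} = \tfrac{1}{2}\sum_j\Big(\frac{(\xi-\eta)_j}{|\xi-\eta|} - \mu\nu\frac{\eta_j}{|\eta|}\Big)^2
\]
one extracts a single null factor, placing $qq \in \mathcal{M}_{\mu\nu}^{null}$. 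For the $S$-form, I would invoke the harmonic gauge relation (\ref{wavecoordinatecondition}), which on the Fourier side ties $h_{00}$, $h_{0i}$, $h_{ij}$ through a null combination of the input frequencies; the contraction $c^{\mu\nu}_{i\gamma}\eta_i\eta_\gamma$ appearing in (\ref{nov5eqn1}) then factors as $|\eta|^2$ times a null multiplier in $(\xi-\eta)/|\xi-\eta|$ versus $\eta/|\eta|$, yielding $qs \in |\eta||\xi-\eta|^{-1}\mathcal{M}_{\mu\nu}^{null}$ when the quasilinear derivatives land on the second input and the symmetric $|\xi-\eta||\eta|^{-1}\mathcal{M}_{\mu\nu}^{null}$ in the opposite branch.

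For the $P$-form, expanding (\ref{weaknull}) after Hodge substitution, the trace $\eta^{\gamma\mu}\eta^{\lambda\nu}$ annihilates all components except those stemming from trace-type Hodge pieces. When the two inputs are not both $\vartheta$, the surviving contractions with the Riesz factors still force at least one input-null factor $(\xi-\eta)/|\xi-\eta|\mp\eta/|\eta|$, giving $qp \in \mathcal{M}_{\mu\nu}^{null}$. When both inputs are $\vartheta$ but the output projector $R_{\tilde{h}}^{\alpha\beta}(\xi)$ corresponds to $\tilde{h}\neq \underline{F}$, the projector contributes an additional factor $\xi_p/|\xi|$; pairing it with the $\eta_p/|\eta|$ from the $\vartheta$ input of sign $\nu$ and rewriting the combination as an output-null expression $\xi_p/|\xi| - \nu\eta_p/|\eta|$ modulo a symbol in $\mathcal{M}$ yields $R_{\tilde{h}}^{\alpha\beta}(\xi)\,qp_{\alpha\beta;\mu\nu}^{\tilde{h}_1,\tilde{h}_2} \in \tilde{\mathcal{M}}_{-\nu}^{null}$.

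The main obstacle I expect is the last case, since only the \emph{output-null} structure (not the input-null structure) is available, and extracting it requires simultaneously tracking the output projector $R_{\tilde{h}}^{\alpha\beta}(\xi)$, the double Hodge structure on the $\vartheta$ inputs, and the precise trace appearing in (\ref{weaknull}). This is exactly the algebraic manifestation of the Lindblad--Rodnianski weak null condition; the verification, while essentially mechanical, is tedious and follows the tensor-algebra carried out by Ionescu--Pausader in \cite{IP}, to which we would refer for the explicit computations.
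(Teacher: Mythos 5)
Your overall strategy — substitute the double-Hodge decomposition into the explicit quadratic terms, compute the resulting bilinear symbols, and defer the detailed tensor algebra to Ionescu--Pausader \cite{IP} — is the same route the paper takes; the observation for the $Q$-form (the Klainerman identity $1-\mu\nu\,\hat\theta\cdot\hat\eta=\tfrac12\sum_j(\hat\theta_j-\mu\nu\hat\eta_j)^2$, with each square giving a null factor times a bounded symbol in $\mathcal{M}$) is correct, and the high-level conclusions match.

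Two of your intermediate claims need repair, though, and they matter if one actually executes the computation. First, the description of the $P$-form — ``the trace $\eta^{\gamma\mu}\eta^{\lambda\nu}$ annihilates all components except those stemming from trace-type Hodge pieces'' — is not right: the first piece of (\ref{weaknull}) at quadratic order is $-\tfrac12\p_\alpha h_{\mu\nu}\p_\beta h^{\mu\nu}$, a full Frobenius inner product, and it is precisely the transverse-traceless $\vartheta$-pieces that contribute undamped to it. This is \emph{why} the $\vartheta$-$\vartheta$ interaction is the exceptional case. The paper's proof isolates the offending term $\tfrac12\p_\alpha(R_pR_q\vartheta_{mn})\p_\beta(R_pR_q\vartheta_{mn})$ and shows the remainder of $\Lambda_2[P_{\alpha\beta}]$ already carries input-null structure; your account should make this decomposition explicit rather than attributing the cancellation to the trace. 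Second, a sign slip in the output-null factor: by the definition in (\ref{nullmutipliers}), $\tilde{\mathcal{M}}_{\sigma}^{null}$ carries the factor $\xi_i/|\xi| - \sigma\,\eta_i/|\eta|$, so membership in $\tilde{\mathcal{M}}_{-\nu}^{null}$ requires the factor $\xi_i/|\xi| + \nu\,\eta_i/|\eta|$, not $\xi_i/|\xi| - \nu\,\eta_i/|\eta|$ as you wrote. The sign is exactly what is determined by how $\p_\beta$ on the $\nu$-half-wave pairs with the output projector $R_{\tilde{h}}^{\alpha\beta}(\xi)$, and would have to be tracked correctly for the argument to close.
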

\begin{proof}
The above property depends solely on the Einstein equation, which has been verified and used in the study of the Einstein-Klein-Gordon system. We only provide an intuitive explanation here, 
see \cite{IP} for more details.

As a result of direct computations,   $\Lambda_{2}[  H_{i\gamma} \p_i\p_{\gamma} h_{\alpha\beta}]$,   $\Lambda_{2}[P_{\alpha\beta}] $ $+1/2\p_{\alpha} R_pR_q \vartheta_{mn} \p_\beta R_pR_q \vartheta_{mn}  $, and $\Lambda_{2}[Q_{\alpha\beta}]$, are linear combinations of null forms with symbols in $\mathcal{M}_{\mu\nu}^{null}\cup |\xi-\eta||\eta|^{-1} \mathcal{M}_{\mu\nu}^{null} \cup \xi-\eta|^{-1}|\eta| \mathcal{M}_{\mu\nu}^{null}$. For quasilinear term, the corresponding symbol belongs to $|\xi-\eta||\eta|^{-1} \mathcal{M}_{\mu\nu}^{null} \cup \xi-\eta|^{-1}|\eta| \mathcal{M}_{\mu\nu}^{null}$. For semilinear term, the corresponding symbol belongs to $ \mathcal{M}_{\mu\nu}^{null}  $.

Moreover, let $f=R_pR_q\vartheta_{mn}$,   we note that $\p_{0} f\p_0 f + R_iR_j(\p_i f\p_j f )$, $\varepsilon_{jkl}R_k\big(\p_0 f \p_l f \big)$, and $\varepsilon_{jmp}\varepsilon_{knq}$ $ R_m R_n( \p_p f \p_q f) $ are all null forms with symbol in $ \tilde{\mathcal{M}}_{-\nu}^{null}$, i.e., $R_{\tilde{h}}^{\alpha\beta}(\xi)qp_{\alpha\beta;\mu\nu}^{\tilde{h}_1, \tilde{h}_2}(\xi-\eta, \eta) \in    \tilde{\mathcal{M}}_{-\nu}^{null}$, where $\tilde{h}\in \{F, \omega_j, \vartheta_{mn}\}$.

That  is to say,  all quadratic terms have null structure except the $ Q(\vartheta_{mn}, \vartheta_{pq})$ type quadratic term in the equation satisfied by the component $\underline{F}.$ 

\end{proof}

\subsection{Bootstrap assumption}

Let $\delta_1:=10^{-20}$, $\gamma:=\delta_1/100$,   ${\delta}_0:=100 \delta_1,$ and $N_0=200 $. Our bootstrap assumption for the perturbed metric component is  stated as follows, 
\[
 \sup_{t\in[0,T] }   \sum_{ n, l\in \Z_+, n+l\leq N_0}\sum_{   {L}\in {P}^{  }_n}   
   \langle t \rangle^{-   d( n, l) \delta_0 }   \|     U^{{L}h_{\alpha\beta}}  (t)) \|_{E_{   l}} +  \sum_{\tilde{h}\in \{F, \omega_j, \vartheta_{mn}\}}   \langle t \rangle^{-   e( |L|;\tilde{h})\delta_1 } \|      \widetilde{V^{ \tilde{h}^L }}(t)  \|_{E_{   l}} 
    \]
    \[
+\langle t \rangle^{-  e(   |L|;\underline{F})\delta_1}  \|    \widetilde{V^{  \underline{F}^L }}(t)    \|_{E_{ l}}  
        + \sum_{\begin{subarray}{c}
\tilde{L}   \in {P}^{  }_l, l\leq N_0-10\\
i\in\{1,2,3\}\\
\end{subarray}} \sup_{k\in \Z}
   \langle t \rangle^{-   {H}(l+1)\delta_1}  2^{k-k_{-}/2+\gamma k_{-}+ (N_0-l-1)k_{+} }
    \]
\be\label{BAmetricold}
\times  \| P_k(x_i  V^{ \tilde{L}h_{\alpha\beta}})(t)\|_{L^2}   + \|  {V^{\tilde{h}}}(t)\|_{Z}
    + \langle t\rangle^{-\delta_1 }\|  {V^{\underline{F}}}(t)\|_{Z} \lesssim \epsilon_1:=\epsilon_0^{5/6}   ,
 \ee
where the energy space  $E_{ l}$is defined in (\ref{energyespace}),     the $Z$-normed space is defined as follows,
\[
\| f\|_{Z}:=\sup_{k\in \mathbb{Z}} 2^{1.001k_{-} + 40k_{+}} \|\widehat{f}(\xi)\psi_k(\xi)\|_{L^\infty_\xi},
\]
and $d(n,l)$, $e(|L|; \tilde{h}), \tilde{h}\in \{F, \underline{F}, \omega_j, \vartheta_{mn}\}$, and $H(n), n\in \Z_{+}$ are defined as follows
\[
 e(|L|;\tilde{h}):=\left\{\begin{array}{ll}
 H(|L|) & \tilde{h}= \underline{F}\\
 H(|L|)-\beta(|L|) & \tilde{h}\in \{F, \omega_j, \vartheta_{mn} \}\\
\end{array}\right.,\quad   d(n,l):=H(n+l)+ 10n, 
 \]
\be\label{2022may3eqn11}
  H(n)= \left\{\begin{array}{ll}
30n+3  & n< 2\\ 
200n^2  & n\geq 2\\ 
\end{array}\right., \quad \beta(n) =\left\{\begin{array}{ll}
1+1/50 & \textup{if\,} n< 2\\
H(1) +1/50 & \textup{if\,} n\geq 2 \\
\end{array}\right..
\ee

The choice of function $H(n)$ guarantees the following estimates hold, 
\be\label{july15eqn21}
\begin{split}
 \textup{if\,\,} a\leq b, & \quad  H(a+1)+H(b)\leq H(a+b) + H(1 ), \\  
 \textup{if\,\,} n \geq 1, & \quad   H(n)- H(n-1) \geq 20\beta(n). 
  \end{split}
\ee 
 
  The bootstrap assumption we stated in (\ref{BAmetricold}) and the energy spaces we use are stronger than the one used in \cite{IP}, i.e., the  bootstrap assumption stated in (\ref{BAmetricold}) is automatically satisfied. The    $Z$-normed space is   same as the one used in  \cite{IP}.  This setting  allows us to take the analysis of the increment of  $Z$-norm  of the perturbed metric  contributed from the Einstein part as a black box  and be more focus on the overall effect of the Vlasov part, which is the main theme of this paper.

 Recall the equation (\ref{2020april8eqn31}) and the index $\tilde{c}(\beta),\beta\in \mathcal{S},$ defined in \eqref{countingnumber}.  Our bootstrap assumption for the Vlasov part is stated as follows, 
\be\label{BAvlasovori}
\sup_{t\in[0,T]}   \sum_{
\begin{subarray}{c}
 \mathcal{L}\in \cup_{l\leq N_0  }\mathcal{P}^{  }_{l},\rho\in \mathcal{S}, 
    | \mathcal{L}| +  |\rho |\leq N_0 
\end{subarray}}     (1+t)^{-H(\mathcal{L}, \rho) \delta_0   }   E_{\mathcal{L}, \rho}^{vl}(t) + Z^{vl}(t) \lesssim \epsilon_1,
\ee
where $ E_{\mathcal{L}, \rho}^{vl}(t)$ aims to control the wighted energy of the Vlasov part and    the $Z$-norm  of the Vlasov part $ Z^{vl}(t)$  aims to show that the density of particles and its derivatives decay sharply over time in a low regularity space. More precisely, 
\begin{multline}
 H(\mathcal{L}, \rho):=  d(|\mathcal{L}|+|\tilde{c}(\rho)|, |\rho|-|\tilde{c}(\rho)|),\qquad    E_{\mathcal{L}, \rho}^{vl}(t):=  \| \omega^{\mathcal{L}}_{\rho} (t,x, v) \Lambda^{\rho}{u}^{\mathcal{L}}(t,x,v)\|_{L^2_{x,v}},\\
 Z^{vl}(t):=   \sum_{
\begin{subarray}{c}
 \mathcal{L}\in \cup_{l\leq N_0   }\mathcal{P}^{  }_{l}, \alpha\in\Z_{+}^3,|\alpha|+|\mathcal{L}|\leq N_0 
\end{subarray}}   \|\widetilde{  \omega^{\mathcal{L}}_{\alpha} }(v)\big(\nabla_v^\alpha \widehat{u^{\mathcal{L} }}(t, 0, v) - \nabla_v\cdot {u^{\mathcal{L};\alpha}_{corr}}(t,v) \|_{L^2_{x,v}}, 
\end{multline}
 where the above weight functions  $\omega^{\mathcal{L}}_{\rho} (t, x, v)$, and $\widetilde{  \omega^{\mathcal{L}}_{\alpha} }(v)$  are defined as follows, 
  \be\label{weightfunctions2}
  \omega^{\mathcal{L}}_{\rho} (t, x, v) = (1+|v|)^{c(\rho)} \big( 1+|v|^{1 /\delta_1}+ |x|\phi(|x|/(1+|t|)^{4H( 4)\delta_1 })\big)^{200N_0-10(| \mathcal{L}| + |\rho| )},
 \ee
  \be\label{weightfunctions3}
  \widetilde{  \omega^{\mathcal{L}}_{\alpha} }(v) = (1 +|v|^{1 /\delta_1}  )^{100N_0-10  (| \mathcal{L}| + |\alpha| )},
 \ee
  where $c(\rho)$ and $\tilde{c}(\rho)$ are defined in (\ref{countingnumber}) and    the cutoff function  $\phi:\R_{+}\longrightarrow \R_{+}$ is defined as follows, 
 \be\label{2020april27eqn61}
 \phi(x):=\left\{\begin{array}{ll}
 0 & \textup{if}\,\, x\leq 1\\ 
 (x-1)^2 & \textup{if}\,\, x\in(1,2]\\
3-2e^{-(x-2)} & \textup{if}\,\, x\in(2,\infty)\\
 \end{array}\right. , \quad \Longrightarrow \phi'(x)\geq 0, \quad \p_t \omega^{\mathcal{L}}_{\rho} (t, x, v) \leq 0   .
 \ee

The   correction term  ``${u^{\mathcal{L};\alpha}_{corr}}(t,v)$'', which is introduced to get around a losing derivative issue,  is defined as follows, 
\be\label{correctionterm}
{u^{\mathcal{L};\alpha}_{corr}}(t,v):=\left\{\begin{array}{ll}
\displaystyle{\int_0^t \int_{\R^3} - \h \big(  \frac{v_{\alpha} v_{\beta} }{v^0}  \nabla_x g^{\alpha\beta}\big)(s, x+s\hat{v})    \nabla_v^{\alpha} u^{\mathcal{L}}(s,x,v)  d x d s }& \textup{if\,\,} |\alpha|+|\mathcal{L}|= N_0\\ 
&\\ 
 0  & \textup{if\,\,}  |\alpha|+|\mathcal{L}| < N_0. \\
\end{array}\right. 
\ee

\subsection{Bootstrap argument and the proof of the theorem \ref{maintheorem}}

Assume that $(g, f)$ is a solution of the Einstein-Vlasov system \eqref{EV} on time $[0, T]$ such that the  bootstrap assumptions 
 (\ref{BAmetricold}) and \eqref{BAvlasovori} are valid, the existence of $T>0$ is ensured by the local existence result of  Choquet-Bruhat \cite{Bruhat}.

Firstly, from the estimate \eqref{may7eqn21} in Proposition \ref{energyvlasovest} and the estimate \eqref{oct11eqn6} in Proposition \ref{estimateofzerofrequency},   the bootstrap assumption for the Vlasov part is improved. Secondly,  from the estimate \eqref{aug19eqn70} in Proposition \ref{energyestwave},  the estimate of $\widetilde{V^{ \tilde{h}^L }}(t) , \tilde{h}\in\{F, \underline{F}, \omega_i, \vartheta_{mn}\}$ is improved. Moreover, from      the estimate \eqref{2020july17eqn1} in Proposition \ref{fixedtimenonlinarityestimate},  the weighted estimate of  $V^{ \tilde{L}h_{\alpha\beta}} (t)$ is improved.  Lastly, recall    (\ref{connection}),  (\ref{april1eqn11}), and \eqref{2020april1eqn4},  from the estimate (\ref{oct7eqn73}) in Lemma  \ref{harmonicgaugehigher} and the improved estimate of $\widetilde{V^{ \tilde{h}^L }}(t), \tilde{h}\in\{F, \underline{F}, \omega_i, \vartheta_{mn}\}$ and the improved estimates of the Vlasov part,  the estimate of $   U^{{L}h_{\alpha\beta}}  (t))$  is also improved. 

Hence closing the bootstrap argument, we can extend the time interval $[0, T]$ to $[0, \infty)$, i.e., the global regularity holds.  

\section{ Estimates of nonlinearities for any fixed time}\label{fixedtimeestimate}

In this section,  one of  main goals  is to prove the following proposition \ref{fixedtimenonlinarityestimate}, in which  we prove rough estimates for the nonlinearity of the metric part, which are convenient at places where it's not necessary to be sharp.    We also reduce the goal of improving the bootstrap assumption on $P_k(x_i  V^{ \tilde{L}h_{\alpha\beta}})(t)$ to the energy estimates of the metric part and the Vlasov part and the the $Z$-norm estimate of the Vlasov part, see \eqref{2020july17eqn1}. 

 Moreover, as byproducts, we   obtain  some estimates  of the error type terms in the double Hodge decomposition   in subsection \ref{auxiliary} and an improved estimate for the modified profile at low frequency in subsection \ref{lowfreq}. 

\begin{proposition}\label{fixedtimenonlinarityestimate}
Under the bootstrap assumption \textup{(\ref{BAmetricold})} and \textup{(\ref{BAvlasovori})}, the following 
estimate holds for any fixed $t\in[2^{m-1},2^m]\subset[0,T]$, $L\in \cup_{n\leq N_0}P_n$, $\tilde{L}\in \cup_{l\leq N_0-10} P_{l},  $ 
\[
 \forall l\in [0, N_0-|L|]\cap \Z, \quad 2^{-k_{-}/2+\gamma k_{-}+ (l-1) k_{+}}\big(\|\p_t \widehat{ \widetilde{   V^{L\tilde{h} } }}(t, \xi) \psi_k(\xi)\|_{L^2}  + \| \Lambda_{\geq 2}[ \p_t \widehat{   {   V^{L h_{\alpha\beta} } }}(t, \xi) ]\psi_k(\xi)\|_{L^2} \big)
\]
\be\label{oct4eqn41}
 \lesssim   2^{-m+ H(|L|)\delta_1 m + \beta_{}(|L|) \delta_1 m} \epsilon_1^2 + 2^{-m+k_{-}+d(|L|+1, l)\delta_0m } \epsilon_1^2, 
\ee
\[
  \sup_{k\in \mathbb{Z}}    2^{ k_{-}/2+\gamma k_{-}+ (N_0-|\tilde{L}| -1)k_{+} }  \|\nabla_\xi \widehat{ {   V^{ \tilde{L}  {h}_{\alpha\beta} } }}(t, \xi) \psi_k(\xi)\|_{L^2}\lesssim    2^{     {H}( |\tilde{L}|+1)\delta_1 m} \epsilon_1^2+  Z^{vl}(t) 
    \]
 \be\label{2020july17eqn1}
 +   \sup_{k\in \mathbb{Z}} \sum_{\Gamma\in P_1}   2^{ -k_{-}/2+\gamma k_{-}+(N_0-|\tilde{L}| -1)k_{+} }  \| \widehat{ {   U^{\Gamma \tilde{L}  {h}_{\alpha\beta} } }}(t, \xi) \psi_k(\xi)\|_{L^2}+ \sum_{|\mathcal{L}|+|\rho|\leq N_0} 2^{-m/2} E_{\mathcal{L}, \rho}^{vl}(t).
\ee

\end{proposition}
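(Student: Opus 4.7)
The plan is to estimate both quantities by decomposing the nonlinearities into Einstein-only and Vlasov-coupled pieces, exploiting the fact that the modified profile $\widetilde{V^{\tilde{h}^L}}$ absorbs the dangerous linear density. For (\ref{oct4eqn41}), the starting point is the identity (\ref{2020june30eqn1}), which writes $e^{-it|\nabla|}\partial_t \widetilde{V^{\tilde{h}^L}}$ as the sum of the quasilinear term $H_{i\gamma}\partial_i\partial_\gamma\tilde h^L$, the higher-order Vlasov-wave piece $\Lambda_{\geq 2}[N_{vl}^{\tilde h;L}]$, the Einstein wave-wave contributions $R_{mt}^{\tilde h;L}$ and $Q_{mt}^{\tilde h;L}$, and the corrector $\mathcal N_{vl;m}^{\tilde h;L}$ coming from the definition (\ref{2020april1eqn4}). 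Each of these is classified through (\ref{2020june1eqn11}) and the splitting (\ref{nov7eqn21}), then bounded against the bootstrap (\ref{BAmetricold}). The quasilinear term carries the extra loss $\beta(|L|)\delta_1$, since one factor must be at top order, while the wave-wave quadratic and cubic pieces exploit the null structure summarised in Lemma \ref{nullstructurefact}. Together they produce the first summand $2^{-m+(H(|L|)+\beta(|L|))\delta_1 m}\epsilon_1^2$.

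For the Vlasov-coupled pieces $\Lambda_{\geq 2}[N_{vl}^{\tilde h;L}]$ and $\mathcal N_{vl;m}^{\tilde h;L}$, I use the representations (\ref{sep20eqn61}) and (\ref{2020june30eqn1}). For the corrector, which is a $v$-integral of $\partial_t\widehat{u^{\mathcal L}}/(|\xi|-\hat v\cdot\xi)$, I substitute (\ref{2020april8eqn31}) for $\partial_t u^{\mathcal L}$ and integrate by parts in $v$ to exploit the non-resonance bound $|\xi|-\hat v\cdot\xi\gtrsim\langle v\rangle^{-2}|\xi|$. The polynomial-in-$v$ weight $\langle v\rangle^{1/\delta_1}$ built into $\omega^{\mathcal L}_\rho$ absorbs the cost $\langle v\rangle^{2}$ of the non-resonance, after which (\ref{BAvlasovori}) yields the second summand $2^{-m+k_-+d(|L|+1,l)\delta_0 m}\epsilon_1^2$; the $2^{k_-}$ factor is precisely the scaling of the denominator $|\xi|-\hat v\cdot\xi\sim|\xi|$ at low frequencies. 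The $\Lambda_{\geq 2}$ version of the estimate for the unmodified profile $V^{Lh_{\alpha\beta}}$ follows from the same argument applied directly to (\ref{aug11eqn21}), since the $\Lambda_{\geq 2}$ cutoff removes the linear density which is the only content of the modified profile correction.

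For (\ref{2020july17eqn1}) I compute on the Fourier side
\[
\partial_{\xi_i}\widehat{V^{\tilde L h_{\alpha\beta}}}(t,\xi)=it\frac{\xi_i}{|\xi|}e^{it|\xi|}\widehat{U^{\tilde L h_{\alpha\beta}}}(t,\xi)+e^{it|\xi|}\partial_{\xi_i}\widehat{U^{\tilde L h_{\alpha\beta}}}(t,\xi).
\]
The second summand is the Fourier transform of $ix_iU^{\tilde L h_{\alpha\beta}}$, which via the identity $x_i\partial_t=L_i-t\partial_i$ and the definition $U=(\partial_t-i|\nabla|)h$ reduces to $U^{L_i\tilde L h_{\alpha\beta}}$ (accounting for the $\sum_{\Gamma\in P_1}$ term on the right-hand side) plus terms of the form $t\partial_i\tilde L h_{\alpha\beta}$. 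Those, together with the first summand, are treated using the splitting $U^{\tilde L h_{\alpha\beta}}=\widetilde{U^{\tilde L h_{\alpha\beta}}}+\tilde\rho^{h_{\alpha\beta}}_{\tilde L}(f)$ from (\ref{2022march19eqn16}). The contribution of $\widetilde{U^{\tilde L h_{\alpha\beta}}}$ grows only like $2^{H(|\tilde L|+1)\delta_1 m}$ by (\ref{BAmetricold}), so multiplication by $t\sim 2^m$ combined with the Riesz factor and the weight $2^{k_-/2+\gamma k_-}$ gives the first term on the right-hand side. The density piece $\tilde\rho^{h_{\alpha\beta}}_{\tilde L}(f)$ is a $v$-integral of $\widehat{u^{\mathcal L}}(t,\xi,v)/(|\xi|-\hat v\cdot\xi)$; I split $\widehat{u^{\mathcal L}}(t,\xi,v)=\widehat{u^{\mathcal L}}(t,0,v)+[\widehat{u^{\mathcal L}}(t,\xi,v)-\widehat{u^{\mathcal L}}(t,0,v)]$, bound the first piece via $Z^{vl}(t)$ (whose very definition controls the zero-frequency profile and its $v$-derivatives), and bound the remainder via $E^{vl}_{\mathcal L,\rho}(t)$ with an extra $2^{-m/2}$ gain produced by integration by parts in $v$ using the non-resonance.

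The main obstacle throughout is the apparent singularity of the phase factor $(|\xi|-\hat v\cdot\xi)^{-1}$ at $\xi=0$ multiplied by the large time $t$. The decomposition into a zero-$\xi$ piece (absorbed by $Z^{vl}$) and a difference (absorbed by $E^{vl}$ with a $2^{-m/2}$ gain) is what makes the argument close, and the polynomial-in-$v$ weight $\langle v\rangle^{1/\delta_1}$ in $\omega^{\mathcal L}_\rho$ is precisely what is needed to absorb the quadratic loss $\langle v\rangle^{2}$ from the non-resonance in both estimates.
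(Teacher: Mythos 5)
Your plan for \eqref{oct4eqn41} is essentially the paper's: starting from \eqref{2020june30eqn1}, decompose $e^{-it\d}\partial_t\widetilde{V^{\tilde h^L}}$ into the quasilinear piece, $\Lambda_{\geq 2}[N_{vl}]$, the wave-wave terms $R_{mt}^{\tilde h;L}$ and $Q_{mt}^{\tilde h;L}$, and the corrector $\mathcal N_{vl;m}^{\tilde h;L}$, and invoke the fixed-time lemmas for each; the integration by parts in $v$ for the corrector with the non-resonance $|\xi|-\hat v\cdot\xi\gtrsim\langle v\rangle^{-2}|\xi|$ is precisely what underlies Lemma~\ref{resonancevlasov}.

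There is, however, a genuine gap in your argument for \eqref{2020july17eqn1}. After writing
$\partial_{\xi_i}\widehat{V^{\tilde L h_{\alpha\beta}}}=it\tfrac{\xi_i}{|\xi|}\widehat{V^{\tilde L h_{\alpha\beta}}}+e^{it|\xi|}\partial_{\xi_i}\widehat{U^{\tilde L h_{\alpha\beta}}}$
and invoking $x_i\partial_t=L_i-t\partial_i$, the $t\partial_i h$ contribution from $x_i U$ indeed cancels against the corresponding piece of the first summand, but the residual term $it\tfrac{\xi_i}{|\xi|}\widehat{\partial_t h^{\tilde L}}$ still carries the raw factor $t\sim 2^m$, as does the term $x_i\d h^{\tilde L}$ which your decomposition omits. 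At this point you split $U^{\tilde L h_{\alpha\beta}}=\widetilde{U^{\tilde L h_{\alpha\beta}}}+\tilde\rho^{h_{\alpha\beta}}_{\tilde L}(f)$ and claim that for the modified piece, ``multiplication by $t\sim 2^m$ combined with the Riesz factor and the weight $2^{k_-/2+\gamma k_-}$ gives the first term on the right-hand side.'' This does not work: the bootstrap \eqref{BAmetricold} bounds $\|\widetilde{V^{\tilde h^L}}\|_{E_l}$ by $2^{H(|\tilde L|+1)\delta_1 m}\epsilon_1$, and the only help available from the weight is the factor $2^{k_-}$ coming from comparing $2^{k_-/2+\gamma k_-}$ to the $E_l$-weight $2^{-k_-/2+\gamma k_-}$; since $k_-\leq 0$, that factor is bounded by $1$ but is no smaller than $2^{-m}$ only on the narrow range $k_-\leq -m$, and vanishes entirely once $k\geq 0$ (where the sup is taken). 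The Riesz symbol $\xi_i/|\xi|$ is of order zero and contributes nothing. So the $2^m$ loss is not absorbed, and the estimate would produce $2^{m+H(|\tilde L|+1)\delta_1 m}\epsilon_1^2$, off by a full factor of $t$.

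What the paper actually does at this point (its estimate \eqref{2020july17eqn10}) is quite different in spirit: it expresses $\nabla_\xi\widehat{V^{\tilde L h_{\alpha\beta}}}$, up to an overall factor $2^{-k}$, in terms of the commutation vector fields $\Gamma\in P_1$ applied to $U^{\tilde L h_{\alpha\beta}}$ and the quantity $t\,\partial_t\widehat{V^{\tilde L h_{\alpha\beta}}}$. The point is that $t$ multiplies the \emph{time derivative of the profile}, not the profile itself; the quadratic-and-higher part of $\partial_t\widehat{V^{\tilde L h_{\alpha\beta}}}$ carries an intrinsic factor $2^{-m}$ (that is exactly \eqref{2020july17eqn4}, a consequence of the improved bound \eqref{2020aug4eqn1} which is available precisely because $|\tilde L|\leq N_0-10$), so the $t$-factor is cancelled, while the linear density $\Lambda_1[\partial_t\widehat{V}]$ is the only piece that survives with a $t$-weight and must be sent to $Z^{vl}(t)$ and $E^{vl}_{\mathcal L,\rho}(t)$ via the zero-frequency decomposition \eqref{oct11eqn11}. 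You correctly identify the $Z^{vl}$/$E^{vl}$ mechanism for the density piece, but you are missing the step that converts the $t$-factor on $\widetilde{U}$ into $t\,\partial_t\widehat{V}$ and hence into a gain, and without that step the estimate for the modified-profile contribution does not close.
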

\begin{proof}
For better presentation, we postpone  the proof   to subsection \ref{proofpropfixed}.  
\end{proof}

\subsection{Linear decay estimates for the density type function and the wave part }

We first record the following $L^\infty$-decay estimate for the density type function of the Vlasov part. 
 \begin{lemma}\label{decayestimateofdensity}
For   $x\in \R^3$, $  t\in \R$, $k\in\Z, n\in \Z_{+},$, s.t., $|t|\geq 1$,  and any given zero order symbol $m(\xi,v)\in L^\infty_v \mathcal{S}^0$, the following decay estimate holds, 
\[
 \sup_{\xi\in \R^n}\big|\int_{\R^3} e^{  i t \hat{v} \cdot \xi} m(\xi, v)  \widehat{g}(t, \xi, v) \psi_{k}(\xi) d v    \big|
 \]
 \[
 \lesssim \sum_{|\alpha|+|\beta|\leq  n} 2^{-nk} |t|^{-n} \|\nabla_v^\beta m(\xi,v)\|_{L^\infty_{v, \xi}} \min\big\{ \|(1+|v|)^{ 4n} \nabla_v^\alpha \widehat{g}(t,x,v) \|_{L^1_{x,v} } ,
 \]
\be\label{densitydecay}
   \|(1+|v|)^{ 4n} \nabla_v^\alpha \widehat{g}(t,0,v) \|_{L^1_v }  +\sum_{|\gamma|\leq 1}|t|^{-1 } \| (1+|v|)^{4+4n} (1+|x|)\nabla_v^\gamma \nabla_v^\alpha g(t,x,v)\|_{L^1_x L^1_v}\big\}.
\ee
Similarly, the following estimate holds for any fixed $x\in \R^3, a\in \R\cap(-3, \infty)$,  and $h(x,v)\in H^{10+a}_{x,v}$,
\[
 \big|\int_{\R^3}\int_{\R^3} e^{i x\cdot \xi + i t \hat{v} \cdot \xi} m(\xi, v)|\xi|^{a}\widehat{g}(t, \xi, v) h(x,v) d v d\xi   \big|
 \]
 \[
 \lesssim \sum_{|\alpha|\leq 5+\lfloor a\rfloor} \big(\sum_{|\beta|\leq 5+\lfloor a\rfloor}   \|\nabla_v^\beta m(\xi,v)\|_{L^\infty_v\mathcal{S}^\infty}\big)   \big[|t|^{-3-a} \|(1+|v|)^{5+|a|} \nabla_v^\alpha\big( h(x,v) \widehat{g}(t,0,v)\big) \|_{L^1_v } 
\]
\be\label{julyeqn21}
+|t|^{-4-a} \| (1+|v|)^{5+|a|} (1+|y|)\nabla_v^\alpha\big( h(x,v)g(t,y,v)\big)\|_{L^1_y L^1_v}\big].
\ee
For any $n\in\mathbb{Z}_{+},p\in[1,\infty) $, the following estimate holds, 
\[
\big\|\int_{\R^3}\int_{\R^3} e^{i x\cdot \xi + i t \hat{v} \cdot \xi} m(\xi, v)|\xi|^{n}\widehat{g}(t, \xi, v) h(x,v) \psi_k(\xi) d v d\xi    \big\|_{L^p_x}
\]
\be\label{oct6eqn61}
\lesssim \sum_{|\alpha|,|\beta|,|\gamma|\leq n}|t|^{-n}\|\nabla_v^\alpha m(\xi,v)\|_{L^\infty_v\mathcal{S}^\infty} \|\nabla_v^\beta h(x,v)\|_{L^\infty_{x,v}} \| (1+ |v|)^{10+3n}\nabla_v^\gamma g(t,x,v)\|_{L^p_{x,v}}.
\ee 
\end{lemma}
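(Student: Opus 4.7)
The plan is to establish all three estimates via repeated integration by parts in the velocity variable $v$, exploiting the non-degeneracy of the phase $\phi(v) = t\hat v \cdot \xi$. The Jacobian $A(v) := \nabla_v\hat v = (1+|v|^2)^{-1/2} I - (1+|v|^2)^{-3/2} v\otimes v$ is symmetric, positive definite, and satisfies $\|A(v)^{-1}\|\lesssim (1+|v|)^{3}$ (the large direction being radial). Consequently $|\nabla_v\phi|\geq t|\xi|(1+|v|)^{-3}$, and the differential operator
\[
L^* u := -\nabla_v\cdot\Big(\frac{A(v)\xi}{i\,t\,|A(v)\xi|^2}\,u\Big), \qquad L^*(e^{i\phi}) = e^{i\phi},
\]
may be iterated. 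After $n$ applications, each IBP brings a factor of size $(t|\xi|)^{-1}$ at the cost of one $v$-derivative (distributed on the amplitude or $\widehat g$) and a polynomial weight $(1+|v|)^{O(1)}$, yielding
\[
|I(t,\xi)| \lesssim (t|\xi|)^{-n}\sum_{|\alpha|+|\beta|\leq n}\int (1+|v|)^{4n}|\nabla_v^\beta m(\xi,v)|\,|\nabla_v^\alpha\widehat g(t,\xi,v)|\,dv.
\]

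For the first branch of the minimum in \eqref{densitydecay} we then use the elementary bound $|\nabla_v^\alpha \widehat g(t,\xi,v)|\leq \|\nabla_v^\alpha g(t,\cdot,v)\|_{L^1_x}$ to integrate in $v$. For the second branch, we decompose via the fundamental theorem of calculus
\[
\widehat g(t,\xi,v) = \widehat g(t,0,v) - i\xi\cdot\int_0^1 \widehat{xg}(t,s\xi,v)\,ds,
\]
and bound each piece separately. The zero-frequency piece is exactly $\|(1+|v|)^{4n}\nabla_v^\alpha \widehat g(t,0,v)\|_{L^1_v}$. For the remainder we perform $n+1$ integrations by parts in $v$ (rather than $n$); the extra IBP supplies an additional factor $(t|\xi|)^{-1}$, which combines with the factor $|\xi|$ extracted from the FTC to produce the gain $|t|^{-1}$. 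The derivative count $|\gamma|\leq 1$ in the stated bound records the single extra derivative on $x g$ produced by the final IBP.

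The second estimate \eqref{julyeqn21} is reduced to the first by carrying out the $\xi$-integration, but the presence of $|\xi|^a$ forces a dyadic splitting. On $\{|\xi|\lesssim |t|^{-1}\}$ no IBP is performed and we directly estimate $\int|\xi|^a\,d\xi \sim |t|^{-3-a}$; on $\{|\xi|\gtrsim |t|^{-1}\}$ we IBP $N \sim 5+\lfloor a\rfloor$ times in $v$, after which $\int_{|\xi|\gtrsim |t|^{-1}}|\xi|^{a-N}\,d\xi \sim |t|^{N-a-3}$ combines with $(t|\xi|)^{-N}$ to yield the same $|t|^{-3-a}$ decay; the sharper $|t|^{-4-a}(1+|y|)$ term arises exactly as in the second branch of \eqref{densitydecay} by swapping $\widehat g$ for its FTC remainder. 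Estimate \eqref{oct6eqn61} is handled by Minkowski's inequality in $x$: writing the $x$-integral as convolution with the kernel $\mathcal{F}^{-1}_\xi[|\xi|^n m(\xi,v)\widehat g(t,\xi,v)\psi_k(\xi)]$, the factor $|\xi|^n$ cancels $(t|\xi|)^{-n}|\xi|^n$ from the $\xi$-localized IBP, reducing matters to an $L^p_{x,v}$ bound on $\nabla_v^\gamma g$ with the appropriate $v$-weight.

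The main technical obstacle is purely combinatorial bookkeeping: tracking the polynomial $v$-weights produced at each IBP and ensuring the $v$-derivatives land on $m$, on the amplitude $A(v)\xi/|A(v)\xi|^2$, and on $\widehat g$ with the correct multiplicities to match the stated norms. The only truly delicate point is the interplay between the $|\xi|^a$ singularity/growth in \eqref{julyeqn21} and the cost of IBP, which is resolved by the dyadic splitting at $|\xi|\sim |t|^{-1}$ described above.
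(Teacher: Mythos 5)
Your proposal is correct and follows essentially the same route the paper indicates: the paper's proof of Lemma 3.1 is just the one-sentence remark "The main ideas are singling out the zero frequency (or not) for different desired estimates and doing integration by parts in $v$," with details deferred to the author's prior work \cite{wang3}. Your iterated IBP operator built from $A(v) = \nabla_v\hat v$ with $\|A(v)^{-1}\|\lesssim(1+|v|)^3$, the FTC decomposition $\widehat g(t,\xi,v)=\widehat g(t,0,v)-i\xi\cdot\int_0^1\widehat{xg}(t,s\xi,v)\,ds$ for the second branch, the dyadic split at $|\xi|\sim|t|^{-1}$ for \eqref{julyeqn21}, and Minkowski in $v$ for \eqref{oct6eqn61} are exactly the singling-out-zero-frequency and IBP-in-$v$ mechanisms the paper points to, fleshed out in considerably more detail than the paper provides.
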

\begin{proof}
The main ideas are  singling out the zero frequency (or not) for different desired estimates and doing integration by parts in $v$. See  \cite{wang3}[The proof of Lemma 3.1] for more details.
\end{proof}

In the following Lemma, as in the same spirit of \cite{IP}[Lemma 2.11], we prove $L^\infty_x$ decay estimate for the frequency super localized linear half-wave. 
 
\begin{lemma}\label{superlocalizedaug}
Let $t\in \R, x\in \R^3, \mu \in\{+,-\}$, $k, n,\tilde{k}\in \mathbb{Z}$, s.t., $|t|\gg 1$, $2^{k}\geq  |t|^{-1}$,  $\tilde{k}\leq k$. 
If  $|x|\leq 2^{-100}|t|$, we have 
\[
 \big|\int_{\R^3}  e^{ i x\cdot \xi-i \mu t |\xi|  }     m(\xi) \widehat{f}(\xi) \psi_k(\xi) \psi(2^{-\tilde{k}} |\xi|-n)  d \xi \big| 
\]
\be\label{decayestimatesmall}
  \lesssim  |t|^{-1} \|m(\xi)\|_{\mathcal{S}^\infty_k}  2^{ \tilde{k}/2}\big(2^k\|\nabla_\xi \widehat{f}(\xi)\psi_k(\xi) \psi(2^{-\tilde{k}} |\xi|-n)\|_{L^2_\xi} +\|  \widehat{f}(\xi)\psi_k(\xi) \psi(2^{-\tilde{k}} |\xi|-n)\|_{L^2_\xi}
 \big).
\ee
If  $|x|\geq 2^{-100}|t|$, we have 
\[
 \big|\int_{\R^3}  e^{ i x\cdot \xi-i \mu t |\xi|  }     m(\xi) \widehat{f}(\xi) \psi_k(\xi) \psi(2^{-\tilde{k}} |\xi|-n)  d \xi \big|   \lesssim     \sum_{\Gamma \in \{\Omega_{ij}
\}, |\alpha|\leq 1, |\beta|\leq 2 } |x|^{-1}  2^{\tilde{k}/2}  \|m(\xi)\|_{\mathcal{S}^\infty_k} 
 \]
 \be\label{july27eqn4}
\times \|    \widehat{\Gamma^\alpha f}(\xi )\psi(2^{-\tilde{k}} |\xi|-n)  \psi_k(|\xi|)\|_{L^2(\xi) }^{1-\delta_1^2} \|    \widehat{\Gamma^\beta f}(\xi )\psi(2^{-\tilde{k}} |\xi|-n)\psi_k(|\xi|)\|_{L^2(\xi) }^{\delta_1^2},
\ee
\[
 \big|\int_{\R^3}  e^{ i x\cdot \xi-i \mu t |\xi|  }     m(\xi) \widehat{f}(\xi) \psi_k(\xi) \psi(2^{-\tilde{k}} |\xi|-n)  d \xi \big|   \lesssim    \sum_{\Gamma \in \{\Omega_{ij}
\},   |\beta|\leq 2 }  |x|^{-1}  2^{\tilde{k}/2}  \|m(\xi)\|_{\mathcal{S}^\infty_k} 
\]
\be\label{linearwavedecay2}
 \times \big[2^{k+\tilde{k}/2}\|\widehat{f}(\xi)\psi_k(\xi) \psi(2^{-\tilde{k}} |\xi|-n) \|_{L^\infty_\xi} + |x|^{-1/3} 2^{-k/3} \|    \widehat{\Gamma^\beta f}(\xi )\psi(2^{-\tilde{k}} |\xi|-n)\psi_k(|\xi|)\|_{L^2(\xi) }\big].
\ee
\end{lemma}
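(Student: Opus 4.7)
The plan is to combine radial integration by parts (for the inside-the-cone regime) with rotational integration by parts (for the outside-the-cone regime), after first reducing to the case of a trivial symbol via a standard convolution argument. The phase is $\Phi(\xi) = x\cdot\xi - \mu t|\xi|$, with $\nabla_\xi\Phi = x - \mu t\xi/|\xi|$ and $\Omega_{ij}^\xi \Phi = \xi_i x_j - \xi_j x_i$. Writing $m(\xi)\psi_{[k-1,k+1]}(\xi) = \mathcal{F}_x(K_m)(\xi)$ with $\|K_m\|_{L^1_x} \lesssim \|m\|_{\mathcal{S}^\infty_k}$, we reduce to bounding the integral with $m$ replaced by $1$; the $\|m\|_{\mathcal{S}^\infty_k}$ factor then appears from a Young-type convolution in the $x$ variable.

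For (\ref{decayestimatesmall}), the regime $|x|\leq 2^{-100}|t|$ has no stationary points, since $|\nabla_\xi\Phi| \geq (1-2^{-100})|t| \sim |t|$. I would pass to polar coordinates $\xi = r\omega$, write the integrand as $\int_{S^2}d\sigma(\omega) \int_0^\infty e^{ir(x\cdot\omega-\mu t)} r^2 \hat f(r\omega)\psi_k(r)\psi(2^{-\tilde k}r-n)\,dr$, and integrate by parts once in $r$ using the fact that $|x\cdot\omega-\mu t|\gtrsim |t|$. This produces the factor $|t|^{-1}$. For terms where $\partial_r$ lands on $\hat f$ or on $\psi_k$, Cauchy-Schwarz in $r$ over the interval of length $\lesssim 2^{\tilde k}$ followed by Cauchy-Schwarz on $S^2$ produces the factor $2^{\tilde k/2}$, while the $r^2\sim 2^{2k}$ Jacobian combined with the $L^2$-normalization $r^{-1}$ converts $\|\nabla_\xi \hat f\|_{L^2}$ and $\|\hat f\|_{L^2}$ to their stated scale (with weight $2^k$ on $\nabla_\xi \hat f$). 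For the term where $\partial_r$ hits $\psi(2^{-\tilde k}r-n)$, I would perform a second radial integration by parts to move the derivative off this cutoff, which removes the spurious $2^{-\tilde k}$ weight.

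For (\ref{july27eqn4}) and (\ref{linearwavedecay2}), in the regime $|x|\geq 2^{-100}|t|$, I use the operator $\tilde L = \sum_{i<j}(\Omega_{ij}^\xi \Phi)\,\Omega_{ij}^\xi / |\Omega\Phi|^2$, which satisfies $\tilde L e^{i\Phi} = i e^{i\Phi}$, with denominator $|\Omega\Phi|^2 = |\xi|^2|x|^2 - (\xi\cdot x)^2$. Using $\mathcal{F}(\Omega_{ij}f) = \Omega_{ij}^\xi \hat f$, each integration by parts produces a factor $|x|^{-1}2^{-k}\cdot|\sin\theta|^{-1}$ and substitutes $\hat f$ by $\widehat{\Gamma f}$. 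I decompose into angular sectors $\{|\sin\theta|\sim 2^{-j}\}$ around $\pm \hat x$, perform one $\tilde L$-IBP on sectors with $j\leq j_0$ (gaining $|x|^{-1}2^{-k+j}$), and bound the small cone $\{|\sin\theta|\lesssim 2^{-j_0}\}$ by Cauchy-Schwarz (for (\ref{july27eqn4})) or by $L^\infty$ times the cone volume $\sim 2^{2k+\tilde k-j_0}$ (for (\ref{linearwavedecay2})). Optimizing in $j_0$ and interpolating between $\Gamma^1$ and $\Gamma^2$ with a $\delta_1^2$ exponent yields the stated bounds.

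The main obstacle is the proper handling of the super-localization $\psi(2^{-\tilde k}|\xi|-n)$: a naive integration by parts produces a spurious $2^{-\tilde k}$ loss that is incompatible with the target factor $2^{\tilde k/2}$. The resolution is to work in polar coordinates and to perform a secondary radial integration by parts whenever the derivative falls on this cutoff, which simultaneously trades the $2^{-\tilde k}$ loss for a derivative on the well-behaved factors and allows the $r^2$ Jacobian to be absorbed correctly via the $r^{-1}$ from polar $L^2$. A secondary subtlety is the optimization of the angular dyadic decomposition and the interpolation between $\widehat{\Gamma^\alpha f}$ for $|\alpha|\leq 1$ and $|\beta|\leq 2$, where the $\delta_1^2$ exponent reflects the fractional number of rotational IBPs needed to close both endpoints without logarithmic losses.
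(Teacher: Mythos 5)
Your high-level strategy for \eqref{july27eqn4} and \eqref{linearwavedecay2} — reduce the symbol to $1$, decompose dyadically in the angle $\theta=\angle(\xi,\pm x)$, use rotational integration by parts away from the degenerate directions, and bound the residual small cone separately — is the same as the paper's, and the radial-IBP idea for \eqref{decayestimatesmall} is the natural non-stationary-phase argument (which the paper does not even write out). However, there are two genuine issues worth flagging.

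The first concerns the small-cone bound. You propose controlling the sector $\{|\sin\theta|\lesssim 2^{-j_0}\}$ by Cauchy--Schwarz over the volume, which gains only one power of the angular radius, i.e.\ a factor $2^{-j_0}$. The paper's estimate \eqref{july27eqn2} is strictly stronger: it puts $\widehat f$ in $L^2(r^2\,dr)L^\infty_\theta$ and integrates the cutoff over the sphere, so it extracts the full solid angle $2^{-2j_0}$. With the paper's choice $2^{\bar l}\sim 2^{-k/2}|x|^{-1/2}$, the solid-angle gain yields precisely $|x|^{-1}2^{\tilde k/2}\|\widehat f\|_{L^2_r L^\infty_\theta}$, which is then converted to the stated $\Gamma$-norms by Sobolev embedding on $S^2$. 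Your Cauchy--Schwarz version gives $2^{k/2}|x|^{-1/2}2^{\tilde k/2}\|\widehat f\|_{L^2}$ instead, and the ratio to the target is $(2^k|x|)^{1/2}\gg 1$ in the relevant regime $|x|\gtrsim|t|$, $2^k\gg|t|^{-1}$. Re-optimizing $j_0$ does not close the gap: shrinking the cone worsens the rotational-IBP contribution exactly in proportion. So the $L^2_rL^\infty_\theta$ bound plus angular Sobolev embedding (at the cost of two rotations $\Omega_{ij}$, hence the $|\beta|\le 2$ and the $\delta_1^2$-interpolation against $|\alpha|\le 1$) is essential, not a cosmetic choice. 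Your argument as written would lose a factor that cannot be recovered.

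The second concerns your handling of the super-localization $\psi(2^{-\tilde k}|\xi|-n)$ in \eqref{decayestimatesmall}. You correctly identify that a radial derivative landing on this cutoff costs $2^{-\tilde k}$, which exceeds the allowable budget $2^{\tilde k/2}$, and you propose a second radial IBP. But each further IBP trades a factor $|t|^{-1}$ for another possible $2^{-\tilde k}$ from the cutoff, so the iteration only converges when $|t|\,2^{\tilde k}\gtrsim 1$; the lemma as stated imposes no such lower bound on $\tilde k$. When $|t|\,2^{\tilde k}\ll 1$, the correct move is not to iterate but to switch to a direct Cauchy--Schwarz bound (no IBP at all), in which the estimate closes \emph{because} of the $2^k\|\nabla_\xi\widehat f\|_{L^2}$ term already present on the right-hand side — the radial thickness forces $\nabla_\xi\widehat f$ to be large on the shell. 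Your ``second IBP cures the loss'' explanation hides this case distinction, and in the borderline regime the argument as described would not terminate. Since the paper omits the proof of \eqref{decayestimatesmall} entirely, this is not a disagreement with the paper but a gap in your blind reconstruction.
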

\begin{proof}
Note that $\nabla_\xi(x\cdot\xi-\mu t|\xi|)=0$ if and only if \mbox{${\xi}/{|\xi|} = \mu {x}/{t} = \mu{x}/{|x|} := \xi_{0} $}.   Let $\bar{l} $ be the least integer such that  $2^{ \bar{l} } \geq   2^{ -k/2} |x|^{-1/2}$. From the volume of support of $\xi$ and the Sobolev embedding in angular variables, we have 
\[
 \Big| 
\int_{\R^3}    e^{ i x\cdot \xi-i \mu t |\xi|  }  
 \widehat{f}(\xi) m(\xi) \psi_{k}(|\xi|) \psi_{\leq  \bar{l} }(\angle(\xi,\xi_0))\psi(2^{-\tilde{k}} |\xi|-n) 
  d \xi \Big| 
   \]
 \be\label{july27eqn2}
   \lesssim \|m(\xi)\|_{\mathcal{S}^\infty_k}  2^{(2k+\tilde{k})/2+2\bar{l}}  \|\widehat{f}(r\theta)\psi_k(r)\psi(2^{-\tilde{k}} r-n) \|_{L^2(r^2 dr ) L^\infty_\theta}  .
\ee

For the case when the angle is localized around $2^l$ where $l> \bar{l}$, we first do integration by parts in $\xi$ in the rotational directions   once. As a result, we have
\begin{multline}
 \int_{\R^3}  e^{ i x\cdot \xi-i \mu t |\xi|  }  
   \widehat{f}(\xi) m(\xi) \psi_{k}(\xi) \psi(2^{-\tilde{k}} |\xi|-n)  \psi_{l }(\angle(\xi,\xi_0))
  d \xi  =I_l^1+I_l^2,\\
  I_l^1=  \int_{\R^3}  e^{ ix\cdot \xi - i \mu t|\xi|  }   \big(  \frac{i x \times\xi }{|x\times \xi|^2}    \cdot    (\xi\times 
 \nabla_\xi)\widehat{f}(\xi)   m(\xi)\psi(2^{-\tilde{k}} |\xi|-n) \psi_{k}(\xi) \psi_{l }(\angle(\xi,\xi_0))\big) 
  d \xi,  \\
    I_l^2=\int_{\R^3}  e^{ ix\cdot \xi - i \mu t|\xi|  }      \big[ (\xi\times 
 \nabla_\xi)\cdot  \big(  \frac{i x \times\xi }{|x\times \xi|^2}      m(\xi)\psi(2^{-\tilde{k}} |\xi|-n) \psi_{k}(\xi) \psi_{l }(\angle(\xi,\xi_0))\big)\big]  \widehat{f}(\xi) 
  d \xi. 
  \end{multline}
  For  $ I_l^2$,  we   do integration by parts in $\xi$ in the rotational directions   one more time. As a result, we have 
  \begin{multline}\label{aug2eqn3}
   I_l^2=\int_{\R^3}  e^{ ix\cdot \xi - i \mu t|\xi|  }  (\xi\times \nabla_\xi)\cdot\Big[ \frac{i x \times\xi }{|x\times \xi|^2}   \big[ (\xi\times 
 \nabla_\xi)\cdot  \big(  \frac{i x \times\xi }{|x\times \xi|^2}     m(\xi)\psi(2^{-\tilde{k}} |\xi|-n)\\
 \times  \psi_{k}(\xi) \psi_{l }(\angle(\xi,\xi_0))\big)  \widehat{f}(\xi) \big]\Big]
  d \xi. 
\end{multline}
 From the volume of support of ``$\xi$'', we have 
 \[
   |I_l^1|+  |I_l^2|\lesssim \sum_{\Gamma \in \{\Omega_{ij}
\}}  \big(  |x|^{-1} 2^{- l} + |x|^{-2} 2^{-2l} 2^{-k-l}\big)
 \]
 \[
  \times 2^{-   k } 2^{(2k+\tilde{k})/2+ (1+\delta_1^3){l} } \|    \widehat{\Gamma f}(r\theta )\psi(2^{-\tilde{k}} r-n)\psi_k(r)\|_{L^2(r^2 d r)L^{2/(1-\delta_1^3)}_\theta }\]
  \be\label{july27eqn1}
+ |x|^{-2} 2^{-2l} 2^{-2k-2l}   2^{(2k+\tilde{k})/2+2 {l}}   \|\widehat{f}(r\theta)\psi_k(r)\psi(2^{-\tilde{k}} r-n) \|_{L^2(r^2 dr ) L^\infty_\theta} 
 \ee
From the above estimate (\ref{july27eqn1}), we have
\[
\sum_{\bar{l}< l \leq 2} |I_{l}^1| +|I_{l}^2|  \lesssim    \sum_{\Gamma \in \{\Omega_{ij}
\}}    |x|^{-1}  2^{\tilde{k}/2}  \|m(\xi)\|_{\mathcal{S}^\infty_k} \big[ \|\widehat{f}(r\theta)\psi_k(r)\psi(2^{-\tilde{k}} r-n) \|_{L^2(r^2 dr ) L^\infty_\theta} 
\]
 \be\label{july27eqn3}
+   \|    \widehat{\Gamma f}(r\theta )\psi(2^{-\tilde{k}} r-n)\psi_k(r)\|_{L^2(r^2 d r)L^{2/(1-\delta_1^3)}_\theta }\big].
\ee 
 The desired estimate  (\ref{july27eqn4}) holds from the estimates (\ref{july27eqn2}) and  (\ref{july27eqn3}) and the Sobolev embedding on the sphere. 

 Similarly,  if we rerun the above argument, do    integration by parts in $\xi$ in the rotational directions   one more time for $  I_l^1 $. As a result, the following estimate holds, 
 \[
  \big|\int_{\R^3}  e^{ i x\cdot \xi-i \mu t |\xi|  }     m(\xi) \widehat{f}(\xi) \psi_k(\xi) \psi(2^{-\tilde{k}} |\xi|-n)  d \xi \big| \lesssim \sum_{l\geq \bar{l}}
   \sum_{\Gamma \in \{\Omega_{ij}
\}}  |x|^{-2} 2^{-2l}\big[ 2^{-k-l}   2^{-   k } 2^{(2k+\tilde{k})/2+ 5{l}/3 } \]
  \be 
\times \|    \widehat{\Gamma f}(r\theta )\psi(2^{-\tilde{k}} r-n)\psi_k(r)\|_{L^2(r^2 d r)L^{6}_\theta }+  2^{-2k-2l}   2^{(2k+\tilde{k})/2+2 {l}}   \|\widehat{f}(r\theta)\psi_k(r)\psi(2^{-\tilde{k}} r-n) \|_{L^2(r^2 dr ) L^\infty_\theta} \big].
 \ee
 Therefore, our   desired estimate (\ref{linearwavedecay2}) holds from the above estimate and  the Sobolev embedding on the sphere. 
\end{proof}

\subsection{Estimates of basic quantities  }

\subsubsection{Difference between the perturbed metric and the modified perturbed metric}

In the following Lemma, we    show that the growth rates of the modified perturbed metric and the perturbed metric are almost comparable in both $L^2$-type space and the $L^\infty$-type space. They are only different at very top order. 
\begin{lemma}\label{basicestimates}
Under the bootstrap assumptions \textup{(\ref{BAmetricold})} and \textup{(\ref{BAvlasovori})}, for any $t\in[2^{m-1}, 2^m],   \tilde{L}\in \cup_{n\leq N_0}P_n,   {L}\in \cup_{n\leq N_0-5}P_n$, $\mu\in\{0,1,2,3\},$   we have 
\[ 
  \| \Lambda_{1}[ \p_t \widehat{   {   V^{\tilde{L} h_{\alpha\beta} } }}(t, \xi) ]\psi_k(\xi)\|_{L^2} + 2^k\|  P_{k}\big(U^{\tilde{L} h_{\alpha\beta}}(t)-\widetilde{U^{\tilde{L}h_{\alpha\beta}}}(t) \big)\|_{L^2_{x}} 
\]
\be\label{2022march19eqn30}
 \lesssim  
  2^{3k_{-}/2} \min\{2^{d( |\tilde{L}|, 0)\delta_0 m}, 2^{k+d( |\tilde{L}|, 0)\delta_0 m }+1  
  + 2^{m+k}\mathbf{1}_{|\tilde{L}|=N_0 
  }\}\epsilon_1, 
\ee
\be\label{2022march19eqn34}
\sum_{ |L|\leq N_0 -2} \sum_{l\in [0, N_0-|L|]\cap\Z}  \sup_{k\in \Z   } 2^{-k_{-}/2+\gamma k_{-} +  l k_{+}}\|  P_{k}\big(U^{ {L} h_{\alpha\beta}}(t)-\widetilde{U^{ {L}h_{\alpha\beta}}}(t) \big)\|_{L^2_{x}} \lesssim \epsilon_1,
\ee
\[
  2^{-k_{-}+ (N_0-3-|L|) k_{+}}\big( \| P_{k}(  \widetilde{U^{ {L} h_{\alpha\beta}  }})(t )   \|_{L^\infty_x}  + \| P_{k}(  {U^{ {L} h_{\alpha\beta}  }})(t )  \|_{L^\infty_x}\big) 
  +\| L h_{\alpha\beta}\|_{L^\infty_x}
\]
\be\label{2020julybasicestiamte}
 \lesssim 2^{-m +H(|L|+1)\delta_1 m+1.1\gamma m  }\epsilon_1  . 
 \ee
\end{lemma}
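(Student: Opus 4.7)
All three bounds reduce to controlling the density-type function $\tilde{\rho}^{h_{\alpha\beta}}_{L}(f) = U^{Lh_{\alpha\beta}} - \widetilde{U^{Lh_{\alpha\beta}}}$, whose Fourier transform is given by \eqref{modifiedperturbedmetric} as $\widehat{\tilde{\rho}^{h_{\alpha\beta}}_{L}}(t,\xi) = \sum_{\mathcal{L}\preceq L}\int e^{-it\hat v\cdot\xi}b_{\alpha\beta}^{L\mathcal{L}}(v)\bigl(i|\xi|-i\hat v\cdot\xi\bigr)^{-1}\widehat{u^{\mathcal{L}}}(t,\xi,v)\,dv$. The key analytic input is that $|\xi|-\hat v\cdot\xi\gtrsim |\xi|\langle v\rangle^{-2}$, so the multiplier factors as $|\xi|^{-1}$ times a symbol uniformly controlled by $\langle v\rangle^{2}$. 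The polynomial $v$-growth of $b_{\alpha\beta}^{L\mathcal{L}}$ recorded in \eqref{sep20eqn61} is absorbed by the weight $\omega^{\mathcal{L}}_{0}$, since $\omega^{\mathcal{L}}_{0}\gtrsim (1+|v|)^{M/\delta_{1}}$ with $M$ large.

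For \eqref{2022march19eqn30} I would bound $\|\psi_{k}\widehat{\tilde{\rho}^{h_{\alpha\beta}}_{L}}\|_{L^{2}_{\xi}}$ in two complementary ways and take the minimum. The first uses Plancherel and Minkowski in $v$: pulling the $v$-integral out and applying Cauchy–Schwarz against $\omega^{-1}$ gives $2^{k}\|P_{k}\tilde{\rho}^{h_{\alpha\beta}}_{L}\|_{L^{2}_{x}}\lesssim \sum_{\mathcal{L}\preceq L} E^{vl}_{\mathcal{L},0}(t)\lesssim 2^{d(|L|,0)\delta_{0}m}\epsilon_{1}$, which produces the middle term of the $\min$. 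The second uses $\|\psi_{k}F\|_{L^{2}_{\xi}}\lesssim 2^{3k/2}\|F\|_{L^{\infty}_{\xi}}$ (effective at $k<0$) combined with $|\widehat{u^{\mathcal{L}}}(t,\xi,v)|\leq\|u^{\mathcal{L}}(t,\cdot,v)\|_{L^{1}_{x}}$; the $L^{1}_{x}$ norm is controlled by splitting into $|x|\leq\langle t\rangle^{4H(4)\delta_{1}}$, where $(1+|x|)^{2}$ costs only a slow $t$-factor, and $|x|>\langle t\rangle^{4H(4)\delta_{1}}$, where the $|x|\phi(\cdot)$ factor inside $\omega^{\mathcal{L}}_{0}$ dominates. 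This recovers the factor $2^{3k_{-}/2}$. The estimate on $\Lambda_{1}[\partial_{t}\widehat{V^{\tilde L h_{\alpha\beta}}}]$ is obtained by the same pair of bounds: by \eqref{sep20eqn61}, its Fourier transform equals $e^{it|\xi|}\sum_{\mathcal{L}}\int e^{-it\hat v\cdot\xi}b(v)\widehat{u^{\mathcal{L}}}\,dv$, with no $|\xi|^{-1}$ singularity, so the argument applies verbatim.

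The main obstacle is the uniform-in-$t$ bound \eqref{2022march19eqn34}, which cannot follow from the naive Minkowski estimate since that grows like $2^{d(|L|,0)\delta_{0}m}$. I plan to exploit the fact that $N_{0}-|L|\geq 2$ leaves spare regularity: integrating by parts in $v$ via Lemma \ref{decayestimateofdensity} with $n$ derivatives produces extra factors $2^{-nk}|t|^{-n}$, and the $|t|^{-n}$ gain overcomes the polynomial-in-$t$ energy growth once $n$ is a small positive integer. The factor $2^{lk_{+}}$ from the high-frequency weight is absorbed by additional $\partial_{x_{i}}$-type derivatives on $u^{\mathcal{L}}$, controlled by the weighted energy $E^{vl}_{\mathcal{L},\rho}$ for an appropriate $\rho\in\mathcal{S}$ (using the low-$|v|$ entries $\psi_{\leq 0}(|v|)\partial_{x_{i}}$ of $\mathcal{K}$ together with the radial and angular spatial vector fields on the high-$|v|$ regime, which together span $\nabla_{x}$). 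Finally, for the $L^{\infty}_{x}$ estimate \eqref{2020julybasicestiamte}, I would split $U^{Lh_{\alpha\beta}}$ via \eqref{2020june1eqn1} into the modified essential half-waves $\widetilde{U^{\tilde h^{L}}}$ plus the density remainder $\tilde{\rho}^{h_{\alpha\beta}}_{L}$ and lower-order error terms. The modified piece is handled by Lemma \ref{superlocalizedaug} using the $Z$-norm bootstrap on $V^{\tilde h}$ and the weighted $L^{2}$ bound on $\nabla_{\xi}\widehat{V^{\tilde L h_{\alpha\beta}}}$ already established in \eqref{2020july17eqn1}, yielding the decay rate $2^{-m+H(|L|+1)\delta_{1}m+1.1\gamma m}\epsilon_{1}$. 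The density piece is summed over $k$ after applying Lemma \ref{decayestimateofdensity} with enough $v$-integrations by parts to extract $|t|^{-1}$ decay, with convergence of the $k$-sum ensured by the $N_{0}-|L|$ reserve of regularity on $u^{\mathcal{L}}$ and the large-$v$ decay of $\omega^{\mathcal{L}}_{\rho}$.
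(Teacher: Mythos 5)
The proposal misses the key mechanism the paper relies on: the Taylor-expansion decomposition \eqref{oct11eqn11}, which isolates the zero-frequency profile $\widehat{u^{\mathcal{L}}}(t,0,v)-\nabla_v\cdot\tilde u^{\mathcal{L}}_{corr}(t,v)$. That quantity is controlled \emph{uniformly in $t$} by the $Z^{vl}$ bootstrap, and this is the only way to produce the additive $1$ in the $\min$ of \eqref{2022march19eqn30}. Your two competing bounds — Plancherel/Cauchy--Schwarz against $\omega^{-1}$, and $\|\psi_kF\|_{L^2}\lesssim 2^{3k_-/2}\|F\|_{L^\infty}$ combined with $|\widehat{u^{\mathcal{L}}}|\le\|u^{\mathcal{L}}\|_{L^1_x}$ — both end up bounded by the weighted energy $E^{vl}_{\mathcal{L},0}(t)\lesssim 2^{d\delta_0 m}\epsilon_1$, so you can only recover $2^{3k_-/2}\cdot 2^{d\delta_0 m}\epsilon_1$, i.e.\ the first element of the $\min$, for every $k$. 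The second element (essentially $\max\{1,2^{k+d\delta_0 m}\}$), which is the one needed at low frequencies, never appears. Likewise you offer no explanation for the $2^{m+k}\mathbf{1}_{|\tilde L|=N_0}$ correction — in the paper this comes from the fourth term of \eqref{oct11eqn11}, where $\tilde u^{\mathcal{L}}_{corr}$ appears multiplied by $t\nabla_v(i\hat v\cdot\xi)$ and is nonzero only when $|\alpha|+|\mathcal{L}|=N_0$.

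The same gap propagates into \eqref{2022march19eqn34}. Your plan to gain $|t|^{-n}$ via integration by parts in $v$ costs $2^{-nk}$, which is harmless for $k\gtrsim -m$ but blows up for $k\ll -m$; there, you again need the zero-frequency option from \eqref{oct11eqn11} combined with the $Z$-norm. The paper's proof explicitly takes a $\min$ of the two bounds — density decay when $k$ is moderate, and the $Z$-norm-controlled Taylor expansion when $k$ is very small — and the second branch cannot be replaced by an energy argument since $E^{vl}$ grows polynomially in $t$. Finally, a minor but real concern on \eqref{2020julybasicestiamte}: you cite \eqref{2020july17eqn1}, which appears in Proposition \ref{fixedtimenonlinarityestimate} whose proof in turn rests on this lemma; the paper instead uses the bootstrap assumption \eqref{BAmetricold}'s weighted bound on $x_iV^{\tilde L h_{\alpha\beta}}$ directly together with the already-established estimate \eqref{2022march19eqn34} and Lemma \ref{superlocalizedaug}, avoiding any circularity.
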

\begin{proof}

Note that the following decomposition  holds for any differentiable symbol $m(\xi, v)$, 
 \begin{multline}\label{oct11eqn11}
 \int_{\R^3} e^{-it\hat{v}\cdot \xi}\widehat{u^{\mathcal{L}}}(t, \xi, v)  m(\xi,v)  \psi_k(\xi)    d v  =  \int_0^1 \int_{\R^3} e^{-it\hat{v}\cdot \xi}  m(\xi,v)    \xi\cdot \nabla_\xi \widehat{u^{\mathcal{L}}}(t, s\xi, v)   \psi_k(\xi) d v d s  \\
 + \int_{\R^3} e^{-it\hat{v}\cdot \xi}  m(\xi,v)     \big(\widehat{u^{\mathcal{L}}}(t, 0, v) -\nabla_v\cdot \tilde{u}^{\mathcal{L}}_{corr}(t,v)\big)\psi_k(\xi) d v  - \int_{\R^3} e^{-it\hat{v}\cdot \xi}  \nabla_v  m(\xi,v)    \cdot  \tilde{u}^{\mathcal{L}}_{corr}(t,v) \psi_k(\xi)d v\\ 
 + \int_{\R^3} e^{-it\hat{v}\cdot \xi}   m(\xi,v)     t \nabla_v( i \hat{v}\cdot \xi) \cdot  \tilde{u}^{\mathcal{L}}_{corr}(t,v) \psi_k(\xi)d v. 
\end{multline}
Recall  (\ref{sep20eqn61}) and the definition of the modified perturbed metric in (\ref{modifiedperturbedmetric}). From the above decomposition and the volume of support of $\xi$ if $k\leq 0$, we  have   
\[
 \|P_{k}\big(\Lambda_{1}[L \mathcal{N}_{\alpha\beta}^{vl}]\big)(t,x) \|_{L^2_{x}}    \lesssim  
  2^{3k_{-}/2-l k_{+}} \min\{2^{d( |\tilde{L}|, l)\delta_0 m},    2^{k+ d( |\tilde{L}|, l)\delta_0 m }  +1 + 2^{m+k}\mathbf{1}_{| {L}|=N_0 
  }\}\epsilon_1
\]
\[
 \|  P_{k}\big(U^{Lh_{\alpha\beta}}(t)-\widetilde{U^{Lh_{\alpha\beta}}}(t) \big)\|_{L^2_{x}}
\]
\be\label{2020june24eqn40}
 \lesssim  2^{3k_{-}/2-k- l k_{+}} \min\{2^{ d( |\tilde{L}|, l) \delta_0 m},    2^{k+ d( |\tilde{L}|, l) \delta_0 m }  +1 + 2^{m+k}\mathbf{1}_{| {L}|=N_0 
  } \}\epsilon_1.
\ee
Hence finishing the  proof of the desired estimate   (\ref{2022march19eqn30}).

From the decomposition (\ref{oct11eqn11}), the estimate (\ref{densitydecay})  in Lemma \ref{decayestimateofdensity}, the following estimate holds for any $k\in \Z$ if $|L|\leq N_0-2$, and $l\in [0, N_0-|L|]\cap \Z, $
\[
  2^{-k_{-}/2+\gamma k_{-} +  l k_{+}} \|  P_{k}\big(U^{Lh_{\alpha\beta}}(t)-\widetilde{U^{Lh_{\alpha\beta}}}(t) \big)\|_{L^2_{x}} 
  \]
  \[
  \lesssim  \min\{ 2^{- m- k_{-} + d(| {L}|+1, (l-1)_+ )\delta_0 m } , 2^{l k_+}( 1+2^{k+d(|L|,0)\delta_0 m } )\}\epsilon_1\lesssim \epsilon_1. 
 \]
Hence finishing the proof of the desired estimate   (\ref{2022march19eqn34}).

From the linear decay estimates (\ref{decayestimatesmall}) and  (\ref{july27eqn4})  in Lemma \ref{superlocalizedaug} and the obtained   estimate   (\ref{2022march19eqn34}),   the following estimate holds for any $t\in[2^{m-1},2^m]\subset [0,T], L\in \cup_{n\leq N_0-10} P_n ,$
\be\label{aug16eqn1}
 \| P_{k}(  {U^{ {L} h_{\alpha\beta}  }}) \|_{L^\infty_x}  \lesssim 2^{k_{-}/2-\gamma k_{-}}\min\{2^{3k/2-(N_0-|L|) k_{+}},2^{-(N_0-|L|-2)k_{+}} 2^{-m +k/2+  H(| {L}|+1)\delta_1 m +\gamma m/100   } \}\epsilon_1. 
\ee
   From the decay estimate (\ref{densitydecay}) of the density type function in Lemma  \ref{decayestimateofdensity}, the following estimate holds if $|L|\leq N_0-5$, 
\be\label{2020april16eqn51}
\| P_k( \tilde{\rho}^{h_{\alpha\beta}}_{L}(f))(t)\|_{L^\infty_x }  + \sum_{\tilde{h}\in\{F, \underline{F}, \omega_j, \vartheta_{mn}\}} \|  P_k(\tilde{\rho}^{\tilde{h}}_{L}(f))(t)\|_{L^\infty_x } \lesssim \min\{2^{2k}, 2^{-3m- k }\} 2^{-(N_0-|L|-3)k_{+}}\epsilon_1.
\ee
Recall the definition of modified half-wave in (\ref{modifiedperturbedmetric}).  We know that the last  estimate   in (\ref{2020julybasicestiamte}) hold   from the estimates (\ref{aug16eqn1}) and (\ref{2020april16eqn51}).  

\end{proof}

\subsubsection{General bilinear estimates for the wave-wave type interaction}
  \begin{lemma}\label{aug9bilinearLemma}
 For any  $ L_1,L_2\in \cup_{  0\leq n\leq N_0, |\alpha|\leq N_0-n }  \nabla_x^\alpha P_n, $ s.t., $ L_1\circ L_2 = L \in \cup_{  0\leq n\leq N_0, |\alpha|\leq N_0-n }  \nabla_x^\alpha P_n$, $m\in \mathbb{Z}_{+},t\in [2^{m-1}, 2^m]\subset[0, T]$, $f,g\in \{F, \underline{F},\omega_j, \vartheta_{mn}, h_{\alpha\beta}\}$, and  Fourier multiplier operators $T_1,T_2\in \mathcal{S}^0$, the following estimates hold  under  the bootstrap assumptions \textup{(\ref{BAmetricold})} and \textup{(\ref{BAvlasovori})}, 
\be\label{aug9eqn84}
\| T_1(L_1 f(t)) T_2( L_2 g(t))\|_{L^2_x} \lesssim 2^{-m/2 +   d( |L|+1, \|L\|-|L| )\delta_0 m   }\epsilon_1^2,  
\ee
\be\label{oct3reqn52}
\| \p_{\alpha} T_1( L_1 f(t)) T_2( L_2 g(t))\|_{L^2_x} \lesssim 2^{-2m/3 + d( |L|+1, \|L\|-|L| )\delta_0 m   }\epsilon_1^2, 
\ee
where  $\|L\|,$ and $|L|$ are defined in Definition \ref{vectorfieldscla}. 
 \end{lemma}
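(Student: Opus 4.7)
The strategy is a standard bilinear Hölder estimate: Littlewood--Paley decompose both factors, place the factor with fewer derivatives in $L^\infty_x$, and use the sharp linear decay provided by Lemma~\ref{basicestimates} there while using the energy bootstrap on the other factor in $L^2_x$. By the symmetry of the hypothesis I may assume $\|L_1\|\le\|L_2\|$, so $\|L_1\|\le\|L\|/2\le N_0/2\le N_0-5$, which is exactly the regime where the $L^\infty$ estimate (\ref{2020julybasicestiamte}) applies. When $f$ or $g$ equals $h_{\alpha\beta}$, the identity (\ref{connection}) rewrites $L_i h_{\alpha\beta}$ as a sum of zero-order Fourier multipliers of Hodge components, which can be absorbed into $T_1,T_2$ without leaving the class $\mathcal{S}^0$, so I may assume $f,g\in\{F,\underline F,\omega_j,\vartheta_{mn}\}$.

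Writing $L_1 f = c_+|\nabla|^{-1}\mathrm{Im}(U^{L_1 f})$ via (\ref{aug12eqn1}), the dyadic estimate (\ref{2020julybasicestiamte}) yields $\|P_{k_1}(T_1 L_1 f)\|_{L^\infty_x}\lesssim 2^{-m+H(|L_1|+1)\delta_1 m + 1.1\gamma m}\epsilon_1$ at moderate frequency, with the low-frequency tail controlled by Bernstein $L^2\to L^\infty$ applied to (\ref{BAmetricold}), and the high-frequency tail by the $(N_0-|L_1|-3)k_{1+}$ decay built into (\ref{2020julybasicestiamte}). For the $L^2$ factor, writing $L_2=\nabla_x^{\alpha_2}\Gamma_2^{\beta_2}$ and using $\Gamma_2^{\beta_2}g = c_\mu |\nabla|^{-1}U^{\Gamma_2^{\beta_2}g}$, the bootstrap (\ref{BAmetricold}) with $l=\|L_2\|-|L_2|$ gives $\|P_{k_2}(T_2 L_2 g)\|_{L^2_x}\lesssim 2^{(|\alpha_2|-1)k_2 + k_{2-}/2 - \gamma k_{2-} - (\|L_2\|-|L_2|)k_{2+}}\, 2^{d(|L_2|,\|L_2\|-|L_2|)\delta_0 m}\epsilon_1$. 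Multiplying and summing over $(k_1,k_2)$ with all three regimes of $k_1$ handled as above, the dominant contribution comes from $k_1,k_2\sim 0$ and produces a bound of order $2^{-m+H(|L_1|+1)\delta_1 m + d(|L_2|,\|L_2\|-|L_2|)\delta_0 m}\epsilon_1^2$. The monotonicity $d(|L_2|,\|L_2\|-|L_2|) = H(\|L_2\|) + 10|L_2| \le H(\|L\|)+10|L| \le d(|L|+1,\|L\|-|L|)$, combined with $H\delta_1 \ll \delta_0$, makes this strictly stronger than the claimed (\ref{aug9eqn84}).

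For the derivative estimate (\ref{oct3reqn52}) I place $\partial_\alpha$ on the $L^\infty$ factor: if $\alpha=0$ I use $\partial_t L_1 f=\mathrm{Im}(U^{L_1 f})$ to avoid the $|\nabla|^{-1}$ and save one factor of $2^{-k_1}$; if $\alpha\in\{1,2,3\}$ I simply gain $2^{k_1}$ in front of the $L^\infty$ piece, which helps low-frequency summation without hurting the moderate-frequency bound. Summing as before again produces a bound of order $2^{-m+\cdots}$, strictly stronger than $2^{-2m/3+\cdots}$. The main bookkeeping obstacle is ensuring convergence of the high-frequency sum in $k_2$ when $\|L_2\|$ is close to $N_0$ (so $\|L_2\|-|L_2|$ is small); this is secured by the high-frequency decay $2^{-(\|L_2\|-|L_2|)k_{2+}}$ coming from the bootstrap choice $l=\|L_2\|-|L_2|$ together with the constraint $\|L_2\|\le N_0$, and all remaining small losses ($H(\cdot)\delta_1 m$, $\gamma m$, and the logarithmic $m$ from summing the dyadic scales) are absorbed into the generous $2^{m/2}$ or $2^{m/6}$ slack of the claimed estimates.
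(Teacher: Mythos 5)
Your overall framework (Littlewood--Paley decomposition, placing the smaller-derivative factor in $L^\infty_x$ via Lemma~\ref{basicestimates} and the other in $L^2_x$ via the bootstrap) is the same as the paper's. But there is a concrete error in the bookkeeping that invalidates your conclusion for (\ref{aug9eqn84}): you claim the dominant contribution comes from $k_1,k_2\sim 0$ and yields $2^{-m+\cdots}\epsilon_1^2$, ``strictly stronger than the claimed'' $2^{-m/2+\cdots}$. That is false. The dominant contribution comes from $k_1\approx k_2\approx -m$.

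To see this, take the worst representative case $L_1=L_2=\mathrm{Id}$, $f=g=h_{\alpha\beta}$, and $k_1=k_2=k$ with $k\in[-m,0]$. Since $T_i(L_i h) = T_i\circ|\nabla|^{-1}$ applied to $U^{L_ih}$ (see (\ref{aug12eqn1})), the $L^\infty$ factor satisfies $\|P_kT_1 L_1 f\|_{L^\infty_x}\lesssim 2^{-k}\cdot 2^{k_-}\cdot 2^{-m+\cdots}\epsilon_1\sim 2^{-m+\cdots}\epsilon_1$, which is \emph{flat} in $k$ on this range, while the $L^2$ factor has $\|P_kT_2L_2g\|_{L^2_x}\lesssim 2^{-k}\cdot 2^{k_-/2-\gamma k_-+\cdots}\epsilon_1\sim 2^{-k/2-\gamma k+\cdots}\epsilon_1$, which \emph{grows} as $k$ decreases --- a genuine feature of the energy norm $E_l$, not an artifact, reflecting the $\langle r\rangle^{-1+}$ tail of the metric. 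The product is $\sim 2^{-m-k/2-\gamma k+\cdots}\epsilon_1^2$, which at $k=-m$ is $\sim 2^{-m/2+\gamma m+\cdots}\epsilon_1^2$, much larger than the $2^{-m+\cdots}$ you get at $k=0$. For $k<-m$ the $L^\infty$ factor is controlled by Bernstein as $2^{(1-\gamma)k+\cdots}$, so the product $\sim 2^{(1/2-2\gamma)k+\cdots}$ decays and is again maximized near $k=-m$. The lemma's $2^{-m/2}$ is therefore sharp and cannot be improved to $2^{-m}$ from these inputs; your computation simply omits the low-frequency balloon in the $L^2$ factor against the $|\nabla|^{-1}$ weights.

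Two smaller points. First, your reduction ``by symmetry I may assume $\|L_1\|\le\|L_2\|$'' is harmless for (\ref{aug9eqn84}) but is not a clean symmetry for (\ref{oct3reqn52}) because $\partial_\alpha$ sits only on the first factor; the paper instead splits by $|L_1|\le|L_2|$ versus $|L_1|\ge|L_2|$ precisely to handle this asymmetry (see the estimate leading to (\ref{oct3reqn55})). Second, always putting the factor with fewer vector fields in $L^\infty$ does not give a convergent $k_2$-sum when $k_2\ll k_1$ and $|L_2|$ is near $N_0$; the paper's case split (\ref{2020aug24eqn12})--(\ref{2020aug24eqn13}) alternates which factor is placed in $L^\infty$ to fix this, which your proposal does not address.
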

 \begin{proof}

 After doing dyadic decompositions for two inputs and using the orthogonality in $L^2$, we have
 \[
 \| T_1(L_1 f(t)) T_2( L_2 g(t))\|_{L^2_x}^2 \lesssim \sum_{k \in \Z }   \|P_{k }(T_1(L_1 f(t))) P_{\leq k-10}(T_2( L_2 g(t)))\|_{L^2_x}^2
 \]
 \be\label{2020aug24eqn10}
   + \|P_{\leq k-10 }(T_1(L_1 f(t))) P_{ k }(T_2( L_2 g(t)))\|_{L^2_x}^2+ \big(\sum_{|k_1-k_2|\leq 10} \|P_{k_1 }(T_1(L_1 f(t))) P_{k_2}(T_2( L_2 g(t)))\|_{L^2_x} \big)^2.
 \ee
  Based on the relative size of $|L_1|$ and $|L_2|$, we  separate into two cases  as follows.

\noindent $\bullet$ If $|L_1|\leq |L_2|$. \qquad Note that we have $|L_1|\leq |L|/2$ for this case. If $k_1\leq k_2 +10$, then  from the $L^2-L^\infty$ type estimate and the $L^\infty$-estimate (\ref{2020julybasicestiamte}) in Lemma \ref{basicestimates}, we  have  
\be\label{2020aug24eqn11}
 \|P_{k_1}(T_1(L_1 f(t))) P_{k_2}(T_2( L_2 g(t)))\|_{L^2_x} \lesssim  2^{-k_1-k_2} \min\{ \| U^{L_1 f}_{k_1}\|_{L^\infty} , 2^{3k_1/2} \| U^{L_1 f}_{k_1}\|_{L^2}\}\| U^{L_2 g}_{k_2}\|_{L^2}.
\ee
If $k_2\leq k_1-10$ and $|L|\leq N_0-5$, then   the following estimate holds after putting $  U^{L_2 g}_{k_2}  $ in $L^\infty$,
\be\label{2020aug24eqn12}
 \|P_{k_1}(T_1(L_1 f(t))) P_{k_2}(T_2( L_2 g(t)))\|_{L^2_x} \lesssim  2^{-k_1-k_2 } \min\{ \| U^{L_2 f}_{k_2}\|_{L^\infty} , 2^{3k_2/2} \| U^{L_2 f}_{k_2}\|_{L^2}\} \| U^{L_1 g}_{k_1}\|_{L^2}.
\ee
If $k_2\leq k_1-10$ and $|L|\geq N_0-5$, the following estimate holds after putting $  U^{L_1 f}_{k_1}  $ in $L^\infty$ or putting $  U^{L_2 g}_{k_2}  $ in $L^\infty$  and using $L^\infty\longrightarrow L^2$ type Sobolev embedding, 
\be\label{2020aug24eqn13}
 \|P_{k_1}(T_1(L_1 f(t))) P_{k_2}(T_2( L_2 g(t)))\|_{L^2_x} \lesssim  2^{-k_1-k_2 } \min\{ 2^{3k_2/2}\| U^{L_1 f}_{k_1}\|_{L^2},\| U^{L_1 f}_{k_1}\|_{L^\infty}   \} \| U^{L_2 g}_{k_2}\|_{L^2}.
\ee
After combining the estimates (\ref{2020aug24eqn10}) and (\ref{2020aug24eqn11}--\ref{2020aug24eqn13}), we have 
\be\label{2020aug24eqn14}
\begin{split}
\| T_1(L_1 f(t)) T_2( L_2 g(t))\|_{L^2_x} & \lesssim 2^{  d( |L|+1, \|L\|-|L| )\delta_0 m   -\delta_0 m } \big(\sum_{k\leq -m} 2^{k/2-2\gamma k_{-}} + \sum_{k\geq -m} 2^{- m -k-2\gamma k_{-}} \big)  \epsilon_1^2\\
& \lesssim 2^{-m/2+d( |L|+1, \|L\|-|L| )\delta_0 m  } \epsilon_1^2.
 \end{split}
\ee
Following the same strategy, with minor modifications, we have
 
\be\label{2020aug24eqn15}
\begin{split}
\| \p_{\alpha}T_1(L_1 f(t)) T_2( L_2 g(t))\|_{L^2_x}  & \lesssim 2^{   d( |L|+1, \|L\|-|L| )\delta_0 m   -\delta_0 m  } \big(\sum_{k\leq -2m/3} 2^{k-2\gamma k_{-}} \\
 + \sum_{k\geq -2m/3} 2^{- m -k/2-2\gamma k_{-}} \big)  \epsilon_1^2 & \lesssim 2^{-2m/3+ d( |L|+1, \|L\|-|L| )\delta_0 m    } \epsilon_1^2.
  \end{split}
\ee
 
\noindent $\bullet$ If $|L_1|\geq |L_2|$. \qquad Since the estimate of $T_1(L_1 f(t)) T_2( L_2 g(t))$ is symmetric with respect to $L_1$ and $L_2$, we only need to  estimate  $\p_{\alpha} T_1(L_1 f(t)) T_2( L_2 g(t))$.

 Note that the following estimate holds from the $L^2-L^\infty$ type bilinear estimate, 
\be\label{oct3reqn55}
  \|\p_{\alpha} T_1(L_1 f(t)) T_2( L_2 g(t))\|_{L^2} \lesssim  
\| U^{L_1 f}_{ }(t)\|_{L^2} \| U^{L_2 g}_{}(t)\|_{L^\infty} \lesssim 2^{-m+d( |L|+1, \|L\|-|L| )\delta_0 m }. 
  \ee
  To sum up,  our desired estimates (\ref{aug9eqn84})  and (\ref{oct3reqn52}) hold from   the estimates (\ref{2020aug24eqn14}--\ref{oct3reqn55}). 
 \end{proof}

\subsubsection{Estimates of basic quantities}

Now, we apply basic linear and bilinear estimates obtained in previous subsubsections to control some basic quantities, which are mostly higher order terms. We estimate them in details here so that they can be treated like error terms in later energy estimates. 

 \begin{lemma}\label{highorderterm1}
 Under the bootstrap assumptions  \eqref{BAmetricold} and \eqref{BAvlasovori}, for any $\gamma\in\{0,1,2,3\}$, $t\in [2^{m-1}, 2^m]\subset[0, T], m\in \Z_+,$
    $ L_1 \in \cup_{  0\leq n\leq N_0, |\alpha|\leq N_0-n }  \nabla_x^\alpha P_n$ and $L_2\in  \cup_{  0\leq n\leq N_0-5, |\alpha|\leq N_0-n } \nabla_x^\alpha P_n$, we have
 \[
  2^{m/2-d( |L_1|+1, \|L_1\|-|L_1| ) \delta_0m }\| {L}_1(\Lambda_{\geq 2}[\textup{det} g ])\|_{L^2}   +  2^{2m/3-d( |L_1|+1, \|L_1\|-|L_1| ) \delta_0m }\| \p_\alpha {L}_1(\Lambda_{\geq 2}[\textup{det} g ])\|_{L^2}  
 \] 
 \be\label{june29eqn1}
+     2^{2m  -d( |L_2|+2, \|L_2\|-|L_2| ) \delta_0m }\| {L}_2(\Lambda_{\geq 2}[\textup{det} g ])\|_{L^\infty} \lesssim 2^{2\delta_0m} \epsilon_1^2.
    \ee

 \end{lemma}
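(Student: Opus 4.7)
The plan is to Taylor expand $\det g = \det(\eta + h)$ around $h = 0$ to obtain
$$\Lambda_{\geq 2}[\det g] = \sum_{k\geq 2}\sum_{\vec\alpha,\vec\beta} c^{(k)}_{\vec\alpha,\vec\beta}\, h_{\alpha_1\beta_1}\cdots h_{\alpha_k\beta_k},$$
an absolutely convergent polynomial series of degree $\geq 2$ in the components $h_{\alpha\beta}$ (convergence is pointwise, since $\|h\|_{L^\infty}\ll 1$ from \eqref{2020julybasicestiamte}). Applying $L_1$ and using the Leibniz rule, each resulting summand is of the form $c\cdot (\tilde L^{(1)} h_{\alpha_1\beta_1})\cdots (\tilde L^{(k)} h_{\alpha_k\beta_k})$ where $\tilde L^{(1)}\circ\cdots\circ \tilde L^{(k)} \preceq L_1$ in the sense of Definition \ref{vectorfieldscla}, so in particular $\sum_i |\tilde L^{(i)}|\leq |L_1|$ and $\sum_i \|\tilde L^{(i)}\|\leq \|L_1\|$.

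For the quadratic contribution ($k=2$), which is the only piece for which the claimed $2^{-m/2}$ decay is close to tight, I would apply the bilinear estimates \eqref{aug9eqn84} and \eqref{oct3reqn52} from Lemma \ref{aug9bilinearLemma} with $f=g=h$ and trivial multipliers $T_1=T_2=\text{Id}$. For each split $L_1 = \tilde L^{(1)}\circ\tilde L^{(2)}$ this yields
$$\|(\tilde L^{(1)} h_{\alpha_1\beta_1})(\tilde L^{(2)} h_{\alpha_2\beta_2})\|_{L^2_x} \lesssim 2^{-m/2+d(|L_1|+1,\|L_1\|-|L_1|)\delta_0 m}\epsilon_1^2,$$
and analogously a bound with $2^{-2m/3+\cdots}$ for $\|\partial_\alpha(\cdots)\|_{L^2_x}$. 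Summing over the finite set of Leibniz splits produces the first two inequalities in \eqref{june29eqn1} for the quadratic part, with room to spare relative to the $2^{2\delta_0 m}$ buffer on the right-hand side.

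For the cubic and higher-order contributions ($k\geq 3$) in the $L^2$ bounds, by pigeonhole at least $k-2\geq 1$ of the factors satisfy $|\tilde L^{(i)}|\leq |L_1|/2$ and $\|\tilde L^{(i)}\|\leq \|L_1\|/2$, well inside the range $\leq N_0-5$ where the $L^\infty$ bound \eqref{2020julybasicestiamte} applies. Handling the remaining two factors by \eqref{aug9eqn84} (or \eqref{oct3reqn52}) and placing the rest in $L^\infty$, each extra $L^\infty$ factor contributes a decay $2^{-m+H(\cdot)\delta_1 m+1.1\gamma m}\epsilon_1$, which is strictly better than $2^{-m/2}$ and is therefore harmlessly absorbed.

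For the $L^\infty$ estimate in the last line of \eqref{june29eqn1}, the hypothesis $|L_2|\leq N_0-5$ ensures every factor in the Leibniz expansion lies in the range where \eqref{2020julybasicestiamte} gives $\|\tilde L^{(i)} h_{\alpha_i\beta_i}\|_{L^\infty}\lesssim 2^{-m+H(|\tilde L^{(i)}|+1)\delta_1 m+1.1\gamma m}\epsilon_1$. Multiplying at least two such factors and invoking $H(n)\delta_1 \ll d(n,\cdot)\delta_0$ (since $\delta_0 = 100\delta_1$) delivers the target with considerable slack. The only real bookkeeping obstacle is to verify that the derivative/vector-field counts aggregate correctly across all Leibniz splits so that $d(|L_1|+1,\|L_1\|-|L_1|)\delta_0 m$ always dominates; this is guaranteed by monotonicity of $d(n,l)$ in both arguments together with $\preceq$ preserving both $|\cdot|$ and $\|\cdot\|$, making the entire argument routine.
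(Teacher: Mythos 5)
Your proof is correct and follows essentially the same route as the paper: expand $\det g$ as a (finite, degree-4) polynomial in the entries $h_{\alpha\beta}$, apply Leibniz, bound the leading pair of factors with the bilinear $L^2$ estimates of Lemma \ref{aug9bilinearLemma}, and place remaining factors in $L^\infty$ via \eqref{2020julybasicestiamte}. The only cosmetic differences are that you frame the expansion as a Taylor series requiring a convergence remark (unnecessary, since $\det$ of a $4\times 4$ matrix is exactly a degree-4 polynomial, as the paper notes explicitly), and you absorb the accumulated $H(\cdot)$ factors into the target using the crude $\delta_0 = 100\delta_1$ buffer where the paper invokes the sharper superadditivity property \eqref{july15eqn21} — both are valid here.
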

 \begin{proof}
Note that
\[
\textup{det}(g) = \sum_{\alpha_i, \beta_i\in\{0,1,2,3\}}c^{\beta_1\cdots\beta_4}_{\alpha_1\cdots\alpha_4}\prod_{i=1,2,3,4}(1+h_{\alpha_i\beta_i}),
\]
where $c^{\beta_1\cdots\beta_4}_{\alpha_1\cdots\alpha_4}\in\{0,-1,1\}$ is some uniquely determined constants. Therefore, we have
 \be\label{june29eqn2}
 {L}(\Lambda_{\geq 2}[\textup{det} g ])=   \sum_{\begin{subarray}{c}
 {L}= {L}_1\circ \cdots  {L}_4,  {L}_i \in P^{b_i }_{a_i},
\{j_1,\cdots,j_4\}=\{1,\cdots,4\}, \alpha_i, \beta_i\in\{0,1,2,3\} \\
\end{subarray} }  c^{\beta_1\cdots\beta_4}_{\alpha_1\cdots\alpha_4}\prod_{i=1,2,3,4} \Lambda_{\geq 2}[  {L}_{j_i}(1+h_{\alpha_i\beta_i})].
 \ee
 From    the estimates (\ref{aug9eqn84}) and (\ref{oct3reqn52}) in Lemma \ref{aug9bilinearLemma}, we have
 \[
 2^{m/2-d( |L_1|+1, \|L_1\|-|L_1| )\delta_0m }\| {L}_1(\Lambda_{\geq 2}[\textup{det} g ])\|_{L^2}   +  2^{2m/3-d( |L_1|+1, \|L_1\|-|L_1| )\delta_0m }\| \p_\alpha {L}_1(\Lambda_{\geq 2}[\textup{det} g ])\|_{L^2}
 \]
 \[
     \lesssim 2^{2\delta_0 m } \epsilon_1^2. 
 \]
 From  the $L^\infty-L^\infty$ type mutlilinear estimate, the estimate (\ref{2020julybasicestiamte}) in Lemma \ref{basicestimates}, and the estimate (\ref{july15eqn21}), we have
 \[
  \| {L}_2(\Lambda_{\geq 2}[\textup{det} g ])\|_{L^{\infty}} \lesssim  \sum_{    \tilde{L}_1\circ \tilde{L}_2 \preceq L_2 } \|  {L}_1 h_{\alpha\beta}\|_{L^\infty}  \|  {L}_2 h_{\alpha\beta}\|_{L^\infty} \lesssim 2^{-2m+ d( |L_2|+2, \|L_2\|-|L_2| )\delta_0 m  } \epsilon_1^2.
 \]
 Hence finishing the proof of the desired estimate  (\ref{june29eqn1}). 
\end{proof}

\begin{lemma}\label{highorderterm2}
 Under the bootstrap assumptions  \eqref{BAmetricold} and \eqref{BAvlasovori},   for any $\alpha, \beta, \gamma \in\{0,1,2,3\}$,  $t\in [2^{m-1}, 2^m]\subset[0, T], m\in \Z_+,$ $ L_1 \in \cup_{  0\leq n\leq N_0, |\alpha|\leq N_0-n }  \nabla_x^\alpha P_n$ and $L_2\in  \cup_{  0\leq n\leq N_0-5, |\alpha|\leq N_0-n } \nabla_x^\alpha P_n$, we have
\[
  2^{m/2- d( |L_1|+1, \|L_1\|-|L_1| ) \delta_0m    }\big[\big(\| {L}_1(\Lambda_{\geq 2}[( {-\textup{det} g })^{-1/2}])\|_{L^2_x} +   \| {L}_1(\Lambda_{\geq 2}[ g^{\alpha\beta}])\|_{L^2_x}\big)
  \]
  \[
  +  2^{ m/6 }\big( \| \p_{\gamma} {L}_1(\Lambda_{\geq 2}[( {-\textup{det} g })^{-1/2}])\|_{L^2_x}   + \|  \p_{\gamma} {L}_1(\Lambda_{\geq 2}[ g^{\alpha\beta}])\|_{L^2_x}  \big)\big]
 \]
 \be\label{june30eqn1}
  +   2^{ 2m-  d( |L_2|+2, \|L_2\|-|L_2| ) \delta_0 m   } \big(\| {L}_2( \Lambda_{\geq 2}[ ( {-\textup{det} g })^{-1/2}])\|_{L^\infty_x}  +  \| {L}_2(\Lambda_{\geq 2}[ g^{\alpha\beta} ])\|_{L^\infty_x}\big)\lesssim  2^{2\delta_0 m }\epsilon_1^2.
 \ee
  
\end{lemma}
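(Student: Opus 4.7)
The plan is to mimic the strategy of Lemma \ref{highorderterm1}, reducing both quantities to convergent multilinear expansions in the perturbation $h_{\alpha\beta}$ and then invoking the bilinear estimates \eqref{aug9eqn84}, \eqref{oct3reqn52} together with the pointwise bound \eqref{2020julybasicestiamte}.

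First I would write both nonlinear quantities as absolutely convergent power series in $h$. Since $-\det \eta = 1$, we have $-\det g = 1 + Q(h)$ where $Q(h)$ is a polynomial in the entries of $h_{\alpha\beta}$ of degree $4$ with no constant term. By \eqref{2020julybasicestiamte} we have $\|h\|_{L^\infty}\lesssim \epsilon_1\ll 1$, so the Taylor series $(1+Q(h))^{-1/2}=\sum_{n\geq 0} c_n Q(h)^n$ converges pointwise and geometrically. Likewise, writing $g=\eta+h$ and expanding the matrix inverse by the Neumann series yields $g^{\alpha\beta}=\eta^{\alpha\beta}+\sum_{n\geq 1}(-1)^n(\eta^{-1}h)^{n}{}^{\alpha}{}_\gamma \eta^{\gamma\beta}$. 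Therefore
\[
\Lambda_{\geq 2}\big[(-\det g)^{-1/2}\big]=\sum_{k\geq 2} M_k(h,\dots,h),\qquad \Lambda_{\geq 2}\big[g^{\alpha\beta}\big]=\sum_{k\geq 2} N_k^{\alpha\beta}(h,\dots,h),
\]
where $M_k, N_k^{\alpha\beta}$ are $k$-linear forms in the entries of $h$ with uniformly bounded coefficients.

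Next, distributing the vector field $L_1$ (which is a composition of elements of $P\cup\{\partial_\gamma\}$) over each $k$-linear form produces, in exact analogy with \eqref{june29eqn2}, a sum
\[
L_1\Lambda_{\geq 2}[\cdot]=\sum_{k\geq 2}\sum_{\tilde L_1\circ\cdots\circ \tilde L_k\preceq L_1}c\,\prod_{i=1}^{k}\tilde L_i h_{\alpha_i\beta_i}.
\]
For the $L^2$ estimate I would select from each summand the two factors $\tilde L_{i_1}h$, $\tilde L_{i_2}h$ carrying the largest values of $\|\tilde L\|$, apply the bilinear estimate \eqref{aug9eqn84} to their product (or \eqref{oct3reqn52} when an outer $\partial_\gamma$ hits one of them), and place each of the remaining $k-2$ factors in $L^\infty$ via \eqref{2020julybasicestiamte}. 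Because $\|h\|_{L^\infty}\lesssim 2^{-m+H(1)\delta_1 m+\gamma m}\epsilon_1\ll 1$ for $m$ large, the geometric sum in $k\geq 2$ converges and contributes only a constant factor. Summing the resulting exponent using the subadditivity property \eqref{july15eqn21} of $H$ gives the bound $2^{-m/2+d(|L_1|+1,\|L_1\|-|L_1|)\delta_0 m}\epsilon_1^2$ and, with the extra $2^{-m/6}$ from \eqref{oct3reqn52}, the derivative bound. For the $L^\infty$ estimate, every factor is placed in $L^\infty$ using \eqref{2020julybasicestiamte}; the two factors of top order produce the growth $2^{-2m+d(|L_2|+2,\|L_2\|-|L_2|)\delta_0 m}$ and the remaining factors again sum geometrically.

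The only nontrivial point is the exponent arithmetic: one must verify that for any admissible partition $\tilde L_1\circ\cdots\circ\tilde L_k\preceq L_1$ the pairing of the two largest factors yields a growth rate at most $d(|L_1|+1,\|L_1\|-|L_1|)\delta_0 m$. This is the same combinatorial check already performed in Lemma \ref{highorderterm1}, and it goes through verbatim thanks to \eqref{july15eqn21} and the monotonicity of $H$. Hence no new obstacle arises beyond the bookkeeping already handled in the $\det g$ case.
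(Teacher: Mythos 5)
Your proposal is essentially correct but takes a genuinely different route from the paper. The paper does not Taylor-expand $(-\det g)^{-1/2}$; instead it sets $x=\Lambda_1[-\det g]$, $y=\Lambda_{\geq 2}[-\det g]$, $w=\Lambda_1[(-\det g)^{-1/2}]$, $z=\Lambda_{\geq 2}[(-\det g)^{-1/2}]$ and exploits the exact algebraic identity $(1+z+w)^2(1+x+y)=1$, which after isolating $z$ yields a \emph{finite} fixed-point relation expressing $z$ in terms of $z$ multiplied by an $O(\epsilon_1)$ factor plus lower-order data already controlled by Lemma~\ref{highorderterm1}. Applying vector fields to this relation produces recursive inequalities of the form $\tilde Y_{\alpha,n}\lesssim \epsilon_1\tilde Y_{\beta,l}+\tilde X_{\alpha,n}+\epsilon_1^2$, which absorb and close without any infinite sum; the estimate for $g^{\alpha\beta}$ is then obtained through the cofactor identity $\Lambda_{\geq 2}[g^{\alpha\beta}]=\sum\prod\Lambda_{\geq 2}[(\det g)^{-1}(1+h_{\alpha_i\beta_i})]$ rather than a Neumann series. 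Your Taylor/Neumann-series route reaches the same conclusion, and the bilinear estimates \eqref{aug9eqn84}, \eqref{oct3reqn52} and the pointwise bound \eqref{2020julybasicestiamte} are indeed what closes the argument, but one must track somewhat more bookkeeping than you indicate: distributing $L_1$ (of total weight up to $N_0$) over $k$ factors generates on the order of $k^{\|L_1\|}$ terms, and convergence of $\sum_{k\geq 2}$ rests on the observation that all but at most $\|L_1\|$ of the $k$ factors carry no derivatives, so the polynomial combinatorial factor is beaten by $\|h\|_{L^\infty}^{k-O(1)}\lesssim\epsilon_1^{k-O(1)}$. This works, but the paper's fixed-point argument sidesteps the infinite series and the attendant combinatorics entirely; that is the main thing the paper's approach buys. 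Your final appeal to \eqref{july15eqn21} for the exponent arithmetic is the right ingredient — it is exactly what ensures that splitting the vector fields among factors never worsens the rate beyond $d(|L_1|+1,\|L_1\|-|L_1|)\delta_0 m$ — though the critical observation is really the monotonicity of $d(n,l)=H(n+l)+10n$ in both arguments.
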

\begin{proof}
Due to the small data regime, we will use a fixed point type argument to show that the size of ``$ {L} (\Lambda_{\geq 2}[ ( {-\textup{det} g })^{-1/2} ])$''  is comparable with the size of ``$ {L} (\Lambda_{\geq 2}[  \textup{det} g  ])$''. More precisely, let
\[
y:=\Lambda_{\geq 2}[-\textup{det}(g)],\quad  x:= \Lambda_{1}[-\textup{det}(g)]=h_{00}-h_{ii},\]
\[
 z:=\Lambda_{\geq 2}[  ( {-\textup{det} (g) })^{-1/2}] ,  \quad w:= \Lambda_{1}[  ( {-\textup{det} (g) })^{-1/2}]=-\h  x.
\]
Note that
\begin{multline}\label{june29eqn31}
(1+z+w)^2= ({1+x+y})^{-1} = 1-x +\Lambda_{\geq 2}[({1+x+y})^{-1}], \\ 
 \Longrightarrow 2z   + 2 zw +z^2 +w^2 =  \Lambda_{\geq 2}[({1+x+y})^{-1}],
\end{multline}
  \[
 \Lambda_{\geq 2}[\frac{1}{1+ (x+y) }]= \Lambda_{\geq 2}[\frac{- (x+y)}{1+ (x+y)}]
 \]
 \[
 = -\frac{\Lambda_{\geq 2}[(x+y)]}{1+ (x+y)}-x\Lambda_{\geq 1}[\frac{1}{1+ (x+y)}] = -y\Lambda_{\geq 2}[\frac{1}{1+(x+y)}] - y (1-x)
 \]
\be\label{june29eqn44}
-x\big(-x +\Lambda_{\geq 2}[\frac{1}{1+ (x+y)}] \big)= - (x+y) \Lambda_{\geq 2}[\frac{1}{1+x+y}] - y (1-x) + x^2. 
\ee
Let
\[
X_{\alpha,n }:=  \sum_{  {L} \in  \nabla_x^\alpha P^{   }_n}     2^{m/2-d( |L |+1, \|L \|-|L | ) \delta_0m -2\delta_0 m  } \big[\| {L}  (\Lambda_{\geq 2}[({1+x+y})^{-1}])\|_{L^2}
\]
\[
+ 2^{m/6 } \|\p_{\gamma} {L}  (\Lambda_{\geq 2}[ ({1+x+y})^{-1}])\|_{L^2}\big], \]
\[
 Y_{\alpha,n  }:=   \sum_{  {L} \in   \nabla_x^\alpha P^{   }_n}    2^{m/2-d( |L |+1, \|L \|-|L | ) \delta_0m -2\delta_0 m  } \big[  \| {L}   z   \|_{L^2} +  2^{ m/6} \|\p_{\gamma} {L}   z   \|_{L^2}\big], 
\]
\[
 \tilde{X}_{\alpha,n  }:=  \sum_{  {L} \in   \nabla_x^\alpha P^{   }_n}  2^{ 2m- d( |L |+ 2, \|L \|-|L | ) \delta_0 m -2\delta_0 m} \| {L}  (\Lambda_{\geq 2}[ ({1+x+y})^{-1}])\|_{L^\infty}, 
 \]
 \[
  \tilde{Y}_{\alpha,n  }:=  \sum_{  {L} \in  \nabla_x^\alpha P^{   }_n}  2^{ 2m- d( |L |+2, \|L \|-|L | )\delta_0 m -2\delta_0 m}  \| {L}   z \|_{L^\infty}.
\]
From the equality  (\ref{june29eqn44}), $L^\infty-L^\infty$ type estimate, $L^2-L^\infty$ type estimate, and the estimate (\ref{june29eqn1}) in Lemma \ref{highorderterm1},  we know that the following estimates hold, 
\be\label{june29eqn51}
\tilde{X}_{\alpha,n }\lesssim    \sum_{0\leq l\leq n, |\beta|\leq |\alpha|  } \epsilon_1 \tilde{X}_{\beta, l } +\epsilon_1^2\Longrightarrow \sum_{0\leq |\alpha|+ N(n) \leq N_0-5}\tilde{X}_{n } \lesssim \epsilon_1^2,   
\ee
\be\label{june29eqn53}
 X_{\alpha,n}\lesssim  \sum_{0\leq l \leq n/2+2, |\beta|\leq |\alpha|} \epsilon_1   \tilde{X}_{\beta, l}   + \sum_{0\leq n'  \leq  n, |\beta|\leq |\alpha|   } \epsilon_1  {X}_{\beta, n' }  +\epsilon_1^2,\quad \Longrightarrow \quad  \sum_{0\leq |\alpha|+N(n) \leq N_0 } X_{\alpha, n }\lesssim \epsilon_1^2.
\ee
Similarly, from the equality (\ref{june29eqn31}), the following estimates hold from the estimate (\ref{june29eqn51}),  
\[
\tilde{Y}_{\alpha,n }\lesssim    \sum_{0\leq l\leq n, |\beta|\leq |\alpha|  } \epsilon_1 \tilde{Y}_{\beta, l }+ \tilde{X}_{\alpha, n }+ \epsilon_1^2 + \big( \sum_{ 0\leq l\leq n, |\beta|\leq |\alpha| }   \tilde{Y}_{\beta, l} \big)^2,  
\]
\be\label{june29eqn61}
 \Longrightarrow     \sum_{n\in[0, N_0-5]\cap \Z, |\alpha|\in [0, N_0-n]\cap \Z} \tilde{Y}_{\alpha, n}\lesssim \epsilon_1^2,
  \ee
\begin{multline}\label{june29eqn62}
  Y_{\alpha, n}\lesssim  \sum_{0\leq l' \leq n/2+2, |\beta|\leq |\alpha|} \epsilon_1   \tilde{Y}_{\beta, l' }   + \sum_{0\leq l  \leq  n,|\beta|\leq |\alpha|   } \epsilon_1  {Y}_{\beta, l } +\epsilon_1^2 + \big( \sum_{0\leq l'  \leq n/2+2, |\beta|\leq |\alpha|} \tilde{Y}_{\beta, l'}\big) \big( \sum_{0\leq l  \leq n, |\beta|\leq |\alpha|  } Y_{\beta, l }\big)\\ 
   \Longrightarrow \quad \sum_{n\in[0, N_0 ]\cap \Z, |\alpha|\in [0, N_0-n]\cap \Z} Y_{\alpha, n  }  \lesssim \epsilon_1^2. 
\end{multline}
 
Note that, for any fixed $\alpha, \beta\in\{0,1,2,3\}$, the following equality holds for some uniquely determined constants $ c_{\alpha_1\beta_1\cdots \alpha_3\beta_3}^{\alpha\beta}\in\{0,1,-1\}$,
\be\label{june30eqn11}
\Lambda_{\geq 2}[g^{\alpha\beta}]=\sum_{\alpha_i, \beta_i\in\{0,1,2,3\} } \prod_{i=1,2,3} c_{\alpha_1\beta_1\cdots \alpha_3\beta_3}^{\alpha\beta} \Lambda_{\geq 2}[(\textup{det} g)^{-1} (1+h_{\alpha_i\beta_i})].
\ee
Note that we have already derived the basic $L^2$-estimate and the basic $L^\infty$-estimate for $\Lambda_{\geq 2}[(\det g)^{-1}]$, see estimates (\ref{june29eqn51}) and (\ref{june29eqn53}). After applying the $L^2-L^\infty$ type bilinear estimate and the $L^\infty-L^\infty$ type bilinear estimate for the equality (\ref{june30eqn11}), it is easy to see that our desired estimate (\ref{june30eqn1}) holds. 
\end{proof}
Due to the on shell relation, see \eqref{massshell}, $v_0$ as well as $v^0$ depend on  the perturbed metric. For convenience, we identify the zero order and the   first order of  $v_0$ ($v^\alpha$) as follows, 
\begin{multline}\label{june16eqn51}
g^{\mu\nu} v_{\mu} v_{\nu}=-1, \quad \Longrightarrow \Lambda_0[v^{0}] =   \sqrt{1+|v|^2} , \quad \Lambda_{0}[v_0] =- \sqrt{1+|v|^2},\\ 
 \Lambda_{0}[v^i] = v_i,\quad \Lambda_{1}[v^i] =   -h_{0i} \sqrt{1+|v|^2}  - h_{ij}v_j,\quad   \Lambda_1[v^0]= \h\sqrt{1+|v|^2} \big( h_{00} -  \hat{v}_i \hat{v}_j   h_{ij} \big),\\ 
\Lambda_1[v_0]= \h\big( \sqrt{1+|v|^2}h_{00} + 2  v_ih_{0i} + \frac{v_i v_j}{\sqrt{1+|v|^2}}h_{ij} \big).
\end{multline}

For the quadratic and higher order terms of $v_0$, their explicit formulas are not necessary. It would be sufficient to give their rough estimates, which are summarized in the following two lemmas. 
\begin{lemma}\label{highorderterm3}
 Under the bootstrap  assumptions  \eqref{BAmetricold} and \eqref{BAvlasovori},  for any   $t\in [2^{m-1}, 2^m]\subset[0, T], m\in \Z_+,$
  for  $ L_1 \in \cup_{  0\leq n\leq N_0, |\alpha|\leq N_0-n }  \nabla_x^\alpha P_n$ and $L_2\in  \cup_{  0\leq n\leq N_0-5, |\alpha|\leq N_0-n } \nabla_x^\alpha P_n$,  we have 
  \[
  2^{m/2- d( |L_1|+1, \|L_1\|-|L_1| ) \delta_0m    } \big[\big( \|\langle v \rangle^{-1 } {L}_1(\Lambda_{\geq 2}[v^0 ])\|_{L^2_x L^\infty_v} + \|\langle v \rangle^{-1 } {L}_1(\Lambda_{\geq 2}[v_0 ])\|_{L^2_xL^\infty_v}  
  \]
  \[
   +  \|\langle v \rangle^{-1 } {L}_1(\Lambda_{\geq 2}[v^i ])\|_{L^2_x L^\infty_v}\big) +2^{m/6  }\big( \|\langle v \rangle^{-1 } \p_{\gamma }{L}_1(\Lambda_{\geq 2}[v^0 ])\|_{L^2_x L^\infty_v}   
 \]
 \[
+ \|\langle v \rangle^{-1 } \p_{\gamma }{L}_1(\Lambda_{\geq 2}[v_0 ])\|_{L^2_xL^\infty_v}+  \|\langle v \rangle^{-1 } \p_{\gamma } {L}_1(\Lambda_{\geq 2}[v^i ])\|_{L^2_x L^\infty_v}\big)\big]
 \]
 \[
 +    2^{ 2m-  d( |L_2|+2, \|L_2\|-|L_2| ) \delta_0 m   } \big(  \|\langle v \rangle^{-1 } {L}_2(\Lambda_{\geq 2}[v^i ])\|_{L^\infty_{x,v}} 
 \]
\be\label{june29eqn71}
 + \|\langle v \rangle^{-1 }  {L}_2(\Lambda_{\geq 2}[v^0 ])\|_{L^\infty_{x,v}}+ \|\langle v \rangle^{-1 }  {L}_2(\Lambda_{\geq 2}[v_0 ])\|_{L^\infty_{x,v}}\big) \lesssim 2^{2\delta_0 m }\epsilon_1^2.
 \ee
 \end{lemma}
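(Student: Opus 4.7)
The plan is to mimic the fixed point argument used in Lemma \ref{highorderterm2}, now applied to the mass shell relation rather than to $\det g$. Since $v_\alpha$ are the independent momentum variables and $g^{\mu\nu}v_\mu v_\nu = -1$, the component $v_0$ is determined implicitly by the quadratic
\[
g^{00} v_0^2 + 2 g^{0i} v_0 v_i + (g^{ij} v_i v_j + 1) = 0,
\]
and $v^0, v^i$ are then obtained from $v^\alpha = g^{\alpha\beta}v_\beta$. In particular, each of $\langle v\rangle^{-1} v_0$, $\langle v\rangle^{-1}v^0$, $\langle v\rangle^{-1}v^i$ is a smooth function of $h_{\alpha\beta}$ (and of $\hat v = v/\sqrt{1+|v|^2}$) near the Minkowski value, so Taylor expansion in $h$ together with \eqref{june16eqn51} yields an algebraic relation of the schematic form
\[
\Lambda_{\geq 2}[\langle v\rangle^{-1} w] \;=\; \mathcal{Q}\bigl(h_{\alpha\beta},\, \Lambda_{\geq 2}[g^{\mu\nu}]\bigr) + \Lambda_{\geq 2}[g^{\mu\nu}]\cdot \Lambda_{\geq 0}[\langle v\rangle^{-1} w] + h_{\alpha\beta}\cdot \Lambda_{\geq 2}[\langle v\rangle^{-1} w],
\]
for each $w \in \{v_0, v^0, v^i\}$, where $\mathcal{Q}$ denotes a finite sum of products of $h$'s with coefficients bounded uniformly in $v$.

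First I would apply $L$ (with $L \in \nabla_x^\alpha P_n$) to this identity, distribute the derivatives with Leibniz, and split each resulting factor into low-order ($\prec L$) and top-order pieces. The bilinear and trilinear pieces involving products of $h_{\alpha\beta}$ (with no $\Lambda_{\geq 2}[g^{\mu\nu}]$ and no $\Lambda_{\geq 2}[v^\bullet]$ factor) are controlled directly by the bilinear estimates \eqref{aug9eqn84} and \eqref{oct3reqn52} of Lemma \ref{aug9bilinearLemma} (for $L^2$ and $\p_\gamma L^2$) and by $L^\infty \cdot L^\infty$ using \eqref{2020julybasicestiamte} of Lemma \ref{basicestimates} (for $L^\infty$). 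The factors containing $\Lambda_{\geq 2}[g^{\mu\nu}]$ are handled by Lemma \ref{highorderterm2}. This gives an inequality for the quantities
\[
\begin{aligned}
X^w_{\alpha,n} &:= \sum_{L\in \nabla_x^\alpha P_n} 2^{m/2 - d(|L|+1,\|L\|-|L|)\delta_0 m - 2\delta_0 m}\bigl(\| \langle v\rangle^{-1} L \Lambda_{\geq 2}[w]\|_{L^2_x L^\infty_v} \\
&\quad + 2^{m/6}\|\langle v\rangle^{-1}\p_\gamma L \Lambda_{\geq 2}[w]\|_{L^2_x L^\infty_v}\bigr), \\
\tilde X^w_{\alpha,n} &:= \sum_{L\in \nabla_x^\alpha P_n} 2^{2m - d(|L|+2,\|L\|-|L|)\delta_0 m -2\delta_0 m}\|\langle v\rangle^{-1} L\Lambda_{\geq 2}[w]\|_{L^\infty_{x,v}},
\end{aligned}
\]
of exactly the same contraction type encountered in Lemma \ref{highorderterm2}, namely
\[
\tilde X^w_{\alpha,n}\lesssim \epsilon_1^2 + \sum_{\beta,\ell}\epsilon_1 \tilde X^w_{\beta,\ell},\qquad X^w_{\alpha,n}\lesssim \epsilon_1^2 + \sum_{\beta,\ell}\epsilon_1 X^w_{\beta,\ell} + \epsilon_1\sum_{\beta,\ell}\tilde X^w_{\beta,\ell},
\]
summed over $|\beta|+N(\ell) < |\alpha|+N(n)$ in the relevant cases.

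I would then close the estimate by induction on $|\alpha|+N(n)$ exactly as in \eqref{june29eqn51}--\eqref{june29eqn62}, absorbing the self-interaction terms into the left-hand side thanks to the smallness of $\epsilon_1$. The weight $\langle v\rangle^{-1}$ is preserved throughout because the first-order pieces in \eqref{june16eqn51} scale linearly in $\langle v\rangle$, so each subsequent factor extracted via Taylor expansion brings at most one extra power of $\langle v\rangle$, which is offset by another factor of $h$ (bounded independently of $v$). The main technical point to verify is that the Taylor remainder in $h$ of the implicit function producing $v_0$ from \eqref{june16eqn51} indeed factors as $\langle v\rangle$ times a uniformly bounded symbol; once this is granted the argument is a verbatim replay of Lemmas \ref{highorderterm1}--\ref{highorderterm2}, so I expect no additional obstacle beyond bookkeeping the weights in $v$.
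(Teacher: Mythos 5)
Your approach is essentially the same as the paper's: the paper also derives an explicit fixed-point identity from the mass-shell constraint $g^{\mu\nu}v_\mu v_\nu = -1$ (see \eqref{june29eqn97}--\eqref{july15e1qn98} and \eqref{june30eqn86}--\eqref{june30eqn87}), defines the same normalized quantities (called $Z_{\alpha,n},\tilde Z_{\alpha,n}, W_{\alpha,n},\tilde W_{\alpha,n}$), and closes by the same contraction argument as in Lemma \ref{highorderterm2}. The "main technical point" you flag but defer — that each extracted coefficient carries at most one factor of $\langle v\rangle$ that is offset against the $\langle v\rangle^{-1}$ weight — is exactly what the paper resolves by writing out the explicit relations above, so there is no gap, only a step you left to inspection that the paper makes explicit.
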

 \begin{proof}

Note that
\[
 g^{00} \big(  -\sqrt{1+|v|^2}+\Lambda_1[v_0] +\Lambda_{\geq 2}[v_0] \big)^2=  g^{00} \big[\big(  -\sqrt{1+|v|^2}+\Lambda_1[v_0]  \big)^2 \]
 \be\label{june29eqn97}
 +2  \Lambda_{\geq 2}[v_0]\big(  -\sqrt{1+|v|^2}+\Lambda_1[v_0]  \big) + \big(\Lambda_{\geq 2}[v_0] \big)^2 \big]= 2  \Lambda_{\geq 2}[v_0]    \sqrt{1+|v|^2}+ R,
\ee
where
\be\label{june29eqn91} 
R= g^{00}\big(  -\sqrt{1+|v|^2}+\Lambda_1[v_0]  \big)^2
-2\Lambda_{\geq 2}[v_0] \big(  \Lambda_{\geq 1}[g^{00}]   \sqrt{1+|v|^2} + g^{00}   \big(   \Lambda_1[v_0]  \big) \big)+ g^{00} \big(\Lambda_{\geq 2}[v_0] \big)^2.
\ee
From the  equality   (\ref{june29eqn97}), we have
\[
2\sqrt{1+|v|^2}  \Lambda_{\geq 2}[v_0] = -R -1 -g^{ij} v_i v_j +2 g^{0i} \sqrt{1+|v|^2}- 2g^{0i} v_i \Lambda_{1}[v_0]- 2g^{0i} v_i \Lambda_{\geq 2}[v_0]  
\]
\be\label{july15e1qn98}
=- 2 g^{0i} v_i \Lambda_{\geq 2}[v_0] -\Lambda_{\geq 2}[R] - \Lambda_{\geq 2}[g^{ij}]v_iv_j+2 \Lambda_{\geq 2}[  g^{0  i}  ]\sqrt{1+|v|^2}- 2\Lambda_{\geq 1}[g^{0i}] v_i \Lambda_{1}[v_0].
\ee
Moreover, 
\be\label{june30eqn86}
v_0= g_{\alpha 0} v^{\alpha},\quad v^i= g^{i\alpha} v_{\alpha}, \quad \Longrightarrow \Lambda_{\geq 2}[v_0] = - \Lambda_{\geq 2}[v^0] + h_{0i} \Lambda_{\geq 2}[ v^i] + h_{0i} \Lambda_{1}[v^i],
\ee
\be\label{june30eqn87}
\Lambda_{\geq 2}[v^i] = \Lambda_{\geq 2}[g^{i 0} v_0] + \Lambda_{\geq 2}[g^{i j}] v_j= g^{i0}\Lambda_{\geq 2}[v_0] + \Lambda_{\geq 2}[g^{i 0} \Lambda_{\leq 1}[v_0]] +\Lambda_{\geq 2}[g^{ij}] v_j. 
\ee
For any fixed $v$, we define the following two quantities 
\[
Z_{\alpha, n }:= \sum_{  {L} \in \nabla_x^\alpha P^{ }_n }    2^{m/2-d( |L |+1, \|L \|-|L | ) \delta_0m -2\delta_0 m  }\big[ \|\langle v \rangle^{-1 } {L}  (\Lambda_{\geq 2}[ v^{0}])\|_{L^2_x L^\infty_v}
\]
\be\label{june30eqn34}
 +  \langle t \rangle^{ 1/6  } \|\langle v \rangle^{-1 } \p_{\gamma }{L}  (\Lambda_{\geq 2}[ v^{0}])\|_{L^2_x L^\infty_v}\big], 
\ee
\be\label{june30eqn35}
 \tilde{Z}_{\alpha, n }:=  \sum_{  {L} \in \nabla_x^\alpha P^{ }_n } 2^{ 2m-  d( |L |+2, \|L \|-|L_2| ) \delta_0 m -2\delta_0 m }\|\langle v \rangle^{-1 } {L}  (\Lambda_{\geq 2}[ v^{0}])\|_{L^\infty_{x,v}}.
\ee
\[
W_{\alpha, n}:=   \sum_{  {L} \in \nabla_x^\alpha P^{ }_n }         2^{m/2-d( |L |+1, \|L \|-|L | ) \delta_0m -2\delta_0 m  } \big[\big(\|\langle v \rangle^{-1 } {L}  (\Lambda_{\geq 2}[ v_{0}])\|_{L^2_xL^\infty_v} +\|\langle v \rangle^{-1 } {L}  (\Lambda_{\geq 2}[ v^{i}])\|_{L^2_x L^\infty_v}\big)
\]
\be\label{june30eqn64}
+   2^{ m/6  }\big(\|\langle v \rangle^{-1 } \p_{\gamma} {L}  (\Lambda_{\geq 2}[ v_{0}])\|_{L^2_xL^\infty_v} +\|\langle v \rangle^{-1 } \p_{\gamma} {L}  (\Lambda_{\geq 2}[ v^{i}])\|_{L^2_x L^\infty_v}\big)\big], 
\ee
\be\label{june30eqn65}
\tilde{W}_{\alpha, n }:=  \sum_{  {L} \in \nabla_x^\alpha P^{ }_n }  2^{ 2m-  d( |L |+2, \|L \|-|L | ) \delta_0 m -2\delta_0 m } \big(\|\langle v \rangle^{-1 } {L}  (\Lambda_{\geq 2}[ v_{0}])\|_{L^\infty_{x,v}} +\sum_{i=1,2,3}\|\langle v \rangle^{-1 } {L}  (\Lambda_{\geq 2}[ v^{i}])\|_{L^\infty_{x,v}} \big).
\ee
By using a similar argument as in the proof of Lemma \ref{highorderterm2} and the estimate (\ref{june30eqn1}) in Lemma \ref{highorderterm2},  the following estimate holds for any fixed $v\in \R^3$, 
\be
\tilde{Z}_{\alpha, n }  + \tilde{W}_{\alpha, n}  \lesssim      \sum_{0\leq l\leq n,0\leq |\beta|\leq |\alpha| } \epsilon_1 (\tilde{Z}_{\beta,l }  + \tilde{W}_{\beta, l } ) +\epsilon_1^2,   \Longrightarrow    \sum_{n\in[0, N_0-5]\cap \Z, |\alpha|\in [0, N_0-n]\cap \Z} \tilde{Z}_{\alpha,n }  + \tilde{W}_{\alpha,n }  \lesssim \epsilon_1^2, 
\ee
\[
 {Z}_{\alpha, n }  +  {W}_{\alpha, n } \lesssim    \sum_{0\leq l' \leq n/2+2, |\beta|\leq |\alpha|}  \epsilon_1\big( \tilde{Z}_{\beta, l' }  + \tilde{W}_{\beta, l' } \big)   +   \sum_{0\leq l  \leq  n,|\beta|\leq |\alpha|   } \epsilon_1\big( {Z}_{\beta, l }  +  {W}_{\beta, l } \big) +\epsilon_1^2 
\]
\be
  \Longrightarrow  \sum_{n\in[0, N_0  ]\cap \Z, |\alpha|\in [0, N_0-n]\cap \Z}  {Z}_{\alpha, n   }  +  {W}_{\alpha, n  }   \lesssim \epsilon_1^2. 
\ee
Hence finishing the proof of our desired estimate 
 (\ref{june29eqn71}).  
 \end{proof}

\begin{lemma}\label{highorderterm5}
 Under the bootstrap  assumptions  \eqref{BAmetricold} and \eqref{BAvlasovori},
  the following estimate holds for any  $\gamma\in\{0,1,2,3\}$,  $t\in [2^{m-1}, 2^m]\subset[0, T], m\in \Z_+,$ and any triple  $ \beta\in \Z_+^3 $,  for any $i\in\{1,2,3\}, \gamma\in\{0,1,2,3\}$, $t\in [2^{m-1}, 2^m]\subset[0, T], m\in \Z_+,$
  \[
      2^{m/2- d( |L_1|+1, \|L_1\|-|L_1| ) \delta_0m    }   \big[ \big(\|\langle v \rangle^{|\beta|+1 } {L}_1\nabla_v^{\beta}(\Lambda_{\geq 2}[ (v^0)^{-1} ])\|_{L^2_x L^\infty_v}  + \|\langle v \rangle^{|\beta|-1 }{L}_1\nabla_v^{\beta}(\Lambda_{\geq 2}[ {v^0 } ])\|_{L^2_x L^\infty_v}\]
 \[
   + \|\langle v \rangle^{|\beta|-1 }\mathcal{L}\nabla_v^{\beta}(\Lambda_{\geq 2}[ {v_0 } ])\|_{L^2_x L^\infty_v }\big)+  2^{ m/6}  \big(\|\langle v \rangle^{|\beta|+1 } \p_{\gamma} {L}_1\nabla_v^{\beta}(\Lambda_{\geq 2}[ (v^0)^{-1} ])\|_{L^2_x L^\infty_v}   
   \]
   \[
     + \|\langle v \rangle^{|\beta|-1 }\p_{\gamma} {L}_1\nabla_v^{\beta}(\Lambda_{\geq 2}[ {v^0 } ])\|_{L^2_x L^\infty_v}\]
     \[
     + \|\langle v \rangle^{|\beta|-1 }\p_{\gamma}\mathcal{L}\nabla_v^{\beta}(\Lambda_{\geq 2}[ {v_0 } ])\|_{L^2_x L^\infty_v }\big)   \big]   +        2^{ 2m-  d( |L_2|+2, \|L_2\|-|L_2| ) \delta_0 m   }  \big(\|\langle v \rangle^{|\beta|+1 } {L}_2\nabla_v^{\beta}(\Lambda_{\geq 2}[ ({v^0 })^{-1} ])\|_{L^\infty_{x,v}} 
      \]
  \be\label{july2eqn2}
 + \|\langle v \rangle^{|\beta|-1 } {L}_2\nabla_v^{\beta}(\Lambda_{\geq 2}[ {v^0 } ])\|_{L^\infty_{x,v}} + \|\langle v \rangle^{|\beta|-1 } {L}_2\nabla_v^{\beta}(\Lambda_{\geq 2}[ {v_0 } ])\|_{L^\infty_{x,v}} \big)\lesssim 2^{2\delta_0 m }\epsilon_1^2.
  \ee
\end{lemma}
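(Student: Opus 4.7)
The plan is to mirror the proof of Lemma \ref{highorderterm3}, now incorporating $v$-derivatives and the new quantity $(v^0)^{-1}$. The key structural observation is that every vector field $L \in \nabla_x^\alpha P_n$ acts only in $(t,x)$ and therefore commutes with $\nabla_v^\beta$; likewise the Leibniz rule distributes $\nabla_v^\beta$ across any polynomial factor of $v$ (e.g.\ $\sqrt{1+|v|^2}$, $v_i v_j$) to produce coefficients pointwise bounded by $\langle v\rangle^{2-|\beta|}$, which is exactly what the weights $\langle v\rangle^{|\beta|-1}$ (for $v^0,v_0,v^i$) and $\langle v\rangle^{|\beta|+1}$ (for $(v^0)^{-1}$) are calibrated to absorb.

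First I would handle $\nabla_v^\beta \Lambda_{\geq 2}[v_0]$ by applying $\nabla_v^\beta L$ to the algebraic identity \eqref{july15e1qn98} and isolating the top-order $\nabla_v^\beta L \Lambda_{\geq 2}[v_0]$ term (the coefficient $2\sqrt{1+|v|^2}$ is invertible and lies in $\langle v\rangle$). All remaining terms on the right-hand side are either Leibniz contractions already controlled by Lemma \ref{highorderterm2} and Lemma \ref{highorderterm3} (for the $\Lambda_{\geq 2}[g^{\alpha\beta}]$ and $\Lambda_{\geq 2}[v_0]$ factors at lower order in $|\beta|+|L|$), or products of two factors one of which carries strictly fewer vector fields/derivatives and can therefore be placed in $L^\infty_{x,v}$ using the inductive hypothesis. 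Reproducing the definitions of the auxiliary quantities $Z_{\alpha,n}, \tilde{Z}_{\alpha,n}, W_{\alpha,n}, \tilde{W}_{\alpha,n}$ from \eqref{june30eqn34}--\eqref{june30eqn65} but now indexed additionally by $|\beta|$, and running the same $L^2_xL^\infty_v$ and $L^\infty_{x,v}$ fixed-point closure as in Lemma \ref{highorderterm3}, yields the desired bound. The $v^0$ and $v^i$ estimates follow from \eqref{june30eqn86}--\eqref{june30eqn87} after taking $\nabla_v^\beta$.

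Next, for $\nabla_v^\beta \Lambda_{\geq 2}[(v^0)^{-1}]$, I would use the Neumann-type identity
\[
\Lambda_{\geq 2}\bigl[(v^0)^{-1}\bigr] = -\frac{\Lambda_{\geq 2}[v^0]}{(\sqrt{1+|v|^2})\,v^0} + \frac{(\Lambda_{\geq 1}[v^0])^2}{(\sqrt{1+|v|^2})^2\, v^0},
\]
obtained by expanding $1/v^0 = 1/(\sqrt{1+|v|^2}+\Lambda_{\geq 1}[v^0])$, in the same spirit as \eqref{june29eqn44}. Applying $\nabla_v^\beta L$ and using the already-obtained estimate for $\nabla_v^\beta L \Lambda_{\geq 2}[v^0]$ together with Leibniz on the denominator, plus the fact that $\nabla_v^\beta (v^0)^{-1}$ and $\nabla_v^\beta (\sqrt{1+|v|^2})^{-1}$ are both pointwise $O(\langle v\rangle^{-1-|\beta|})$, gives the recursive inequality that closes in $L^2_xL^\infty_v$ and $L^\infty_{x,v}$ after being weighted by $\langle v\rangle^{|\beta|+1}$; the extra weight exactly cancels the extra $\langle v\rangle^{-2}$ decay of $(v^0)^{-1}$ relative to $v^0$. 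The derivative estimate in $\p_\gamma$ with the extra $2^{m/6}$ factor is obtained in parallel by distributing $\p_\gamma$ via Leibniz and using the $\p_\gamma$-versions of \eqref{june29eqn71} and \eqref{june30eqn1}, exactly as in Lemma \ref{highorderterm3}.

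The main obstacle is purely organizational: one must index the inductive quantities simultaneously by $|\alpha|$, $|L|$, $\|L\|$, and $|\beta|$, and verify that in every Leibniz split $\nabla_v^{\beta} L = \sum \nabla_v^{\beta_1}L_1 \cdot \nabla_v^{\beta_2}L_2$ at least one factor lands in a strictly smaller inductive stratum (so that it can be bounded by the previous step) while the other is handled by \eqref{2020julybasicestiamte} or by the already-established $L^\infty_{x,v}$ bounds $\tilde{Y},\tilde{Z},\tilde{W}$. Once this accounting is in place, the bounds stated in \eqref{july2eqn2} follow by the same small-data absorption argument that gave \eqref{june29eqn61}--\eqref{june29eqn62}; no new analytic input beyond what is already proved in Lemmas \ref{highorderterm1}--\ref{highorderterm3} is needed.
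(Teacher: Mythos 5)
Your proposal is essentially the proof in the paper: both rest on the same fixed-point / Neumann-type algebraic identities for $\Lambda_{\geq 2}[v_0]$, $\Lambda_{\geq 2}[v^0]$, $\Lambda_{\geq 2}[v^i]$ (via \eqref{july15e1qn98}, \eqref{june30eqn86}, \eqref{june30eqn87}) and for $\Lambda_{\geq 2}[(v^0)^{-1}]$, combined with the closure argument of Lemma~\ref{highorderterm3} and induction on $|\beta|$, using the commutation of $L\in\nabla_x^\alpha P_n$ with $\nabla_v^\beta$ and the pointwise $\langle v\rangle$-weights produced by Leibniz to calibrate the norms. The paper's version of the $(v^0)^{-1}$ identity is the fixed-point relation \eqref{july2eqn1}, $\langle v\rangle z = -(x+y)z-\langle v\rangle^{-1}y - xw - yw$, which is equivalent in spirit to your Neumann expansion.

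One small correction: your explicit Neumann-type identity for $\Lambda_{\geq 2}[(v^0)^{-1}]$ is not quite exact. Writing $a=\sqrt{1+|v|^2}$ and $u=\Lambda_{\geq 1}[v^0]$, the correct expansion obtained from $1/v^0 = 1/a - u/a^2 + u^2/(a^2 v^0)$ is
\[
\Lambda_{\geq 2}\bigl[(v^0)^{-1}\bigr] = -\frac{\Lambda_{\geq 2}[v^0]}{a^2} + \frac{(\Lambda_{\geq 1}[v^0])^2}{a^2\, v^0},
\]
whereas your version has $a\,v^0$ in the denominator of the first term; the discrepancy is a cubic-and-higher term $\Lambda_{\geq 2}[v^0]\,\Lambda_{\geq 1}[v^0]/(a^2 v^0)$. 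This does not affect the validity of the strategy — such a term lands in the error stratum of the induction and is absorbed by the same small-data argument — but the identity as written is not an equality.
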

\begin{proof}
Recall (\ref{june16eqn51}). Note that
 \[
 ({v^0})^{-1}= ({\sqrt{1+|v|^2} + \Lambda_{1}[v^0] + \Lambda_{\geq 2}[v^0]})^{-1}.\]
Let
\[
\quad x:=\Lambda_{1}[v^0], \quad y:=\Lambda_{\geq 2}[v^0], \quad z:=\Lambda_{\geq 2}[  ({v^0})^{-1} ], \quad w:= \Lambda_{1}[  ({v^0})^{-1} ]=-\langle v \rangle^{-2}\Lambda_{1}[v^0]=-\langle v \rangle^{-2}x.
 \]
 Hence, the following equality holds,
 \be\label{july2eqn1}
\big(\langle v\rangle^{-1} +z+w \big)\big(\langle v\rangle+ x +y \big)=1, \quad \Longrightarrow \langle v\rangle z =- (x+y) z -\langle v \rangle^{-1} y -x w - y w .
 \ee
From the fixed point type formulations for $\Lambda_{\geq 2}[v_0]$, $\Lambda_{\geq 2}[v^0]$, and $\Lambda_{\geq 2}[\big(v^0\big)^{-1}]$ in (\ref{july15e1qn98}), (\ref{june30eqn86}), (\ref{june30eqn87}), and (\ref{july2eqn1}) and the $L^2$-type estimate  and the $L^\infty$-type estimate    (\ref{june29eqn71}) in Lemma  \ref{highorderterm3},  our desired estimate  (\ref{july2eqn2}) holds after using the same argument used in the proof of the estimates (\ref{june29eqn71})    Lemma \ref{highorderterm3} and doing induction on the size of  ``$|\beta|$''.   
\end{proof}

\subsection{Estimates  of the nonlinearities  in Vlasov part}

In the following lemma, we estimate the energy of  the cubic and higher order terms of the Vlasov part. 
\begin{lemma}\label{estimateofremaindervlasov}
Under the bootstrap assumptions \textup{(\ref{BAmetricold})} and \textup{(\ref{BAvlasovori})}, the following estimate holds for any   $t\in [2^{m-1}, 2^m]$, $\mathcal{L}\in \cup_{n\leq N_0  } \mathcal{P}_n, \rho\in \mathcal{S},  |\rho|+|\mathcal{L}| \leq N_0$,
\[
  \sum_{i=1,3,4 }  \| \omega^{\mathcal{L}}_{\rho}(x,v)  \Lambda_{\geq 3}[  \mathfrak{N}_{\rho,i }^{\mathcal{L
 }} ](t,x,v)   \|_{L^2_{x,v}} +  \| \langle v \rangle^{-1} \omega^{\mathcal{L}}_{\rho}(x,v)  \Lambda_{\geq 3}[  \mathfrak{N}_{\rho,2 }^{\mathcal{L
 }} ](t,x,v)   \|_{L^2_{x,v}}
\]
\be\label{aug9eqn87}
\lesssim   2^{-7m/6+3d( |\mathcal{L}|+|\tilde{c}(\rho)| +3, |\rho|- |\tilde{c}(\rho)| )\delta_0m} \epsilon_1^3.
\ee
Moreover, for any   $\mathcal{L}\in \cup_{n\leq N_0  } \mathcal{P}_n, \rho,\iota\in \mathcal{S}, $ s.t.,  $|\iota|=1$, $  |\rho|+|\mathcal{L}| \leq N_0-1$,  we have
\[
\|\omega^{\mathcal{L}}_{\iota\circ\rho}(x,v) \Lambda^{\iota} \Lambda_{\geq 3}[  \mathfrak{N}_{\rho,1 }^{\mathcal{L
 }} ](t,x,v)   \|_{L^2_{x,v}} + \|\langle  v \rangle^{-1} \omega^{\mathcal{L}}_{\iota\circ\rho}(x,v)\Lambda^{\iota} \Lambda_{\geq 3}[  \mathfrak{N}_{\rho,2 }^{\mathcal{L
 }} ](t,x,v)   \|_{L^2_{x,v}}
\]
\be\label{2020april16eqn1}
\lesssim 2^{-2m+d( |\mathcal{L}|+|\tilde{c}(\rho)| +4, |\rho|- |\tilde{c}(\rho)| )\delta_0 m + \delta_0 m}\epsilon_1^3. 
\ee
\end{lemma}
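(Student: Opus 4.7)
The plan is to treat each $\Lambda_{\geq 3}[\mathfrak{N}_{\rho,i}^{\mathcal L}]$ as a multilinear expression of the form (product of at least two perturbed-metric factors)$\times$(Vlasov profile or a low-order derivative of it), and close by H\"older's inequality with the metric factors in $L^\infty_{x,v}$ and the Vlasov piece in the weighted $L^2_{x,v}$ controlled by the bootstrap energy $E^{vl}_{\mathcal L,\rho}$.

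First I would unfold each $\mathfrak{N}_{\rho,i}^{\mathcal L}$ via \eqref{2020april8eqn32}--\eqref{2020april8eqn35} and note that $\Lambda_{\geq 3}$ forces either a factor from the list $\Lambda_{\geq 2}[(v^0)^{-1}],\,\Lambda_{\geq 2}[v_0],\,\Lambda_{\geq 2}[v^\alpha],\,\Lambda_{\geq 2}[g^{\alpha\beta}]$, or a product $\Lambda_{\geq 1}[\cdot]\cdot\Lambda_{\geq 1}[\cdot]$ of two linear metric terms. In the first scenario, Lemmas \ref{highorderterm1}--\ref{highorderterm5} deliver the $L^\infty_{x,v}$ bound $\|\langle v\rangle^{|\beta|+1}\Lambda_{\geq 2}[\cdots]\|_{L^\infty_{x,v}}\lesssim 2^{-2m+O(\delta_0 m)}\epsilon_1^2$, where the $\langle v\rangle^{|\beta|+1}$-prefactor swallows $|v_\alpha v_\beta|\lesssim \langle v\rangle^2$ and any residual polynomial $v$-weight is absorbed by $\omega^{\mathcal L}_\rho$ thanks to \eqref{weightfunctions2}; in the second scenario two $L^\infty_x$ bounds from Lemma \ref{basicestimates} give $2^{-m+O(\delta_1 m)}\epsilon_1$ each, combining to the same $2^{-2m+O(\delta_0 m)}\epsilon_1^2$. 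Pairing this with $\|\omega^{\mathcal L}_\rho\,\Lambda^\rho u^{\mathcal L}\|_{L^2_{x,v}}=E^{vl}_{\mathcal L,\rho}(t)\lesssim 2^{H(\mathcal L,\rho)\delta_0 m}\epsilon_1$ from \eqref{BAvlasovori} yields a total of $2^{-2m+O(\delta_0 m)}\epsilon_1^3$, comfortably below the claimed $2^{-7m/6+3d\delta_0 m}\epsilon_1^3$. The generous slack accommodates the shift $x\mapsto x+t\hat v$ in the arguments of the metric factors (transferring $\omega^{\mathcal L}_\rho(x,v)$ to $\omega^{\mathcal L}_\rho(x+t\hat v,v)$ up to a polynomial loss) and, in the $\mathfrak{N}_{\rho,3}$ case, absorbs the coefficient $|c_\iota|\lesssim \langle v\rangle||t|-|x+t\hat v||\lesssim \langle x\rangle\langle v\rangle^2$ of $D_v$ from \eqref{2020april21eqn11} via the $|x|$-dependence of $\omega^{\mathcal L}_\rho$ together with the monotonicity $\phi'\geq 0$ recorded in \eqref{2020april27eqn61}.

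For the second estimate, the extra vector field $\Lambda^\iota$ is distributed by Leibniz: if it lands on the Vlasov factor, the output $\Lambda^{\iota\circ\rho}u^{\mathcal L}$ is controlled in weighted $L^2_{x,v}$ by $E^{vl}_{\mathcal L,\iota\circ\rho}$, available since $|\iota\circ\rho|+|\mathcal L|\leq N_0$; if it lands on a metric factor, Lemma \ref{decompositionofderivatives} converts the second-set vector field into a classical $P$-vector field at the cost of a single extra power of $\langle x\rangle\langle v\rangle$, absorbed by the slightly lighter weight $\omega^{\mathcal L}_{\iota\circ\rho}$. In either case, two $L^\infty_x$ bounds on the metric factors deliver the target $2^{-2m+O(\delta_0 m)}\epsilon_1^3$. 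The main obstacle is the index bookkeeping inside $\mathfrak{N}_{\rho,4}^{\mathcal L}$, where the nested sums over $\mathcal L_1,\mathcal L_2,\mathcal L_3,\rho_1,\rho_2,\rho_3,\tilde L_1,\tilde L_2$ produce many configurations: in each one, a pigeonhole using $|\rho|+|\mathcal L|\leq N_0$ forces at least two metric-type factors to have small indices compatible with the $|L|\leq N_0-5$ hypothesis of Lemmas \ref{highorderterm3} and \ref{highorderterm5}, which is what allows their placement in $L^\infty_x$ while the remaining factor carrying the top derivative count goes into the $L^2_x$ slot.
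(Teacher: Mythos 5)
Your plan captures the easy half of the argument but it breaks down precisely where the dichotomy in the paper's proof is forced, namely the handling of $\mathfrak{N}_{\rho,3}^{\mathcal{L}}$ and $\mathfrak{N}_{\rho,4}^{\mathcal{L}}$.

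The $L^\infty_{x,v}$ bounds you invoke from Lemmas \ref{highorderterm1}--\ref{highorderterm5} require the vector field count on the metric-type factor to satisfy $|L_2|\leq N_0-5$; this is baked into the hypotheses of those lemmas because the $L^\infty$ control is obtained by Sobolev embedding at the expense of a few extra derivatives. In a cubic configuration from \eqref{2020april8eqn35} with, say, $|\mathcal L_2|=N_0-2$ and $|\rho_1|+|\Gamma_1\mathcal L_1|\leq 2$, the factor $\tilde L_1\Gamma_2\mathcal L_2 g^{\alpha\beta}$ carries nearly all of the $N_0$ derivatives, so no $L^\infty_{x,v}$ bound is available for it; the pigeonhole you describe forces only that the \emph{other} factors (the Vlasov profile $\Lambda^{\rho_1}u^{\Gamma_1\mathcal L_1}$ and the $v$-coefficient) are low order, which is of no help if you insist on keeping the Vlasov slot in $L^2_{x,v}$. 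The paper's proof therefore splits on whether $|\rho_1|+|\mathcal L_1|>|\rho|+|\mathcal L|-5$ (most vector fields on the Vlasov side: your strategy applies, metric in $L^\infty_{x,v}$, Vlasov in weighted $L^2_{x,v}$, yielding \eqref{oct3reqn64}) or $|\rho_1|+|\mathcal L_1|\leq|\rho|+|\mathcal L|-5$ (most vector fields on the metric side: you \emph{must} put the Vlasov profile in $L^\infty_x L^2_v$ via the density decay Lemma \ref{decayestimateofdensity}, and the high-order metric factor goes into $L^2_x$ via the bilinear estimate \eqref{aug9eqn84}; this is the content of \eqref{oct3reqn3}--\eqref{oct3reqn63}). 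You do not carry out this second branch, and without it the proof does not close.

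A secondary point: the claimed overall bound $2^{-2m+O(\delta_0 m)}\epsilon_1^3$ is not what the argument delivers once the second branch is included. In the $\mathfrak{N}_{\rho,3}^{\mathcal{L}}$ case the coefficient of $D_v$ costs a factor $2^m$ which, combined with the $2^{-2m/3}$ of the differentiated bilinear estimate \eqref{oct3reqn52} and the $2^{-3m/2}$ of the density decay in $L^\infty_x L^2_v$, gives exactly the $2^{-7m/6}$ appearing in \eqref{aug9eqn87}; this is the tight term and it is not reachable by a pure $L^\infty$-metric/$L^2$-Vlasov H\"older as you propose. So not only is the case split missing, but the stated decay rate in your proposal is inconsistent with the worst case in the lemma.
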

\begin{proof}
Recall (\ref{2020april8eqn32}--\ref{2020april8eqn35}).  Note that,  from the $L^\infty_{x,v}-L^2_{x,v}$ type bilinear estimate,  the estimate (\ref{june30eqn1}) in Lemma \ref{highorderterm2} and the estimate (\ref{june29eqn71}) in Lemma \ref{highorderterm3}, we have
\[
 \| \omega^{\mathcal{L}}_{\rho}(x,v)  \Lambda_{\geq 3}[  \mathfrak{N}_{\rho,1 }^{\mathcal{L
 }} ](t,x,v)   \|_{L^2_{x,v}} +  \| \langle v \rangle^{-1} \omega^{\mathcal{L}}_{\rho}(x,v)  \Lambda_{\geq 3}[  \mathfrak{N}_{\rho,2 }^{\mathcal{L
 }} ](t,x,v)   \|_{L^2_{x,v}}
\]
\[
 \lesssim \big(\| v\Lambda_{\geq 2}[(v^0)^{-1}](t,x,v)\|_{L^\infty_{x,v}} +  \| \langle v \rangle^{-1}  (v^0)^{-1}  \Lambda_{\geq 2}[ v_{\alpha} v_{\beta} \nabla_x g^{\alpha\beta}](t,x,v)\|_{L^\infty_{x,v}} \big)
 \]
 \be
 \times  \| \omega^{\mathcal{L}}_{\rho}(x,v)   \Lambda^{\rho} u^{\mathcal{L}}(t,x,v)\|_{L^2_{x,v}} \lesssim 2^{-2m+ d( |\mathcal{L}|+|\tilde{c}(\rho)| +4, |\rho|- |\tilde{c}(\rho)| )\delta_0 m + \delta_0 m}\epsilon_1^3. 
\ee
By using the same strategy,  our desired estimate (\ref{2020april16eqn1}) holds from the equality (\ref{sepeqn610}) and the estimate of corresponding coefficients (\ref{sepeqn88})
 in Lemma \ref{decompositionofderivatives}.

It remains to estimate $  \mathfrak{N}_{\rho,3}^{\mathcal{L
 }} $ and $  \mathfrak{N}_{\rho,4 }^{\mathcal{L
 }} $.  We use different strategies based on the possible size of the number of vector fields acting on the Vlasov part. 

\noindent $\bullet$\qquad  If $|\rho_1|+ |\mathcal{L}_1|>  |\rho|+|\mathcal{L}|-5$\qquad Without further explanation, we let the vector fields be restricted in the range listed in (\ref{2020april8eqn35}).  Since there are at most six derivatives act on the metric component, we use the decay of the perturbed metrics. As a result, from the $L^\infty_{x,v}-L^2_{x,v}$ type bilinear estimate,  the estimates of coefficients in (\ref{2020july30eqn1}), the estimate (\ref{june30eqn1}) in Lemma \ref{highorderterm2}, the estimate (\ref{june29eqn71}) in Lemma \ref{highorderterm3}, and the estimate (\ref{july2eqn2}) in Lemma \ref{highorderterm5},  the following estimate holds for the 
\be\label{oct3reqn64}
\begin{split}
\|   c_{\mathcal{L}_1\mathcal{L}_2\mathcal{L}_3;\tilde{L}_1, \tilde{L}_2}^{  {\mathcal{L}};\Gamma_1,\Gamma_2;\rho, \rho_1, \rho_2}(x,v) \Lambda_{\geq 2}[   \tilde{L}_1 \Gamma_2 \mathcal{L}_2 g^{\alpha\beta}&(t,x+t\hat{v})  \tilde{L}_2 \mathcal{L}_3\big(  ({2v^0})^{-1} { v_{\alpha} v_{\beta}} \big](t,x+t\hat{v}, v)  \Lambda^{\rho_1} u^{\Gamma_1 \mathcal{L}_1}(t,x,v) \\
\times  \omega^{\mathcal{L}}_{\rho}(x,v) \|_{L^2_{x,v}}  + \|\omega^{\mathcal{L}}_{\rho}(x,v)    \tilde{c}_{\mathcal{L}_1\mathcal{L}_2;\tilde{L}}^{\mathcal{L};\rho,\rho_1,\rho_2}(x,v)  \cdot &\nabla_x \Lambda^{\rho_1}  u^{\mathcal{L}_1}(t,x,v) \Lambda_{\geq 2}[\tilde{L}\mathcal{L}_2(v^0)^{-1}](t, x+t\hat{v},v) \|_{L^2_{x,v}}\\
\lesssim \sum_{|\tilde{\rho}|+|\tilde{\mathcal{L}}|\leq |\rho|+|\mathcal{L}|} \big( \| 
 \Lambda_{\geq 2}[\tilde{L}_1 &\Gamma_2 \mathcal{L}_2 g^{\alpha\beta}] \|_{  L^\infty_{x,v}} +  \|  \langle v\rangle^{-1}\Lambda_{\geq 1}[\tilde{L}_2\mathcal{L}_3\big(  ({2v^0})^{-1} { v_{\alpha} v_{\beta}} \big) ](t ) \|_{ L^\infty_{x,v}} \\
\times  \|\tilde{L}_1\Gamma_2 \mathcal{L}_2 g^{\alpha\beta}(t, x  )   \|_{ L^\infty_{x,v}} &+\| \Lambda_{\geq 2}[\tilde{L}\mathcal{L}_2(v^0)^{-1}](t, x+t\hat{v})\|_{L^\infty_{x,v}}\\
 +\|   \langle v\rangle^{-1}\Lambda_{\geq 2}[\tilde{L}_2\mathcal{L}_3\big(&  ({2v^0})^{-1} { v_{\alpha} v_{\beta}} \big) ](t ) \|_{ L^\infty_{x,v}}  \big)  \|   \omega^{ \tilde{\mathcal{L}}}_{\tilde{\rho}}  (x, v) \Lambda^{\tilde{\rho}}u^{ \tilde{\mathcal{L}}} (t,x , v) \|_{  L^2_{x,v}}\\
  \qquad \qquad  \lesssim &2^{-2m  + 3 d( |\mathcal{L}|+|\tilde{c}(\rho)| +3, |\rho|- |\tilde{c}(\rho)| ) \delta_0 m}\epsilon_1^3. \qquad \qquad 
\end{split}
 \ee

\noindent $\bullet$\qquad If $|\rho_1|+ |\mathcal{L}_1|\leq |\rho|+|\mathcal{L}|-5$. \qquad 

For this case, we use the decay of the density of the Vlasov part. From the $L^2_x L^\infty_v-L^\infty_x L^2_v$ type estimate, the decay estimate of density function type (\ref{densitydecay}) in Lemma \ref{decayestimateofdensity}, and  the estimate (\ref{july2eqn2}) in Lemma \ref{highorderterm5},     we have
 \[
\| \omega^{\mathcal{L}}_{\rho}(x,v)    c_{\mathcal{L}_1\mathcal{L}_2\mathcal{L}_3;\tilde{L}_1, \tilde{L}_2}^{  {\mathcal{L}};\Gamma_1,\Gamma_2;\rho, \rho_1, \rho_2}(x,v) \Lambda_{\geq 2}[   \tilde{L}_1 \Gamma_2 \mathcal{L}_2 g^{\alpha\beta}(t,x+t\hat{v})  \tilde{L}_2 \mathcal{L}_3\big(  ({2v^0})^{-1} { v_{\alpha} v_{\beta}} \big)](t,x+t\hat{v}, v) \]
\[
\times  \Lambda^{\rho_1} u^{\Gamma_1 \mathcal{L}_1}(t,x,v)\|_{L^2_{x,v}}
\]
\[
+ \|\omega^{\mathcal{L}}_{\rho}(x,v)    \tilde{c}_{\mathcal{L}_1\mathcal{L}_2;\tilde{L}}^{\mathcal{L};\rho,\rho_1,\rho_2}(x,v)  \cdot \nabla_x \Lambda^{\rho_1}  u^{\mathcal{L}_1}(t,x,v) \Lambda_{\geq 2}[\tilde{L}\mathcal{L}_2(v^0)^{-1}](t, x+t\hat{v},v) \|_{L^2_{x,v}}
\]
\[ 
 \lesssim \sum_{| \tilde{\rho}|+|\tilde{\mathcal{L}} |\leq N_0-5  } \big(  \|\Lambda_{\geq 2}[   \tilde{L}_1 \Gamma_2 \mathcal{L}_2 g^{\alpha\beta}(t,x+t\hat{v})  \tilde{L}_2 \mathcal{L}_3\big(  ({2v^0})^{-1} { v_{\alpha} v_{\beta}} \big](t,x , v)] \|_{L^2_x L^\infty_v} 
\] 
\[
+\|  \Lambda_{\geq 2}[\tilde{L}\mathcal{L}_2(v^0)^{-1}](t, x )  \|_{L^2_x L^\infty_v}   \big) \| \langle v \rangle^{-20} \omega^{ \tilde{\mathcal{L}}}_{\tilde{\rho}}   (x-t\hat{v},v)  \Lambda^{ \tilde{\rho}}u^{ \tilde{\mathcal{L}}}_{} (t,x-t\hat{v}, v) \|_{L^\infty_x L^2_v}
\]
\be\label{oct3reqn3}
  \lesssim 2^{-2m+ 3 d( |\mathcal{L}|+|\tilde{c}(\rho)| +3, |\rho|- |\tilde{c}(\rho)| ) \delta_0 m}\epsilon_1^3.
\ee

By using a similar strategy, the following estimate holds from the decay estimate of density function type (\ref{densitydecay}) in Lemma \ref{decayestimateofdensity}, the estimate (\ref{june30eqn1}) in Lemma \ref{highorderterm2}, the estimate (\ref{june29eqn71}) in Lemma \ref{highorderterm3}, the estimate (\ref{july2eqn2}) in Lemma \ref{highorderterm5},   and the $L^2$-type bilinear estimate (\ref{aug9eqn84}) in Lemma \ref{aug9bilinearLemma},
 \[
 \|   \omega^{\mathcal{L}}_{\rho}(x,v)   c^{\tilde{L}}_\rho(x,v) \Lambda_{\geq 2}[\big( (v^0)^{-1}   v_{\alpha} v_{\beta} \nabla_x \tilde{L} \mathcal{L} g^{\alpha\beta}\big)] (t, x+t\hat{v},v) \cdot D_v u^{ }(t,x,v)   \|_{L^2_{x,v}}
 \]
\[
 \lesssim 2^{m}\big( \|     \nabla_x \tilde{L} \mathcal{L} g^{\alpha\beta} \|_{L^2_x  } \|  \langle v\rangle^{-2}\Lambda_{\geq 1}[ \big(  ({2v^0})^{-1}   { v_{\alpha} v_{\beta}} \big) ](t, x  , v)\|_{ L^\infty_{x,v}}    +\|  \Lambda_{\geq 2}[   \nabla_x \tilde{L} \mathcal{L} g^{\alpha\beta}](t, x  , v)\|_{L^2_{x} }    \big)   
\]
\[
\times \|  \langle v\rangle^{ 2}   \omega^{\mathcal{L}}_{\rho}(x,v)   \nabla_{x,v}^{\alpha}  \Lambda^{\rho_1}u^{\mathcal{L}}(t,x-t\hat{v}, v) \|_{L^\infty_x L^2_v}
\]
\be\label{oct3reqn63}
 \lesssim 2^{-7m/6+ 3d( |\mathcal{L}|+|\tilde{c}(\rho)| +3, |\rho|- |\tilde{c}(\rho)| )\delta_0 m}\epsilon_1^3.
\ee
   To sum up,   our desired estimate (\ref{aug9eqn87}) holds from the estimates (\ref{oct3reqn3}--\ref{oct3reqn63}).
 
\end{proof}
Now, we estimate  the quadratic terms of the Vlasov part. 
\begin{lemma}\label{estimateofremaindervlasov3}
Under the bootstrap assumptions \textup{(\ref{BAmetricold})} and \textup{(\ref{BAvlasovori})}, the following estimate holds for any  $t\in [2^{m-1}, 2^m]$,   $ m\in \mathbb{Z}_{+}$,  $\mathcal{L}, \tilde{\mathcal{L}}\in   \cup_{n\leq N_0  } \mathcal{P}_n, \rho, \iota, \kappa\in \mathcal{S}, $ s.t.,     $|\iota|=1   $, $|\rho|+|\mathcal{L}|\leq N_0$,  $|\kappa|+|  \tilde{\mathcal{L}}|\leq |\rho|+|\mathcal{L}|-1,$
\be\label{2020april9eqn21}
\begin{split}
    &  \sum_{i=1,   4  }  \| \omega^{\mathcal{L}}_{\rho}(x,v)  \Lambda_{2}[  \mathfrak{N}_{\rho,i }^{\mathcal{L
 }} ](t,x,v)   \|_{L^2_{x,v}}  + \| \omega^{\tilde{\mathcal{L}} }_{\iota\circ\kappa}(x,v)  \Lambda_{2}[ \Lambda^{\iota} \mathfrak{N}_{  \kappa,i }^{ \tilde{\mathcal{L}}} ](t,x,v)   \|_{L^2_{x,v}} \\
&+\| \langle v \rangle^{-1} \omega^{\mathcal{L}}_{\rho}(x,v)  \Lambda_{2}[  \mathfrak{N}_{\rho,2 }^{\mathcal{L
 }} ](t,x,v)   \|_{L^2_{x,v}}+  \| \langle v \rangle^{-1} \omega^{\tilde{\mathcal{L}} }_{\iota\circ\kappa}(x,v)  \Lambda_{2}[ \Lambda^{\iota} \mathfrak{N}_{  \kappa,2 }^{ \tilde{\mathcal{L}}} ](t,x,v)   \|_{L^2_{x,v}}   \\ 
   & \lesssim   2^{-m +   H(\mathcal{L}, \rho) \delta_0m + H(4 )\delta_0  m } \epsilon_1^2.\\ 
   &   \| \omega^{\mathcal{L}}_{\rho}(x,v)  \Lambda_{2}[  \mathfrak{N}_{\rho,3 }^{\mathcal{L
 }} ](t,x,v)   \|_{L^2_{x,v}}  + \| \omega^{\tilde{\mathcal{L}} }_{\iota\circ\kappa}(x,v)  \Lambda_{2}[ \Lambda^{\iota} \mathfrak{N}_{  \kappa,3  }^{ \tilde{\mathcal{L}}} ](t,x,v)   \|_{L^2_{x,v}} \\ 
 & \lesssim   2^{-m/2 +   H(\mathcal{L}, \rho) \delta_0m + H(4 )\delta_0  m } \epsilon_1^2 .
\end{split}
\ee
 Moreover, for the second part of non-linearity, $ \mathfrak{N}_{\rho,2 }^{\mathcal{L
 }}(t,x,v) $,  the following improved estimate holds if we allow some loss of decay in $(x,v),$
\[
  \| \langle v \rangle^{-3}\langle |x|\rangle^{-1} \omega^{\mathcal{L}}_{\rho}(x,v)  \Lambda_{2}[  \mathfrak{N}_{\rho,2 }^{\mathcal{L
 }} ](t,x,v)   \|_{L^2_{x,v}} +  \| \langle v \rangle^{-3}\langle |x|\rangle^{-1}\omega^{\tilde{\mathcal{L}} }_{\iota\circ\kappa}(x,v)  \Lambda_{2}[ \Lambda^{\iota} \mathfrak{N}_{  \kappa,2 }^{ \tilde{\mathcal{L}}} ](t,x,v)   \|_{L^2_{x,v}} 
\]
\be\label{2020april14eqn3}
  \lesssim2^{-2m+    H(\mathcal{L}, \rho) \delta_0 m + H(4) \delta_1 m}\epsilon_1^2.
\ee

\end{lemma}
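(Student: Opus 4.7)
Each $\Lambda_2[\mathfrak{N}_{\rho,i}^{\mathcal{L}}]$ is, per the explicit formulas \eqref{2020april8eqn32}--\eqref{2020april8eqn35}, schematically a product of a metric-type factor built from the perturbed metric and its vector-field derivatives and a Vlasov-profile factor built from $\Lambda^\rho u^{\mathcal{L}}$. My overall strategy is a H\"older split in $L^2_{x,v}$, placing the metric factor in $L^\infty_{x,v}$, where it decays at rate $2^{-m + H(|L|+1)\delta_1 m}$ by \eqref{2020julybasicestiamte} of Lemma \ref{basicestimates} (auxiliary factors $v^0$, $v_0$, $(v^0)^{-1}$ being controlled by Lemma \ref{highorderterm3} and Lemma \ref{highorderterm5}), and placing the Vlasov factor in weighted $L^2_{x,v}$, where it is controlled by $E^{vl}_{\mathcal{L},\rho}(t)\lesssim 2^{H(\mathcal{L},\rho)\delta_0 m}\epsilon_1$ from the bootstrap assumption \eqref{BAvlasovori}.

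For $i=1$ and the $\langle v\rangle^{-1}$-weighted $i=2$ statement this scheme runs directly: $\Lambda_1[(v^0)^{-1}]v$ and $(v^0)^{-1}v_\alpha v_\beta\nabla_x h^{\alpha\beta}$ are linear in $h$ respectively $\nabla_x h$, and the $\langle v\rangle^{-1}$ weight absorbs the harmless $v/v^0$ factors, yielding the advertised $2^{-m+H(\mathcal{L},\rho)\delta_0 m + H(4)\delta_0 m}\epsilon_1^2$ bound. For $i=4$, the formula \eqref{2020april8eqn35} forces a strict decrease $\mathcal{L}_i\prec\mathcal{L}$ or $\rho_i\prec\rho$ on one factor in every summand, so that factor admits the $L^\infty$ decay via Sobolev embedding (combined with \eqref{2020julybasicestiamte}) while the other factor goes into the weighted $L^2$ bootstrap, with the coefficients $\tilde c_{\mathcal{L}_1\mathcal{L}_2}^{\mathcal{L}}$, $c_{\mathcal{L}_1\mathcal{L}_2\mathcal{L}_3;\tilde L_1,\tilde L_2}^{\mathcal{L};\Gamma_1,\Gamma_2;\rho,\rho_1,\rho_2}$ handled by \eqref{2020july30eqn1} and their polynomial $(x,v)$-growth absorbed by $\omega^{\mathcal{L}}_\rho$.

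The main obstacle is $i=3$: the expression \eqref{2022may16eqn1} contains the operator $D_v = \nabla_v - t\nabla_v\hat{v}\cdot\nabla_x$ applied to $u$, and the explicit $t$-factor prevents a direct $2^{-m}$ bound. My plan is to invoke the decomposition \eqref{2020april21eqn11} to write $D_v = \sum_{|\iota|=1} c_\iota(t,x,v)\Lambda^\iota$ with coefficient bound $|c_\iota|\lesssim (1+|v|)\,||t|-|x+t\hat{v}||$, thereby converting $D_v u$ into a bootstrapped profile $\Lambda^\iota u$ at the cost of the coefficient $c_\iota$. Using $||t|-|x+t\hat{v}||\lesssim |x| + t/\langle v\rangle^2$, the $(1+|v|)|x|$ piece is absorbed into the polynomial weight $\omega^{\mathcal{L}}_\rho$ from \eqref{weightfunctions2} at cost $\langle t\rangle^{O(H(4)\delta_1)}$ (using that the cutoff $\phi$ activates at scale $\langle t\rangle^{4H(4)\delta_1}$), while the $t/\langle v\rangle$ piece is handled by a dyadic split in $|v|$: for $|v|\gtrsim 2^{m/2}$ the $\langle v\rangle^{1/\delta_1}$ room in $\omega^{\mathcal{L}}_\rho$ absorbs everything comfortably, and for $|v|\lesssim 2^{m/2}$ we bound $t/\langle v\rangle\lesssim 2^{m/2}$ directly. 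Combining this $2^{m/2}$ loss with the $2^{-m+H\delta_1 m}$ $L^\infty$ decay of $\nabla_x\tilde L\mathcal{L} h$ and the weighted $L^2$ bound on $\Lambda^\iota u$ produces the advertised $2^{-m/2 + H(\mathcal{L},\rho)\delta_0 m + H(4)\delta_0 m}\epsilon_1^2$ rate.

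Finally, for the improved bound \eqref{2020april14eqn3}: the extra $\langle v\rangle^{-3}\langle x\rangle^{-1}$ weight permits rerouting H\"older so that the Vlasov factor is first integrated over $v$ with moment weight $\langle v\rangle^{-3}$, after which the density-type decay estimate \eqref{densitydecay} of Lemma \ref{decayestimateofdensity}, applied along the flow $x+t\hat{v}$, yields $t^{-3}$ decay for the resulting $v$-moment of $\Lambda^\rho u^{\mathcal{L}}$. Pairing this with the $L^\infty$ decay $\|\nabla_x h\|_{L^\infty}\lesssim 2^{-m+H(2)\delta_1 m}$ of the metric factor in $\mathfrak{N}_{\rho,2}^{\mathcal{L}}$ produces the overall $2^{-2m}\epsilon_1^2$ bound; because the metric is now controlled only through its $\delta_1$-growing modified-profile $L^\infty$ norm and the Vlasov $v$-moment estimate invokes only the low-regularity $Z^{vl}$ side of \eqref{BAvlasovori}, the top-order exponent loss is $H(4)\delta_1 m$ rather than $\delta_0 m$, as asserted.
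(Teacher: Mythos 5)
Your overall Hölder scheme (metric factor in $L^\infty$, Vlasov factor in weighted $L^2$) matches the paper for $i=1,2$ and in spirit for $i=4$, but there are two concrete gaps where your execution would fail.

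\textbf{The $i=3$ estimate.} Your dyadic split in $|v|$ doesn't give the claimed $2^{m/2}$ bound. From $|c_\iota|\lesssim(1+|v|)\bigl(|x|+t/\langle v\rangle^2\bigr)$ the piece $(1+|v|)\,t/\langle v\rangle^2\approx t/\langle v\rangle$ is controlled by $2^{m/2}$ \emph{only} when $\langle v\rangle\gtrsim 2^{m/2}$; for $|v|\lesssim 1$ it is a full $t\approx 2^m$ loss, and no amount of ``$\langle v\rangle^{1/\delta_1}$ room'' in the weight helps you for small $v$. So as written the split gives $2^{-m}\cdot 2^{m}\cdot 2^{2H\delta_0 m}$, which does not close. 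The paper's route is different: it puts the metric factor $\nabla_x\tilde L\mathcal{L}g$ in $L^2_x$ (no $L^\infty$ decay, just the $2^{H\delta_0 m}$ growth from the bootstrap) and the Vlasov factor $c_\iota\Lambda^\iota u$ in $L^\infty_x L^2_v$; the $L^\infty_x L^2_v$ norm of the pulled-back profile gains $t^{-3/2}$ by the density-decay mechanism (Lemma \ref{decayestimateofdensity}), which is what absorbs the unavoidable $t$ loss from $D_v$ and produces the net $2^{-m/2}$. You need that $L^\infty_x L^2_v$ dispersive step; the weight alone cannot save you at small $v$.

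\textbf{The improved estimate \eqref{2020april14eqn3}.} Your plan to ``reroute Hölder so that the Vlasov factor is first integrated over $v$'' and then apply the density decay estimate is not coherent for an $L^2_{x,v}$ target: the quantity to bound is $\|\langle v\rangle^{-3}\langle|x|\rangle^{-1}\omega^{\mathcal{L}}_\rho\Lambda_2[\mathfrak{N}_{\rho,2}^{\mathcal L}]\|_{L^2_{x,v}}$, which has no velocity integral to dispersify, and no Hölder splitting of the product $h(t,x+t\hat v)\cdot\Lambda^\rho u^{\mathcal L}(t,x,v)$ produces an $L^1_v$ moment of the profile while retaining an $L^2_{x,v}$ bound on the whole. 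The paper instead uses the identity $(\partial_t,\nabla_x)=\sum_\Gamma a_\Gamma(t,x)\Gamma$ with $|a_\Gamma|\lesssim\bigl||t|-|x|\bigr|^{-1}$ (see \eqref{definitionofcoefficient1}), applied twice to trade the $\nabla_x$ on the metric for two vector fields, and then the pointwise inequality $\langle|t|-|x+t\hat v|\rangle^{-1}\langle|x|\rangle^{-1}\langle v\rangle^{-2}\lesssim 2^{-m}$ (this is exactly the role of the extra $\langle v\rangle^{-3}\langle|x|\rangle^{-1}$ weight) to gain the second factor of $2^{-m}$; the profile stays in $L^2_{x,v}$ and is bounded by its energy, and the metric with two more vector fields is still controlled in $L^\infty_{x,v}$ at rate $2^{-m+H\delta_1 m}$ by \eqref{2020julybasicestiamte}. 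That is where the $H(4)\delta_1 m$ (rather than $\delta_0 m$) top-order loss comes from, not from the $Z^{vl}$ bootstrap.

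As a minor point, your treatment of $i=4$ glosses over the regime $|\rho_2|+|\mathcal L_2|$ close to $N_0$, where the metric factor cannot be placed in $L^\infty$ by Sobolev embedding without exceeding the available derivative count; the paper resolves this by a dyadic-frequency case split combined with density decay, as in \eqref{2020april9eqn49}--\eqref{2020april9eqn50}. The idea of routing Hölder by which factor carries fewer vector fields is right, but that extra case analysis is not optional.
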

\begin{proof}

Recall (\ref{2020april8eqn32}-\ref{2020april8eqn33}). From the $L^\infty_{x,v}-L^2_{x,v}$ type bilinear estimate, the estimate (\ref{2020julybasicestiamte}) in Lemma \ref{basicestimates},  the estimate of corresponding coefficients (\ref{sepeqn88})
 in Lemma \ref{decompositionofderivatives},  we have
\be\label{2020april14eqn1}
\begin{split}
 &   \| \omega^{\tilde{\mathcal{L}} }_{\iota\circ\kappa}(x,v)  \Lambda_{2}[ \Lambda^{\iota} \mathfrak{N}_{  \kappa,1 }^{ \tilde{\mathcal{L}}} ](t,x,v)   \|_{L^2_{x,v}}    +  \| \langle v \rangle^{-1}  \omega^{\tilde{\mathcal{L}} }_{\iota\circ\kappa}(x,v)  \Lambda_{2}[ \Lambda^{\iota} \mathfrak{N}_{  \kappa,2 }^{ \tilde{\mathcal{L}}} ](t,x,v)   \|_{L^2_{x,v}} \\
 &   +  \| \omega^{\mathcal{L}}_{\rho}(x,v)  \Lambda_{ 2}[  \mathfrak{N}_{\rho,1 }^{\mathcal{L
 }} ](t,x,v)   \|_{L^2_{x,v}} +  \| \langle v \rangle^{-1} \omega^{\mathcal{L}}_{\rho}(x,v)  \Lambda_{2}[  \mathfrak{N}_{\rho,2 }^{\mathcal{L
 }} ](t,x,v)   \|_{L^2_{x,v}}\\ 
 &\lesssim \sum_{\Gamma\in P_1, |\Gamma|+|\rho|+|\mathcal{L}|\leq N_0} \| \omega^{\mathcal{L}}_{\rho}(x,v)   \Lambda^{\rho} u^{\mathcal{L}}(t,x,v)\|_{L^2_{x,v}}  \big( \|  \langle v \rangle \Lambda_{1}[\Gamma (v^0)^{-1}](t,x,v)\|_{L^\infty_{x,v}}\\
  & +\| \langle v \rangle^{-1}  (v^0)^{-1}  \Lambda_{1}[ v_{\alpha} v_{\beta} \nabla_x \Gamma g^{\alpha\beta}](t,x,v)\|_{L^\infty_{x,v}}\big) \lesssim 2^{-m+  \mathcal{H}(\mathcal{L}, \rho) \delta_0 m +  (H(1)\delta_1 +2\gamma)  m}\epsilon_1^2.
\end{split}
  \ee
 
 Next, we estimate $  \mathfrak{N}_{\rho,3 }^{\mathcal{L
 }}(t,x,v) $. Recall \eqref{2022may16eqn1}. From the $L^2_{x}L^\infty-L^\infty_x L^2_v$ type bilinear estimate,  the decay estimate of the density type function (\ref{densitydecay}) in Lemma \ref{decayestimateofdensity} and the estimate of coefficients in (\ref{2020july30eqn1}), we have
\[
\| \omega^{\mathcal{L}}_{\rho}(x,v)  \Lambda_{2}[  \mathfrak{N}_{\rho,3}^{\mathcal{L
 }} ](t,x,v)   \|_{L^2_{x,v}}  + \| \omega^{\tilde{\mathcal{L}} }_{\iota\circ\kappa}(x,v)  \Lambda_{2}[ \Lambda^{\iota} \mathfrak{N}_{  \kappa,3 }^{ \tilde{\mathcal{L}}} ](t,x,v)   \|_{L^2_{x,v}} 
\]
\be\label{2022may16eqn2}
\lesssim 2^{-m/2+     H(\mathcal{L}, \rho)\delta_0m + H(4)\delta_0    m } \epsilon_1^2. 
\ee

Lastly,  we  estimate $  \mathfrak{N}_{\rho,4 }^{\mathcal{L
 }}(t,x,v) $. Recall   (\ref{2020april8eqn35}).  If $|\rho_1|+|\tilde{\mathcal{L}}_1 |\geq 10$ or $|\rho|+|\mathcal{L}|\leq N_0-10$, then the following estimate holds from the $L^\infty_{x,v}-L^2_{x,v}$ type bilinear estimate,    the   estimate of coefficients in (\ref{2020july30eqn1}) and the decay estimate in  (\ref{2020julybasicestiamte}) in Lemma \ref{basicestimates},
  \[
\|    c_{\mathcal{L}_1\mathcal{L}_2\mathcal{L}_3;\tilde{L}_1, \tilde{L}_2}^{  {\mathcal{L}};\Gamma_1,\Gamma_2;\rho, \rho_1, \rho_2}(x,v) \Lambda_{1}[   \tilde{L}_1 \Gamma_2 \mathcal{L}_2 g^{\alpha\beta}(t,x+t\hat{v})  \tilde{L}_2 \mathcal{L}_3\big(  ({2v^0})^{-1} { v_{\alpha} v_{\beta}} \big)(t,x+t\hat{v}, v)]  \Lambda^{\rho_1} u^{\Gamma_1 \mathcal{L}_1}(t,x,v)
\] 
  \[
 \times \omega^{\mathcal{L}}_{\rho}(x,v) \|_{L^2_{x,v}} +  \| \omega^{\mathcal{L}}_{\rho}(x,v)      \tilde{c}_{\mathcal{L}_1\mathcal{L}_2;\tilde{L}}^{\mathcal{L};\rho,\rho_1,\rho_2}(x,v)  \cdot \nabla_x \Lambda^{\rho_1}  u^{\mathcal{L}_1}(t,x,v) \Lambda_{  1}[\tilde{L}\mathcal{L}_2(v^0)^{-1}](t, x+t\hat{v},v) \|_{L^2_{x,v}}
  \]
\be\label{2020april9eqn47}
\lesssim 2^{-m+  \mathcal{H}(\mathcal{L}, \rho) \delta_0 m +  (H(1)\delta_1 +2\gamma)  m}  \epsilon_1^2. 
 \ee
If  $|\rho_1|+|\tilde{\mathcal{L}}_1 |\leq 10$ and $|\rho|+|\mathcal{L}|\geq N_0-10$, we do dyadic decomposition for the metric component. As an example, we have 
 \begin{multline}
 \nabla_x \Lambda^{\rho_1}  u^{\mathcal{L}_1}(t,x,v) \Lambda_{  1}[\tilde{L}\mathcal{L}_2(v^0)^{-1}](t, x+t\hat{v},v) = \sum_{k\in \mathbb{Z}} H_k(t, x+t\hat{v}, v), \\ 
  H_k(t, x+t\hat{v}, v)=   \nabla_x \Lambda^{\rho_1}  u^{\mathcal{L}_1}(t,x,v) P_k( \Lambda_{  1}[\tilde{L}\mathcal{L}_2(v^0)^{-1}])(t, x+t\hat{v},v). 
 \end{multline}
On one hand, the following estimate holds from the $L^\infty_{x,v}-L^2_{x,v}$ type bilinear estimate and the $L^\infty\longrightarrow L^2$ type Sobolev embedding, 
\be\label{2020april9eqn49}
 \| \omega^{\mathcal{L}}_{\rho}(x,v)     \tilde{c}_{\mathcal{L}_1\mathcal{L}_2;\tilde{L}}^{\mathcal{L};\rho,\rho_1,\rho_2}(x,v)   H_k(t, x+t\hat{v}, v)\|_{L^2_{x,v}} \lesssim 2^{(1-\gamma)k+  H(\mathcal{L}, \rho) \delta_0 m}\epsilon_1^2. 
 \ee
 On the other hand,  form the $L^2_{x}L^\infty_v-L^\infty_x L^2_v$ type bilinear estimate,  the decay estimate of the density type function (\ref{densitydecay}) in Lemma \ref{decayestimateofdensity}, the first estimate in (\ref{2020julybasicestiamte}) in Lemma \ref{basicestimates} and the estimate of coefficients in (\ref{2020july30eqn1}), we have
\[
  \| \omega^{\mathcal{L}}_{\rho}(x,v)      \tilde{c}_{\mathcal{L}_1\mathcal{L}_2;\tilde{L}}^{\mathcal{L};\rho,\rho_1,\rho_2}(x,v)   H_k(t, x+t\hat{v}, v)\|_{L^2_{x,v}} \lesssim \sum_{ \iota\in \mathcal{S},|\iota|\leq 4 } 2^{-k/2+\gamma k-3m/2+ H(\mathcal{L}_1, \iota\circ \rho_1)  \delta_0 m} 
 \]
\be\label{2020april9eqn50}
 \times \min\{2^{   H(\mathcal{L}_2,  \rho_2)  \delta_0m}, 2^{H(  |\mathcal{L}_2|+| \rho_2|)\delta_1m}+2^{k+   H(\mathcal{L}_2,  \rho_2)   \delta_0m}+ 2^{m+k} \mathbf{1}_{|\rho_2|+|\mathcal{L}_2|=N_0}\} \epsilon_1^2  . 
 \ee
 Combining the above two estimates (\ref{2020april9eqn49}) and (\ref{2020april9eqn50}), we have
 \[
   \|    \omega^{\mathcal{L}}_{\rho}(x,v)      \tilde{c}_{\mathcal{L}_1\mathcal{L}_2;\tilde{L}}^{\mathcal{L};\rho,\rho_1,\rho_2}(x,v) \cdot  \nabla_x \Lambda^{\rho_1}  u^{\mathcal{L}_1}(t,x,v) \Lambda_{  1}[\tilde{L}\mathcal{L}_2(v^0)^{-1}](t, x+t\hat{v},v)   \|_{L^2_{x,v}}
 \]
\be\label{2020april9eqn51}
\lesssim 2^{-m+    H(\mathcal{L}, \rho)   \delta_0 m+   (H(1)\delta_1 +2\gamma)  m     } \epsilon_1^2. 
 \ee
 Following the same strategy, we have
   \[
\|   c_{\mathcal{L}_1\mathcal{L}_2\mathcal{L}_3;\tilde{L}_1, \tilde{L}_2}^{  {\mathcal{L}};\Gamma_1,\Gamma_2;\rho, \rho_1, \rho_2}(x,v) \Lambda_{1}[   \tilde{L}_1 \Gamma_2 \mathcal{L}_2 g^{\alpha\beta}(t,x+t\hat{v})  \tilde{L}_2 \mathcal{L}_3\big(  ({2v^0})^{-1} { v_{\alpha} v_{\beta}} \big)(t,x+t\hat{v}, v)]  \Lambda^{\rho_1} u^{\Gamma_1 \mathcal{L}_1}(t,x,v)
\] 
 \be\label{2020july30eqn2}
 \times \omega^{\mathcal{L}}_{\rho}(x,v) \|_{L^2_{x,v}}  \lesssim 2^{-m+    H(\mathcal{L}, \rho)   \delta_0 m+   (H(1)\delta_1 +2\gamma)  m    } \epsilon_1^2. 
 \ee
 To sum up, our desired estimate (\ref{2020april9eqn21}) holds from the estimates (\ref{2020april14eqn1}--\ref{2020april9eqn47})  and (\ref{2020april9eqn51}--\ref{2020july30eqn2}). 

 $\bullet$\qquad Proof of the desired estimate (\ref{2020april14eqn3}).  

 Recall (\ref{2020april8eqn33}).  The main idea  is to exploit the  decay rate of the distance with respect to the light cone. More precisely, the following identity holds,  
\be\label{definitionofcoefficient1}
(\p_t, \nabla_x) = \sum_{\Gamma\in P_1}  a_{\Gamma}(t,x) \Gamma,\quad  a_{\Gamma}(t,x) = c_{\Gamma}(t,x) (|t|^2-|x|^2)^{-1}
\ee
where  $c_{\Gamma }(t,x)$ are some uniquely determined  coefficients and satisfy the following estimate 
\be\label{2022may16eqn21}
 |c_{\Gamma}(t,x)|\leq |t|+|x|, \quad |a_{\Gamma}(t,x)|\leq \big|  |t|-|x| \big|^{-1}.
\ee
 Note that the following point-wise estimate holds, 
\be\label{2020aug31eqn31}
\langle |t|-|x+t\hat{v}|\rangle^{-1} \langle |x |\rangle^{-1} \langle|v|\rangle^{-2} \lesssim 2^{-m}. 
\ee
 From the above estimate, the $L^\infty_{x,v}-L^2_{x,v}$ type bilinear estimate, and the decay estimate in  (\ref{2020julybasicestiamte}) in Lemma \ref{basicestimates},  we have 
\[
  \| \langle v \rangle^{-3}\langle |x|\rangle^{-1} \omega^{\mathcal{L}}_{\rho}(x,v)  \Lambda_{2}[  \mathfrak{N}_{\rho,2 }^{\mathcal{L
 }} ](t,x,v)   \|_{L^2_{x,v}} +  \| \langle v \rangle^{-3}\langle |x|\rangle^{-1}\omega^{\tilde{\mathcal{L}} }_{\iota\circ\kappa}(x,v)  \Lambda_{2}[ \Lambda^{\iota} \mathfrak{N}_{  \kappa,2 }^{ \tilde{\mathcal{L}}} ](t,x,v)   \|_{L^2_{x,v}} 
\]  
\[
 \lesssim   \sum_{\Gamma_1, \Gamma_2\in P_1 , |\Gamma_1|+ |\rho|+|\mathcal{L}|\leq N_0} 2^{-m} \| \langle v \rangle^{-1}  (v^0)^{-1}  \Lambda_{1}[ v_{\alpha} v_{\beta} \Gamma_2 \Gamma_1 g^{\alpha\beta}](t,x,v)\|_{L^\infty_{x,v}} \| \omega^{\mathcal{L}}_{\rho}(x,v)   \Lambda^{\rho} u^{\mathcal{L}}(t,x,v)\|_{L^2_{x,v}}
\]
\be\label{2020april14eqn2}
 \lesssim 2^{-2m+   H(\mathcal{L}, \rho)   \delta_0 m + H(4) \delta_1 m}\epsilon_1^2. 
\ee
Hence finishing the proof of desired estimate (\ref{2020april14eqn3}).  
\end{proof}

\subsection{Estimates of cubic and higher order terms of the metric part}
\begin{lemma}\label{cubicandhighermetric}
Under the bootstrap  assumptions  \eqref{BAmetricold} and \eqref{BAvlasovori}, the following estimate holds for any $  \tilde{h}\in \{F, \underline{F}, \omega_j, \vartheta_{mn}\}$, $t\in [2^{m-1}, 2^m]\subset [0, T], m\in \mathbb{Z}_{+},L\in   P_n, n\in[0, N_0], $ $l \in [0, N_0-n]\cap \Z$, 
\[
 \|   L( 
 \Lambda_{\geq 2}[ H_{i\gamma}] \p_i\p_{\gamma} h_{\alpha\beta} ) -  \Lambda_{\geq 2}[ H_{i\gamma}] \p_i\p_{\gamma} L h_{\alpha\beta}  \|_{E_{ l}} + \|     L \Lambda_{\geq 3}[\mathcal{N}_{\alpha\beta}^{wa}]  \|_{ E_{ l}}  +  \|   R_{mt;s}^{\tilde{h};L } (t) \|_{E_{ l}}
\]
\be\label{oct1eqn31}
 \lesssim  2^{-7m/6+d(n+4,0 )\delta_0 m} \epsilon_1^3.
\ee
 
\end{lemma}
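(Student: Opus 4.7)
All three quantities are at least cubic in the perturbed metric $h_{\alpha\beta}$: the first because $\Lambda_{\geq 2}[H_{i\gamma}]$ is quadratic, and the other two by the definition of $\Lambda_{\geq 3}$. I will use a uniform trilinear $L^\infty_x\cdot L^\infty_x\cdot L^2_x$ estimate inside the $E_l$ norm. First I distribute $L$ by the Leibniz rule. For the commutator term on the left, the unique Leibniz summand in which every vector field lands on the quasilinear factor $\p_i\p_\gamma h_{\alpha\beta}$ coincides with the subtracted term $\Lambda_{\geq 2}[H_{i\gamma}]\p_i\p_\gamma L h_{\alpha\beta}$, modulo the commutators $[L,\p_i\p_\gamma]$, which strictly decrease $|L|$ and are absorbed into lower-order Leibniz pieces. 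Hence every surviving summand places at least one vector field on the quadratic coefficient $\Lambda_{\geq 2}[H_{i\gamma}]$ and remains cubic in $h$. The pieces $L\Lambda_{\geq 3}[\mathcal{N}_{\alpha\beta}^{wa}]$ and $R_{mt;s}^{\tilde h;L}$ are already cubic after Taylor-expanding in $h$ and using the expansions \eqref{2020june17eqn21}--\eqref{2020june17eqn23}.

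\textbf{Trilinear estimate.} By pigeonhole, within each resulting cubic product of factors $\tilde L_i h_{\alpha_i\beta_i}$ with $\sum|\tilde L_i|\leq n$, at least two of the three factors carry at most $n/2\leq N_0/2<N_0-5$ vector fields, which is well within the regime where the pointwise decay \eqref{2020julybasicestiamte} of Lemma \ref{basicestimates} applies. I place these two factors in $L^\infty_x$, producing a joint gain of $2^{-2m+2H(n+1)\delta_1 m+2.2\gamma m}\epsilon_1^2$. The remaining factor I place in $L^2_x$ via a standard Littlewood--Paley paraproduct decomposition: for output frequency $2^k$ the highest input frequency $2^{k_1}$ satisfies $k_1\geq k-O(1)$, so the $E_l$ high-frequency weight $2^{lk_+}$ is absorbed by $2^{lk_{1+}}\|P_{k_1}(\cdot)\|_{L^2}$, which the bootstrap \eqref{BAmetricold} controls by $2^{d(n,l)\delta_0 m}\epsilon_1$. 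The low-frequency weight $2^{-k_-/2+\gamma k_-}$ of $E_l$ is absorbed when $k<0$ by the volume-of-support gain $2^{3k'_-/2}$ of the lowest-frequency $L^\infty$ factor.

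\textbf{Closing and main obstacle.} Combining these ingredients yields
\[
\|(\cdots)\|_{E_l}\;\lesssim\; 2^{-2m+d(n,l)\delta_0 m+2H(n+1)\delta_1 m+2.2\gamma m}\,\epsilon_1^3.
\]
Since $n+l\leq N_0$ implies $d(n,l)\leq H(2N_0)+10N_0=O(N_0^2)$, and $\delta_0\cdot N_0^2\ll 1/6$ together with $\delta_1,\gamma\ll\delta_0$, the right-hand side is comfortably absorbed by $2^{-7m/6+d(n+4,0)\delta_0 m}\epsilon_1^3$, with $5m/6$ of slack to spare. The main obstacle is purely bookkeeping: verifying that the two-$L^\infty$ pigeonhole is genuinely available on every Leibniz term after Taylor-expanding $H_{i\gamma}$, $\mathcal{N}^{wa}_{\alpha\beta}$, and the Hodge symbols; and executing the paraproduct so that the unusual $E_l$ weight at low output frequency does not erode the $2^{-2m}$ gain. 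No new analytic idea beyond Hölder and the $L^\infty$ decay \eqref{2020julybasicestiamte} is required, because the cubic structure built into $\Lambda_{\geq 3}$ and $\Lambda_{\geq 2}[H_{i\gamma}]$ supplies the full extra factor of $2^{-m}$ relative to the target decay.
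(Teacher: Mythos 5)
Your approach is broadly in the spirit of the paper's, but the decomposition is genuinely different and the quantitative claim at the end is not quite right.

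\textbf{What is different.} You unpack the cubic terms into a full trilinear product of $h$-factors and pigeonhole on the vector-field count, putting two low-order factors in $L^\infty_x$ (via \eqref{2020julybasicestiamte}) and the remaining one in $L^2_x$ (via \eqref{BAmetricold}). The paper instead factors through the pre-established bilinear estimates of Lemma \ref{highorderterm2}: it treats the quadratic block $\Lambda_{\geq 2}[H_{i\gamma}]$, $\Lambda_{\geq 2}[(g^{00})^{-1}]$, etc.\ as a single object with known $L^2$ bound $\sim 2^{-m/2+\cdots}$ and $L^\infty$ bound $\sim 2^{-2m+\cdots}$, and then applies $L^2\text{--}L^\infty$ and $L^2\text{--}L^2$ (with volume-of-support) bilinear estimates to the product (quadratic block)$\times$(linear factor). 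The modular route buys you automatic control of the full Taylor series of $(g^{00})^{-1}$, $(-\det g)^{-1/2}$, and $(v^0)^{-1}$ --- these are summed by the fixed-point argument in Lemmas \ref{highorderterm1}--\ref{highorderterm5} and your trilinear pigeonhole would have to confront the geometric series in $h$ directly; this is doable but is more bookkeeping than you acknowledge.

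\textbf{The arithmetic gap.} The claimed bound $2^{-2m+\cdots}$ is not uniform in the output frequency $k$, and your remark that the $E_l$ low-frequency weight $2^{-k_-/2+\gamma k_-}$ ``is absorbed by the volume-of-support gain $2^{3k'_-/2}$'' is the crux of the issue. If one literally replaces the lowest-frequency $L^\infty$ factor's pointwise decay by the Bernstein estimate $\|P_{k'}h\|_{L^\infty}\lesssim 2^{3k'/2}\|P_{k'}h\|_{L^2}\sim 2^{(1-\gamma)k'+\cdots}\epsilon_1$, then that factor no longer contributes $2^{-m}$; the surviving decay is $2^{-m}$ from the remaining $L^\infty$ factor only, so the output-$k$ sum yields $\sim 2^{-m+\cdots}$, which does \emph{not} beat the target $2^{-7m/6+\cdots}$. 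To close one must take the $\min$ of the two bounds (decay for $k\gtrsim -m$, Bernstein for $k\lesssim -m$), exactly as the paper does with $\min\{2^{-3m/2},\,2^{3k/2}2^{-m/2}\}$ in the displayed estimate preceding \eqref{july3eqn91}; then your trilinear split yields roughly $2^{-3m/2+\cdots}$ and the paper's bilinear split yields $2^{-7m/6+\cdots}$, both sufficient. So the conclusion is attainable by your route, but as written the proof has a gap at very low output frequency, the stated ``$5m/6$ of slack'' is not correct (it is closer to $m/3$), and the absorption step must be made precise.

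Finally, your Leibniz bookkeeping for the commutator is correct, and your pigeonhole (at most one factor can carry more than $n/2\leq N_0/2<N_0-5$ vector fields) is valid; just note that the $L^2$-placed quasilinear factor $\partial_i\partial_\gamma L_2 h$ with $L_2\prec L$ picks up an extra power of the frequency, so its $E_l$ control relies on $|L_2|+l+1\leq N_0$, which holds precisely because $L_2\prec L$.
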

\begin{proof}

Recall (\ref{2020april3eqn1}), (\ref{nullstructure}) and (\ref{weaknull}).  We have
\[
{L}\Lambda_{\geq 3}[(g^{00})^{-1}  P_{\alpha\beta}]= \sum_{  \tilde{L}_1 \circ \tilde{L}_2 \preceq L} \tilde{L}_1(\Lambda_{\geq 1}[ (g^{00})^{-1}]) \tilde{L}_2 P_{\alpha\beta}   + \sum_{  L_1\circ L_2\circ L_3\preceq L}   c_{ {L}_2 {L}_3}^{ {L}_1}\Lambda_{\geq 1}[  {L}_1(g^{\gamma\mu}g^{\lambda\nu})]
\]
\[
\times \big[ \big( -\p_{\alpha} {L}_2h_{\mu\lambda} \p_{\beta} {L}_3 h_{\gamma\nu} \big) 
 +\h  {L}_2\p_{\alpha}h_{\mu\gamma} \p_{\beta} {L}_3h_{\lambda\nu}\big],
\]
\[
 {L} \Lambda_{\geq 3}[(g^{00})^{-1} Q_{\alpha\beta}]= \sum_{  \tilde{L}_1\circ \tilde{L}_2\preceq L} \tilde{L}_1(\Lambda_{\geq 1}[(g^{00})^{-1}] ) \tilde{L}_2 Q_{\alpha\beta} - L(\Lambda_{\geq 3}[Q_{\alpha\beta}]),
\]
\be
  L( 
 \Lambda_{\geq 2}[ H_{i\gamma}] \p_i\p_{\gamma} h_{\alpha\beta} ) -  \Lambda_{\geq 2}[ H_{i\gamma}] \p_i\p_{\gamma} L h_{\alpha\beta} =\sum_{ L_1\circ L_2\preceq L, L_2\prec L }   c_{ {L}_1{L}_2 }^{ {L}}     {L}_1(\Lambda_{\geq 2}[ H_{i\gamma} ])\p_{i}\p_{\gamma}  {L}_2 h_{\alpha\beta}. 
\ee
From the volume of support of the output frequency, the $L^2-L^2-L^\infty$ type trilinear estimate, and  the $L^2-L^\infty-L^\infty$ type trilinear estimate and the estimate (\ref{june30eqn1}) in Lemma \ref{highorderterm2},  we know that the following estimate holds after putting the input with the highest order derivative in $L^2$, 
\be\label{july3eqn91}
\begin{split}
 &\|   L( 
 \Lambda_{\geq 2}[ H_{i\gamma}] \p_i\p_{\gamma} h_{\alpha\beta} ) -  \Lambda_{\geq 2}[ H_{i\gamma}] \p_i\p_{\gamma} L h_{\alpha\beta}  \|_{E_{
 l}} + \|     L \Lambda_{\geq 3}[\mathcal{N}_{\alpha\beta}^{wa}]  \|_{E_{ l}} \\
&\lesssim 2^{- 3m/2+d(n+4,l )\delta_0 m } \epsilon_1^3+ \sum_{k\in \Z, k\leq 0} 2^{ -k_{-}/2+\gamma k_{-} +d(n+4,0 )\delta_0 m} \min\{ 2^{ - 3m/2  }, 2^{ 3k/2  } 2^{-m/2 } \}  \epsilon_1^3  \\
&\lesssim 2^{-7m/6+d(n+4,0 )\delta_0 m} \epsilon_1^3.\\
\end{split}
\ee
Recall  the   formula of  ``$  R_{mt;s}^{\tilde{h};L } $''  in \textup{(\ref{aug11eqn21})}.     Our desired estimate (\ref{oct1eqn31}) follows from    the above estimate (\ref{july3eqn91}) and   the estimate (\ref{june30eqn1})  in Lemma \ref{highorderterm2}.
\end{proof}
 
\begin{lemma}\label{cubicandhigherrough}
Under the bootstrap assumptions \textup{(\ref{BAmetricold})} and \textup{(\ref{BAvlasovori})}, the following estimate holds for  any  $t\in [2^{m-1}, 2^m]\subset [0, T], m\in \mathbb{Z}_{+}, L\in   P_n, n\in[0, N_0], $ $l \in [0, N_0-n]\cap \Z$, 
\be\label{july3eqn61}
   \| \Lambda_{\geq 3}[   L  \mathcal{N}_{\alpha\beta}^{vl} ]\|_{E_{ l}}   
   \lesssim  2^{-3m/2+2d(|L|+4, l )\delta_0 m } \epsilon_1^3.
\ee
\end{lemma}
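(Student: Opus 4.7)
The plan is to expand $L\,\Lambda_{\geq 3}[\mathcal{N}_{\alpha\beta}^{vl}]$ via Leibniz and the explicit structure
\[
\mathcal{N}_{\alpha\beta}^{vl}=(g^{00})^{-1}\bigl(T_{\alpha\beta}(f)-\tfrac{1}{2}g_{\alpha\beta}T(f)\bigr),
\]
using the Taylor expansions of $(g^{00})^{-1}$, $g_{\alpha\beta}$, $(-\det g)^{-1/2}$, $v^0$, $v_\alpha$ in powers of $h$. Every cubic or higher term has the schematic form
\[
L_1\Lambda_{\geq a_1}[H_1]\;\cdot\;L_2\Lambda_{\geq a_2}[H_2]\;\cdot\;\int_{\R^3} b(v)\,\mathcal{L}_3 f(t,x,v)\,dv,
\]
where $H_i$ are polynomial combinations of the metric/velocity objects just listed, $a_1+a_2\geq 2$, $b(v)$ is a symbol satisfying an estimate of the type \eqref{july11eqn78}, and $|L_1|+|L_2|+|\mathcal{L}_3|\leq |L|$ (with an analogous bookkeeping for the space derivatives). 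After taking $P_k$ and using the orthogonality implicit in $E_l$, it suffices to bound each such dyadic piece in $L^2_x$ by $2^{k_-/2-\gamma k_--lk_+}2^{-3m/2+2d(|L|+4,l)\delta_0 m}\epsilon_1^3$.

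First I would split into two cases according to the size of $|L_1|+|L_2|$. When $|L_1|+|L_2|\leq N_0-5$, I place both metric factors in $L^\infty_x$, using the $L^\infty_x$-decay in Lemma \ref{basicestimates} together with the higher-order estimates \eqref{june30eqn1}, \eqref{june29eqn71}, \eqref{july2eqn2} of Lemmas \ref{highorderterm2}, \ref{highorderterm3}, \ref{highorderterm5} for $H_i$. This already gains $2^{-2m+O(H(1)\delta_1 m)}$, and the density factor $\int b(v)\mathcal{L}_3 f\,dv$ is estimated in $L^2_x$ by Minkowski and the bootstrap $E^{vl}_{\mathcal{L}_3,0}(t)\lesssim 2^{d(|\mathcal{L}_3|,0)\delta_0 m}\epsilon_1$; the gain $2^{-2m}\gg 2^{-3m/2}$ leaves ample room for the frequency-weighted summation. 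When $\max(|L_1|,|L_2|)>N_0-5$, I place the high-order metric factor in $L^2$ (gaining $2^{d(|L|+1,l)\delta_0 m}$ from the bootstrap or from Lemmas \ref{highorderterm2}--\ref{highorderterm5}), keep the low-order metric factor in $L^\infty$ (gaining $2^{-m+O(\delta_1 m)}$), and place the density factor in $L^\infty_x$ via Lemma \ref{decayestimateofdensity}, which yields a $2^{-3m}$ decay at the cost of a few extra $v$-derivatives on $\mathcal{L}_3 f$, still controlled by \eqref{BAvlasovori}. The dyadic sum in $k$ is absolutely convergent because for $k<0$ Bernstein gives a $2^{3k/2}$ factor that beats the $2^{-k_-/2+\gamma k_-}$ weight of $E_l$, and for $k>0$ the high-frequency decay of the $L^\infty$-placed metric beats the $2^{lk_+}$ weight.

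The subtlety, and thus the main obstacle, is combinatorial: the Leibniz expansion produces many sub-terms in which $L$ is split across multiple factors in many ways, and one must verify that in every splitting at least one of the scenarios above applies. The hierarchy exponent $d(n,l)$ is specifically tuned so that $d(n_1,l_1)+d(n_2,l_2)\leq d(n_1+n_2+O(1),l_1+l_2)$ via \eqref{july15eqn21}, which furnishes the necessary slack; the extra ``$+4$'' appearing in the target exponent $2d(|L|+4,l)\delta_0 m$ is exactly the cushion that accommodates the $\Lambda_{\geq 2}$-type expansions of $(g^{00})^{-1}$, $v^0$, etc., handled by Lemmas \ref{highorderterm2}--\ref{highorderterm5}. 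Combining all contributions yields the stated estimate with $2^{-3m/2}$ decay, and in fact with a margin of $2^{-m/2}$ to spare.
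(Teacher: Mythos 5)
Your proposal follows essentially the same route as the paper's proof: expand $\Lambda_{\geq 3}[L\mathcal{N}^{vl}_{\alpha\beta}]$ as a Vlasov factor paired with a quadratic-or-higher metric/velocity factor (via (\ref{oct3reqn81})), split according to how the vector fields distribute between the two, use the density-decay estimates of Lemma \ref{decayestimateofdensity} when few derivatives land on $f$, and use the $L^2_x L^\infty_v$ and $L^\infty_{x,v}$ bounds of Lemmas \ref{highorderterm2}--\ref{highorderterm5} for the $\Lambda_{\geq 2}$ factors when many derivatives land on $f$. One small inaccuracy in your closing remark: there is no margin of $2^{-m/2}$ to spare. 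The paper computes both an $L^1_x$ bound $\sim 2^{-m/2}\epsilon_1^3$ and an $L^2_x$ bound $\sim 2^{-2m}\epsilon_1^3$ for the nonlinearity, and when these are fed through the $E_l$ weight $2^{-k_-/2+\gamma k_- + l k_+}$ (using volume-of-support/Bernstein $\|P_k\cdot\|_{L^2}\lesssim 2^{3k/2}\|\cdot\|_{L^1}$ for $k\lesssim -m$, and the $L^2$ bound for $k\gtrsim -m$), the dyadic sum peaks at $k\sim -m$ giving exactly $2^{-3m/2}$ up to the $2^{2d(|L|+4,l)\delta_0 m}$ slack; your statement that the $L^2$ bound $2^{-2m}$ already settles matters overlooks that the low-frequency portion of the $E_l$-norm is controlled by the weaker $L^1$ bound, which is what forces the final rate down to $2^{-3m/2}$.
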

\begin{proof}
Recall  (\ref{2020april3eqn1}). We have
\[
 \Lambda_{\geq 3}[L  \mathcal{N}_{\alpha\beta}^{vl} ]= \sum_{  \tilde{L}\preceq L} c_{L\tilde{L}}\tilde{L} \big( -  \Lambda_{\geq 3}[ T_{\alpha\beta}(f)] + \h \Lambda_{\geq 3}[ g_{\alpha\beta} T(f)] \big),\quad  T(f)=g^{\mu\nu} T_{\mu\nu}(f),  
\]
\[
   \Lambda_{\geq 3}[(g^{00})^{-1} g_{\alpha\beta}T_{ }(f)] =\Lambda_{\geq 2}[(g^{00})^{-1}g_{\alpha\beta}]T_{ }(f) + \Lambda_{1}[(g^{00})^{-1} g_{\alpha\beta}] \Lambda_{\geq 2}[T_{ }(f) ] 
\]
\be
 + \Lambda_{0}[(g^{00})^{-1}g_{\alpha\beta}] \Lambda_{\geq 3}[T_{ }(f) ].
\ee
 Since $g_{\alpha\beta}T(f)$ and $T_{\alpha\beta}(f)$ can be estimated in the same way, we only estimate $T_{\alpha\beta}(f)$ in details here.  Recall (\ref{march19eqn6}). After doing dyadic decomposition for inputs, for any $\tilde{L}\preceq L$ and $\alpha\in \Z^3$, s.t., $|\alpha|\leq N_0-n$, we have
\be\label{oct3reqn81}
  \Lambda_{\geq 3}[ \nabla_x^\alpha \tilde{L}T_{\alpha\beta}(f) ] =  \sum_{ \begin{subarray}{c}  
  |\mu|+|\mathcal{L}_1| +|\mathcal{L}_2|\leq  |L |\\ 
   |\alpha_1|+|\alpha_2|\leq |\alpha|
   \end{subarray}
 }    c^{ \tilde{L}, \mu }_{\mathcal{L}_1\mathcal{L}_2} \int_{\R^3} \nabla_x^{\alpha_1} \mathcal{L}_1f (t,x,v)   \Lambda_{\geq 2}[\nabla_x^{\alpha_2}\mathcal{L}_2 \big( \nabla_v^{\mu} \big(\frac{v_{\alpha} v_{\beta}}{v^0\sqrt{-\det g}}\big)   \big)] \big)  d v,
\ee
where   $ c^{ \tilde{L}, \mu }_{\mathcal{L}_1\mathcal{L}_2} $   are some uniquely determined absolute constants whose precise values are not pursued here. 

Note that, from the $L^2-L^2$ type bilinear estimate, the estimate (\ref{june30eqn1}) in Lemma \ref{highorderterm2}, and the estimate  (\ref{july2eqn2}) in Lemma \ref{highorderterm5}, we have
\[
\sum_{|\alpha|\leq N_0-n }\| \nabla_x^\alpha \Lambda_{\geq 3}[  \tilde{L}T_{\alpha\beta}(f) ]\|_{L^1_x} \lesssim \sum_{|\alpha_1|+|\alpha_2|\leq N_0-n} \|\langle v \rangle^{-10} \nabla_x^{\alpha_2}  \Lambda_{\geq 2}[\mathcal{L}_2 \big( \nabla_v^{\mu} \big(\frac{v_{\alpha} v_{\beta}}{v^0\sqrt{-\det g}}\big)   \big)] \big) \|_{L^2_x L^\infty_v}
\]
\be\label{2020aug24eqn39}
\times \|\langle v \rangle^{20} \nabla_x^{\alpha_1} \mathcal{L}_1f (t,x,v)\|_{L^2_xL^2_v}\lesssim 2^{-m/2+2e(|L|+4 )\delta_0 m }\epsilon_1^3. 
\ee
 Moreover, from the orthogonality in $L^2$, we have
 \[
  \| \int_{\R^3} \nabla_x^{\alpha_1} \mathcal{L}_1f (t  )   \Lambda_{\geq 2}[\nabla_x^{\alpha_2}\mathcal{L}_2 \big( \nabla_v^{\mu} \big(\frac{v_{\alpha} v_{\beta}}{v^0\sqrt{-\det g}}\big)   \big)] \big)  d v\|_{L^2_x }^2
 \]
 \[
 \lesssim \sum_{k\in \Z}  \| \int_{\R^3}P_{\leq k-10}\big( \nabla_x^{\alpha_2 }\Lambda_{\geq 2}[ \mathcal{L}_2 \big( \nabla_v^{\mu} \big(\frac{v_{\alpha} v_{\beta}}{v^0\sqrt{-\det g}}\big)   \big)]   \big)  P_{k}(\nabla_x^{\alpha_1} \mathcal{L}_1f ) (t )    d v\|_{L^2_x }^2 \]
 \[
  +  \| \int_{\R^3} P_{\leq k-10}(\nabla_x^{\alpha_1} \mathcal{L}_1f ) (t )  P_{k}\big( \Lambda_{\geq 2}[ \nabla_x^{\alpha_2} \mathcal{L}_2 \big( \nabla_v^{\mu} \big(\frac{v_{\alpha} v_{\beta}}{v^0\sqrt{-\det g}}\big)   \big)]   \big)  d v\|_{L^2_x }^2 
 \]
 \be\label{2020aug24eqn31}
 + \big( \sum_{|k_1-k_2|\leq 10}  \| \int_{\R^3} P_{k_1}(\nabla_x^{\alpha_1} \mathcal{L}_1f ) (t )  P_{k_2}\big( \Lambda_{\geq 2}[ \nabla_x^{\alpha_2} \mathcal{L}_2 \big( \nabla_v^{\mu} \big(\frac{v_{\alpha} v_{\beta}}{v^0\sqrt{-\det g}}\big)   \big)]   \big)  d v\|_{L^2_x } \big)^2.
 \ee

Based on the possible size of $|\mathcal{L}_1|$ and $|L|$, we separate into three cases as follow.

\noindent $\bullet$\quad If $ |\mathcal{L}_1|\leq |L|-5$ \qquad  From the estimate (\ref{julyeqn21}) in  Lemma \ref{decayestimateofdensity}, the following estimate holds 
 \[
 \|  \int_{\R^3}   P_{k_1}( \nabla_x^{\alpha_1} \mathcal{L}_1f )(t,x,v)   P_{k_2}
\big(\Lambda_{\geq 2}[\nabla_x^{\alpha_2}\mathcal{L}_2 \big( \nabla_v^{\mu} \big(\frac{v_{\alpha} v_{\beta}}{v^0\sqrt{-\det g}}\big)   \big)] \big) \big) d v\|_{L^2_x }
 \]
 \be\label{2020aug24eqn33}
 \lesssim 2^{-3m +  e(|L|+4 )\delta_0 m +\delta_1 k_{1}  -2k_{1,+}}\epsilon_1  \|  (1+|v|)^{-10}  P_{k_2}
\big(\Lambda_{\geq 2}[ \nabla_x^{\alpha_2}\mathcal{L}_2 \big( \nabla_v^{\mu} \big(\frac{v_{\alpha} v_{\beta}}{v^0\sqrt{-\det g}}\big)   \big)] \big) \big)\|_{L^2_x L^\infty_v}.
 \ee

\noindent $\bullet$\quad If $ |\mathcal{L}_1|\geq |L|-5$ and $|L|\leq N_0-5$.  \qquad  From the estimate (\ref{julyeqn21}) in  Lemma \ref{decayestimateofdensity}, the obtained  estimate (\ref{2020aug24eqn33}) is also valid if $k_2\geq k_1-10.$ It remains to consider the case 
 $k_2 \leq k_1-10$. From the estimate (\ref{june30eqn1}) in Lemma \ref{highorderterm2}, and the estimate  (\ref{july2eqn2}) in Lemma \ref{highorderterm5}, after putting the metric component in $L^\infty_{x,v}$, the following estimate holds if $k_2\leq k_1-10$,
  \[
 \|  \int_{\R^3}   P_{k_1}( \nabla_x^{\alpha_1} \mathcal{L}_1f )(t,x,v)   P_{k_2}
\big(\Lambda_{\geq 2}[\nabla_x^{\alpha_2}\mathcal{L}_2 \big( \nabla_v^{\mu} \big(\frac{v_{\alpha} v_{\beta}}{v^0\sqrt{-\det g}}\big)   \big)] \big) \big) d v\|_{L^2_x } \]
\be\label{2020aug24eqn35}
\lesssim 2^{-2m + e(|L|+4 )\delta_0 m }\epsilon_1^2 \|\langle v\rangle^{10}P_{k_1}( \nabla_x^{\alpha_1} \mathcal{L}_1f )(t,x,v)\|_{L^2_{x,v}}.
 \ee

\noindent $\bullet$\quad If $ |\mathcal{L}_1|\geq |L|-5$ and $|L|\geq N_0-5$.  \qquad Note that we have $|\mathcal{L}_2|\leq 5$  for this case. From the estimate (\ref{june30eqn1}) in Lemma \ref{highorderterm2}, and the estimate  (\ref{july2eqn2}) in Lemma \ref{highorderterm5}, after putting the metric component in $L^\infty_{x,v}$, the following estimate holds if $k_2\leq k_1-10$,
  \[
 \|  \int_{\R^3}   P_{k_1}( \nabla_x^{\alpha_1} \mathcal{L}_1f )(t,x,v)   P_{k_2}
\big(\Lambda_{\geq 2}[\nabla_x^{\alpha_2}\mathcal{L}_2 \big( \nabla_v^{\mu} \big(\frac{v_{\alpha} v_{\beta}}{v^0\sqrt{-\det g}}\big)   \big)] \big) \big) d v\|_{L^2_x } \]
\be\label{2020aug24eqn36}
\lesssim 2^{-2m +d(|\mathcal{L}_2|+4, |\alpha_2|)\delta_0 m -10k_{2,+}}\epsilon_1^2 \|\langle v\rangle^{10}P_{k_1}( \nabla_x^{\alpha_1} \mathcal{L}_1f )(t,x,v)\|_{L^2_{x,v}}.
 \ee
 To sum up, after combining the estimates (\ref{2020aug24eqn31}--\ref{2020aug24eqn36}), we have
\be\label{2020aug24eqn37}
\| \nabla_x^\alpha \Lambda_{\geq 3}[  \tilde{L}T_{\alpha\beta}(f) ]\|_{L^2_x}\lesssim 2^{-2m+2d(|L|+4, l )\delta_0 m } \epsilon_1^3 . 
\ee
Combinging the estimates (\ref{2020aug24eqn39}) and (\ref{2020aug24eqn37}), we have
\[
\|   \Lambda_{\geq 3}[  \tilde{L}T_{\alpha\beta}(f) ]\|_{E_n}\lesssim  \sum_{k\leq -m } 2^{(1+\gamma)k}2^{-m/2+  2d(|L|+4, l )\delta_0 m }\epsilon_1^3 + \sum_{k\in [-m,0]\cap \Z} 2^{-k/2+\gamma k }2^{-2m+ 2d(|L|+4, l )\delta_0 m } \epsilon_1^3
\]
\be
+ 2^{-2m+ 2d(|L|+4, l )\delta_0 m } \epsilon_1^3\lesssim 2^{-3m/2+2d(|L|+4, l )\delta_0 m } \epsilon_1^3.
\ee
 Hence finishing the proof of our desired estimate (\ref{july3eqn61}). 
\end{proof}

\subsection{Estimates of wave-Vlasov type quadratic terms of the metric part}
Recall (\ref{2020june30eqn1}). In this subsection, we mainly estimate the wave-Vlasov type interaction in $L^2$-type space, i.e.,  $ \Lambda_{\geq 2}[N_{vl}^{  {\tilde{h} ;L }}]    $ and $ \mathcal{N}_{vl;m}^{  {\tilde{h} ;L }}$. To this end, we first prove some bilinear estimates of general bilinear operators defined as follows.

For any $\mu\in\{+,-\}$, $a,b,c\in \R$, $k, k_1, k_2\in \mathbb{Z}$, any symbol $m\in \cup_{d\in \R}P_v^d \mathcal{M}^{a;b,c}$, we define the following two operators,
\[
T^k_{k_1,k_2}(h,u)(t,\xi):=\int_{\R^3}\int_{\R^3} e^{it|\xi| -it \hat{v}\cdot \eta} \widehat{h^{ }}(t,\xi-\eta,v)  \widehat{u}(t, \eta, v)
\]
\be\label{sep4eqn1} 
 \times m(\xi-\eta, \eta, v)\psi_k(\xi)\psi_{k_1}(\xi-\eta)\psi_{k_2}(\eta) d \eta d v.
\ee

\begin{lemma}\label{wavevlasovbi1}
Assume that $|t|\geq 1, a,b,c,d\in \R, k,k_1,k_2\in \mathbb{Z}$, $m\in P_v^d\mathcal{M}^{a;b,c} $. The following estimate holds, 
\[
\|T^k_{k_1,k_2}(h,u)(t,\xi)\|_{L^2}  
\lesssim  2^{ak + b k_1 + c k_2 }    2^{ 3\min\{k,k_1,k_2\}/2+3 k_2 /2}\| m\|_{P^d_v\mathcal{M}^{a;b,c}}   \| \widehat{h}(t, \xi, v)\psi_{k_1}(\xi) \|_{ L^\infty_v  L^2_\xi }\]
\be\label{june23eqn2}
\times  \min\{ \| (1+|x|+|v|)^{d+ 10} u(t,x,v)\|_{L^2_{x,v}},  \sum_{|\alpha|\leq 2} |t|^{-2}2^{-2k_2 }  \| (1+|x|+|v|)^{d+ 10}\nabla_v^{\alpha}u(t,x,v)\|_{L^2_{x,v}}\}. 
\ee
\[
 \|T^k_{k_1,k_2}(U,u)(t,\xi)\|_{L^\infty_\xi}  \lesssim 2^{ak+ b k_1 + c k_2 } 2^{ 3\min\{k_1,k_2\}/2}  
 \]
 \[
 \times \min\{ 2^{3\min\{k_1,k_2\}/2} \| \widehat{h}(t, \xi,v)\psi_{k_1}(\xi) \|_{L^\infty_{\xi, v}}, \| \widehat{h}(t, \xi,v)\psi_{k_1}(\xi) \|_{L^\infty_v L^2_\xi}\} \]
 \be\label{2020aug5eqn5}
  \times \min\{\|(1+|v|)^d\widehat{u}(t, \xi, v) \|_{L^1_vL^\infty_\xi},   \sum_{|\alpha|\leq 2} |t|^{-2}2^{-2k_2 }  \| (1+|x|+|v|)^{d+ 10}\nabla_v^{\alpha}u(t,x,v)\|_{L^2_{x,v}} \}.
\ee
\end{lemma}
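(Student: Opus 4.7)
The unifying tool for both estimates will be the kernel representation afforded by the symbol class $P^d_v\mathcal{M}^{a;b,c}$. By definition of the norm in \eqref{2022may22eqn31}, one can write
\[
m(\xi-\eta,\eta,v)\psi_k(\xi)\psi_{k_1}(\xi-\eta)\psi_{k_2}(\eta)=\int_{\R^3}\int_{\R^3}K(x,y,v)e^{ix\cdot(\xi-\eta)+iy\cdot\eta}\,dxdy,
\]
with $\|K(\cdot,\cdot,v)\|_{L^1_{x,y}}\lesssim 2^{ak+bk_1+ck_2}\langle v\rangle^{d}\|m\|_{P^d_v\mathcal{M}^{a;b,c}}$. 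Substituting this into the definition of $T^k_{k_1,k_2}$ and collapsing the $\eta$-integral against the two Fourier transforms produces, for fixed $v,x,y$, a frequency-localized product of translates of $h(t,\cdot,v)$ and $u(t,\cdot-t\hat v,v)$ multiplied by $e^{it|\xi|}\psi_k(\xi)$. Thus the entire operator becomes a superposition, with $L^1_{x,y,v}$-weight controlled by the above, of bilinear products, which reduces matters to standard fixed-$v$ bilinear bounds.

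For the $L^2_\xi$ estimate \eqref{june23eqn2} I would then apply Minkowski's inequality in $(x,y,v)$ to pass the $L^2_\xi$-norm inside. For each fixed slice I am left with a bilinear operator supported in $|\xi|\sim 2^k$, $|\xi-\eta|\sim 2^{k_1}$, $|\eta|\sim 2^{k_2}$. Putting $\widehat h(t,\cdot,v)$ in $L^2_\xi$ and bounding the $u$-factor in $L^\infty_\xi$ via Bernstein on the $2^{k_2}$-ball contributes the Bernstein factor $2^{3k_2/2}\|\widehat u(t,\cdot,v)\|_{L^1_\xi}\lesssim 2^{3k_2/2}\|u(t,\cdot,v)\|_{L^1_x}$; a further application of Cauchy--Schwarz in $x$ against the weight $\langle x\rangle^{-10}$ (absorbed into the $(1+|x|+|v|)^{d+10}$ prefactor) converts this to an $L^2_x$ norm. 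The extra factor $2^{3\min\{k,k_1,k_2\}/2}$ is the volume-of-support gain coming from the frequency-support constraint $\xi=(\xi-\eta)+\eta$: one simply observes that integration in the smallest of the three frequency scales is free and produces $2^{3\min\{k,k_1,k_2\}/2}$ after Cauchy--Schwarz. Finally the $\langle v\rangle^d$ factor from $K$ is combined with the $\langle v\rangle^{10}$ weight to produce $\|(1+|x|+|v|)^{d+10}u\|_{L^2_{x,v}}$.

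The alternate bound with $|t|^{-2}2^{-2k_2}$ in \eqref{june23eqn2} (and similarly in \eqref{2020aug5eqn5}) is obtained by integrating by parts twice in $v$ against the oscillating factor $e^{-it\hat v\cdot\eta}$ before invoking the fixed-$v$ bilinear estimate. The phase derivative $t\nabla_v\hat v\cdot\eta$ has size $\sim t2^{k_2}\langle v\rangle^{-2}$ on the support of $\psi_{k_2}(\eta)$, so each integration by parts gains $(t2^{k_2})^{-1}$ at the cost of one $\nabla_v$ landing on $\widehat u$ or on the symbol/weight, and the $\langle v\rangle^{2}$ penalties are absorbed into the $\langle v\rangle^{d+10}$ prefactor on $u$.

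The $L^\infty_\xi$ estimate \eqref{2020aug5eqn5} is similar but simpler: after the kernel decomposition one takes $L^\infty_\xi$ through Minkowski and is reduced, for fixed $(x,y,v)$, to bounding a convolution-type object by either $\|\widehat h(t,\cdot,v)\|_{L^\infty_\xi}\|\widehat u(t,\cdot,v)\|_{L^1_\eta}$ directly (using Bernstein on $\widehat h$ when it is placed in $L^\infty$ over the $2^{\min\{k_1,k_2\}}$-ball to gain $2^{3\min\{k_1,k_2\}/2}$) or by Cauchy--Schwarz against $\widehat u$ when the $L^\infty_v L^2_\xi$-norm of $\widehat h$ is used, which produces the second option in the minimum over $h$-norms. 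The anticipated main obstacle is bookkeeping the interplay between the $v$-weights $\langle v\rangle^d$ built into the symbol class, the $v$-weights needed for the $u$-norms, and the weights generated by integration by parts in $v$; with the convention $(1+|x|+|v|)^{d+10}$ in the hypothesis this is designed to close cleanly, but care is needed when $d<0$ to ensure the IBP error terms are still absorbed.
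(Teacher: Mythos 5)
Your proposal is correct and follows essentially the same route as the paper: the $P_v^d\mathcal{M}^{a;b,c}$ norm furnishes the $2^{ak+bk_1+ck_2}\langle v\rangle^d\|m\|$ kernel control, the two $L^2_\xi$ options $\min\{2^{3k/2+3\min\{k_1,k_2\}/2},\,2^{3k_2}\}$ come from volume-of-support/$L^2$--$L^\infty$ bilinear estimate vs.\ Minkowski plus volume-of-support of $\eta$, the $u$-factor is converted to $\|(1+|x|+|v|)^{d+10}u\|_{L^2_{x,v}}$ via Hausdorff--Young and weighted Cauchy--Schwarz, and the $|t|^{-2}2^{-2k_2}$ alternative comes from two integrations by parts in $v$ against $e^{-it\hat v\cdot\eta}$. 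The only minor inaccuracy is your size estimate on the phase gradient: $|\nabla_v(\hat v\cdot\eta)|$ can degenerate to $\sim\langle v\rangle^{-3}|\eta|$ along the radial direction (the eigenvalues of $\nabla_v\hat v$ are $\langle v\rangle^{-3}$ radially and $\langle v\rangle^{-1}$ tangentially), so each IBP may cost $\langle v\rangle^{3}$ rather than $\langle v\rangle^{2}$; this, and your worry about $d<0$, are moot precisely because the $\langle v\rangle^{-d}$ factor built into $\|m\|_{P^d_v\mathcal{M}^{a;b,c}}$ cancels the $d$-dependence and the remaining $(1+|x|+|v|)^{10}$ is ample room to absorb all $v$-penalties.
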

\begin{proof}
Recall (\ref{june23eqn2}). After using the volume of support of $\xi$,   the $L^2-L^2$ type bilinear estimate,    Minkowski inequality, and the volume of support of $\eta$, the following estimates hold, 
\[
\|T^k_{k_1,k_2}(h,u)(t,\xi)\|_{L^2} \lesssim 2^{ak+ b k_1 + c k_2 } \| m\|_{P^d_v\mathcal{M}^{a;b,c}}  \min\{2^{3k/2 +3\min\{k_1,k_2\}/2}, 2^{3k_2 }\} 
\]
\be\label{june23eqn25}
 \times \| \widehat{h}(t, \xi, v)\psi_{k_1}(\xi) \|_{L^\infty_v L^2_\xi } \|(1+|v|)^d\widehat{u}(t, \xi, v) \|_{L^1_vL^\infty_\xi}.
\ee
\[
\|T^k_{k_1,k_2}(h,u)(t,\xi)\|_{L^\infty_\xi} \lesssim 2^{ak+ b k_1 + c k_2 } 2^{ 3\min\{k_1,k_2\}/2}\|(1+|v|)^d\widehat{u}(t, \xi, v) \|_{L^1_vL^\infty_\xi}
\]
\be\label{2020aug5eqn2}
\times  \min\big\{ 2^{3\min\{k_1,k_2\}/2} \| \widehat{h}(t, \xi, v)\psi_{k_1}(\xi) \|_{L^\infty_{\xi, v}}, \| \widehat{h}(t, \xi, v)\psi_{k_1}(\xi) \|_{L^\infty_v L^2_\xi}\big\}.
\ee
 On the other hand, we can do integration by parts in $v$ twice. As a result, the following estimates hold, 
\[
\|T^k_{k_1,k_2}(h,u)(t,\xi)\|_{L^2} \lesssim \sum_{|\alpha|\leq 2} 2^{a k + b k_1 + ck_2} \min\{2^{3k/2+ 3\min\{k_1,k_2\}/2}, 2^{3\min\{k_1,k_2\}}\}    2^{-2k_2}|t|^{-2} \| m\|_{P^d_v\mathcal{M}^{a;b,c}}\]
\be\label{june23eqn26}
\times\| \widehat{h}(t, \xi, v)\psi_{k_1}(\xi) \|_{L^\infty_v L^2_\xi} \| (1+|x|+|v|)^{d+ 10}\nabla_v^{\alpha}u(t,x,v) \|_{L^2_x L^2_v }.
\ee
\[
\|T^k_{k_1,k_2}(h,u)(t,\xi)\|_{L^\infty_\xi} \lesssim  \sum_{|\alpha|\leq 2} 2^{ak+ b k_1 + c k_2 } 2^{ 3\min\{k_1,k_2\}/2-2m -2k_2} 
 \| (1+|x|+|v|)^{d+ 10}\nabla_v^{\alpha}u(t,x,v) \|_{L^2_x L^2_v } 
\]
\be\label{2020aug5eqn3}
 \times\min\{ 2^{3\min\{k_1,k_2\}/2} \| \widehat{h}(t, \xi, v)\psi_{k_1}(\xi) \|_{L^\infty_{\xi, v}}, \| \widehat{h}(t, \xi, v)\psi_{k_1}(\xi) \|_{L^\infty_v L^2_\xi}\}.
\ee
Therefore, our desired estimate (\ref{june23eqn2}) holds   from the estimates (\ref{june23eqn25}) and (\ref{june23eqn26}) and the desired estimate (\ref{2020aug5eqn5}) holds from the estimates (\ref{2020aug5eqn2}) and (\ref{2020aug5eqn3}).

\end{proof}

By exploiting the distance with respect to the light cone, in the following Lemma, we show that we can trade one derivative  of the wave part for the decay rate $2^{-m}$ over time for the wave-Vlasov type interaction. 
 
 \begin{lemma}\label{wavevlabil6sep}
Assume that $t\in[2^{m-1}, 2^m]\subset[0,T], m\in \mathbb{Z}_{+}, a,b,c,d\in \R, k,k_1,k_2\in \mathbb{Z}$, $m\in P_v^d\mathcal{M}^{a;b,c} $, $ {\min\{k_1,k_2\}}\geq -m +\delta_1 m   $, then the following estimate holds for the bilinear operator defined in \textup{(\ref{sep4eqn1})},
\[
\| T^k_{k_1,k_2}(h,u)(t,\xi)\|_{L^2} \lesssim \sum_{\Gamma\in P}  2^{- m }   2^{3k_{2 }  } 2^{ak + bk_1+c k_2}  \| m\|_{P_v^d\mathcal{M}^{a;b,c}}  \big(2^{-k_1} \|    P_{k_1}\big(\Gamma h\big)   (t,x,v)\|_{  L^2_{x,v} }
\]
 \be\label{2020july8bilinear}
+ 2^{-2k_1} \| P_{k_1}(\p_t  h)\|_{L^2_{x,v}} \big)  \|P_{k_2}(u)(t,x,v)(1+|x|^5+|v|^{ 3/\delta_1 +d})\|_{L^2_{x,v}}.
\ee
\[
\| T^k_{k_1,k_2}(h,u)(t,\xi)\|_{L^2} \lesssim \sum_{\Gamma\in P}  2^{- m }   2^{3\min\{k,k_2\}_{-}/2 } 2^{ak + bk_1+c k_2}  \| m\|_{P_v^d\mathcal{M}^{a;b,c}}  \big(2^{-k_1} \|    P_{k_1}\big(\Gamma h\big)   (t,x,v)\|_{  L^\infty_{x,v} }
\]
 \be\label{bilinearestimate2}
  + 2^{-2k_1} \| P_{k_1}(\p_t  h)\|_{L^\infty_{x,v}} \big)  \|P_{k_2}(u)(t,x,v) (1+|x|^5+|v|^{ 3/\delta_1 +d})\|_{L^2_{x,v}}.
\ee
\end{lemma}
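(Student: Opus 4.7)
The proof of \eqref{2020july8bilinear} combines two ingredients: identity \eqref{definitionofcoefficient1} to trade one derivative of $h$ for a Klainerman vector field, and the effective support of the shifted Vlasov factor, which together yield the $2^{-m}$ gain. First, by Plancherel I would rewrite the left-hand side as an $L^2_x$-norm of a physical-space bilinear expression pairing $P_{k_1}h(t,x,v)$ with $U_{k_2}(t,x,v):=P_{k_2}u(t,x-t\hat{v},v)$ through the symbol $m\in P^d_v\mathcal{M}^{a;b,c}$ and a final $P_k e^{it|D|}$-projection; the shift appears upon moving $e^{-it\hat{v}\cdot\eta}$ through the $\eta$-Fourier transform. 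The weight $(1+|x|^5+|v|^{3/\delta_1+d})$ on $u$ localizes $U_{k_2}$ to $|x-t\hat{v}|=O(1)$, and since $1-|\hat{v}|\gtrsim\langle v\rangle^{-2}$, on this effective support one has $\bigl||t|-|x|\bigr|\gtrsim 2^{m}\langle v\rangle^{-2}$, matching the pointwise bound \eqref{2020aug31eqn31}.

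Next, on this effective support the coefficient $a_\Gamma(t,x)=c_\Gamma(t,x)/(t^2-|x|^2)$ from \eqref{definitionofcoefficient1} obeys $|a_\Gamma|\lesssim(|t|+|x|)/|t^2-|x|^2|\lesssim 2^{-m}\langle v\rangle^{2}$, with the $\langle v\rangle^{2}$ loss absorbable into the $\langle v\rangle^{3/\delta_1+d}$ weight on $u$. Substituting $\nabla_x h=\sum_{\Gamma\in P_1}a_\Gamma\Gamma h$ modulo $\p_t h$ contributions, and then invoking Bernstein's inequality in view of $\widehat{P_{k_1}h}\subset\{|\xi|\sim 2^{k_1}\}$, produces the factor $2^{-k_1}$ in front of $\|P_{k_1}(\Gamma h)\|_{L^2_{x,v}}$ together with the $2^{-m}$ gain from $a_\Gamma$. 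The hypothesis $\min\{k_1,k_2\}\geq -m+\delta_1 m$ is invoked precisely to guarantee that $2^{-k_1}$ is not so large that the commutator $[P_{k_1},a_\Gamma]$ or the Bernstein factor swamps the gain. Iterating the same device to re-express residual $\p_t h$ contributions through \eqref{definitionofcoefficient1} yields the alternative form in terms of $\|P_{k_1}(\p_t h)\|_{L^2_{x,v}}$ with one additional factor of $2^{-k_1}$, hence the $2^{-2k_1}$ prefactor. The remaining $v$-integration is carried out by Cauchy--Schwarz, and the factor $2^{3k_2}2^{ak+bk_1+ck_2}\|m\|_{P^d_v\mathcal{M}^{a;b,c}}$ arises from the volume of $\eta$-support combined with the symbol-norm definition in \eqref{2022may22eqn31}.

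Estimate \eqref{bilinearestimate2} follows by the same mechanism with $h$ placed in $L^\infty_{x,v}$ rather than $L^2_{x,v}$; in that case a Bernstein on the output projection $P_k$ (intersected with the $\eta$-support) yields the factor $2^{3\min\{k,k_2\}_-/2}$ in place of $2^{3k_2}$.

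The main obstacle is the careful treatment of the commutator $[P_{k_1},a_\Gamma]$: while $a_\Gamma$ is smooth of size $\sim 2^{-m}\langle v\rangle^{2}$ on the good region $\{|x-t\hat{v}|=O(1)\}$, it degenerates at the light cone $|t|=|x|$ and has polynomial growth far from it, so the substitution $\nabla_x h=\sum a_\Gamma\Gamma h$ cannot be made globally. I would handle this by partitioning space into dyadic shells in $\bigl||t|-|x|\bigr|$, using the $\langle x\rangle^{5}$-weight on $u$ to absorb contributions of shells outside the effective support, and relying on the $2^{\delta_1 m}$-margin in $\min\{k_1,k_2\}\geq -m+\delta_1 m$ to soak up the commutator losses on the good shells.
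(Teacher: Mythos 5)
Your high-level strategy is aligned with the paper's: pass to physical space, use the identity \eqref{definitionofcoefficient1} to convert a Cartesian derivative of $h$ into a vector field $\Gamma h$ at the price of the coefficient $a_\Gamma$, and then exploit the geometric fact (a pointwise version of \eqref{2020aug31eqn31}) that when $u$ is weighted by $(1+|x|^5+|v|^{3/\delta_1+d})$ and the kernel localizes $|x_1|+|x_2|\lesssim 2^{m-\delta_1 m/2}$, the coefficient $a_\Gamma$ gains $2^{-m}$ modulo absorbable losses.

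However, there are two concrete places where your mechanism departs from what actually works. First, the $\p_t h$ term with prefactor $2^{-2k_1}$ is \emph{not} produced by iterating \eqref{definitionofcoefficient1}. It comes from the commutator $[\Gamma,\nabla\d^{-2}P_{k_1}]$ in the algebraic identity
$P_{k_1}h = -\sum_\Gamma a_\Gamma\cdot\Gamma(\nabla\d^{-2}P_{k_1}h)$,
as recorded in the paper's estimate \eqref{sep13eqn1}: $\Gamma(\nabla\d^{-2}P_{k_1}h) = \nabla\d^{-2}P_{k_1}(\Gamma h) + [\Gamma,\nabla\d^{-2}P_{k_1}]h$, and for $\Gamma\in\{L_i\}$ this commutator is exactly a Fourier multiplier of order $2^{-2k_1}$ applied to $\p_t h$ (it vanishes for rotations and gives another $P_{k_1}$-type term for $S$). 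If you instead try to re-substitute $\p_t h = \sum a_\Gamma^{(0)}\Gamma h$ and iterate, you do not cleanly land on $\|P_{k_1}(\p_t h)\|$ with a fixed $2^{-2k_1}$ — you would pick up another $a_\Gamma$ and the loop never closes. Relatedly, the commutator you flag, $[P_{k_1},a_\Gamma]$, does not appear in the paper's argument: the identity is applied to the already-inverse-Fourier-transformed, localized factor $\mathcal{F}^{-1}[\widehat h\,\psi_{k_1}]$ evaluated at a point (see \eqref{sep7eqn1}), so $a_\Gamma$ multiplies a function in physical space and never has to be commuted past $P_{k_1}$. The only commutator is the vector-field one just described, and it is benign.

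Second, the dyadic-shell decomposition in $||t|-|x||$ that you propose to control $a_\Gamma$ near the light cone is more machinery than is needed and would in fact be awkward to combine with the $(1+|x|+|v|)$-weighted $L^2$ on $u$. The paper replaces it by the single pointwise inequality \eqref{july5eqn23},
which holds for all $(x,x_1,x_2,v)$ once $\min\{k_1,k_2\}\geq -m+\delta_1 m$. This inequality does exactly the accounting you are attempting to do shell by shell — it says that whichever of the four factors (distance of $h$'s argument to the cone, distance of $u$'s argument to $t\hat v$, kernel tail, $\langle v\rangle$) is large, it is always enough to recover $|t|$ — but it closes in one line after the $L^1_{x_1,x_2}$–$L^2_x$ Schur-type bound with the kernel estimates \eqref{july10eqn2}–\eqref{july10eqn90}. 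I would encourage you to replace the dyadic plan by this algebraic inequality; otherwise the proposal does not clearly close, because on shells where $||t|-|x||$ is small you would still need to see why the weighted $L^2$-norm of $u$ controls the contribution, and that is precisely what \eqref{july5eqn23} encodes.
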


\begin{proof}
 Note that,  after applying the   equality (\ref{definitionofcoefficient1}) for $\mathcal{F}_{\xi}^{-1}[   \widehat{h^{ }}(t, \xi)\psi_{k_1}(\xi) ](t, x , v)$,  we have
 \[
 \mathcal{F}^{-1}_{\xi}[e^{-it |\xi|}T^k_{k_1,k_2}(h,u)(t,\xi)](x) = \sum_{\Gamma\in P } \int_{\R^3}\int_{\R^3} K(x_1,x_2,v)  u_{k_2}(t, x-x_1-x_2-t\hat{v}, v) 
 \]
 \be\label{sep7eqn1}
 \times  a_{\Gamma}(t, x-x_1)\cdot \Gamma \big( \mathcal{F}_{\xi}^{-1}[   -i\xi |\xi|^{-2}\widehat{h^{ }}(t, \xi)\psi_{k_1}(\xi) ]\big)(t, x-x_1, v)    d x_1 x_2 d v, 
 \ee
 where
\[
 K(x,y,v):=\int_{\R^3}\int_{\R^3} e^{ix \cdot\xi+ i y\cdot \eta}  m(\xi-\eta, \eta, v) \psi_{[k_1-4,k_1+4]}(\xi-\eta)\psi_{[k_2-4,k_2+4]}(\eta)\psi_k(\xi) d \xi d \eta.
\]

By doing integration by parts in $\xi$ and $\eta$ many times, we know that the following pointwise estimate holds for the kernel  if $k_2\leq k_1+10$, 
\be\label{july10eqn2}
 |K(x,y,v)| \lesssim \|m\|_{P_v^d\mathcal{M}^{a;b,c}} 2^{ak+b k_1 + c k_2} (1+|v|)^{d}2^{ {3k}+3k_2}(1+2^{k}|x|)^{-  20/\delta_1} (1+2^{\min\{k_1,k_2\} }|y|)^{-2 0/\delta_1}. 
\ee
If $k_2\geq k_1 + 10$, then we first switch the role of $\xi-\eta$ and $\eta$ and then do integration by parts in $\xi$ and $\eta$ many times. As a result, the following estimate holds for the kernel,
\be\label{july10eqn90}
|K(x,y,v)| \lesssim \|m\|_{P_v^d\mathcal{M}^{a;b,c}} 2^{ak+b k_1 + c k_2} (1+|v|)^{d}2^{ {3k}+3k_1}(1+2^{k}|x+y|)^{- 20/\delta_1}(1+2^{ \min\{k_1,k_2\}}|y|)^{- 20/\delta_1}.
 \ee
Note that the following estimate holds if  $ {\min\{k_1,k_2\}}\geq  -m +\delta_1 m    $, 
\be\label{july5eqn23}
(1+|t-(x-x_1)|)(1+| x-x_1-x_2-t\hat{v}|)(1+2^{\min\{k_1,k_2\}}(|x_1|+|x_2|) )^{20/\delta_1}(1+|v|)^{5/(2\delta_1)}\geq |t|. 
\ee
Hence, recall (\ref{sep7eqn1}), our desired estimate (\ref{2020july8bilinear}) follows directly from the above estimate, the estimate \eqref{2022may16eqn21},  and the estimate of kernel in (\ref{july10eqn2}). 

Now, we proceed to the proof of the desired estimate \eqref{bilinearestimate2}.  From the estimates of kernel in (\ref{july10eqn2}) and (\ref{july10eqn90}), we have
 \[
\|    T^k_{k_1,k_2}(h,u)(t,\xi) \|_{L^2}\lesssim  \sum_{\Gamma\in P  }  2^{-m} \|P_{k_2}(u)(t,x,v)(1+|x|+|v|^{5/(2\delta_1)}) \|_{L^2_{x,v}}  \]
\[
\times    \|   \Gamma \big( \nabla \d^{-2}P_{k_1}h \big)(t,x,v)\|_{L^\infty_{x,v}}   \|(1+2^{\min\{k_1,k_2\}}|z_2|)^{10/\delta_1}(1+|v|)^{-d}K(z_1,z_2, v)\|_{L^\infty_v L^1_{z_1,z_2}}  
  \]
  \[
  \lesssim \sum_{\Gamma\in P  }2^{-m } 2^{ak + bk_1+c k_2} \| m\|_{P_v^d\mathcal{M}^{a;b,c}}    \|   \Gamma \big( \nabla \d^{-2}P_{k_1}h \big)(t,x,v)\|_{L^\infty_{x,v}} 
  \]
\be\label{july10eqn5}
  \times \|P_{k_2}(u)(t,x,v)(1+|x|+|v|^{5/(2\delta_1)}) \|_{L^2_{x,v}}.
 \ee
Note that, as a result of direct computation,     for    any $\Gamma\in \{S, \Omega_{ij}, L_i, \p_{x_i}\}$,   we have 
\[
   \| \Gamma \big( \nabla \d^{-2}P_{k_1}h \big) (t,x,v)\|_{  L^\infty_{x,v} } \lesssim   \|    \nabla \d^{-2}P_{k_1}\big(\Gamma h\big)   (t,x,v)\|_{  L^\infty_{x,v} } + \|    [\Gamma, \nabla \d^{-2}P_{k_1}] \big(h\big)   (t,x,v)\|_{  L^\infty_{x,v} }
\]
\be\label{sep13eqn1}
\lesssim 2^{-k_1} \|    P_{k_1}\big(\Gamma h\big)   (t,x,v)\|_{  L^\infty_{x,v} } + 2^{-2k_1} \| P_{k_1}(\p_t  h)\|_{L^\infty_{x,v}}.
\ee

Therefore, from the above two estimates, to prove (\ref{bilinearestimate2}), it would be sufficient to  consider the case $\min\{k,k_2\}\leq 1 $. Based on the relative size of $k_1$ and $k_2$, we split into two cases as follow.

\noindent $\bullet$\quad If $k_2\leq k_1+10$.\qquad From the   pointwise estimate  (\ref{july5eqn23}) and the estimate of kernel $K(z_1,z_2,v)$ in  (\ref{july10eqn2}), we have 
\[
|\mathcal{F}^{-1}_{\xi}[e^{-it|\xi|} T^k_{k_1,k_2} (h,u)(t,\xi)](x) |\lesssim  \sum_{\Gamma\in P }  2^{ak+b k_1 +c k_2}  2^{-m}  \| m\|_{P_v^d\mathcal{M}^{a;b,c}}  \|   \Gamma \big( \nabla \d^{-2}P_{k_1}h \big)(t,x,v)\|_{L^\infty_{x,v}}
\]
\be\label{june23eqn40}
\times \|u(t,x,v) (1+|x|^5+|v|^{ 3/\delta_1 +d})\|_{L^2_{x,v}}  2^{3k+3k_2 }\big(\int_{\R^3} \int_{\R^3} \int_{\R^3} (1+2^{k}|x-y|)^{- 10/\delta_1}  \tilde{m}(t, y, z) d y d z  \big),
\ee
where
\be\label{june23eqn41}
\tilde{m}(t, y,z):=  \int_{\R^3} (1+2^{ \min\{k_1,k_2\}}|y-z-t\hat{v}| )^{-10/\delta_1}(1+|v|^{10}+|z|^{  4 }  )^{-1} d v . 
\ee
From the estimates (\ref{june23eqn40})   , the following estimate holds from the Minkowski inequality, 
\[
\|  T^k_{k_1,k_2}(h,u)(t,\xi)\|_{L^2} \lesssim \sum_{\Gamma\in P^{ }  }  2^{3\min\{k,k_2\}/2}   2^{ak + bk_1+c k_2}   2^{- m } \| m\|_{P_v^d\mathcal{M}^{a;b,c}}  \|   \Gamma \big( \nabla \d^{-2}P_{k_1}h \big)(t,x,v)\|_{L^\infty_{x,v}}  
\]
 \be\label{july10eqn10}
\times    \|P_{k_2}(u)(t,x,v) (1+|x|^5+|v|^{ 3/\delta_1 +d}) \|_{L^2_{x,v}}.
\ee
\noindent $\bullet$\quad If $k_2\geq k_1+10$.\qquad From the estimate (\ref{july10eqn90}) ,    we have 
 \[
|\mathcal{F}^{-1}_{\xi}[e^{-it|\xi|}  T^k_{k_1,k_2}(h,u)(t,\xi)](x) |\lesssim  \sum_{\Gamma\in P^{ }  }   2^{ak+b k_1 +c k_2} \| m\|_{P_v^d\mathcal{M}^{a;b,c}}  2^{-m}   \|   \Gamma \big( \nabla \d^{-2}P_{k_1}h \big)(t,x,v)\|_{L^\infty_{x,v}} 
\]
\be\label{july11eqn2}
\times \|P_{k_2}(u)(t,x,v) (1+|x|^5+|v|^{ 3/\delta_1 +d})\|_{L^2_{x,v}}  2^{3k+3k_1 }\big(\int_{\R^3} \int_{\R^3}\int_{\R^3} (1+2^{k}|x-z-t\hat{v}|)^{-  10/\delta_1}  \tilde{m}(t, y,z) d y  d z  \big).
\ee
Recall (\ref{june23eqn41}). From the above estimate, the following estimate holds, 
\[
\|    T^k_{k_1,k_2}(h,u)(t,\xi) \|_{L^2} \lesssim  \sum_{\Gamma\in P  }   2^{3k/2 }   2^{ak+b k_1 +c k_2} \| m\|_{P_v^d\mathcal{M}^{a;b,c}} 2^{-m}   \|   \Gamma \big( \nabla \d^{-2}P_{k_1}h \big)(t,x,v)\|_{L^\infty_{x,v}}  \]
\be\label{july11eqn31}
\times     \|P_{k_2}(u)(t,x,v) (1+|x|^5+|v|^{ 3/\delta_1 +d})\|_{L^2_{x,v}}.
\ee
To sum up, from the estimate (\ref{july10eqn10}) and the estimate (\ref{july11eqn31}), the following estimate holds regardless the size of $k_1-k_2$,
\[ \|    T^k_{k_1,k_2}(h,u)(t,\xi) \|_{L^2}\lesssim  \sum_{\Gamma\in P  }  2^{3\min\{k,k_2\}/2}   2^{ak + bk_1+c k_2}   2^{-m } \| m\|_{P_v^d\mathcal{M}^{a;b,c}}  \|   \Gamma \big( \nabla \d^{-2}P_{k_1}h \big)(t,x,v)\|_{L^\infty_{x,v}}  
\]
 \be\label{july11eqn21}
\times       \|P_{k_2}(u)(t,x,v) (1+|x|^5+|v|^{ 3/\delta_1 +d})\|_{L^2_{x,v}}.
\ee
To sum up, our desired estimate (\ref{bilinearestimate2}) holds from the estimates    (\ref{july10eqn5}),   \eqref{july10eqn10}, (\ref{sep13eqn1}),  and  (\ref{july11eqn21}).

\end{proof}

In the following Lemma, by using the bilinear estimates obtained in previous Lemmas, we obtain rough energy estimate for the wave-Vlasov type interaction in the Einstein equations. 

\begin{lemma}\label{resonancevlasov}
Under the bootstrap assumptions \textup{(\ref{BAmetricold})} and \textup{(\ref{BAvlasovori})}, the following estimate holds for any      $t\in [2^{m-1}, 2^m]\subset[0,T]$, $m\in \Z_+, L\in   P_n, n\in[0, N_0], $ $l \in [0, N_0-n]\cap \Z$,   
\be\label{july11eqn32}
\begin{split}
  2^{-k_{-}/2+\gamma k_{-} + l k_{+} } \|P_k(\mathcal{N}_{vl;m}^{  {\tilde{h} ;L }} ) (t) \|_{L^2}&\lesssim 2^{ -m+k_{-}+ d(|L|+1,l)\delta_0m }\epsilon_1^2 + 2^{-7m/6+3  d(|L|+1,l)\delta_0m} \epsilon_1^2,\\
\| \mathcal{N}_{vl;m}^{  {\tilde{h} ;L }}  (t) \|_{E_l}&\lesssim 2^{ -m +  d(|L|+1,l)\delta_0m }\epsilon_1^2.  
\end{split}
\ee
\end{lemma}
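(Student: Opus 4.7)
The plan is to start from the definition
\[
\mathcal{N}_{vl;m}^{\tilde{h};L} = \sum_{\mathcal{L}\preceq L} -\mathcal{F}_{\xi}^{-1}\Big[\int_{\R^3} e^{-it\hat v\cdot\xi}\, b^{L\mathcal{L}}_{\alpha\beta}(v)\,\frac{k^{\tilde h}_{\alpha\beta}(\xi)\,\partial_t\widehat{u^{\mathcal{L}}}(t,\xi,v)}{i2(|\xi|-\hat v\cdot\xi)}\,dv\Big],
\]
substitute the equation \eqref{2020april7eqn6} for $\partial_t u^{\mathcal{L}}$, and integrate by parts: the $\nabla_x\cdot\mathfrak{N}_1^{\mathcal{L}}$ piece yields an extra $\xi$ in the symbol, while for $D_v\cdot\mathfrak{N}_2^{\mathcal{L}}$ I move the $\nabla_v$ onto $e^{-it\hat v\cdot\xi}$ (producing a factor $t\nabla_v\hat v\cdot\xi$ that cancels one power of $|\xi|$) and onto the symbol. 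This reduces every contribution to a bilinear form $T^k_{k_1,k_2}(h,u^{\mathcal{L}'})$ as in \eqref{sep4eqn1} with symbol in $P^d_v\mathcal{M}^{-1;b,c}\cup P^d_v\mathcal{M}^{0;b,c}$ — the $(|\xi|-\hat v\cdot\xi)^{-1}$ is absorbed into the symbol class since on the support of $\psi_k(\xi)$ we have $|\xi|-\hat v\cdot\xi\gtrsim 2^k\langle v\rangle^{-2}$. The symbol inherits the $v$-bounds \eqref{july11eqn78} from $b^{L\mathcal{L}}_{\alpha\beta}(v)$.

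Next I split the dyadic sum into two regimes according to $\min\{k_1,k_2\}$ versus $-m+\delta_1 m$. In the regime $\min\{k_1,k_2\}\geq -m+\delta_1 m$ I invoke the light-cone gain of Lemma \ref{wavevlabil6sep}, which produces the decisive factor $2^{-m}$ by trading one derivative on the wave input for the boot\-strapped energy control on either $\widetilde{U^{L_1 h_{\alpha\beta}}}$ or on $\Gamma\widetilde{U}$. In the complementary regime $\min\{k_1,k_2\}\leq -m+\delta_1 m$ I use Lemma \ref{wavevlasovbi1}: when the wave frequency is at $k_1$ and the symbol carries an extra $|\xi|^{-1}$, the volume factor $2^{3\min\{k,k_1,k_2\}/2}$ together with the $v$-integration-by-parts gain $|t|^{-2}2^{-2k_2}$ (used only when $k_2$ is the smaller one) is enough. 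On the remaining summands one uses the bootstrap energy and $Z$-norm bounds on $u^{\mathcal{L}}$ from \eqref{BAvlasovori}; the quadratic pieces of $\mathfrak{N}_3^{\mathcal{L}}$ and $\mathfrak{N}_4^{\mathcal{L}}$ are handled exactly as in Lemma \ref{estimateofremaindervlasov3} and their cubic-and-higher pieces as in Lemma \ref{estimateofremaindervlasov}.

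The main obstacle is the top-order case $|L|=N_0$, where the quasilinear term in $\mathfrak{N}_2^{\mathcal{L}}$ contains $\nabla_x g^{\alpha\beta}$ at high frequency and, simultaneously, the Vlasov factor $\Lambda^\rho u^{\mathcal{L}}$ does not benefit from further room in the energy hierarchy. Here I will exploit the splitting $U^{Lh_{\alpha\beta}}=\widetilde{U^{Lh_{\alpha\beta}}}+\widetilde{\rho}^{h_{\alpha\beta}}_L(f)$ from \eqref{2022march19eqn16}–\eqref{2022march19eqn3}: the modified profile obeys the slow growth $\langle t\rangle^{\delta_1}$ dictated by \eqref{BAmetricold}, while the density correction $\widetilde{\rho}^{h_{\alpha\beta}}_L(f)$ is controlled by Lemma \ref{basicestimates} and therefore contributes only the admissible loss $2^{d(|L|,0)\delta_0 m}$ that is already built into the target bound.

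Finally, to pass from the frequency-localized estimate to the $E_l$-bound I sum against the weight $2^{-k_-/2+\gamma k_-+lk_+}$. The low-frequency gain $2^{k_-}$ in the first right-hand-side term of \eqref{july11eqn32} makes the sum over $k\leq 0$ convergent, producing the loss-free $2^{-m+d(|L|+1,l)\delta_0 m}\epsilon_1^2$; the high-frequency tail is controlled by the polynomial factor $2^{lk_+}$ built into the norm, and the residual $2^{-7m/6}$ contribution from the cubic-and-higher pieces is absorbed by $2^{-m}$ for $m$ large, yielding the claimed $\|\mathcal{N}_{vl;m}^{\tilde h;L}\|_{E_l}$ bound.
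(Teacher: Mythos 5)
Your outer strategy matches the paper: substitute the equation for $\partial_t u^{\mathcal{L}}$ into $\mathcal{N}_{vl;m}^{\tilde h;L}$, move $\nabla_x$ (giving an $i\xi$ that cancels the singular $(|\xi|-\hat v\cdot\xi)^{-1}$) and $D_v$ (integrate by parts in $v$; the $-it\nabla_v(\hat v\cdot\xi)$ cancels the dangerous $t\nabla_v\hat v\cdot\nabla_x$ term), and then estimate what remains. But from that point the paper does something much shorter than what you propose: it does not re-derive any bilinear estimate, nor does it dyadically decompose in $(k_1,k_2)$ or split on $\min\{k_1,k_2\}\gtrless-m+\delta_1 m$. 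It simply applies Cauchy--Schwarz together with the volume-of-support factor $2^{3k_-/2}$ for $k\le0$, against the already-established weighted $L^2_{x,v}$-bounds of the nonlinear terms (the cubic estimate \eqref{aug9eqn87} from Lemma~\ref{estimateofremaindervlasov}, the quadratic estimate \eqref{2020april14eqn3} from Lemma~\ref{estimateofremaindervlasov3}, and, crucially, the bounds \eqref{2020april14eqn21} for the quadratic pieces $\mathfrak{Q}_i$). Those bounds already carry the $2^{-m}$ light-cone gain you are trying to re-extract via Lemma~\ref{wavevlabil6sep}; you are effectively re-proving them inside this lemma, which is a valid but more laborious route and which forces you to address the symbol-class issue (derivatives of $(|\xi|-\hat v\cdot\xi)^{-1}$ generate ever-higher powers of $\langle v\rangle$, so it is not in a fixed $P_v^d\mathcal{M}^{a;b,c}$ class, and you must absorb these into the Vlasov weight rather than "into the symbol class").

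Two points where the proposal misidentifies the structure. First, you omit the key decomposition \eqref{2020april14eqn23}, which rewrites the \emph{quadratic} parts of $\mathfrak{N}_3^{\mathcal{L}}+\mathfrak{N}_4^{\mathcal{L}}$ as $\nabla_x\cdot\mathfrak{Q}_1^{\mathcal{L}}+D_v\cdot\mathfrak{Q}_2^{\mathcal{L}}+\mathfrak{Q}_3^{\mathcal{L}}$. Without it, the quadratic contribution of $\mathfrak{N}_3,\mathfrak{N}_4$ lands in the non-divergence piece $I_k$ with the uncancelled singular symbol, and you cannot reach the sharp $2^{-m+k_-}$ in the first inequality of \eqref{july11eqn32}; only after the $\mathfrak{Q}_1$-piece has been exposed as a full $x$-divergence does the $i\xi$ cancel the singularity and yield the $2^{-m+k_-}$ with the slow $\delta_0m$ loss (this is precisely \eqref{oct5eqn52}). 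Second, the ``main obstacle'' you single out is not there: $\mathfrak{N}_2^{\mathcal{L}}$ carries only $\nabla_x g^{\alpha\beta}$ with no vector fields, so there is no top-order quasilinear metric derivative to worry about; the top metric derivative sits in $\mathfrak{N}_3^{\mathcal{L}}$, where the accompanying Vlasov factor is $\nabla_v f$ with zero vector fields, and that imbalance is already handled in Lemma~\ref{estimateofremaindervlasov3}. Accordingly, the paper never invokes the modified-profile splitting $U^{Lh}=\widetilde{U^{Lh}}+\widetilde{\rho}^{h}_L(f)$ in the proof of this lemma: $\mathcal{N}_{vl;m}^{\tilde h;L}$ is the \emph{residual} of the modification and this lemma is a rough estimate for it, so introducing the splitting here is circular and unnecessary.
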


\begin{proof}
Recall (\ref{2020june30eqn1}) and  the equation satisfied by the $u^{\mathcal{L}}$ in (\ref{2020april7eqn6}) . Note that
\be\label{2020april14eqn23}
\Lambda_2[   \mathfrak{N}_3^{\mathcal{L
 }}](t,x,v)  + \Lambda_2[   \mathfrak{N}_4^{\mathcal{L
 }}](t,x,v) = \nabla_x  \cdot  \mathfrak{Q}_1^{\mathcal{L
 }}  (t,x,v)  +  D_v \cdot  \mathfrak{Q}_2^{\mathcal{L
 }} (t,x,v) + \mathfrak{Q}_3^{\mathcal{L
 }} (t,x,v), 
\ee
where 
\be\label{2020july9eqn1}
\begin{split}
\mathfrak{Q}_1^{\mathcal{L
 }} (t,x,v)& = \sum_{ \mathcal{L}_1 \circ \mathcal{L}_2\preceq  {\mathcal{L}} }   \tilde{c}_{\mathcal{L}_1\mathcal{L}_2}^{\mathcal{L}}(v) \Lambda_{  1}[\mathcal{L}_1(v^0)^{-1}]    \mathcal{L}_2 f, \\  
 \mathfrak{Q}_2^{\mathcal{L
 }} (t,x,v)&= \sum_{
\begin{subarray}{c}
 \mathcal{L}_1 \circ \mathcal{L}_2  \preceq  {\mathcal{L}}
 \end{subarray}} {c}_{\mathcal{L}_1\mathcal{L}_2\mathcal{L}_3 }^{  {\mathcal{L}}; j,l}(v)    \p_{j} \mathcal{L}_1 g^{\alpha\beta  }  \mathcal{L}_2 f    \mathbf{e}_l, \\
 \mathfrak{Q}_3^{\mathcal{L
 }} (t,x,v) & = \sum_{\mathcal{L}_1, \mathcal{L}_2\in \mathcal{P}_n, \mathcal{L}_1 \circ \mathcal{L}_2\preceq  {\mathcal{L}} }   \tilde{c}_{\mathcal{L}_1\mathcal{L}_2}^{\mathcal{L}}(v)\cdot \nabla_x  \Lambda_{  1}[\mathcal{L}_1(v^0)^{-1}]    \mathcal{L}_2 f .
\end{split}
\ee 

By using the same strategy used in obtaining the estimates    (\ref{2020april9eqn21})   and  (\ref{2020april14eqn3}) in Lemma \ref{estimateofremaindervlasov3}, the following estimate holds for any $\rho\in \mathcal{S}$ s.t., $|\rho| +|\mathcal{L}|\leq N_0,$
\be\label{2020april14eqn21}
\begin{split}
\|\langle v \rangle^{-3}\langle |x|\rangle^{-1}  \omega^{\mathcal{L}}_{\rho}(x,v) \Lambda^\rho \mathfrak{Q}_1^{\mathcal{L
 }} (t,x,v)   \|_{L^2_{x,v}} & \lesssim 2^{-m +  d( |\tilde{c}(\rho)|+|\mathcal{L}|+1, |\rho|-|\tilde{c}(\rho)|)\delta_0m    } \epsilon_1^2, \\ 
   \sum_{i=2,3}\| \langle v \rangle^{-3}\langle |x|\rangle^{-1}  \omega^{\mathcal{L}}_{\rho}(x,v) \Lambda^\rho \mathfrak{Q}_i^{\mathcal{L
 }} (t,x,v)   \|_{L^2_{x,v}} & \lesssim 2^{-3m/2 + d(  |\tilde{c}(\rho)|+|\mathcal{L}|+3,|\rho|- |\tilde{c}(\rho)|)\delta_0 m } \epsilon_1^2 .
 \end{split}
\ee
From the equalities (\ref{2020april7eqn6}) and  (\ref{2020april14eqn23}), we have
\[
\int_{\R^3}  e^{ - i t\hat{v}\cdot \xi}  \frac{ {b}^{L\mathcal{L} }_{\alpha\beta}(v)  k^{\tilde{h}}_{\alpha\beta}(\xi)  \p_t \widehat{u^{\mathcal{L}}}(t, \xi, v)}{i2\big(|\xi|-\hat{v}\cdot\xi\big)}\psi_k(\xi) d v = I_k + II_k + III_k,
\]
where 
\be\label{oct5eqn53}
\begin{split}
I_k & = \sum_{i=3,4}\int_{\R^3} e^{-i t |\xi|- i t\hat{v}\cdot \xi}  {b}^{L\mathcal{L} }_{\alpha\beta}(v)  k^{\tilde{h}}_{\alpha\beta}(\xi)    \frac{  \widehat{  \Lambda_{\geq 3}[\mathfrak{N}_i^{\mathcal{L}}] } (t, \xi, v)  + \widehat{  \mathfrak{Q}_3^{\mathcal{L}} } (t, \xi, v) }{i2\big(|\xi|+\hat{v}\cdot\xi\big)} \psi_{k}(\xi) d v, \\
II_k  & = \int_{\R^3} e^{-i t |\xi| }  {b}^{L\mathcal{L} }_{\alpha\beta}(v)  k^{\tilde{h}}_{\alpha\beta}(\xi)    i \xi \cdot \frac{ \widehat{ \mathfrak{N}_1^{\mathcal{L}}} (t, \xi, v) + \widehat{ \mathfrak{Q}_1^{\mathcal{L}}} (t, \xi, v)  }{i\big(|\xi|+\hat{v}\cdot\xi\big)} \psi_{k}(\xi) d v, \\  
III_k: & =   - \int_{\R^3} e^{-i t |\xi| }    \big(  \widehat{ \mathfrak{N}_2^{\mathcal{L}}} (t, \xi, v)  +  \widehat{ \mathfrak{Q}_2^{\mathcal{L}}} (t, \xi, v) \big)   \cdot \nabla_v \big( \frac{ {b}^{L\mathcal{L} }_{\alpha\beta}(v)  k^{\tilde{h}}_{\alpha\beta}(\xi)  }{i\big(|\xi|+\hat{v}\cdot\xi\big)}\big) \psi_{k}(\xi) d v.
\end{split}
\ee

From the estimate (\ref{aug9eqn87}) in Lemma \ref{estimateofremaindervlasov},  the estimate  (\ref{2020april14eqn3}) in Lemma \ref{estimateofremaindervlasov3}, and the estimate (\ref{2020april14eqn21}), the following estimate holds from the volume of support of $\xi$ if $k\leq 0$ and the Cauchy-Schwarz inequality, 
  \be\label{oct5eqn51}
  \begin{split}
  & \sum_{k\in \Z} 2^{-k_{-}/2+\gamma k_{-} + l k_{+} } \big(\big\| I_k\big\|_{L^2} + \big\| III_k\big\|_{L^2} \big) \\
 &\lesssim  \sum_{k\in \Z}  2^{-k_{-}/2+\gamma k_{-}  } 2^{-k+3k_{-}/2}  \big( 2^{-7m/6+3 d(|L|+3,l)\delta_0m} \epsilon_1^3 +   2^{ -2m  + d(|L|+3, l)\delta_0m} \epsilon_1^2\big) \\ 
 & \lesssim 2^{-7m/6+3 d(|L|+3, l) \delta_0m} \epsilon_1^2.
  \end{split}
\ee
Similarly, from the estimate  (\ref{2020april9eqn21}) in Lemma \ref{estimateofremaindervlasov3} and the estimate (\ref{2020april14eqn21}), the following estimates hold  from the volume of support of ``$\xi$'' if $k\leq 0$ and the Cauchy-Schwarz inequality, 
\[
  2^{-k_{-}/2+\gamma k_{-} + l k_{+} }  \big\| II_k\big\|_{L^2}  \lesssim 2^{-k_{-}/2+\gamma k_{-}  } 2^{ 3k_{-}/2} \big( \sum_{\rho\in \mathcal{S}, |\rho|\leq l, \tilde{c}(\rho)=0 } \| \langle v\rangle^{10} \Lambda^\rho  \mathfrak{Q}_1^{\mathcal{L}}(t,x,v)\|_{L^2_{x,v}} 
  \]
  \be\label{oct5eqn52}
  +  \| \langle v\rangle^{10}   \mathfrak{R}_{\rho,1}^{\mathcal{L}}(t,x,v)\|_{L^2_{x,v}}\big)  \lesssim 2^{-m+k_{-}+ d(|L|+1, l) \delta_0 m }\epsilon_1^2.
\ee
 
Our desired estimates in (\ref{july11eqn32}) hold  from the decomposition (\ref{oct5eqn53}) and the estimates (\ref{oct5eqn51}) and (\ref{oct5eqn52}).
\end{proof}

\begin{lemma}\label{quadraticvlasovrough}
Under the bootstrap assumptions \textup{(\ref{BAmetricold})} and \textup{(\ref{BAvlasovori})}, the following estimate holds for any   $k\in \mathbb{Z}$, $t\in[2^{m-1}, 2^m]\subset[0,T]$,  $m\in \Z_+, L\in   P_n, n\in[0, N_0], $ $l \in [0, N_0-n]\cap \Z$,    
\be\label{july5eqn30}
\begin{split}
 2^{-k_{-}/2+\gamma k_{-}+ l k_{+}}   \| \Lambda_{2}[ P_k( L  \mathcal{N}_{\alpha\beta}^{vl})]\|_{L^2}   
  &\lesssim  2^{  -m+k_{-}+  d(|L|+1, l )  \delta_0 m   } \epsilon_1^2 +    2^{-3m/2}\epsilon_1^2,\\ 
   \| \Lambda_{2}[ P_k( L  \mathcal{N}_{\alpha\beta}^{vl})]\|_{E_l} &\lesssim   2^{  -m+       d(|L|+1, l ) \delta_0 m   } \epsilon_1^2. 
\end{split}
\ee
\end{lemma}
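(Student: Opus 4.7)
The strategy is to use the decomposition (\ref{2020april14eqn51}), which writes $\Lambda_2[L\mathcal{N}_{\alpha\beta}^{vl}]$ as a finite sum of products of the form $L_1 h_{\gamma\kappa} \cdot \rho_{L_2;\gamma\kappa}$, where $\rho_{L_2;\gamma\kappa}(t,x) := \int c_{\alpha\beta;\gamma\kappa}^{L L_1 \mathcal{L}_2}(v)\, \mathcal{L}_2 f(t,x,v)\, dv$ is a density-type function and $|L_1| + |\mathcal{L}_2| \le |L|$ with the total count of derivatives preserved. Using the identity $\mathcal{L}_2 f(t,x,v) = u^{\mathcal{L}_2}(t, x - t\hat{v}, v)$, passing to the Fourier side and performing Littlewood--Paley decomposition of both factors converts each such product (up to the harmless phase $e^{it|\xi|}$ that does not affect the $L^2$ norm) into the bilinear operator $T^k_{k_1,k_2}$ defined in (\ref{sep4eqn1}), with symbol $m(\xi-\eta,\eta,v) = c(v) \in P_v^{10}\mathcal{M}^{0;0,0}$.

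I then split the frequency parameters into two regimes. In the \emph{high-frequency regime} $\min\{k_1,k_2\} \ge -m + \delta_1 m$, I apply the bilinear estimate (\ref{2020july8bilinear}) of Lemma \ref{wavevlabil6sep}, which exploits the pointwise gain (\ref{july5eqn23}) from the distance to the light cone and yields the crucial factor $2^{-m}$. Together with the bootstrap assumption (\ref{BAmetricold}) controlling $P_{k_1}(\Gamma L_1 h)$ in $L^2$ and the weighted bound (\ref{BAvlasovori}) controlling $P_{k_2}(u^{\mathcal{L}_2})$ against the weight $\omega^{\mathcal{L}_2}_0$ (which dominates $\langle x\rangle^5 \langle v\rangle^{3/\delta_1 + 10}$), together with the monotonicity (\ref{july15eqn21}) of $H$, this yields
\begin{equation*}
2^{-k_-/2+\gamma k_- + l k_+}\|\cdot\|_{L^2} \lesssim 2^{-m + k_- + d(|L|+1, l)\delta_0 m}\epsilon_1^2
\end{equation*}
after summing the resulting geometric series in $k_1, k_2$.

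In the \emph{low-frequency regime} $\min\{k_1,k_2\} \le -m + \delta_1 m$, I apply instead the bilinear estimate (\ref{june23eqn2}) of Lemma \ref{wavevlasovbi1} (with the two integrations by parts in $v$ applied when $k_2$ is the larger of the two). The factor $2^{3\min\{k,k_1,k_2\}/2 + 3k_2/2}$ combined with the smallness $\min\{k_1,k_2\} \le -m + \delta_1 m$ and the weight-$v$ integration by parts produces the crude bound $2^{-3m/2}\epsilon_1^2$, which accounts for the second term in the right-hand side of (\ref{july5eqn30}). The $E_l$-norm estimate in (\ref{july5eqn30}) then follows by squaring the per-frequency bound and summing: the $k \le 0$ range converges geometrically because of the $2^{k_-}$ gain, while the $k \ge 0$ range is controlled by the $2^{l k_+}$ weight already built into the energy norm.

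The main technical obstacle is managing the bookkeeping of vector-field indices: the weighted $L^2_{x,v}$ norms appearing in (\ref{BAvlasovori}) involve the weight $\omega^{\mathcal{L}_2}_0$, so one must verify that the kernel bounds (\ref{july10eqn2})--(\ref{july10eqn90}) behind Lemma \ref{wavevlabil6sep} remain integrable against the dual polynomial weight, and that the hierarchy $d(|L_1|+1,\tilde l) + H(\mathcal{L}_2, 0) \le d(|L|+1, l)$ holds whenever $|L_1| + |\mathcal{L}_2| \le |L|$ and $\tilde l + |\mathcal{L}_2| \le l$ — this last point uses (\ref{july15eqn21}) together with the margin $\delta_0 \gg \delta_1$. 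Once these combinatorial bounds are verified, the estimate closes at the polynomial rate $d(|L|+1,l)\delta_0 m$.
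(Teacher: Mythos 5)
Your high-level architecture---dyadic decomposition of the quadratic piece, transfer to the bilinear operator $T^k_{k_1,k_2}$ of (\ref{sep4eqn1}), and a split according to whether $\min\{k_1,k_2\}$ lies above or below the light-cone threshold $-m+\delta_1 m$---is in the right family of ideas, but the choice of bilinear estimate inside the high-frequency regime would make the sum over the Vlasov frequency $k_2$ diverge in a corner case.

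Concretely, you propose to apply (\ref{2020july8bilinear}) uniformly when $\min\{k_1,k_2\}\geq -m+\delta_1 m$. That estimate carries the factor $2^{3k_2}$ and places the Vlasov side in weighted $L^2$ as $\|P_{k_2}(u)(1+|x|^5+|v|^{3/\delta_1+d})\|_{L^2_{x,v}}$. Now take the sub-case where all $N_0$ vector fields land on the Vlasov factor, so $|\mathcal{L}_2|=N_0$, $|L_1|=0$, and $l=0$. In the bootstrap (\ref{BAvlasovori}) the constraint $|\mathcal{L}|+|\rho|\leq N_0$ then forces $\rho=\mathrm{Id}$: no spatial derivatives remain on the Vlasov profile, so $\|P_{k_2}(u^{\mathcal{L}_2})\cdot\omega^{\mathcal{L}_2}_0\|_{L^2}$ has no $k_{2,+}$-decay. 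The only remaining $k_2$-dependence is the bad factor $2^{3k_2-k_1}$ together with the metric at $k_1$ (which decays in $k_{1,+}$, not $k_{2,+}$). Since $l=0$, the $2^{lk_+}$ weight in the $E_l$-norm cannot absorb it either, and in the Low $\times$ High block $(k_1,k_2)\in\chi_k^2$ the sum over $k_2\geq 0$ diverges.

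This is precisely why the paper's proof splits not on frequency first but on the \emph{distribution of vector fields}. When the Vlasov factor carries the bulk and $|L|\geq N_0-5$, the paper uses (\ref{bilinearestimate2}), the $L^\infty$--$L^2$ version of Lemma \ref{wavevlabil6sep}: that estimate replaces $2^{3k_2}$ by the harmless $2^{3\min\{k,k_2\}_{-}/2}$, and puts the metric factor (which now has at most five vector fields) in $L^\infty_{x,v}$, where (\ref{2020julybasicestiamte}) supplies both the $2^{-m}$ decay and a $2^{-(N_0-3-|L_1|)k_{1,+}}$ smoothing weight to close the sum; for $k_1$ very small it uses the volume of support instead, and for $k_2\leq k-10$ it puts the metric in $L^\infty$ directly. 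Conversely, when the metric carries the bulk ($|\mathcal{L}_1|\leq|L|-5$ in the paper's notation), the paper applies the IBP-in-$v$ branch of (\ref{june23eqn2}), extracting the density-type decay $|t|^{-2}2^{-2k_2}$. Your two-way frequency split conflates these cases; the combinatorial verification you mention ($d(|L_1|+1,\tilde l)+H(\mathcal{L}_2,0)\leq d(|L|+1,l)$) is fine but cannot rescue a divergent geometric sum.
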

\begin{proof}
Recall  (\ref{2020april3eqn1}). Since $g_{\alpha\beta}T(f)$ can be estimated in the same way as the estimate of $T_{\alpha\beta}(f)$, we only handle $T_{\alpha\beta}(f)$ in details here. 

Recall the equality (\ref{oct3reqn81}). For fixed $  \mathcal{L}_i\in \cup_{n\leq N_0}\mathcal{P}_{n} , i\in\{1,\cdots,4\},$  s.t., $   \mathcal{L}_1\circ \mathcal{L}_2\preceq  {L},    \mathcal{L}_3\circ \mathcal{L}_4\preceq \mathcal{L}_2  
 $, we do dyadic decomposition for both inputs and have the decomposition as follows, 
\[
P_k\big( \int_{\R^3} \mathcal{L}_1f(t,x,v) \Lambda_{1}[ \mathcal{L}_3 \big(\frac{1}{\sqrt{-\det g}}\big) \mathcal{L}_4\big(  \nabla_v^{\beta}\big(  \frac{v_{\mu} v_{\nu} }{  v^0}\big)\big)]  d v\big) =\sum_{(k_1,k_2)\in \cup_{i=1,2,3}\chi_k^i} H_{k_1,k_2}^k,
\] 
where
\be\label{july5eqn1}
 H_{k_1,k_2}^k = \int_{\R^3} P_k\big[ P_{k_1}(\mathcal{L}_1f)(t,x,v) P_{k_2}(\Lambda_{1}[\mathcal{L}_3 \big(\frac{1}{\sqrt{-\det g}}\big) \mathcal{L}_4\big(  \nabla_v^{\beta}\big(  \frac{v_{\mu} v_{\nu} }{  v^0}\big)\big)]) \big]d v.
\ee

\noindent $\bullet$\quad  If $|\mathcal{L}_1|\leq |L|-5$.  \qquad From the estimate (\ref{june23eqn2}) in Lemma \ref{wavevlasovbi1}, the following estimate holds, 
\[
\| H^k_{k_1,k_2}\|_{L^2} \lesssim \sum_{ \rho\in \mathcal{S}, |\rho|\leq N_0-(|\mathcal{L}_1|+3), |L|\leq |\mathcal{L}_2|,|\gamma|\leq 3} 2^{3\min\{k,k_1,k_2\}/2-k_2 - (N_0-n+2)k_{1,+}} 2^{3k_1/2 } \min\{1,  2^{-2m -2k_1}\}  
\]
\be\label{july3eqn121}
\times  \| P_{k_2} U^{ {L}h_{\alpha\beta}}\|_{L^2} \|\langle |x|+|v|\rangle^{20}\Lambda^{\rho}\nabla_v^{\gamma}  u^{\mathcal{L}_1f}(t,x,v)\|_{L^2_{x,v}}
\ee

\noindent $\bullet$\qquad    If $|\mathcal{L}_1| > |L|-5$ and $|L|\leq N_0-5$. \qquad   Note that,  the following estimate holds from the $L^2-L^\infty$ type bilinear estimate by putting the perturbed metric component in $L^\infty$ and the estimate (\ref{2020julybasicestiamte}) in Lemma \ref{basicestimates} if $k_2\leq k-10$,
\[
\| H_{k_1, k_2}^k\|_{L^2} \lesssim  \min\{2^{ k_2},  2^{-m  }\} 2^{-\gamma k_2  +H( |\mathcal{L}_2|+1)\delta_1m +1.1\gamma m - 4k_{2,+}} \epsilon_1
\]
\be\label{2020aug25eqn23}
\times  \min\{ \|\langle |v|\rangle^{4}P_{k_1}(\mathcal{L}_1 f) (t,x,v) \|_{L^2_{x,v}}, 2^{3k_1/2} \|\langle |v|\rangle^{4} \widehat{u^{\mathcal{L}_1}}(t, \xi, v)\psi_{k_1}(\xi)  \|_{L^1_{v}L^\infty_{\xi}}\}. 
\ee
If $k_2\geq k -10$, then the following estimate holds from  the estimate (\ref{june23eqn2}) in Lemma \ref{wavevlasovbi1},
\be\label{2020aug25eqn21}
\| H^k_{k_1,k_2}\|_{L^2} \lesssim   2^{3k_1-k_2 - 2k_{1,+} + H(N_0 )\delta_0 m }  \min\{1,  2^{-2m -2k_1}\} \epsilon_1 
   \| P_{k_2} U^{ {L}h_{\alpha\beta}}\|_{L^2}.
\ee

\noindent $\bullet$\qquad    If $|\mathcal{L}_1| > |L|-5$ and $|L|\geq N_0-5$. \qquad For this case we have  $l\leq 5$ and $|\mathcal{L}_2|\leq 5.$ Note that the obtained estimate (\ref{2020aug25eqn23}) is also valid if $k_2\leq k-10$. 

  It would be sufficient to consider the case $k_2\geq k-10$, i.e., $(k_1,k_2)\in \chi_k^1\cup \chi_k^2   $.
  From the volume of support of the output frequency and inputs frequency, the following estimate holds  if $ {k_1}\leq  {-99m/100}$, 
\be\label{july18eqn1}
   \| H_{k_1, k_2}^k\|_{L^2} \lesssim2^{  3\min\{k,k_1\}/2+3k_1/2 - k_2 } \|\widehat{U^{ {L}h_{\alpha\beta}}}(t, \xi)\psi_{k_2}(\xi)\|_{L^2_\xi}   \| \langle  v \rangle^{4}\widehat{u^{\mathcal{L}_1   }}(t, \xi, v)\|_{L^1_v L^\infty_\xi}
\ee  
 
If ${k_1}\geq  {-99m/100}$, then the following estimate holds from the estimate (\ref{bilinearestimate2}) in Lemma \ref{wavevlabil6sep} and the estimate  (\ref{2020julybasicestiamte}) in Lemma \ref{basicestimates}, 
 
\be\label{july5eqn21}
 \| H_{k_1, k_2}^k\|_{L^2} \lesssim  2^{3k/2 -20k_{2,+}} 2^{-2m-k_2+d(|L|+3,0)\delta_0 m } \epsilon_1^2. 
\ee

To sum up, in whichever case, from the above obtained estimates (\ref{july3eqn121}--\ref{july5eqn21}), for any fixed $k\in \Z,  $we have   
\be\label{july5eqn29}
   \sum_{(k_1,k_2)\in \chi_k^1\cup \chi_k^2\cup \chi_k^3}  2^{ -k_{-}/2+\gamma k_{-}+ l k_{+}}\| H_{k_1, k_2}^k\|_{L^2} \lesssim  2^{-m+k_{-} + d(|L|+1, l)\delta_0 m   } \epsilon_1^2 + 2^{-3m/2}\epsilon_1^2. 
\ee
Hence finishing the proof of our desired estimate (\ref{july5eqn30}). 

\end{proof}

 \subsection{Estimates of wave-wave type quadratic terms of the metric part}\label{wavewave}

 In this subsection, we first prove several 
   bilinear estimates for the wave-wave type interaction and then give a fixed time energy estimate for  the wave-wave type    quadratic terms of the metric part. 

We remark that  there are two major differences between the desired bilinear estimates and the general   $L^2-L^\infty$ type rough bilinear estimate. Firstly, we pay attention to the smallest frequency among the output frequency and the frequencies of  the two inputs. Because of the delicate structure of energy spaces at the low frequency part, the gain of the factor $2^{\min\{k,k_1,k_2\}/2}$ is crucial. Secondly, we pay  attention to the hierarchy of growth rates for different orders of vector fields applied, which    is very subtle and also   crucial  to close the bootstrap argument. 

\subsubsection{Set-up of bilinear estimates}
  For any $t\in [2^{m-1}, 2^m]\subset[0, T]$, $m\in \mathbb{Z}_{+}$,  $ L_1, L_2 \in  \cup_{n\leq N_0 } P_{n } , k \in \mathbb{Z}, i\in\{1,2\}, $     we assume that the following estimates hold,
\be\label{oct22eqn1}
\begin{split}
 \sum_{k\in \Z }  (A_i(k,m,L_1;L_2))^{-2} \|R_{\alpha }P_k(L_2   f_i^{L_1}  )(t) \|_{L^2}^2 & \lesssim 1,\quad A_i(k,m,L_1 ):=A_i(k,m,L_1;Id),  \\    \|\widehat{f_i}(t, \xi)\psi_k(\xi)\|_{L^\infty_\xi}  &  \lesssim C(k,m),  
 \end{split}
\ee
where $ f_i^{L_1}$ is some input depends on the input $f_i$ and the vector field $L_1$, e.g., $L_1 h_{\alpha\beta}$, $\widetilde{U^{L_1 h_{\alpha\beta}}}$, and $\widetilde{U^{  \tilde{h}^{L_1} }}$.  Moreover, the following hierarchy holds for   $A_i(k,m,L_1;L_2)$, $i\in\{1,2\},$
\be
A_i(k,m,\tilde{L}_1;\tilde{L}_2)\leq A_i(k,m,L_1;L_2), \quad \textup{if\,\,} \tilde{L}_1\preceq  {L}_1, \quad \tilde{L}_2\preceq  {L}_2. 
\ee

Given any  fixed $m\gg 1,$ $t\in [2^{m-1}, 2^m]\subset[0, T]$, $k \in \mathbb{Z}$,  $(k_1,k_2)\in \chi_k^1\cup\chi_k^2\cup \chi_k^3$,    $L_1, L_2\in \cup_{n\leq N_0 } P_n$,   s.t., $|L_1|+|L_2|\leq N_0  $, and any $f,g$ satisfy the estimates in (\ref{oct22eqn1}), 
 we define the following bilinear operator, 
\be\label{2020june20eqn8}
\begin{split}
 T^{  }_{k,k_1, k_2}(R_\alpha L_1f_1,R_\beta L_2f_2   )(t,x)&= \int_{\R^3}\int_{\R^3} K_{k,k_1,k_2}(x_1, x_2)\\
& \times P_{k_1}R_{\alpha}(L_1f_1 )(x-x_1) P_{k_2}R_{\beta}(L_2 f_2 )(x-x_1-x_2) d x_1 d x_2, 
\end{split}
\ee
where the kernel  $  K_{k,k_1,k_2}(x_1, x_2)$ is defined as follows, 
\be  
 K_{k,k_1,k_2}(x_1, x_2):=\int_{\R^3}\int_{\R^3} e^{i x_1 \cdot \xi + i x_2 \cdot \eta} k(\xi-\eta, \eta) \psi_{k}(\xi) \psi_{[k_1-4, k_1+4]}(\xi-\eta)  \psi_{[k_2 -4, k_2+4]}( \eta) d \xi d \eta, 
\ee
and the following estimate holds for the above defined kernlel 
\be\label{2020june20eqn30}
 \|K_{k,k_1,k_2}(x_1, x_2)\|_{L^1_{x_1,x_2}}\leq C(K).  
\ee

For any  $i\in\{1,2\}$, $L\in \nabla_x^\alpha P_n$,  we also define the following quantities for  convenience in notation, 
\be\label{2020june20eqn4}
\begin{split}
D_i(m,k )&:= \sum_{\Gamma_1, \Gamma_2\in  \{\Omega_{ij}\} }2^{3k/2}   C(k,m) + 2^{-m/3-k/3}A_i(k,m, Id;\Gamma_1\circ\Gamma_2  \big)  , \\
C_{i}(k,m, L)&:=\sum_{\Gamma_1, \Gamma_2\in   \{\Omega_{ij}\} }     (A_i(k,m, L;\Gamma_1  ))^{1-\delta_1^2}(A_i(k,m,  L ;  \Gamma_1 \circ\Gamma_2))^{ \delta_1^2} .
\end{split}
\ee

With the above notation,  from the estimate (\ref{july27eqn4}) in Lemma \ref{superlocalizedaug}, the following estimate holds if $k\geq -m $ and $|L|\leq N_0-2$,
\be\label{2020june20eqn7}
\begin{split}
 \|R_{\alpha} P_k( f_i^L )(t, x)\psi_{\geq m-10}(|x| ) \|_{L^\infty_x} &\lesssim 2^{-m+k/2} C_i(k,m,L ),\\ \|R_{\alpha} P_k(  f_i)(t, x)\psi_{\geq m-10}(|x| ) \|_{L^\infty_x} &\lesssim 2^{-m+k/2} D_i(m, k ). 
\end{split}
\ee
\subsubsection{Bilinear estimates}
In the following Lemma, based on the possible room of allowing  maximum number of vector fields, we provide several $L^2$-type bilinear estimates for the   bilinear operator defined in previous subsubsection. 

\begin{lemma}\label{2020junebilinearestimate}
The following estimate holds for the bilinear form defined in \textup{(\ref{2020june20eqn8})},
\be\label{2020june20eqn20}
 \| T^{  }_{k,k_1, k_2}(R_\alpha f_1^{L_1},R_\beta f_2^{ L_2}   )(t,x) \|_{L^2_x} \lesssim  2^{3\min\{k,k_1,k_2\}/2}   C(K) A_1(k_1, m,L_1) A_2(k_2, m, L_2). 
\ee 
Moreover, if $\min\{k_1, k_2\}\geq -m + \delta_1 m $ and  the kernel $K_{k,k_1,k_2}(x_1, x_2)$ satisfies the following estimate, 
\be\label{2020june20eqn1}
\quad  \|K_{k,k_1,k_2}(x_1, x_2)\psi_{\geq -m+\delta_1 m}(|x_1|+|x_2|)\|_{L^1_{x_1,x_2}\cap L^\infty_{x_1,x_2}}\leq 2^{-10 m}C(K).
\ee
Then  then following estimate holds if $  |L_1|\leq N_0-2$, $\star\in \{\leq m-10, \geq m+10\}$, 
\be\label{2020june20eqn11}
\begin{split}
 &\|  T^{  }_{k,k_1, k_2}(R_\alpha  f_1^{L_1} ,R_\beta  f_2^{L_2}  )(t,x)\psi_{\star}(x)\|_{L^2_x} \lesssim  2^{3\min\{k,k_1,k_2\}/2}   C(K) A_2(k_2, m, L_2)\\
&  \times\min\big\{   \sum_{\Gamma_1, \Gamma_2\in P_1} 2^{-2m-2k_1}  A_1(k_1, m,  L_1;\Gamma_1\circ\Gamma_2)  + 2^{-m-3k_1}\|P_{k_1}(\square  f_1^{L_1})\|_{L^2}\\
& + 2^{-2m-4k_1}\|P_{k_1}(\square \Gamma_1   f_1^{L_1} )\|_{L^2}, \sum_{\Gamma \in P_1} 2^{- m- k_1}  A_1(k_1, m,L_1;\Gamma )  
  + 2^{-m-3k_1}\|P_{k_1}(\square  f_1^{L_1})\|_{L^2}    \big\}.
  \end{split}
\ee
If $|L_1|\leq N_0-1$ and $|L_2|\leq N_0-1$, then the following estimate holds for  $\star\in  \{\leq m-10, \geq m+10\}$,
\be\label{2020june20eqn12}
\begin{split}
& \|  T^{  }_{k,k_1, k_2}(R_\alpha  f_1^{L_1} ,R_\beta  f_2^{L_2}  )(t,x)\psi_{\star}(x)\|_{L^2_x} \\
 &\lesssim\prod_{i=1,2}     2^{3\min\{k,k_1,k_2\}/2}  C(K) \big( \sum_{\Gamma \in P_1} 2^{- m- k_1} A_i(k_i, m, L_i;\Gamma)  +  2^{-m-3k_i}\|P_{k_i}(\square f_i^{ L_i})\|_{L^2} \big).
 \end{split}
 \ee 
 Moreover, for the close to the light cone case, the following rough estimate holds,
\be\label{2020june20eqn25}
\begin{split}
 & \|  T^{  }_{k,k_1, k_2}(R_\alpha  f_1^{L_1} ,R_\beta  f_2^{L_2}  )(t,x) \psi_{> m-10}(x)\|_{L^2_x} \\
& \lesssim   2^{-m+\min\{k,k_1,k_2\}/2 }  C(K) C_1(k_1,m,L_1) A_2(k_2, m, L_2). 
\end{split}
\ee
 In particular, if $|L_1|=0$, then we have 
 \be\label{2020june21eqn2}
  \|  T^{  }_{k,k_1, k_2}(R_\alpha  f_1^{L_1} ,R_\beta  f_2^{L_2}  )(t,x)\psi_{> m-10}(x)\|_{L^2_x} \lesssim   2^{-m+\min\{k,k_1,k_2\}/2 }  C(K) D_1(m, k_1) A_2(k_2, m, L_2)  .
\ee
\end{lemma}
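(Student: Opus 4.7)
The plan is to handle the five estimates in a common framework, distinguishing (a) a basic $L^1$-kernel/Bernstein argument for the raw bound, (b) a derivative-to-vector-field exchange away from the light cone, and (c) an angular dispersive $L^\infty$ bound near the light cone. Throughout I will exploit the convolution structure of (\ref{2020june20eqn8}): Minkowski in $(x_1,x_2)$ reduces all estimates to $L^p\times L^q$ bilinear bounds on the frequency-localized Riesz images of $f_1^{L_1}$ and $f_2^{L_2}$, multiplied by a constant controlled by the kernel norm.

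First I will prove (\ref{2020june20eqn20}) by placing one factor in $L^2$ (using the assumption (\ref{oct22eqn1}) for $A_i(k_i,m,L_i)$) and the other in $L^\infty$; Bernstein on the second factor gives $2^{3k_j/2}$ with $j\in\{1,2\}$, while the Fourier $L^\infty$ bound $\|\widehat{f_i}\|_{L^\infty_\xi}\lesssim C(k,m)$ combined with the output support volume gives $2^{3k/2}$ when $k$ is smallest. Taking the most favorable of the three choices and using (\ref{2020june20eqn30}) produces the factor $2^{3\min\{k,k_1,k_2\}/2}C(K)A_1(k_1,m,L_1)A_2(k_2,m,L_2)$.

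Next, for (\ref{2020june20eqn11}) and (\ref{2020june20eqn12}) on the off-cone region $\psi_\star(x)$ with $\star\in\{\leq m-10,\geq m+10\}$, I will use the improved kernel bound (\ref{2020june20eqn1}) to effectively localize $|x_1|+|x_2|\leq 2^{m-\delta_1 m}$, the complementary region contributing $O(2^{-10m})$. In the effective region the shifted points $x-x_1$ and $x-x_1-x_2$ satisfy $\bigl||t|-|y|\bigr|\gtrsim 2^m$, so (\ref{definitionofcoefficient1}) and (\ref{2022may16eqn21}) yield $\partial_\alpha=\sum_{\Gamma\in P_1}a_\Gamma(t,y)\Gamma$ with $|a_\Gamma|\lesssim 2^{-m}$. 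Writing $R_\alpha=\partial_\alpha(-\Delta)^{-1/2}$, I will push each $\Gamma$ through $(-\Delta)^{-1/2}P_{k_1}$ to produce $2^{-m-k_1}\sum_\Gamma P_{k_1}(\Gamma f_1^{L_1})$, treating the time-derivative parts of $S,L_i$ through the identity $\partial_t^2=\Delta+\square$ to generate the $2^{-m-3k_1}\|P_{k_1}\square f_1^{L_1}\|_{L^2}$ contribution in (\ref{2020june20eqn11}); the second option in the minimum of (\ref{2020june20eqn11}) comes from performing the exchange twice (hence the $2^{-2m-2k_1}$ gain and the second-derivative $\Gamma_1\circ\Gamma_2$ term). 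Performing this trade symmetrically on both inputs, which costs one extra commuting vector field per input and forces $|L_i|\leq N_0-1$, yields (\ref{2020june20eqn12}).

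For the near-light-cone estimates (\ref{2020june20eqn25}) and (\ref{2020june21eqn2}) on $\psi_{>m-10}(x)$, where no $\bigl||t|-|x|\bigr|^{-1}$ gain is possible, I will place $R_\alpha P_{k_1}(f_1^{L_1})$ in $L^\infty$ via the angular-dispersive bound (\ref{2020june20eqn7}), which is itself a consequence of the rotation-vector-field estimate (\ref{july27eqn4}) and thus involves the factor $C_1(k_1,m,L_1)$, and pair it with the $L^2$ bound $A_2(k_2,m,L_2)$ on the second input. A case analysis on $\min\{k,k_1,k_2\}$ (using the output support volume when $k$ is smallest and Bernstein on the $L^2$ factor otherwise) recovers the factor $2^{\min\{k,k_1,k_2\}/2}$, giving (\ref{2020june20eqn25}). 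For $|L_1|=0$, substituting $D_1(m,k_1)$ for $C_1$ in (\ref{2020june20eqn7}) (so that the raw Fourier bound $C(k_1,m)$ is available instead of only $A_1$ applied to $\Omega$-images of $f_1$) yields (\ref{2020june21eqn2}). The main technical obstacle will be the derivative-to-vector-field exchange in Step~2: one must verify that the coefficient bound $|a_\Gamma(t,x-x_1)|\lesssim 2^{-m}$ persists uniformly across the effective kernel support, organize the commutators $[\Gamma,(-\Delta)^{-1/2}P_{k_1}]$ (noting that rotations commute with $P_k$ while $S,L_i$ produce lower-order corrections absorbable into the $\square$-terms), and ensure that the intermediate terms keep the vector-field count within the allowed hierarchy $|L_1|,|L_2|\leq N_0-1$.
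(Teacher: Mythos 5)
Your treatment of (\ref{2020june20eqn20}) and of the off-cone estimates (\ref{2020june20eqn11})--(\ref{2020june20eqn12}) tracks the paper's argument accurately: the raw $L^2$-$L^\infty$/support-volume interpolation, the restriction to $|x_1|+|x_2|\leq 2^{m-\delta_1 m}$ via (\ref{2020june20eqn1}), and the exchange $\partial_\alpha\to \sum_\Gamma a_\Gamma\Gamma$ (with $\square$-correction terms) once or twice are exactly what the paper does through (\ref{2020june20eqn24}) and (\ref{oct25eqn6}).

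The near-light-cone estimates (\ref{2020june20eqn25}) and (\ref{2020june21eqn2}) have a genuine gap. Your plan is to put $R_\alpha P_{k_1}(f_1^{L_1})$ in $L^\infty$ via (\ref{2020june20eqn7}), pair with $A_2$ in $L^2$, and then fix the frequency factor by ``a case analysis on $\min\{k,k_1,k_2\}$.'' But (\ref{2020june20eqn7}) intrinsically produces the factor $2^{-m+k_1/2}$, not $2^{-m+\min\{k,k_1,k_2\}/2}$. When $\min\{k,k_1,k_2\}<k_1$ (i.e.\ $(k_1,k_2)\in\chi_k^3$, or $(k_1,k_2)\in\chi_k^1$ with $k\leq k_1-10$) your two fallback moves do not repair this: Bernstein on the $L^2$ factor would force you to put $f_1$ in $L^2$ and forfeit the $2^{-m}$ gain entirely, and using only the output Fourier support gives $2^{3k/2}A_1A_2$ with no $2^{-m}$ at all. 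You cannot simultaneously collect the dispersive $2^{-m}$ from $f_1$ and the small factor $2^{\min/2}$ by just swapping which input goes in $L^\infty$. What is actually needed, and what the paper does, is the super-localized dispersive estimate from Lemma \ref{superlocalizedaug}, whose gain is $2^{-m+\tilde k/2}$ with the secondary radial-localization scale $\tilde k$ a free parameter: one chooses $\tilde k=\min\{k,k_2\}$, decomposes $P_{k_1}f_1$ into radial shells $P_{k_1,n}$ of thickness $2^{\tilde k}$ as in (\ref{2020june20eqn50}), and then uses almost-orthogonality in $L^2$ over the shells (cf.\ (\ref{oct7eqn32})--(\ref{2020june21eqn1})) to reassemble $A_1(k_1,m,L_1)$. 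Without that super-localization plus orthogonality the exponent in (\ref{2020june20eqn25}) cannot be reached in the $\chi_k^3$ and low-output $\chi_k^1$ regimes.
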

\begin{proof}

Recall (\ref{2020june20eqn8}) and the estimate of kernel in (\ref{2020june20eqn30}). For fixed $x_1,x_2\in \R^3$,   we use the $L^2-L^\infty$ type bilinear estimate and the $L^\infty \longrightarrow L^2$ type Sobolev embedding. As a result, we have
\be\label{oct25eqn1}
\| T^{  }_{k,k_1, k_2}(R_\alpha  f_1^{L_1} ,R_\beta  f_2^{L_2}  )(t,x)\|_{L^2_x} \lesssim 2^{ 3\min\{k_1, k_2\}/2  }C(K)  A_1(k_1, m, L_1)A_2(k_2, m, L_2). 
\ee 
Alternatively, the following estimate holds from first using    the volume of support of $\xi$ and then using the $L^2-L^2$ type bilinear estimate, 
\be\label{oct25eqn2}
 \| T^{  }_{k,k_1, k_2}(R_\alpha  f_1^{L_1} ,R_\beta  f_2^{L_2}  )(t,x)\|_{L^2_x} \lesssim    2^{ 3k/2  }C(K)  A_1(k_1, m, L_1)A_2(k_2, m, L_2). 
\ee 
Hence the desired estimate (\ref{2020june20eqn20}) holds from the estimates (\ref{oct25eqn1}) and  (\ref{oct25eqn2}).  

Now we proceed to prove our desired estimates (\ref{2020june20eqn11}) and (\ref{2020june20eqn12}), which are the  the far away from the light cone cases. 
  From the  estimate of kernel in (\ref{2020june20eqn1}), we can restrict ourself to the case $|x_1|+|x_2|\leq 2^{m-\delta_1 m  }$. More precisely, by using the same strategy used in obtaining the estimate (\ref{2020june20eqn20}), we have   
\[
\big|\int_{|x_1|+|x_2|\geq 2^{m-\delta_1 m}} K_{k,k_1,k_2}(x_1, x_2) P_{k_1}(R_{\alpha}f_1^{L_1} )(x-x_1) P_{k_2}(R_{\beta} f_2^{L_2})(x-x_1-x_2) d x_1 d x_2\big|
\]
\be\label{2020june20eqn21}
 \lesssim  2^{-10m + 3\min\{k,k_1,k_2\}/2} C(K)  A_1(k_1, m,L_1) A_2(k_2, m, L_2). 
\ee
Hence $x-x_1$ and $x-x_1-x_2$ are also localized away from the light cone if $|x_1|+|x_2|\leq 2^{m-\delta_1 m  }$. 
  From the equality (\ref{definitionofcoefficient1}), the following equalities hold if $|L_1|\leq N_0-1$,
  \be\label{2020june20eqn24}  
\begin{split}
 P_{k_1}R_{\alpha}(f_1^{L_1})(t,x)  &=\sum_{\Gamma\in P_1} -  a_{\Gamma}(t,x)  \cdot \Gamma\nabla_x \d^{-2}( P_{k_1}R_{\alpha}(f_1^{L_1}) ) , \quad 
   \\ 
  \Gamma\nabla_x \d^{-2}( P_{k_1}R_{\alpha}(f_1^{L_1} ) )     &= T_{\Gamma,k_1}^{\alpha,1 } \p_{\alpha} f_1^{L_1}+   \nabla_x \d^{-2} P_{k_1}R_{\alpha}(\Gamma f_1^{L_1})   + T_{\Gamma,k_1}^{\alpha,2} \p_{t}^2 f_1^{L_1}   ,  
\end{split}
\ee
where $T_{\Gamma,k_1}^{\alpha,1}\in \mathcal{S}^{ -2}, T_{\Gamma,k_1}^{\alpha,2}\in \mathcal{S}^{ -3}$ are some uniquely determined Fourier multiplier operator such that $ T_{\Gamma,k_1}^{\alpha,2} =0$ if $\alpha \neq 0.$ 

If $|L_1|\leq N_0-2$,  then we can do the process of trading    derivatives for vector fields one more time in (\ref{2020june20eqn24}).   As a result, we have
\be\label{oct25eqn6}
\begin{split}
\nabla_x \d^{-2} P_{k_1}R_{\alpha}(\Gamma f_1^{L_1})  &+ T_{\Gamma,k_1}^{\alpha,1} \p_{\alpha} f_1^{L_1}+ T_{\Gamma,k_1}^{\alpha,2} \Delta f_1^{L_1}= \sum_{\tilde{\Gamma}\in P} - a_{\tilde{\Gamma}}(t,x) \big[ \tilde{T}_{\tilde{\Gamma}, \Gamma,k_1}^{\alpha,1}\p_\alpha (\tilde{\Gamma}\Gamma f_1^{L_1}) \\
&+ \tilde{T}_{\tilde{\Gamma}, \Gamma,k_1}^{\alpha,2}\p_\alpha (\tilde{\Gamma} f_1^{L_1}) + \tilde{T}_{\tilde{\Gamma}, \Gamma,k_1}^{\alpha,3}\p_t^2 ( \Gamma f_1^{L_1}) + \tilde{T}_{\tilde{\Gamma}, \Gamma,k_1}^{\alpha,4}\p_t^2 (  f_1^{L_1})\big],
\end{split}
\ee
where $\tilde{T}_{\tilde{\Gamma}, \Gamma,k_1}^{\alpha,1}, \tilde{T}_{\tilde{\Gamma}, \Gamma,k_1}^{\alpha,2}\in \mathcal{S}^{ -3}, \tilde{T}_{\tilde{\Gamma}, \Gamma,k_1}^{\alpha,3}, \tilde{T}_{\tilde{\Gamma}, \Gamma,k_1}^{\alpha,4}\in \mathcal{S}^{ -4}$.
Therefore,  by using the strategy used in obtaining the estimate (\ref{2020june20eqn20}),   our desired estimate (\ref{2020june20eqn11}) holds  from the equalities in (\ref{2020june20eqn24}) and  (\ref{oct25eqn6})   and the estimates (\ref{definitionofcoefficient1}) and 
(\ref{oct22eqn1}).  
 
If $|L_1|\leq N_0-1$ and $|L_2|\leq N_0-1$,    we are allowed to trade the space regularity for the decay rate of the distance with respect to the light cone for both inputs $ f_1^{L_1}$ and $f_2^{L_2}$. As a result, our desired estimate (\ref{2020june20eqn11}) holds after using the equalities in (\ref{2020june20eqn12})  for both ``$f_1^{L_1}$'' and ``$f_2^{L_2}$'' and  rerun the argument   used in obtaining the estimate (\ref{2020june20eqn20}).

Now, we focus on   proof  of   the desired estimates (\ref{2020june20eqn25})  and (\ref{2020june21eqn2}). Since they can be proved similalry,   we first prove the more difficult estimate (\ref{2020june20eqn25}) and then briefly explain how to obtain the other desired  estimate (\ref{2020june21eqn2}).  

 Note that  the estimate (\ref{2020june20eqn21}) is still valid. Hence, it would be sufficient to restrict ourself to the case $x_1,x_2\in \R^3$ are fixed such that $|x_1|+|x_2|\leq 2^{m-\delta_1 m }$. 
 Note that, the following estimate holds from  the estimate   (\ref{2020june20eqn7}) and the $L^2-L^\infty$ type bilinear estimate  
 \[
 \big|\int_{|x_1|+|x_2|\leq 2^{m-\delta_1 m}} K_{k,k_1,k_2}(x_1, x_2) P_{k_1}(R_\alpha f_1^{L_1} )(x-x_1) P_{k_2}(R_\beta  f_2^{L_2} )(x-x_1-x_2)  \psi_{> m-10}(x) d x_1 d x_2\big|
 \] 
 \be\label{oct24eqn47}
 \lesssim  2^{-m +k_1/2} C(K) C_1(k_1, m, L_1) A_2(k_2, m, L_2).
 \ee
 Hence finishing the proof of desired estimate (\ref{2020june20eqn25}) if $(k_1,k_2)\in \chi_k^2$ or $ (k_1,k_2)\in \chi_k^1, k\geq k_1-10$. 

 It remains to consider the case $(k_1,k_2)\in \chi_k^3$ and  the case   $ (k_1,k_2)\in \chi_k^1, k\leq k_1-10$.  Due to the symmetric structure in $L^2$,   the strategy  used in the case $(k_1,k_2)\in \chi_k^3,$ i.e., $k_2\leq k_1-10$ is also applicable for the case   $ (k_1,k_2)\in \chi_k^1, k\leq k_1-10$, it would be sufficient to consider the case when $(k_1,k_2)\in \chi_k^3$, i.e., $k_2\leq k_1-10$. 

  To take advantage of the orthogonality in $L^2$, we  localize further the frequency of $f_1^{L_1}$ 
  and have the following decomposition, 
  \[
   \int_{\R^3} \int_{\R^3} K_{k,k_1,k_2}(x_1, x_2) P_{k_1}(R_\alpha f_1^{L_1} )(x-x_1) P_{k_2}(R_\beta  f_2^{L_2} )(x-x_1-x_2) \psi_{\geq m-20 }(x-x_1)d x_1 d x_2  
   \]
  \[
   =\sum_{n\in [2^{k_1-k_2  -10}, 2^{k_1-k_2  + 10}]\cap \mathbb{Z}} I_n^1 + I_n^2,
  \]
where
\be\label{2020june20eqn50}
I_n^1:=\sum_{l\in \mathbb{Z}, |l-n|\leq 10}  T^{  }_{k,k_1, k_2}(P_{k_1,n}(R_\alpha P_{k_1,l}f_1^{L_1} \psi_{\geq m-20 }(\cdot) ),R_\beta  f_2^{L_2}   )(t,x),
\ee
\be\label{2022march20eqn1}
I_n^2:=\sum_{l\in \mathbb{Z}, |l-n|> 10}  T^{  }_{k,k_1, k_2}(P_{k_1,n}(R_\alpha P_{k_1,l}f_1^{L_1} \psi_{\geq m-20 }(\cdot) ),R_\beta   f_2^{L_2}   )(t,x)
\ee
where  the Fourier multiplier operator $P_{k_1,n}, n\in \Z$, are  defined by  the Fourier symbol $\psi_{\leq k_2}(|\xi|-2^{k_2}n)\psi_{k_1}(\xi)$.

From the estimate   (\ref{2020june20eqn21}) and the decomposition in (\ref{2020june20eqn50}) and the orthogonality in $L^2$, we have
\be\label{oct7eqn32}
\begin{split}
& \|  T^{  }_{k,k_1, k_2}(R_\alpha f_1^{L_1},R_\beta    f_2^{L_2})(t,x) \psi_{> m-10}(x)\|_{L^2_x}^2\\
& \lesssim   2^{-20m + 3\min\{k,k_1,k_2\} } \big(C(K)  A_1(k_1, m,L_1) A_2(k_2, m, L_2)\big)^2  + \sum_{i,j\in\{1,2\}}  I_{i,j}, 
\end{split}
\ee
where
\be\label{2022march20eqn13}
I_{i,j}:=   \sum_{n,n'\in  [2^{k_1-k_2  -10}, 2^{k_1-k_2  + 10}] \cap \mathbb{Z}, |n-n'|\leq 10 }  \int_{\R^3} \overline{I_n^i} I_{n'}^j d  x.
\ee 
From the $L^2-L^\infty$ type bilinear estimate and the super localized decay estimate (\ref{july27eqn4}) in Lemma \ref{superlocalizedaug}, the following rough estimate holds for $I_n^1$,
\be\label{oct7eqn13}
\begin{split}
\big\| I_n^1   \big\|_{L^2}&\lesssim \sum_{l \in \mathbb{Z}, |l-n|\leq 10}  C(K)\|P_{k_2}R_{\beta}( f_2^{L_2})\|_{L^2}\| P_{k_1,l}R_{\alpha}(f_1^{L_1} ) \psi_{\geq m-20}(|x|)\|_{L^\infty}
\\
&\lesssim  \sum_{|\gamma|\leq 2,|\kappa|\leq 1}  2^{-m   +k_2/2} C(K) A_2(k_2,m, L_2) \big[\| P_{k_1, n}\big(R_\alpha \Omega^\gamma f_1^{L_1}\big) \|_{L^2}^{\delta_1^2} \| P_{k_1, n}\big(R_\alpha \Omega^\kappa f_1^{L_1}\big)  \|_{L^2}^{1-\delta_1^2}   \big]. 
\end{split}
\ee
 
From the orthogonality in $L^2$ and the   estimate (\ref{oct7eqn13}) , we have
\be\label{oct7eqn31}
\begin{split}
\big| I_{1,1}\big|& \lesssim \sum_{n,n'\in  [2^{k_1-k_2  -10}, 2^{k_1-k_2  + 10}]\cap \mathbb{Z}, |n-n'|\leq 10} \| I_n^1\|_{L^2}  \| I_{n'}^1\|_{L^2} \\
& \lesssim \Big(\sum_{|\gamma|\leq 2,|\kappa|\leq 1} C(K) 2^{-m   +k_2/2} A_2(k_2,m, L_2)   \| P_{k_1}\big(R_\alpha \Omega^\gamma  f_1^{L_1} \big) \|_{L^2}^{\delta_1^2}\| P_{k_1}\big(R_\alpha \Omega^\kappa f_1^{L_1} \big)  \|_{L^2}^{1-\delta_1^2} \Big)^2\\
& \lesssim \big(  C(K) 2^{-m   +k_2/2}A_2(k_2,m, L_2) C_1(k_1, m, L_1)\big)^2.
\end{split}
\ee
 
Now, we  estimate the case  $I_{i,j},i,j\neq 1$. Note that    the symbol $ \psi_{\leq k_2}(|\xi|-2^{k_2}l) \psi_{\leq k_2}(|\xi-\sigma|-2^{k_2}n)  $ vanishes if $|\sigma|\leq 2^{ k_2}|l-n|/5$ and $|l-n|\geq 10$.  As a result, we have  
\begin{multline}\label{oct7eqn2}
 P_{k_1,n}(R_\alpha P_{k_1,l}f_1^{L_1} \psi_{\geq m-20 }(\cdot) )(x) =2^m \int_{\R^3} \int_{\R^3} e^{i x\cdot \xi}   \widehat{R_\alpha f_1^{L_1}}(\xi-\sigma)  \psi_{k_1}(|\xi|)\psi_{\leq k_2}(|\xi|-2^{k_2}n) \\
 \times \psi_{k_1}(|\xi-\sigma|)\psi_{\leq k_2}(|\xi-\sigma|-2^{k_2}l)  \hat{\psi}(2^m|\sigma|)\psi_{\geq -1}(2^{-k_2}5|l-n|^{-1}\sigma ) d \xi d \sigma. 
\end{multline}
Since $\hat{\psi}$ is a Schwarz function and $k_2\geq -m +\delta_1 m +10$, from the rapidly decay of Schwarz function, we have
\be\label{oct7eqn3}
\int_{\R^3 }\big| \hat{\psi}(2^m|\sigma|)\big|    \psi_{\geq -1}(2^{-k_2}5|l-n|^{-1} \sigma )  d\sigma \lesssim  2^{-100 m } |l-n|^{-2}. 
\ee
From the above estimate (\ref{oct7eqn3}), the $L^2-L^\infty$ type estimate, and the $L^\infty-L^2$ type Sobolev embedding, the following estimate holds for any $  n \in 2^{k_1-k_2}[2^{-10}, 2^{ 10}]\cap \mathbb{Z} $,
\be\label{oct7eqn11}
\big\| I_n^2 \big\|_{L^2}\lesssim \sum_{l\in  \mathbb{Z}} (1+|l-n|)^{-2} 2^{3k_2/2} 2^{-90m  } C(K)  \|P_{k_1,l}R_\alpha f_1^{L_1} \|_{L^2}\|P_{k_2}R_\beta  f_2^{L_2}\|_{L^2}.
\ee
From the $L^2-L^\infty$ type estimate and the Sobolev embedding,   the following rough estimate  holds of $I_n^1$,  
\be\label{oct7eqn14}
\big\| I_n^1   \big\|_{L^2}\lesssim  \sum_{l\in    \mathbb{Z}} (1+|l-n|)^{-2}   2^{3k_2/2}C(K)  \|P_{k_1,l}R_\alpha  f_1^{L_1} \|_{L^2}\|P_{k_2}R_\beta  f_2^{L_2}\|_{L^2}.
\ee
Recall (\ref{oct7eqn32}). From the above estimates (\ref{oct7eqn11}) and   (\ref{oct7eqn14})   the orthogonality in $L^2$, and  the Cauchy-Schwarz inequality,   we have
\be\label{2020june21eqn1}
\begin{split}
&\sum_{i,j\in\{1,2\},i,j\neq 1}\big|I_{i,j}\big| \lesssim \sum_{n,  n', l_1, l_2\in  [2^{k_1-k_2  -10}, 2^{k_1-k_2  + 10}] \cap \mathbb{Z}, |n-n'|\leq 10}  2^{3k_2 } 2^{-90m  }  (C(K))^2\\
& \times (1+|l_2-n|)^{-2}  (1+|l_1-n|)^{-2}  \|P_{k_1,l_1}R_\alpha f_1^{L_1} \|_{L^2} \|P_{k_1,l_2}R_\alpha  f_1^{L_1} \|_{L^2}\|P_{k_2}R_\beta  f_2^{L_2}\|_{L^2}^2 \\
& \lesssim 2^{3k_2 } (C(K))^2 2^{-90m  } \|P_{k_1 }R_\alpha f_1^{L_1} \|_{L^2}^2 \|P_{k_2}R_\beta  f_2^{L_2}\|_{L^2}^2\\
 & \lesssim\big( 2^{-45 m + 3k_2/2  }C(K) A_1(k_1, m, L_1) A_2(k_2,m, L_2)\big)^2.
 \end{split}
\ee
Hence our desired estimate (\ref{2020june20eqn25}) holds from the estimates  (\ref{oct7eqn32}), (\ref{oct7eqn31}), and (\ref{2020june21eqn1}). 

With minor modification,   our desired estimate (\ref{2020june21eqn2}) holds after rerunning the above argument with the first decay estimate in (\ref{2020june20eqn7}) replaced with the second decay estimate in (\ref{2020june20eqn7}).  
\end{proof}

\subsubsection{Estimate of the wave-wave type quadratic terms}

In this subsection, we use the bilinear estimates obtained in previous subsubsection to estimate the wave-wave type quadratic terms of the metric part. More precisely, we have 
 \begin{lemma}\label{quadratichigherein}
Under the bootstrap assumptions   \eqref{BAmetricold} and \eqref{BAvlasovori},   for any   $ L\in    P_n, n\in [0, N_0]\cap \Z, l\in [0, N_0-n]\cap \Z$,  $t\in[2^{m-1}, 2^m]\subset[0,T]$, $k\in \mathbb{Z},$ $K\in\{ S,Q,P\}$, we have 
\be\label{2022march20eqn11}
2^{-k_{-}/2+\gamma k_{-}+ (l-1)k_{+} }   \| P_k( \Lambda_{2}[{L} K_{\alpha\beta}])(t)\|_{L^2}\lesssim 2^{-m } \epsilon_1^2(2^{ H(|L|)\delta_1 m + \beta_{}(|L|) \delta_1 m}  + 2^{ k_{-}+d(|L|+1,l)\delta_0m  } ),
\ee
\be\label{july3eqn99}
 \| \Lambda_{2}[{L} K_{\alpha\beta}] (t)\|_{E_{l-1} }\lesssim 2^{-m + d(|L|+1,l)\delta_0m -5\delta_0m} \epsilon_1^2. 
\ee
Moreover, for any fixed $k\in \mathbb{Z}$, we have  
\be\label{oct28eqn36}
\begin{split}
2^{-k_{-}/2+\gamma k_{-}+(l-1)k_{+} }  &\| P_k( \Lambda_{\geq 2}[ \square L h_{\alpha\beta}])(t)\|_{L^2}\\ 
 &\lesssim 2^{-m + H(|L|)\delta_1 m +\beta_{}(|L|) \delta_1 m   }\epsilon_1^2+  2^{-m +k_{-} +  d(|L|+1,l)\delta_0 m   }\epsilon_1^2.\\ 
 \end{split}
\ee
\end{lemma}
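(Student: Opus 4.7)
The plan is to start from the schematic decomposition \eqref{2020april8eqn1}, which writes $\Lambda_{2}[LK_{\alpha\beta}]$ as a finite sum over $L_1\circ L_2\preceq L$ and frequency triples $(k,k_1,k_2)$ with $(k_1,k_2)\in\chi_k^1\cup\chi_k^2\cup\chi_k^3$ of the bilinear forms $QS,QQ,QP$ with explicit symbols given in \eqref{nov5eqn1}--\eqref{nov5eqn5}. Each of these is a standard bilinear Fourier multiplier of the two factors $L_1 h_{\gamma\kappa}$ and $L_2 h_{\iota\tau}$ (or derivatives thereof), so it fits the framework of the bilinear operator in \eqref{2020june20eqn8} with the inputs playing the role of $f_1^{L_1}$ and $f_2^{L_2}$ and the estimates in \eqref{oct22eqn1} supplied by the bootstrap assumption \eqref{BAmetricold}. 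I would split the analysis into two spatial regions, $|x|\le 2^{m-\delta_1 m}$ (far from the light cone) and $|x|\ge 2^{m-\delta_1 m}$ (close to it), and within each region further split according to the relative sizes of $k,k_1,k_2$.

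For the frequency regime $\min(k,k_1,k_2)\le -m+\delta_1 m$, the rough bilinear bound \eqref{2020june20eqn20} suffices, since the factor $2^{3\min(k,k_1,k_2)/2}$ absorbs the loss $2^{-k_-/2+\gamma k_-}$ from the energy norm and yields better than $2^{-m}$. In the remaining regime $\min(k_1,k_2)\ge -m+\delta_1 m$, I would first handle the far-from-light-cone region using the refined estimates \eqref{2020june20eqn11}--\eqref{2020june20eqn12}: whenever $|L_1|+|L_2|<|L|$, both factors can be equipped with at least one extra vector field via the identity \eqref{definitionofcoefficient1}, and the hierarchy of weights $H(\cdot)$ in \eqref{2022may3eqn11} coupled with \eqref{july15eqn21} keeps the total growth rate below $H(|L|)+\beta(|L|)$. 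When $|L_1|+|L_2|=|L|$, the quasilinear symbol structure in the second line of \eqref{nov5eqn1}--\eqref{nov5eqn5} places one input at zero vector fields, so one applies \eqref{2020june20eqn11} with $f_1$ of full order and $f_2^{L_2}=f_2$; the $\square$ terms produced by trading derivatives are handled inductively by the bootstrap plus the statement we are proving. For the close-to-light-cone region I would invoke \eqref{2020june20eqn25} together with \eqref{2020june21eqn2} so as to trade the spatial localization for a factor $2^{-m+\min(k,k_1,k_2)/2}$ at the cost of an extra rotation vector field, and again absorb the loss by the hierarchy.

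For \eqref{2022march20eqn11} and \eqref{july3eqn99}, one then sums over $(k_1,k_2)$, using $\sum_{k_2\le k_1}2^{3k_2/2}\lesssim 2^{3k_1/2}$ and similar elementary bounds; the key point is that in every case the total growth from the two inputs is at most $e(|L_1|;\tilde h_1)\delta_1 m+e(|L_2|;\tilde h_2)\delta_1 m$, which by \eqref{july15eqn21} is controlled by $H(|L|)\delta_1 m+\beta(|L|)\delta_1 m$. The slightly different term on the right of \eqref{2022march20eqn11}, namely $2^{k_-+d(|L|+1,l)\delta_0 m}$, appears when one factor is taken from the full $U^{Lh_{\alpha\beta}}$ bootstrap bound rather than from the modified $\widetilde{V^{\tilde h^L}}$ one; this occurs precisely when a density-type correction \eqref{modifiedperturbedmetric} is involved, and Lemma \ref{basicestimates} matches exactly that loss. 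Passing to \eqref{july3eqn99} is routine: the $E_{l-1}$ norm is obtained by summing $\|\cdot\|_{L^2}^2$ against the weights $2^{-k_-+2\gamma k_-+2(l-1)k_+}$, and the $2^{3\min(k,k_1,k_2)/2}$ gain in low frequency plus the decay $2^{-m-k_1}$ at high frequency guarantees summability with a constant $2^{-5\delta_0 m}$ better than \eqref{2022march20eqn11}.

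Finally, \eqref{oct28eqn36} follows by combining the above with the master equation \eqref{aug11eqn21} written for $Lh_{\alpha\beta}$ instead of $\tilde h^L$: the quasilinear piece $\Lambda_{\geq 2}[H_{i\gamma}]\partial_i\partial_\gamma h_{\alpha\beta}$ and the purely wave cubic remainder are absorbed using Lemma \ref{cubicandhighermetric}; the wave--Vlasov quadratic contribution $\Lambda_{2}[L\mathcal{N}_{\alpha\beta}^{vl}]$ is dominated by Lemma \ref{quadraticvlasovrough} via \eqref{july5eqn30}, and the higher order Vlasov piece by Lemma \ref{cubicandhigherrough}. The main obstacle I anticipate is the top order case $|L_1|+|L_2|=|L|$ with one factor at zero vector fields in the close-to-light-cone region: here one must simultaneously use the quasilinear null structure from Lemma \ref{nullstructurefact} (to avoid a loss of derivatives in the bilinear form) and the trade-derivatives-for-rotation vector-fields mechanism in \eqref{2020june20eqn24}, without violating the tight gap $H(n)-H(n-1)\ge 20\beta(n)$ that makes the exponents close up.
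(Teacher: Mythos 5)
Your general strategy — decompose $\Lambda_2[LK_{\alpha\beta}]$ into the bilinear forms $QS,QQ,QP$, then split by frequency regime and by spatial region ($|x|\lessgtr 2^{m-\delta_1 m}$), and run the estimates of Lemma \ref{2020junebilinearestimate} with the $A_i,C_i,D_i$ bounds supplied by the bootstrap — matches the paper's outline. However there is a genuine structural gap that you gesture at but do not resolve. When $|L_1|+|L_2|=|L|$ and you trade derivatives for vector fields via \eqref{2020june20eqn24}--\eqref{oct25eqn6}, the refined bilinear bounds \eqref{2020june20eqn11}--\eqref{2020june20eqn12} produce terms of the form $\|P_{k_i}(\square L_i h_{\alpha\beta})\|_{L^2}$, which by \eqref{sep28eqn26} are dominated precisely by the quantity $\Lambda_2[L_i K_{\alpha\beta}]$ you are trying to bound, plus controlled cubic and Vlasov pieces. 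Writing this as ``handled inductively by the bootstrap plus the statement we are proving'' conceals the actual mechanism: the paper defines $X_L(t):=2^{-d(|L|+1,l)\delta_0 m}\sup_k X_L^k(t)$, derives a \emph{self-referential} inequality $\sum_{|L|\le N_0} X_L(t)\lesssim \epsilon_1\,\big(\sum_{|L|\le N_0}X_L(t)\big)+2^{-m-5\delta_0 m}\epsilon_1^2$ (this is \eqref{oct26eqn84}), and then closes by absorbing the $\epsilon_1\sum X_L$ term — not by an induction on $|L|$. This absorption simultaneously at all orders is what produces \eqref{july3eqn99}, which is then \emph{fed back} to obtain the pointwise-in-$k$ estimate \eqref{2022march20eqn11} via \eqref{nov8eqn80}. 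In your write-up \eqref{july3eqn99} appears as a corollary of \eqref{2022march20eqn11} obtained ``by summing,'' which reverses the actual logical order.

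Two secondary issues. First, the two terms on the right of \eqref{2022march20eqn11} track the growth rates of two distinct object classes — the modified half-waves $\widetilde{U^{\tilde h^L}}$ (growth $H(|L|)\delta_1 m$) versus the density-type pieces $\tilde\rho^{h_{\alpha\beta}}_L(f)$ (growth $d(\cdot,\cdot)\delta_0 m$ but with the extra factor $2^{k_-}$) — and producing them requires the explicit decomposition $QK=QK^{ess}+QK^{vl}+QK^{err}$ from \eqref{nov7eqn21}, with the high-high regime $\chi_k^1$ handled using the modified profiles and Lemma \ref{wavevlabil6sep} for the Vlasov pieces. You acknowledge this in one sentence but it needs to be where the case analysis branches, since it is the source of the two right-hand terms. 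Second, your anticipated obstacle about needing the null structure from Lemma \ref{nullstructurefact} is misdirected: that null structure is invoked later, for the trilinear space-time integrals in Proposition \ref{theessentialcubic}; the present fixed-time $L^2$ bound closes via the absorption argument and the quasilinear-vs-semilinear distinction in the symbols ($|\xi-\eta||\eta|^{-1}\mathcal{M}^{null}$ vs $\mathcal{M}^{null}$), not via explicit cancellation of the null form.
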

\begin{proof}
 Recall (\ref{2020april8eqn1}).  For any   $  L\in    P_n, n\in [0, N_0]\cap \Z, l\in [0, N_0-n]\cap \Z$,   $t \subset[2^{m-1},2^m]\subset[0, T]$, we define
 \be\label{oct25eqn60}
\begin{split}
 X_{L}^k(t)&:= \sum_{K\in\{ S,Q,P\}} 2^{-k_{-}/2+\gamma k_{-}+ (l-1)k_{+} }   \| P_k( \Lambda_{2}[{L} K_{\alpha\beta}])(t)\|_{L^2}  , \\  X_L(t)&:= 2^{-d(|L|+1, l)\delta_0 m }\sup_{k\in \Z} X_{L}^k(t). 
\end{split}
\ee

Recall the decomposition of $L(K_{\alpha\beta}), K\in\{S, Q, P\}$,  in  (\ref{2020april8eqn1}), the bootstrap assumption (\ref{BAmetricold}) and the definition of bounds in (\ref{oct22eqn1}) and (\ref{2020june20eqn4}). For  any  $L, L_1, L_2\in   \cup_{n\in [0, N_0]\cap \Z, |\alpha|\in[0, N_0-n]\cap \Z } \nabla_x^\alpha P_n $ s.t., $L_1 \circ L_2\in  \cup_{n\in [0, N_0]\cap \Z, |\alpha|\in[0, N_0-n]\cap \Z } \nabla_x^\alpha P_n $ and $|L|\leq N_0 -5$, from the estimate (\ref{2022march19eqn34}), the corresponding  quantities for  the perturbed metric  $h_{\alpha\beta}$  are defined as follows,  
\be\label{2020june21eqn4}
\begin{split}
A_i(k,m,L_1;L_2)&:= 2^{ k_{-}/2-k-\gamma k_{-}- (\|L_1\|+\|L_2\|-|L_1|-|L_2|)k_{+}+d(|L_1|+|L_2|,\|L_1\|+\|L_2\|-|L_1|-|L_2|)\delta_0 m }\epsilon_1, \\
C_i(k,m, L)&:=  2^{ -k_{}/2-\gamma k_{-}- (N_0-|L|-2) k_{+}}   2^{-(1-\delta)k_{+}}  2^{(1-\delta_1^2)H(|L|+1 )\delta_1 m  + \delta_1^2 H(|L| +2 )\delta_1 m } \epsilon_1, \\
D_i(k,m)&:= \big(2^{k/2-a k} 2^{\delta m} + 2^{-m/3+k/6-\gamma k_{-}+ H(2  )\delta_1 m - (N_0-4)k_{+}} \big)\epsilon_1, 
\end{split}
\ee
where  $\|L\|,$ and $|L|$ are defined in Definition \ref{vectorfieldscla}. 

For $L\in  \cup_{n\leq N_0}P_n, \Gamma\in   \{\Gamma_1, \Gamma_2\Gamma_3: \Gamma_1\in P_1 , \Gamma_2, \Gamma_3 \in \{  \Omega_{ij}\}\}$,  the following  quantities will be associated with  the modified profiles  $ \widetilde{   U^{ \tilde{h}^L } }(t, x)$ and $\widetilde{   U^{  L h_{\alpha\beta} } }(t, x) $ defined in (\ref{modifiedperturbedmetric}), 
\be\label{2020june24eqn14}
\widetilde{A}_i(k,m,L;\Gamma):= 2^{ k_{-}/2-k-\gamma k_{-}-(N_0-|L|- |\Gamma|)k_{+}+H(|L|+|\Gamma|  ))\delta_1 m}\epsilon_1. 
\ee

Again, thanks to  the estimate (\ref{2022march19eqn34}), the modified profiles are not different much with the original profiles if the total vector fields are not very close to the top order.  

Recall (\ref{sep28eqn26}).  From the estimate (\ref{oct1eqn31}) in Lemma \ref{cubicandhighermetric}, the estimate (\ref{july3eqn61}) in Lemma \ref{cubicandhigherrough}, and the estimate (\ref{july5eqn30}) in Lemma \ref{quadraticvlasovrough}, the following estimate holds for any fixed $k\in \mathbb{Z}$,  $L\in   \cup_{n\leq N_0 } P_n$, 
\[
2^{-k_{-}/2+\gamma k_{-}+ (l-1)k_{+} } \| P_k\big( \Lambda_{\geq 2}[ \square L h_{\alpha\beta}]\big)\|_{L^2}
\]
 \be\label{oct26eqn1}
  \lesssim  2^{-7m/6+d(n+4,0 )\delta_0 m} \epsilon_1^3  + X_{L}^k (t) + 2^{  -m+k_{-}    + d(|L|+1, l ) \delta_0 m } \epsilon_1^2. 
\ee

With the above preparation, based on the possible size of $k,k_1,k_2$, we  separate into two cases as follow. 

$\bullet$\quad If $(k_1, k_2)\in \chi_k^2\cup \chi_k^3$, i.e., $|k_1-k_2|\geq 10, |k-\max\{k_1,k_2\}|\leq 10$. 

From the estimate (\ref{2020june20eqn20}) in Lemma \ref{2020junebilinearestimate}, the following estimate holds for any $Q\in \{S, P, Q\}$ if $\min\{k_1,k_2\}\leq -m +\delta_1 m $, 
\be\label{2020aug25eqn41}
 \|  QH_{\alpha\beta;k,k_1,k_2}^{L;L_1,L_2}(t,x)\|_{L^2}  \lesssim 2^{3\min\{k_1,k_2\}/2 + |k_1-k_2| } \| U^{L_1 h_{\alpha\beta}}_{k_1}(t)\|_{L^2}\| U^{L_2 h_{\alpha\beta}}_{k_2}(t)\|_{L^2}. 
\ee

Now we consider the case   $-m + \delta_1 m \leq \min\{k_1, k_2\}\leq k+10.$  From the estimate (\ref{2020june20eqn11}) in Lemma \ref{2020junebilinearestimate} and the estimate (\ref{oct26eqn1}), the following estimate holds for any $H\in \{S, P, Q\}  $ one of the following two scenarios happens: (i) $|L_1|\leq |L|-3$; (ii) $|L_1|\geq |L|-3, |L|\leq N_0-5$, and $k_1\leq k_2$,
\[
   \|     QH_{\alpha\beta;k,k_1,k_2}^{L;L_1,L_2}(t,x)  \psi_{\leq m-10}(x)\|_{L^2}\lesssim \sum_{\Gamma \in P_1}         2^{ 2k-k_2} 2^{3\min\{k_1,k_2\}/2} \| U^{L_2 h_{\alpha\beta}}_{k_2}(t)\|_{L^2} \big[2^{- m- k_1} A_1(k_1, m, \Gamma \circ L_1) 
   \]
\be\label{2020aug25eqn45}
+ 2^{-m+k_{1,-}/2-\gamma k_{1,-}- ( l-1)k_{1,+}} \big(2^{-3k_1} X_{L_1}^{k_1}(t)  + 2^{ -3k_{1}/2}\epsilon_1 + 2^{ -k_{1}/2} 2^{H(|L_1|)\delta_0 m } +  2^{- m/2 +k_{1,-}-3k_1}\epsilon_1^2 \big)\big].
\ee

Moreover, from   the estimate (\ref{2020june20eqn25}) in Lemma \ref{2020junebilinearestimate} ,  if $|L_1|\leq |L|-3$, we have 
\be\label{2020aug25eqn42}
 \|  QH_{\alpha\beta;k,k_1,k_2}^{L;L_1,L_2}(t,x) \psi_{ > m-10}(x)\|_{L^2} \lesssim  2^{2k-k_2} 2^{-m+\min\{k_1,k_2\}/2}   C_1(k_1,m,L_1) \| U^{L_2 h_{\alpha\beta}}_{k_2}(t)\|_{L^2}. 
\ee 

Similarly, if either   $|L_1|\geq |L|-3, |L|\leq N_0-5$,   $k_2\leq k_1$ or $|L_1|\geq |L|-3, |L|\geq N_0-5$, then the following two estimates hold  analogously,
\[
   \|     QH_{\alpha\beta;k,k_1,k_2}^{L;L_1,L_2}(t,x)  \psi_{\leq m-10}(x)\|_{L^2} \lesssim \sum_{\Gamma \in P_1}         2^{ 2k-k_1} 2^{3\min\{k_1,k_2\}/2}\| U^{L_1 h_{\alpha\beta}}_{k_1 }(t)\|_{L^2} \big[2^{- m- k_2} A_1(k_2, m, \Gamma \circ L_2)
   \]
\be\label{2020aug25eqn43}
   + 2^{-m + k_{2 ,-}/2-\gamma k_{2 ,-}- (l-1)k_{2,+}}\big(2^{-3k_2} X_{L_2}^{k_2}(t)  + 2^{ -3k_{2}/2}\epsilon_1+ 2^{ -k_{2}/2} 2^{d(|L_2|,l)\delta_0 m } +  2^{- m/2 +k_{2,-}-3k_2 }\epsilon_1^2 \big)\big] .
\ee
\be\label{2020aug25eqn44}
 \|  QH_{\alpha\beta;k,k_1,k_2}^{L;L_1,L_2}(t,x) \psi_{ > m-10}(x)\|_{L^2} \lesssim  2^{2k-k_1} 2^{-m+\min\{k_1,k_2\}/2}   C_1(k_2,m,L_2) \| U^{L_1 h_{\alpha\beta}}_{k_1}(t)\|_{L^2}.
\ee

From the obtained estimates (\ref{2020aug25eqn41}--\ref{2020aug25eqn44}), we have 
\[
\sum_{k\in \Z} \sum_{H\in \{S, P, Q\}}2^{-k_{-}+2\gamma k_{-} + 2 (l-1)k_{+}} \big(\sum_{(k_1,k_2)\in \chi_k^2\cup \chi_k^3}  \|  QH_{\alpha\beta;k,k_1,k_2}^{L;L_1,L_2}(t,x)\|_{L^2_x}  \big)^2
\]
\be\label{2020aug25eqn52} 
\lesssim  \big( \sup_{k_1\in \Z}   2^{-\delta_1 m/2  }   2^{  d(|L_2|,l) \delta_0 m} \epsilon_1   X^{k_1}_{L_1}(t) \big)^2+2^{-2m +2d (|L|+1,l)\delta_0 m -10\delta_0m }\epsilon_1^4. 
\ee
\[
\sum_{(k_1,k_2)\in \chi_k^2\cup \chi_k^3} 2^{-k_{-}/2+\gamma k_{-} + (l-1)k_{+}}  \|  QH_{\alpha\beta;k,k_1,k_2}^{L;L_1,L_2}(t,x)\|_{L^2_x} 
\]
\be\label{2020aug25eqn51}
   \lesssim 2^{- m +k_{-} +  d (|L|+1,l) \delta_0 m -5\delta_0m }\epsilon_1^2+   \sup_{k_1\in \Z}   2^{-\delta_1 m/2 +k_{-}}   2^{ d(|L_2|,l)\delta_0 m} \epsilon_1   X^{k_1}_{L_1}(t).
\ee

$\bullet$\quad If $(k_1, k_2)\in \chi_k^1$, i.e., $|k_1-k_2|\leq 10 $. 

For the High $\times$ High type interaction,   due to symmetry, we assume that $| {L}_1|\leq | {L}_2|$. Note that $| {L}_1|\leq N_0/2+5.$  Recall the decomposition of $QH_{\alpha\beta;k,k_1,k_2}^{L;L_1,L_2}$ in   (\ref{nov7eqn21}) and  we use formulations in \eqref{2020june18eqn1} and \eqref{2020june18eqn2}. Since the error type is zero in this formulation, it would be sufficient to estimate $  QH_{\alpha\beta;k,k_1,k_2}^{ess;L,L_1,L_2}(t,x)$ and $QH_{\alpha\beta;k,k_1,k_2}^{vl;L,L_1,L_2}, H\in \{S, P, Q\}.$

$\oplus$\qquad The estimate of $    QH_{\alpha\beta;k,k_1,k_2}^{vl;L,L_1,L_2}$.

   Recall (\ref{2020june18eqn2}). Note that, if $k_1\leq -m/10$, then we can gain extra $2^{-m/10}$ from the symbol of $  {THVl}_{k,k_1,k_2}^{\alpha\beta;\tilde{L}_1,  \tilde{L}_2 }$, which is more than sufficient to cover the gap of difference between $2^{H(|L|)\delta_1 m }$ and $2^{d (|L|+1,l)\delta_0 m }$. After we redo the argument used in  obtaining the  estimates     (\ref{2020aug25eqn41}--\ref{2020aug25eqn42}), we obtain the following estimate,
\be\label{nov8eqn73}
  \sum_{k\in \mathbb{Z}, (k_1,k_2)\in \chi_k^1, k_1\leq -m/10}     2^{-k_{-}/2+\gamma k_{-}+ (l-1) k_{+} }  \|    QH_{\alpha\beta;k,k_1,k_2}^{vl;L,L_1,L_2}(t,x)\|_{L^2} \lesssim  2^{-  m/40 }  X_{L_1}(t)  \epsilon_1 +  2^{-m  - m/40 }\epsilon_1^2.
 \ee

It remains to consider the case   $k_1\geq -m/10$. From the estimate (\ref{bilinearestimate2}) in Lemma \ref{wavevlabil6sep}  and the estimate (\ref{oct26eqn1}), we have
\be\label{nov8eqn74}
  \sum_{k\in \mathbb{Z}, (k_1,k_2)\in \chi_k^1, k_1\geq -m/10}      2^{-k_{-}/2+\gamma k_{-}+ (l-1)k_{+} }  \|    QH_{\alpha\beta;k,k_1,k_2}^{vl;L,L_1,L_2}(t,x)\|_{L^2}
\lesssim  2^{-3m/2} \epsilon_1^2 + 2^{-  m/40 } X_{L_1}(t) \epsilon_1. 
\ee

$\oplus$\qquad The estimate of $QH_{\alpha\beta;k,k_1,k_2}^{ess;L,L_1,L_2} $.

Recall \eqref{2020june18eqn1}.  We can first rule out the case   $k\leq -m +\delta_1 m $ or $k_1\geq m/10$.   Note that, the upper bounds in (\ref{2020june24eqn14}) will be applied for this case. From the estimate (\ref{2020june20eqn20}) in Lemma \ref{2020junebilinearestimate}, the following estimate holds, 
\[
\sum_{k\in \mathbb{Z}, k\leq -m +\delta_1 m } \sum_{(k_1,k_2)\in \chi_k^1}     2^{-k_{-}/2+\gamma k_{-}+ (l-1) k_{+} }   \|   QH_{\alpha\beta;k,k_1,k_2}^{ess;L,L_1,L_2}(t,x)    \|_{L^2}
\]
\[
+\sum_{k\in \mathbb{Z} } \sum_{(k_1,k_2)\in \chi_k^1, k_1\geq m/10}      2^{-k_{-}/2+\gamma k_{-}+ (l-1) k_{+} }  \|   QH_{\alpha\beta;k,k_1,k_2}^{ess;L,L_1,L_2} (t,x)    \|_{L^2}
\]
 \be\label{nov8eqn62}
 \lesssim 2^{-m   +H(|L|)\delta_1 m   +(H(1)\delta_1 + 2\gamma)m  }\epsilon_1^2.
 \ee

Now,   it would be sufficient to let $k$ and $k_1$ be fixed s.t.,    $   -m +\delta_1m \leq k\leq k_1+10\leq m/10 $. We first rule out the case $|x|$ is relatively small.   From  the estimate (\ref{2020june20eqn11}) in Lemma \ref{2020junebilinearestimate} and the estimate (\ref{oct26eqn1}), we have
\[
    2^{-k_{-}/2+\gamma k_{-}+ (l-1)k_{+} }   \|   QH_{\alpha\beta;k,k_1,k_2}^{ L;L_1,L_2} (t,x)  \psi_{\leq m-10 }(x)\|_{L^2}\lesssim  \sum_{\Gamma_1, \Gamma_2\in P_1} 2^{2k_1 +k+\gamma k_{-}+ (l-1)k_{+}}\]
\[
\times  \big[2^{-2m-2k_1} A_1(k_1, m, \Gamma_1\circ \Gamma_2 \circ  \tilde{L}_1)    + 2^{k_{1,-}/2-\gamma k_{1,-}- (l-1)k_{1,+}}\big(2^{-m-3k_1} X_{ \tilde{L}_1}(t) + 2^{-2m -4k_1}  X_{\Gamma_1 \circ\tilde{L}_1}(t) )  
\]
\[
+ 2^{-2m-3k_1+\gamma m}(1+2^{-m-k_1}) \epsilon_1 +  2^{-3m/2 +k_{1,-}-3k_1}\epsilon_1^2
   +    2^{-5m/2 +k_{1,-}-4k_1}\epsilon_1^2\big)\big]A_2(k_2, m, L_2)
   \]
\be\label{oct26eqn42}
\lesssim \sum_{\Gamma\in P_1} 2^{-m/3} (X_{\tilde{L}_1}(t) + X_{\Gamma \circ \tilde{L}_1}(t))\epsilon_1 + 2^{-3m/2 + H(|L|+2)\delta_1 m }\epsilon_1^2. 
\ee

Recall the decomposition   (\ref{nov7eqn21}). From the above estimate and the obtained estimates \eqref{nov8eqn73} and \eqref{nov8eqn74}, we have 
\[
2^{-k_{-}/2+\gamma k_{-}+ (l-1)k_{+} }   \|    QH_{\alpha\beta;k,k_1,k_2}^{ess;L,L_1,L_2}(t,x)  \psi_{\leq m-10 }(x)\|_{L^2}
\]
\be\label{2022may17eqn31}
\lesssim \sum_{\Gamma\in P_1} 2^{-m/3} (X_{\tilde{L}_1}(t) + X_{\Gamma \circ \tilde{L}_1}(t))\epsilon_1+   2^{-  m/40 }  X_{L_1}(t)  \epsilon_1 +  2^{-m  - m/40 }\epsilon_1^2. 
\ee

Now, we consider the the case $|x|$ is relatively large.  From    the estimate (\ref{2020june20eqn25}) in Lemma \ref{2020junebilinearestimate}, the following estimate holds if $|\tilde{L}_1|\geq 1$, which implies that $|L|\geq 2|\tilde{L}_1|\geq 2$, 
\[
   2^{-k_{-}/2+\gamma k_{-}+ (l-1)k_{+} }   \|    QH_{\alpha\beta;k,k_1,k_2}^{ess;L,L_1,L_2} (t,x)   \psi_{ > m-10 }(x)\|_{L^2}\lesssim 2^{-m -k_{-}/2+\min\{k,k_1\}/2+ 2k_1+\gamma k_{-} +  (l-1)k_{+}}
\]
\be\label{nov8eqn70}
\times   {C}_1(k_1, m, L_1) \widetilde{A}_2(k_2, m, L_2) \lesssim 2^{-m + H(|L|)\delta_1 m   +(H(1)\delta_1 + 2\gamma)m }\epsilon_1^2. 
\ee
From the estimate (\ref{2020june21eqn2})  in Lemma \ref{2020junebilinearestimate}, the following estimate holds if $| \tilde{L}_1|=0$,
\[
    2^{-k_{-}/2+\gamma k_{-}+  (l-1)k_{+} }   \|    QH_{\alpha\beta;k,k_1,k_2}^{ess;L,L_1,L_2} (t,x)  \psi_{>m-10}(x)\|_{L^2}\lesssim 2^{-m -k_{-}/2+\min\{k,k_1\}/2 + 2k_1+\gamma k_{-} +   (l-1) k_{+}}
\]
\be\label{nov8eqn71}
\times   {D}_1(k_1, m) \widetilde{A}_2(k_2, m, L_2) \lesssim 2^{-m + H(|L|)\delta_1 m   +\delta_1 m }\epsilon_1^2. 
\ee 
Hence, from the above estimates (\ref{nov8eqn70}) and (\ref{nov8eqn71}) and the definition of $\beta(|L|)$, we have
 \be\label{nov8eqn72}
2^{-k/2+\gamma k_{-}+  (l-1)k_{+}}  \|    QH_{\alpha\beta;k,k_1,k_2}^{ess;L,L_1,L_2} (t,x)   \psi_{>m-10}(x) \|_{L^2}\lesssim  2^{-m + H(|L|)\delta_1 m + \beta(|L|)\delta_1 m }\epsilon_1^2. 
 \ee

Recall the decomposition  (\ref{nov7eqn21}). To sum up, from the estimates (\ref{2022may17eqn31}), (\ref{nov8eqn72}),  (\ref{nov8eqn73}),  and (\ref{nov8eqn74}), we have
\[
 \sum_{k\in \Z}\sum_{(k_1,k_2)\in \chi_k^1}  2^{-k/2+\gamma k_{-}+   (l-1) k_{+}}   \|  QH_{\alpha\beta;k,k_1,k_2}^{L;L_1,L_2}(t,x)\|_{L^2}
\]
\be\label{nov8eqn77}
 \lesssim  2^{-m + H(|L|)\delta_1 m + \beta(|L|)\delta_1 m }\epsilon_1^2 +   2^{-  m/40 } X_{L_1}(t) \epsilon_1+  \sum_{\Gamma\in P_1} 2^{-m/3} (X_{\tilde{L}_1}(t) + X_{\Gamma \circ \tilde{L}_1}(t))\epsilon_1. 
\ee
Recall (\ref{oct25eqn60}) and   (\ref{2020april8eqn1}).  From the estimates  (\ref{2020aug25eqn51}) and   (\ref{nov8eqn77}), we have
\be\label{oct26eqn84} 
\sum_{  |L|\leq N_0} X_{L}(t) \lesssim  \epsilon_1 \big( \sum_{  |L|\leq N_0}  X_{L}(t)\big)  +     2^{-m   -5\delta_0 m }\epsilon_1^2, \Longrightarrow \sum_{ |L|\leq N_0} X_{L}(t) \lesssim   2^{-m    -5\delta_0 m }\epsilon_1^2 . 
\ee 
Combining the above estimate and the estimates  (\ref{2020aug25eqn51}) and   (\ref{nov8eqn77}), we have
\be\label{nov8eqn80}
X_{L}^k(t) \lesssim    2^{-m +k_{-}+ d(|L|+1, l)\delta_0 m -5\delta_0 m }\epsilon_1^2+ 2^{-m + H(|L|)\delta_1 m + \beta(|L|)\delta_1 m }\epsilon_1^2.
\ee

To sum up,  our desired estimate (\ref{july3eqn99}) holds from the above estimate, the estimates(\ref{2020aug25eqn52}) and (\ref{nov8eqn77}).  Moreover, our desired estimate (\ref{oct28eqn36}) follows from the estimates (\ref{oct26eqn1}) and the estimate (\ref{nov8eqn80}).  
\end{proof}

 \subsection{Proof of Proposition \ref{fixedtimenonlinarityestimate}}\label{proofpropfixed}

 Recall (\ref{aug11eqn151}) and  (\ref{2020april1eqn4}). The  desired estimate  (\ref{oct4eqn41})  holds from the estimate (\ref{oct1eqn31}) in Lemma \ref{cubicandhighermetric}, the estimate (\ref{july3eqn61}) in Lemma  \ref{cubicandhigherrough}, the estimate (\ref{july5eqn30}) in Lemma \ref{quadraticvlasovrough}, the estimate (\ref{2022march20eqn11}) in Lemma \ref{quadratichigherein},  and the estimate (\ref{july11eqn32}) in Lemma \ref{resonancevlasov}.

Now, we proceed to prove the desired estimate (\ref{2020july17eqn1}).  
Recall the   estimate  (\ref{2022march19eqn34}) in Lemma \ref{basicestimates}, the basic energy estimate is improved for the case $|\tilde{L}|\leq N_0-10$. The following improved estimate holds after rerunning the proof of (\ref{july3eqn99}) in  Lemma \ref{quadratichigherein}, 
\be\label{2020aug4eqn1}
X_{\tilde{L}}^k(t)\lesssim  2^{-m +H(| \tilde{L}|+1)\delta_1m  } \epsilon_1^2. 
\ee

Moreover, as $|\tilde{L}|\leq N_0-10$, the total number vector fields act on the Vlasov part is far away from $N_0$ and we are allowed to lose one derivative in the desired estimate (\ref{2020july17eqn1}). Recall the estimate (\ref{oct5eqn51}). After rerunning the proof of  the estimate (\ref{july3eqn121}) and doing integration by parts in $v$ once in the estimate (\ref{2020aug25eqn23}), for any $l\in[0, N_0-|L|]\cap\Z,$ we have
\be\label{2020aug4eqn2}
 2^{-k_{-}/2+\gamma k_{-}+ (l-1)k_{+}} \big( \| \Lambda_{2}[ P_k( \tilde{L}  \mathcal{N}_{\alpha\beta}^{vl})]\|_{L^2}  + \|P_k(\mathcal{N}_{vl;m}^{  {\tilde{h} ;\tilde{L} }} ) \|_{L^2} 
\big) \lesssim 2^{-7m/6+3d(|L|+3, l)\delta_0m} \epsilon_1^2. 
\ee
 From  the above estimates (\ref{2020aug4eqn1}--\ref{2020aug4eqn2}),  the estimate (\ref{oct1eqn31}) in Lemma \ref{cubicandhighermetric}, and the estimate (\ref{july3eqn61}) in Lemma  \ref{cubicandhigherrough},   we have
\be\label{2020july17eqn4}
2^{-k_{-}/2+\gamma k_{-}+ (l-1)k_{+}}\big(\|\p_t \widehat{ \widetilde{   V^{  \tilde{L}\tilde{h} } }}(t, \xi) \psi_k(\xi)\|_{L^2}   + \| \Lambda_{\geq 2}[ \p_t \widehat{   {   V^{ \tilde{L} h_{\alpha\beta} } }}(t, \xi) ]\psi_k(\xi)\|_{L^2} \big)\lesssim   2^{-m +H(| \tilde{L}|+1)\delta_1m  } \epsilon_1^2.  
\ee
As a result of the above  estimate (\ref{2020july17eqn4}), we have  
\[
   2^{ k_{-}/2+\gamma k_{-}+ (l-1) k_{+} }   \|\nabla_\xi   \widehat{   {  V^{ \tilde{L} {h}_{\alpha\beta} } }}(t, \xi)   \psi_k(\xi)\|_{L^2}  \lesssim \sum_{\begin{subarray}{c}
 \Gamma\in  P_1
  \end{subarray}}  2^{ -k+k_{-}/2+\gamma k_{-}+ (l-1) k_{+} } \big(\| P_k\big(\Gamma { {  e^{-it\d}  V^{ \tilde{L} {h}_{\alpha\beta} } }} \big)  \|_{L^2} \]
   \[
  +  2^m \|\p_t \widehat{ {   V^{ \tilde{L} {h}_{\alpha\beta} } }}(t, \xi) \psi_k(\xi)\|_{L^2}\big)
  \lesssim  \sum_{\Gamma\in P_1}   2^{ -k_{-}/2+\gamma k_{-}+ (l-1) k_{+} }     \| \widehat{ {   U^{\Gamma \tilde{L} {h}_{\alpha\beta} } }}(t, \xi) \psi_k(\xi)\|_{L^2}
   \]
  \be\label{2020july17eqn10}
  +  2^{-m +H(|\tilde{L}|+1)\delta_1m -5\delta_1m} \epsilon_1^2   +  2^{ -k+m+k_{-}/2+\gamma k_{-}+ (l-1) k_{+} }\|\Lambda_1[\p_t \widehat{ {   V^{ \tilde{L}{h}_{\alpha\beta} } }}] (t, \xi) \psi_k(\xi)\|_{L^2}.
\ee
By using the   decomposition of the density type function in (\ref{oct11eqn11}), the following estimate holds for the linear part of $\p_t \widehat{ {   V^{L\tilde{h} } }}$, 
 \be\label{2020july17eqn15} 
 \begin{split}
  &2^{ -k +k_{-}/2+\gamma k_{-}+ (l-1) k_{+} }\|\Lambda_1[\p_t \widehat{ {   V^{ \tilde{L} {h}_{\alpha\beta} } }}] (t, \xi) \psi_k(\xi)\|_{L^2} \\ 
  & \lesssim  \min \{2^{k} Z^{vl}(t)   +\sum_{|\mathcal{L}|+|\rho|\leq N_0} 2^{2k  }   E_{\mathcal{L}, \rho}^{vl}(t)  ,  2^{-k-2m}  Z^{vl}(t)  +\sum_{|\mathcal{L}|+|\rho|\leq N_0} 2^{-2m }   E_{\mathcal{L}, \rho}^{vl}(t)  \big\} \\ 
& \lesssim  2^{-m}  Z^{vl}(t)   +\sum_{|\mathcal{L}|+|\rho|\leq N_0} 2^{-3m/2  }  E_{\mathcal{L}, \rho}^{vl}(t) .
\end{split}
\ee
The desired estimate (\ref{2020july17eqn1}) follows from the estimates  (\ref{2020july17eqn10})  and (\ref{2020july17eqn15}).  Hence finishing the proof.

 \subsection{Estimates of the error terms in the double Hodge decomposition}\label{auxiliary}
For the commutator term $E_{\mu}$, which was defined in (\ref{sep28eqn79}), the following basic $L^\infty$-type  estimate and the $L^2$-type estimate hold . 
\begin{lemma}\label{harmonicgaugehigher}
 Under the bootstrap assumptions \textup{(\ref{BAmetricold})} and \textup{(\ref{BAvlasovori})}, the following estimate holds for any  $  L\in    P_n, n\in [0, N_0]\cap \Z, l\in [0, N_0-n]\cap \Z   $, $\tilde{L}\in  \cup_{n\leq N_0-5} P_n$, $t\in [2^{m-1}, 2^m]\subset[0,T]$, $k\in \mathbb{Z}$,
  \be\label{oct9eqn11}
 \|\tilde{L} E_\mu\|_{L^\infty}\lesssim 2^{-2m+H(|\tilde{L}|+4)\delta_1 m}\epsilon_1^2,
 \ee
\be\label{oct7eqn73} 
   2^{-k_{-}/2+\gamma k_{-} + (l-1)k_{+} }  \| P_k ( \p_{\nu} ( L E_{\mu}  ))
   (t) \|_{L^2}  \lesssim   2^{-m+ H(|L|)\delta_1 m + \beta_{}(|L|) \delta_1 m} \epsilon_1^2 + 2^{-m+k_{-}+d(|L|+1,l)\delta_0m  } \epsilon_1^2.
 \ee 

\end{lemma}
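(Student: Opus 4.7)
The plan is to reduce both estimates to results already established in Lemmas \ref{basicestimates} and \ref{quadratichigherein} by exploiting the observation that $E_\mu = -\Lambda_{\geq 2}[\Gamma_\mu]$ is structurally a quadratic (and higher) wave-wave expression in the perturbed metric. Concretely, using $\Gamma_\mu = g^{\alpha\beta}\Gamma_{\mu\alpha\beta}$ together with the expansion $g^{\alpha\beta}=\eta^{\alpha\beta}-\eta^{\alpha\gamma}h_{\gamma\lambda}\eta^{\lambda\beta}+\cdots$ from \eqref{march24eqn11}, I will write
\[
E_\mu = \sum_{a\geq 1}\Lambda_a[g^{\alpha\beta}] \cdot \partial h_{\cdot\cdot},
\]
schematically a product of an at-least-linear expression in $h$ with one first-order derivative of $h$. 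Thus $\partial_\nu L E_\mu$ is of exactly the same structural form as the quadratic wave-wave objects $QK_{\alpha\beta;k,k_1,k_2}^{L;L_1,L_2}$ of Section \ref{wavewave} (plus cubic-and-higher remainders handled by Lemma \ref{cubicandhighermetric}).

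For the $L^\infty$ estimate \eqref{oct9eqn11}, I would apply Leibniz to distribute $\tilde L$ (and the first-order derivative inside $E_\mu$) across the two inputs, obtaining a sum of terms of the form $\tilde L_1(\text{perturbed metric})\cdot \tilde L_2(\partial h_{\alpha\beta})$ with $|\tilde L_1|+|\tilde L_2|\leq |\tilde L|\leq N_0-5$. By the pigeonhole principle each factor has at most $N_0-5$ vector fields and satisfies the $L^\infty$ decay estimate \eqref{2020julybasicestiamte} from Lemma \ref{basicestimates}. Multiplying the two $L^\infty$ bounds gives the factor $2^{-2m}$, and the monotonicity property \eqref{july15eqn21} of $H$ ensures the combined weight is bounded by $H(|\tilde L|+4)\delta_1 m$.

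For the $L^2$ estimate \eqref{oct7eqn73}, the plan is to observe that $\partial_\nu L E_\mu$ is a finite sum of terms that fit precisely into the categories $QS,QQ,QP$ classified in \eqref{nov5eqn1}--\eqref{nov5eqn5} (with the schematic substitution $K_{\alpha\beta}\leadsto E_\mu$), together with cubic-and-higher remainders controlled by Lemma \ref{cubicandhighermetric}. Hence the estimate \eqref{2022march20eqn11} of Lemma \ref{quadratichigherein} applies verbatim, which gives exactly the right-hand side of \eqref{oct7eqn73}. The main nontrivial point is to verify that the null-structure analysis of Lemma \ref{nullstructurefact} is not needed in a new way here, because our target estimate already allows the slower growth rate $2^{H(|L|)\delta_1 m+\beta(|L|)\delta_1 m}$ that accommodates the single non-null $Q(\vartheta_{mn},\vartheta_{pq})$ interaction in $\underline{F}$.

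The main obstacle I anticipate is bookkeeping rather than any novel estimate: one must check carefully that the derivative counts match, i.e.\ that the $\partial_\nu$ outside and the extra $\partial$ inside $E_\mu$ together amount to the same effective quasilinear structure as in \eqref{nov5eqn1}, and that the decomposition \eqref{nov7eqn21} into essential/Vlasov/error parts survives under the substitution $K_{\alpha\beta}\leadsto E_\mu$. Once this is verified, the quantitative bounds are immediate consequences of \eqref{2022march20eqn11}, and no additional dyadic analysis is required beyond that already performed in Lemma \ref{quadratichigherein}.
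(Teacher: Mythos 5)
Your proposal is correct and follows essentially the same route as the paper. The paper likewise starts from the harmonic‐gauge identity $E_\mu=-\Lambda_{\geq 2}[\Gamma_\mu]$, splits $E_\mu=E_\mu^1+E_\mu^2$ (quadratic vs.\ cubic and higher), handles the $L^\infty$ bound \eqref{oct9eqn11} by distributing vector fields and multiplying the $L^\infty$ decay bounds from Lemma \ref{basicestimates} (together with Lemma \ref{highorderterm2} for $\Lambda_{\geq 2}[g^{\alpha\beta}]$), and obtains \eqref{oct7eqn73} for the quadratic part by observing that $\partial_\nu(LE_\mu^1)$ is a linear combination of the same type of quadratic terms $LH_{\alpha\beta}$, $H\in\{S,P,Q\}$, so that the analysis of Lemma \ref{quadratichigherein} applies. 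The only minor deviation is in the cubic-and-higher remainder: the paper does not invoke Lemma \ref{cubicandhighermetric} (which estimates specific cubic terms from the Einstein equations rather than $E_\mu^2$), but instead directly applies an $L^\infty$--$L^2$ bilinear estimate with Lemma \ref{highorderterm2} and interpolates between an $L^2$ and an $L^1$ bound to get a $2^{-7m/6+}$ rate; your citation of Lemma \ref{cubicandhighermetric} is in the right spirit but not literally applicable, so one would need to spell out the analogous short direct argument for $E_\mu^2$.
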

\begin{proof}
Recall (\ref{sep28eqn79}). From the harmonic gauge condition (\ref{wavecoordinatecondition}), we have
\begin{multline}\label{oct7eqn77} 
E_{\mu}= \Lambda_1[\Gamma_{\mu}] = -\Lambda_{\geq 2}[\Gamma_{\mu}] = -\Lambda_{\geq 1}[ g^{\alpha\beta}]\big(\p_{\alpha} h_{\beta\mu}   - \h \p_{\mu} h_{\alpha \beta} \big)= E_\mu^1  + E_\mu^2,\\ 
E_\mu^1= -\Lambda_{1}[ g^{\alpha\beta}]\big(\p_{\alpha} h_{\beta\mu}   - \h \p_{\mu} h_{\alpha \beta} \big), \qquad E_\mu^2= -\Lambda_{\geq 2}[ g^{\alpha\beta}]\big(\p_{\alpha} h_{\beta\mu}   - \h \p_{\mu} h_{\alpha \beta} \big).
\end{multline}
 From the above decomposition, our desired estimate (\ref{oct9eqn11}) holds from the $L^\infty-L^\infty$ type bilinear estimate, the $L^\infty$-type estimate in Lemma \ref{basicestimates}, and the estimate (\ref{june30eqn1}) in Lemma \ref{highorderterm2}.

 Note that $\p_{\nu}(L E_\mu^1)$ is a linear combination of same type of quadratic terms $L H_{\alpha\beta}, H\in \{S, P, Q\}$. Similar to the obtained estimate (\ref{july3eqn99}), by using the same argument, the following estimate holds for any $k\in \Z$,
 \[
     2^{-k_{-}/2+\gamma k_{-} + (l-1) k_{+} }  \| P_k ( \p_{\nu} ( L E_{\mu}^1  ))
   (t) \|_{L^2}
 \]
 \be\label{2020aug4eqn3}
 \lesssim2^{-m+ H(|L|)\delta_1 m + \beta_{}(|L|) \delta_1 m} \epsilon_1^2 + 2^{-m+k_{-}+  d(|L|+1,l)\delta_0m -5\delta_0m} \epsilon_1^2.
 \ee

Note that the following estimates from the $L^\infty-L^2$ type estimate and the estimate (\ref{june30eqn1}) in Lemma  \ref{highorderterm2}, 
\[
\sum_{|\alpha|\leq  l-1 } \| \nabla_x^\alpha \p_{\nu}(L E_{\mu}^2) \|_{L^2} \lesssim 2^{-3m/2 + 2  d(n +3,l)\delta_0 m } \epsilon_1^3,  
\]
\[ \sum_{|\alpha|\leq l-1 }
\|\nabla_x^\alpha   \p_{\nu}(L E_{\mu}^2) \|_{L^1} \lesssim 2^{- m/2 + 2  d(n+3,l)\delta_0 m } \epsilon_1^3.
\]
From the above estimate, we have
\[
   2^{-k_{-}/2+\gamma k_{-} + (l-1) k_{+} }  \| P_k\big( L E_{\mu}^2\big)\|_{L^2} \lesssim \sup_{k\leq -2m/3}  2^{k+\gamma k_{ }   } 2^{- m/2 + 2   d(n +3,l) \delta_0 m } \epsilon_1^3    \]
\be\label{oct7eqn75}  
 + \sup_{k\geq -2m/3 } 2^{-k_{-}/2+\gamma k_{-}}2^{-3m/2 + 2  d(n +3,l) \delta_0 m } \epsilon_1^3  \lesssim 2^{-7m/6+ 2  d(n +3,l)\delta_0 m } \epsilon_1^3.
\ee
To sum up, our desired estimate (\ref{oct7eqn73})  holds from (\ref{2020aug4eqn3}) and (\ref{oct7eqn75}). 
\end{proof}

 In the following Lemma, we show that  commutators $E_{L,0}^{comm}$ and $R_k E_{L,k}^{comm}$, $L\in P_1  $, which appears in \eqref{aug25eqn1} and \eqref{aug25eqn2},  decay sharply over time. 
\begin{lemma}\label{firstordercommutator}
Under the bootstrap assumption\textup{(\ref{BAmetricold})}, the following estimate holds for any $L\in P, |L|=1$, $k\in \mathbb{Z}, t\in [2^{m-1}, 2^m]\subset[0,T],$
\[
\sum_{|\gamma|\leq 1}\|  \nabla_{x_\alpha}^\gamma P_k(E^{comm}_{ {L}, 0})(t)\|_{L^\infty_x} + \|  \nabla_{x_\alpha}^\gamma P_k(R_i E^{comm}_{ {L}, i})(t)\|_{L^\infty_x}
\]
\be\label{april26eqn35}
 \lesssim  2^{-m+0.999k-5k_{+}} \epsilon_1 + 2^{-2m +d( 4,0)\delta_0 m -k}. 
\ee
\end{lemma}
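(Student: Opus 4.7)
Since $|L|=1$, the vector field $L$ lies in $\{S,\Omega_{ij},L_i\}$ and the commutator in \eqref{sep28eqn79} is, for each such $L$, an explicit linear combination of first partials $\partial_\nu h_{\alpha\beta}$. The plan is to dispose of the three cases separately: in each one reduce $E^{comm}_{L,0}$ (and $R_i E^{comm}_{L,i}$) to a combination of derivatives of the ``good'' Hodge components $\{F,\omega_j,\vartheta_{mn}\}$ modulo harmonic-gauge quadratic errors and Vlasov linear-density contributions, then invoke the sharp linear dispersive decay furnished by the $Z$-norm bootstrap.

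Case $L=S$ is immediate from $[\partial_\alpha,S]=\partial_\alpha$: one computes $E^{comm}_{S,\mu}=E_\mu$, which by the harmonic gauge \eqref{wavecoordinatecondition} equals $-\Lambda_{\geq 2}[\Gamma_\mu]$, i.e.\ is quadratic in $h$. Bernstein embedding applied to the $L^2$ bound \eqref{oct7eqn73} and to the $L^\infty$ bound \eqref{oct9eqn11} from Lemma~\ref{harmonicgaugehigher} then deliver the quadratic contribution $2^{-2m+d(4,0)\delta_0 m - k}$. For $L=\Omega_{ij}$, the identities $[\partial_k,\Omega_{ij}]=\delta_{ki}\partial_j-\delta_{kj}\partial_i$ and $[\partial_0,\Omega_{ij}]=0$ give $E^{comm}_{\Omega_{ij},0}=\partial_j h_{i0}-\partial_i h_{j0}$; substituting $h_{i0}=-R_i\rho+\varepsilon_{ikl}R_k\omega_l$ from the Hodge decomposition \eqref{april1eqn1}, the $\rho$ contribution cancels identically at the symbol level because $\partial_j R_i-\partial_i R_j\equiv 0$, so only first derivatives of $\omega_l$ survive linearly. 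For $L=L_i$ one uses $[\partial_0,L_i]=\partial_i$ and $[\partial_j,L_i]=\delta_{ij}\partial_t$ to expand $E^{comm}_{L_i,0}$, substitutes the Hodge decomposition for $h_{00},h_{i0},h_{jj}$, and then applies the harmonic-gauge identities \eqref{april1eqn11} of Lemma~\ref{linearrelationhodge} (which express $\rho$ and $\Omega_j$ in terms of $R_0\underline F+R_0\tau$ and $R_0\omega_j$ modulo the quadratic errors $E_\mu$ and the Vlasov-linear terms \eqref{2022march19eqn1}--\eqref{sep26eqn21}).

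With $E^{comm}_{L,0}$ (and $R_i E^{comm}_{L,i}$) written as $\partial_\nu$ of good Hodge variables plus quadratic/Vlasov remainders, the sharp $1/t$ decay comes from \eqref{BAmetricold}: for $\tilde h\in\{F,\omega_j,\vartheta_{mn}\}$ one has $\|\widehat{V^{\tilde h}}(\xi)\psi_k\|_{L^\infty_\xi}\lesssim 2^{-1.001k_--40k_+}\epsilon_1$. Feeding this into the super-localized linear decay estimates \eqref{decayestimatesmall}--\eqref{linearwavedecay2} of Lemma~\ref{superlocalizedaug} (and multiplying by $2^k$ for the extra $\partial_\nu$) yields the main contribution $2^{-m+0.999k-5k_+}\epsilon_1$. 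The second term $2^{-2m+d(4,0)\delta_0 m - k}$ is then assembled from two sources: (i)~the harmonic-gauge quadratic pieces, handled by the bounds of Lemma~\ref{harmonicgaugehigher} combined with Bernstein; and (ii)~the Vlasov linear-density contributions appearing in $\tau^L$ and $\partial_t\tau^L$ through \eqref{2022march19eqn1}--\eqref{sep26eqn21}, which carry a $\d^{-2}$ (resp.\ $\d^{-1}R_i$) symbol and are estimated via the density decay Lemma~\ref{decayestimateofdensity} together with the Vlasov bootstrap on $E^{vl}_{\mathcal L,\rho}$.

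The principal obstacle is the $L_i$ case, where $\underline F$ naively enters linearly and would force a $\langle t\rangle^{\delta_1}$ loss incompatible with the stated $1/t$ bound. The resolution must come from the algebraic identity produced by the harmonic-gauge reduction: every $\partial_i\underline F$ appearing in $E^{comm}_{L_i,0}$ can be recombined with the $\partial_t h_{i0}$ term (via the relation $\rho=R_0\underline F+R_0\tau+$error) so that the $\underline F$ contributions reorganize into $\partial_t$ of $R_0\underline F = \partial_t\d^{-1}\underline F$, which can then be converted into $\Delta \underline F$ and absorbed into the quadratic Einstein nonlinearity through the wave equation $\square\underline F=$~nonlinear, or into a structure involving $F,\omega_j,\vartheta_{mn}$ only. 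Verifying this cancellation at the symbol level is the core structural step; once achieved, the rest is routine application of Lemma~\ref{superlocalizedaug}, Lemma~\ref{decayestimateofdensity}, and Lemma~\ref{harmonicgaugehigher}.
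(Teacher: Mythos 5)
Your proposal matches the paper's proof: the paper likewise computes $E^{comm}_{L,0}$ and $R_iE^{comm}_{L,i}$ explicitly for each $L\in\{S,\Omega_{ij},L_j\}$, substitutes the Hodge decomposition, and rewrites so that $\underline F$ enters only through $\square\underline F$, $\rho-R_0\underline F$, or $\Omega_m-R_0\omega_m$ (all nonlinear or good-component quantities plus Vlasov-linear density pieces), then closes with the same menu of lemmas (\ref{superlocalizedaug}, \ref{harmonicgaugehigher}, \ref{decayestimateofdensity}, together with \ref{basicestimates} and Proposition \ref{fixedtimenonlinarityestimate}). Your observation that the $\rho$-contribution vanishes outright for $\Omega_{ij}$ via $\partial_jR_i-\partial_iR_j\equiv 0$ is the same cancellation the paper uses, merely stated at the symbol level rather than via the $h_{0i}+R_iR_0\underline F$ rewrite, and your identification of $-\partial_j\underline F-\partial_tR_jR_0\underline F=-R_j\d^{-1}\square\underline F$ as the structural step for $L=L_j$ is exactly the paper's mechanism.
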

\begin{proof}
 We  first show that the commutators $E_{L,0}^{comm}$ and $R_k E_{L,k}^{comm}$, $L\in P_1  $,  don't depend on the bad component $\underline{F}.$ Recall  (\ref{sep28eqn79}). As a result of direct computations, we have
\[
E^{comm}_{ {S}, 0}= -[\p_t, S] h_{00} + [\p_i, S] h_{0i} +\h [\p_t, S] h_{00} - \h [\p_t, S]  h_{ii}= -\h \p_t h_{00} +   \p_i h_{0i}- \h \p_t h_{ii}
\]
\[
=-\h\p_t(F+\underline{F})  +   \p_i(-R_i\rho + \epsilon_{ikl}R_k \omega_l) -\h \p_t R_iR_i(F-\underline{F}) \]
\[
- \h \p_t \big(- \big( \varepsilon_{ilm} R_i + \varepsilon_{ilm}R_i \big) R_l \Omega_m^{  } + \varepsilon_{ipm}\varepsilon_{  iqn} R_p R_q \vartheta_{mn}^{  } \big) \]
\be\label{april26eqn31}
=    \p_i \epsilon_{ikl}R_k \omega_l + \d(\rho-R_0 \underline{F})- \h \p_t \big(- \big( \varepsilon_{ilm} R_i + \varepsilon_{ilm}R_i \big) R_l \Omega_m^{  } + \varepsilon_{ipm}\varepsilon_{  iqn} R_p R_q \vartheta_{mn}^{  } \big),
\ee

\[
E^{comm}_{ L_j, 0}=-[\p_t, L_j ] h_{00} + [\p_i,  L_j ] h_{0i} +\h [\p_t,  L_j] h_{00} - \h [\p_t,  L_j]  h_{ii}= -\h \p_j h_{00} + \p_t h_{0j}   - \h \p_j h_{ii}
\]
\[
= -\h \p_j ( F+\underline{F}) + \p_t(-R_j \rho + \varepsilon_{jkl} R_k \omega_l^{  } ) - \h \p_j R_iR_i(F-\underline{F}) 
\]
\[
- \h \p_j\big( - \big( \varepsilon_{ilm} R_i + \varepsilon_{ilm}R_i \big) R_l \Omega_m^{  } + \varepsilon_{ipm}\varepsilon_{  iqn} R_p R_q \vartheta_{mn}^{  }  \big)
\]
\[
= - \p_j \underline{F} - \p_t R_j(R_0 \underline{F}) - \p_t R_j(\rho - R_0 \underline{F})+   \varepsilon_{jkl} \p_t  R_k \omega_l^{  }    
\]
\[
- \h \p_j\big( - \big( \varepsilon_{ilm} R_i + \varepsilon_{ilm}R_i \big) R_l \Omega_m^{  } + \varepsilon_{ipm}\varepsilon_{  iqn} R_p R_q \vartheta_{mn}^{  }  \big)= -\d^{-1} R_j \square \underline{F} - \p_t R_j(\rho - R_0 \underline{F})
\]
\be\label{april26eqn32}
 + \varepsilon_{jkl} \p_t  R_k \omega_l^{  } - \h \p_j\big( - \big( \varepsilon_{ilm} R_i + \varepsilon_{ilm}R_i \big) R_l \Omega_m^{  } + \varepsilon_{ipm}\varepsilon_{  iqn} R_p R_q \vartheta_{mn}^{  }  \big),
\ee
 \[
E^{comm}_{ \Omega_{ij}, 0}=-\h[\p_t, \Omega_{ij} ] h_{00} + [\p_k, \Omega_{ij} ] h_{0k}    - \h [\p_t,  \Omega_{ij}]  h_{kk}= [\p_k, \Omega_{ij} ] h_{0k} = \p_j h_{0i}-\p_i h_{0j}
 \]
\be\label{april26eqn33}
 =\p_j( h_{0i}+R_i R_0\underline{F}) - \p_i( h_{0 j}+R_j R_0\underline{F})
 \ee
 \[
 E^{comm}_{ {S}, k}= -\h [\p_{x_k}, S] h_{00} + [\p_i, S] h_{ki}  - \h [\p_{x_k}, S]  h_{ii} = -\h \p_{x_k} h_{00} + \p_{x_i} h_{ki}- \h \p_{x_k} h_{ii}
 \]
 \[
 = -\h\p_{x_k}( F+ \underline{F})-\p_{x_i} R_kR_i\underline{F} + \h \p_{x_k}R_iR_i\underline{F} +\p_{x_i}\big( h_{ki}+ R_kR_i\underline{F}\big) - \h \p_{x_k} \big( h_{ii}+ R_iR_i\underline{F}\big)
 \]
 \[
 = -\h\p_{x_k}  F+\p_{x_i}\big( h_{ki}+ R_kR_i\underline{F}\big) - \h \p_{x_k} \big( h_{ii}+ R_iR_i\underline{F}\big)
 \]
 \[
  E^{comm}_{ L_j , k}= -\h [\p_{x_k},L_j] h_{00} + [\p_i, L_j] h_{ki}  - \h [\p_{x_k},L_j]  h_{ii}= -\h \delta_{kj} \p_t h_{00} + \p_t h_{kj}- \h \delta_{kj} \p_t h_{ii}
 \]
 \[
 =  -\h \delta_{kj} \p_t ( F +\underline{F})  - \p_t R_k R_j \underline{F} + \h\delta_{kj} R_iR_i \underline{F} + \p_t( h_{kj} + R_kR_j \underline{F})- \h \delta_{kj} \p_t ( h_{ii} + R_iR_i \underline{F})
 \]
 Therefore, 
 \[
 R_kE^{comm}_{ L_j , k}=  R_k\big(  -\h \delta_{kj} \p_t   F  + \p_t( h_{kj} + R_kR_j \underline{F})- \h \delta_{kj} \p_t ( h_{ii} + R_iR_i \underline{F})\big).
 \]
 Similarly, we have 
 \[
   E^{comm}_{ \Omega_{mn} , k}= -\h [\p_{x_k},\Omega_{mn}] h_{00} + [\p_i,  \Omega_{mn} ] h_{ki}  - \h [\p_{x_k},\Omega_{mn}]  h_{ii} \]
   \[
   =-\h\big(\delta_{mk}\p_{x_n}-\delta_{nk}\p_{x_m} \big)(h_{00}-h_{ii}) + \p_{x_n}h_{km}- \p_{x_m} h_{kn} =- \big(\delta_{mk}\p_{x_n}-\delta_{nk}\p_{x_m} \big)\underline{F}
 \]
 \[
 -\h\big(\delta_{mk}\p_{x_n}-\delta_{nk}\p_{x_m} \big)(F-h_{ii}+R_iR_i \underline{F}) +  \p_{x_n}(h_{km}-R_kR_m \underline{F})- \p_{x_m} ( h_{kn} -R_kR_n \underline{F})).
 \]
 Therefore, 
 \[
 R_kE^{comm}_{ \Omega_{mn} , k}= R_k\big(-\h\big(\delta_{mk}\p_{x_n}-\delta_{nk}\p_{x_m} \big)(F-h_{ii}+R_iR_i \underline{F}) +  \p_{x_n}(h_{km}-R_kR_m \underline{F})- \p_{x_m} ( h_{kn} -R_kR_n \underline{F})) \big).
 \]
 Therefore, our desired estimate (\ref{april26eqn35}) follows from the decay estimate   of the metric componet in (\ref{2020julybasicestiamte})   in Lemma \ref{basicestimates} for the far away from the light cone case, the estimate (\ref{oct9eqn11}) in Lemma \ref{harmonicgaugehigher}, the decay estimates (\ref{july27eqn4}) and (\ref{linearwavedecay2}) in Lemma \ref{superlocalizedaug} and the estimate (\ref{oct4eqn41}) in Proposition \ref{fixedtimenonlinarityestimate}. 
\end{proof}

\subsection{An improved estimate for the modified profile at low frequency}\label{lowfreq}
In the following Lemma, we show   relation between $P_{k}\big[\widetilde{   U^{\tilde{h}^{\Gamma L} } }(t) \big]$ and $ \Gamma\big(P_{k}\big[\widetilde{    U^{\tilde{h}^{ L} }  }\big]\big)(t)$, which helps us to rule out the very low frequency case in the energy estimate of the metric part. 
  
 \begin{lemma}\label{vecmodicomm}
For any $t\in[2^{m-1}, 2^m],  $ $\Gamma\in P, L\in \cup_{n\leq N_0-1}P_n,  k\in \Z$,    the following equality holds for the modified perturbed metric defined in \textup{(\ref{modifiedperturbedmetric})},
\be\label{oct1eqn51}
P_{k}\big[\widetilde{   U^{\tilde{h}^{\Gamma L} } }(t) \big]= \Gamma\big(P_{k}\big[\widetilde{    U^{\tilde{h}^{ L} }  }\big]\big)(t) + \sum_{ L_1\preceq L } P_k R_{\Gamma;\alpha,\beta}^{\mu} \big(  {   U^{  L h_{\alpha\beta} } }(t)\big)^{\mu}+ Vlasov_{\Gamma;k}^{L;\tilde{h}} (t) + Highmetric_{\Gamma;k}^{L;\tilde{h}}(t) ,
\ee
where $R_{\Gamma;\alpha,\beta}^{\mu}$ are some zero order Fourier multiplier operator,  ``$Vlasov_{\Gamma;k}^{L;\tilde{h}}(t)$'' represents the error terms depend only with respect to  the vlasov part and $Highmetric_{\Gamma;k}^{L;\tilde{h}}(t)$ is a quadratic and higher order terms involve the perturbed meteric, see \textup{(\ref{sep24eqn1})} for their detailed formulas. 
Moreover, the following general formulation holds for  $Vlasov_{\Gamma;k}^{L;\tilde{h}} (t) $, 
\be\label{sep20eqn83}
Vlasov_{\Gamma;k}^{L;\tilde{h}} (t) = \sum_{  \mathcal{L} \preceq L\circ \Gamma } \mathcal{F}_{\xi}^{-1}\big[ \int_{\R^3} e^{-it\hat{v}\cdot \xi}\widehat{u^{\mathcal{L}}}(t, \xi, v) m_{\mathcal{L}}^{L;\tilde{h}}(t, \xi, v) \psi_k(\xi)    d v\big](x), 
\ee
where $m_{\mathcal{L}}^{L;\tilde{h}}(t, \xi, v)\in \langle v \rangle^{ 10} L^\infty_{v}\mathcal{S}^{-1}_{ }$.
\end{lemma}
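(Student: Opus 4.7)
The plan is to apply $\Gamma$ to the identity $\widetilde{U^{\tilde h^L}}=U^{\tilde h^L}-\tilde\rho^{\tilde h}_L(f)$ coming from \eqref{2022march19eqn16} and \eqref{2022march19eqn3}, subtract the analogous identity with $L$ replaced by $\Gamma L$, and sort the result into three buckets: commutators coming from the wave half-wave $(\p_t-i\d)R^{\alpha\beta}_{\tilde h}$, commutators coming from the Vlasov correction $\tilde\rho^{\tilde h}_L(f)$, and higher-order wave-Vlasov bilinear terms produced when $\Gamma=L_i$ forces a time derivative on $\widehat{u^{\mathcal L}}$. These feed the three non-leading summands of \eqref{oct1eqn51}.

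For the wave commutator I would compute $[\Gamma,(\p_t-i\d)R^{\alpha\beta}_{\tilde h}]$ one generator at a time. The rotation $\Omega_{ij}$ commutes with $\p_t$ and $\d$ and satisfies $[\Omega_{ij},R_k]=\delta_{jk}R_i-\delta_{ik}R_j$; the scaling obeys $[S,\p_t]=-\p_t$, $[S,\d]=-\d$, $[S,R_k]=0$ by degree considerations; for the boost $L_i$ one has $[L_i,\p_t]=-\p_i$, $[L_i,\d]=R_i\p_t$ and $[L_i,R_j]=(R_iR_j-\delta_{ij})\d^{-1}\p_t$, so $[L_i,(\p_t-i\d)R^{\alpha\beta}_{\tilde h}]$ is of total order zero once the $-1$-order Riesz commutator is absorbed by the first-order factor $(\p_t-i\d)$. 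Rewriting the leftover $\p_\alpha Lh_{\alpha\beta}$'s through $\p_t Lh_{\alpha\beta}=\h(U^{Lh_{\alpha\beta}}+\overline{U^{Lh_{\alpha\beta}}})$ and $\p_j Lh_{\alpha\beta}=R_j(\overline{U^{Lh_{\alpha\beta}}}-U^{Lh_{\alpha\beta}})/(2i)$ converts each commutator into $\sum_\mu R^\mu_{\Gamma;\alpha,\beta}(U^{Lh_{\alpha\beta}})^\mu$ with $R^\mu_{\Gamma;\alpha,\beta}$ a fixed zero-order Fourier multiplier, matching the second summand in \eqref{oct1eqn51}.

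For the Vlasov correction I would first use $\mathcal F^{-1}_\xi[e^{-it\hat v\cdot\xi}\widehat{u^{\mathcal L}}(t,\xi,v)](x)=\mathcal Lf(t,x,v)$ to recast $\tilde\rho^{\tilde h}_L(f)$ as $\sum_{\mathcal L\preceq L}\int T^{L,\mathcal L}_{\tilde h,v}\mathcal Lf(t,\cdot,v)\,dv$, where $T^{L,\mathcal L}_{\tilde h,v}$ is the Fourier multiplier in $x$ with symbol $b^{L\mathcal L}_{\tilde h}(v)k_{\tilde h}(\xi)(i|\xi|-i\hat v\cdot\xi)^{-1}$. Passing $\Gamma\in\{\Omega_{ij},S\}$ through and writing $\Gamma\mathcal Lf=\tilde\Gamma\mathcal Lf-(\tilde\Gamma-\Gamma)\mathcal Lf$, where $\tilde\Gamma\in\mathcal P$ is the Vlasov-adapted counterpart from Definition~\ref{vectorfieldscla}, the $\tilde\Gamma\mathcal L$ piece lines up with the corresponding summand of $\tilde\rho^{\tilde h}_{\Gamma L}(f)$ because $\tilde\Gamma\mathcal L\preceq\Gamma L$, while $(\tilde\Gamma-\Gamma)\mathcal Lf$ is a pure $v$-derivative which, after one integration by parts in $v$, lands on $b^{L\mathcal L}_{\tilde h}(v)k_{\tilde h}(\xi)(i|\xi|-i\hat v\cdot\xi)^{-1}$ and produces a new symbol in $\langle v\rangle^{10}L^\infty_v\mathcal S^{-1}$, i.e.\ a term of the form \eqref{sep20eqn83}. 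The delicate case $\Gamma=L_i$ introduces a time derivative of $\widehat{u^{\mathcal L}}$; substituting \eqref{2020april7eqn6} replaces $\p_tu^{\mathcal L}$ by $\n_x\cdot\mathfrak N_1^{\mathcal L}+D_v\cdot\mathfrak N_2^{\mathcal L}+\mathfrak N_3^{\mathcal L}+\mathfrak N_4^{\mathcal L}$; the wave-Vlasov bilinear pieces collapse into $Highmetric^{L;\tilde h}_{\Gamma;k}(t)$, while the remaining genuinely Vlasov pieces go into $Vlasov^{L;\tilde h}_{\Gamma;k}(t)$ after another integration by parts in $v$ trading $D_v$ and $\n_x$ against derivatives of the symbol.

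The main obstacle is bookkeeping in the $L_i$ case: one must check that every $v$-derivative generated by integration by parts lands on the symbol $b^{L\mathcal L}_{\tilde h}(v)k_{\tilde h}(\xi)(i|\xi|-i\hat v\cdot\xi)^{-1}$ in a way that preserves the symbol class $\langle v\rangle^{10}L^\infty_v\mathcal S^{-1}$, and that the highest-order $\mathcal L$ which appears stays $\preceq\Gamma L$. The symbol-class bookkeeping follows from $(|\xi|-\hat v\cdot\xi)^{-1}\lesssim\langle v\rangle^2|\xi|^{-1}$ combined with $k_{\tilde h}\in\mathcal S^0$ and the polynomial-in-$v$ estimate on $b^{L\mathcal L}_{\tilde h}(v)$ from \eqref{sep20eqn61}: each derivative in $v$ or $\xi$ costs at most an extra $\langle v\rangle^2$ factor, well within the allowed weight $\langle v\rangle^{10}$.
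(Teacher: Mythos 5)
Your proposal is correct and follows essentially the same route as the paper's proof: write $\widetilde{U^{\tilde h^L}}=U^{\tilde h^L}-\tilde\rho^{\tilde h}_L(f)$, commute $\Gamma$ through the Riesz/half-wave operator $(\p_t-i\d)R^{\alpha\beta}_{\tilde h}$ generator by generator, and handle the Vlasov correction by passing $\Gamma$ through the kernel representation, splitting off $\tilde\Gamma-\Gamma$ as a $v$-derivative to integrate by parts, with the boost $L_i$ producing the $(\p_t+\hat v\cdot\n_x)\mathcal Lf$ term that is eliminated via the Vlasov equation. One small computational slip worth flagging: with $R_j=\p_j\d^{-1}$ one actually gets $[L_i,R_j]=-(R_iR_j+\delta_{ij})\d^{-1}\p_t$ rather than $(R_iR_j-\delta_{ij})\d^{-1}\p_t$, and the $(\p_t-i\d)\d^{-1}\p_t$ factor that this spawns also produces $\d^{-1}\square L h_{\alpha\beta}$ when you replace $\p_t^2$ by $\Delta+\square$, whose nonlinear part is exactly what feeds into $Highmetric^{L;\tilde h}_{\Gamma;k}$; neither point changes the structure of your argument.
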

\begin{proof}
Recall the definitions of profiles in (\ref{2020april1eqn4}) and (\ref{aug11eqn151}) and the double Hodge decompositions in (\ref{doublehodge1}). From  (\ref{modifiedperturbedmetric}), we have 
\be\label{oct1eqn1}
\widetilde{ \tilde{h}^L}= R_{\alpha\beta}^{\tilde{h}}(Lh_{\alpha\beta})+  \sum_{\mu\in\{+,-\}}\sum_{  \mathcal{L} \preceq L }      \big(   \mathcal{F}_{\xi}^{-1}[     \int_{\R^3} e^{  - i t\hat{v}\cdot \xi} {b}^{L\mathcal{L};\tilde{h}}_{\alpha\beta}(v) \frac{\mu k_{\alpha\beta}(\xi) \widehat{u^{\mathcal{L}}}(t, \xi, v)}{2|\xi|\big(|\xi|-\hat{v}\cdot\xi\big)} d v ]\big)^{\mu},
\ee
\be\label{oct1eqn2}
\widetilde{\p_t \tilde{h}^L}=   R_{\alpha\beta}^{\tilde{h}}(\p_tLh_{\alpha\beta})+  \sum_{\mu\in\{+,-\}}\sum_{   \mathcal{L} \preceq L }       \big(   \mathcal{F}_{\xi}^{-1}[     \int_{\R^3} e^{  - i t\hat{v}\cdot \xi} {b}^{L\mathcal{L};\tilde{h}}_{\alpha\beta}(v) \frac{  k_{\alpha\beta}(\xi) \widehat{u^{\mathcal{L}}}(t, \xi, v)}{i2\big(|\xi|-\hat{v}\cdot\xi\big)} d v ]\big)^{\mu},
\ee
where $R_{\alpha\beta}^{\tilde{h}}\in \{ R_i,   R_iR_j \}\times \C$.  Therefore, from the equality (\ref{oct1eqn1}), we have
 \be\label{sep28eqn46}
 \begin{split}
 P_{k}(\widetilde{\tilde{h}^{\Gamma L}})&= \Gamma\big(P_k(\widetilde{  \tilde{h}^L })\big)+ [P_k R_{\alpha\beta}^{\tilde{h}}, \Gamma ]( Lh_{\alpha\beta})\\
 &-   \sum_{\mu\in\{+,-\},\mathcal{L}\preceq L }      \Gamma \big(   \mathcal{F}_{\xi}^{-1}[     \int_{\R^3} e^{  - i t\hat{v}\cdot \xi} {b}^{L\mathcal{L};\tilde{h}}_{\alpha\beta}(v) \frac{\mu k_{\alpha\beta}(\xi)\psi_k(\xi) \widehat{u^{\mathcal{L}}}(t, \xi, v)}{i2\big(|\xi|-\hat{v}\cdot\xi\big)} d v ]\big)^{\mu}   \\
 &+\sum_{\mu\in\{+,-\}, \tilde{\mathcal{L}}\preceq \Gamma \circ L }      \big(   \mathcal{F}_{\xi}^{-1}[     \int_{\R^3} e^{  - i t\hat{v}\cdot \xi} {b}^{\Gamma L \tilde{\mathcal{L}};\tilde{h}}_{\alpha\beta}(v) \frac{\mu k_{\alpha\beta}(\xi)\psi_k(\xi) \widehat{u^{ \tilde{\mathcal{L}}}}(t, \xi, v)}{2|\xi|\big(|\xi|-\hat{v}\cdot\xi\big)} d v ]\big)^{\mu}. 
 \end{split}
 \ee
 Note that the following equality holds for some uniquely determined operators $R_{\alpha\beta;0}^{\tilde{h};\Gamma}\in \mathcal{S}^0$ and  $R_{\alpha\beta;-1}^{\tilde{h};\Gamma}\in \mathcal{S}^{-1}$,
 \be\label{sep28eqn56}
 [P_k R_{\alpha\beta}^{\tilde{h}}, \Gamma ] = P_k R_{\alpha\beta;0}^{\tilde{h};\Gamma} + P_k R_{\alpha\beta;-1}^{\tilde{h};\Gamma}\p_t,\quad [\p_t, \Gamma]= c_{\Gamma,\alpha}\p_{\alpha}, 
 \ee
 where 
 \be\label{oct5eqn1}
 R_{\alpha\beta;0}^{\tilde{h};\Gamma}\in \mathcal{S}^0, \quad R_{\alpha\beta;-1}^{\tilde{h};\Gamma}\in \mathcal{S}^{-1}, \quad R_{\alpha\beta;-1}^{\tilde{h};\Gamma}=0, \quad \textup{if}\quad \Gamma\notin \{L_i\}.
 \ee
Following a similar procedure, from (\ref{oct1eqn2}),  we have
 \be\label{sep28eqn41}
 \begin{split}
 P_k(\widetilde{\p_t\tilde{h}^{ \Gamma L}}) & =\Gamma P_k \big(  \widetilde{\p_t \tilde{h}^L }\big)+   P_k R_{\alpha\beta}^{\tilde{h}} [\p_t, \Gamma ](Lh_{\alpha\beta})  + [P_k R_{\alpha\beta}^{\tilde{h}} , \Gamma ](\p_t Lh_{\alpha\beta}) \\
  & -    \sum_{
\mu\in\{+,-\} , \mathcal{L}_1 \preceq L }       \Gamma \big(   \mathcal{F}_{\xi}^{-1}[     \int_{\R^3} e^{  - i t\hat{v}\cdot \xi} {b}^{L\mathcal{L}_1;\tilde{h}}_{\alpha\beta}(v) \frac{  k_{\alpha\beta}(\xi)\psi_k(\xi) \widehat{u^{\mathcal{L}_1}}(t, \xi, v)}{i2\big(|\xi|-\hat{v}\cdot\xi\big)} d v ]\big)^{\mu} \\ 
 &+   \sum_{\mu\in\{+,-\} , \mathcal{L}_2\preceq \Gamma\circ L }       \big(   \mathcal{F}_{\xi}^{-1}[     \int_{\R^3} e^{  - i t\hat{v}\cdot \xi} {b}^{L\mathcal{L}_2;\tilde{h}}_{\alpha\beta}(v) \frac{  k_{\alpha\beta}(\xi) \widehat{u^{\mathcal{L}_2}}(t, \xi, v)}{i2\big(|\xi|-\hat{v}\cdot\xi\big)} d v ]\big)^{\mu}.
 \end{split}
 \ee
We now consider the case when the vector field hits the density type term. Recall (\ref{sep28eqn41}). 
Note that
\be\label{2020april14eqn31}
\begin{split}
 \mathcal{F}_{\xi}^{-1}[     \int_{\R^3} e^{  - i t\hat{v}\cdot \xi} {b}^{L\mathcal{L};\tilde{h}}_{\alpha\beta}(v) \frac{  k_{\alpha\beta}(\xi)\psi_k(\xi) \widehat{u^{\mathcal{L}}}(t, \xi, v)}{i2\big(|\xi|-\hat{v}\cdot\xi\big)} d v ]   & =\int_{\R^3} \int_{\R^3} K^{L\mathcal{L};\tilde{h}}(x-y, v) \mathcal{L}f(t, y, v) d y d v,\\ 
K^{L\mathcal{L};\tilde{h}}(x,v)&:={b}^{L\mathcal{L};\tilde{h}}_{\alpha\beta}(v)  \int_{\R^3} e^{i x\cdot \xi}  \frac{ k_{\alpha\beta}(\xi)\psi_k(\xi) }{i2\big(|\xi|-\hat{v}\cdot\xi\big)} d \xi.
\end{split}
\ee
As a result of direct computations, we have
\[
\Omega_{ij}\big(\int_{\R^3} \int_{\R^3} K^{L\mathcal{L};\tilde{h}}( y, v) \mathcal{L}f(t, x-y, v) d y d v\big)  = \int_{\R^3} \int_{\R^3}  (y_i\p_{j} - y_j\p_{i}) K^{L\mathcal{L};\tilde{h}}( y, v) \mathcal{L}f(t, x-y, v) 
\]
 \be\label{sep28eqn42}
+  K^{L\mathcal{L};\tilde{h}}( y, v) (\Omega_{ij}\mathcal{L})f(t, x-y, v) +  \big(v_i \p_{v_j}  - v_j\p_{v_i} \big) K^{L\mathcal{L};\tilde{h}}( y, v)   \mathcal{L} f(t, x-y, v)\big)      d y d v.
\ee
 \[
 S\big(\int_{\R^3} \int_{\R^3} K^{L\mathcal{L};\tilde{h}}( y, v) \mathcal{L}f(t, x-y, v) d y d v\big) 
 \]
 \be\label{sep28eqn43}
 =  \int_{\R^3} \int_{\R^3} K^{L\mathcal{L};\tilde{h}}( y, v) S\mathcal{L}f(t, x-y, v)  + y\cdot \nabla_y K^{L\mathcal{L};\tilde{h}}( y, v) \mathcal{L}f(t, x-y, v)d y d v . 
\ee
\[
L_i \big(\int_{\R^3} \int_{\R^3} K^{L\mathcal{L};\tilde{h}}( y, v) \mathcal{L}f(t, x-y, v) d y d v\big) 
\]
\[
=\int_{\R^3} \int_{\R^3} K^{L\mathcal{L};\tilde{h}}( y, v) \mathcal{L}_i\mathcal{L}f(t, x-y, v)  
+ \p_{v_i}\big(\sqrt{1+|v|^2} K^{L\mathcal{L};\tilde{h}}( y, v)\big)     \mathcal{L}f(t, x-y, v)  
\]
 \be\label{sep28eqn44} 
 -  y_i K^{L\mathcal{L};\tilde{h}}( y, v) \hat{v}\cdot\nabla_x \mathcal{L}f(t, x-y, v)   +   y_i K^{L\mathcal{L};\tilde{h}}( y, v)  (\p_t + \hat{v}\cdot \nabla_x) \mathcal{L}f(t, x-y, v) dy d v.  
\ee
To sum up, from the equalities (\ref{sep28eqn42}), (\ref{sep28eqn43}), and (\ref{sep28eqn44}), we have
\[
\Gamma\mathcal{F}_{\xi}^{-1}[     \int_{\R^3} e^{  - i t\hat{v}\cdot \xi} {b}^{L\mathcal{L};\tilde{h}}_{\alpha\beta}(v) \frac{  k_{\alpha\beta}(\xi)\psi_k(\xi) \widehat{u^{\mathcal{L}}}(t, \xi, v)}{i2\big(|\xi|-\hat{v}\cdot\xi\big)} d v ] =\sum_{\tilde{\Gamma}\in \mathcal{P}\cup \{Id\} }\int_{\R^3} \int_{\R^3}  K^{L\mathcal{L};\tilde{h}}_{\Gamma \tilde{\Gamma}}(y, v) 
\]
 \be\label{sep28eqn47} 
\times \tilde{\Gamma} \mathcal{L}f(t, x-y, v) d y d v +  C_{\Gamma}^{L_i}\big(\int_{\R^3} \int_{\R^3} y_i K^{L\mathcal{L};\tilde{h}}( y, v)  (\p_t + \hat{v}\cdot \nabla_x) \mathcal{L}f(t, x-y, v) d y d v\big), 
\ee
where $\mathcal{F}_{x}[K^{L\mathcal{L};\tilde{h}}_{\Gamma \tilde{\Gamma}}](\xi, v) \in L^{\infty}_{\langle v\rangle^{-10}}\mathcal{S}^{-1} $ and $ C_{\Gamma}^{L_i}\in \R$, $i\in\{1,2,3\}$, are some uniquely determined constants. Similarly, we have
\[
\Gamma\mathcal{F}_{\xi}^{-1}[     \int_{\R^3} e^{  - i t\hat{v}\cdot \xi} {b}^{L\mathcal{L};\tilde{h}}_{\alpha\beta}(v) \frac{  k_{\alpha\beta}(\xi)\psi_k(\xi) \widehat{u^{\mathcal{L}}}(t, \xi, v)}{i2|\xi|\big(|\xi|-\hat{v}\cdot\xi\big)} d v ] =\sum_{\tilde{\Gamma}\in \mathcal{P}\cup \{Id\} }\int_{\R^3} \int_{\R^3}  \tilde{K}^{L\mathcal{L};\tilde{h}}_{\Gamma \tilde{\Gamma}}(y, v)  \tilde{\Gamma} \mathcal{L}f(t, x-y, v)
\]
\be \label{2020june23eqn11}
 +   C_{\Gamma}^{L_i} y_i \tilde{K}^{L\mathcal{L};\tilde{h}}( y, v)  (\p_t + \hat{v}\cdot \nabla_x) \mathcal{L}f(t, x-y, v) d y d v, 
\ee
\[
 \tilde{K}^{L\mathcal{L};\tilde{h}}(x,v):={b}^{L\mathcal{L};\tilde{h}}_{\alpha\beta}(v)  \int_{\R^3} e^{i x\cdot \xi}  \frac{ k_{\alpha\beta}(\xi)\psi_k(\xi) }{i2|\xi|\big(|\xi|-\hat{v}\cdot\xi\big)} d \xi,
\]
where $\mathcal{F}_{x}[K^{L\mathcal{L};\tilde{h}}_{\Gamma \tilde{\Gamma}}](\xi, v) \in L^{\infty}_{\langle v\rangle^{-10}}\mathcal{S}^{-2} $. Therefore, from (\ref{sep28eqn46}), (\ref{sep28eqn56}) and   (\ref{2020june23eqn11}), we have
 \be\label{sep28eqn50} 
 P_{k}(\widetilde{\tilde{h}^{\Gamma L}})= \Gamma\big(P_k(\widetilde{ \tilde{h}^L})\big)+  P_k R_{\alpha\beta;0}^{\tilde{h};\Gamma} Lh_{\alpha\beta} + P_k R_{\alpha\beta;-1}^{\tilde{h};\Gamma}\p_t Lh_{\alpha\beta}   +    vl_{\Gamma;k}^{L;\tilde{h}} (t) + hm_{\Gamma;k}^{L;\tilde{h}}(t),
\ee
where
\[
 vl_{\Gamma;k}^{L;\tilde{h}} (t)=  \sum_{\mu\in\{+,-\}}\sum_{  \mathcal{L}_1 \preceq \Gamma\circ L }      \big(   \mathcal{F}_{\xi}^{-1}[     \int_{\R^3} e^{  - i t\hat{v}\cdot \xi} {b}^{\Gamma L\mathcal{L}_1;\tilde{h}}_{\alpha\beta}(v) \frac{\mu k_{\alpha\beta}(\xi)\psi_k(\xi) \widehat{u^{\mathcal{L}_1}}(t, \xi, v)}{2|\xi|\big(|\xi|-\hat{v}\cdot\xi\big)} d v ]\big)^{\mu} \]
 \be\label{sep28eqn51} 
  -   \sum_{\mu\in\{+,-\}}\sum_{ \mathcal{L}_2 \preceq L } \sum_{\tilde{\Gamma}\in \mathcal{P}\cup \{Id\} }\big(\int_{\R^3} \int_{\R^3}  \tilde{K}^{L\mathcal{L}_2;\tilde{h}}_{\Gamma \tilde{\Gamma}}(y, v) 
 \tilde{\Gamma} \mathcal{L}_2f(t, x-y, v) d y d v\big)^{\mu},
\ee
 \be\label{sep28eqn52} 
hm_{\Gamma;k}^{L;\tilde{h}}(t)= -   \sum_{\mu\in\{+,-\}}\sum_{  \mathcal{L} \preceq L } \sum_{\tilde{\Gamma}\in \mathcal{P}\cup \{Id\} }C_{\Gamma}^{L_i}\big(\int_{\R^3} \int_{\R^3} y_i \tilde{K}^{L\mathcal{L};\tilde{h}}( y, v)  (\p_t + \hat{v}\cdot \nabla_x) \mathcal{L}f(t, x-y, v) d y d v\big)^{\mu}.
\ee
Similarly, from (\ref{sep28eqn41}),   (\ref{sep28eqn56}), and (\ref{sep28eqn47}) and the nonlinear wave equation satisfied by $Lh_{\alpha\beta}$ in (\ref{sep28eqn26}), we have
 \be\label{sep28eqn53}
 \begin{split} 
 P_k(\widetilde{\p_t \tilde{h}^{\Gamma L}})& =  \Gamma P_k \big(  \widetilde{\p_t\tilde{h}^L}\big) +  P_k R_{\alpha\beta}^{\tilde{h}} c_{\Gamma,\alpha}\p_{\alpha} Lh_{\alpha\beta}  +P_k R_{\alpha\beta;0}^{\tilde{h};\Gamma}\p_t Lh_{\alpha\beta}\\
  & + P_k R_{\alpha\beta;-1}^{\tilde{h};\Gamma}\Delta Lh_{\alpha\beta} +   ptvl_{\Gamma;k}^{L;\tilde{h}} (t) + pthm_{\Gamma;k}^{L;\tilde{h}}(t),
  \end{split}
\ee
where
\[
 ptvl_{\Gamma;k}^{L;\tilde{h}} (t)= \sum_{\mu\in\{+,-\}}\sum_{   \tilde{\Gamma}\in \mathcal{P}\cup \{Id\},|\mathcal{L}_1\preceq \Gamma \circ L, \mathcal{L}_2\preceq L }       \big(   \mathcal{F}_{\xi}^{-1}[     \int_{\R^3} e^{  - i t\hat{v}\cdot \xi} {b}^{L\mathcal{L}_1;\tilde{h}}_{\alpha\beta}(v) \frac{  k_{\alpha\beta}(\xi) \widehat{u^{\mathcal{L}_1}}(t, \xi, v)}{i2\big(|\xi|-\hat{v}\cdot\xi\big)} d v ]\big)^{\mu}
\]
 
\be\label{sep28eqn71} 
  -   \big(\int_{\R^3} \int_{\R^3}  {K}^{L\mathcal{L}_2;\tilde{h}}_{\Gamma \tilde{\Gamma}}(y, v) 
 \tilde{\Gamma} \mathcal{L}_2f(t, x-y, v) d y d v\big)^{\mu}  + P_k R_{\alpha\beta;-1}^{\tilde{h};\Gamma}   \big(\sum_{  \mathcal{L} \preceq L  }  \int_{\R^3} {b}^{L\mathcal{L} }_{\alpha\beta}(v) \mathcal{L} f   (t,x, v) d v\big),  
 \ee
 \[
  pthm_{\Gamma;k}^{L;\tilde{h}}(t)=   -   \sum_{\begin{subarray}{c}
    \mu\in\{+,-\},  \mathcal{L} \preceq L\\ 
     \tilde{\Gamma}\in \mathcal{P}\cup \{Id\} \\
  \end{subarray}}C_{\Gamma}^{L_i}\big(\int_{\R^3} \int_{\R^3}    y_i  {K}^{L\mathcal{L};\tilde{h}}( y, v)   \big((\p_t + \hat{v}\cdot \nabla_x) \mathcal{L}f(t, x-y, v) \big) d y d v\big)^{\mu}  \]
\be\label{sep28eqn72} 
   + P_k R_{\alpha\beta;-1}^{\tilde{h};\Gamma}  \Lambda_{\geq 2}[\square L h_{\alpha\beta}]  .
 \ee
 Hence, to sum up, from (\ref{modifiedperturbedmetric}),  (\ref{sep28eqn50})  and (\ref{sep28eqn53})  ,  we know that   the following equalities hold, 
\be\label{sep24eqn1}
\begin{split}
Vlasov_{\Gamma;k}^{L;\tilde{h}} (t) & = ptvl_{\Gamma;k}^{L;\tilde{h}} (t) + i \d \big( vl_{\Gamma;k}^{L;\tilde{h}} (t)\big), \\ 
 Highmetric_{\Gamma;k}^{L;\tilde{h}}(t) & = pthm_{\Gamma;k}^{L;\tilde{h}}(t) +  i \d \big(hm_{\Gamma;k}^{L;\tilde{h}}(t)\big).
\end{split}
\ee
Recall the detailed formulas of $ vl_{\Gamma;k}^{L;\tilde{h}} (t)$ in (\ref{sep28eqn51}) and $  ptvl_{\Gamma;k}^{L;\tilde{h}} (t g)$ in (\ref{sep28eqn71}). 
We know that the representation formula for $Vlasov_{\Gamma;k}^{L;\tilde{h}} (t)$ in (\ref{sep20eqn83}) holds  for some uniquely determined symbol $m_{\mathcal{L}}^{L;\tilde{h}}(t, \xi, v)$. 
\end{proof} 
With the decomposition in previous Lemma, now, we show that the error terms appeared in (\ref{oct1eqn51}) is indeed small and the estimate of perturbed metric is improved at very low frequency,  e.g., of size almost like $\langle t \rangle^{-1}.$

 \begin{lemma}\label{lowfrequencyinputestimate}
Under the bootstrap assumptions \textup{(\ref{BAmetricold})} and \textup{(\ref{BAvlasovori})}, the following estimate holds for any    $L \in \cup_{l\leq N_0 }P_l$, $|L|\geq 1$,  $t\in[2^{m-1}, 2^m]\subset[0, T]$, $k\in \Z$, s.t., $k\leq -2H(N_0) \delta_0 m$,  and zero order Fourier multiplier operator $M$ with symbol in $ \mathcal{S}^0$, 
\[
2^{-k/2+\gamma k}\big(\| M P_k(  U^{Lh_{\alpha\beta}})(t,x)\psi_{\leq m+2}(|x|) \|_{L^2} +\| M P_k( \widetilde{U^{Lh_{\alpha\beta}}}  )(t,x)  \psi_{\leq m+2}(|x|) \|_{L^2}\big)   \]
\be\label{nov9eqn20}
\lesssim (1+2^{-k-m}) 2^{   {H}(|L|-1)\delta_1 m + \beta_{}(|L|) \delta_1 m} \epsilon_1    + 2^{  {H}(|L|-1)\delta_1 m + m + k } \epsilon_1.
\ee
\end{lemma}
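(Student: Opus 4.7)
\medskip

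\noindent\textbf{Proof proposal for Lemma \ref{lowfrequencyinputestimate}.}

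Since $|L|\geq 1$, the plan is to peel off one vector field and exploit the fact that, after this reduction, the bootstrap assumption on the remaining object carries the strictly smaller growth rate $2^{H(|L|-1)\delta_1 m}\epsilon_1$.  More precisely, I would factor $L=\Gamma\circ L'$ with $\Gamma\in\{S,L_i,\Omega_{ij}\}$ (or $\p_\alpha$, which is easier) and $|L'|=|L|-1$.  First I would invoke the Hodge reconstruction formula \eqref{2020june1eqn1} to write $U^{Lh_{\alpha\beta}}$ as a finite sum of $\tilde{R}^{\tilde h}_{\alpha\beta}\widetilde{U^{\tilde h^L}}$ over $\tilde h\in\{F,\underline F,\omega_j,\vartheta_{mn}\}$, plus strictly lower-order terms $\widetilde{U^{\tilde L h_{\gamma\kappa}}}$ with $\tilde L\prec L$, plus wave and Vlasov error terms already controlled by \eqref{2022may18eqn1}.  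Handling $\widetilde{U^{Lh_{\alpha\beta}}}$ is identical up to the harmless discrepancy controlled by \eqref{2022march19eqn34} in Lemma \ref{basicestimates}, which contributes at most $\epsilon_1$ after the weight $2^{-k/2+\gamma k}$ and is dominated by the first term on the right-hand side.

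Next I would apply the commutation formula \eqref{oct1eqn51} of Lemma \ref{vecmodicomm} to each $\widetilde{U^{\tilde h^{\Gamma L'}}}$, producing four pieces: the main term $\Gamma\bigl(P_k\widetilde{U^{\tilde h^{L'}}}\bigr)$, a zero-order commutator term $P_kR^\mu_{\Gamma;\alpha,\beta}(U^{L'h_{\alpha\beta}})^\mu$, and the Vlasov/Highmetric error terms from \eqref{sep24eqn1}.  For the main term, on the cutoff region $|x|\leq 2^{m+2}$ the coefficients $x^i,\,t$ in $\Gamma\in\{\Omega_{ij},L_i,S\}$ are bounded by $2^{m+2}$, so
\[
\|\Gamma(P_k\widetilde{U^{\tilde h^{L'}}})\psi_{\leq m+2}(x)\|_{L^2}\lesssim 2^{m}\bigl(\|\nabla_x P_k\widetilde{U^{\tilde h^{L'}}}\|_{L^2}+\|\p_t P_k\widetilde{U^{\tilde h^{L'}}}\|_{L^2}\bigr).
\]
Since $\widetilde{U^{\tilde h^{L'}}}$ is a half-wave, both $\nabla_x$ and $\p_t$ contribute $O(2^k)$ at frequency $k$ (up to the harmless contribution from $\p_t\widetilde{V^{\tilde h^{L'}}}$, which is controlled by \eqref{oct4eqn41}).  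Combined with the bootstrap bound $\|P_k\widetilde{V^{\tilde h^{L'}}}\|_{L^2}\lesssim 2^{k_-/2-\gamma k_-}2^{H(|L|-1)\delta_1 m}\epsilon_1$, this gives $2^{m+k}2^{H(|L|-1)\delta_1 m}\epsilon_1$ after multiplying by $2^{-k/2+\gamma k}$, which is exactly the second term of the desired estimate.  The case $\Gamma=\p_\alpha$ gives an even better bound of the same form.

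For the commutator term, I would use \eqref{sep28eqn56}, which writes $P_kR^\mu_{\Gamma;\alpha,\beta}$ as a zero-order Fourier multiplier (plus, only when $\Gamma=L_i$, a symbol-of-order $-1$ times $\p_t$).  Both pieces reduce the estimate to $\|P_k U^{L'h_{\alpha\beta}}\|_{L^2}$, which by \eqref{2022march19eqn34} is controlled by $\|P_k\widetilde{U^{L'h_{\alpha\beta}}}\|_{L^2}$ up to $\epsilon_1$ loss; the latter is in turn bounded via \eqref{2020june1eqn1} and bootstrap by $2^{k_-/2-\gamma k_-}2^{H(|L|-1)\delta_1 m}\epsilon_1$.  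After weighting by $2^{-k/2+\gamma k}$ this matches the first term of the desired bound with room to spare (the extra $\beta(|L|)\delta_1 m$ slack absorbs the zeroth-order operator norm and the Hodge error terms $\textit{Err}^{L';wa}$, $\textit{Err}^{L';vl}$ via Lemma \ref{harmonicgaugehigher} and the estimates of subsection \ref{auxiliary}).

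Finally, the Vlasov term $\textit{Vlasov}^{L';\tilde h}_{\Gamma;k}$ has the explicit representation \eqref{sep20eqn83} with symbol $m^{L';\tilde h}_{\mathcal L}\in\langle v\rangle^{10}L^\infty_v\mathcal S^{-1}$; the $\mathcal{S}^{-1}$ class gives a $2^{-k}$ gain.  I would bound it by splitting off the zero-frequency of the profile via \eqref{oct11eqn11} and applying Lemma \ref{decayestimateofdensity}: the zero-frequency piece is controlled by $Z^{vl}$ with a volume factor $2^{3k}$, and the $\xi$-gradient piece by the weighted energy $E^{vl}_{\mathcal L,\rho}$ with volume factor $2^{3k}$ and an extra $2^{k}$ from the derivative, producing precisely the factor $(1+2^{-k-m})$ after combining with the time decay from the density estimate.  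The Highmetric term $\textit{Highmetric}^{L';\tilde h}_{\Gamma;k}$ is quadratic-and-higher in the metric and its size is already encoded in the Lemma \ref{quadratichigherein} bounds, giving $2^{-m}$-level decay, far below the target.  The delicate step — and the main obstacle — will be bookkeeping the Vlasov contribution: one must verify that the $\mathcal S^{-1}$ gain from the symbol exactly absorbs the $2^{k}$ loss that appears when the kernel $\bigl(|\xi|-\hat v\cdot\xi\bigr)^{-1}$ acts at frequency $k$ that is very negative compared to $-m$, so that the prefactor comes out as $(1+2^{-k-m})2^{H(|L|-1)\delta_1 m+\beta(|L|)\delta_1 m}$ rather than something worse.
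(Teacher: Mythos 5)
Your plan matches the paper's strategy in its essentials: factor $L=\Gamma\circ L'$ with $|L'|=|L|-1$, apply the commutation formula \eqref{oct1eqn51} of Lemma~\ref{vecmodicomm} to peel off one vector field, exploit that the coefficients of $\Gamma$ are $\lesssim 2^m$ on $\{|x|\lesssim 2^{m+2}\}$ to produce the $2^{m+k}$ factor, and handle the $\textit{Vlasov}$ piece by splitting off the zero frequency of the profile. Two points in your write-up, however, would need repair.

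First, you justify replacing $U^{Lh_{\alpha\beta}}$ by $\widetilde{U^{Lh_{\alpha\beta}}}$ by citing \eqref{2022march19eqn34}, calling the discrepancy ``harmless.'' But \eqref{2022march19eqn34} is stated only for $|L|\leq N_0-2$, whereas the Lemma is claimed for all $|L|\geq 1$, and the top-order cases $|L|\in\{N_0-1,N_0\}$ are precisely the ones for which the difference $U-\widetilde U$ picks up the $2^{m+k}\mathbf 1_{|L|=N_0}$ term in \eqref{2022march19eqn30}. The paper avoids this by deriving directly from \eqref{modifiedperturbedmetric} and the zero-frequency decomposition \eqref{oct11eqn11} the bound $2^{-k/2+\gamma k}\|P_k\tilde\rho^{h_{\alpha\beta}}_L(f)(t)\|_{L^2}\lesssim\epsilon_1+2^{k+m}\epsilon_1$ (its \eqref{nov9eqn22}), which is valid for all $|L|$ and whose $2^{k+m}\epsilon_1$ piece feeds into the second term of the conclusion. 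Your citation as written does not cover the top order.

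Second, you dismiss $\textit{Highmetric}^{L';\tilde h}_{\Gamma;k}$ as ``far below the target, giving $2^{-m}$-level decay.'' That is not quite right: the paper's bound on this term, its \eqref{oct5eqn11}, carries the factor $2^{-k-m+H(|L|-1)\delta_1 m+\beta(|L|)\delta_1 m}\epsilon_1^2$, and when $k<-m$ (which is allowed in the range $k\leq -2H(N_0)\delta_0 m$) the $2^{-k-m}$ factor is large. This contribution is exactly what produces the $(1+2^{-k-m})$ prefactor in the lemma's conclusion; you cannot absorb it trivially. Estimating it actually requires writing $\p_t+\hat v\cdot\nabla_x$ of $\mathcal L f$ in terms of the Vlasov nonlinearities, integrating by parts in $v$ for the $D_v$-type piece, and invoking the estimates of Lemmas \ref{estimateofremaindervlasov} and \ref{estimateofremaindervlasov3}, as in \eqref{oct4eqn82}--\eqref{oct5eqn11}. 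Finally, your detour through the Hodge reconstruction \eqref{2020june1eqn1} and its error terms $\textit{Err}^{L';wa}$, $\textit{Err}^{L';vl}$ is not wrong but is unnecessary: Lemma~\ref{vecmodicomm} (and its proof) applies to $\widetilde{U^{Lh_{\alpha\beta}}}$ directly, which is the path the paper takes and which avoids re-estimating the Hodge error terms.
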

\begin{proof}
Recall (\ref{modifiedperturbedmetric}). From   the decomposition of density type function in (\ref{oct11eqn11}), we have 
\be\label{nov9eqn22}
 2^{-k/2+\gamma k}\big(\|P_k  \tilde{\rho}^{h_{\alpha\beta}}_{L}(f)(t) \|_{L^2}+\|P_k  \tilde{\rho}^{\tilde{h}}_{L}(f)(t) \|_{L^2}\big) \lesssim \epsilon_1 + 2^{k+m}\epsilon_1. 
\ee

 Since   $L \in \cup_{l\leq N_0 }P_l$, $|L|\geq 1$, we know that $L=\Gamma \circ \tilde{L}$ holds for some $\Gamma\in P_1 , \tilde{L}\in P_{|L|-1}$.  
From the equality (\ref{oct1eqn51}) in Lemma \ref{vecmodicomm}, we have
 \be\label{2020april13eqn23}
 \begin{split}
  & 2^{-k/2+\gamma k} \| M P_k( \widetilde{U^{Lh_{\alpha\beta}}}  )(t,x)  \psi_{\leq m+2}(|x|) \|_{L^2}\\ 
  &\lesssim  2^{-k/2+\gamma k}\big[ \|\Gamma M P_k( \widetilde{U^{\tilde{L}h_{\alpha\beta}}}) \psi_{\leq m+2}(|x|) \|_{L^2} +\|[\Gamma, M] P_k( \widetilde{U^{\tilde{L}h_{\alpha\beta}}})   \|_{L^2} \\
  &+ 2^{-k/2+\gamma k} \big(  \sum_{   L_1 \prec   L }  \|   P_k(  {U^{ {L}_1 h_{\alpha\beta}}}) \|_{L^2} + \|Vlasov_{\Gamma;k}^{\tilde{L};\tilde{h}} (t)\|_{L^2} + \|Highmetric_{\Gamma;k}^{\tilde{L};\tilde{h}}(t) \|_{L^2}\big)\big].
  \end{split}
\ee
 
From     the estimate  (\ref{oct4eqn41}) in Proposition \ref{fixedtimenonlinarityestimate} and the fact that $|\tilde{L}|\leq N_0-1$, with minor modification in the obtained estimate (\ref{2020june24eqn40}), we have
\[
 2^{-k/2+\gamma k} \big(\|[\Gamma, M] P_k( \widetilde{U^{\tilde{L}h_{\alpha\beta}}})   \|_{L^2} + \sum_{ L_1 \prec  L   }   \|   P_k(  {U^{ {L}_1 h_{\alpha\beta}}}) \|_{L^2} \big) \lesssim (1+ 2^{-m-k})2^{ {H}(|L|-1)\delta_1 m + \beta_{}(|\tilde{L}|) \delta_1 m} \epsilon_1    
\]
\be\label{2020april13eqn21}
 + 2^{-m + {d} (|L|+1,0)\delta_0m -5\delta_0m} \epsilon_1   \lesssim  (1+ 2^{-m-k})2^{ {H}(|L|-1)\delta_1 m + \beta_{}(|\tilde{L}|) \delta_1 m} \epsilon_1 . 
\ee

Recall the detailed formulation of $ Highmetric_{\Gamma;k}^{L;\tilde{h}}(t)$  in (\ref{sep24eqn1}).   We first estimate the imaginary part $hm_{\Gamma;k}^{L;\tilde{h}}(t)$.  Recall (\ref{sep28eqn52}),   the equations (\ref{2020april2eqn1}) and (\ref{2020april7eqn6})  and the decomposition in  (\ref{2020april14eqn23}). After doing integration by parts in $v$ once, we have
\[
\int_{\R^3} \int_{\R^3} y_i \tilde{K}^{\tilde{L}\mathcal{L};\tilde{h}}( y, v)  (\p_t + \hat{v}\cdot \nabla_x) \mathcal{L}f(t, x-y, v) d y d v
\]
\[
 =\sum_{i=3,4} \int_{\R^3} \int_{\R^3} y_i \tilde{K}^{\tilde{L}\mathcal{L};\tilde{h}}( y, v) \big(  \Lambda_{\geq 3}[\mathfrak{N}_i^{\mathcal{L}}] (t,x-y-t\hat{v},v) +\mathfrak{Q}_3^{\mathcal{L}}(t,x-y-t\hat{v},v) + \nabla_x\cdot   \mathfrak{N}_1^{\mathcal{L}}(t, x-y-t\hat{v},v) 
\]
\be\label{oct4eqn82}
 + \nabla_x\cdot  \mathfrak{Q}_1^{\mathcal{L
 }}  (t, x-y-t\hat{v},v)  \big)  -   \nabla_v\big(y_i \tilde{K}^{\tilde{L}\mathcal{L};\tilde{h}}( y, v)\big)\cdot    \big(    \mathfrak{N}_2^{\mathcal{L
 }}(t, x-y-t\hat{v},v)+ \mathfrak{Q}_2^{\mathcal{L
 }}(t, x-y-t\hat{v},v)\big) d y d v.
\ee
 
Recall (\ref{2020april14eqn31}). With the above equality, from  the volume of support of $\xi$ if $k\leq 0$, 
   the estimate (\ref{aug9eqn87}) in Lemma \ref{estimateofremaindervlasov}, the estimate  (\ref{2020april14eqn3})  in Lemma \ref{estimateofremaindervlasov3}, the estimate (\ref{2020april14eqn21}), and the Cauchy-Schwarz inequality, we have
\[
2^{- k_{-}/2+\gamma k_{-} } \|\d(hm_{\Gamma;k}^{\tilde{L};\tilde{h}}(t))\|_{L^2} \lesssim 2^{- k_{-}/2+\gamma k_{-}+k+3k_{-}/2} 2^{-3k+  d(|L|+3,0)\delta_0m}  \big(    2^{ -m+k}\epsilon_1^2   +    2^{ -3m/2    }\epsilon_1^2 
\]
\be\label{oct4eqn83}
+  2^{-7m/6+2d(|L|+3,0)\delta_0m}\epsilon_1^3 \big)\lesssim  2^{-m +d(|L|+3,0)\delta_0 m } \epsilon_1^2 +  2^{-k- 7m/6+3d(|L|+3,0)\delta_0m}\epsilon_1^2. 
\ee

Now we estimate the real part $pthm_{\Gamma;k}^{\tilde{L};\tilde{h}}(t)$. Recall (\ref{sep28eqn72}). With minor modification in  the estimate of  $ hm_{\Gamma;k}^{\tilde{L};\tilde{h}}(t)$, the following estimate holds from the estimate  (\ref{oct4eqn41}) in Proposition \ref{fixedtimenonlinarityestimate}, 
\be\label{oct4eqn84}
2^{- k_{-}/2+\gamma k_{-}  } \| pthm_{\Gamma;k}^{\tilde{L};\tilde{h}}(t)  \|_{L^2} \lesssim 2^{-k}    2^{-m+    {H}(|L|-1)\delta_1 m + \beta_{}(|L|) \delta_1 m} \epsilon_1^2 +  2^{- m/6+3d(|L|+3,0)\delta_0m}\epsilon_1^2. 
\ee
Combining the estimates (\ref{oct4eqn83}) and  (\ref{oct4eqn84}), we  have 
\be\label{oct5eqn11}
  2^{- k_{-}/2+\gamma k_{-}   }   \|Highmetric_{\Gamma;k}^{\tilde{L};\tilde{h}}(t) \|_{L^2} \lesssim     2^{- m/6+3d(|L|+3,0)\delta_0m}\epsilon_1^2  + 2^{-k -m+    {H}(|L|-1)\delta_1 m + \beta_{}(|L|) \delta_1 m} \epsilon_1^2. 
\ee
Recall the general formula of  $Vlasov_{\Gamma;k}^{\tilde{L};\tilde{h}} (t)$ in (\ref{sep20eqn83}). By using the same strategy used in   obtaining the  equality (\ref{oct11eqn11}) and the estimate (\ref{nov9eqn22}),   we have
\be\label{2020june22eqn11}
2^{-k/2+\gamma k}  \|Vlasov_{\Gamma;k}^{\tilde{L};\tilde{h}} (t)\|_{L^2}  \lesssim \epsilon_1 + 2^{k+m}\epsilon_1. 
\ee
From the estimate (\ref{oct4eqn41}) in  Proposition \ref{fixedtimenonlinarityestimate}, we have
\be\label{2020april13eqn31}
2^{-k/2+\gamma k}  \|\Gamma P_k( \widetilde{U^{\tilde{L}h_{\alpha\beta}}}) \psi_{\leq m+2}(|x|) \|_{L^2} \lesssim   2^{   {H}(|L|-1)\delta_1 m + \beta_{}(|L|) \delta_1 m} \epsilon_1  + 2^{  {H}(|L|-1)\delta_1 m + m + k } \epsilon_1.  
\ee 
To sum up,   our desired estimate (\ref{nov9eqn20}) holds from the estimates (\ref{nov9eqn22}--\ref{2020april13eqn21}) and (\ref{oct5eqn11}--\ref{2020april13eqn31}).
\end{proof}

\section{Energy estimate of the perturbed metric part}\label{energyestimatewave}

Let  $\tilde{h}\in \{  F,  \underline{F},  \omega_j,  \vartheta_{mn} \}$,  $ L \in   P_n, n\leq N_0 $. To get around the losing derivatives issue, we utilize the symmetric structure of the Einstein equation and  define a modified  energy for the modified perturbed metric as follows, 
\[
E_{modi }^{ \tilde{h};L}(t):=\sup_{k\in \mathbb{Z}} 2^{-k_{-}+2\gamma k_{-}+2  (N_0-|L|)k_{+}}  E_{modi;k}^{ \tilde{h};L}(t), 
\]
where
 \be\label{nov22eqn6}
   E_{modi;k}^{ \tilde{h};L}(t):=  \h \int_{\R^3} \big| P_k({ \widetilde{   V^{ \tilde{h}^L } }})(t  ) \big|^2 +\frac{1}{2} \p_{x_i} P_k ( \widetilde{\tilde{h}^L} )  H_{ij}\p_{x_j} P_k ( \widetilde{\tilde{h}^L}   )d x.
\ee
The first part of  $ E_{modi}^{L\tilde{h};1}(t)$ denotes the energy of the modified profile $\widetilde{   V^{ \tilde{h}^L } }(t)$,  and the second part are   cubic correction terms which aim  to get around the losing derivatives issue  caused from the quasilinear nature of the Einstein equation.  

From (\ref{2020june30eqn1})   and the definition of the modified perturbed metric in (\ref{modifiedperturbedmetric}), we have
\[
\p_t E_{modi;k}^{ \tilde{h};L}(t) = \textup{Re}\big[\int_{\R^3}\overline{ P_k(\widetilde{   V^{ \tilde{h}^L } }  ) } \p_t  P_k(\widetilde{   V^{ \tilde{h}^L } }) d x  \big]
\]
\[
 + \int_{\R^3}  \p_{x_i} P_k ( \p_t \widetilde{\tilde{h}^L} )  H_{ij}\p_{x_j} P_k ( \widetilde{\tilde{h}^L}   )+\frac{1}{2}  \p_{x_i} P_k (  \widetilde{\tilde{h}^L} ) \p_t H_{ij}\p_{x_j} P_k ( \widetilde{\tilde{h}^L}   ) d x 
\]
\[
=  \textup{Re}\big[\int_{\R^3}\overline{ P_k(\widetilde{   U^{ \tilde{h}^L } }  ) }       \big(   H_{i0} \p_{i}P_k \widetilde{\p_{t}\tilde{h}^{L}} +    H_{ij} \p_{i} {\p_{j}P_k\widetilde{\tilde{h}^{L}}}\big)   d x  \big] + \int_{\R^3}   \p_{x_i} P_k ( \widetilde{ \p_t \tilde{h}^L} )  H_{ij}\p_{x_j} P_k ( \widetilde{\tilde{h}^L}   ) d x 
\]
\[
+  \textup{Re}\big[ \int_{\R^3} \overline{ P_k(\widetilde{   U^{ \tilde{h}^L } }  ) }   \big(e^{-it\d}\p_t  P_k \widetilde{   V^{ \tilde{h}^L }} -  \big(   H_{i0} \p_{i}P_k \widetilde{\p_{t}\tilde{h}^{L}} +    H_{ij} \p_{i} {\p_{j}P_k\widetilde{\tilde{h}^{L}}}\big)\big)  d x\big]
\]
\be\label{2020june30eqn31}
  +  \int_{\R^3} \frac{1}{2}  \p_{x_i} P_k (  \widetilde{\tilde{h}^L} ) \p_t H_{ij}\p_{x_j} P_k ( \widetilde{\tilde{h}^L}   )     d x  +\int_{\R^3}    \p_{x_i} P_k ( \p_t \widetilde{\tilde{h}^L} -  \widetilde{\p_t \tilde{h}^L})  H_{ij}\p_{x_j} P_k ( \widetilde{\tilde{h}^L}  ) d x. 
\ee

Recall the definition of the modified half-wave in \eqref{2022march19eqn12}. The real part of $\widetilde{ U^{ \tilde{h}^L }}   $ is $ \widetilde{\p_{t}\tilde{h}^{L}}$. Note that, thanks to the symmetric structure, after doing integration by parts in $x$, we have 
\begin{multline}\label{2020june30eqn39}
   \textup{Re}\big[\int_{\R^3}\overline{ P_k(\widetilde{   U^{ \tilde{h}^L } }  ) }       \big(   H_{i0} \p_{i}P_k \widetilde{\p_{t}\tilde{h}^{L}} +    H_{ij} \p_{i} {\p_{j}P_k\widetilde{\tilde{h}^{L}}}\big)   d x  \big] + \int_{\R^3}   \p_{x_i} P_k ( \widetilde{ \p_t \tilde{h}^L} )  H_{ij}\p_{x_j} P_k ( \widetilde{\tilde{h}^L}   ) d x 
  \\ 
= -    \int_{\R^3} { P_k(\widetilde{   \p_t { \tilde{h}^L } }  ) }   \p_i  H_{i0 } P_k \big( \widetilde{\p_t \tilde{h}^{L}} \big) -   { P_k(\widetilde{   \p_t { \tilde{h}^L } }  ) }     \p_i H_{ij } P_k \big( \p_j \widetilde{ \tilde{h}^{L}} \big)   d x.
\end{multline}
 To sum up, from equalities (\ref{2020june30eqn31}) and (\ref{2020june30eqn39}), we have
\be\label{2020july1eqn1}
\begin{split}
\p_t E_{modi;k}^{ \tilde{h};L}(t)& =   \textup{Re}\big[ \int_{\R^3} \overline{ P_k(\widetilde{   U^{ \tilde{h}^L } }  ) }   \big(e^{-it\d}\p_t  P_k \widetilde{   V^{ \tilde{h}^L }} - \big(   H_{i0} \p_{i}P_k \widetilde{\p_{t}\tilde{h}^{L}} +    H_{ij} \p_{i} {\p_{j}P_k\widetilde{\tilde{h}^{L}}}\big)\big)  d x\big]\\
& -   \int_{\R^3} { P_k(\widetilde{   \p_t { \tilde{h}^L } }  ) }   \p_i  H_{i0 } P_k \big( \widetilde{\p_t \tilde{h}^{L}} \big)   d x-   \int_{\R^3} { P_k(\widetilde{   \p_t { \tilde{h}^L } }  ) }     \p_i H_{ij } P_k \big( \p_j \widetilde{ \tilde{h}^{L}} \big)   d x \\
& +  \int_{\R^3} \frac{1}{2}  \p_{x_i} P_k (  \widetilde{\tilde{h}^L} ) \p_t H_{ij}\p_{x_j} P_k ( \widetilde{\tilde{h}^L}   )     d x+   \int_{\R^3}    \p_{x_i} P_k ( \p_t \widetilde{\tilde{h}^L} -  \widetilde{\p_t \tilde{h}^L})  H_{ij}\p_{x_j} P_k ( \widetilde{\tilde{h}^L}  ) d x.  
\end{split}
\ee

Recall (\ref{nov23eqn2}) and (\ref{2020june30eqn1}). We know that $\Lambda_{1}[H_{0i}] = -2h_{0i}$ and $\Lambda_{1}[H_{ij}] = h_{00}\delta_{ij} + h_{ij} $.  After using the   double Hodge decompositions in (\ref{doublehodge1}), we have
\be\label{nov22eqn13}
\begin{split}
 & -   \int_{\R^3} { P_k(\widetilde{   \p_t { \tilde{h}^L } }  ) }   \p_i \Lambda_{1}[ H_{i0 }] P_k \big( \widetilde{\p_t \tilde{h}^{L}} \big)   d x-   \int_{\R^3} { P_k(\widetilde{   \p_t { \tilde{h}^L } }  ) }     \p_i \Lambda_{1}[ H_{ij }] P_k \big( \p_j \widetilde{ \tilde{h}^{L}} \big)   d x\\
 & +  \int_{\R^3} \frac{1}{2}  \p_{x_i} P_k (  \widetilde{\tilde{h}^L} ) \p_t \Lambda_{1}[ H_{ij}] \p_{x_j} P_k ( \widetilde{\tilde{h}^L}   )     d x +   \int_{\R^3} \overline{ P_k(\widetilde{   U^{ \tilde{h}^L } }  ) }\big[ P_k\big(  \Lambda_{1}[H_{i0}]\p_{i} \widetilde{\p_{t}\tilde{h}^{L}}\\
 &  +   \Lambda_{1}[H_{ij}]\p_{i} {\p_{j}\widetilde{\tilde{h}^{L}}}\big)- \big(  \Lambda_{1}[H_{i0}]\p_{i}P_k \widetilde{\p_{t}\tilde{h}^{L}} +   \Lambda_{1}[H_{ij}]\p_{i} {\p_{j}P_k\widetilde{\tilde{h}^{L}}}\big)\big]  d x \\
& = \sum_{h'\in \{F, \underline{F}, \omega_{j}, \vartheta_{mn}\}} \sum_{\mu, \nu \in\{+,-\}} \textup{Re}\big[ \int_{\R^3}    \widetilde{U^{ \tilde{h}^L }_k }(t,x) \tilde{Q}^{\mu\nu}_{ h' \tilde{h}}( (U^{h'})^{\mu}(t), ( \widetilde{ U^{\tilde{h}^L }_k } )^{\nu}(t))(x) d x\big] + \mathfrak{R}_k ,
 \end{split}
\ee
where, very importantly, as a result of direct computations,   the bilinear operators $\tilde{Q}^{\mu\nu}_{h'\tilde{h}}(\cdot, \cdot)\in \mathcal{M}_{\mu\nu}^{null}, \mu, \nu\in\{+,-\} ,$ see \eqref{nullmutipliers},  have null structure. 

 The quartic and higher order term  $\mathfrak{R}_k$  reads as follows,
\[
\mathfrak{R}_k = \int_{\R^3} -\big(    P_{k}( \widetilde{   \p_t { \tilde{h}^L } }  )     )\big)^2 \p_i R_i\big(\rho-R_0 \underline{F} \big) -   \h \p_{x_i}    P_{k}( \widetilde{ \tilde{h}^{L}} )     \p_{x_j}  P_{k}( \widetilde{ \tilde{h}^{L}}  )\big[\p_t\big(   \big( \varepsilon_{ilm} R_j + \varepsilon_{jlm}R_i \big) R_l (\Omega_m^{ }-R_0\omega_m) \big)
\]
\be\label{aug8eqn41}
 +       \big( \varepsilon_{ilm} R_j + \varepsilon_{jlm}R_i \big) R_l \d^{-1}\square \omega_m \big]   +     P_k(  \widetilde{   \p_t { \tilde{h}^L } } )    \p_{j}  P_{k}(   \widetilde{ \tilde{h}^{L}}  ) \p_i  \big(   \big( \varepsilon_{ilm} R_j + \varepsilon_{jlm}R_i \big) R_l (\Omega_m^{ }-R_0\omega_m) \big) d x. 
\ee

Now, we study the first line in (\ref{nov22eqn13}). Recall (\ref{2020june30eqn1}).  From  (\ref{2020april7eqn6}), and  (\ref{2020april14eqn23}), we classify the   nonlinearity $ \mathcal{N}_{vl;m}^{  {\tilde{h} ;L }}$ of $e^{-it\d}\p_t  P_k \widetilde{   V^{ \tilde{h}^L }}$ based on the order of nonlinearity as follows,
\[
 \textup{Re}\big[ \int_{\R^3} \overline{ P_k(\widetilde{   U^{ \tilde{h}^L } }  ) }  \mathcal{N}_{vl;m}^{  {\tilde{h} ;L }}(t,x) d x \big] =     -\sum_{\mathcal{L}\in \mathcal{P}_{a}^{b },|\mathcal{L}|\leq |L|   }    \textup{Re}\big[\int_{\R^3} \int_{\R^3} \overline{  \widehat{\widetilde{   U^{ \tilde{h}^L } } }  (t, \xi)}
\]
\[
 \times (\psi_k(\xi))^2         e^{ - i t\hat{v}\cdot \xi} {b}^{L\mathcal{L} }_{\alpha\beta}(v) \frac{ k^{\tilde{h}}_{\alpha\beta}(\xi)  \p_t \widehat{u^{\mathcal{L}}}(t, \xi, v)}{i2\big(|\xi|-\hat{v}\cdot\xi\big)} d \xi d  v  \big]  
  =\sum_{i=1,\cdots, 4} \textup{Re}\big[E_{\tilde{h};L}^{k;i}(t)\big],
\] 
where
\be\label{2020july1eqn2}
\begin{split}
   E_{\tilde{h};L}^{k;1}(t)&:=  \int_{\R^3} \int_{\R^3} \overline{  \widehat{   \widetilde{ U^{L\tilde{h} }  } }(t, \xi) }    {b}^{L\mathcal{L};\tilde{h}}_{\alpha\beta}(v) \frac{k_{\alpha\beta}(\xi) \xi\cdot \big( \widehat{\Lambda_{2}[  \mathfrak{N}_1^{\mathcal{L
 }}  ]}(t, \xi, v) +\widehat{   \mathfrak{D}_1^{\mathcal{L
 }}   }(t, \xi, v)\big) }{ 2\big(|\xi|-\hat{v}\cdot\xi\big)} 
  \big(\psi_k(\xi)\big)^2 d v d\xi,\\ 
 E_{\tilde{h};L}^{k;2}(t)&:= \int_{\R^3} \int_{\R^3} \overline{  \widehat{   \widetilde{ U^{L\tilde{h} }  } }(t, \xi) }    \big( \widehat{  \mathfrak{N}_2^{\mathcal{L
 }}}  (t, \xi, v) + \widehat{  \mathfrak{D}_2^{\mathcal{L
 }}}  (t, \xi, v)  \big)\cdot  \nabla_v \Big( \frac{k_{\alpha\beta}(\xi)    {b}^{L\mathcal{L};\tilde{h}}_{\alpha\beta}(v)  }{ 2\big(|\xi|-\hat{v}\cdot\xi\big)} \Big)
  \big(\psi_k(\xi)\big)^2  d v d\xi,\\
 E_{\tilde{h};L}^{k;3}(t)&:=\sum_{i=3,4}  \int_{\R^3} \int_{\R^3} \overline{  \widehat{   \widetilde{ U^{L\tilde{h} }  } }(t, \xi) }   {b}^{L\mathcal{L};\tilde{h}}_{\alpha\beta}(v) \frac{k_{\alpha\beta}(\xi) \big(2 \widehat{ \Lambda_{\geq 3}[\mathfrak{N}_i^{\mathcal{L
 }}] }(t, \xi, v) + \widehat{  \mathfrak{D}_3^{\mathcal{L
 }}  }(t, \xi, v)\big) }{i4\big(|\xi|-\hat{v}\cdot\xi\big)} 
  \big(\psi_k(\xi)\big)^2 d v d\xi,\\
 E_{\tilde{h};L}^{k;4}(t)&:=\int_{\R^3} \int_{\R^3} \overline{  \widehat{   \widetilde{ U^{L\tilde{h} }  } }(t, \xi) }     {b}^{L\mathcal{L};\tilde{h}}_{\alpha\beta}(v) \frac{k_{\alpha\beta}(\xi) \xi\cdot   \big( \widehat{\Lambda_{\geq 3}[  \mathfrak{N}_1^{\mathcal{L
 }}  ]}(t, \xi, v) \big)}{ 2\big(|\xi|-\hat{v}\cdot\xi\big)} 
  \big(\psi_k(\xi)\big)^2 d v d\xi.
\end{split}
\ee

From (\ref{2020july1eqn1}), (\ref{nov22eqn13}), and (\ref{2020july1eqn2}),  based on the types and the orders of inputs,  we  classify   $  \p_t  \tilde{E}_{modi;k}^{  \tilde{h};L }(t)  $ as follows, 
\[
  2    \p_t    \tilde{E}_{modi;k}^{  \tilde{h};L }(t)     = C_{vlasov}^{\tilde{h}^L;k}(t) + R_{vlasov}^{ \tilde{h}^L;k}(t)+\int_{\R^3} \textup{Re}\big[ C_{metric}^{ \tilde{h}^L;k}(t, x)\big] d x    +   R_{metric}^{ \tilde{h}^L;k}(t), 
\]
where $C_{vlasov}^{\tilde{h}^L;k}(t)$ denotes wave-wave-Vlasov type trilinear interaction, $R_{vlasov}^{ \tilde{h}^L;k}(t)$ denotes the quartic and higher order interaction that involves the Vlasov part, $C_{metric}^{ \tilde{h}^L;k}(t, x)$ denotes wave-wave-wave type trilinear interaction, and $R_{metric}^{ \tilde{h}^L;k}(t)$ denotes the quartic and higher order purely wave-type interaction. More precisely, the detailed formulas are given as follows, 
\[
C_{vlasov}^{\tilde{h}^L;k}(t) =   E_{\tilde{h};L}^{k;1}(t) +   \textup{Re}\big[ \int_{\R^3} \overline{ P_k(\widetilde{   U^{ \tilde{h}^L } }  ) }    P_k\big( \Lambda_{  2}[ N_{vl}^{  {\tilde{h} ;L }}  ] \big) d  x\big]
\]
\be\label{2022april30eqn1}
 +    \int_{\R^3} { P_k(\widetilde{   \p_t { \tilde{h}^L } }  ) }   \big[ \Lambda_{1}[ H_{ij }] P_k \big(\p_i\p_j ( {   \tilde{h}^{L}}- \widetilde{  \tilde{h}^{L}})  \big)+      \Lambda_{1}[   H_{i0 } ] P_k \big(\p_i ( {\p_t \tilde{h}^{L}}- \widetilde{\p_t \tilde{h}^{L}})  \big)\big]   d x, 
 \ee
\be\label{2022april30eqn2}
    R_{vlasov}^{ \tilde{h}^L;k}(t) =      \textup{Re}\big[ \int_{\R^3} \overline{ P_k(\widetilde{   U^{ \tilde{h}^L } }  ) }    P_k\big( \Lambda_{ \geq 3}[ N_{vl}^{  {\tilde{h} ;L }}  ]\big) d  x\big] +  \sum_{i=2,3,4} E_{\tilde{h};L}^{k;i}(t) ,
\ee
\[
 C_{metric}^{\tilde{h}^L;k}(t,x) = \sum_{h'\in \{F, \underline{F}, \omega_{j}, \vartheta_{mn}\}} \sum_{\mu, \nu \in\{+,-\}}     \widetilde{U^{ \tilde{h}^L }_k }(t,x) \tilde{Q}^{\mu\nu}_{ h' \tilde{h}}( (U^{h'})^{\mu}(t), ( \widetilde{ U^{\tilde{h}^L }_k } )^{\nu}(t))(x) 
 \]
 \be\label{2022april30eqn3}
+   \overline{ P_k(\widetilde{   U^{ \tilde{h}^L } }  ) }   P_k(    Q_{mt  }^{ \tilde{h};L}) (t,x) ,
\ee
\begin{multline}\label{nov14eqn48}
 R_{metric}^{ \tilde{h}^L;k}(t)=  \mathfrak{R}_k   +    \textup{Re}\big[ \int_{\R^3} \overline{ P_k(\widetilde{   U^{ \tilde{h}^L } }  ) }   P_k\big(   R_{mt;s}^{\tilde{h};L }\big) d x\big]  -   \int_{\R^3} \big({ P_k(\widetilde{   \p_t { \tilde{h}^L } }  ) }\big)^2   \p_i \Lambda_{\geq 2}[ H_{i0 }]      \\
 -     { P_k(\widetilde{   \p_t { \tilde{h}^L } }  ) }     \p_i \Lambda_{\geq 2}[ H_{ij }] P_k \big( \p_j \widetilde{ \tilde{h}^{L}} \big)   d x+   \int_{\R^3} \overline{ P_k(\widetilde{   U^{ \tilde{h}^L } }  ) } \big( P_k( \Lambda_{1}[H_{i\gamma}]\p_{i} {\p_{\gamma}\tilde{h}^{L}}  )-   \Lambda_{1}[H_{i\gamma}] P_k(\p_{i} {\p_{\gamma}\tilde{h}^{L}}) d x \\
 +  \int_{\R^3} \frac{1}{2}  \p_{x_i} P_k (  \widetilde{\tilde{h}^L} ) \Lambda_{\geq 2}[\p_t   H_{ij}] \p_{x_j} P_k ( \widetilde{\tilde{h}^L}   )     d x +   \int_{\R^3} { P_k(\widetilde{   \p_t { \tilde{h}^L } }  ) }\big[\Lambda_{\geq 2}[   H_{i0 } ] P_k \big(\p_i ( {\p_t \tilde{h}^{L}}- \widetilde{\p_t \tilde{h}^{L}})  \big)  \\
       +       \Lambda_{\geq 2}[ H_{ij }] P_k \big(\p_i\p_j ( {   \tilde{h}^{L}}- \widetilde{  \tilde{h}^{L}})  \big)  \big] d x  
+  \int_{\R^3}    \p_{x_i} P_k ( \p_t \widetilde{\tilde{h}^L} -  \widetilde{\p_t \tilde{h}^L})  H_{ij}\p_{x_j} P_k ( \widetilde{\tilde{h}^L}  ) d x.
\end{multline}

In this section, we aim to prove the following proposition. 
\begin{proposition}\label{energyestwave}
Under the bootstrap assumptions \textup{(\ref{BAmetricold}) and (\ref{BAvlasovori})}, the following estimate holds for any $L \in   P_n, n\leq N_0 $, $\tilde{h}\in \{  F,  \underline{F},  \omega_j,  \vartheta_{mn}\},$ and any $t_1, t_2 \in [2^{m-1}, 2^m]\subset [0, T],$
\be\label{aug19eqn70}
 \big| E_{modi }^{ \tilde{h};L}(t_2)- E_{modi }^{ \tilde{h};L}(t_1)\big| \lesssim  2^{2  e(L;\tilde{h})\delta_1 m -\delta_1 m/300   } \epsilon_1^3.   
\ee
\end{proposition}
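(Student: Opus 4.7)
\medskip

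\noindent\textbf{Proof proposal.} The plan is to integrate the identity
\[
2\partial_t \tilde{E}^{\tilde{h};L}_{modi;k}(t)=C^{\tilde{h}^L;k}_{vlasov}(t)+R^{\tilde{h}^L;k}_{vlasov}(t)+\int_{\R^3}\mathrm{Re}\,C^{\tilde{h}^L;k}_{metric}(t,x)\,dx+R^{\tilde{h}^L;k}_{metric}(t)
\]
in time on $[t_1,t_2]\subset[2^{m-1},2^m]$, multiply by the weight $2^{-k_-+2\gamma k_-+2(N_0-|L|)k_+}$, and take the supremum in $k$. Since the claimed bound has size $2^{2e(L;\tilde h)\delta_1 m-\delta_1 m/300}\epsilon_1^3$ and the time-window has length $2^m$, it suffices to prove, for each of the four types above, a pointwise-in-time bound of size $2^{-(1+\delta_1/200)m}2^{2e(L;\tilde h)\delta_1 m}\epsilon_1^3$ after weighting and summing/taking sup in $k$. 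I will split the argument into the wave--Vlasov terms and the purely wave (Einstein-vacuum--like) terms, then conclude by integration.

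\medskip
\noindent\emph{Step 1: Remainder terms $R^{\tilde h^L;k}_{metric}$ and $R^{\tilde h^L;k}_{vlasov}$.} These collect quartic and higher order contributions, commutator terms from $P_k$ hitting $\Lambda_1[H_{i\gamma}]$, the quasilinear correction from $\partial_t\widetilde{\tilde h^L}-\widetilde{\partial_t\tilde h^L}$ (cf.\ \eqref{2022may17eqn33}, \eqref{2020june30eqn32}), the cubic metric correction $\mathfrak R_k$ in \eqref{aug8eqn41}, and $E^{k;i}_{\tilde h;L}$ for $i=2,3,4$ in \eqref{2020july1eqn2}. The energy factor $\|\overline{P_k(\widetilde{U^{\tilde h^L}})}\|_{L^2}$ in each inner product is bounded by $2^{k_-/2-\gamma k_-+(|L|-N_0)k_+}2^{e(L;\tilde h)\delta_1 m}\epsilon_1$ from the bootstrap. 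Combining with the Cauchy--Schwarz in $k$, all such terms reduce to estimating the remaining factor in $\|\cdot\|_{E_{l}}$-type norms. For $R^{\tilde h^L;k}_{metric}$ this is accomplished by the $L^2-L^\infty-L^\infty$ trilinear estimate built from Lemmas \ref{basicestimates}, \ref{highorderterm1}, \ref{highorderterm2}, \ref{cubicandhighermetric}, which give gain $2^{-2m}$ per instantaneous estimate; for $R^{\tilde h^L;k}_{vlasov}$, the ingredients are Lemmas \ref{estimateofremaindervlasov}, \ref{estimateofremaindervlasov3}, Lemma \ref{resonancevlasov} (the estimate \eqref{july11eqn32} already incorporates the factor $|\xi|-\hat v\cdot\xi$ in the denominator through the modified profile), and Lemma \ref{cubicandhigherrough}. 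All bounds contain at least an extra $2^{-m/6}$ beyond $2^{-m}$, more than enough to absorb into the target $2^{-\delta_1 m/300}$ slack.

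\medskip
\noindent\emph{Step 2: The cubic wave--Vlasov term $C^{\tilde h^L;k}_{vlasov}$.} Recall \eqref{2022april30eqn1}: this contains $E^{k;1}_{\tilde h;L}$, the quadratic part of $N^{\tilde h;L}_{vl}$, and two terms coupling the quasilinear coefficient $\Lambda_1[H_{ij}]$ with the density correction $\tilde h^L-\widetilde{\tilde h^L}$ (and the $\partial_t$ analogue). For the first two, the key is that after the modification \eqref{2020april1eqn4} the linear density has been eliminated, so the coupling is already quadratic in the Vlasov profile; applying Lemma \ref{quadraticvlasovrough} (with the refinement coming from the phase $|\xi|-\hat v\cdot\xi$ and Lemma \ref{wavevlabil6sep}) yields the gain $2^{-m+k_-}+2^{-7m/6}$ after weighting. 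The quasilinear correction pieces are handled by the fixed-time bound \eqref{2022march19eqn30} of Lemma \ref{basicestimates} which bounds $U-\widetilde U$ in $L^2$ with two derivatives gained, then combined with the $L^\infty$ control of $\Lambda_1[H_{ij}]$ from \eqref{2020julybasicestiamte}. All contributions integrate to the required bound.

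\medskip
\noindent\emph{Step 3: The cubic purely wave term $C^{\tilde h^L;k}_{metric}$.} This is the main obstacle and is where the split between good components and $\underline F$, encoded by $\beta(n)$ in $e(L;\tilde h)$, arises. Decompose $U^{h'}=\widetilde{U^{h'}}+\tilde\rho^{h'}$ (cf.\ \eqref{2022march19eqn16}), and use the classification \eqref{nov7eqn21}--\eqref{2020june17eqn23} to expand $Q^{\tilde h;L}_{mt}$ into the essential, Vlasov-coupling and error parts. For the essential part, Lemma \ref{nullstructurefact} provides null structure in $\mathcal M^{null}_{\mu\nu}$ for all pieces \emph{except} $qp^{\vartheta_{mn},\vartheta_{pq}}_{\underline F;\mu\nu}\in\tilde{\mathcal M}^{null}_{-\nu}$, which gives only a weak null form. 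In the good case $\tilde h\in\{F,\omega_j,\vartheta_{mn}\}$, every symbol lies in $\mathcal M^{null}_{\mu\nu}$, so after dyadic decomposition the bilinear estimates of Lemma \ref{2020junebilinearestimate}, combined with the close-to-light-cone decay \eqref{2020june20eqn25}, \eqref{2020june21eqn2} and the bookkeeping function $\beta(n)$, deliver an integrand $\lesssim 2^{-m}2^{(H(|L|)-\beta(|L|))\delta_1 m}2^{\beta(|L|)\delta_1 m}\epsilon_1^3=2^{-m}2^{H(|L|)\delta_1 m}\epsilon_1^3$, and $2^{H(|L|)-\beta(|L|)}=2^{e(L;\tilde h)}$ in this case. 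For $\tilde h=\underline F$, the $Q(\vartheta,\vartheta)$ term forces the weaker bound $2^{-m}2^{(H(|L|)+\beta(|L|))\delta_1 m}\epsilon_1^3$ (cf.\ the estimate \eqref{2022march20eqn11} and its companion \eqref{nov8eqn72}), but now $e(L;\underline F)=H(|L|)$ absorbs this. The Vlasov-coupling and error pieces were already estimated in Lemmas \ref{resonancevlasov}, \ref{quadraticvlasovrough}, \ref{quadratichigherein} with uniform gain $2^{-m/40}$ beyond the basic $2^{-m}$.

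\medskip
\noindent\emph{Step 4: Conclusion.} Time integration over $[t_1,t_2]$ of length $\le 2^m$ converts the instantaneous bound $2^{-m}2^{2e(L;\tilde h)\delta_1 m-\delta_1 m/200}\epsilon_1^3$ into the claimed \eqref{aug19eqn70}. The main obstacle throughout is the $Q(\vartheta,\vartheta)$ weak null term in the $\underline F$ equation, which is responsible for the necessity of the two-tier energy hierarchy $H,\beta$ in \eqref{2022may3eqn11} and the only place where the growth rate $\delta_1$ actually appears; in the present paper this estimate is structurally identical to the one in \cite{IP} for Einstein--Klein--Gordon, the new ingredient being merely the verification that the Vlasov coupling contributes strictly faster-decaying corrections, which was established in Steps 1--2.
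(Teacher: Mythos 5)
Your Step 1 (remainder terms $R_{metric}$, $R_{vlasov}$) and your overall decomposition of $\partial_t E^{\tilde h;L}_{modi;k}$ into $C_{vlasov}+R_{vlasov}+C_{metric}+R_{metric}$ match the paper. However, Steps 2 and 3 contain a genuine gap: you reduce the cubic terms $C_{vlasov}$ and $C_{metric}$ to pointwise-in-time bounds of size $2^{-(1+\delta_1/200)m}2^{2e\delta_1 m}\epsilon_1^3$ and then multiply by the window length $2^m$, but such pointwise bounds are false and cannot be derived from the ingredients you cite.

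For $C_{vlasov}$, the pointwise estimates you invoke (Lemma \ref{quadraticvlasovrough}, Lemma \ref{resonancevlasov}, Lemma \ref{wavevlabil6sep}) all carry a factor $2^{d(|L|+1,l)\delta_0 m}$ coming from the Vlasov bootstrap, and $\delta_0=100\delta_1$. After pairing with the energy factor $2^{e(L;\tilde h)\delta_1 m}\epsilon_1$ and integrating over $2^m$, the $2^{-m+k_-}$ part of \eqref{july5eqn30} gives $\lesssim 2^{d\delta_0 m+e\delta_1 m}\epsilon_1^3 \gg 2^{2e\delta_1 m-\delta_1 m/300}\epsilon_1^3$, so the bound is lost by a factor $2^{(d\delta_0-e\delta_1)m}$. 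The only way to downgrade the $\delta_0$-growth of the Vlasov energy to the $\delta_1$-growth of the metric profile is to exploit the non-time-resonance of the phase $|\xi|-\mu|\eta|-\hat v\cdot(\xi-\eta)$ and integrate by parts in time — this is precisely the content of Proposition \ref{waveVlaovestimate} and Lemma \ref{oscillationwavevlasov}, which the paper's proof invokes and which your Step 2 replaces with a pointwise estimate that does not close.

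For $C_{metric}$, your claimed integrand bound $2^{-m}2^{(H-\beta)\delta_1 m}2^{\beta\delta_1 m}\epsilon_1^3=2^{-m}2^{H(|L|)\delta_1 m}\epsilon_1^3$ undercounts the trilinear factors. After putting one input on the light cone via \eqref{2020june20eqn25} or \eqref{2020june21eqn2}, you get $\lesssim 2^{-m}\cdot A_{out}\cdot C_1\cdot A_2$, and with $A_{out}\approx 2^{e(L;\tilde h)\delta_1 m}$, $C_1\approx 2^{H(|L_1|+1)\delta_1 m}$, $A_2\approx 2^{H(|L_2|)\delta_1 m}$, the pointwise integrand is at best $2^{-m+(e+H(|L|)+H(1))\delta_1 m}\epsilon_1^3$ by \eqref{july15eqn21}, which after time integration gives $2^{(2H-\beta+H(1))\delta_1 m}\epsilon_1^3$ — short by a positive power $2^{(\beta+H(1)+1/300)\delta_1 m}$ of the target $2^{2(H-\beta)\delta_1 m-\delta_1 m/300}$. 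To close, the paper's Lemma \ref{2022maybilinearest} performs an angular decomposition with threshold $\bar l_{-;\pm}=-2\beta(|L|)\delta_1 m-\delta_1 m/100$: in the small-angle region the null symbol gains $2^{\bar l}$, while in the large-angle region the phase is nonzero of size $\gtrsim 2^{k_2+2l}$ and one integrates by parts in time, using the paralinearized quasilinear variable $\widetilde{\mathcal U^{\tilde h^L}}$ of \eqref{2020sep9eqn51}–\eqref{2020july21eqn30} to avoid losing derivatives under the $\partial_t$. None of this appears in your Step 3; you also omit the separate treatment of the very low output frequency and low input frequency regimes (Lemmas \ref{energycubictermslow}, \ref{lowinputfrequency}), where the weighted energy space $E_l$ behaves differently. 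In short, the decomposition into four pieces and the remainder estimates are right, but the hard part — the time-oscillation/normal-form argument for the cubic terms — is missing, and the factor-counting that would make a pointwise argument plausible is off.
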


By assuming the estimates of the wave-wave-wave type interaction and the wave-wave-Vlasov type interaction, which are postponed in next two subsections, we first give a proof of the above Proposition.

\noindent \textit{Proof of Proposition} \ref{energyestwave}. 

Recall (\ref{nov14eqn48}). From the estimates  in Lemma \ref{energycubictermslow},    the estimate (\ref{2020july8eqn91}) in Proposition  \ref{theessentialcubic}, and  the estimate (\ref{2020july9eqn31}) in Proposition \ref{waveVlaovestimate},   we have
\[
  \sum_{k\in \Z}2^{-k_{-}+2\gamma k_{-}+ 2 (N_0-|L|) k_{+}   }\big[\big| \int_{t_1}^{t_2} \int_{\R^3}   C_{metric}^{ \tilde{h}^L;k}(t, x)  d x   d t  \big| + \big| \int_{t_1}^{t_2} \int_{\R^3}  C_{vlasov}^{ \tilde{h}^L;k}(t,x)    d x  d t\big|\big]
\]
\be\label{2020july8eqn92}
\lesssim 2^{ 2 e(L;\tilde{h})\delta_1 m -\delta_1 m/300   } \epsilon_1^3. 
\ee
It remains to estimate  $R_{vlasov}^{L\tilde{h};k}(t)$ and  $R_{metric}^{L\tilde{h};k}(t)$.

\noindent $\bullet$\qquad The estimate of $R_{vlasov}^{L\tilde{h};k}(t)$. \qquad Recall (\ref{nov14eqn48}), (\ref{aug11eqn21}), and (\ref{2020april3eqn1}). From the $L^2-L^2$ estimate and the estimate (\ref{july3eqn61}) in Lemma \ref{cubicandhigherrough}, the following estimate holds,
\[
 \sum_{k\in\Z} 2^{-k_{-}+2\gamma k_{-}+ 2 (N_0-|L|) k_{+}}  \big| \int_{\R^3} \overline{ P_k(\widetilde{   U^{ \tilde{h}^L } }  ) }    P_k\big( \Lambda_{ \geq 3}[ N_{vl}^{  {\tilde{h} ;L }}  ]\big) d  x\big| 
\]
\be\label{aug24eqn30}
\lesssim 2^{  H(|L| )\delta_1 m }  \epsilon_1   \| \Lambda_{\geq 3}[   L  \mathcal{N}_{\alpha\beta}^{vl} ]\|_{E_n}\lesssim 2^{-4m/ 3 }\epsilon_1^4.
\ee

Recall (\ref{2020july1eqn2}). From the $L^2-L^2$ type bilinear estimate, the volume of support of $\xi$, and the estimate (\ref{aug9eqn87}) in Lemma \ref{estimateofremaindervlasov}, and the estimate (\ref{2020april14eqn21}), the estimate (\ref{2020april14eqn3}) in Lemma  \ref{estimateofremaindervlasov3},  the following estimate holds,  
\[
  \sum_{i=2,3,4} \sum_{k\in \Z} 2^{-k_{-}+2\gamma k_{-}+ 2 (N_0-|L|) k_{+}}    |E_{\tilde{h};L}^{k;i}(t)  |  \lesssim \sum_{i=1,3,4} \sum_{j=2,3}   2^{H(|L|)\delta_1 m }\epsilon_1   \big( \|(1+|x|)^2(1+|v|)^{10} \mathfrak{N}_2^{\mathcal{L}} (t,x,v) \|_{L^2_{x,v}}
 \]
 \[
  + \|(1+|x|)^2(1+|v|)^{10}\Lambda_{\geq3}[ \mathfrak{N}_i^{\mathcal{L}} ](t,x,v) \|_{L^2_{x,v}} + \|(1+|x|)^2(1+|v|)^{10}   \mathfrak{D}_j^{\mathcal{L}}  (t,x,v) \|_{L^2_{x,v}} \big)
 \]
 \be\label{2020may15eqn41}
 \lesssim 2^{-7m/6 + 3d(N_0+3,0)\delta_0 m } \epsilon_1^4. 
\ee
Combining the estimates (\ref{aug24eqn30}) and (\ref{2020may15eqn41}), we have
\be\label{2020july1eqn10}
\sum_{k\in \mathbb{Z}} 2^{-k_{-}+2\gamma k_{-}+ 2(N_0-|L|)k_{+}}   | R_{vlasov}^{ \tilde{h}^L;k}(t)|  \lesssim  2^{-7m/6 + 3d(N_0+3,0)\delta_0 m } \epsilon_1^4.
\ee

\noindent $\bullet$\qquad The estimate of $R_{metric}^{L\tilde{h};k}(t)$.\qquad 
 We first estimate $\mathfrak{R}_k^3$. Recall  (\ref{aug8eqn41}). From the $L^2-L^2-L^\infty$ type multilinear estimate, the following estiamte holds for any fixed $k\in \Z$,
 \[
  2^{-k_{-}+2\gamma k_{-}+ 2 (N_0-|L|) k_{+}}  | \mathfrak{R}_k^3| 
 \]
\be\label{aug25eqn9}
 \lesssim  \sum_{ k_1\in \mathbb{Z}}  2^{  2H(|L|)\delta_1 m } \big(\|\p_{i}P_{k_1}(\rho-R_0 \underline{F})\|_{L^\infty_x} + \|\p_{\alpha}P_{k_1}(\Omega_m-R_0 \omega_m )\|_{L^\infty_x} + \| \d^{-1} P_{k_1}(\square \omega_m) \|_{L^\infty_x} \big).
\ee
From  the relations between double Hodge decomposition variable in (\ref{april1eqn11})  and the equalities (\ref{aug25eqn1}) and (\ref{aug25eqn2}), we have
\[
\| \p_{i}P_{k_1}(\rho-R_0 \underline{F})\|_{L^\infty_x} + \| \p_{\alpha}P_{k_1}(\Omega_m-R_0 \omega_m)\|_{L^\infty_x} +  \| \d^{-1} P_{k_1}(\square \omega_m) \|_{L^\infty_x}\]
\[
\lesssim  \|\p_tP_{k_1} \tau(t)\|_{L^\infty_x} + \|E_\mu(t)\|_{L^\infty_x} + \|\d^{-1}\p_t E_\mu (t)\|_{L^\infty_x} \]
\be\label{aug25eqn10}
 \lesssim \min\{2^{-k_1}\big(\|\p_t E_0\|_{L^\infty_x } + \|\square h_{0i}\|_{L^\infty_x} \big)+  \|E_0(t)\|_{L^\infty_x}, 2^{k_1/2} \big(\|\p_t E_0\|_{L^2_x } + \|\square h_{0i}\|_{L^2_x} \big) + 2^{3k_1/2} \|E_0(t)\|_{L^2_x} \}.
\ee

From the estimates (\ref{oct7eqn73}) and (\ref{oct9eqn11}) in Lemma \ref{harmonicgaugehigher} , the linear decay estimate (\ref{july27eqn4}) in Lemma \ref{superlocalizedaug}, which is used in the close to the light cone case, the equality (\ref{definitionofcoefficient1}) in the far away from    the light cone case,   and the estimate (\ref{oct4eqn41}) in Lemma \ref{fixedtimenonlinarityestimate}, we have
\be\label{aug25eqn15}
 (\textup{\ref{aug25eqn10}})\lesssim  \sum_{k_1\in \mathbb{Z}} \min\{  2^{-k_1 -2m  }  + 2^{-2m    } , 2^{k_1/2- m  } + 2^{3k_1/2- m  }  \}2^{ 2H( 1)\delta_0 m}\epsilon_1^2\lesssim 2^{-4m/3  + 2H( 1)\delta_0 m}\epsilon_1^2.
\ee
From the above estimate (\ref{aug25eqn15}) and the estimate (\ref{aug25eqn9}), we have
\be\label{aug25eqn16}
\sum_{k\in \mathbb{Z}}   2^{-k_{-}+2\gamma k_{-}+ 2 (N_0-|L|) k_{+}} | \mathfrak{R}_k^3| \lesssim 2^{-4m/3  + 2H( 1)\delta_0 m +2 H(|L|)\delta_0 m }\epsilon_1^3. 
\ee

Recall (\ref{2022may17eqn33}) and (\ref{2020june30eqn1}). Note that, the imaginary part of $ e^{-it\d} \p_t  \widetilde{   V^{  \tilde{h}^L} }$ contains only $ \mathcal{N}_{vl;m}^{  {\tilde{h} ;L }}$, which is semilinear. From the $L^2$-estimate of  $ \mathcal{N}_{vl;m}^{  {\tilde{h} ;L }}$   (\ref{july11eqn32}) in Lemma  \ref{resonancevlasov} and the $L^2-L^2-L^\infty$ type multilinear estimate, we have
\be\label{2020july2eqn1}
\sum_{k\in \mathbb{Z}}   2^{-k_{-}+2\gamma k_{-}+ 2 (N_0-|L|) k_{+}} \big|  \int_{\R^3}    \p_{x_i} P_k ( \p_t \widetilde{\tilde{h}^L} -  \widetilde{\p_t \tilde{h}^L})  H_{ij}\p_{x_j} P_k ( \widetilde{\tilde{h}^L}  ) d x \big| \lesssim 2^{-2m+ 2d(N_0+3,0)\delta_1 m }\epsilon_1^3. 
\ee

Recall (\ref{nov14eqn48}).   From the above estimates (\ref{aug25eqn16}) and (\ref{2020july2eqn1}), the $L^2-L^2$ type bilinear estimate, the $L^2-L^2-L^\infty$ type multilinear estimate,  the estimate  (\ref{oct1eqn31})  in Lemma \ref{cubicandhighermetric},  and  the estimate (\ref{june30eqn1})  in Lemma \ref{highorderterm2}, we have
\be\label{aug25eqn20}
\sum_{k\in \mathbb{Z}}   2^{-k_{-}+2\gamma k_{-}+ 2 (N_0-|L|) k_{+}} \big| R_{metric}^{L\tilde{h};k}(t)\big| \lesssim  2^{-m-\delta_1 m   } \epsilon_1^4. 
\ee  
 To sum up, our desired estimate (\ref{aug19eqn70}) holds from (\ref{2020july8eqn92}), (\ref{2020july1eqn10}) and (\ref{aug25eqn20}).

\qed

Now, the goal is reduced to estimate  the contribution from the wave-wave-Vlasov type interaction, i.e., the estimate of $C_{vlasov}^{ \tilde{h}^L;k}(t, x)$ and the wave-wave-wave type interaction, i.e., the estimate $C_{metric}^{ \tilde{h}^L;k}(t, x)$. We first do further reduction for  the estimate of $C_{metric}^{ \tilde{h}^L;k}(t, x)$ and then analyze these two types of interaction in the next two subsections in details. 

We first rule out the very low output frequency case and the case when $x$ is  far away from the light cone. The main tool we will use is the Lemma \ref{lowfrequencyinputestimate}, which allows us to exploit the difference of hierarchy we set for the growth rate of energy of the  different order of vector fields.  

Let 
\be\label{outputthre}
\alpha_{out}(|L|):= \left\{\begin{array}{ll}
( H(|L|)-H(|L|-1)-  2 \beta(|L|))/2 - 1/50 & \textup{if\,} |L|\geq 1,\\ 
-10 &  \textup{if\,} |L|=0, 
\end{array}\right. 
\ee
where $\beta(|L|)$ is defined in (\ref{2022may3eqn11}).  

In the following Lemma, we first rule out the very low output frequency case. 

\begin{lemma}\label{energycubictermslow}
Under the bootstrap assumptions \textup{(\ref{BAmetricold})} and \textup{(\ref{BAvlasovori})}, the following estimate holds for any $t\in [2^{m-1}, 2^m]\subset [0,T]$, $\tilde{h}\in \{F, \underline{F}, \omega_j, \vartheta_{mn}\}$, $L\in \cup_{n\leq N_0 }P_n, k\in \mathbb{Z}$,  
\be\label{nov14eqn84}
\sum_{k\in \mathbb{Z}, k\leq-m + \alpha_{out}(|L|)\delta_1 m}  2^{-k_{-}+2\gamma k_{-}+ 2 (N_0-|L|) k_{+}} \big| \int_{\R^3}   C_{metric}^{\tilde{h}^L;k}(t,x) d x \big| \lesssim 2^{-m + 2   e(L;\tilde{h})\delta_1   m   -\delta_1 m/100} \epsilon_1^3, 
\ee
\[
\sum_{k\in \mathbb{Z}   }  2^{-k_{-}+2\gamma k_{-}+ 2 (N_0-|L|) k_{+}} \big| \int_{\R^3}   C_{metric}^{\tilde{h}^L;k}(t,x) \big(\psi_{ < m-4}(x) + \psi_{> m+1}(x)\big) d x \big|
\]
\be\label{2020june24eqn31}
 \lesssim 2^{-m + 2   e(L;\tilde{h})\delta_1 m - \delta_1 m } \epsilon_1^3. 
\ee
\end{lemma}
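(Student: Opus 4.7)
Both estimates concern the essentially trilinear expression
$C_{metric}^{\tilde{h}^L;k}$ in \eqref{2022april30eqn3}, which couples one ``outer'' factor
$\widetilde{U^{\tilde{h}^L}_k}$ to a null-form bilinear piece
$\tilde{Q}^{\mu\nu}_{h'\tilde{h}}((U^{h'})^\mu,(\widetilde{U^{\tilde{h}^L}_k})^\nu)$
(plus the analogous metric cubic $Q_{mt}^{\tilde{h};L}$). The strategy is to first establish
the far-from-cone bound \eqref{2020june24eqn31} using the super-localized linear decay of
Lemma \ref{superlocalizedaug}, and then bootstrap off of it to get the very low-output-frequency
bound \eqref{nov14eqn84} by reducing to $|x|\le 2^{m+2}$ and invoking
Lemma \ref{lowfrequencyinputestimate}.

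\textbf{Second estimate (far from the light cone).} I split $x$ into the deep interior
$|x|\le 2^{m-4}$ and the exterior $|x|\ge 2^{m+1}$. On both regions I plan to put the outer
factor $\widetilde{U^{\tilde{h}^L}_k}$ in $L^2$ using the bootstrap bound
$\|\widetilde{V^{\tilde{h}^L}}\|_{E_l}\lesssim 2^{e(|L|;\tilde{h})\delta_1 m}\epsilon_1$,
then control the bilinear piece in $L^2$ with one input placed in $L^\infty$ via the
away-from-the-cone linear decay estimates \eqref{july27eqn4}, \eqref{linearwavedecay2} of
Lemma \ref{superlocalizedaug}. These give the $|x|^{-1}\lesssim 2^{-m}$ gain far from the cone,
which combined with the standard $2^{-m}$ already present in the $L^\infty$ decay yields a net
$2^{-2m}$, comfortably absorbing the loss $2^{2e(|L|;\tilde{h})\delta_1 m}$ from the two
$\widetilde{U^{\tilde{h}^L}}$ factors. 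The null multiplier symbol bounds from
Lemma \ref{nullstructurefact} play no role beyond keeping kernels bounded; one finally sums
over $k$ using the usual $2^{-k_-/2+\gamma k_-}$ low-frequency trick and the polynomial weight
in $k_+$ absorbed by the $N_0$-loss in the $L^2$ bootstrap.

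\textbf{First estimate (very low output frequency).} For $k\le -m+\alpha_{out}(|L|)\delta_1 m$
I cut $x$ using $\psi_{\le m+2}(|x|)+\psi_{>m+2}(|x|)$; the contribution of the second piece is
already dominated by \eqref{2020june24eqn31}. On $|x|\le 2^{m+2}$ I apply Cauchy--Schwarz to
isolate $\|P_k(\widetilde{U^{\tilde{h}^L}})\psi_{\le m+2}\|_{L^2}$ and bound the remainder in
$L^2$ via standard $L^2$-$L^\infty$ bilinear estimates, putting the metric factor with the
highest vector-field count in $L^2$ and the other in $L^\infty$ through the decay
\eqref{2020julybasicestiamte}. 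The decisive input is Lemma \ref{lowfrequencyinputestimate},
which at $k\le -m$ replaces the naive bound $2^{k/2-\gamma k}2^{e(|L|;\tilde{h})\delta_1 m}\epsilon_1$
by the strictly better $2^{k/2-\gamma k}2^{(H(|L|-1)+\beta(|L|))\delta_1 m}\epsilon_1$.
Inserting this and tracking the multiplicative weight $2^{-k_-+2\gamma k_-+2(N_0-|L|)k_+}$ gives
a gain of $2^{(H(|L|-1)+\beta(|L|)-H(|L|))\delta_1 m - \alpha_{out}(|L|)\delta_1 m}$ relative to
the target $2^{2e(|L|;\tilde{h})\delta_1 m}$; by \eqref{july15eqn21} and the explicit definition
of $\alpha_{out}(|L|)$ in \eqref{outputthre} this surplus is at least $\delta_1 m/100$, which is
exactly the claimed room. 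The $|L|=0$ case, where Lemma \ref{lowfrequencyinputestimate} does
not apply, is handled separately: the range $k\le -m-10\delta_1 m$ is so narrow and low that
the naive $L^2$ bootstrap bound together with the $2^{3k/2}$ volume factor in the $L^2$
summation already beats the required threshold.

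\textbf{Main obstacle.} The delicate step will be matching the exponents in the low-frequency
estimate: the gap $H(|L|)-H(|L|-1)-2\beta(|L|)$ drives the definition of $\alpha_{out}$, and
any sub-optimal use of the bilinear bound (for instance, failing to place the high-vector-field
factor in $L^2$ when estimating $\tilde{Q}$) would eat into the margin. The null structure
from Lemma \ref{nullstructurefact} is not essential here (no resonance issue at low
output frequency), but the careful bookkeeping of which of the two modified profile
factors in $C_{metric}^{\tilde{h}^L;k}$ is subjected to Lemma \ref{lowfrequencyinputestimate}
and which is handled by the energy bootstrap is what makes the exponent arithmetic close.
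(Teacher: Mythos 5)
Your proposal for the first estimate~\eqref{nov14eqn84} is substantively the paper's argument: you cut $x$ at $2^{m+2}$, push the exterior piece to the second estimate, and on the interior apply Cauchy--Schwarz to isolate $\|P_k(\widetilde{U^{\tilde{h}^L}})\psi_{\le m+2}\|_{L^2}$, then invoke Lemma~\ref{lowfrequencyinputestimate} (for $|L|\ge 1$) and track the exponent gap $H(|L|)-H(|L|-1)-2\beta(|L|)$ against $\alpha_{out}(|L|)$; your separate treatment of $|L|=0$ by volume of support is also correct in spirit. The one difference from the paper is minor: rather than the $L^2$-$L^\infty$ plus \eqref{2020julybasicestiamte} bound you propose for the nonlinear factor, the paper re-runs the energy bound for $Q_{mt}^{\tilde{h};L}$ using only the volume of support to extract an extra $2^{(1+\gamma)k_-}$ (estimate \eqref{nov9eqn1}); both serve the same purpose.

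For the second estimate~\eqref{2020june24eqn31} there is, however, a genuine gap in the mechanism you propose. You claim that putting one input of the bilinear piece in $L^\infty$ via the super-localized decay estimates~\eqref{july27eqn4}, \eqref{linearwavedecay2} yields ``a net $2^{-2m}$'' by ``combining $|x|^{-1}\lesssim 2^{-m}$ with the standard $2^{-m}$ already present in the $L^\infty$ decay.'' But there is no such second factor: the $|x|^{-1}$ \emph{is} the $L^\infty$ decay, and it saturates at $2^{-m}$, exactly the near-cone rate; moreover \eqref{july27eqn4}, \eqref{linearwavedecay2} are stated for $|x|\gtrsim 2^{-100}|t|$ and do not cover the deep interior $|x|\le 2^{m-4}$ you also need. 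With your estimates you therefore only recover $2^{-m+2e(L;\tilde{h})\delta_1 m+(\text{loss})}$, which is not good enough. The extra smallness far from the cone comes from a different source in the paper: one applies the identity~\eqref{definitionofcoefficient1} to trade a spatial derivative for a vector field and gain a factor $|a_\Gamma(t,x)|\lesssim ||t|-|x||^{-1}\lesssim 2^{-m}$ (on the region $\psi_{<m-4}+\psi_{>m+1}$); this is exactly what the bilinear estimates~\eqref{2020june20eqn11} and~\eqref{2020june20eqn12} of Lemma~\ref{2020junebilinearestimate} package. They give the choice between a single trade (costing one extra vector field, yielding $2^{-m-k_1}$) and a double trade (costing two vector fields, yielding $2^{-2m-2k_1}$), and crucially for the quasilinear term $QS$ one must observe $|L_2|<|L|$ so that the trade is always available, and must choose which input to trade on depending on whether $|L_2|\le |L|/2+4$. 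None of this appears in your proposal, and without it the bookkeeping of vector fields against decay does not close.
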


\begin{proof}
Recall (\ref{2022april30eqn3}) and (\ref{2020june1eqn11}). We split into two cases as follows.

$\bullet$\qquad \textbf{Case $1$}:\qquad  Proof of the desired estimate \eqref{nov14eqn84}.

Similar to the obtained estimate (\ref{2022march20eqn11}), after rerunning the argument and using the volume of support of the output frequency instead of using the $L^2-L^\infty$ type bilinear estimate,    the following  estimate  holds   if $k\leq -m +\alpha_{out}(|L|)\delta_1 m $,
\be\label{nov9eqn1}
\sum_{H\in\{S, P, Q\}} \sum_{(k_1,k_2)\in \chi_k^1\cup \chi_k^2\cup \chi_k^3} 2^{-k/2+\gamma k+  (N_0-|L|) k_{+}}\| QH_{\alpha\beta;k,k_1,k_2}^{L;L_1,L_2}(t)\|_{L^2} \lesssim 2^{ (1+\gamma)k_{-}+ H(|L|)\delta_1 m +\delta_1 m  }\epsilon_1^2.
\ee
 
From the $L^2-L^2$ type estimate,  the above obtained estimates (\ref{nov9eqn1})  
 and the estimate (\ref{nov9eqn20}) in Lemma \ref{lowfrequencyinputestimate} for the case $|L|\geq 1 $,   we have
 \[
   \sum_{k\in \mathbb{Z}, k\leq-m + \alpha_{out}(|L|)\delta_1 m} 2^{-k_{-}+2\gamma k_{-}+ 2 (N_0-|L|) k_{+}}  \big| \int_{\R^3}     \overline{ P_k(\widetilde{   U^{ \tilde{h}^L } }  ) }   P_k(    Q_{mt }^{ \tilde{h};L}) (t,x)  \psi_{\leq m +2}(|x|)  d x\big| 
 \]
 \be\label{nov14eqn61}
\lesssim  2^{-m + 2 e(L;\tilde{h})\delta_1 m -\delta_1 m/100  } \epsilon_1^3.
\ee
Hence finishing the proof of the desired estimate (\ref{nov14eqn84}).

From the proof of the estimates (\ref{2020aug25eqn45}), (\ref{2020aug25eqn44}),  and (\ref{oct26eqn42}),  the following improved estimate holds for the exterior of the light cone region  if $k\leq -m/2$,
\be\label{nov14eqn80}
\sum_{H\in\{S, P, Q\}} \sum_{k\in \Z, k\leq -m/2} \sum_{(k_1,k_2)\in \chi_k^1\cup \chi_k^2\chi_k^3} 2^{-k/2+\gamma k+  (N_0-|L|)k_{+}}\| QH_{\alpha\beta;k,k_1,k_2}^{L;L_1,L_2}(t)  \psi_{> m + 2}(|x|) \|_{L^2} \lesssim 2^{-4m/3}\epsilon_1^3. 
\ee
From the obtained estimates  (\ref{nov14eqn61})  and (\ref{nov14eqn80}),  we have
\[
 \sum_{k\in \mathbb{Z}, k\leq-m + \alpha_{out}(|L|)\delta_1 m} 2^{-k_{-}+2\gamma k_{-}+ 2 (N_0-|L|) k_{+}}  \big| \int_{\R^3}     \overline{ P_k(\widetilde{   U^{ \tilde{h}^L } }  ) }   P_k(    Q_{mt }^{ \tilde{h};L}) (t,x)   d x\big|
\]
\be\label{nov14eqn68}
\lesssim 2^{-m + 2 e(L;\tilde{h})\delta_1 m -\delta_1 m/100  } \epsilon_1^3. 
\ee

Recall again  (\ref{2022april30eqn3}). It remains to estimate the $\tilde{Q}^{\mu\nu}_{ h'\tilde{h}}(U^{h'}, ( U^{ \tilde{h}^L }_k)^{\nu}) $.   Note that, from the support of frequency variables,  the frequency of $U^{h'}$ is bounded from above by $2^{k+10}$, otherwise the integral vanishes.  From the $L^2-L^2-L^\infty$ type estimate and the $L^\infty\rightarrow L^2$ type Sobolev embedding, the following estimate holds,
\[
 \sum_{k\in \mathbb{Z}, k\leq-m + \alpha_{out}(|L|)\delta_1 m}  2^{-k+2\gamma k}\big|\int_{\R^3}   ( U^{ \tilde{h}^L }_k)^{\mu} (t,x) \tilde{Q}^{\mu\nu}_{ h'\tilde{h}}(U^{h'}, ( U^{ \tilde{h}^L }_k)^{\nu}) (t,x)   d x\big|
\]
\be\label{nov14eqn65}
\lesssim   \sum_{k\in \mathbb{Z}, k\leq-m + \alpha_{out}(|L|)\delta_1 m}  2^{-m + 2 H(|L|)\delta_0m+ (1+\gamma) k  +\delta_1 m } \epsilon_1^3 \lesssim 2^{-3m/2}  \epsilon_1^3.
\ee
  To sum up,  our desired estimate (\ref{nov14eqn84}) holds from the estimates (\ref{nov14eqn61}--\ref{nov14eqn65}).

$\bullet$\qquad \textbf{Case $2$}:\qquad  Proof of the desired estimate \eqref{2020june24eqn31}.

 For the semilinear quadratic terms $QK_{\alpha\beta;k,k_1,k_2}^{L;L_1,L_2}(t,x), K\in\{  Q, P\}$ and $\tilde{Q}^{\mu\nu}_{ h'\tilde{h}}(U^{h'}, ( U^{ \tilde{h}^L }_k)^{\nu}) (t,x)$, we use the   bilinear estimate (\ref{2020june20eqn11}) in Lemma    \ref{2020junebilinearestimate} and for the quasilinear  quadratic term $QS_{\alpha\beta;k,k_1,k_2}^{L;L_1,L_2}(t,x)$, we use the the   bilinear estimate (\ref{2020june20eqn11}) in Lemma    \ref{2020junebilinearestimate} for the input $L_2 h_{\alpha\beta}$ if $|L_2|\leq |L|/2+4$ and use the bilinear estimate (\ref{2020june20eqn12}) in  Lemma    \ref{2020junebilinearestimate} if $|L_2| >|L|/2+4$. 

 Here,   we exploit the fact that  $|L_2|< N_0 -1$ for the quasilinear  quadratic term  $QS_{\alpha\beta;k,k_1,k_2}^{L;L_1,L_2}(t,x)$. As a result, the symbols of quadratic terms allow us to gain the smallness $2^{-m}$ from the distance with respect to the light cone, which is more than sufficient to ensure the validity of our desired estimate (\ref{2020june24eqn31}). 
\end{proof}

Now, we consider the case   $k\geq -m +\alpha_{out}(|L|) \delta_1 m$. Recall (\ref{nov14eqn48}).  To distinguish the subtle growth rates of the perturbed profile and the modified perturbed profile for the metric component, from  (\ref{2020june1eqn11}--\ref{nov7eqn21}), based on the size of $\min\{k_1,k_2\}$,   we decompose  $  C_{metric}^{\tilde{h}^L;k}(t,x) $ into several parts as follows,
\be\label{2020june24eqn32}
 C_{metric}^{\tilde{h}^L;k}(t,x) = \sum_{H\in \{S, P, Q\}}\sum_{(k_1,k_2)\in \chi_k^1\cup\chi_k^2\cup\chi_k^3, }  CH_{metric }^{ \tilde{h}^L;k,k_1,k_2}(t,x) + \sum_{k'\in \mathbb{Z}} \sum_{j\in\{1,2\}}   \tilde{C}_{metric;j}^{ \tilde{h}^L;k,  k'}(t,x),
\ee
where  $ CH_{metric }^{ \tilde{h}^L;k,k_1,k_2}(t,x)$ denotes term determined by $ Q_{mt;s}^{ \tilde{h};L} (t,x)  $ and $\tilde{C}_{metric;j}^{ \tilde{h}^L;k,  k'}(t,x)$ are given as follows, 
\be\label{2022april18eqn11}
\begin{split}
\tilde{C}_{metric;1}^{ \tilde{h}^L;k,  k'}(t,x)& = \sum_{ {h}' \in \{F, \underline{F}, \omega_{j}, \vartheta_{mn}\}} \sum_{\mu, \nu \in\{+,-\}}      \widetilde{U^{ \tilde{h}^L }_k }(t,x) \tilde{Q}^{\mu\nu}_{  {h}' \tilde{h}}(\widetilde{ ( U^{ {h}'}_{k'})^{\mu}}, \widetilde{(  U^{ \tilde{h}^L }_k)^{\nu}}) (t,x),\\
\tilde{C}_{metric;2}^{ \tilde{h}^L;k,  k'}(t,x)&= \sum_{ h'\in \{F, \underline{F}, \omega_{j}, \vartheta_{mn}\}} \sum_{\mu, \nu \in\{+,-\}}  \widetilde{U^{ \tilde{h}^L }_k }(t,x) \tilde{Q}^{\mu\nu}_{ h' \tilde{h}}(  { ( \tilde{\rho}^{h'}_{}(P_{k'} f))^{\mu}}, \widetilde{(  U^{ \tilde{h}^L }_k)^{\nu}})(t,x). 
\end{split}
\ee

 $ CH_{metric }^{ \tilde{h}^L;k,k_1,k_2}(t,x)$ is more subtle.  We use  different formulations for the wave-wave-wave type interaction based on the size of the input frequency because, from \eqref{2022march19eqn30}, there is no need  to decompose the profile into the modified profile and the density type function  if the input frequency is very small. 

 More precisely, let 
 \be\label{inputthre}
\alpha_{inp}(|L|):= \left\{\begin{array}{ll}
( H(|L|)-H(|L|-1)-  2 \beta(|L|))/2 - 1/50 & \textup{if\,} |L|\geq 1,\\ 
10 &  \textup{if\,} |L|=0, 
\end{array}\right. 
\ee
We remark that,  $\alpha_{inp}(|L|)$ only differs from $\alpha_{out}(|L|)$ when $|L|=0$, see \eqref{outputthre}.

If $\min\{k_1, k_2\}\geq -m + \alpha_{inp}(|L|)\delta_1 m$, then 
\begin{multline}\label{2020july6eqn65}
 \forall H\in \{S, P, Q\}, \qquad CH_{metric }^{  \tilde{h}^L;k,k_1,k_2}(t,x) = \sum_{  \star\in \{ess, err,vl\}} CH_{metric; \star}^{  \tilde{h}^L;k,k_1,k_2}(t,x),\\
  CS_{metric; \star }^{  \tilde{h}^L;k,k_1,k_2}(t,x) =   \sum_{\begin{subarray}{c}
 \tilde{L}_1\circ\tilde{L}_2\preceq L,\tilde{L}_2\prec  L
 \end{subarray}}   c_{\tilde{L}_1\tilde{L}_2}^{\tilde{L} }   \overline{ P_k(\widetilde{   U^{ \tilde{h}^L } }  ) } (t,x)   R_{\tilde{h}}^{\alpha\beta} \big(   QS_{\alpha\beta;k,k_1,k_2}^{\star;L,\tilde{L}_1,\tilde{L}_2} \big)(t,x), \\ 
 CK_{metric ; \star}^{  \tilde{h}^L;k,k_1,k_2}(t,x) =   \sum_{\begin{subarray}{c}
 \tilde{L}_1\circ\tilde{L}_2\preceq L
 \end{subarray}}   c_{\tilde{L}_1\tilde{L}_2}^{\tilde{L} }   \overline{ P_k(\widetilde{   U^{ \tilde{h}^L } }  ) } (t,x)   R_{\tilde{h}}^{\alpha\beta} \big(   QK_{\alpha\beta;k,k_1,k_2}^{\star;L, \tilde{L}_1, \tilde{L}_2} \big)(t,x),\quad K\in \{P, Q\},
 \end{multline}
 where $QH_{\alpha\beta;k,k_1,k_2}^{\star;L,\tilde{L}_1,\tilde{L}_2}, H\in \{S, P, Q\},  \star\in \{ess, err,vl\}$ are defined in (\ref{nov7eqn21}).

 If  $\min\{k_1, k_2\}\leq -m + \alpha_{inp}(|L|)\delta_1 m$, then 
 \[
 CH_{metric }^{  \tilde{h}^L;k,k_1,k_2}(t,x) =  CH_{metric;wa }^{  \tilde{h}^L;k,k_1,k_2}(t,x) + CH_{metric;vl }^{  \tilde{h}^L;k,k_1,k_2}(t,x), \quad H\in \{S, P, Q\},
 \]
where, for $K\in \{P, Q\}$, we have 
 \be\label{2020july6eqn51}
 \begin{split}
 CK_{metric;wa }^{  \tilde{h}^L;k,k_1,k_2}(t,x) & =   \sum_{\begin{subarray}{c}
 \tilde{L}_1\circ\tilde{L}_2\preceq L
 \end{subarray}}   c_{\tilde{L}_1\tilde{L}_2}^{\tilde{L} }   \overline{ P_k(\widetilde{   U^{ \tilde{h}^L } }  ) } (t,x)   R_{\tilde{h}}^{\alpha\beta} \big(    QK_{\alpha\beta;k,k_1,k_2}^{ L,\tilde{L}_1,\tilde{L}_2}   \big)(t,x), \\
   CK_{metric;vl }^{  \tilde{h}^L;k,k_1,k_2}(t,x)& =0,
 \end{split}
 \ee
 and, for the quaslinear terms, i.e., $H=S$, as two inputs have different number of derivatives, we use different formulas based on sizes of $k_1$ and $k_2$. More precisely, if  $k_2=\min\{k_1, k_2\}\leq -m + \alpha_{inp}(|L|)\delta_1 m$, then 
  \begin{equation}\label{2020july6eqn52}
  CS_{metric;wa }^{  \tilde{h}^L;k,k_1,k_2}(t,x) =   \sum_{\begin{subarray}{c}
 \tilde{L}_1\circ\tilde{L}_2\preceq L,\tilde{L}_2\prec  L
 \end{subarray}}   c_{\tilde{L}_1\tilde{L}_2}^{\tilde{L} }   \overline{ P_k(\widetilde{   U^{ \tilde{h}^L } }  ) } (t,x)   R_{\tilde{h}}^{\alpha\beta} \big(    QS_{\alpha\beta;k,k_1,k_2}^{ L,\tilde{L}_1,\tilde{L}_2}  \big)(t,x),  \ee
 \be
    CS_{metric;vl }^{  \tilde{h}^L;k,k_1,k_2}(t,x) =0.
 \end{equation}
 If $k_1=\min\{k_1,k_2\}\leq   -m + \alpha_{inp}(|L|)\delta_1 m$, then 
 \[
  CS_{metric;wa }^{  \tilde{h}^L;k,k_1,k_2}(t,x) =  \sum_{\begin{subarray}{c}
 \tilde{L}_1\circ\tilde{L}_2\preceq L,\tilde{L}_2\prec  L 
 \end{subarray}}    c_{\tilde{L}_1\tilde{L}_2}^{\tilde{L} }   \overline{ P_k(\widetilde{   U^{ \tilde{h}^L } }  ) } (t,x)  R_{\tilde{h}}^{\alpha\beta} \big[   c^{\mu\nu}_{ i 0} P_k\big(P_{k_1}(  L_1  h_{\mu\nu} ) P_{k_2}(\p_{i} \widetilde{\p_{t}  L_2 h_{\alpha\beta} } )\big)
 \]
 \be\label{2022may3eqn18}
  + c^{\mu\nu}_{ i j} P_k\big(P_{k_1}(  L_1  h_{\mu\nu} ) P_{k_2}(\p_{i}\p_{j} \widetilde{  L_2 h_{\alpha\beta} } )\big)\big](t,x),
 \ee
 \[
  CS_{metric;vl }^{  \tilde{h}^L;k,k_1,k_2}(t,x) =  \sum_{\begin{subarray}{c}
 \tilde{L}_1\circ\tilde{L}_2\preceq L \\ 
 \tilde{L}_2\prec  L 
 \end{subarray}}    c_{\tilde{L}_1\tilde{L}_2}^{\tilde{L} }   \overline{ P_k(\widetilde{   U^{ \tilde{h}^L } }  ) } (t,x)  R_{\tilde{h}}^{\alpha\beta} \big[   c^{\mu\nu}_{ i 0} P_k\big(P_{k_1}(  L_1  h_{\mu\nu} ) P_{k_2}(\p_{i}\big( {\p_{t}  L_2 h_{\alpha\beta} } )- \widetilde{\p_{t}  L_2 h_{\alpha\beta} } )\big) \big)
 \]
 \be\label{2020july6eqn53}
+ c^{\mu\nu}_{ i j} P_k\big(P_{k_1}(  L_1  h_{\mu\nu} ) P_{k_2}(\p_{i}\p_{j}\big( {  L_2 h_{\alpha\beta} } - \widetilde{  L_2 h_{\alpha\beta} } )\big)\big](t,x). 
 \ee
 
With the above preparations, in the following Lemma, we first rule out some easy cases by using the obtained estimates in section \ref{fixedtimeestimate}.
\begin{lemma}\label{lowinputfrequency2}
 Under the bootstrap assumptions \textup{(\ref{BAmetricold})} and \textup{(\ref{BAvlasovori})}, the following estimate holds for any $t\in [2^{m-1}, 2^m]\subset [0,T]$,  $L\in \cup_{l\leq N_0 } P_l$,  the following estimates hold,
 \[
   \sum_{\begin{subarray}{c}
 k,k'\in \Z, k\geq - m + \alpha_{out}(|L|) \delta_1 m \\
 j=1,2,k'\leq -2|\alpha_{out}(|L|)| \delta_1 m\\
 \end{subarray}} 2^{-k_{-}+2\gamma k_{-}+ 2 (N_0-|L|) k_{+}}\big| \int_{\R^3}  \tilde{C}_{metric;j}^{ \tilde{h}^L;k,  k'}(t,x) \psi_{[m-4, m+1]}(x) d x\big|  
 \]
 \be\label{2020june24eqn50}
 \lesssim 2^{-m +   2e(L;\tilde{h})\delta_1 m  - \delta_1 m} \epsilon_1^3, 
 \ee
 \[
 \sum_{\begin{subarray}{c}
 k,k'\in \Z, k\geq - m + \alpha_{out}(|L|) \delta_1 m \\
 k'\geq -2|\alpha_{out}(|L|)| \delta_1 m\\
 \end{subarray}} 2^{-k_{-}+2\gamma k_{-}+ 2 (N_0-|L|) k_{+}} \big| \int_{\R^3}  \tilde{C}_{metric ;2}^{ \tilde{h}^L;k,  k'}(t,x) \psi_{[m-4, m+1]}(x) d x\big|
 \]
  \be\label{2020june24eqn51}
\lesssim 2^{-m +   2e(L;\tilde{h})\delta_1 m  - \delta_1 m } \epsilon_1^3. 
 \ee
 \[
 \sum_{\begin{subarray}{c}
k\in \Z, k\geq -m + \alpha_{out}(|L|) \delta_1 m \\
K\in \{S,P, Q\}, (k_1,k_2)\in \chi_k^1\cup \chi_k^2\cup\chi_k^3\\ 
 \min\{k_1,k_2\}\geq -m + \alpha_{inp}(|L|) \delta_1 m
\end{subarray}  }  2^{-k_{-}+2\gamma k_{-}+ 2 (N_0-|L|) k_{+}}   \big|\int_{\R^3}  CK_{metric;err }^{  \tilde{h}^L;k,k_1,k_2}(t,x)  \psi_{[m-4, m+1]}(x) d x \big|
 \]
 \be\label{2020july6eqn56}
\lesssim 2^{-m +   2e(L;\tilde{h})\delta_1 m  - \delta_1 m} \epsilon_1^3.  
 \ee
 \end{lemma}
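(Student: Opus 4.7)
The plan is to treat the three displayed estimates separately, exploiting in each case that we are in the near-light-cone region $|x|\sim 2^m$ and that the outer/inner frequency thresholds $\alpha_{out}(|L|)$ and $\alpha_{inp}(|L|)$ have been chosen precisely to absorb the loss $2^{2e(L;\tilde h)\delta_1 m}$ coming from the bootstrap assumption \eqref{BAmetricold}.

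For \eqref{2020june24eqn50} the input frequency $k'$ is very small compared to the output frequency $k\gtrsim 2^{-m+\alpha_{out}(|L|)\delta_1 m}$, so the $\tilde Q^{\mu\nu}_{h'\tilde h}$ bilinear interaction forces $|k_1-k|\leq 5$ on the second factor. I plan to place the low-frequency factor in $L^\infty_x$ via Bernstein and apply $L^2\times L^2\times L^\infty$ on the triple product. In case $j=1$ the $L^\infty$ bound on $\widetilde{U^{h'}_{k'}}$ follows from the improved low-frequency estimate \eqref{nov9eqn20} of Lemma \ref{lowfrequencyinputestimate} (together with $\|P_{k'}f\|_{L^\infty}\lesssim 2^{3k'/2}\|P_{k'}f\|_{L^2}$), which yields the factor $2^{3k'/2}(1+2^{-k'-m})$; summing over $k'\leq -2|\alpha_{out}(|L|)|\delta_1 m$ produces a geometric series whose sum beats $2^{2e(L;\tilde h)\delta_1 m}$ by the choice of $\alpha_{out}$. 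In case $j=2$ the low-frequency density $P_{k'}(\tilde\rho^{h'}_{}(f))$ is put in $L^\infty_x$ directly through \eqref{2020april16eqn51}, giving the gain $\min\{2^{2k'},2^{-3m-k'}\}$ which trivially absorbs all losses.

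For \eqref{2020june24eqn51} the input frequency $k'$ is no longer arbitrarily small but the input is a density, so the pointwise bound \eqref{2020april16eqn51} provides $\|P_{k'}(\tilde\rho^{h'}_{L_1}(f))\|_{L^\infty}\lesssim\min\{2^{2k'},2^{-3m-k'}\}\epsilon_1$. Pairing this $L^\infty$ bound against the $L^2$ norms of the two remaining modified profiles and balancing the two regimes at $k'\sim -m$ delivers the $2^{-m}\cdot 2^{-\delta_1 m}$ saving. The null symbol $\tilde Q^{\mu\nu}_{h'\tilde h}\in \mathcal{M}^{null}_{\mu\nu}$ plays no role here; only its $\mathcal{S}^0$ boundedness is needed.

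For \eqref{2020july6eqn56}, I recall the explicit splitting \eqref{2020june17eqn22} of $QK^{err;L,L_1,L_2}_{\alpha\beta;k,k_1,k_2}$: it is a sum of bilinear terms where either one factor is of the form $Err^{L_i;wa}_{\alpha'\beta'}$ (cubic in the metric by \eqref{2022may18eqn1}) or the total number of vector fields is strictly lower than $|L_1|+|L_2|$. For the first contribution the cubic estimates already gathered in Lemma \ref{cubicandhighermetric} (applied after dyadic localisation) give a straight $2^{-7m/6}$ bound, amply sufficient. For the lower-order contribution I gain from the strict inequality $H(|L_1|+|L_2|)\geq H(|\tilde L_1|+|\tilde L_2|)+20\beta(|L|)$ guaranteed by \eqref{july15eqn21}, which is more than enough to cover the loss $2^{2e(L;\tilde h)\delta_1 m}$ after applying the bilinear estimate \eqref{2020june20eqn20} of Lemma \ref{2020junebilinearestimate} together with the standard $L^2\times L^\infty$ decomposition based on \eqref{2020julybasicestiamte}.

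The main technical obstacle will be the careful bookkeeping of the growth rates: one must verify that in each regime the combination of Bernstein's factor $2^{3\min\{k,k_1,k_2\}/2}$, the density/low-frequency decay, and the strict drop in the exponent $e(\cdot;\tilde h)$ afforded by $\alpha_{out},\alpha_{inp}$, together produce a net gain of at least $2^{-\delta_1 m}$ over the baseline $2^{-m+2e(L;\tilde h)\delta_1 m}$. This is essentially algebraic but must be done case by case because the thresholds differ when $|L|=0$ versus $|L|\geq 1$ (see \eqref{outputthre} and \eqref{inputthre}); I expect no analytic difficulty beyond what is already encapsulated in Lemmas \ref{basicestimates}, \ref{2020junebilinearestimate}, and \ref{lowfrequencyinputestimate}.
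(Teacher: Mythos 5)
The proposal takes a genuinely different route and, in the first part, has a gap that would cause the argument to fail. For \eqref{2020june24eqn50} with $j=1$, you propose to place $\widetilde{U^{h'}_{k'}}$ in $L^\infty_x$ via Bernstein and then pair the remaining two profiles in $L^2\times L^2$, claiming the geometric sum over $k'\leq -2|\alpha_{out}(|L|)|\delta_1 m$ absorbs the loss. But Bernstein by itself gives an $L^\infty$ bound of size roughly $2^{3k'/2}(1+2^{-k'-m})\|P_{k'}\widetilde{U^{h'}}\|_{L^2}$, and after absorbing the energy weight by the two $L^2$ factors the total is of the form $2^{2e(L;\tilde h)\delta_1 m}\cdot\sum_{k'\leq -2|\alpha_{out}|\delta_1 m}2^{ck'}(1+2^{-k'-m})$. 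This sum produces at most a polynomial gain in $2^{-\delta_1 m}$; it does \emph{not} produce the essential factor $2^{-m}$ in the target bound $2^{-m+2e(L;\tilde h)\delta_1 m-\delta_1 m}\epsilon_1^3$. The $2^{-m}$ decay in this lemma comes from the localization $\psi_{[m-4,m+1]}(x)$ near the light cone, and Bernstein alone does not see it. The paper encodes this decay once and for all in the near-light-cone bilinear estimate \eqref{2020june20eqn25} of Lemma \ref{2020junebilinearestimate}, which carries the factor $2^{-m+\min\{k,k_1,k_2\}/2}$, and then only has to sum the remaining symbol gain $2^{k'}$ in $k'$. Without that estimate (or the super-localized decay of Lemma \ref{superlocalizedaug}) the argument for $j=1$ does not close.

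For \eqref{2020june24eqn51} your plan is workable in spirit: putting the density $P_{k'}(\tilde\rho^{h'}(f))$ in $L^\infty$ through \eqref{2020april16eqn51}, whose bound $\min\{2^{2k'},2^{-3m-k'}\}$ does carry the requisite $2^{-m}$ decay, and pairing the two modified profiles in $L^2$ is sufficient after taking the symbol factors into account; the paper reaches the same conclusion via Lemma \ref{wavevlabil6sep}, Lemma \ref{superlocalizedaug} and Lemma \ref{decayestimateofdensity}, which is a more uniform packaging but not essentially different. For \eqref{2020july6eqn56}, note that the factors $Err^{L_i;wa}_{\alpha'\beta'}$ of \eqref{2022may18eqn1} contain terms such as $\d^{-1}\Lambda_{\geq 2}[\square\underline F^L]$ and $\d^{-1}(\p_t+i\d)(LE_0)$, which are controlled by \eqref{oct4eqn41} in Proposition \ref{fixedtimenonlinarityestimate} rather than by the purely cubic bound \eqref{oct1eqn31} in Lemma \ref{cubicandhighermetric}; the latter does not directly cover the $E_0$-type commutators. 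Also, using the rough bilinear estimate \eqref{2020june20eqn20} instead of \eqref{2020june20eqn25} again drops the $2^{-m}$ factor — one must either invoke \eqref{2020june20eqn25} or explicitly put a factor in $L^\infty$ via the decay estimate \eqref{2020julybasicestiamte}, with care that the input put in $L^\infty$ has few enough vector fields that its decay rate $2^{-m+H(\cdot)\delta_1 m}$ does not overwhelm the $-\delta_1 m$ margin.
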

\begin{proof}
Recall the detailed formula of $\tilde{C}_{metric;j}^{L\tilde{h};k,  k'}(t)$  in  (\ref{2020june24eqn32}) and the equality  (\ref{nov22eqn13}).  From the  bilinear estimate (\ref{2020june20eqn25}) in  Lemma \ref{2020junebilinearestimate},  the following estimate holds if $k'\leq -2 |\alpha_{out}(|L|)|   \delta_1 m$, 
\[
 2^{-k_{-}+2\gamma k_{-}+ 2(N_0-|L|)k_{+}}  \big| \int_{\R^3}  \tilde{C}_{metric;j }^{ \tilde{h}^L;k,  k'}(t,x) \psi_{[m-4, m+1]}(x) d x\big|    \lesssim   2^{-m+ k' +  2e(L;\tilde{h})\delta_1 m  +\beta(|L|)\delta_1 m } 
\]
\be
   \lesssim  2^{-m +  2e(L;\tilde{h})\delta_1 m- \delta_1 m } \epsilon_1^3. 
\ee
Hence finishing the proof of our desired estimate (\ref{2020june24eqn50}).

Recall (\ref{2022april18eqn11}).  Since    $  k'\geq  -2 |\alpha_{out}(|L|)|\delta_1 m  $, the  desired estimate (\ref{2020june24eqn51}) holds directly from the  estimate (\ref{bilinearestimate2}) in Lemma \ref{wavevlabil6sep}, the estimate (\ref{july27eqn4}) in Lemma \ref{superlocalizedaug},   and the decay estimate  of the density type function (\ref{densitydecay}) in Lemma \ref{decayestimateofdensity}. 

Recall (\ref{2020june18eqn1}), (\ref{2020june17eqn22}), and (\ref{2022may18eqn1}). The desired estimate (\ref{2020july6eqn56}) follows directly  from the   estimate   (\ref{2022march19eqn30}) in Lemma \ref{basicestimates} and  the estimate (\ref{oct4eqn41}) in Proposition \ref{fixedtimenonlinarityestimate} and   the  bilinear estimate (\ref{2020june20eqn25}) in  Lemma \ref{2020junebilinearestimate}.
\end{proof}

To sum up, it remains to estimate two types of interactions: 

For the wave-wave-Vlasov type interaction, it remains to    estimate (i) $  C_{vlasov}^{ \tilde{h}^L;k}(t,x) $ in  (\ref{2022april30eqn1}); (ii) $\tilde{C}_{metric;2}^{ \tilde{h}^L;k,  k'}(t,x)$ in (\ref{2022april18eqn11}) for the case $k'\geq -2|\alpha_{out}(|L|)|\delta_1 m   $; (iii) $ CH_{metric;vl }^{  \tilde{h}^L;k,k_1,k_2}(t,x), H\in \{S, P, Q\} $ in (\ref{2020july6eqn65})  for the case $\min\{k_1,k_2\}\geq -m + \alpha_{inp}(|L|) \delta_1 m $ 
 and $ CS_{metric;vl }^{  \tilde{h}^L;k,k_1,k_2}(t,x)$ in (\ref{2020july6eqn53}) for the case $k_1=\min\{k_1,k_2\}\leq -m + \alpha_{inp}(|L|) \delta_1 m $. We analyze these three cases in details in subsection \ref{wavewaveVlasov}. 

For the wave-wave-wave type interaction, it remains to estimate (i) $\tilde{C}_{metric;1}^{ \tilde{h}^L;k,  k'}(t,x)$ in (\ref{2022april18eqn11}) for the case $k'\geq -2|\alpha_{out}(|L|)|\delta_1 m   $; (ii) $ CH_{metric; ess }^{  \tilde{h}^L;k,k_1,k_2}(t,x), H\in \{S, P, Q\}$ in (\ref{2020july6eqn65}) for the case $\min\{k_1,k_2\}\geq  -m + \alpha_{inp}(|L|) \delta_1 m $; (iii) $ CK_{metric;wa }^{  \tilde{h}^L;k,k_1,k_2}(t,x), K\in\{P, Q\},$  in (\ref{2020july6eqn51}) and $  CS_{metric;wa }^{  \tilde{h}^L;k,k_1,k_2}(t,x)$ in \eqref{2020july6eqn52} and  \eqref{2022may3eqn18}  for the case $\min\{k_1,k_2\}\leq  -m + \alpha_{inp}(|L|) \delta_1 m $. We analyze these three cases in details in subsection \ref{wavewavewave}.

With the above divided plan, for the rest of this section,  we aim to prove the following Proposition.   

\begin{proposition}\label{theessentialcubic}
Under the bootstrap assumptions \textup{(\ref{BAmetricold})} and \textup{(\ref{BAvlasovori})}, the following estimate holds for any $t_1, t_2 \in [2^{m-1}, 2^m]\subset [0,T]$, $L\in \cup_{l\leq N_0 }P_l$,  
\[
\sum_{k\in \mathbb{Z}, k>-m + \alpha_{out}(|L|)\delta_1 m}   2^{-k_{-}+2\gamma k_{-}+ 2 (N_0-|L|) k_{+}} \big| \int_{t_1}^{t_2} \int_{\R^3} C_{metric}^{ \tilde{h}^L;k}(t,x)  \psi_{ [m-4, m+1]}(x)   d x  d t\big| 
\]
\be\label{2020july8eqn91}
\lesssim 2^{ 2e(L;\tilde{h})\delta_1 m - \delta_1 m/300 } \epsilon_1^3. 
\ee
\end{proposition}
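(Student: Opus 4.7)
The plan is to exploit the decomposition \eqref{2020june24eqn32} which, after Lemmas \ref{energycubictermslow} and \ref{lowinputfrequency2}, reduces the problem to several distinct scenarios; I will treat each by the corresponding structural estimate. By \eqref{2020june24eqn31} the integrand may be restricted to the shell $|x|\approx 2^m$, and by \eqref{nov14eqn84} the output frequency may be restricted to $k\geq -m+\alpha_{out}(|L|)\delta_1 m$. The remaining contributions split naturally into a \emph{wave-wave-wave} family (the terms $\tilde{C}_{metric;1}^{\tilde h^L;k,k'}$ with $k'\geq -2|\alpha_{out}(|L|)|\delta_1 m$, the essential pieces $CH_{metric;ess}^{\tilde h^L;k,k_1,k_2}$ and the low-input quasilinear/semilinear pieces $CH_{metric;wa}$, $CS_{metric;wa}$) and a \emph{wave-wave-Vlasov} family (the term $C_{vlasov}^{\tilde h^L;k}$, the Vlasov-error contribution $\tilde C_{metric;2}^{\tilde h^L;k,k'}$ with $k'\geq -2|\alpha_{out}(|L|)|\delta_1 m$, the pieces $CH_{metric;vl}^{\tilde h^L;k,k_1,k_2}$, and the low-input quasilinear piece $CS_{metric;vl}$).

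For the wave-wave-wave family I will work on the Fourier side with the modified profiles $\widetilde{V^{\tilde h^{L_i}}}$. The crucial structural input is Lemma~\ref{nullstructurefact}, which ensures that every bilinear symbol occurring in $C_{metric}^{\tilde h^L;k}$ lies in one of the null multiplier classes $\mathcal M^{null}_{\pm}$ or $\widetilde{\mathcal M}^{null}_{\pm}$, the only exception being the $Q(\vartheta_{mn},\vartheta_{pq})$ contribution to the $\underline F$ equation, whose failure of null structure is exactly absorbed by the looser growth rate $H(|L|)\delta_1 m$ allocated to $\widetilde{V^{\underline F^L}}$ in \eqref{BAmetricold}. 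Combining the $Z$-norm bootstrap on the modified profiles with the energy assumptions and performing a $TT^{*}$ type integration by parts in time on the resonant output frequencies, as in \cite{IP}, each dyadic piece gains the factor $2^{-\delta_1 m/300}$, the null denominators $(|\xi|\mp|\eta|\mp|\xi-\eta|)^{-1}$ being converted into bounded multipliers by the null symbols. This I treat largely as a black box; the only book-keeping specific to our setting is to verify that each gain is consistent with the hierarchy function $e(|L|;\tilde h)$.

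For the wave-wave-Vlasov family I open the density factors in $C_{vlasov}^{\tilde h^L;k}$, $\tilde C_{metric;2}^{\tilde h^L;k,k'}$ and $CH_{metric;vl}^{\tilde h^L;k,k_1,k_2}$ using the definition \eqref{modifiedperturbedmetric}, so that one factor becomes $\int e^{-it\hat v\cdot\xi}\widehat{u^{\mathcal L}}(t,\xi,v)m(t,\xi,v)\,dv$ against the kernel of the modified profile. Two regimes of the wave frequency $k_1$ must be distinguished. In the \emph{high wave-frequency} regime $k_1\geq -m/10$, I invoke the bilinear kernel estimate \eqref{bilinearestimate2} of Lemma~\ref{wavevlabil6sep}: the pointwise identity $\langle|t|-|x+t\hat v|\rangle\langle|x|\rangle\langle|v|\rangle^{2}\gtrsim\langle t\rangle$ (obtained from \eqref{definitionofcoefficient1}--\eqref{2020aug31eqn31}) lets me trade one derivative on the wave factor for the gain $2^{-m}$, and together with the weighted bootstrap on $\Lambda^\rho u^{\mathcal L}$ this yields the desired bound with ample room. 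In the \emph{low wave-frequency} regime $k_1\leq -m/10$ the distance-to-light-cone device fails, but on the support of the integrand the phase $|\xi|-\mu|\eta|-\hat v\cdot(\xi-\eta)$ is bounded below by $c|\xi|\langle v\rangle^{-2}$, which is precisely the non-time-resonance statement motivating the modified profile. I will perform a normal form (integration by parts in $t$), whose boundary term is absorbed into a redefinition of $E_{modi}^{\tilde h;L}$ and whose bulk term gains the factor $\langle t\rangle^{-1/2}$ after paying at worst $\langle v\rangle^{2}$ against the velocity weights in $\omega^{\mathcal L}_\rho$, using Lemma~\ref{wavevlasovbi1}.

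The main obstacle is the top-order case $|L|=N_0$ inside the low wave-frequency regime, where the Vlasov profile must be controlled in $L^2_{x,v}$ without any further integration by parts in $v$, because $\nabla_v$ does not commute with the transport operator and the correction term $u^{\mathcal L;\alpha}_{corr}$ in \eqref{correctionterm} is only built into the $Z$-norm. To close this case I plan to combine the normal form with the second set of vector fields $\Lambda^\beta$ of Lemma~\ref{decompositionofderivatives}, so that any velocity derivative produced by integrating by parts in $v$ is exchanged (via Lemma~\ref{decompositionofderivatives}) for a combination of classical vector fields acting on the wave factor, whose $L^2$ bound is controlled by the metric bootstrap at the price of the admissible rate $H(\mathcal L,\rho)\delta_0 m$. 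The nontrivial verification will be that the symbol arising after this commutation still belongs to $\langle v\rangle^{10}L^\infty_v\mathcal S^0$, so that Lemma~\ref{wavevlasovbi1} remains applicable and the margin $2^{-\delta_1 m/300}$ survives.
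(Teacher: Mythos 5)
Your decomposition of $C_{metric}^{\tilde h^L;k}$ and the split into a wave--wave--wave family and a wave--wave--Vlasov family matches the paper's route, and your citations of Lemma~\ref{lowinputfrequency2}, Lemma~\ref{wavevlabil6sep} and Lemma~\ref{wavevlasovbi1} are pointed at the right places. But two steps that you treat as routine are exactly where the real work lies, and in both your plan as stated would not close.

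\textbf{Wave-wave-wave family.} You write ``$TT^*$ type integration by parts in time on the resonant output frequencies, as in \cite{IP}\dots{} I treat largely as a black box.'' The obstruction is that the quasilinear term in $QS_{\alpha\beta}$ loses a derivative when you integrate by parts in time: the phase divisor gains nothing on the factor carrying $|L|$ vector fields, and the resulting $\p_t\widetilde{V^{\tilde h_2^{L_2}}}$ term in the bulk (see \eqref{2022may19eqn51}) puts you one derivative over budget. The paper handles this by first replacing $\widetilde{U^{\tilde h^L}}$ with the paralinearized/symmetrized variable $\widetilde{\mathcal U^{\tilde h^L}} = \widetilde{U^{\tilde h^L}} - iT_{\Lambda_{\geq 1}[\sigma_{wa}]}\tilde h^L$ from \eqref{2020sep9eqn51}--\eqref{2020aug9eqn1}, whose evolution \eqref{2020july21eqn30} has a symmetric principal symbol $\Sigma_{wa}$; after integrating by parts in time, the self-commutator cancellation in \eqref{2020sep9eqn71} (paper's $\mathcal{H}^{\nu;\star;4}$) removes the top-order loss. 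Without this device the ``black box'' is empty. The null-structure input you cite (Lemma~\ref{nullstructurefact}, separate treatment of the $Q(\vartheta,\vartheta)$ term for $\underline F$) is necessary but not sufficient.

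\textbf{Wave-wave-Vlasov family, top order.} Your proposed fix --- exchange the $\nabla_v$ arising from integration by parts in $v$ for classical vector fields on the wave factor via Lemma~\ref{decompositionofderivatives} --- misreads that lemma. Lemma~\ref{decompositionofderivatives} converts the vector fields $\Lambda^\rho$ acting on \emph{composed} functions $f(t,x+\hat v t)$ into classical vector fields; it says nothing about $\nabla_v$ acting on the Vlasov profile $u^{\mathcal L}$. The intrinsic conversion $\nabla_v=\sum_\iota c_\iota(x,v)\Lambda^\iota$ (as used in \eqref{2020may4eqn21}) carries a coefficient of size $\langle x\rangle\langle v\rangle$, which threatens the weight hierarchy precisely in the regime $|\mathcal L|+|\rho|=N_0$ that you single out. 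What the paper does instead is exploit the divergence structure of the nonlinearity: in \eqref{2020april7eqn6} and \eqref{2020april14eqn23}, $\p_t u^{\mathcal L}$ is written as $\nabla_x\cdot\mathfrak N_1 + D_v\cdot\mathfrak N_2 + (\text{lower order})$, so that after integration by parts in $v$ the derivative lands on the symbol $b^{L\mathcal L}_{\alpha\beta}(v)k^{\tilde h}_{\alpha\beta}(\xi)/(|\xi|+\hat v\cdot\xi)$ (see $III_k$ in \eqref{oct5eqn53}), never on the Vlasov profile. No vector-field exchange is needed, and no $\langle x\rangle$-growth arises. Your claim that the boundary term of the normal form is ``absorbed into a redefinition of $E_{modi}^{\tilde h;L}$'' is also inaccurate --- that modified energy carries only the quasilinear $H_{ij}\p_i\p_j$ correction, not the Vlasov terms; the boundary terms $End^{\star,\mu;L;L_1,\mathcal L_2}_{k,k_1,k_2}(t_i)$ are estimated outright in Lemma~\ref{oscillationwavevlasov}. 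Finally, the initial invocation of Lemma~\ref{energycubictermslow} is superfluous here, since the restriction to the shell $|x|\in[2^{m-4},2^{m+1}]$ and $k>-m+\alpha_{out}(|L|)\delta_1 m$ is already built into the statement you are proving.
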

\begin{proof}
 
Recall the decomposition in (\ref{2020june24eqn32}).   
 Our desired estimate holds from  the estimates (\ref{2020june24eqn50}--\ref{2020july6eqn56}) in Lemma \ref{lowinputfrequency2},  the estimates (\ref{2020july7eqn2}--\ref{2020july9eqn31}) in Lemma \ref{waveVlaovestimate}, the estimate (\ref{nov15eqn50}) in Lemma \ref{lowinputfrequency}, and the estimate (\ref{2020july6eqn27}) in Lemma \ref{wavewavewave2020ess}.  
 
\end{proof}

 \subsection{The estimate of wave-wave-Vlasov type interaction }\label{wavewaveVlasov}

Recall the detailed formula of $  C_{vlasov}^{ \tilde{h}^L;k}(t,x) $ in  (\ref{2022april30eqn1}) and  the the detailed formulas of corresponding terms in  (\ref{2020july1eqn2}), (\ref{2020july9eqn5}),     (\ref{2020july9eqn1}), and (\ref{2020april14eqn51}). We  can write  $  C_{vlasov}^{ \tilde{h}^L;k}(t,x) $ as a linear combinations of the following trilinear form, 
\be\label{2020july9eqn21}
\int_{t_1}^{t_2} \int_{\R^3}  C_{vlasov}^{ \tilde{h}^L;k}(t,x)    d x  d t=\sum_{\begin{subarray}{c}
(k_1,k_2)\in \chi_k^1\cup \chi_k^2\cup \chi_k^3, \mu\in \{+, -\}\\ 
L, L_1\in P_n,\mathcal{L}_2\in \mathcal{P}_n, \mathcal{L}_1\circ\mathcal{L}_2\preceq \mathcal{L} \\ 
V^L  \in \{\widetilde{V^{L h_{\alpha\beta}}},\widetilde{V^{  \tilde{h}^L }}, {V^{L h_{\alpha\beta}}}, {V^{  \tilde{h}^L }} \}\\
V_1^{L_1} \in \{\widetilde{V^{L_1 h_{\alpha\beta}}},\widetilde{V^{  \tilde{h}^{L_1} }}, {V^{L_1 h_{\alpha\beta}}}, {V^{  \tilde{h}^{L_1} }} \}\\
\end{subarray} }    K_{k,k_1,k_2}^{\mu;L,L_1,\mathcal{L}_2 }(t_1,t_2;V,V_1),
\ee
where
\[
K_{k,k_1,k_2}^{\mu;L,L_1,\mathcal{L}_2 }(t_1,t_2;V,V_1):=    \int_{t_1}^{t_2} \int_{\R^3}  \int_{\R^3} \int_{\R^3} e^{it|\xi|+it\mu |\xi-\eta|-it \hat{v}\cdot \eta}   \overline{\widehat{V^{L } }(t, \xi)}\widehat{(V^{L_1}_1)^{\mu}}(t,\xi-\eta)   \]
\be\label{2020july9eqn22}
\times \tilde{m} (\xi-\eta, \eta, v) \widehat{u^{\mathcal{L}_2}}(t, \eta, v) \psi_k(\xi)\psi_{k_1}(\xi-\eta)\psi_{k_2}(\eta)  d \xi  d \eta d v,
\ee
where the symbol $\tilde{m}(\xi-\eta, \eta,v)$ satisfies the following estimate, 
\be\label{2020july9eqn23}
 \| (1+|v|)^{-100}  \tilde{m}(\xi-\eta, \eta,v) \|_{L^\infty_v \mathcal{S}^\infty_{k,k_1,k_2}} \lesssim 2^{-k_1}. 
\ee
Similarly, we have 
\[
\int_{t_1}^{t_2}  \int_{\R^3}  CK_{metric; vl}^{  \tilde{h}^L;k,k_1,k_2}(t,x)  \psi_{[m-4, m+1]}(x) d x d t
\]
\be
=  \sum_{\begin{subarray}{c}
  \mu\in \{+, -\},
L, L_1\in P_n,\mathcal{L}_2\in \mathcal{P}_n, \mathcal{L}_1\circ\mathcal{L}_2\preceq \mathcal{L}\\ 
V^L  \in \{\widetilde{V^{L h_{\alpha\beta}}},\widetilde{V^{  \tilde{h}^L }}, {V^{L h_{\alpha\beta}}}, {V^{  \tilde{h}^L }} \}\\
V_1^{L_1} \in \{\widetilde{V^{L_1 h_{\alpha\beta}}},\widetilde{V^{  \tilde{h}^{L_1} }}, {V^{L_1 h_{\alpha\beta}}}, {V^{  \tilde{h}^{L_1} }} \}\\
\end{subarray} }       \int_{\R^3} \varphi(\sigma) T^{K,\mu;L;L_1, \mathcal{L}_2}_{k,k_1,k_2}(t_1, t_2;V,V_1, \sigma) d \sigma,
\ee
\[
\int_{t_1}^{t_2}  \int_{\R^3}  CS_{metric; vl}^{  \tilde{h}^L;k,k_1,k_2}(t,x)  \psi_{[m-4, m+1]}(x) d x d t \]
\[
= \sum_{\begin{subarray}{c}
  \mu\in \{+, -\},
L,   \tilde{L}_1, L_1\in P_n,\mathcal{L}_2\in \mathcal{P}_n, \mathcal{L}_1\circ\mathcal{L}_2\preceq \mathcal{L},  
 \tilde{\mathcal{L}}_1\circ\mathcal{L}_2\preceq \mathcal{L},   {L}_1\prec \mathcal{L} \\ 
V^L  \in \{\widetilde{V^{L h_{\alpha\beta}}},\widetilde{V^{  \tilde{h}^L }}, {V^{L h_{\alpha\beta}}}, {V^{  \tilde{h}^L }} \},  
V_1^{L_1} \in \{\widetilde{V^{L_1 h_{\alpha\beta}}},\widetilde{V^{  \tilde{h}^{L_1} }}, {V^{L_1 h_{\alpha\beta}}}, {V^{  \tilde{h}^{L_1} }} \}\\
\end{subarray} }       \int_{\R^3}  \varphi(\sigma)\big[T^{S_1,\mu;L;\tilde{L}_1, \mathcal{L}_2}_{k,k_1,k_2}(t_1, t_2;V,V_1, \sigma) 
\]
\be\label{2020july9eqn61}
+T^{S_2,\mu;L;L_1, \mathcal{L}_2}_{k,k_1,k_2}(t_1, t_2;V,V_1, \sigma)  \big] d \sigma
\ee
where
\be
\varphi(\sigma):=2^{ 3m}\widehat{\psi}_{[-4,1]}(2^m \sigma),
\ee
  \[
T^{\star,\mu;L;L_1, \mathcal{L}_2}_{k,k_1,k_2}(t_1, t_2;V,V_1, \sigma):= \int_{t_1}^{t_2}  \int_{\R^3}  \int_{\R^3}  \int_{\R^3}  e^{it|\xi-\sigma|+it\mu |\xi-\eta|-it \hat{v}\cdot \eta}  \overline{\widehat{V^{L } }(t, \xi-\sigma)}  \widehat{(V^{L_1}_1)^{\mu}}(t,\xi-\eta)  \]
\be\label{2020july8eqn21}
\times  m_{\star}(\xi-\eta, \eta, v) \widehat{u^{\mathcal{L}_2}}(t, \eta, v)\psi_k(\xi)\psi_{k_1}(\xi-\eta)\psi_{k_2}(\eta) d \xi  d \eta d v,\quad \star\in \{P, Q, S_1, S_2\} ,
\ee
and   the symbols $m_{\star}(\xi-\eta, \eta, v),  \star\in \{P, Q, S_1, S_2\} ,$ satisfies the following estimate,
\be\label{symbolestimate2020july} 
\begin{split}
 \sum_{\star\in \{P, Q\}   }\| (1+|v|)^{-100} m_{\star}(\xi-\eta, \eta, v) \|_{L^\infty_v \mathcal{S}^\infty_{k,k_1,k_2}}& \lesssim 2^{-k_2},\\
\| (1+|v|)^{-100} m_{S_1}(\xi-\eta, \eta, v) \|_{L^\infty_v \mathcal{S}^\infty_{k,k_1,k_2}} &\lesssim  2^{-k_1},\\ 
  \| (1+|v|)^{-100} m_{S_2 }(\xi-\eta, \eta, v) \|_{L^\infty_v \mathcal{S}^\infty_{k,k_1,k_2}} &\lesssim  2^{k_1-2k_2}.
\end{split}
\ee

Recall (\ref{2020july9eqn22}) and the estimates   (\ref{2020july9eqn23}) and (\ref{symbolestimate2020july}), we know that  $K_{k,k_1,k_2}^{\mu;L,L_1,\mathcal{L}_2 }(t_1,t_2;V,V_1)$ is a special case of trilinear form $T^{S_1,\mu;L;L_1, \mathcal{L}_2}_{k,k_1,k_2}(t_1, t_2;V,V_1,   \sigma)$.
Recall (\ref{2020june24eqn32}). We know that the trilinear form of $ \tilde{C}_{metric;2}^{ \tilde{h}^L;k,  k'}(t,x)$ is of same type as $CK_{metric; vl}^{  \tilde{h}^L;k,k_1,k_2}(t,x), K\in \{P, Q\} $ with $k'=k_1$ and $k_2=k$. 

With the above preparations, by assuming the validity of a technical Lemma, Lemma \ref{oscillationwavevlasov},  we  state and prove the main result of this subsection as follows.

\begin{proposition}\label{waveVlaovestimate}
 Under the bootstrap assumptions \textup{(\ref{BAmetricold})} and \textup{(\ref{BAvlasovori})}, the following estimates hold  for any $t_1, t_2\in [2^{m-1}, 2^m]\subset [0,T]$, $\tilde{h}\in \{F, \underline{F}, \omega_j, \vartheta_{ij}\},$   $L\in \cup_{l\leq N_0 } P_l   $,  
\[
 \sum_{\begin{subarray}{c} 
 k\in \Z, (k_1,k_2)\in \chi_k^1\cup \chi_k^2\cup  \chi_k^3\\
 k\geq - m + \alpha_{out}(|L|) \delta_1 m \\
   k_1=\min\{k_1,k_2\}\leq -m + \alpha_{inp}(|L|) \delta_1 m-10 
 \end{subarray} }    2^{-k_{-}+2\gamma k_{-}+ 2 (N_0-|L|) k_{+}} \big| \int_{t_1}^{t_2}  \int_{\R^3}  CS_{metric; vl}^{  \tilde{h}^L;k,k_1,k_2}(t,x)  \psi_{[m-4, m+1]}(x) d x d t\big| 
\]
\be\label{2020july7eqn2}
\lesssim   \epsilon_1^3,
\ee
\[
\sum_{ 
 k\in[- m + \alpha_{out}(|L|) \delta_1 m, \infty)\cap \Z
 } 2^{-k_{-}+2\gamma k_{-}+ 2 (N_0-|L|) k_{+}} \Big[
 \sum_{\begin{subarray}{c}
  H\in \{S, P, Q\}, 
  (k_1, k_2)\in \chi_k^1\cup \chi_k^2\cup \chi_k^3,\\ 
   \min\{k_1,k_2\}\geq -m + \alpha_{inp}(|L|) \delta_1 m-10 
 \end{subarray} }   \big| \int_{t_1}^{t_2} \int_{\R^3}  CH_{metric; vl}^{  \tilde{h}^L;k,k_1,k_2}(t,x) 
\]
\be\label{2020july7eqn1}
 \times  \psi_{[m-4, m+1]}(x) d x d t\big| + \sum_{ k'\geq -2 |\alpha_{out}(|L|)\delta_1 m} \big| \int_{t_1}^{t_2} \int_{\R^3}   \tilde{C}_{metric;2}^{ \tilde{h}^L;k,  k'}(t,x) \psi_{[m-4, m+1]}(x) d x d t\big| \Big]\lesssim   \epsilon_1^3,
\ee
\be\label{2020july9eqn31}
\sum_{k\in \mathbb{Z}}      2^{-k_{-}+2\gamma k_{-}+ 2 (N_0-|L|) k_{+}}\big| \int_{t_1}^{t_2} \int_{\R^3}  C_{vlasov}^{ \tilde{h}^L;k}(t,x)    d x  d t\big| \lesssim \epsilon_1^3.
\ee
\end{proposition}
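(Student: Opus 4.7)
My plan is to reduce all three bounds \eqref{2020july7eqn2}, \eqref{2020july7eqn1}, \eqref{2020july9eqn31} to uniform estimates on the single trilinear form $T^{\star,\mu;L;L_1,\mathcal{L}_2}_{k,k_1,k_2}(t_1,t_2;V,V_1,\sigma)$ from \eqref{2020july8eqn21}. Indeed, $K^{\mu;L,L_1,\mathcal{L}_2}_{k,k_1,k_2}$ in \eqref{2020july9eqn22} is the special case $(\star,\sigma)=(S_1,0)$, the pieces $CH_{metric;vl}$ for $H\in\{P,Q,S\}$ are written as superpositions of such forms (see the representations immediately preceding \eqref{2020july8eqn21} and including \eqref{2020july6eqn53}), and the density-type term $\tilde{C}^{\tilde{h}^L;k,k'}_{metric;2}$ in the regime $k'\geq -2|\alpha_{out}(|L|)|\delta_1 m$ fits the same template after identifying the factor $\tilde{\rho}^{h'}(P_{k'}f)$ with the integral kernel against $u^{\mathcal{L}_2}$.

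Next I would exploit the oscillation in time. The phase is $\Phi(\xi,\eta,v,\sigma)=|\xi-\sigma|+\mu|\xi-\eta|-\hat{v}\cdot\eta$, and by construction of the modified profile in \eqref{2020april1eqn4} combined with the strict bound $|\hat v|<1$, one has $|\Phi|\gtrsim 2^k\langle v\rangle^{-2}$ outside a measure-zero resonant set (the modification removes precisely the linear-in-Vlasov resonance at $\mu=-$, $\sigma=0$, $\xi=\eta$). I would invoke the technical oscillation lemma (Lemma \ref{oscillationwavevlasov}) and integrate by parts in $t$ along $\Phi$, producing four pieces: boundary terms at $t_1$ and $t_2$, and three bulk terms where $\partial_t$ falls on $V^L$, on $V_1^{L_1}$, or on $\widehat{u^{\mathcal{L}_2}}$. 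Each piece is bilinear after peeling off one factor, so it falls within the scope of the wave–Vlasov bilinear machinery of Lemmas \ref{wavevlasovbi1} and \ref{wavevlabil6sep}, combined with the fixed-time nonlinearity control \eqref{oct4eqn41} for $\partial_t V^L$, $\partial_t V_1^{L_1}$, and with the Vlasov equation \eqref{2020april8eqn31} for $\partial_t\widehat{u^{\mathcal{L}_2}}$.

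The cost of the $\langle v\rangle^{-2}$ denominator is paid by the polynomial $v$-weight in $\omega^{\mathcal{L}}_\rho$ carried by the bootstrap \eqref{BAvlasovori}; the time integration contributes $2^m$ but is overcompensated by the $2^{-m}$ decay of each bilinear factor. Summing over $k,k_1,k_2$ and admissible $L_1, \mathcal{L}_2$ under the hierarchy encoded in the $H$-function via \eqref{july15eqn21} gives the claimed $O(\epsilon_1^3)$ bound.

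The main obstacle will be the threshold regime $k\sim -m+\alpha_{out}(|L|)\delta_1 m$ in \eqref{2020july7eqn1}, where the phase gain $2^k\langle v\rangle^{-2}$ is only marginally better than $2^{-m}$. Here I plan to trade two derivatives for vector fields using the refined low-frequency bound \eqref{nov9eqn20} of Lemma \ref{lowfrequencyinputestimate}, and to exploit the spatial localization $\psi_{[m-4,m+1]}(x)$ together with the distance-to-light-cone identity \eqref{2020aug31eqn31} to extract an additional factor of $2^{-m}$ on the nearly-resonant spatial region. A parallel difficulty appears in \eqref{2020july7eqn2}: when $k_1$ is ultra-low the quasilinear character of $CS$ places two derivatives on the output, but the modified-versus-original profile distinction in \eqref{2020july6eqn53} ensures only the harmless density correction $\tilde{\rho}^{h_{\alpha\beta}}_{L_2}(f)$ survives, and this is controlled directly by the Vlasov bootstrap via the decay estimates of Lemma \ref{decayestimateofdensity}.
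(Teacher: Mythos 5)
Your overall reduction to the trilinear forms $T^{\star,\mu;L;L_1,\mathcal{L}_2}_{k,k_1,k_2}$ is correct and matches the paper, and you correctly identify that $\tilde{C}^{\tilde{h}^L;k,k'}_{metric;2}$ and $CS_{metric;vl}$ for ultra-low $k_1$ are density-correction terms of this shape. But the central claim that $|\Phi|\gtrsim 2^k\langle v\rangle^{-2}$ ``outside a measure-zero resonant set'' is not true, and the consequence is that your plan of integrating by parts in time uniformly along $\Phi$ does not go through. The lower bound in the paper's estimate \eqref{july21eqn16} reads $|\Phi|\gtrsim |\eta|/(1+|v|^2)-2|\xi-\eta|-|\sigma|\gtrsim 2^{k_2}\langle v\rangle^{-2}$ and crucially requires $|\xi-\eta|\ll|\eta|/(1+|v|^2)$, i.e.\ $(k_1,k_2)\in\chi_k^2$ with $k_1$ well below $k_2$ and $|v|$ moderate. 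In $\chi_k^3$ (Vlasov input at low frequency, $k_2\leq k_1-10$, $|k-k_1|\leq 10$, $\mu=-$) the two wave terms $|\xi-\sigma|$ and $|\xi-\eta|$ nearly cancel and the phase reduces to $\eta\cdot(\xi/|\xi|-\hat v)+O(2^{k_1}|\sigma|)$, which can vanish when $\eta$ is orthogonal to $\xi/|\xi|-\hat v$; there is no $2^k\langle v\rangle^{-2}$ gain. Also, the modification of the profile in \eqref{2020april1eqn4} plays no role in lower-bounding this trilinear phase — it removes the \emph{linear} density source in the equation for $V^{\tilde h^L}$, whereas what controls $|\Phi|$ is simply $|\hat v|<1$ together with the specific frequency geometry, and only in a narrow regime.

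What the paper actually does is use integration by parts in time only in the single regime codified in Lemma \ref{oscillationwavevlasov} ($(k_1,k_2)\in\chi_k^2$, $k_1\leq -m/2$, $k\gtrsim -4d(N_0+1,0)\delta_0 m$). All other regimes are handled without time oscillation: when $|\mathcal{L}_2|\leq|L|-5$ the density decay estimate \eqref{densitydecay} of Lemma \ref{decayestimateofdensity} already beats $\langle t\rangle^{-1}$ (see \eqref{2020july9eqn41}–\eqref{2020july8eqn22}); when $|\mathcal{L}_2|>|L|-5$ and $|L|\geq N_0-5$ the extra $2^{-m}$ is extracted from the distance-to-the-light-cone bilinear estimates \eqref{2020july8bilinear}, \eqref{bilinearestimate2} of Lemma \ref{wavevlabil6sep} (this is the mechanism you relegated to a secondary role); and the $|L|\leq N_0-5$ case reuses those two strategies split by $\chi_k^1\cup\chi_k^3$ versus $\chi_k^2$. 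To repair your proposal you would need to explicitly delimit the time-oscillation step to $\chi_k^2$ with $k_1$ low, state the phase bound as $2^{k_2}\langle v\rangle^{-2}$ for $|v|\lesssim 2^{m/10}$ (with a separate large-$v$ piece as in \eqref{2020july8eqn45}), and promote the light-cone bilinear estimates of Lemma \ref{wavevlabil6sep} and the density decay of Lemma \ref{decayestimateofdensity} to primary tools rather than backup.
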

\begin{proof}

As discussed at the beginning of this section, the proof of our desired estimates is reduced to the estimate of trilinear forms $T^{K,\mu;L;\tilde{L}_1, \mathcal{L}_2}_{k,k_1,k_2}(t_1, t_2;V,V_1,  \sigma), K\in \{S_1, S_2, P, Q\}$ for different choices of $k,k_1,k_2$ in different settings. 
 
  Recall (\ref{2020july9eqn22}) and (\ref{2020july8eqn21}). From the estimates of symbols in (\ref{symbolestimate2020july}), the bilinear estimate \eqref{june23eqn2} in Lemma \ref{wavevlasovbi1} for the case $|L_1|, |\tilde{L}_1|\geq |L|-5$,  and the bilinear estimates \eqref{2020july8bilinear} and (\ref{bilinearestimate2}) in Lemma \ref{wavevlabil6sep} for the case $|L_1|, |\tilde{L}_1|\leq |L|-5$, the following estimate holds for any fixed  $k\in \Z,(k_1,k_2)\in \chi_k^1\cup \chi_k^2\cup \chi_k^3$, $L,   \tilde{L}_1, L_1\in P_n,\mathcal{L}_2\in \mathcal{P}_n$, s.t., $ \mathcal{L}_1\circ\mathcal{L}_2\preceq \mathcal{L}, \tilde{\mathcal{L}}_1\circ\mathcal{L}_2\preceq \mathcal{L},  {L}_1\prec \mathcal{L}$,
\[
\sum_{\star\in \{S_1,   P, Q\}} 2^{-k_{-}+2\gamma k_{-}+ 2 (N_0-|L|)k_{+}}\big(\big|K_{k,k_1,k_2}^{\mu;L,L_1,\mathcal{L}_2 }(t_1,t_2;V,V_1)\big|+ \sup_{\sigma \in \R^3}\big| T^{\star,\mu;L;\tilde{L}_1, \mathcal{L}_2}_{k,k_1,k_2}(t_1, t_2;V,V_1, \sigma)\big|
\] 
\be\label{2020july9eqn70}
+ \big| T^{S_2,\mu;L; {L}_1, \mathcal{L}_2}_{k,k_1,k_2}(t_1, t_2;V,V_1, \sigma)\big|\big)  \lesssim 2^{2d(|L|+1, N_0-|L|)\delta_0 m + \max\{k_1, k_2\} }\epsilon_1^3.
\ee
The above estimate allows us to rule out the case $\max\{k_1, k_2\}\leq -2d(|L|+1, N_0-|L|)\delta_0 m$.

 Moreover, we rule out the case $|\sigma|$ is not so small.  Recall (\ref{2020july9eqn61}). From the rapidly decay rate of $\varphi(\sigma)$ if $|\sigma|\geq 2^{-m + \delta_1 m }$ and the rough estimate,  the following estimate  holds,
\[
\sum_{\star\in \{S_1,   P, Q\}} \sum_{k\in \Z, (k_1,k_2)\in \chi_k^1\cup \chi_k^2\cup \chi_k^3         } 2^{-k_{-}+2\gamma k_{-}+ 2 (N_0-|L|)k_{+}} \int_{|\sigma|\geq 2^{-m+\delta_1 m }} |\varphi(\sigma)|\big(|T^{\star,\mu;L;\tilde{L}_1, \mathcal{L}_2}_{k,k_1,k_2}(t_1, t_2;V,V_1, \sigma)|\]
 \[
 + |T^{S_2,\mu;L;\tilde{L}_1, \mathcal{L}_2}_{k,k_1,k_2}(t_1, t_2;V,V_1, \sigma)|\big) d \sigma \lesssim  2^{-m/4}  \epsilon_1^3. 
\]

  It remains to consider the case $\max\{k_1, k_2\}\geq -2d(|L|+1, N_0-|L|)\delta_0 m$ and fixed $\sigma\in \R^3$, s.t., $|\sigma|\leq 2^{-m + \delta_1 m }.$  Based on the possible size of $\mathcal{L}_2$, we separate into two cases as follows.

$\bullet$\quad If $|\mathcal{L}_2|\leq |L|-5$.

For this case, the total number  of vector fields act on the Vlasov part is far away from the top order and we mainly use   the decay estimate for the density type function. More precisely, from the estimate of symbol in (\ref{symbolestimate2020july}) and  the estimate (\ref{densitydecay}) in Lemma \ref{decayestimateofdensity}, the following estimate holds for any fixed $k\in \Z$, s.t.,  $ k\geq - m + \alpha_{out}(|L|) \delta_1 m$, 
\[
 \sum_{\mathcal{L}_1\circ \mathcal{L}_2\preceq \mathcal{L} ,   |\mathcal{L}_2|\leq |L|-5 } \sum_{\star\in \{P, Q\}} \sum_{ (k_1,k_2)\in \chi_k^1\cup \chi_k^2\cup \chi_k^3} \sup_{\sigma \in \R^3} 2^{-k_{-}+2\gamma k_{-}+ 2 (N_0-|L|) k_{+}} \big|T^{ \star, \mu ;L;L_1, \mathcal{L}_2}_{k,k_1,k_2}(t_1, t_2;V,V_1, \sigma)\big|    
 \]
 \[
  \lesssim \sum_{k_2\in \Z} 2^{-k_{-}/2+ m+  2d(|L|, N_0-|L|)\delta_0 m  } \min\{ 2^{2k_2} + 2^{3k_2+ d(|L|, N_0-|L|) \delta_0 m}, 2^{-4m-2 k_2} 
 \]
 \be\label{2020july9eqn41}
+  2^{-4m-k_2+ d(|L|, N_0-|L|) \delta_0 m} \} \lesssim 2^{-m/3}\epsilon_1^3.
 \ee
 Similarly, from the estimate of symbol in (\ref{2020july9eqn23}), the following estimate holds for any fixed $k\in \Z$, 
 \[
  \sum_{\begin{subarray}{c}
   \mathcal{L}_1\circ \mathcal{L}_2\preceq \mathcal{L} ,   |\mathcal{L}_2|\leq |L|-5\\
   k\in \Z, (k_1,k_2)\in \chi_k^1\cup \chi_k^2\cup \chi_k^3\\
  \end{subarray}
  } 
  2^{-k_{-}+2\gamma k_{-}+ 2 (N_0-|L|)  k_{+}}  \big[ \big|K_{k,k_1,k_2}^{\mu;L,L_1,\mathcal{L}_2 }(t_1,t_2;V,V_1)\big| + \sup_{\sigma\in \R^3}\big|T^{ S_1, \mu ;L;L_1, \mathcal{L}_2}_{k,k_1,k_2}(t_1, t_2;V,V_1, \sigma)\big|  \big]\]
  \[ \lesssim \sup_{k_2\in \Z} \sum_{k_1\leq k_2-10 } 2^{  m+  2d(|L|, N_0-|L|)\delta_0 m  }  \min\{2^{k_1}, 2^{-k_1/2}\min\{ 2^{3k_2} + 2^{4k_2+ d(|L|, N_0-|L|) \delta_0 m}, 2^{-4m-  k_2} \]
 \be\label{2020july9eqn42}
 +  2^{-5m-k_2+ d(|L|, N_0-|L|) \delta_0 m} \}\}\epsilon_1^3    +2^{-m/3}\epsilon_1^3 \lesssim 2^{-m/3}\epsilon_1^3. 
 \ee

It remains to consider $T^{ S_1, \mu ;L;L_1, \mathcal{L}_2}_{k,k_1,k_2}(t_1, t_2;V,V_1, \sigma)$. Recall (\ref{symbolestimate2020july}). Since the    symbol $m_{S_2}(\xi-\eta, \eta, v) $ is worse when   $k_2\leq k_1-10$, this term is more delicate. By using  the estimate (\ref{densitydecay}) in Lemma \ref{decayestimateofdensity}, we first  rule out the case $k_2\geq -m +2d(|L|, N_0-|L|)\delta_0 m $ as follows, 
\[
 \sum_{  \begin{subarray}{c} 
 {\mathcal{L}}_1\circ \mathcal{L}_2\preceq \mathcal{L},  {\mathcal{L}}_1\prec  \mathcal{L}, |\mathcal{L}_2|\leq |L|-5 \\ 
 k\in \Z, (k_1,k_2)\in \chi_k^1\cup \chi_k^2\cup \chi_k^3\\ 
  k_2\geq -m +2 d(|L|, N_0-|L|)\delta_0 m
 \end{subarray} }   2^{-k_{-}+2\gamma k_{-}+ 2 (N_0-|L|)  k_{+}}   \sup_{\sigma\in \R^3} \big|T^{ S_2;\mu ;L; {L}_1, \mathcal{L}_2}_{k,k_1,k_2}(t_1, t_2;V,V_1, \sigma)\big|  
\]
\[
\lesssim \sum_{k_2\geq -m +2d(|L|, N_0-|L|)\delta_0 m} 2^{m+  2d(|L|, N_0-|L|)\delta_0 m  } \big( 2^{-3m-  2k_2} +  2^{-3m-k_2+d(|L|, N_0-|L|)\delta_0 m} \big)
\]
 \be\label{2020july9eqn43}
+ 2^{-m/3}\epsilon_1^3\lesssim \epsilon_1^3.
\ee

It remains to estimate $T^{ S_2;\mu ;L; {L}_1, \mathcal{L}_2}_{k,k_1,k_2}(t_1, t_2, \sigma)$ for the case $k_1\geq -2d(|L|+1, N_0-|L|)\delta_0m$ and $k_2\in [-m +\alpha_{input}(|L|)\delta_1 m -10, -m+2d(|L|, N_0-|L|)\delta_0 m ]$. Recall (\ref{2020july9eqn61}). We have  $| {L}_1|< |L|$ for the estimate of  $T^{ S_2;\mu ;L; {L}_1, \mathcal{L}_2}_{k,k_1,k_2}(t_1, t_2, \sigma)$.  From the bilinear estimate (\ref{2020july8bilinear}) in Lemma \ref{wavevlabil6sep} and the estimate (\ref{oct4eqn41}) in Proposition \ref{fixedtimenonlinarityestimate}, we have
\[
 \sum_{  \begin{subarray}{c}
   {\mathcal{L}}_1\circ \mathcal{L}_2\preceq \mathcal{L}, \tilde{\mathcal{L}}_1\prec  \mathcal{L}, |\mathcal{L}_2|\leq N_0-5 \\ 
   k_2 \leq -m+2 d(|L|, N_0-|L|) \delta_0 m \\  k_1\geq -2 d(|L|+1, N_0-|L|) \delta_0 m 
 \end{subarray}
}    2^{-k_{-}+2\gamma k_{-}+ 2 (N_0-|L|) k_{+}} \sup_{\sigma\in \R^3} \big|T^{S_2, \mu ;L;\tilde{L}_1, \mathcal{L}_2}_{k,k_1,k_2}(t_1, t_2;V,V_1,  \sigma)\big|  
\]
 \be\label{2020july8eqn22}
\lesssim   \sum_{k_2 \leq -m+2 d(|L|, N_0-|L|) \delta_0 m  } 2^{m + 2d(|L|+1, N_0-|L|)\delta_0 m +k_1-2k_2 }  2^{-m-k_1 +  3k_2 } \epsilon_1^3  \lesssim 2^{-m/2}\epsilon_1^3. 
 \ee

 $\bullet$\quad If $|\mathcal{L}_2|> |L|  -5$ and $|L|\geq N_0-5$. 

 For this case, there are at most five vector fields act on the metric component. From the bilinear estimate (\ref{bilinearestimate2}) in Lemma \ref{wavevlabil6sep}, the $L^\infty_x$ decay estimate in (\ref{2020julybasicestiamte})  in Lemma \ref{basicestimates}, and the estimate (\ref{oct4eqn41}) in Proposition \ref{fixedtimenonlinarityestimate}, the following estimate holds,  
 \[
 \sum_{\begin{subarray}{c}
  \mathcal{L}_1\circ \mathcal{L}_2\preceq \mathcal{L}  \\ 
     |\mathcal{L}_2|> |L|-5, \star\in \{P, Q\}
 \end{subarray}} \sum_{\begin{subarray}{c}
 k\in \Z, (k_1,k_2)\in  \chi_k^1\cup\chi_k^2\cup \chi_k^3\\
 \min\{k_1,k_2\}\geq -m +\alpha_{inp}(|L|)\delta_1 m
 \end{subarray} }   2^{-k_{-}+2\gamma k_{-}+ 2 (N_0-|L|)  k_{+}}  \sup_{\sigma\in \R^3 }   \big|T^{ \star,\mu;L;L_1, \mathcal{L}_2}_{k,k_1,k_2}(t_1, t_2;V,V_1,  \sigma)\big|   
 \]
 \be\label{2020july8eqn50}
 \lesssim    2^{m   + 3d(|L|+1, N_0-|L|)\delta_0 m } 2^{-2m }\epsilon_1^3 \lesssim  2^{-m/4} \epsilon_1^3.
 \ee
 Similarly, we have
\[
 \sum_{ \begin{subarray}{c}
 \tilde{\mathcal{L}}_1\circ \mathcal{L}_2\preceq \mathcal{L},     |\mathcal{L}_2|> |L|-5, 
 k\in \Z, (k_1,k_2)\in \chi_k^1\cup\chi_k^2\cup \chi_k^3\\ 
 k_1\geq - m + 4 d(|L|+1, N_0-|L|)\delta_0 m 
 \end{subarray}} \sup_{\sigma\in \R^3}  2^{-k_{-}+2\gamma k_{-}+ 2 (N_0-|L|)  k_{+}}    \big(\big|T^{S_1, \mu ;L;\tilde{L}_1, \mathcal{L}_2}_{k,k_1,k_2}(t_1, t_2;V,V_1,\sigma)\big| 
 \]
 \[
   + \big|T^{S_2, \mu ;L;\tilde{L}_1, \mathcal{L}_2}_{k,k_1,k_2}(t_1, t_2;V,V_1, \sigma)\big|  + \big| K_{k,k_1,k_2}^{\mu;L, \tilde{L}_1,\mathcal{L}_2 }(t_1,t_2;V,V_1 )\big| \big)
 \]
 \be\label{2020july8eqn51}
 \lesssim \sum_{k_1\geq - m +  4 d(|L|+1, N_0-|L|)\delta_0 m  } 2^{m   + 3d(|L|+1, N_0-|L|)\delta_0 m } 2^{-2m}  2^{-k_1}  \epsilon_1^3 \lesssim  \epsilon_1^3.
 \ee

From the estimate (\ref{2020july8eqn44}) in Lemma \ref{oscillationwavevlasov}, the following estimate holds if $k_1\leq - m + 4  d(|L|+1, N_0-|L|)\delta_0 m $ and  $  k_2 \geq -2d(|L|+1, N_0-|L|)\delta_0 m$.
\[
  \sum_{ \begin{subarray}{c}
 \tilde{\mathcal{L}}_1\circ \mathcal{L}_2\preceq \mathcal{L},     |\mathcal{L}_2|> |L|-5\\ 
 k\in \Z, (k_1,k_2)\in \chi_k^1\cup\chi_k^2\cup \chi_k^3\\ 
 k_1\leq - m +   4 d(|L|+1, N_0-|L|)\delta_0 m    
 \end{subarray}} \sup_{\sigma\in \R^3,|\sigma|\leq 2^{-m+\delta_1 m }}   2^{-k_{-}+2\gamma k_{-}+ 2(N_0-|L|)k_{+}}      \big(\big|T^{S_1, \mu ;L;\tilde{L}_1, \mathcal{L}_2}_{k,k_1,k_2}(t_1, t_2;V,V_1,\sigma)\big|
 \]
\be\label{2020july8eqn52}
  + \big|T^{S_2, \mu ;L;\tilde{L}_1, \mathcal{L}_2}_{k,k_1,k_2}(t_1, t_2;V,V_1,\sigma)\big|  + \big| K_{k,k_1,k_2}^{\mu;L, \tilde{L}_1,\mathcal{L}_2 }(t_1,t_2;V,V_1)\big| \big)
      \lesssim 2^{-m/4} \epsilon_1^3.
 \ee

$\bullet$\quad If $|\mathcal{L}_2|> |L|  -5$ and $|L|\leq N_0-5$.  

The main difference between this case and the previous case  is that we pay attention to the possible issue of losing derivative.

 If $(k_1,k_2)\in \chi_k^1 \cup \chi_k^3$, i.e., $k_2\leq k_1+10$, then the argument used in the case $|\mathcal{L}_2|\leq |L|-5$ is still applicable since $|L|\leq N_0-5$. As a result, we have
\[
  \sum_{\begin{subarray}{c} 
  \star\in \{S_1,   P, Q\},
 k\in \Z, (k_1,k_2)\in \chi_k^1\cup   \chi_k^3\\
 k\geq - m + \alpha_{out}(|L|) \delta_1 m \\
 \end{subarray} }   2^{-k_{-}+2\gamma k_{-}+ 2 (N_0-|L|) k_{+}}\big(\big|K_{k,k_1,k_2}^{\mu;L,L_1,\mathcal{L}_2 }(t_1,t_2;V,V_1)\big|
\] 
\be\label{2020sep18eqn1}
+ \sup_{\sigma \in \R^3}\big| T^{\star,\mu;L;\tilde{L}_1, \mathcal{L}_2}_{k,k_1,k_2}(t_1, t_2;V,V_1, \sigma)\big| + \big| T^{S_2,\mu;L; {L}_1, \mathcal{L}_2}_{k,k_1,k_2}(t_1, t_2;V,V_1, \sigma)\big|\big) \lesssim  \epsilon_1^3. 
\ee
 
 If $(k_1,k_2)\in \chi_k^2$, i.e., $k_1\leq k_2-10, |k-k_2|\leq 10$, then the argument used in the case $|\mathcal{L}_1|>|L|-5, |L|\geq N_0-5$ is still applicable. As  a result, we have
\[
  \sum_{\begin{subarray}{c} 
  \star\in \{S_1, P, Q\}, 
 k\in \Z, (k_1,k_2)\in \chi_k^2 \\
 k\geq - m + \alpha_{out}(|L|) \delta_1 m \\
 \end{subarray} }   2^{-k_{-}+2\gamma k_{-}+ 2N(|L|)k_{+}} \sup_{\sigma\in \R^3,|\sigma|\leq 2^{-m+\delta_1 m }}  \big(\big|K_{k,k_1,k_2}^{\mu;L,L_1,\mathcal{L}_2 }(t_1,t_2;V,V_1)\big|
\] 
\be\label{2020sep18eqn3}
+  \big| T^{\star,\mu;L;\tilde{L}_1, \mathcal{L}_2}_{k,k_1,k_2}(t_1, t_2;V,V_1, \sigma)\big| + \big| T^{S_2,\mu;L; {L}_1, \mathcal{L}_2}_{k,k_1,k_2}(t_1, t_2;V,V_1, \sigma)\big|   \big)  \lesssim  \epsilon_1^3. 
\ee
Hence finishing the proof of desired estimates (\ref{2020july7eqn2}), (\ref{2020july7eqn1}) and (\ref{2020july9eqn31}).

\end{proof}

  For the rest of this subsection, we prove the assumed Lemma \ref{oscillationwavevlasov} in the proof of the above Proposition. 

\begin{lemma}\label{oscillationwavevlasov}
 Given any $t_1, t_2\in [2^{m-1}, 2^m]\subset [0,T]$, $L, L_1 , L_2\in \cup_{l\leq N_0 } {P}_l, $   s.t., $ \mathcal{L}_1\circ\mathcal{L}_2\preceq \mathcal{L}   $   .   Under the bootstrap assumptions \textup{(\ref{BAmetricold})} and \textup{(\ref{BAvlasovori})},  the following estimate holds for   the trilinear form defined in \textup{(\ref{2020july8eqn21})}, 
\[
 \sum_{\begin{subarray}{c}
\star\in \{S_1,S_2\}, k\in \Z, (k_1,k_2)\in \chi_k^2\\
  k_1\leq -m/2, k\geq - 4d(N_0+1,0)\delta_0 m
  \end{subarray}}\sup_{\sigma\in \R^3, |\sigma|\leq 2^{-m+\delta_1 m}}  2^{-k_{-}+2\gamma k_{-}+ 2(N_0-|L|)k_{+}}  \big|T^{ \star,\mu ;L;L_1, \mathcal{L}_2}_{k,k_1,k_2}(t_1, t_2;V,V_1, \sigma)\big|
\]
 \be\label{2020july8eqn44}
\lesssim 2^{-m/4} \epsilon_1^3. 
 \ee
\end{lemma}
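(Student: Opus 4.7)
The plan is to exploit non-time-resonance of the phase
\[
\phi(\xi,\eta,v,\sigma):=|\xi-\sigma|+\mu|\xi-\eta|-\hat{v}\cdot\eta,
\]
which is $t$-independent, via a normal form transformation in $t$. Under the hypotheses $(k_1,k_2)\in\chi_k^2$ (so $|\xi-\eta|\approx 2^{k_1}\ll 2^{k_2}\approx|\eta|\approx|\xi|$), $k_1\leq -m/2$, $|\sigma|\leq 2^{-m+\delta_1 m}$ and $k\geq -4d(N_0+1,0)\delta_0 m$, I would write
\[
\phi = (|\xi|-\hat{v}\cdot\xi) + \bigl(|\xi-\sigma|-|\xi|\bigr) + \mu|\xi-\eta| + \hat{v}\cdot(\xi-\eta),
\]
and combine the identity $|\xi|-\hat{v}\cdot\xi\gtrsim |\xi|\langle v\rangle^{-2}$ with $\bigl||\xi-\sigma|-|\xi|\bigr|\leq|\sigma|$ and $|\xi-\eta|\approx 2^{k_1}$ to conclude $|\phi|\gtrsim 2^{k_2}\langle v\rangle^{-2}$ on the support of the integrand, at least in the velocity range $\langle v\rangle\lesssim 2^{\delta_1 m}$ where the main term dominates $2^{k_1}+|\sigma|$; the tail $\langle v\rangle\gtrsim 2^{\delta_1 m}$ will be handled separately using the $v$-polynomial decay built into the weight $\omega^{\mathcal{L}}_{\rho}$ of \eqref{weightfunctions2}.

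Next I would integrate by parts in $t$ via the identity $e^{it\phi}=(i\phi)^{-1}\partial_t e^{it\phi}$, producing boundary terms at $t=t_1,t_2$ plus three types of interior contributions in which $\partial_t$ lands on $\overline{\widehat{V^L}}(t,\xi-\sigma)$, on $\widehat{(V_1^{L_1})^\mu}(t,\xi-\eta)$, or on $\widehat{u^{\mathcal{L}_2}}(t,\eta,v)$. The new multiplier $m_\star/(i\phi)$ gains a factor of $\langle v\rangle^{2} 2^{-k_2}$ over $m_\star$; since $2^{-k_2}\lesssim 2^{4d(N_0+1,0)\delta_0 m+5}$ and $\langle v\rangle^{2}\lesssim 2^{2\delta_1 m}$ on the effective velocity support, both are negligible polynomial losses in $m$. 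The boundary terms are then of exactly the same shape as the trilinear forms already bounded through \eqref{2020july9eqn70}, and the extra gain of $2^{-k_2}\langle v\rangle^{2}$ together with $k_1\leq -m/2$ delivers an overall bound $2^{-m/2+O(\delta_1 m)}\epsilon_1^3$, which sums over the admissible $(k,k_1,k_2)$-range to the desired $2^{-m/4}\epsilon_1^3$.

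For the interior terms, when $\partial_t$ falls on the wave profiles I would use Proposition \ref{fixedtimenonlinarityestimate} to replace $\partial_t V$ by the metric nonlinearity, which carries an extra factor of $2^{-m+e(L;\tilde h)\delta_1 m}\epsilon_1^2$; coupled with the $\langle v\rangle^{2} 2^{-k_2}$ gain this is handled by the bilinear estimates of Lemmas \ref{wavevlasovbi1} and \ref{wavevlabil6sep} exactly as in \eqref{2020july8eqn50}--\eqref{2020july8eqn51}. When $\partial_t$ falls on $\widehat{u^{\mathcal{L}_2}}$, I would invoke \eqref{2020april8eqn31}--\eqref{2020april8eqn35} to split $\partial_t u^{\mathcal{L}_2}$ into a $\nabla_x$-divergence, a $D_v$-divergence and lower-order pieces controlled by Lemma \ref{estimateofremaindervlasov3} together with the refined bound \eqref{2020april14eqn3}. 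The $\nabla_x$-divergence is transferred to the Fourier multiplier $m_\star/\phi$ at cost $2^{k_2}$, which together with the $\langle v\rangle^{2} 2^{-k_2}$ gain is harmless; the $D_v$-divergence is transferred by integration by parts in $v$, producing at worst a factor of $t|\nabla_v\hat{v}|\cdot 2^{k_2}\approx 2^{m}2^{k_2}\langle v\rangle^{-2}$ that is exactly absorbed by $\langle v\rangle^{2} 2^{-k_2}$, leaving an effective loss of $2^{m}$ which is defeated by the $O(\epsilon_1)$ size of the Vlasov nonlinearity in \eqref{2020april14eqn3}--\eqref{2020april14eqn21}.

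The main obstacle I anticipate is the tension between the $\langle v\rangle^{2}$ factor created by $1/\phi$ and the polynomial $v$-growth already present in $m_\star$ and in the coefficients ${b}^{L\mathcal{L}}_{\alpha\beta}(v)$, aggravated by the fact that when $\partial_t u^{\mathcal{L}_2}$ produces a $D_v$-divergence the integration by parts in $v$ deposits further powers of $\langle v\rangle$ and of $t$ onto $m_\star/\phi$. To close the estimate cleanly one must split the $v$-integration at $\langle v\rangle\approx 2^{\delta_1 m/10}$, using the non-time-resonance bound on the bulk and the large polynomial decay of $f$ in $v$ guaranteed by $\omega^{\mathcal{L}}_{\rho}$ on the tail, so that the cumulative $\langle v\rangle$-growth is absorbed by the $\langle v\rangle^{1/\delta_1}$ factor in \eqref{weightfunctions2}; careful bookkeeping of the $v$-weight is the most delicate point.
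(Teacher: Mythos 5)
Your proposal is correct and follows essentially the same route as the paper: decompose the phase $\phi=|\xi-\sigma|+\mu|\xi-\eta|-\hat v\cdot\eta$, observe the non-time-resonance bound $|\phi|\gtrsim 2^{k_2}\langle v\rangle^{-2}$ in the given frequency regime (the paper's \eqref{july21eqn16}), split the $v$-integration at a moderate threshold, perform one integration by parts in time, and then control the boundary terms and the terms where $\partial_t$ falls on $\widehat{V^L}$, $\widehat{(V_1^{L_1})^\mu}$, or $\widehat{u^{\mathcal{L}_2}}$ by Proposition \ref{fixedtimenonlinarityestimate}, the multilinear $L^2$ estimates, and the decomposition of $\partial_t u^{\mathcal{L}_2}$ into $\nabla_x$- and $D_v$-divergences plus remainders, exactly mirroring the paper's treatment of the end-point, $J^1$, and $J^2$ pieces. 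The only cosmetic difference is your choice of $v$-cutoff at $\langle v\rangle\sim 2^{\delta_1 m}$ versus the paper's $\langle v\rangle\sim 2^{m/10}$; both work since the admissible $k\geq -4d(N_0+1,0)\delta_0 m$ keeps the main term dominant, and both tails are killed by the strong $\langle v\rangle^{1/\delta_1}$ weight.
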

\begin{proof}
Recall  \textup{(\ref{2020july8eqn21})}.    Based on the size of $v$, we decompose $ T^{   \star,\mu ;L;L_1, \mathcal{L}_2}_{k,k_1,k_2}(t_1, t_2)$ into two parts as follows,
\begin{multline}\label{2020july8eqn41}
T^{   \star,\mu  ;L;L_1, \mathcal{L}_2}_{k,k_1,k_2}(t_1, t_2;V,V_1,\sigma)= T^{  \star,\mu  ; L;L_1, \mathcal{L}_2;\geq  }_{k,k_1,k_2}(t_1, t_2,\sigma) +T^{ \star,\mu  ; L;L_1, \mathcal{L}_2;< }_{k,k_1,k_2}(t_1, t_2,\sigma),\\
  T^{ \star,\mu  ; L;L_1, \mathcal{L}_2;\diamondsuit  }_{k,k_1,k_2}(t_1, t_2,\sigma):=\int_{t_1}^{t_2} \int_{\R^3} \overline{\widehat{V^{L } }(t, \xi-\sigma)}e^{it|\xi-\sigma|+it\mu |\xi-\eta|-it \hat{v}\cdot \eta} \widehat{(V^{L_1}_1)^{\mu}}(t,\xi-\eta)      \\ 
\times  \widehat{u^{\mathcal{L}_2}}(t, \eta, v) m_{\star}(\xi-\eta, \eta, v)
\psi_k(\xi)\psi_{k_1}(\xi-\eta)\psi_{k_2}(\eta) \psi_{ \diamondsuit m/10}(v) d \eta d v, \quad \diamondsuit \in \{\geq, <\}.
\end{multline} 

For the  case  $|v|$ is relatively large, we gain sufficient decay rates from the high moment of $v$. More precisely, from the $L^2-L^2-L^\infty$ type multilinear estimate and the estimate of symbol in (\ref{symbolestimate2020july}), we have
\be\label{2020july8eqn45}
\sum_{k\in\Z, (k_1,k_2)\in\chi_k^2} 2^{-k_{-}+2\gamma k_{-}+ 2 (N_0-|L|) k_{+}}  \big| T^{ \star,\mu  ; L;L_1, \mathcal{L}_2;\geq  }_{k,k_1,k_2}(t_1, t_2, \sigma) \big|\lesssim 2^{-m}\epsilon_1^3. 
\ee

Now, we focus on the estimate of  $ T^{ \star,\mu  ; L;L_1, \mathcal{L}_2; <   }_{k,k_1,k_2}(t_1, t_2,\sigma)$. Note that the following estimate holds if $|v|\lesssim  2^{m/10}$ and $|\sigma|\leq 2^{-m+\delta_1 m}$,
 \be\label{july21eqn16}
 |\xi-\sigma|+ \mu |\xi-\eta| - \eta \cdot \hat{v} \gtrsim \frac{|\eta|}{1+|v|^2}-2 |\xi-\eta|-|\sigma|\gtrsim \frac{ |\eta|}{2(1+|v|^2)}\gtrsim 2^{k_2 }(1+|v|^2)^{-1}. 
\ee

With the above observation, to take the advantage of high oscillation in time,  we do integration by parts in time once  for  $ T^{ \star,\mu  ; L;L_1, \mathcal{L}_2; <   }_{k,k_1,k_2}(t_1, t_2,\sigma)$. As a result, we have
\be\label{2020july8eqn42}
T^{ \star,\mu  ; L;L_1, \mathcal{L}_2; <  }_{k,k_1,k_2}(t_1, t_2,\sigma) = End^{\star,\mu  ; L;L_1, \mathcal{L}_2}_{k,k_1,k_2}(t_1, t_2) + J^{\star,\mu  ; L;L_1, \mathcal{L}_2;1}_{k,k_1,k_2}(t_1, t_2)+ J^{\star,\mu  ; L;L_1, \mathcal{L}_2;2}_{k,k_1,k_2}(t_1, t_2),
 \ee
\begin{multline}
End^{ \star,\mu  ; L;L_1, \mathcal{L}_2}_{k,k_1,k_2}(t_1, t_2):=\sum_{i=1,2}(-1)^{i-1} \int_{\R^3} \int_{\R^3}      
 \overline{\widehat{U^{L } }(t_i, \xi-\sigma)}  \widehat{(U^{L_1}_1)^{\mu}}(t_i,\xi-\eta) \widehat{ {\mathcal{L}_2f}}(t_i, \eta, v)\\
 \times  m_{k,k_1,k_2}^{  \star,\mu }(\xi-\eta, \eta, v) d \eta d v,\\
J^{ \star,\mu  ; L;L_1, \mathcal{L}_2;1}_{k,k_1,k_2}(t_1, t_2)= \int_{t_1}^{t_2} \int_{\R^3}  \int_{\R^3} e^{it|\xi-\sigma|+it\mu |\xi-\eta|-it \hat{v}\cdot \eta}\p_t\big(\overline{  \widehat{V^{L } }(t , \xi-\sigma)}  \widehat{(V^{L_1}_1)^{\mu}}(t ,\xi-\eta)\big)\\ 
\times  \widehat{ u^{\mathcal{L}_2 }}(t , \eta, v) m_{k,k_1,k_2}^{ \star,\mu }(\xi-\eta, \eta, v) d \eta d v d t,\\ 
J^{  \star,\mu  ; L;L_1, \mathcal{L}_2;2 }_{k,k_1,k_2}(t_1, t_2)= \int_{t_1}^{t_2} \int_{\R^3}  \int_{\R^3} e^{it|\xi-\sigma|+it\mu |\xi-\eta|-it \hat{v}\cdot \eta}\overline{  \widehat{V^{L } }(t , \xi-\sigma)}  \widehat{(V^{L_1}_1)^{\mu}}(t ,\xi-\eta) \\ 
\times \p_t\big(\widehat{ u^{\mathcal{L}_2 }}(t , \eta, v)\big) m_{k,k_1,k_2}^{  \star,\mu }(\xi-\eta, \eta, v) d \eta d v d t,
\end{multline}
where the symbol $m_{k,k_1,k_2}^{  \star,\mu }(\xi-\eta, \eta, v)$ is defined as follows, 
\[
m_{k,k_1,k_2}^{ \star,\mu }(\xi-\eta, \eta, v):= m_{\star}(\xi-\eta, \eta, v) 
  \psi_k(\xi)\psi_{k_1}(\xi-\eta)\psi_{k_2}(\eta) \psi_{  < m/10}(v)(i |\xi-\sigma|+ \mu |\xi-\eta| - \eta \cdot \hat{v})^{-1}.
\]
From the estimate of symbol    in (\ref{symbolestimate2020july}), the estimate (\ref{july21eqn16}),  the $L^2-L^2-L^\infty$ type multilinear estimate, and the $L^\infty\longrightarrow L^2$ type Sobolev embedding, we have
\[
 \sum_{\star\in \{S_1,S_2\}} \sum_{k\in\Z, (k_1,k_2)\in\chi_k^2, k_1\leq -m/2, k\geq - 4d(N_0+1,0)\delta_0 m}  2^{-k+2\gamma k +2 (N_0-|L|)k_{+}}  \big|End^{\star,\mu  ; L;L_1, \mathcal{L}_2}_{k,k_1,k_2}(t_1, t_2)\big|
\]
\be\label{2020july8eqn46}
\lesssim\sum_{k_1\leq -m/2, k\geq - 4d(N_0+1,0)\delta_0 m}  2^{2d(|L|, N_0-|L|)\delta_0 m  -k_2} 2^{k_1}\epsilon_1^3\lesssim 2^{-m/3} \epsilon_1^3. 
\ee 

Recall the estimate of phase in  (\ref{july21eqn16}). Note that we have $|k-k_2|\leq 10$ for the case we are considering.  After doing integration by parts in time, the symbol is one derivative smoother. Hence,  there is no losing derivatives issue for the estimates of $J^{  L;L_1, \mathcal{L}_2;i}_{k,k_1,k_2}(t_1, t_2), i\in \{1,2\}$. By using the same strategy used in the estimate of endpoint case, the following estimate holds from the estimate (\ref{oct4eqn41}) in Proposition \ref{fixedtimenonlinarityestimate},
\[
 \sum_{\star\in \{S_1,S_2\}}  \sum_{k\in\Z, (k_1,k_2)\in\chi_k^2, k_1\leq -m/2, k\geq - 4 d(N_0+1,0) \delta_0 m} 2^{-k+2\gamma k +2 (N_0-|L|)k_{+}}  \big| J^{ \star,\mu  ; L;L_1, \mathcal{L}_2;1}_{k,k_1,k_2}(t_1, t_2)\big|
\]
\be\label{2020july8eqn47}
  \lesssim 2^{-m/3} \epsilon_1^3. 
\ee

Lastly, we estimate $J^{  L;L_1, \mathcal{L}_2;2}_{k,k_1,k_2}(t_1, t_2).$ Recall the equation satisfied by $ {u}^{\mathcal{L}}(t,x,v)$ in (\ref{2020april7eqn6}) and the decomposition in  (\ref{2020april14eqn23}).  The following equality holds after doing integration by parts in $v$ for the nonlinearity $D_v \cdot\mathfrak{R}_2^{\mathcal{L}_2}(t,x,v)$ and $  D_v \cdot  \mathfrak{Q}_2^{\mathcal{L
 }_2} (t,x,v)$,
\[
J^{\star,\mu  ;  L;L_1, \mathcal{L}_2;2 }_{k,k_1,k_2}(t_1, t_2)=\int_{t_1}^{t_2} \int_{\R^3}  \int_{\R^3} e^{ -it \hat{v}\cdot \eta}\overline{  \widehat{U^{L } }(t , \xi)}  \widehat{(U^{L_1}_1)^{\mu}}(t ,\xi-\eta)
\]
\[
\times \big[  \big(  \widehat{\mathfrak{Q}_3^{\mathcal{L
 }_2}  }(t , \eta, v)  +\sum_{i=3,4} \Lambda_{\geq 3}[\widehat{ \mathfrak{R}_i^{\mathcal{L}_2 }}](t , \eta, v) + i \eta \cdot (\widehat{ \mathfrak{R}_1^{\mathcal{L}_2 }}(t , \eta, v)+\widehat{\mathfrak{Q}_1^{\mathcal{L
 }_2}}(t, \eta, v) \big) m_{k,k_1,k_2}^{ \mu}(\xi-\eta, \eta, v) \]
\[
    +   \big(\widehat{\mathfrak{Q}_2^{\mathcal{L
 }_2}  }(t , \eta, v) + \widehat{ \mathfrak{R}_2^{\mathcal{L}_2 }}(t , \eta, v) \big)  \cdot  \nabla_v m_{k,k_1,k_2}^{ \mu}(\xi-\eta, \eta, v) \big] d \eta d v d t.
\] 

From  the estimate of symbol    in (\ref{symbolestimate2020july}), the estimate (\ref{july21eqn16}),  the $L^2-L^2-L^\infty$ type multilinear estimate,   the $L^\infty\longrightarrow L^2$ type Sobolev embedding,   the estimate (\ref{aug9eqn87}) in Lemma \ref{estimateofremaindervlasov}, and the estimate  (\ref{2020april9eqn21}) in Lemma \ref{estimateofremaindervlasov3}, and the obtained estimate \eqref{2020april14eqn21},  we have 
\be\label{2020july8eqn48}
 \sum_{\star\in \{S_1,S_2\}}  \sum_{k\in\Z, (k_1,k_2)\in\chi_k^2, k_1\leq -m/2, k\geq - 4  d(N_0+1,0) \delta_0 m} 2^{-k+2\gamma k +2  (N_0-|L|) k_{+}}  \big| J^{  L;L_1, \mathcal{L}_2;2}_{k,k_1,k_2}(t_1, t_2)\big|\lesssim 2^{-m/3} \epsilon_1^3.
\ee
Recall the decompositions (\ref{2020july8eqn41}) and  (\ref{2020july8eqn42}). To sum up, our desired estimate (\ref{2020july8eqn44}) holds from the estimates (\ref{2020july8eqn45}) and (\ref{2020july8eqn46}--\ref{2020july8eqn48}).

\end{proof}

\subsection{The estimate of wave-wave-wave type interaction }\label{wavewavewave}

In the following Lemma, we first rule out the very small input frequency case for the wave-wave-wave type interaction.

\begin{lemma}\label{lowinputfrequency}
 Under the bootstrap assumptions \textup{(\ref{BAmetricold})} and \textup{(\ref{BAvlasovori})}, the following estimate holds for any $t\in [2^{m-1}, 2^m]\subset [0,T]$,  $L\in \cup_{n\in[0, N_0]  } P_n, k \in \mathbb{Z}$,   
 \[
   \sum_{\begin{subarray}{c}
 k\in \Z, (k_1, k_2)\in \chi_k^1\cup \chi_k^2\cup \chi_k^3, H\in \{S,P,Q\}\\ 
 \min\{k_1,k_2\}\leq -m + \alpha_{inp}(|L|) \delta_1 m,
 k\geq -m + \alpha_{out}(|L|) \delta_1 m \\
 \end{subarray}}  2^{-k_{-}+2\gamma k_{-}+ 2  (N_0-|L|)  k_{+}} \big| \int_{\R^3}   CH_{metric;wa }^{  \tilde{h}^L;k,k_1,k_2}(t,x)
 \]
 \be\label{nov15eqn50}
\times \psi_{[m-4, m+1]}(x) d x\big|  \lesssim  2^{ 2e(L;\tilde{h})\delta_1 m - \delta_1 m/300 } \epsilon_1^3. 
 \ee
  
 \end{lemma}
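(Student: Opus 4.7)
The plan is to handle the three sub-terms $CK_{metric;wa}^{\tilde h^L;k,k_1,k_2}$ for $K\in\{S,P,Q\}$ separately, in each case pairing the outer factor $\overline{P_k\widetilde{U^{\tilde h^L}}}$ in $L^2$ by Cauchy--Schwarz with the bilinear interior, and extracting the needed smallness from three ingredients: the tiny Fourier volume of the low-frequency input (giving a Bernstein factor $2^{3\min\{k_1,k_2\}/2}$), the gain $2^{-m}$ from the spatial restriction $|x|\sim 2^m$ via the bilinear estimates of Lemma \ref{2020junebilinearestimate}, and, when the high-frequency input touches top order, the improved low-frequency bound (\ref{nov9eqn20}) of Lemma \ref{lowfrequencyinputestimate}. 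The outer Cauchy--Schwarz contributes the factor $2^{k_-/2-\gamma k_- -(N_0-|L|)k_+}\,2^{e(L;\tilde h)\delta_1 m}\epsilon_1$ coming from the bootstrap (\ref{BAmetricold}).

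For the semilinear sub-terms $K\in\{P,Q\}$ in (\ref{2020july6eqn51}), both inputs are the original perturbed metrics $L_1 h_{\mu\nu}$, $L_2 h_{\alpha\beta}$ and I would bound the $L^2$ norm of the interior by the light-cone bilinear estimate (\ref{2020june20eqn11}) on $|x|\sim 2^m$, which furnishes the extra factor $2^{-m-k_{\min}}$ through the trading-derivatives-for-vector-fields identity (\ref{definitionofcoefficient1}). Combined with the Bernstein factor on the low-frequency input and the bootstrap energies on both factors, the hypothesis $\min\{k_1,k_2\}\le -m+\alpha_{inp}(|L|)\delta_1 m$ converts the growth exponent into at most $(2e(L;\tilde h)+\alpha_{inp}(|L|))\delta_1 m$. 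Since $\alpha_{inp}(|L|)$ is defined by (\ref{inputthre}) with a safety constant $-1/50$, the summation in $k$, $k_1$, $k_2$ over their dyadic ranges (which costs at most $m^2$, absorbed by the constants) leaves the required net gain $2^{-\delta_1 m/300}$.

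For the quasilinear sub-term $K=S$, I would split according to which of $k_1,k_2$ realizes the minimum. When $k_2=\min\{k_1,k_2\}$, formula (\ref{2020july6eqn52}) places the two extra derivatives $\partial_i\partial_\gamma$ on $L_2 h_{\alpha\beta}$ with $\tilde L_2\prec L$; the frequency localization contributes only $2^{2k_2}$ and, crucially, $|\tilde L_2|<|L|$ removes any losing-derivative concern, so the same light-cone bilinear strategy as in the semilinear case applies directly. When $k_1=\min\{k_1,k_2\}$, formula (\ref{2022may3eqn18}) lands the two derivatives on the modified metric $\widetilde{L_2 h_{\alpha\beta}}$, whose $L^2$ norm obeys the sharp bootstrap bound. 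The low-frequency factor $P_{k_1}(L_1 h_{\mu\nu})$ is then controlled by Lemma \ref{lowfrequencyinputestimate}, which gives $\lesssim (1+2^{-k_1-m})2^{H(|L_1|-1)\delta_1 m+\beta(|L_1|)\delta_1 m}\epsilon_1+2^{H(|L_1|-1)\delta_1 m+m+k_1}\epsilon_1$; under $k_1\le -m+\alpha_{inp}(|L|)\delta_1 m$ the second summand reduces to $\lesssim 2^{(H(|L_1|-1)+\alpha_{inp}(|L|))\delta_1 m}\epsilon_1$ and the first is already at the good growth rate.

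The main obstacle is the bookkeeping in this last subcase, where the product of exponents must be compared against the target $2^{2e(L;\tilde h)\delta_1 m-\delta_1 m/300}$. The decisive ingredient is the hierarchy (\ref{july15eqn21}): $H(|L|)-H(|L|-1)\ge 20\beta(|L|)$, together with the piecewise definitions of $H$ and $\beta$ in (\ref{2022may3eqn11}). Inserting these and the precise choices (\ref{outputthre}), (\ref{inputthre}) gives, schematically, that the sum of exponents on the quasilinear side is at most $2e(L;\tilde h)\delta_1 m-(2\beta(|L|)-2/50)\delta_1 m/2$, which beats $2e(L;\tilde h)\delta_1 m-\delta_1 m/300$ by a comfortable margin once the cases $|L|=0$, $|L|=1$, and $|L|\ge 2$ are checked separately (the formulas for $H$ and $\beta$ jump at $|L|=2$). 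Once this arithmetic is verified, the logarithmic losses from dyadic summation in $k,k_1,k_2$ are trivially absorbed, and the stated bound (\ref{nov15eqn50}) follows.
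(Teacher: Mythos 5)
Your proposal has a genuine gap stemming from a misapplication of the bilinear estimates of Lemma \ref{2020junebilinearestimate}. You propose to extract the crucial $2^{-m}$ factor from (\ref{2020june20eqn11}) (the trading-derivatives-for-vector-fields bound), but that estimate carries two hypotheses that are not available here: it is stated for the spatial regimes $\star\in\{\leq m-10,\geq m+10\}$, i.e.\ \emph{away} from the light cone, whereas the integrand in (\ref{nov15eqn50}) is localized to $\psi_{[m-4,m+1]}(x)$, i.e.\ near the light cone; and it requires $\min\{k_1,k_2\}\geq -m+\delta_1 m$, whereas the hypothesis of the lemma forces $\min\{k_1,k_2\}\leq -m+\alpha_{inp}(|L|)\delta_1 m$, a range that certainly includes frequencies below $-m+\delta_1 m$ (indeed that is the whole point of the lemma). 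The near-light-cone bounds (\ref{2020june20eqn25}) and (\ref{2020june21eqn2}) would be the right tools for $|x|\sim 2^m$, but they too assume $\min\{k_1,k_2\}\geq -m+\delta_1 m$. The correct mechanism in this regime is simply the rough bilinear estimate (\ref{2020june20eqn20}), whose Bernstein factor $2^{3\min\{k,k_1,k_2\}/2}$ is very small precisely because $\min\{k_1,k_2\}$ is tiny; combined with the improved low-frequency bound (\ref{nov9eqn20}) of Lemma \ref{lowfrequencyinputestimate} (applied to the low-frequency input) this closes the $|L|\geq1$ case without any light-cone gain. You do mention both the Bernstein volume factor and Lemma \ref{lowfrequencyinputestimate}, but you subordinate them to the wrong main estimate, and if one tried to carry out the argument as written the application of (\ref{2020june20eqn11}) would simply not be legal.

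A second, smaller gap is the treatment of $|L|=0$. You mention checking $|L|=0,1,\geq 2$ only as an arithmetic verification of the exponent $\beta$ and $H$ formulas, but for $|L|=0$ the argument is structurally different and cannot go through Lemma \ref{lowfrequencyinputestimate} (which assumes $|L|\geq1$). In that case the quasilinear piece $CS_{metric;wa}^{\tilde h^L;k,k_1,k_2}$ vanishes identically (there is no $\tilde L_2\prec L$), and for the semilinear pieces $K\in\{P,Q\}$ the symbols of (\ref{nov5eqn4})--(\ref{nov5eqn5}) contribute an extra factor $2^{\min\{k_1,k_2\}}$, which together with the Fourier volume of the low-frequency input and the $Z$-norm bounds on the profiles gives the claimed smallness directly. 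Without this separate observation your argument has nothing to feed into the low-frequency input when $|L|=0$.
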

\begin{proof}
Recall the detailed formula  of $ CK_{metric;wa }^{  \tilde{h}^L;k,k_1,k_2}(t,x)$ in (\ref{2020july6eqn51}--\ref{2020july6eqn53})  and the   formula of $   QK_{ k,k_1,k_2}^{L;L_1,L_2},$ $  K\in\{S,P,Q\}$  in (\ref{nov5eqn1}--\ref{nov5eqn5}).  
  From  the estimate (\ref{2020june20eqn20}) in Lemma \ref{2020junebilinearestimate} and the estimate (\ref{nov9eqn20}) in Lemma \ref{lowfrequencyinputestimate}, the following estimate holds for $K\in \{S, P, Q\}$ if $|L|\geq 1$, 
\[
   \sum_{\begin{subarray}{c}
 k\in \Z, (k_1, k_2)\in \chi_k^1\cup \chi_k^2\cup \chi_k^3,   k\geq -m + \alpha_{out}(|L|) \delta_1 m
\\
 \min\{k_1,k_2\}\leq -m + \alpha_{inp}(|L|) \delta_1 m \\
 \end{subarray}} 2^{-k_{-}+2\gamma k_{-}+ 2   (N_0-|L|) k_{+}} \big| \int_{\R^3}   CK_{metric;wa }^{  \tilde{h}^L;k,k_1,k_2}(t,x)\psi_{[m-4, m+1]}(x) d x\big|\]
 \[
 \lesssim \sum_{l\leq -m + \alpha_{inp}(|L|) \delta_1 m } 2^{(1-\gamma )l +   e(L;\tilde{h})\delta_1 m  }  2^{\delta_1 m } \epsilon_1^3  \min\{ (1+2^{-  l-m})2^{   {H}(|L|-1)\delta_1 m + \beta_{}(|L|) \delta_1 m}   \]
 \[
    + 2^{  {H}(|L|-1)\delta_1 m + m +  l }  ,    2^{ H(|L|)\delta_1 m +\delta_1 m }\} 
  \lesssim 2^{-m +  2e(L;\tilde{h})\delta_1 m  -\delta_1 m /100 } \epsilon_1^3.
\]

It remains to consider the case $|L|=0$. Note that   $CS_{metric;wa }^{  \tilde{h}^L;k,k_1,k_2}(t,x)$ vanishes for the case we are considering  and the symbol of $CK_{metric;wa }^{  \tilde{h}^L;k,k_1,k_2}(t,x), $ $K\in \{P, Q\}$ contributes the smallness of $2^{\min\{k_1,k_2\}}$. Hence, the following estimate holds from the volume of support and the $Z$-norm estimates of profiles, 
\[
    \sum_{\begin{subarray}{c}
 k\in \Z, (k_1, k_2)\in \chi_k^1\cup \chi_k^2\cup \chi_k^3,   k\geq -m + \alpha_{out}(|L|) \delta_1 m
\\
 \min\{k_1,k_2\}\leq -m + \alpha_{inp}(|L|) \delta_1 m \\
 \end{subarray}}2^{-k_{-}+2\gamma k_{-}+ 2   (N_0-|L|)k_{+}} \big| \int_{\R^3}   CK_{metric;wa }^{  \tilde{h}^L;k,k_1,k_2}(t,x)\psi_{[m-4, m+1]}(x) d x\big| \]
 \be
 \lesssim  \sum_{l\leq -m + \alpha_{inp}(|L|) \delta_1 m }  2^{(2 -2a )l  + 2H(0)\delta_1 m +2\delta_1 m } \epsilon_1^3 
 \lesssim 2^{-3m/2   } \epsilon_1^3.
\ee 
Hence finishing the proof of our desired estimate (\ref{nov15eqn50}). 
\end{proof}

Now,  we  restrict ourselves to the case $k\geq-m + \alpha_{out}(|L|)\delta_1 m $  and $\min\{k_1,k_2\}\geq -m + \alpha_{inp}(|L|)\delta_1 m $  for the estimate of $CK_{metric; ess}^{  \tilde{h}^L;k,k_1,k_2}(t,x)$  in (\ref{2020july6eqn65})    and  the case   $ k \geq -m + \alpha_{out}(|L|)\delta_1 m $ and  $ k'\geq  -2 |\alpha_{out}(|L|)| \delta_1 m$ for  the estimate of  $ \tilde{C}_{metric;1}^{  \tilde{h}^L;k, k'}(t)$ in  (\ref{2020june24eqn32}). 

Recall (\ref{2020july6eqn65}). In the following Lemma, we first rule out the case $L_1\circ L_2\prec L$.
\begin{lemma}\label{wavewavewave2020err}
 Under the bootstrap assumptions \textup{(\ref{BAmetricold})} and \textup{(\ref{BAvlasovori})}, the following estimate holds for any $t\in [2^{m-1}, 2^m]\subset [0,T]$,  $L\in  \cup_{l\leq N_0  } P_l, k \in \mathbb{Z}$,   
 \[
   \sum_{\begin{subarray}{c}
 k\in \Z,k\geq - m + \alpha_{out}(|L|) \delta_1 m, 
  H\in \{S, P, Q\},  (k_1,k_2)\in \chi_k^1\cup \chi_k^2\cup \chi_k^3 \\
L_1\circ L_2\prec L,\min\{k_1,k_2\}\geq  -m + \alpha_{inp}(|L|)\delta_1 m\\  
 \end{subarray}}   2^{-k_{-}+2\gamma k_{-}+ 2   (N_0-|L|) k_{+}}
 \]
 \[
 \times \big[\big| \int_{\R^3}  \overline{ P_k(\widetilde{   U^{ \tilde{h}^L } }  ) }  (t,x)R_{\tilde{h}}^{\alpha\beta}   \big(      QK_{\alpha\beta;k,k_1,k_2}^{ess;L, \tilde{L}_1, \tilde{L}_2}  \big)(t,x) \psi_{[m-4, m+1]}( x )    d x\big|  +\big| \int_{\R^3}  \overline{ P_k(\widetilde{   U^{ \tilde{h}^L } }  ) }  (t,x) 
 \]
 \be\label{2020june19eqn1}
\times R_{\tilde{h}}^{\alpha\beta} P_k QH_{\alpha\beta;\mu\nu}^{ {L}_1, {L}_2;\gamma_1\kappa_1,\gamma_2\kappa_2  } (    (\widetilde{   U^{  {L}_1 h_{\gamma_1\kappa_1} }_{k_1} }(t))^{\mu} , ( \widetilde{   U_{k_2}^{  {L}_2 h_{\gamma_2\kappa_2} } }(t))^{\nu} ) )(x) d x \big|\big]\lesssim  2^{-m+  2e(L;\tilde{h})\delta_1 m -\delta_1 m }\epsilon_1^3.
 \ee
 \end{lemma}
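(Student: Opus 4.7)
The plan is to exploit the strict inequality $|L_1|+|L_2|<|L|$ that is hidden in the conditions $\tilde L_1\circ \tilde L_2\prec L$ and $|\tilde L_1|+|\tilde L_2|<|L_1|+|L_2|$. Because we lose at least one vector field relative to the top order, the hierarchy \eqref{july15eqn21} provides a small but definite gap in the admissible growth rates, which is exactly what is needed to produce the factor $2^{-\delta_1 m}$ in the bound. Since the integrand is localized by $\psi_{[m-4,m+1]}(x)$ to a neighborhood of the light cone, the natural tool is the close-to-cone bilinear estimate \eqref{2020june20eqn25} (or \eqref{2020june21eqn2} when $|\tilde L_1|=0$) from Lemma \ref{2020junebilinearestimate}, applied with the bounds $A_i,\widetilde A_i,C_i,D_i$ from \eqref{2020june21eqn4}--\eqref{2020june24eqn14}; the kernel bound \eqref{2020june20eqn30} is immediate from the symbol class and the null structure summarized in Lemma \ref{nullstructurefact}.

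The first step is to place $\overline{P_k(\widetilde{U^{\tilde h^L}})}$ in $L^2$, which costs $2^{k_-/2-\gamma k_--(N_0-|L|)k_+} 2^{e(L;\tilde h)\delta_1 m}\epsilon_1$. Then, using \eqref{2020june20eqn25} on the bilinear factor with $\tilde L_1\prec L$ and $\widetilde A_2(k_2,m,\tilde L_2)$, the remaining contribution is bounded by $2^{-m+\min\{k,k_1\}/2}\,C_1(k_1,m,\tilde L_1)\widetilde A_2(k_2,m,\tilde L_2)$ in the generic case and, when $|\tilde L_1|=0$, by the analog with $D_1(k_1,m)$ via \eqref{2020june21eqn2}. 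Combining with the $L^2$ factor and using $H(|\tilde L_1|+1)+e(|\tilde L_2|;\tilde h)\leq e(|L|;\tilde h)+\beta(|L|)-\delta_1/50$, which follows from \eqref{july15eqn21} together with the definition of $\beta(n)$ in \eqref{2022may3eqn11}, the sum over $k,k_1,k_2$ (restricted by $k\geq -m+\alpha_{out}(|L|)\delta_1 m$ and $\min\{k_1,k_2\}\geq -m+\alpha_{inp}(|L|)\delta_1 m$) converges as a geometric series in $\min\{k,k_1,k_2\}$ and produces the claimed bound $2^{-m+2e(L;\tilde h)\delta_1 m-\delta_1 m}\epsilon_1^3$.

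The second term, involving $QH^{L_1,L_2;\gamma_1\kappa_1,\gamma_2\kappa_2}_{\alpha\beta;\mu\nu}((\widetilde{U^{L_1 h_{\gamma_1\kappa_1}}_{k_1}})^{\mu},(\widetilde{U^{L_2 h_{\gamma_2\kappa_2}}_{k_2}})^{\nu})$ with $|L_1|+|L_2|<|L|$, is handled analogously, now using the bounds $A_i(k_i,m,L_i)\lesssim 2^{k_{i,-}/2-k_i-\gamma k_{i,-}-(N_0-|L_i|)k_{i,+}+d(|L_i|+1,\|L_i\|-|L_i|)\delta_0 m}\epsilon_1$ from \eqref{2020june21eqn4}. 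The strict inequality $|L_1|+|L_2|\leq |L|-1$ again unlocks the hierarchy gain via \eqref{july15eqn21}. To be safe against the case in which one of the two factors carries nearly all vector fields, I would split based on $|L_1|\lessgtr |L|/2$ (so that the other factor has enough derivative room to apply \eqref{2020june20eqn11} with the trading-derivative-for-vector-fields identity \eqref{2020june20eqn24}--\eqref{oct26eqn6}, after extracting the light-cone weight $|t|^2-|x|^2$ via \eqref{definitionofcoefficient1}). For the quasilinear subcase $H=S$, the estimate \eqref{oct28eqn36} in Lemma \ref{quadratichigherein} is used to absorb the extra derivative. The null structure $\mathcal M^{null}_{\mu\nu}$ enters only to ensure that no high-high$\to$low pathology occurs; since we are in a regime away from the critical top order, a straightforward application of the null bound from Lemma \ref{nullstructurefact} suffices, exactly as in the estimate of $CK^{ess}_{metric;k,k_1,k_2}$.

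The main obstacle I anticipate is the bookkeeping in the borderline case $|\tilde L_1|+|\tilde L_2|=|L|-1$ where the hierarchy gain \eqref{july15eqn21} is tight. Here one must use the sharper comparison $H(|\tilde L_1|+1)+H(|\tilde L_2|)\leq H(|L|)+H(1)$ \emph{together with} the definition of $\beta(|L|)$ in \eqref{2022may3eqn11}, and verify that the deficit $\beta(|L|)-H(1)\delta_1/H(|L|)$ translates into the required $\delta_1/300$ buffer. A secondary subtle point is the low-frequency endpoint $\min\{k_1,k_2\}$ just above $-m+\alpha_{inp}(|L|)\delta_1 m$, where the geometric series is borderline; here I would invoke Lemma \ref{lowfrequencyinputestimate} to re-express $\widetilde U^{L_1 h_{\gamma_1\kappa_1}}_{k_1}$ in terms of a commuted vector field acting on a lower order profile plus controlled error, recovering the needed $2^k$ factor.
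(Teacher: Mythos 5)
Your proposal takes essentially the same route as the paper: put $\overline{P_k(\widetilde{U^{\tilde h^L}})}$ in $L^2$ by Cauchy--Schwarz, estimate the bilinear factor near the cone via \eqref{2020june20eqn25}/\eqref{2020june21eqn2} with the $C_1/D_1$ and $\widetilde A_2$ bounds, and close using the hierarchy gap coming from $L_1\circ L_2\prec L$ together with \eqref{july15eqn21} and the definition of $\beta$ in \eqref{2022may3eqn11}. The paper simply assumes WLOG $|L_1|\le|L_2|$ and applies the two bilinear estimates directly, so the extra machinery you invoke --- the case split on $|L_1|\lessgtr|L|/2$, the far-from-cone bound \eqref{2020june20eqn11} together with the derivative-for-vector-field trade \eqref{2020june20eqn24}--\eqref{oct25eqn6}, estimate \eqref{oct28eqn36}, Lemma \ref{lowfrequencyinputestimate}, and the appeal to null structure from Lemma \ref{nullstructurefact} --- is unnecessary here; the strict inequality $|L_1|+|L_2|<|L|$ already yields more than the required $2^{-\delta_1 m}$ margin without any cancellation. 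Two small inaccuracies: you cite a nonexistent label (\eqref{oct26eqn6} rather than \eqref{oct25eqn6}), and the buffer needed here is $\delta_1 m$, not $\delta_1 m/300$ --- the latter belongs to Proposition \ref{theessentialcubic}, which also covers the sharp case $L_1\circ L_2=L$.
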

\begin{proof}
Note that for the case we are considering, we have $|L|\geq 1$. Without loss of generality, we assume that $|L_1|\leq |L_2|$.   Note that, from the     bilinear estimates (\ref{2020june20eqn25}) and \eqref{2020june21eqn2} in Lemma \ref{2020junebilinearestimate},  the following estimate holds, 
\[
\textup{L.H.S. of (\ref{2020june19eqn1})} \lesssim   \sum_{\begin{subarray}{c}
 k\in \Z,  (k_1,k_2)\in \chi_k^1\cup \chi_k^2\cup \chi_k^3, 
L_1\circ L_2\prec L 
 \\ 
  k\geq - m + \alpha_{out}(|L|) \delta_1 m, \min\{k_1,k_2\}\geq  -m + \alpha_{inp}(|L|)\delta_1 m\\  
 \end{subarray}}     2^{-k_{-}+2\gamma k_{-}+ 2N(|L|)k_{+}} \|P_k(\widetilde{   U^{ \tilde{h}^L } }  )(t)\|_{L^2} 
\]
 \[  
 \times \big[\|  \psi_{[m-4, m+1]}( x )P_k( R_{\alpha}(\widetilde{U_{k_1}^{L_1 h_{\iota\tau}}})^{\mu} R_{\beta}  (\widetilde{U_{k_2}^{L_2 h_{\gamma\kappa}}})^{\nu} )(t,x)\|_{L^2_x} \]
 \[
 + \|P_k( \d^{-1}R_{\alpha}(\widetilde{U_{k_1}^{L_1 h_{\iota\tau}}})^{\mu}(t,x)R_{\beta} \d (\widetilde{U_{k_2}^{L_2 h_{\gamma\kappa}}})^{\nu} ) (t,x)\psi_{[m-4, m+1]}( x )  \|_{L^2_x}\]
\[
   +  \|P_k( \d  R_{\alpha}(\widetilde{U_{k_1}^{L_1 h_{\iota\tau}}})^{\mu} R_{\beta} \d^{-1} (\widetilde{U_{k_2}^{L_2 h_{\gamma\kappa}}})^{\nu} )   \psi_{[m-4, m+1]}( x )\|_{L^2_x} \big]
\]
\[
\lesssim 2^{-m+e(L;\tilde{h})\delta_1 m + H(|L_2|)\delta_1 m +  \delta_1 m  }(2^{  H(|L_1|+1)\delta_1 m}\mathbf{1}_{|L_1|\geq 1} + 2^{\delta_1 m}\mathbf{1}_{|L_1|=0})\epsilon_1^3 
\]
\[
 \lesssim 2^{-m+  2e(L;\tilde{h})\delta_1 m - \delta_1 m  }\epsilon_1^3. 
\]
Hence finishing the proof of our desired estimate (\ref{2020june19eqn1}). 
\end{proof}
 
Now, it remains to consider the case $L_1\circ L_2= L$, in which   the null structure play   essential roles.   We summarize the    effect of  null structure in the following Lemma.

\begin{lemma}\label{2022maybilinearest}
Given  $\mu, \nu\in\{+, -\}$, $\tilde{h}_1, \tilde{h}_2 \in \{F, \underline{F},\rho, \Omega_i,  \omega_{i}, \vartheta_{ij}\}, $ $t_1, t_2\in [2^{m-1}, 2^m]$,$L, L_1, L_2, \tilde{L}_1\in \cup_{n\leq N_0 } P_n$, s.t.,  $|L_1|+|L_2|=|L|$,  $|\tilde{L}_1|+|\tilde{L}_2|=|L|$, $|\tilde{L}_1|< |L|$. Let
\be\label{2022may3eqn32} 
  A:=\{(\textup{quasi}, \tilde{L}_1, \tilde{L}_2),(\textup{semi},   {L}_1,   {L}_2): \tilde{L}_1\circ \tilde{L}_2\preceq \tilde{L}, L_1\circ L_2\preceq L, |\tilde{L}_1|< |L| \}. 
\ee
  For any given  bilinear operators $T_{\mu\nu}^{   \textup{quas}}$ and  $T_{\mu\nu}^{  \textup{semi}}$ with symbols $  m_{\mu\nu}^{\textup{quas}}(\xi-\eta, \eta)|\xi-\eta|^{-1}|\eta|\in \mathcal{M}^{null}_{\mu\nu}\cup \tilde{\mathcal{M}}^{null}_{- \nu}$ and  $  m_{\mu\nu}^{\textup{semi}}(\xi-\eta, \eta)\in  \mathcal{M}^{null}_{\mu\nu}\cup \tilde{\mathcal{M}}^{null}_{- \nu}$ respectively,  the following estimate  holds, 
\[
\sum_{\begin{subarray}{c}
k\in \Z, (k_1,k_2)\in \chi_k^1\cup \chi_k^2\cup \chi_k^3, (\star, L_1, L_2)\in A  \\ 
\min\{k,k_1,k_2\} \geq -m +  \alpha_{inp}(|L|)\delta_1 m 
\end{subarray}} 2^{-k_{-}+2\gamma k_{-}+ 2   (N_0-|L|) k_{+}} \big[ \big|  \int_{t_1}^{t_2} \int_{\R^3}   T_{\mu\nu}^{ {\star}}(    (\widetilde{   U^{ \tilde{h}_1^{L_1} }_{k_1} }(t))^{\mu} , ( \widetilde{   U_{k_2}^{ \tilde{h}_2^{L_2} } }(t))^{\nu} ) )(x)
\]
\be\label{2022may3eqn31} 
  \times \widetilde{   U^{ \tilde{h}^{L  } }_{k  } } (t,x)  \psi_{[m-4, m+1]}(x)  d x d t \big| \big] \lesssim 2^{2e(L;\tilde{h})\delta_1 m - \delta_1 m/100 }\epsilon_1^3.
\ee
\end{lemma}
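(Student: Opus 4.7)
The plan is to reduce \eqref{2022may3eqn31} to a refined bilinear $L^2$--estimate near the light cone that exploits both the null structure of the symbol and the angular concentration of modified profiles, then to distribute the vector--field and derivative budgets using the hierarchy \eqref{july15eqn21}. By symmetry in $L_1,L_2$ (respectively $\tilde L_1,\tilde L_2$) we may assume $|L_2|\geq |L_1|$; in the quasilinear case the constraint $|\tilde L_1|<|L|$ guarantees that the $L^2$--factor of highest vector--field count still carries the allowed $2^{e(L;\tilde h)\delta_1 m}$ growth. Since $\widetilde U^{\tilde h^L}_k$ has $L^2$--norm bounded by $2^{e(L;\tilde h)\delta_1 m+\gamma m}\epsilon_1$ and the time interval has length $2^m$, it will suffice to prove the fixed--time bilinear estimate
\[
\|T^{\star}_{\mu\nu}\big((\widetilde U^{\tilde h_1^{L_1}}_{k_1})^{\mu},(\widetilde U^{\tilde h_2^{L_2}}_{k_2})^{\nu}\big)(t,x)\psi_{[m-4,m+1]}(x)\|_{L^2_x}\lesssim 2^{-m(1+\delta_1/50)}\,2^{e(L;\tilde h)\delta_1 m}\epsilon_1^2,
\]
uniformly for $\min\{k,k_1,k_2\}\geq -m+\alpha_{\text{inp}}(|L|)\delta_1 m$ in the chosen frequency range.

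Next, I would perform an angular decomposition of the two input frequencies $\xi-\eta$ and $\eta$ into caps of size $2^{\bar l_i}$ with $\bar l_i=\max\{-(k_i+m)/2,-m/2\}$ around the directions $\mu x/|x|$ and $\nu x/|x|$ respectively (these being the critical cones for the phases $it|\xi-\eta|$ and $it|\eta|$ combined with the spatial localization $|x|\sim 2^m$). Using the super--localized $L^\infty$ decay estimate \eqref{linearwavedecay2} from Lemma \ref{superlocalizedaug}, contributions from complementary angular sectors decay faster than $2^{-m-\delta_1 m}$ in $L^\infty$ and are harmless. On the critical cones, a symbol in $\mathcal{M}^{null}_{\mu\nu}$ factored as $(\widehat{\xi-\eta}-\mu\nu\hat\eta)_i m'(\xi)$ is bounded by $2^{\bar l_1}+2^{\bar l_2}\lesssim 2^{-m/2}$, gaining the crucial null--form factor. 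For $\tilde{\mathcal{M}}^{null}_{-\nu}$--type symbols (which by Lemma \ref{nullstructurefact} appear precisely when $\tilde h=\underline F$), the same angular localization forces $\xi/|\xi|$ close to $\mu x/|x|$ and $(\xi-\eta+\eta)/|\xi|$ close to $\mu x/|x|$, so the vanishing factor $(\widehat{\xi}-\nu\hat\eta)$ is equally small on the support. In the quasilinear case $|\tilde L_1|<|L|$, the loss $|\eta|/|\xi-\eta|$ is traded for an extra vector field on the smaller factor via \eqref{2020june20eqn24}, at the cost of replacing $H(|\tilde L_1|)$ by $H(|\tilde L_1|+1)$.

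Combining the $L^\infty$ bound with the null--form gain for the $\widetilde U^{\tilde h_1^{L_1}}_{k_1}$ factor and the $L^2$--energy for $\widetilde U^{\tilde h_2^{L_2}}_{k_2}$, summing over the $2^{-\bar l}$ orthogonal caps, and invoking the identity $e(L_1;\tilde h_1)+e(L_2;\tilde h_2)\leq e(L;\tilde h)+H(1)-\beta(|L|)$ (which follows from \eqref{july15eqn21} together with the definition of $\beta$ and the fact that $\underline F$ can only come from one factor in a null product) yields a bound of the form $2^{-m+\bar l}\cdot 2^{e(L;\tilde h)\delta_1 m + H(1)\delta_1 m - \beta(|L|)\delta_1 m}$. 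Choosing $\bar l=-m/2$ and noting $\beta(|L|)\delta_1 m-H(1)\delta_1 m\gg \delta_1 m/100+\gamma m$ by the definition \eqref{2022may3eqn11} absorbs every loss with room to spare.

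The main obstacle, and the place requiring the most care, is the endpoint case $\tilde h=\underline F$ with $|L|$ large and $|L_1|=0$: here both $|\tilde L_1|$ and $|\tilde L_2|$ can be close to $N_0$, the growth budget $e(L;\underline F)=H(|L|)$ is maximal, and one cannot freely swap the $L^2/L^\infty$ roles. The key will be that $\mathcal{M}^{null}_{\mu\nu}$ is unavailable for the $Q(\vartheta_{mn},\vartheta_{pq})$ contribution to $\underline F$, forcing the use of the $\tilde{\mathcal{M}}^{null}_{-\nu}$ class, which demands that the inner geometric alignment be verified simultaneously for all three directions $x/|x|$, $\xi/|\xi|$, $\eta/|\eta|$. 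This is where the narrow angular localization on the spatial cut--off $\psi_{[m-4,m+1]}(x)$ is essential, and where the lower bound $\min\{k,k_1,k_2\}\geq -m+\alpha_{\text{inp}}(|L|)\delta_1 m$ is used to guarantee the angular scales are fine enough for the super--localized decay estimate to yield the claimed $2^{-\delta_1 m/100}$ slack.
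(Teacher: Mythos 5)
The proposal tries to dispense with the normal-form transformation entirely, replacing it by stationary-phase angular localization around $\mu x/|x|$ and $\nu x/|x|$ combined with the null-form gain. This is a genuinely different route from the paper, but it has a serious gap. The paper's proof splits according to the size of $\angle(\xi,-\nu\eta)$: for angles smaller than $\bar l_{-;\nu}\sim -2\beta(|L|)\delta_1 m$ the null symbol supplies the gain directly, while for larger angles the phase $\Phi^{\mu;\nu}$ is bounded below by $\sim 2^{k_2+2l}$ and the paper integrates by parts in time, using the paralinearized/symmetrized variables $\mathcal{U}^{\tilde h^L}$ (cf. \eqref{2020sep9eqn51}, \eqref{2020july21eqn30}) to resolve the ensuing derivative loss when $\p_t$ lands on the high-frequency input. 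You eliminate that second branch altogether.

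The place where this fails is exactly the regime $\mu=-$, $k_2\sim -m+\alpha_{\mathrm{inp}}(|L|)\delta_1 m$, $k_1\sim k\sim 0$: at frequency $2^{k_2}$ on the spatial annulus $|x|\sim 2^m$ the angular uncertainty scale is $\sim 2^{(-k_2-m)/2}\sim 1$, so the low-frequency input $\widetilde U^{\tilde h_2^{L_2}}_{k_2}$ cannot be concentrated to a cap of size $2^{-m/2}$ or indeed any cap of size $o(1)$. Hence your null factor $(\widehat{\xi-\eta}-\mu\nu\hat\eta)$ is $O(1)$ on the full support of $\eta$, and the claimed gain $2^{\bar l_1}+2^{\bar l_2}\lesssim 2^{-m/2}$ is not available. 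This is precisely the angular range for which the paper falls back to integration by parts in time. A secondary issue: you identify the null direction $\xi\approx-\nu\eta$ with the stationary-phase directions only for $\mu=-$, and for $\mu=+$ they disagree; the paper handles $\mu=+$ purely by the non-time-resonance argument (large phase, no angular restriction), which is again an IBPT argument your proposal avoids. Finally, the assertion that the super-localized decay estimate \eqref{linearwavedecay2} makes contributions from complementary angular sectors decay ``faster than $2^{-m-\delta_1 m}$'' is not quite what it says: the $\|\hat f\|_{L^\infty_\xi}$ branch only gives $|x|^{-1}$, and the $|x|^{-4/3}$ branch costs additional $\Gamma$ vector fields, which are not available at top order $|L|$ near $N_0$. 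In short, the missing ideas are the small/large angle dichotomy in $\angle(\xi,-\nu\eta)$, the normal-form transformation for the large-angle piece, and the paralinearization used to control the quasilinear derivative loss after $\p_t$ falls on the high-frequency factor.
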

\begin{proof}
Postponed to subsection \ref{proofofmaybilinear} for better presentation. 
\end{proof}

By using the above Lemma, in the following Lemma,  we finish the remaining cases of estimating $CK_{metric; ess}^{  \tilde{h}^L;k,k_1,k_2}(t,x)$  in (\ref{2020july6eqn65})  for the case   $ k \geq -m + \alpha_{out}(|L|)\delta_1 m $ and  $ \tilde{C}_{metric;1}^{  \tilde{h}^L;k, k'}(t)$ in  (\ref{2020june24eqn32}) for the case $ k'\geq  -2 |\alpha_{out}(|L|)| \delta_1 m$.

\begin{lemma}\label{wavewavewave2020ess}
Under the bootstrap assumptions \textup{(\ref{BAmetricold})} and \textup{(\ref{BAvlasovori})}, the following estimate holds for any $t\in [2^{m-1}, 2^m]\subset [0,T]$, $\tilde{h}\in \{F, \underline{F}, \omega_j, \vartheta_{ij}\},$  $L\in\cup_{n\leq N_0} P_n, $ 
 \[
 \sum_{ 
 k\in[- m + \alpha_{out}(|L|) \delta_1 m, \infty) 
 } 2^{-k_{-}+2\gamma k_{-}+ 2   (N_0-|L|) k_{+}} \Big[
 \sum_{\begin{subarray}{c}
  H\in \{S, P, Q\}, 
  (k_1, k_2)\in \chi_k^1\cup \chi_k^2\cup \chi_k^3,\\ 
   \min\{k_1,k_2\}\geq -m + \alpha_{inp}(|L|) \delta_1 m-10 
 \end{subarray} }   \big|  \int_{t_1}^{t_2} \int_{\R^3}  CH_{metric; ess}^{  \tilde{h}^L;k,k_1,k_2}(t,x)   
\]
\[
\times \psi_{[m-4, m+1]}(x) d x d t\big| + \sum_{k'\geq  -2|\alpha_{out}(|L|)|\delta_1 m }  \big| \int_{t_1}^{t_2}  \int_{\R^3}   \tilde{C}_{metric ;1}^{ \tilde{h}^L;k,  k'}(t,x)  \psi_{[m-4, m+1]}(x) d x d t\big|\Big] 
\]
\be\label{2020july6eqn27}
\lesssim  2^{  2 e(L;\tilde{h})\delta_1 m -\delta_1 m/300}\epsilon_1^3.
\ee

 \end{lemma}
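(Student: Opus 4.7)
The plan is to split the two contributions in \eqref{2020july6eqn27} according to whether the vector-field decomposition $\tilde{L}_1 \circ \tilde{L}_2$ (resp.\ $L_1 \circ L_2$) is strictly less than $L$ or equal to $L$, and then handle the equality case by exploiting null structure. First I would recall that $CH_{metric; ess}^{\tilde{h}^L; k, k_1, k_2}$ from \eqref{2020june17eqn21} and $\tilde{C}_{metric; 1}^{\tilde{h}^L; k, k'}$ from \eqref{2022april18eqn11} split naturally along this dichotomy. For the strict subcase $\tilde{L}_1 \circ \tilde{L}_2 \prec L$, Lemma \ref{wavewavewave2020err} already delivers a pointwise-in-$t$ bound of $2^{-m + 2 e(L; \tilde{h})\delta_1 m - \delta_1 m}\epsilon_1^3$; integrating over $t \in [t_1, t_2] \subset [2^{m-1}, 2^m]$ multiplies by at most $2^m$, so the contribution is $\lesssim 2^{2 e(L; \tilde{h})\delta_1 m - \delta_1 m}\epsilon_1^3$, which is stronger than the target.

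For the top-order subcase $\tilde{L}_1 \circ \tilde{L}_2 = L$, the decisive input is the null structure catalogued in Lemma \ref{nullstructurefact}. The correction bilinear forms $\tilde{Q}^{\mu\nu}_{h' \tilde{h}}$ in \eqref{nov22eqn13} (which absorb the derivative-losing piece of $QS$ with $L_2 = L$ into the cubic modification of $E_{modi; k}^{\tilde{h}; L}$) have symbols in $\mathcal{M}_{\mu\nu}^{null}$. The remaining $QS$ contribution (with $L_2 \prec L$) has symbol in $|\xi-\eta||\eta|^{-1}\mathcal{M}_{\mu\nu}^{null} \cup |\xi-\eta|^{-1}|\eta|\mathcal{M}_{\mu\nu}^{null}$; the $QQ$ contribution, as well as $QP$ outside the double-$\vartheta$ case, lies in $\mathcal{M}_{\mu\nu}^{null}$; and even the double-$\vartheta$ $QP$ satisfies $R_{\tilde{h}}^{\alpha\beta} qp_{\alpha\beta; \mu\nu}^{\vartheta, \vartheta} \in \tilde{\mathcal{M}}_{-\nu}^{null}$ provided $\tilde{h} \neq \underline{F}$. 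In each of these cases the decomposition set $A$ of \eqref{2022may3eqn32} accommodates the relabeling (the quasilinear alternative handles $L_2 \prec L$ after a symmetric swap, since then $|\tilde{L}_1| \leq |L|-1 < |L|$), so Lemma \ref{2022maybilinearest} applies directly and yields the bound $2^{2 e(L; \tilde{h})\delta_1 m - \delta_1 m/100}\epsilon_1^3$. The term $\tilde{C}_{metric; 1}^{\tilde{h}^L; k, k'}$ is treated in the same way, since its bilinear operators inherit the null structure of the $\tilde{Q}^{\mu\nu}_{h'\tilde{h}}$ forms.

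The one residual case is $\tilde{h} = \underline{F}$ paired with $QP(\vartheta_{mn}, \vartheta_{pq})$, the weak-null interaction for which no null structure is available. Here I would perform a direct $L^2 \times L^2$ pairing between $\widetilde{U^{\underline{F}^L}_k}$ (with energy bound $\lesssim 2^{e(L; \underline{F})\delta_1 m}\epsilon_1 = 2^{H(|L|)\delta_1 m}\epsilon_1$) and the quadratic term, using the rough bilinear estimate \eqref{2022march20eqn11} of Lemma \ref{quadratichigherein}, which already isolates exactly this contribution with the factor $2^{-m + H(|L|)\delta_1 m + \beta(|L|)\delta_1 m}\epsilon_1^2$. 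The crucial observation is that the two $\vartheta$ inputs obey the improved growth bounds $2^{(H(|L_i|) - \beta(|L_i|))\delta_1 m}$ with $|L_1|+|L_2|=|L|$, and the sub-additivity $H(a+1) + H(b) \leq H(a+b) + H(1)$ from \eqref{july15eqn21} combined with the definition $\beta(|L|) = H(1) + 1/50$ from \eqref{2022may3eqn11} produces a net gain of at least $\delta_1 m / 50$ relative to the budget $2 e(L; \underline{F})\delta_1 m$. After time integration over $[2^{m-1}, 2^m]$ this is stronger than the required $\delta_1 m/300$ smallness.

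The main obstacle is thus precisely this weak-null $QP(\vartheta, \vartheta)$ interaction in the $\underline{F}$ equation: having no null structure, it must be absorbed entirely through the $1/50$ margin built into $\beta(|L|) = H(1) + 1/50$, which in turn is designed so that the sub-additivity of $H$ extracts a small quantitative gain. Any slippage in distinguishing the growth rates of $\underline{F}$ from those of $\vartheta_{mn}$, or in pairing the correct improved input norms against the $\underline{F}$ output, would destroy the required decay factor. By contrast, the rest of the argument follows mechanically from the null-structure framework and the estimates of Section \ref{fixedtimeestimate} assembled above.
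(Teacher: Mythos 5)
The proposal is mostly aligned with the paper's strategy — strict vs.\ top-order split, then null structure via Lemma~\ref{2022maybilinearest}, then a separate treatment of the weak-null $QP(\vartheta,\vartheta)$ interaction in the $\underline{F}$ equation — but the quantitative argument you give for that residual weak-null case does not close, and you also miss a non-trivial sub-case the paper must handle separately.

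On the weak-null $QP(\vartheta,\vartheta)$ case: the rough estimate~\eqref{2022march20eqn11} you invoke bounds the full set of quadratic terms by $2^{-m+H(|L|)\delta_1 m+\beta(|L|)\delta_1 m}\epsilon_1^2$, and pairing with $\widetilde{U^{\underline{F}^L}_k}$ (energy bound $2^{H(|L|)\delta_1 m}\epsilon_1$) plus time integration over $[2^{m-1},2^m]$ gives $2^{2H(|L|)\delta_1 m+\beta(|L|)\delta_1 m}\epsilon_1^3$, which is \emph{worse} than the target $2^{2H(|L|)\delta_1 m-\delta_1 m/300}\epsilon_1^3$ by the factor $2^{\beta(|L|)\delta_1 m+\delta_1 m/300}$ with $\beta(|L|)\geq 1+1/50$. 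Your attempted repair via the improved growth of the two $\vartheta$ inputs also fails: you would need
\[
H(|L_1|)-\beta(|L_1|)+H(|L_2|)-\beta(|L_2|)\ \leq\ H(|L|)-\tfrac{1}{300},\qquad |L_1|+|L_2|=|L|,
\]
but for $|L|=0$ this is $4-\tfrac{2}{50}\leq 3-\tfrac{1}{300}$, which is false, and for $|L|=1$ one has $H(0)+H(1)=36>H(1)=33$ so the sub-additivity you cite gives nothing. The sub-additivity $H(a+1)+H(b)\leq H(a+b)+H(1)$ only starts helping once $H(|L|)-H(|L|-1)\geq H(1)$, which requires $|L|\geq 2$. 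The paper instead handles the $\vartheta\vartheta$ term with the sharper close-to-light-cone bilinear estimates~\eqref{2020june20eqn25} and~\eqref{2020june21eqn2} of Lemma~\ref{2020junebilinearestimate}: the $\psi_{[m-4,m+1]}(x)$ localization yields the decay factor $2^{-m+\min\{k,k_1,k_2\}/2}$, and then \emph{one} $\vartheta$ input is controlled via $C_1$ (interpolated $L^\infty$-in-angle decay from Lemma~\ref{superlocalizedaug}) or $D_1$ (the $Z$-norm, bounded uniformly for $\vartheta$), while the other $\vartheta$ input is controlled by its energy. This mismatch of norms — rather than two energies and a rough quadratic bound — is what produces the gain in~\eqref{2020june22eqn5}, with the $\mathbf{1}_{|L|\leq 1}$ vs.\ $\mathbf{1}_{|L|\geq 2}$ dichotomy there precisely because of the non-sub-additivity of $H$ for small arguments.

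You also do not address the range $k\in[-m+\alpha_{out}(|L|)\delta_1 m,\ -m+\alpha_{inp}(|L|)\delta_1 m)$, which is non-empty for $|L|=0$ (since $\alpha_{out}(0)=-10\neq 10=\alpha_{inp}(0)$). There Lemma~\ref{2022maybilinearest}'s hypothesis $\min\{k,k_1,k_2\}\geq -m+\alpha_{inp}(|L|)\delta_1 m$ fails for $k$, so the null-structure machinery does not apply as stated; the paper handles this second case by first ruling out small $\max\{k_1,k_2\}$ via the symbol's smallness, then treating the $\vartheta\vartheta$ case directly, then doing a separate integration by parts in time for $\mu=\nu$ (non-resonant) since the phase is $\gtrsim 2^{k_1}$, and finally exploiting the angular smallness $\lesssim 2^{k-k_1}$ of the null symbol in the resonant case $\nu=-\mu$. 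None of this is in your proposal.
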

\begin{proof}
Recall (\ref{2020june24eqn32}) and  (\ref{2020july6eqn65}). From the estimate (\ref{2020june19eqn1}) in Lemma \ref{wavewavewave2020err}, to prove   our desired estimate (\ref{2020july6eqn27}), it would be sufficient to the case $ |L_1|+|L_2|=|\tilde{L}_1|+|\tilde{L}_2|=|L|, |\tilde{L}_2|< |L|$. Based on the possible sizes of $|L|$ and $k$, we separate into two cases as follow. 

\noindent $\bullet$\quad If $ k \geq -m + \alpha_{inp}(|L|)\delta_1 m $, i.e., the assumptions in Lemma \ref{2022maybilinearest} are valid.

Recall (\ref{2020june17eqn21}) and the statement about null structure in Lemma \ref{nullstructurefact}.   Due to the presence of null structure in   all the  quadratic terms of wave-wave type interaction   in the equation satisfied by $\tilde{h}^L, \tilde{h}\in \{F, \omega_j, \vartheta_{ij}\},$  the following estimate follows directly from the estimate (\ref{2022may3eqn31}) in Lemma \ref{2022maybilinearest},  
  \begin{multline}\label{2020june19eqn2}
     \sum_{\begin{subarray}{c}
 H\in \{ P, Q\}, k\in \Z,  (k_1,k_2)\in \chi_k^1\cup \chi_k^2\cup \chi_k^3 \\
   \min\{k_1,k_2\}\geq  -m + \alpha_{inp}(|L|)\delta_1 m\\  
k \geq -m + \alpha_{out}(|L|)\delta_1 m\\ 
 \end{subarray}}   2^{-k_{-}+2\gamma k_{-}+ 2   (N_0-|L|) k_{+}}\big[\big|\int_{t_1}^{t_2} \int_{\R^3}  \overline{ P_k(\widetilde{   U^{  \tilde{h}^L } }  ) }  (t,x)R_{\tilde{h}}^{\alpha\beta}  \big(    QH_{\alpha\beta;k,k_1,k_2}^{ess;L, {L}_1, {L}_2}   \big)(t,x)  \\
 \times \psi_{[m-4, m+1]}(x)  d x d t\big| +\big|\int_{t_1}^{t_2}  \int_{\R^3}  \overline{ P_k(\widetilde{   U^{  \tilde{h}^L } }  ) }  (t,x)R_{\tilde{h}}^{\alpha\beta}  \big(   QS_{\alpha\beta;k,k_1,k_2}^{ess;L,\tilde{L}_1,\tilde{L}_2}  \big) (t,x)\psi_{[m-4, m+1]}(x)  d x d t\big|\\
+ \sum_{k'\geq -2|\alpha_{out}(|L|)|\delta_1 m}   \big| \int_{\R^3}  \tilde{C}_{metric ;1}^{ \tilde{h}^L;k,  k'}(t,x) \psi_{[m-4, m+1]}(x) d x\big|\big] \lesssim  2^{ 2 e(L;\tilde{h})\delta_1 m -\delta_1 m/100 }\epsilon_1^3.
\end{multline}

 It remains to consider the case when $\tilde{h}=\underline{F}$.     Recall (\ref{2020june17eqn21}). We first consider the quadratic terms without null structure, i.e., the interaction of $\vartheta_{mn}$ and $\vartheta_{pq}$.    From the bilinear estimates   (\ref{2020june20eqn25}) and (\ref{2020june21eqn2}) in Lemma \ref{2020junebilinearestimate},  the following estimate holds for any $\tilde{h}_1,  \tilde{h}_2\in \{\vartheta_{ij}\}  $,
 \[
      \sum_{\begin{subarray}{c}
  k\in \Z,  (k_1,k_2)\in \chi_k^1\cup \chi_k^2\cup \chi_k^3 \\
   \min\{k,k_1,k_2\}\geq  -m + \alpha_{inp}(|L|)\delta_1 m 
 \end{subarray}} 2^{-k_{-}+2\gamma k_{-}+ 2  (N_0-|L|)k_{+}} \big|\int_{t_1}^{t_2} \int_{\R^3}  \overline{ P_k(\widetilde{   U^{  \underline{F}^L } }  ) }  (t,x)  
 \]
 \[
   \times R_{\underline{F}}^{\alpha\beta}   P_k QP_{\alpha\beta;\mu\nu}^{\tilde{h}_1, \tilde{h}_2} (    (\widetilde{   U^{ \tilde{h}_1^{L_1} }_{k_1} }(t))^{\mu} , ( \widetilde{   U_{k_2}^{ \tilde{h}_2^{L_2} } }(t))^{\nu} ) )(x)   \psi_{[m-4, m+1]}(x)  d x d t\big| 
 \]
\be\label{2020june22eqn5}
  \lesssim 2^{   e(L;\underline{F})\delta_1 m    +   e(L;\vartheta_{mn})\delta_1 m }  \big[ 2^{\beta(|L|)\delta_1 m -\delta_1 m/200} \mathbf{1}_{|L|\geq 2} +   \mathbf{1}_{|L|\leq 1} \big]\epsilon_1^3   \lesssim  2^{ 2  e(L;\underline{F})\delta_1 m   -\delta_1 m /200  }\epsilon_1^3.
\ee

Now, we consider the cases with null structure, i.e., all other quadratic terms except the terms considered in the above integral. From  the estimate (\ref{2022may3eqn31}) in Lemma \ref{2022maybilinearest}, we have
\[
\sum_{\begin{subarray}{c}
\tilde{h}_1, \tilde{h}_2\in \{F, \underline{F}, \omega_j, \vartheta_{ij}\}, (\tilde{h}_1, \tilde{h}_2)\notin \{\vartheta_{ij}\}\times \{\vartheta_{ij}\}\\ 
k\in \Z, (k_1,k_2)\in \chi_k^1\cup \chi_k^2\cup \chi_k^3, 
 \min\{k,k_1,k_2\}\geq  -m + \alpha_{inp}(|L|)\delta_1 m 
\end{subarray}
 }   2^{-k_{-}+2\gamma k_{-}+ 2 (N_0-|L|)k_{+}}\big|\int_{t_1}^{t_2} \int_{\R^3} \overline{ P_k(\widetilde{   U^{  \underline{F}^L } }  ) }  (t,x)   \]
\be\label{2020july6eqn21}
\times  \psi_{[m-4, m+1]}(x)   R_{\underline{F}}^{\alpha\beta}   P_k QP_{\alpha\beta;\mu\nu}^{\tilde{h}_1, \tilde{h}_2} (    (\widetilde{   U^{ \tilde{h}_1^{L_1} }_{k_1} }(t))^{\mu} , ( \widetilde{   U_{k_2}^{ \tilde{h}_2^{L_2} } }(t))^{\nu} ) )(x)  d x d t\big|\lesssim  2^{ 2  e(L;\underline{F})\delta_1 m   -\delta_1 m /100   }\epsilon_1^3. 
\ee

\[
\sum_{\begin{subarray}{c}
\tilde{h}_1, \tilde{h}_2\in \{F, \underline{F}, \omega_j, \vartheta_{ij}\}, k\in \Z, (k_1,k_2)\in \chi_k^1\cup \chi_k^2\cup \chi_k^3\\ 
 \min\{k,k_1,k_2\}\geq  -m + \alpha_{inp}(|L|)\delta_1 m \\
\end{subarray} }    2^{-k_{-}+2\gamma k_{-}+ 2 (N_0-|L|) k_{+}}\big[ \big|\int_{t_1}^{t_2} \int_{\R^3}   \overline{ P_k(\widetilde{   U^{  \underline{F}^L } }  ) }  (t,x)  \psi_{[m-4, m+1]}(x)  
\]
\[
\times R_{\underline{F}}^{\alpha\beta}   P_k QQ_{\alpha\beta;\mu\nu}^{\tilde{h}_1, \tilde{h}_2} (    (\widetilde{   U^{ \tilde{h}_1^{L_1} }_{k_1} }(t))^{\mu} , ( \widetilde{   U_{k_2}^{ \tilde{h}_2^{L_2} } }(t))^{\nu} ) )(x) d x d t\big|   + \big|\int_{t_1}^{t_2} \int_{\R^3} \overline{ P_k(\widetilde{   U^{  \underline{F}^L } }  ) }  (t,x)
\]
\[
\times    R_{\underline{F}}^{\alpha\beta}   P_k QS_{\alpha\beta;\mu\nu}^{\tilde{h}_1, \tilde{h}_2} (    (\widetilde{   U^{ \tilde{h}_1^{\tilde{L}_1} }_{k_1} }(t))^{\mu} , ( \widetilde{   U_{k_2}^{ \tilde{h}_2^{\tilde{L}_2} } }(t))^{\nu} ) )(x)  \psi_{[m-4, m+1]}(x) d x d t\big| \big]
\]
\be\label{2020july6eqn22}
\lesssim   2^{ 2  e(L;\underline{F})\delta_1 m   -\delta_1 m /100   }\epsilon_1^3. 
\ee
 To sum up,  the following estimate holds   from the estimates (\ref{2020june22eqn5}), (\ref{2020july6eqn21}), and (\ref{2020july6eqn22}),
    \[
  \sum_{\begin{subarray}{c}
 H\in \{ P, Q\}, k\in \Z,  (k_1,k_2)\in \chi_k^1\cup \chi_k^2\cup \chi_k^3 \\
   \min\{k,k_1,k_2\}\geq  -m + \alpha_{inp}(|L|)\delta_1 m  \\ 
 \end{subarray}}    2^{-k_{-}+2\gamma k_{-}+ 2 (N_0-|L|) k_{+}}\big[\big|\int_{t_1}^{t_2} \int_{\R^3}   \psi_{[m-4, m+1]}(x)   \overline{ P_k(\widetilde{   U^{ \underline{F}^L } }  ) }  (t,x) \]
 \[
 \times  R_{\underline{F}}^{\alpha\beta} \big(   QH_{\alpha\beta;k,k_1,k_2}^{ess;L, {L}_1, {L}_2}    \big)(t,x)  d x d t\big| 
  +\big|\int_{t_1}^{t_2}  \int_{\R^3}   \overline{ P_k(\widetilde{   U^{  \underline{F}^L } }  ) }  (t,x)  R_{ \underline{F} }^{\alpha\beta}  \big(    QS_{\alpha\beta;k,k_1,k_2}^{ess;L, \tilde{L}_1, \tilde{L}_2}     \big)(t,x) 
 \]
  \be\label{2020june19eqn4}
 \times  \psi_{[m-4, m+1]}(x) d x d t\big|\big] \lesssim   2^{ 2 e(L; \underline{F})\delta_1 m -\delta_1 m/300  }\epsilon_1^3.
   \ee
\noindent $\bullet$\quad If   $  k \in [ -m + \alpha_{out}(|L|)\delta_1 m,  -m + \alpha_{inp}(|L|)\delta_1 m) $. 

Recall (\ref{outputthre}) and (\ref{inputthre}). For this case, it would be sufficient to consider the case $|L|=0.$

Note that, from the rough bilinear estimate   (\ref{2020june20eqn20}) in Lemma \ref{2020junebilinearestimate}, due to the smallness of at least $2^{\max\{k_1,k_2\}}$ from symbols, we can rule out the case when $\max\{k_1, k_2\}\leq -2 \alpha_{inp}(0)\delta_1 m$. It remains to consider the case $\max\{k_1, k_2\}\geq -2 \alpha_{inp}(0)\delta_1 m$, i.e.,  $(k_1, k_2)\in \chi_k^1.$

From the bilinear estimate (\ref{2020june21eqn2}) in Lemma \ref{2020junebilinearestimate} and the $Z$-norm estimate of the good component, the following estimate holds for the quadratic term without null structure, 
\begin{multline}
 \sum_{\begin{subarray}{c}
  \tilde{h}_1,  \tilde{h}_2\in \{\vartheta_{ij}\}, H\in \{S, P, Q\}\\
   k\in \Z, (k_1,k_2)\in \chi_k^1
 \end{subarray}} 2^{-k_{-}+2\gamma k_{-}+2N_0 k_{+} } \big|\int_{t_1}^{t_2} \int_{\R^3}    R_{  \tilde{h} }^{\alpha\beta}   P_k QH_{\alpha\beta;\mu\nu}^{\tilde{h}_1, \tilde{h}_2} (    (\widetilde{   U^{ \tilde{h}_1^{  } }_{k_1} }(t))^{\mu} , ( \widetilde{   U_{k_2}^{ \tilde{h}_2^{ } } }(t))^{\nu} ) )(x) \\ 
  \times \overline{ P_k(\widetilde{   U^{  \tilde{h} } }  ) }  (t,x)   \psi_{[m-4, m+1]}(x)  d x d t\big|\lesssim 2^{m+  e(L;  \tilde{h})\delta_1 m    } \epsilon_1^3\lesssim 2^{m+ 2 e(L;  \tilde{h})\delta_1 m  -\delta_1 m /100  } \epsilon_1^3
 \end{multline}

It remains to consider the case $ (\tilde{h}_1, \tilde{h}_2)\notin \{\vartheta_{ij}\}\times \{\vartheta_{ij}\}$. We first out the non-time-resonance case   $\mu=\nu$. Note that the phase is relatively large for the case we are considering. More precisely, we have
\[
\big||\xi|+\mu |\xi-\eta|+ \nu |\eta|\big|\gtrsim 2^{k_1}\gtrsim 2^{-2\alpha_{inp}(0)\delta_1 m}.
\]
 
 To take advantage of high oscillation in time, we do integration by parts in time once. Note that, due to the fact that the output frequency is very small, there is no losing derivative
issue. After using the bilinear estimate       (\ref{2020june20eqn20}) in Lemma \ref{2020junebilinearestimate}, the following estimate holds after doing integration by parts in time once, 
\begin{multline}
 \sum_{\begin{subarray}{c}
 \tilde{h}_1, \tilde{h}_2 \in \{F, \underline{F}, \omega_j, \vartheta_{mn}\},
  (\tilde{h}_1, \tilde{h}_2)\notin \{\vartheta_{ij}\}\times \{\vartheta_{ij}\}\\ 
   H\in \{S, P, Q\}, k\in \Z,  (k_1,k_2)\in \chi_k^1, \mu\in\{+,-\}
   \end{subarray}}2^{-k_{-}+2\gamma k_{-} +2N_0 k_{+}  } \big|\int_{t_1}^{t_2} \int_{\R^3}  \overline{ P_k(\widetilde{   U^{  \tilde{h} } }  ) }  (t,x) \psi_{[m-4, m+1]}(x)     \\ 
  \times     R_{  \tilde{h} }^{\alpha\beta} P_k QH_{\alpha\beta;\mu\mu}^{\tilde{h}_1, \tilde{h}_2} (    (\widetilde{   U^{ \tilde{h}_1^{  } }_{k_1} }(t))^{\mu} , ( \widetilde{   U_{k_2}^{ \tilde{h}_2^{ } } }(t))^{\mu} ) )(x)  d x d t\big|\lesssim 2^{-m/2  } \epsilon_1^3.  
 \end{multline}

 Lastly, we consider the case $\nu=-\mu, \mu\in \{+,-\}$. Since the symbol of quadratic terms with null structure belongs to $\mathcal{M}^{null}_{\mu(-\mu)}=\mathcal{M}^{null}_{-}$, see Lemma \ref{nullstructurefact}, we gain the smallness of $(\xi-\eta)/|\xi-\eta|+\eta/|\eta|$ from the symbol, which is less than $2^{k-k_1}$. Therefore, from the rough estimate (\ref{2020june20eqn20}) in Lemma \ref{2020junebilinearestimate}, the following estimate holds, 
 \[
  \sum_{\begin{subarray}{c}
  \tilde{h}_1,  \tilde{h}_2\in \{\vartheta_{ij}\}\\
    H\in \{S, P, Q\}\\ 
     k\in \Z, (k_1,k_2)\in \chi_k^1
  \end{subarray}
}2^{-k_{-}+2\gamma k_{-} +2N_0 k_{+} } \big|\int_{t_1}^{t_2} \int_{\R^3}  \overline{ P_k(\widetilde{   U^{  \tilde{h} } }  ) }  (t,x)  R_{  \tilde{h} }^{\alpha\beta}   P_k QH_{\alpha\beta;\mu(-\mu)}^{\tilde{h}_1, \tilde{h}_2} (    (\widetilde{   U^{ \tilde{h}_1^{  } }_{k_1} }(t))^{\mu} , ( \widetilde{   U_{k_2}^{ \tilde{h}_2^{ } } }(t))^{-\mu} ) )(x) 
 \]
 \be
  \times    \psi_{[m-4, m+1]}(x)  d x d t\big|\lesssim 2^{m + 2k + 4\alpha_{inp}(0)\delta_1 m } \epsilon_1^3\lesssim 2^{-m/2}\epsilon_1^3.
 \ee
Hence finishing the proof of our desired estimate (\ref{2020july6eqn27}). 
\end{proof}

\subsection{Proof of Lemma \ref{2022maybilinearest} }\label{proofofmaybilinear}

 In this subsection, we provide detailed proof of the Lemma \ref{2022maybilinearest}.

  Firstly,   we  prove several $L^2$-type bilinear estimates  in the following Lemma,  which are analogue of bilinear estimates in Lemma \ref{2020junebilinearestimate}.   
 \begin{lemma}\label{2020junebilinearangular}
Assume that $f_1,f_2$ satisfy assumptions defined in \textup{(\ref{oct22eqn1})}. Given $L_1, L_2\in \cup_{n\leq N_0 } P_n, $ $  |L_1|\leq N_0-3$, $m\in \mathbb{Z}_{+}$,  $t\in[2^{m-1}, 2^m]\subset [0,T]$,   $\alpha\in \R,\mu,\nu\in\{0,1,2,3\}, l,k, k_1, k_2\in \mathbb{Z} $, s.t.,  $l\leq 2 $, $k_2\leq k_1$,    we define 
\[
K_{\mu, \nu}(  f_1^{L_1},  f_2^{ L_2} )(t,\xi):=    \int_{\R^3}   \psi_{k}(\xi) \psi_{k_1}(\xi-\eta)\psi_{k_2}(\eta) \psi_{l}(\frac{\xi}{|\xi|}  +\nu  \frac{\eta}{|\eta|} ))\big(\frac{\xi}{|\xi|}\times \frac{\eta}{|\eta|} \big)^{\alpha} 
\]
\be\label{nov23eqn36}
  \times m(\xi-\eta, \eta)\widehat{R_\mu  f_1^{L_1} }(\xi-\eta) \widehat{R_\nu f_2^{L_2}}(\eta) d \eta,
\ee
where the symbol $m(\xi-\eta, \eta)\in \mathcal{S}^\infty_{k,k_1,k_2} $. 
Then the following bilinear estimate holds,
\be\label{nov27eqn31}
  \|   K_{\mu, \nu}(  f_1^{L_1},  f_2^{ L_2} )(t,\xi)\|_{L^2} \lesssim 2^{\alpha l +3\min\{k,k_1,k_2\}/2}   \|m (\xi-\eta, \eta)\|_{\mathcal{S}^\infty_{k,k_1,k_2}}  A_1(k_1, m,L_1) A_2(k_2, m, L_2).
\ee
If  $l+k_1,l+k_2\geq -m+ \delta_1 m$, then the following estimate  holds for $\star\in \{\leq m-10, \geq m+10\}$, 
\[
   \|\mathcal{F}^{-1}[ K_{\mu, \nu}(  f_1^{L_1},  f_2^{ L_2} )](t,x)\psi_{\star}(x)\|_{L^2} \lesssim  2^{\alpha l +3\min\{k,k_1,k_2\}/2}  \|m (\xi-\eta, \eta)\|_{\mathcal{S}^\infty_{k,k_1,k_2}}  A_2(k_2, m, L_2)\]
  \[
  \times \min\big\{  \sum_{\Gamma_1, \Gamma_2\in P_0^1\cup P_1^0} 2^{-2m-2k_1}  A_1(k_1, m,  L_1;\Gamma_1\circ\Gamma_2) + 2^{-m-3k_1}\|P_{k_1}(\square  f_1^{L_1})\|_{L^2} \]
\be\label{nov24eqn2}
  + 2^{-2m-4k_1}\|P_{k_1}(\square \Gamma_1   f_1^{L_1} )\|_{L^2}, \sum_{\Gamma \in P_0^1\cup P_1^0} 2^{- m- k_1}  A_1(k_1, m,L_1;\Gamma )+ 2^{-m-3k_1}\|P_{k_1}(\square  f_1^{L_1})\|_{L^2} ,   \big\}    , 
\ee
\[
  \|  \mathcal{F}^{-1}[ K_{\mu, \nu}(  f_1^{L_1},  f_2^{ L_2} )](t,x) \psi_{>m-10}(x)\|_{L^2_x}
\]
\be\label{2020june25eqn11}
 \lesssim   2^{-m+\min\{k,k_1,k_2\}/2 }   2^{\alpha l  } \|m (\xi-\eta, \eta)\|_{\mathcal{S}^\infty_{k,k_1,k_2}}C_1(k_1,m,L_1) A_2(k_2, m, L_2). 
\ee
If  $l+k_1,l+k_2\geq -m+ \delta_1 m$ and   $|L_1|=0$, then the following estimate holds, 
\[
  \|  \mathcal{F}^{-1}[ K_{\mu, \nu}(  f_1^{L_1},  f_2^{ L_2} )](t,x) \psi_{>m-10}(x)\|_{L^2_x}
\]
  \be\label{nov24eqn4}
 \lesssim   2^{-m+\min\{k,k_1,k_2\}/2 }   2^{\alpha l  }  \|m (\xi-\eta, \eta)\|_{\mathcal{S}^\infty_{k,k_1,k_2}}D_1(m, k_1)  A_2(k_2, m, L_2). 
\ee  
 
\end{lemma}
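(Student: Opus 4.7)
The plan is to mimic, essentially line-by-line, the proof of Lemma \ref{2020junebilinearestimate}, carrying the extra angular cutoff $\psi_l(\xi/|\xi|+\nu\eta/|\eta|)$ and the angular weight $(\xi/|\xi|\times\eta/|\eta|)^\alpha$ through the kernel analysis. Writing $K_{\mu,\nu}$ in physical space,
\[
\mathcal{F}^{-1}[K_{\mu,\nu}(f_1^{L_1},f_2^{L_2})](t,x)=\int_{\R^3}\int_{\R^3}\widetilde{K}_{k,k_1,k_2}^{l,\nu,\alpha}(x_1,x_2)R_\mu f_1^{L_1}(x-x_1)R_\nu f_2^{L_2}(x-x_1-x_2)\,dx_1\,dx_2,
\]
where the kernel $\widetilde{K}_{k,k_1,k_2}^{l,\nu,\alpha}$ is the inverse Fourier transform of $\psi_k(\xi)\psi_{k_1}(\xi-\eta)\psi_{k_2}(\eta)\psi_l\bigl(\xi/|\xi|+\nu\eta/|\eta|\bigr)(\xi/|\xi|\times\eta/|\eta|)^\alpha m(\xi-\eta,\eta)$. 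Since the angular weight is pointwise bounded by $2^{\alpha l}$ on the support of $\psi_l$, and $l\leq 2$, a standard integration-by-parts argument in $\xi,\eta$ (in the tangential and radial directions separately, accounting for the singular support of $\psi_l$) yields
\[
\|\widetilde{K}_{k,k_1,k_2}^{l,\nu,\alpha}(x_1,x_2)\|_{L^1_{x_1,x_2}}\lesssim 2^{\alpha l}\|m(\xi-\eta,\eta)\|_{\mathcal{S}^\infty_{k,k_1,k_2}},
\]
together with the refined estimate
\[
\|\widetilde{K}_{k,k_1,k_2}^{l,\nu,\alpha}(x_1,x_2)\psi_{\geq -m+\delta_1 m}(|x_1|+|x_2|)\|_{L^1_{x_1,x_2}\cap L^\infty_{x_1,x_2}}\lesssim 2^{-10m+\alpha l}\|m\|_{\mathcal{S}^\infty_{k,k_1,k_2}}
\]
whenever $l+k_1,l+k_2\geq -m+\delta_1 m$ (this is the analogue of \eqref{2020june20eqn1} in the angular setting, and is where the assumption on $l+k_i$ is used).

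With these kernel estimates in hand, the four claims follow from \emph{exactly} the same four arguments used in the proof of Lemma \ref{2020junebilinearestimate}. First, \eqref{nov27eqn31} comes from combining the $L^2-L^\infty$ estimate with $L^\infty\to L^2$ Sobolev embedding for fixed $(x_1,x_2)$, together with the volume-of-support bound when $k=\min\{k,k_1,k_2\}$; the extra $2^{\alpha l}$ is supplied by the kernel $L^1$ bound. Second, for \eqref{nov24eqn2}, the far-from-the-light-cone region, I will use the trick \eqref{2020june20eqn24} that trades one derivative of $f_1^{L_1}$ for a vector field $\Gamma$ against a factor $a_\Gamma(t,x)\lesssim ||t|-|x||^{-1}\lesssim 2^{-m}$; iterating this once more via \eqref{oct25eqn6} when $|L_1|\leq N_0-2$ gives the first, stronger variant, while a single application yields the second. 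Third and fourth, the close-to-the-light-cone estimates \eqref{2020june25eqn11} and \eqref{nov24eqn4} are handled by localizing the frequency of $f_1^{L_1}$ into thin annuli as in \eqref{2020june20eqn50}, using orthogonality in $L^2$ for the off-diagonal pieces (the rapid decay estimate \eqref{oct7eqn3} still applies since we assume $l+k_1,l+k_2\geq -m+\delta_1 m$), and invoking the super-localized linear decay estimates \eqref{july27eqn4} or \eqref{linearwavedecay2} of Lemma \ref{superlocalizedaug} on the diagonal, giving $C_1(k_1,m,L_1)$ in general and $D_1(m,k_1)$ when $|L_1|=0$.

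The only genuinely new ingredient is the kernel analysis with the angular cutoff, and here the main subtlety is that one cannot integrate by parts too many times in the tangential direction without losing the $2^{l}$ factors. The resolution is to treat the radial and tangential frequency variables separately: in the tangential directions one integrates by parts at most twice, picking up $(1+2^{\min\{k_1,k_2\}+l}|y_{\rm tang}|)^{-N}$, while in the radial direction one integrates by parts as many times as needed. I expect this careful, but standard, kernel-splitting to be the main bookkeeping obstacle; once the two displayed kernel bounds are established, the rest of the proof is a transcription of the arguments in Lemma \ref{2020junebilinearestimate} with the single systematic modification of an extra $2^{\alpha l}$ factor on the right-hand side of every estimate.
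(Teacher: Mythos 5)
Your proposal bypasses the central device in the paper's proof, and the step where the omission bites is precisely the kernel $L^1$ bound you assert. The full kernel $\widetilde{K}_{k,k_1,k_2}^{l,\nu,\alpha}$ contains the joint angular constraint $\psi_l(\xi/|\xi|+\nu\eta/|\eta|)$; decomposing the sphere into $\sim 2^{-2l}$ caps of size $2^l$ shows that this symbol splits into $\sim 2^{-2l}$ pieces whose \emph{individual} kernels each have $L^1$ norm $\lesssim 2^{\alpha l}\|m\|_{\mathcal{S}^\infty_{k,k_1,k_2}}$, so the $L^1$ norm of the \emph{full} kernel is of size $\sim 2^{\alpha l - 2l}\|m\|$, not $2^{\alpha l}\|m\|$ as you claim. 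Feeding your claimed bound into the $L^2$--$L^\infty$ argument of Lemma \ref{2020junebilinearestimate} therefore silently drops a factor $2^{-2l}$. This is not harmless: the lemma is applied with $l$ as negative as $\bar{l}_{-;\pm}\sim -2\beta(|L|)\delta_1 m$ (see \eqref{2020june25eqn4}), so the loss would be $2^{+c\delta_1 m}$ for a positive constant $c$, destroying the estimates.

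What the paper actually does — and what you would need — is a partition of unity on $\mathbb{S}^2$ into caps $\{\theta_i\}$ of size $2^l$ applied simultaneously to $\xi/|\xi|$ and to $\eta/|\eta|$. The angular cutoff $\psi_l(\xi/|\xi|+\nu\eta/|\eta|)$ forces the diagonal pairing $|i-j|\lesssim 1$, and the kernel of each sector-pair piece $K_i^j$ does admit the anisotropic $L^1$ bound $\lesssim 2^{\alpha l}\|m\|$. The factor $2^{-2l}$ from the number of sectors is then recovered \emph{not} by a kernel bound but by $L^2$-orthogonality across sectors, $\|K_{\mu,\nu}\|_{L^2}^2\sim\sum_{|i-j|\lesssim 1}\|K_i^j\|_{L^2}^2$ (see \eqref{2020july6eqn1}), combined with $\sum_i\|P_{k_1}^{l,i}f\|_{L^2}^2\sim\|P_{k_1}f\|_{L^2}^2$ on the input side. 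The rest of your outline (trading derivatives for vector fields away from the light cone via \eqref{2020june20eqn24}--\eqref{oct25eqn6}, super-localized annular decomposition near the cone, invoking Lemma \ref{superlocalizedaug}) is indeed a transcription of Lemma \ref{2020junebilinearestimate}, but it must be carried out inside each sector pair with the orthogonality sum at the end; without that scaffolding the argument as written does not close.
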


\begin{proof}
To prove our desired estimates, it would be sufficient to consider the case   $l< -10$, otherwise,  the angular localization function plays little role and desired estimates follow from the estimates in Lemma \ref{2020junebilinearestimate}.

 Let $\{\theta_1,\cdots, \theta_K\}\subset \mathbb{S}^2$ be the centers of the   ordered   sectors s.t., 
\[
1=\sum_{i=1, \cdots, K} \psi_{\leq l}(\xi/|\xi|-\theta_i), \quad  \forall i,j\in\{1,\cdots, K\},\quad  |\theta_i-\theta_j|\geq |i-j|2^{l-2}. 
\] 
 We do partition of unity on sphere for the angular variables    $\xi/|\xi|$ and $\eta/|\eta|$ with the sector of size $2^l$ and have the following decomposition, 
\[
\mathcal{F}_{\xi}^{-1}[K_{\mu, \nu}( f_1^{L_1},   f_2^{L_2})](t,x):= \sum_{i,j\in\{1,\cdots, K\}, |i-j|\leq 3} K_i^j ( f_1^{L_1},   f_2^{L_2}), \]
where
\[
  K_i^j ( f_1^{L_1},   f_2^{L_2}):=  \int_{\R^2}\widehat{R_\mu  f_1^{L_1} }(\xi-\eta) \widehat{R_\nu f_2^{L_2}}(\eta)   m(\xi, \eta)  \big(\frac{\xi}{|\xi|}\times \frac{\eta}{|\eta|} \big)^{\alpha} \psi_{k}(\xi) \psi_{k_1}(\xi-\eta)\psi_{k_2}(\eta)     \]
\[
\times   \psi_{l}(\angle(\xi, \nu \eta)) \psi_{\leq l}(\xi/|\xi|-\theta_i)\psi_{\leq l}(\nu\eta/|\eta|-\theta_j) d \eta.
\]
From the orthogonality in $L^2$, we have 
\be\label{2020july6eqn1}
\| K_{\mu, \nu}( f_1^{L_1},   f_2^{L_2}) \|_{L^2}^2 \sim  \sum_{i,j\in\{1,\cdots, K\}, |i-j|\leq 3}\|  K_i^j ( f_1^{L_1},   f_2^{L_2})\|_{L^2}^2.
\ee
Note that, in terms of kernel, we have
\[
 K_i^j ( f_1^{L_1},   f_2^{L_2})(t,x):=  \int_{\R^3} \int_{\R^3}   \tilde{K} (  x_1,  x_2) P_{k_1}^{l,i}( R_\mu  f_1^{L_1} )(t, x-x_1) P_{k_1}^{l,j}( R_\nu f_2^{L_2} )(t, x-x_1-x_2) d x_1 d x_2, 
\]
\[
\tilde{K} (  x_1,  x_2):=\mathcal{F}_{\xi,\eta}^{-1}[m_{k_1k_2}^k(\xi, \eta)\psi_{l}(\angle(\xi, \nu \eta))  \psi_{\leq l+2}(\nu\eta/|\eta|-\theta_j)\psi_{\leq l+2}(\xi/|\xi|-\theta_i)] (  x_1,  x_2), 
\]
\[
m_{k_1k_2}^k(\xi, \eta):=m(\xi, \eta)\psi_{[k-3,k+3]}(\xi)\psi_{[k_2-4,k_2+4]}(\eta)\psi_{[k_1-4,k_1+4]}(\xi-\eta)\big(\frac{\xi}{|\xi|}\times \frac{\eta}{|\eta|} \big)^{\alpha},
\]
\[
P_{k_1}^{l,i}(f)(x):=\int_{\R^3} e^{i x\cdot \xi } \hat{f}(\xi)\psi_{k_1}(\xi)\psi_{\leq l}( \angle(\xi, \theta_i)) d \xi 
\]
\be\label{july29eqn21}
=  \int_{\R^3} \mathcal{F}_{\xi}^{-1}[ \psi_{[k_1-4,k_1+4]}(\xi)\psi_{\leq l}( \angle(\xi, \theta_i))](y) P_{k_1}(x-y) d y  .
\ee
 
Note that, the following estimate for the kernel after doing integration by parts in $\theta_i\cdot\xi$ and $\theta_i\times \xi$ directions and $\theta_j\cdot\eta$ and $\theta_i\times \eta$, 
\[
|\tilde{K} (  x_1,  x_2)| \lesssim 2^{\alpha l } \| m(\xi-\eta, \eta)\|_{\mathcal{S}^\infty_{k,k_1,k_2}} 2^{3k+2l+3k_2+2l} \big(1+2^{k}|x_1\cdot \theta_i| \big)^{-1000}\big(1+2^{k+l}|x_1\times  \theta_i| \big)^{-1000}
\]
\be\label{2020july6eqn2}
\times \big(1+2^{k_2}|x_2\cdot \theta_j| \big)^{-1000}\big(1+2^{k_2+l}|x_2\times  \theta_j| \big)^{-1000}.
\ee
From the above estimate of kernel, the volume of support,   the estimate (\ref{2020july6eqn1}), and the Cauchy-Schwarz inequality,   we have
\[
\| K_{\mu, \nu}( f_1^{L_1},   f_2^{L_2}) \|_{L^2}^2 \lesssim   \sum_{i,j\in\{1,\cdots, K\}, |i-j|\leq 3} \big( 2^{\alpha l + 3\min\{k,k_1, k_2\}/2 }  \| m(\xi-\eta, \eta)\|_{\mathcal{S}^\infty_{k,k_1,k_2}}    \]
\[
\times \|P_{k_1}^{l,i}( R_\mu  f_1^{L_1} )(t, x ) \|_{L^2}  \|P_{k_2}^{l,j}( R_\nu  f_2^{L_2 } )(t, x ) \|_{L^2}\big)^2  \]
\[
\lesssim  \big( 2^{\alpha l  } 2^{3\min\{k,k_1,k_2\}/2}  \| m(\xi-\eta, \eta)\|_{\mathcal{S}^\infty_{k,k_1,k_2}}   A_1(k_1, m,L_1) A_2(k_2, m, L_2)\big)^2.
\]
Hence finishing the proof of our 
  desired estimate (\ref{nov27eqn31}) holds.

If we restrict ourself to the case $  k,k_1,k_2\in [-m +\delta_1 m, \infty), l\in \Z,$ s.t.,  $l+k_1,l+k_2\geq -m+ \delta_1 m$, from the estimate of kenel in (\ref{2020july6eqn2}), we have 
\be\label{2020april12eqn11}
\| \tilde{K} (  x_1,  x_2) \|_{L^1_{x_1,x_2}  } + 2^{20 m} \|  \tilde{K} (  x_1,  x_2)\psi_{\geq m-100}(|x_1|+|x_2|)\|_{L^1_{x_1,x_2}}  \leq 2^{\alpha l }  \| m(\xi-\eta, \eta)\|_{\mathcal{S}^\infty_{k,k_1,k_2}}. 
\ee
Moreover, 
\be\label{2020april12eqn12}
\begin{split}
\|\mathcal{F}_{\xi}^{-1}[ \psi_{[k_1-4,k_1+4]}(\xi)\psi_{\leq l}( \angle(\xi, \theta_i))](y)\|_{L^1_y} &\lesssim 1, \\  \|\mathcal{F}_{\xi}^{-1}[ \psi_{[k_1-4,k_1+4]}(\xi)\psi_{\leq l}( \angle(\xi, \theta_i))](y)\psi_{\geq m-100}(y)\|_{L^1_y} &\lesssim 2^{-20m }. 
\end{split}
\ee
 Hence, our desired estimates (\ref{nov24eqn2}--\ref{nov24eqn4}) hold  from   the estimate (\ref{2020june20eqn20}) in Lemma 
\ref{2020junebilinearestimate} and the estimate of kernels in (\ref{2020april12eqn11}--\ref{2020april12eqn12})  for the case $|x_1|+|x_2|\geq 2^{m-100} $ or $|y|\geq  2^{m-100}$ and the same argument used in the proof of the estimates (\ref{2020june20eqn11}--\ref{2020june21eqn2}) in Lemma \ref{2020junebilinearestimate} for the case $|x_1|+|x_2|\leq 2^{m-100} $ and $|y|\leq  2^{m-100}. $

\end{proof}

 With the bilinear estimates in above Lemma,   now  we prove the postponed   Lemma \ref{2022maybilinearest}.

\noindent \textit{Proof of Lemma} \ref{2022maybilinearest}. 

Note that, in the sense of potentially losing derivatives in the process of doing normal form transformation,  the High $\times$ High type interaction is similar and also much easier than the Low $\times$ High and the High $\times$ High type interaction, we restrict ourselves to the Low $\times$ High type interaction and the High $\times$ Low type interaction. Moreover,  due to the symmetry between two inputs, without loss of generality, we assume that $(k_1, k_2)\in \chi_k^3$, i.e., $k_2\leq k_1-10. $

Based on the size of angle between $\xi-\eta$ and $\eta$, for $\star\in \{\textup{semi}, \textup{quas}\}$, we decompose the integrals  in  (\ref{2022may3eqn31}) into two parts as follows,
 \[
  \int_{t_1}^{t_2} \int_{\R^3} \widetilde{   U^{ \tilde{h}^{L  } }_{k_1} } (t,x)   T_{\mu\nu}^{ \star} (    (\widetilde{   U^{ \tilde{h}_1^{L_1} }_{k_1} }(t))^{\mu} , ( \widetilde{   U_{k_2}^{ \tilde{h}_2^{L_2} } }(t))^{\nu} ) )(x)  \psi_{[m-4, m+1]}(x)  d x d t
 \] 
\be\label{2020june25eqn2}
 =  J_{k,k_1,k_2}^{ \mu\nu;\star;\leq}(t_1, t_2) + J_{k,k_1,k_2}^{ \mu\nu;\star;> }(t_1, t_2), 
\ee
where for $   \diamondsuit\in \{\leq, >\}, $ $J_{k,k_1,k_2}^{ \mu\nu;\star;\diamondsuit  }(t_1, t_2) $ are defined as follows, 
\[
J_{k,k_1,k_2}^{\mu\nu; \star;\diamondsuit}(t_1, t_2)(t_1, t_2):=  \int_{t_1}^{t_2}\int_{ \R^3 } \int_{ \R^3 }\int_{ \R^3 } e^{it\Phi^{\mu;\nu}(\xi, \eta, \sigma)} 2^{3m}\widehat{\psi_{[-4,1]}}(2^{ m}\sigma) m_{\mu\nu}^{\star}(\xi-\eta, \eta)\psi_k(\xi) \psi_{k_1}(\xi-\eta) \psi_{k_2}(\eta) 
  \]
\be\label{2020june25eqn3}
\times \psi_{\diamondsuit \bar{l}_{\mu;\nu}}(\angle(\xi, -\nu \eta))   \widehat{\widetilde{V^{ \tilde{h}^{L}}}}(t, -\xi-\sigma)  \widehat{(\widetilde{   V^{ \tilde{h}_1^{L_1} }_{ } }(t))^{\mu}}(t, \xi-\eta )   \widehat{(\widetilde{   V^{ \tilde{h}_2^{L_2} }_{ } }(t))^{\nu}}(t, \eta )  d \sigma  d\eta d \xi d  t, 
\ee 
where the phase $\Phi^{\mu;\nu}(\xi, \eta, \sigma)$ and the threshold of angular cutoff function are  given as follows, 
\be\label{2020june25eqn4}
\begin{split}
\Phi^{\mu;\nu}(\xi, \eta, \sigma)&:= |\xi+\sigma| +\mu |\xi-\eta| +\nu |\eta|,\quad  \bar{l}_{+;+}=\bar{l}_{+;-}:= 2, \\ 
 \bar{l}_{-;+}=\bar{l}_{-;-}&:=  - 2\beta(|L|)\delta_1 m -\delta_1 m/100 .
\end{split} 
\ee

We remark that, for the non-time-resonance case, i.e., $\mu=+$, there is no need to do angular cutoff and we  did this anyway merely for     consistence in notation. For this case, we have  $J_{k,k_1,k_2}^{\mu\nu; \leq}(t_1, t_2)=0$. Note that, on the physical space, we have
\[
J_{k,k_1,k_2}^{\mu\nu;\star;\leq}(t_1, t_2) :=\int_{t_1}^{t_2} \int_{\R^3} \widetilde{   U^{ \tilde{h}^{L  } }_{k } } (t,x)   T_{\mu\nu}^{ \star;\leq } (    (\widetilde{   U^{ \tilde{h}_1^{L_1} }_{k_1} }(t))^{\mu} , ( \widetilde{   U_{k_2}^{ \tilde{h}_2^{L_2} } }(t))^{\nu} ) )(x)  \psi_{[m-4, m+1]}(x)  d x d t, 
\]
where the bilinear operator $  T_{\mu\nu}^{ \star;\leq } $ is defined by the symbol  $  m_{\mu\nu}^{\star}(\xi-\eta, \eta)  \psi_{\leq \bar{l}_{\mu;\nu}}(\angle(\xi,  -\nu \eta))$. 

Since the null structure of symbol contributes at least one degree of smallness of the angle between $\xi$ and $-\nu \eta$,   from the bilinear estimates (\ref{2020june25eqn11}) and (\ref{nov24eqn4}) in Lemma \ref{2020junebilinearangular}, we have
\[
\sum_{k\in \Z, (k_1,k_2)\in \chi_k^3} 2^{-k_{-}+2\gamma k_{-}+ 2(N_0-|L|) k_{+}} | J_{k,k_1,k_2}^{\mu\nu;\star;\leq}(t_1, t_2)|\lesssim \sum_{\tilde{L}_1\circ \tilde{L}_2\preceq L}   2^{e(L;\tilde{h})\delta_1 m   + \bar{l}_{\mu;\nu}   +H(|\tilde{L}_1|)\delta_1m   } 
\]
\be\label{2020june27eqn7}
  \times \min\{ 2^{H(|\tilde{L}_2|+1)\delta_1 m+\delta_1 m/100 }\mathbf{1}_{|\tilde{L}_2|\geq 1} + 2^{\delta_1 m } \mathbf{1}_{|\tilde{L}_2|=0}\}\epsilon_1^3  \lesssim 2^{2 e(L;\tilde{h})\delta_1 m - \delta_1 m/100 }\epsilon_1^3. 
\ee

Now, we proceed to estimate $J_{k,k_1,k_2}^{\mu\nu;\star; >}(t_1, t_2)$. Note that, for $l\in [2, \bar{l}_{\mu\nu})\cap \Z$, the following estimate holds for the   phase $\Phi^{\mu\nu}(\xi, \eta, \sigma)$ when the angle is relatively large,
\be\label{2020june25eqn5} 
\textup{if\,\,} \angle( \xi,- \nu \eta) \sim  2^{l},|\sigma|\leq 2^{-m +\delta_1 m/10}\quad \Longrightarrow \quad |\Phi^{\mu;\nu}(\xi, \eta, \sigma)| \sim  2^{k_2+ 2l}. 
\ee

To exploit the oscillation of phase in time, we want to do integration by parts in time once for the case $|\sigma|\leq 2^{-m +\delta_1 m/10}$. However, due to the quasilinear nature of the Einstein's equations,    there is an issue of losing derivative after doing integration by parts in time. This issue has been resolved by  Ionescu-Pausader in \cite{IP} by using the paralinearization and symmetrization method.

For the sake of completeness, we   redo this process here. As in the modified profile of the perturbed metric, see \eqref{2020april1eqn4} and \eqref{2022march19eqn11}, we modify the quasilinear variable $\mathcal{U}^{\tilde{h}^{L}}$ introduced in \cite{IP} as well. More precisely, we let
\be\label{2020sep9eqn51}
\widetilde{\mathcal{U}^{\tilde{h}^{L}}}(t)  := {\mathcal{U}^{\tilde{h}^{L}}}(t)  -\widetilde{\rho}^{\tilde{h} }_{L}(f)(t)= \widetilde{U^{\tilde{h}^L}}-i T_{\Lambda_{\geq 1}[\sigma_{wa}]} \tilde{h}^L, \quad \widetilde{\mathcal{V}^{\tilde{h}^{L}}}:= e^{i t\d}\widetilde{\mathcal{U}^{\tilde{h}^{L}}}(t).  
\ee
\be\label{2020aug9eqn1}
\mathcal{U}^{\tilde{h}^{L}} := (\p_t -i T_{\sigma_{wa}}) \tilde{h}^{L}, \quad \sigma_{wa}:=\sqrt{ (\tilde{g}^{0j}\zeta_j)^2 +(\tilde{g}^{jk}\zeta_j\zeta_k) } +\tilde{g}^{0j}\zeta_j, \quad    \tilde{g}^{\mu\nu}= -(g^{00})^{-1} g^{\mu\nu}. 
  \ee
 From  \cite{IP}[Proposition 4.5], the following paralinearization holds, 
 \be\label{2020july21eqn30}
\begin{split}
 (\p_t + i T_{\Sigma_{wa}})\mathcal{U}^{\tilde{h}^{L}}&= - \tilde{g}^{\alpha\beta}\p_{\alpha}\p_{\beta}\tilde{h}^{L} + \sum_{h\in \{h_{\alpha\beta\}}, \iota_1, \iota_2\in \{+,-\}} I_{m_{\iota_1, \iota_2}^h}((U^{h})^{\iota_1}, (U^{\tilde{h}^{L}})^{\iota_2 }),\\
\Sigma_{wa}&= \sqrt{ (\tilde{g}^{0j}\zeta_j)^2 +(\tilde{g}^{jk}\zeta_j\zeta_k) } -\tilde{g}^{0j}\zeta_j,
 \end{split}
 \ee  
 where the Weyl quantization operator $T_a$ is defined in (\ref{weylquantization}) and   the symbols $m_{\iota_1, \iota_2}^h\in \mathcal{M}$, $h\in \{h_{\alpha\beta\}}, \iota_1, \iota_2\in \{+,-\}$, satisfy the following estimate, 
 \be\label{2020july21eqn31}
 \| \mathcal{F}^{-1}[ m_{\iota_1, \iota_2}^h(\xi-\eta, \eta)\psi_{k_1}(\xi-\eta)\psi_{k_2}(\eta)]\|_{L^1_{x,y}} \lesssim \min\{1, 2^{k_2-k_1}\}, \quad \forall k_1, k_2\in \Z.
 \ee 

  Note that $\widetilde{\mathcal{U}^{\tilde{h}^{L}}}(t) $ only differs from $ \widetilde{U^{\tilde{h}^L}}$ at the quadratic and higher order. As a result of $L^2-L^\infty$ type bilinear estimate, we have
  \be\label{2020july22eqn1}
 \begin{split} 
  \sum_{\begin{subarray}{c}
    k\in \Z, (k_1,k_2)\in \chi_k^3\\
     k_2\geq -m +  \alpha_{inp}(|L|)\delta_1 m 
  \end{subarray}
} & 2^{-k_{-}+2\gamma k_{-}+ 2 (N_0-|L|) k_{+}} \big| J_{k,k_1,k_2}^{\mu\nu;\star;\leq}(t_1, t_2) -\mathcal{J}_{k,k_1,k_2}^{\mu\nu;\star;\leq}(t_1, t_2)  \big|\lesssim  \epsilon_1^3.  \\
  &\mathcal{J}_{k,k_1,k_2}^{\mu\nu;\star;\leq}(t_1, t_2) := \int_{t_1}^{t_2} \int_{\R^3}   {   \mathcal{U}^{\tilde{h}^{L}}_{k } } (t,x) T_{-\nu}^{\star;>} (    (\widetilde{   \mathcal{U}^{ \tilde{h}_1^{L_1} }_{k_1} }(t))^{\mu}  , (  {   U_{k_2}^{ \tilde{h}_2^{L_2 } } }(t))^{\nu} ) )(x) d x d t.
 \end{split}
 \ee 
Now the goal is reduced   to the estimate of  $\mathcal{J}_{k,k_1,k_2}^{\mu\nu;\star;\leq}(t_1, t_2).$ After doing  integration by parts in time once for $\mathcal{J}_{k,k_1,k_2}^{\mu\nu;\star;\leq}(t_1, t_2)$ for the case $|\sigma|\leq 2^{-m +\delta_1 m/10}$, we have  
\be\label{2020june27eqn9}
\mathcal{J}_{k,k_1,k_2}^{\mu\nu;\star;\leq}(t_1, t_2) = \sum_{i=1,2} (-1)^{i}   End_{k,k_1,k_2}^{\mu\nu;\star  }(t_i)  +  \widetilde{Err}_{k,k_1,k_2}^{\mu\nu;\star  }(t_1, t_2)  - \widetilde{\mathcal{J}}_{k,k_1,k_2}^{\mu\nu;\star ;i }(t_1, t_2)(t_1, t_2) ,
 \ee
\[
End_{k,k_1,k_2}^{\mu\nu;\star  }(t_i) := \int_{ \R^3 } \int_{ \R^3 }\int_{ \R^3 } e^{it_i\Phi^{\mu;\nu}(\xi, \eta, \sigma)} 2^{3m}\widehat{\psi_{[-4,1]}}(2^{ m}\sigma) \frac{ m_{\mu\nu}^{\star}(\xi-\eta, \eta)\psi_k(\xi) \psi_{k_1}(\xi-\eta) \psi_{k_2}(\eta) }{(i \Phi^{\mu;\nu}(\xi, \eta, \sigma))}  
  \]
\be\label{2020june25eqn33}
\times \psi_{ > \bar{l}_{\mu;\nu}}(\angle(\xi,   -\nu \eta))   \widehat{\widetilde{\mathcal{V}^{ \tilde{h}^{L}}}}(t_i, -\xi-\sigma)  \widehat{(\widetilde{   \mathcal{V}^{ \tilde{h}_1^{L_1} }_{ } } )^{\mu}}(t_i, \xi-\eta )   \widehat{(\widetilde{   V^{ \tilde{h}_2^{L_2} }_{ } } )^{\nu}}(t_i, \eta )  \psi_{\leq -m +\delta_1 m/10 }(\sigma) d \sigma  d\eta d \xi.
\ee
\[
\widetilde{Err}_{k,k_1,k_2}^{\mu\nu;\star  }(t_1, t_2)  :=   \int_{t_1}^{t_2}\int_{ \R^3 } \int_{ \R^3 }\int_{ \R^3 } e^{it\Phi^{\mu;\nu}(\xi, \eta, \sigma)} 2^{3m}\widehat{\psi_{[-4,1]}}(2^{ m}\sigma) m_{\mu\nu}^{\star}(\xi-\eta, \eta)\psi_k(\xi) \psi_{k_1}(\xi-\eta) \psi_{k_2}(\eta) 
  \]
\be\label{2020june25eqn35}
\times \psi_{>  \bar{l}_{\mu;\nu}}(\angle(\xi,   -\nu \eta))   \widehat{\widetilde{\mathcal{V}^{ \tilde{h}^{L}}}}(t, -\xi-\sigma)  \widehat{(\widetilde{  \mathcal{V}^{ \tilde{h}_1^{L_1} }_{ } } )^{\mu}}(t, \xi-\eta )   \widehat{(\widetilde{   V^{ \tilde{h}_2^{L_2} }_{ } } )^{\nu}}(t, \eta ) \psi_{ > -m +\delta_1 m/10 }(\sigma) d \sigma  d\eta d \xi d  t.
\ee
\[
\widetilde{\mathcal{J}}_{k,k_1,k_2}^{\mu\nu;\star;1 }(t_1, t_2)  := \int_{t_1}^{t_2}  \int_{ \R^3 } \int_{ \R^3 }\int_{ \R^3 } e^{it \Phi^{\mu;\nu}(\xi, \eta, \sigma)} 2^{3m}\widehat{\psi_{[-4,1]}}(2^{ m}\sigma)\frac{ m_{\mu\nu}^{\star} (\xi-\eta, \eta)\psi_k(\xi) \psi_{k_1}(\xi-\eta) \psi_{k_2}(\eta) }{ (i \Phi^{\mu;\nu}(\xi, \eta, \sigma)) }
  \]
\be\label{2020june25eqn34}
\times \psi_{ > \bar{l}_{\mu;\nu}}(\angle(\xi,   -\nu \eta))  \widehat{\widetilde{\mathcal{V}^{ \tilde{h}^{L}}}}(t , -\xi-\sigma)  \widehat{(\widetilde{  \mathcal{V}^{ \tilde{h}_1^{L_1} }_{ } } )^{\mu}}(t, \xi-\eta )  \p_t  \widehat{(\widetilde{   V^{ \tilde{h}_2^{L_2} }_{ } } )^{\nu}}(t, \eta )  \psi_{\leq -m +\delta_1 m/10 }(\sigma) d \sigma  d\eta d \xi d t,
\ee 
\[
\widetilde{\mathcal{J}}_{k,k_1,k_2}^{\mu\nu;\star; 2 }(t_1, t_2)  := \int_{t_1}^{t_2}  \int_{ \R^3 } \int_{ \R^3 }\int_{ \R^3 } e^{it \Phi^{\mu;\nu}(\xi, \eta, \sigma)} 2^{3m}\widehat{\psi_{[-4,1]}}(2^{ m}\sigma)\frac{ m_{\mu\nu}^{\star} (\xi-\eta, \eta)\psi_k(\xi) \psi_{k_1}(\xi-\eta) \psi_{k_2}(\eta) }{ (i \Phi^{\mu;\nu}(\xi, \eta, \sigma)) }
  \]
\be\label{2022may19eqn51}
\times \psi_{ > \bar{l}_{\mu;\nu}}(\angle(\xi,   -\nu \eta))  \p_t\big( \widehat{\widetilde{\mathcal{V}^{ \tilde{h}^{L}}}}(t , -\xi-\sigma)  \widehat{(\widetilde{  \mathcal{V}^{ \tilde{h}_1^{L_1} }_{ } } )^{\mu}}(t, \xi-\eta ) \big)   \widehat{(\widetilde{   V^{ \tilde{h}_2^{L_2} }_{ } } )^{\nu}}(t, \eta ) \psi_{\leq -m +\delta_1 m/10 }(\sigma) d \sigma  d\eta d \xi d t.
\ee 
$\bullet$\qquad We first rule out the error type term $ \widetilde{Err}_{k,k_1,k_2}^{\mu\nu;\star  }(t_1, t_2)$. 

From the rapidly decay rate of $\widehat{\psi_{[-4,1]}}(2^{ m}\sigma)$ when $|\sigma|\geq 2^{ -m +\delta_1 m/10 -1}$, from the $L^2-L^2$ type bilinear estimate and the volume of support of $\xi$, the following rough estimate holds, 
\be\label{2020june27eqn3}
  \sum_{k\in \Z, (k_1,k_2)\in \chi_k^3, k_2\geq -m +  \alpha_{inp}(|L|)\delta_1 m } 2^{-k_{-}+2\gamma k_{-}+ 2 (N_0-|L|)k_{+}} \big|\widetilde{Err}_{k,k_1,k_2}^{\mu\nu;\star  }(t_1, t_2) \big| \lesssim 2^{-m } \epsilon_1^3. 
\ee

$\bullet$\qquad Next, we estimate the endpoint terms $End_{k,k_1,k_2}^{\mu\nu;\star  }(t_i), i\in\{1,2\}$ and  $\widetilde{\mathcal{J}}_{k,k_1,k_2}^{\mu\nu;\star;1 }(t_1, t_2).$

Due to the presence of  singular symbol because of the normal form transformation, we need to carefully localize the directions of frequencies. 

  Let $l\in [\bar{l}_{\mu\nu}, 2]\cap \Z$   be fixed and  $\{\theta_1,\cdots, \theta_{K_l}\}\subset \mathbb{S}^2$ be a corresponding ordered  selection of centers on sphere such that $|\theta_i-\theta_j|\gtrsim |p-q|2^{l-2}$, $\forall p,q\in\{1,\cdots, K_l\}$. Based on the possible size of $|L_2|$, we split into two cases as follows.

$\oplus$\qquad If   $|L_2|\geq N_0-10$. 

For this case, we have $|L|\geq N_0-10$ and $|L_1|\leq 10.$  After doing dyadic decomposition for the angle between $\xi$ and $\mu\eta$,  super localizing $ U^{ \tilde{h}_1^{L_1} }_{k_1 }$, 
  and doing  angular decomposition for $\xi/|\xi|$, $(\xi-\eta)/|\xi-\eta|$ and $\eta/|\eta|$ with the sector of size $2^l$ in (\ref{2020june25eqn33}), we have 
  \[
   End_{k,k_1,k_2}^{\mu\nu;\star  }(t_i)=\sum_{l > \bar{l}_{\mu;\nu}, p,q\in\{1, \cdots, K_l\},|p-q|\leq 3, e,f\in2^{k_1-k}[2^{-10}, 2^{10}], |e-f|\leq 10}  End^{e,f}_{l;p,q}(t_i),
  \]
\[
 End^{e,f}_{l;p,q}(t_i):= \int_{\R^3} \int_{\R^3} \int_{\R^3} \int_{\R^3}\widetilde{U^{ \tilde{h}^{L}}_{k,e;p} (t) }(  x-x_1-x_3)(\widetilde{   U^{ \tilde{h}_1^{L_1} }_{k_1,f } }(t_i))^{\mu}(x- x_3)
\]
\be
   \times   (\widetilde{   U^{ \tilde{h}_2^{L_2} }_{ k_2;q} }(t_i ))^{\nu}(x-x_1-x_2-x_3)    \tilde{K}_{k,k_1,k_2}^{\mu\nu;l,p,q}(x_1, x_2,x_3) \psi_{[m-4, m+1]}(x) d x_1 d x_2 d x_3 d x,
\ee
  where
  \[
   \widetilde{U^{ \tilde{h}^{L}}_{k,e;p} }(t, x):= \int_{\R^3} e^{i x\cdot \xi}  \widehat{\widetilde{U^{ \tilde{h}^{L} }}}(t ,  \xi ) \psi_{k}(\xi) \psi_{}( 2^{-k_2}|\xi|-e)  \psi_{\leq l+3}(\xi/|\xi|-\theta_{p}) d \xi, 
  \]
  \[
  \widetilde{   U^{ \tilde{h}_1^{L_1} }_{k_1,f } }(t )  (x ):= \int_{\R^3} e^{i x\cdot \xi}  \widehat{\widetilde{U^{ \tilde{h}_1^{L_1} }}}(t ,  \xi ) \psi_{k_1}(\xi) \psi_{}(  2^{-k_2}|\xi|-f)  d \xi,
\]
 \[
  \widetilde{   U^{ \tilde{h}_2^{L_2} }_{k_2; q} }(t )  (x ):= \int_{\R^3} e^{i x\cdot \xi}  \widehat{\widetilde{U^{ \tilde{h}_2^{L_2} }}}(t ,  \xi )  \psi_{k_2}(\xi) \psi_{\leq l+3}(\xi/|\xi|-\theta_{q})    d \xi,
\]
\[
\tilde{K}_{k,k_1,k_2}^{\mu\nu;l,p,q}(x_1, x_2,x_3):= \int_{\R^3} e^{i x_1\cdot\xi + ix_2\cdot \eta + i x_3 \cdot \sigma} \psi_{ \leq  -m +\delta_1 m/10 }(\sigma)  \frac{ m_{\mu\nu}^{\star}(\xi-\eta, \eta) \psi_{[k-2, k+2]}(\xi) \psi_{k_1}(\xi-\eta)}{(\Phi^{\mu\nu}(\xi, \eta, \sigma))}  
\]
\be\label{2020june27eqn10}
\times \psi_{[k_2-2, k_2+2]}(\eta) \psi_{\leq l+3}(\xi/|\xi|-\theta_{p})\psi_{\leq l+3}(\mu\eta/|\eta|-\theta_{q})
  \psi_{ l}(\angle(\xi,  -\nu \eta)) d \sigma d \eta d \xi,
\ee
 
Note that, by doing integration by parts in $\xi, \eta, \sigma$, from the estimate \eqref{2020june25eqn5},  the following rough estimate holds for the kernel $\tilde{K}_{k,k_1,k_2}^{\mu\nu;l,p,q}(x_1, x_2,x_3)$, 
\[
|\tilde{K}_{k,k_1,k_2}^{\mu\nu;l,p,q}(x_1, x_2,x_3)|\lesssim 2^{-k_2-l} 2^{-3m+3\delta_1 m /10}  2^{3k+2l}  2^{3k_2+2l}(1+2^{-m+\delta_1 m /10}|x_3|)^{-N_0^3}(1+2^{k+l}|x_1\times \theta_p|)^{-N_0^3}
\]
\be
\times (1+2^{k }|x_1\cdot\theta_p|)^{-N_0^3}(1+2^{k_2 }|x_2\cdot \theta_q|)^{-N_0^3}(1+2^{k_2+l}|x_2\times \theta_q |)^{-N_0^3}.
\ee

 From the super localized decay estimates (\ref{july27eqn4}) and (\ref{linearwavedecay2}) in Lemma \ref{superlocalizedaug} and the $L^2-L^2-L^\infty$ type multilinear estimate, if $|L_2|\leq N_0-10$, then  we have
\[
 2^{-k_{-}+2\gamma k_{-}+ 2 (N_0-|L|) k_{+}}\big|End^{e,f}_{l;p,q}(t_i)\big| \lesssim \sum_{\Gamma, \Gamma_1, \Gamma_2\in \{\Omega_{ij}\}} 2^{-k_2-l} 2^{k_1-k_2} 2^{-k_{-}+2\gamma k_{-}+ 2 (N_0-|L|) k_{+}}\|\widetilde{U^{ \tilde{h}^{L}}_{k,e;p}(t) }( x ) \|_{L^2} \]
 \[
 \times\| \widetilde{U^{ \tilde{h}_1^{L_1}}_{k_2;q} (t)}( x)\|_{L^2} 2^{-m+k_2/2}\big( \min\{2^{k_1+k_2/2} \| \widehat{  \widetilde{   U^{ \tilde{h}_1^{L_1} }_{k_1,f } }}(t,\xi)\|_{L^\infty_\xi} ,  \| \Gamma  \widetilde{   U^{ \tilde{h}_1^{L_1} }_{k_1,f } }(t )   \|_{L^2}^{1-\delta_1^2}\| \Gamma_1\Gamma_2  \widetilde{   U^{ \tilde{h}_1^{L_1} }_{k_1,f } }(t )   \|_{L^2}^{\delta_1^2}\}  \]
 \[
 + 2^{-m/3-k_1/3} \| \Gamma_1\Gamma_2  \widetilde{   U^{ \tilde{h}_1^{L_1} }_{k_1,f } }(t )   \|_{L^2}\big)+ 2^{-20m - 10k_{1,+}}\epsilon_1^3.
\]
From the orthogonality in $L^2$, H\"older inequalties, and the above estimate, we have
\[
  \sum_{\begin{subarray}{c}
l > \bar{l}_{\mu;
\nu}, k\in \Z, (k_1,k_2)\in \chi_k^3, k_2\geq -m +  \alpha_{inp}(|L|)\delta_1 m \\ 
p,q\in\{1, \cdots, K_l\}, ,|p-q|\leq 3\\ 
e,f\in2^{k_1-k_2}[2^{-10}, 2^{10}], |e-f|\leq 10
  \end{subarray}
  }   2^{-k_{-}+2\gamma k_{-}+ 2(N_0-|L|)k_{+}} \big|End^{e,f}_{l;p,q}(t_i)\big|
\]
\[
  \lesssim \sum_{k_2\geq  -m +  \alpha_{inp}(|L|)\delta_1 m }  2^{-m-k_2-\bar{l}_{\mu\nu} + e(L;\tilde{h})\delta_1 m + H(|L_2|)\delta_1 m   } \big(2^{  H(|L_1|+1)\delta_1m+ 2\gamma   m }\mathbf{1}_{|L_1|\geq 1}
\] 
\be\label{2020june27eqn1}
  + 2^{\delta_1 m } \mathbf{1}_{|L_1|=0} \big) \epsilon_1^3\lesssim 2^{2 e(L;\tilde{h})\delta_1 m  -  \delta_1 m/100  }\epsilon_1^3.
\ee

$\oplus$\qquad  If   $|L_2|\leq N_0-10$. 

As in the previous caase, we also doing dyadic decomposition for the angle between $\xi$ and $\mu\eta$. However, we don't super-localize $ U^{ \tilde{h}^{L} }_{k }$  and $ U^{ \tilde{h}_1^{L_1} }_{k_1 }$,   and just put $ U^{ \tilde{h}_2^{L_2} }_{k_2 }$  in $L^\infty$. With minor modification in the previous case,  we have 
\[
  \sum_{k\in \Z, (k_1,k_2)\in \chi_k^3, k_2\geq -m +  \alpha_{inp}(|L|)\delta_1 m } 2^{-k_{-}+2\gamma k_{-}+ 2 (N_0-|L|)k_{+}}  \big| End_{k,k_1,k_2}^{\mu\nu;\star  }(t_i)  \big|
\] 
\[
    \lesssim \sum_{k_2\geq  -m +  \alpha_{inp}(|L|)\delta_1 m }  2^{-m-k_2-\bar{l}_{\mu\nu} + e(L;\tilde{h})\delta_1 m + H(|L_1|)\delta_1 m   } \big(2^{  H(|L_2|+1)\delta_1m+ 2\gamma   m }\mathbf{1}_{|L_2|\geq 1} + 2^{\delta_1 m } \mathbf{1}_{|L_2|=0} \big) \epsilon_1^3
\]
\be\label{2022may19eqn41}
\lesssim 2^{2 e(L;\tilde{h})\delta_1 m  -  \delta_1 m/100  }\epsilon_1^3.
  \ee

With minor modifications in the estimate of endpoint terms, by using  the estimate \eqref{oct4eqn41} in Proposition \ref{fixedtimenonlinarityestimate} for $ \p_t  { \widetilde{   V^{ \tilde{h}_2^{L_2} }_{ } }  }(t, \cdot )$, we have 
\[
  \sum_{k\in \Z, (k_1,k_2)\in \chi_k^3, k_2\geq -m +  \alpha_{inp}(|L|)\delta_1 m } 2^{-k_{-}+2\gamma k_{-}+ 2 (N_0-|L|)k_{+}}  \big|\widetilde{\mathcal{J}}_{k,k_1,k_2}^{\mu\nu;\star;1 }(t_1, t_2)\big|
\]
\be
\lesssim 2^{2 e(L;\tilde{h})\delta_1 m  -  \delta_1 m/100  }\epsilon_1^3.
\ee

  $\bullet$\qquad Lastly, we estimate  $\widetilde{\mathcal{J}}_{k,k_1,k_2}^{\mu\nu;\star;2 }(t_1, t_2)    $, see  \eqref{2022may19eqn51}. 

We first rule out    the non-resonance case, $\mu=+$.   Note that, $\forall \sigma \in \R^3, |\sigma|\leq 2^{-m+\delta_1 m}$, we have $|\Phi^{+;\nu}(\xi, \eta, \sigma)|\sim 2^{k_1}$. Hence, we gain one derivative during the normal form transformation, which implies that it allows the one derivative loss for the rough estimate of $ \p_t  { \widetilde{   \mathcal{V}^{ \tilde{h}_2^{L_2} }_{ } }  }(t, \cdot )$. By using the rough    estimate \eqref{oct4eqn41} in Proposition \ref{fixedtimenonlinarityestimate} and the same argument used in the estimate of the endpoint case,  we have 
\be
  \sum_{k\in \Z, (k_1,k_2)\in \chi_k^3, k_2\geq -m +  \alpha_{inp}(|L|)\delta_1 m } 2^{-k_{-}+2\gamma k_{-}+ 2 (N_0-|L|)k_{+}}  \big|\widetilde{\mathcal{J}}_{k,k_1,k_2}^{+\nu;\star;2 }(t_1, t_2)\big|\lesssim 2^{2 e(L;\tilde{h})\delta_1 m  -  \delta_1 m/100  }\epsilon_1^3.
\ee

Now, we focus on the resonance case $\mu=-.$  Note that, from the equality (\ref{2020july21eqn30}) and the equation satisfied by $\tilde{h}^L$ in (\ref{aug11eqn21}), we have 
\be\label{2020july22eqn4}
\begin{split}
e^{-it\d}  \p_t\mathcal{V}^{\tilde{h}^{L}}_{  } & = (\p_t + i\d) \mathcal{U}^{\tilde{h}^{L}}_{ } =  (\p_t + i T_{\Sigma_{wa}})\mathcal{U}^{\tilde{h}^{L}}- i T_{\Lambda_{\geq 1}[\Sigma_{wa}}] \mathcal{U}^{\tilde{h}^{L}}\\ 
& =- i T_{\Lambda_{\geq 1}[\Sigma_{wa}}] \mathcal{U}^{\tilde{h}^{L}} + R_{ }^{\tilde{h}^L  } + L_{ }^{\tilde{h}^L }, \quad \quad L_{ }^{\tilde{h}^L }= - \Lambda_{1}[ N_{vl}^{  {\tilde{h} ;L }}],\\
 R_{ }^{\tilde{h}^L }&= \sum_{h\in \{h_{\alpha\beta\}}, \iota_1, \iota_2\in \{+,-\}} I_{m_{\iota_1, \iota_2}^h}((U^{h})^{\iota_1}, (U^{\tilde{h}^{L}})^{\iota_2 }) 
-\big(\Lambda_{\geq 2}[ N_{vl}^{  {\tilde{h} ;L }}]   + R_{mt;s}^{\tilde{h};L }  + Q_{mt;s}^{ \tilde{h};L}\big).   
\end{split}
\ee

Recall (\ref{2020sep9eqn51}).  Since $\widetilde{\mathcal{V}^{\tilde{h}^{L}}}(t) $ only differs from $ \widetilde{V^{\tilde{h}^L}}$ at the quadratic and higher order and  the linear part  vanishes in the equation satisfied by $ \widetilde{V^{\tilde{h}^L}}$, see \eqref{2020june30eqn1}, we know that $e^{-it\d}  \p_t\mathcal{V}^{\tilde{h}^{L}}_{  }$ consists only quadratic and higher order terms.

From  \eqref{2020july22eqn4},    by using the decomposition of metric into modified perturbed metric and the density type function in (\ref{modifiedperturbedmetric}) if the total number of vector fields    is more than ``$|L|-5$'', otherwise the energy estimate of the perturbed metric is improved, see \eqref{2022march19eqn34},   we decompose $ e^{-it\d}  \p_t\widetilde{\mathcal{V}^{\tilde{h}^{L}}_{k }}$  into three parts as follows, 
\be\label{2020aug10eqn1}
 e^{-it\d}  \p_t\widetilde{\mathcal{V}^{\tilde{h}^{L}}_{k }}  =  (\p_t + i T_{\Lambda_{0}[\Sigma_{wa}}])\widetilde{\mathcal{U}^{\tilde{h}^{L}}}=- i T_{\Lambda_{\geq 1}[\Sigma_{wa}}] \widetilde{\mathcal{U}^{\tilde{h}^{L}}}    +  Q_{vl}^{\tilde{h}^L} + Good^{\tilde{h}^L} ,
\ee
where    ``$Good^{\tilde{h}^L}$'' denotes all good type nonlinearities such that the following estimate holds,
\be\label{2020aug10eqn7}
  \|   Good^{\tilde{h}^L} \|_{E_{|L|}} \lesssim 2^{-m + H(|L|)\delta_1 m + \beta(|L|)\delta_1 m } \epsilon_1^2,
\ee 
and $Q_{vl}^{\tilde{h}^L}  $ denotes the    wave-Vlasov type quadratic terms in which the total number of vector fields act on the Vlasov part is more than ``$|L|-5$'' because otherwise it's good type.  
 Roughly speaking,   $Q_{vl}^{\tilde{h}^L}  $ has the following general formulation, 
\be\label{2020aug10eqn21}
\begin{split}
 \mathcal{F}[Q_{vl}^{\tilde{h}^L}](t, \xi):= \sum_{\begin{subarray}{c}
   L_1\circ L_2 \preceq L, |L_2|\geq |L|-5\\ 
   h\in\{L_1 h_{\alpha\beta}, \widetilde{L_1 h_{\alpha\beta}}\}
 \end{subarray}
 }   \int_{\R^3} \int_{\R^3}& e^{-i t\hat{v}\cdot (\xi-\kappa)}m_{L_1, L_2}^L(\xi, v) \\ 
 & \times  \widehat{ h }(t, \kappa) \widehat{u^{\mathcal{L}_2}}(t, \xi-\kappa, v) \psi_{\leq -3m/4} (\kappa) d \kappa  d v,
 \end{split}
\ee
where the zero order symbol  symbol $m_{L_1, L_2}^L(\xi, v)\in \langle v \rangle^{10}L^\infty_v \mathcal{S}^0$, see \eqref{symbolclas}. We remark that, thanks to the bilinear estimate \eqref{2020july8bilinear} in Lemma \ref{wavevlabil6sep}, the energy estimate of the wave-Vlasov type interaction can be improved (hence, it's good type) if the frequency of the metric part is not so small, e.g., greater than $  2^{-4m/5}.$

Based on the decomposition (\ref{2020aug10eqn1}), we decompose $\widetilde{\mathcal{J}}_{k,k_1,k_2}^{-\nu;\star;2 }(t_1, t_2)$ into three parts as follows, 
\[
 \widetilde{\mathcal{J}}_{k,k_1,k_2}^{-\nu;\star;2 }(t_1, t_2)=\sum_{i=1, 2, 3,4}   \mathcal{H}^{ \nu;\star;i}_{L;L_1,L_2}(k;k_1,k_2), 
\]
where
\be\label{2022may20eqn61}
\begin{split}
 \mathcal{H}^{  \nu;\star;1}_{L;L_1,L_2}(k;k_1,k_2)&=  \int_{t_1}^{t_2}  \int_{ \R^3 } \int_{ \R^3 }\int_{ \R^3 }    \widehat{(\widetilde{   U^{ \tilde{h}_2^{ L_2 } }_{ } } )^{\nu}}(t, \eta ) \big[ \overline{\widehat{ \widetilde{   \mathcal{U}^{ \tilde{h}_1^{L_1 } }_{ } }  }}(t, \eta-\xi )     \widehat{  Good^{\tilde{h}^{L}}  }(t, -\xi-\sigma )\\
 &   +  \widehat{ \widetilde{  \mathcal{U}^{ \tilde{h}^{L } }_{ } } }(t, -\xi-\sigma )     \overline{\widehat{  Good^{\tilde{h}_1^{L_1}}  }(t, \eta -\xi)}   
\big]   m^{\star}(\xi, \eta, \sigma) d \sigma  d\eta d \xi d t,
\end{split}
\ee
\be\label{2022may20eqn63}
\begin{split}
  \mathcal{H}^{  \nu;\star;2}_{L;L_1,L_2}(k;k_1,k_2)&=  \int_{t_1}^{t_2}  \int_{ \R^3 } \int_{ \R^3 }\int_{ \R^3 }   m^{\star}(\xi, \eta, \sigma)  \widehat{(\widetilde{   U^{ \tilde{h}_2^{L_2 } }_{ } } )^{\nu }}(t, \eta )  \big[ \overline{\widehat{ \widetilde{   \mathcal{U}^{ \tilde{h}_1^{L_1 } }_{ } }  }}(t, \eta-\xi ) \widehat{  Q_{vl}^{\tilde{h}^{L}}  }(t, -\xi-\sigma )d \sigma  d\eta d \xi d t,  \\
  \mathcal{H}^{  \nu;\star;3}_{L;L_1,L_2}(k;k_1,k_2)&=  \int_{t_1}^{t_2}  \int_{ \R^3 } \int_{ \R^3 }\int_{ \R^3 }   m^{\star}(\xi, \eta, \sigma)  \widehat{(\widetilde{   U^{ \tilde{h}_2^{L_2 } }_{ } } )^{\nu }}(t, \eta )        \widehat{ \widetilde{  \mathcal{U}^{ \tilde{h}^{L } }_{ } } }(t, -\xi-\sigma )  \overline{ \widehat{ Q_{vl}^{\tilde{h}_1^{L_1}} }(t, \eta -\xi)}
\big]   d \sigma  d\eta d \xi d t,
\end{split}
\ee
\be\label{2020sep9eqn71}
\begin{split}
 &\mathcal{H}^{  \nu;\star;4}_{L;L_1,L_2}(k;k_1,k_2)=  \int_{t_1}^{t_2}  \int_{ \R^3 } \int_{ \R^3 }\int_{ \R^3 } i  m^{\star}(\xi, \eta, \sigma)  \widehat{(\widetilde{   U^{ \tilde{h}_2^{L_2 } }_{ } } )^{\nu}}(t, \eta )  \big[ \overline{\widehat{ \widetilde{   \mathcal{U}^{ \tilde{h}_1^{L_1 } }_{ } }  }}(t, \eta-\xi )    \\
 &\times   \widehat{ \widetilde{  \mathcal{U}^{ \tilde{h}^{L } }_{ } } }  (t, -\xi-\sigma-\kappa ) \widehat{\Lambda_{\geq1}[\Sigma_{wa}]}(t, \kappa, -\xi-\sigma-\kappa) \psi_{\leq -10}(|\kappa|/|\xi+\sigma+\kappa|)- \widehat{ \widetilde{  \mathcal{U}^{ \tilde{h}^{L } }_{ } } }(t, -\xi-\sigma ) \\
 &\times  \overline{ \widehat{ \widetilde{   \mathcal{U}^{ \tilde{h}_1^{L_1 } }_{ } }  }}(t, \eta -\xi-\kappa)   \overline{\widehat{\Lambda_{\geq1}[\Sigma_{wa}]}}(t, \kappa, \eta-\xi-  \kappa)  \psi_{\leq -10}(|\kappa|/|\eta-\xi +\kappa|)
\big]   d \sigma  d\eta d \xi d t,
\end{split}
\ee
where
\[
m^{\star}(\xi, \eta, \sigma) = 2^{3m}\widehat{\psi_{[-4,1]}}(2^{ m}\sigma)  (i \Phi^{-;\nu}(\xi, \eta, \sigma))^{-1} m_{-\nu}^{\star} (\xi-\eta, \eta)\psi_k(\xi) \psi_{k_1}(\xi-\eta) \psi_{k_2}(\eta) 
\]
\be
\times \psi_{ > \bar{l}_{+;-}}(\angle(\xi,     \eta))        \psi_{\leq -m +\delta_1 m/10 }(\sigma).
\ee

 $\oplus$\qquad The estimate of $\mathcal{H}^{   \nu;1}_{L;L_1,L_2}(k;k_1,k_2)$.

Recall (\ref{2022may20eqn61}). Similar to the estimate of endpoint case  $End_{k,k_1,k_2}^{\mu\nu;\star  }(t_i)$ in (\ref{2020june27eqn1}) and \eqref{2022may19eqn41}, by using the same strategy,  the following estimate holds from the estimates (\ref{2020aug10eqn7}),  
\be\label{2020sep10eqn1}
 \sum_{\begin{subarray}{c}
 k\in \Z, (k_1,k_2)\in \chi_k^3 \\
  k_2\geq-m+    \alpha_{inp}(|L|) \delta_1 m  \end{subarray}} 2^{-k_{-}+2\gamma k_{-}+ 2 (N_0-|L|) k_{+}} |\mathcal{H}^{   \nu;1}_{L;L_1,L_2}(k;k_1,k_2)|\lesssim 2^{2 e(L;\tilde{h})\delta_1 m  -  \delta_1 m/100 }\epsilon_1^3. 
\ee 
 
 $\oplus$\qquad The estimate of $\mathcal{H}^{   \nu;\star;2}_{L;L_1,L_2}(k;k_1,k_2)$ and $\mathcal{H}^{   \nu;\star;3}_{L;L_1,L_2}(k;k_1,k_2)$ .

 Recall   (\ref{2022may20eqn63}). Since there is little difference between the estimate of $\mathcal{H}^{   \nu;\star;2}_{L;L_1,L_2}(k;k_1,k_2)$ and $\mathcal{H}^{   \nu;\star;3}_{L;L_1,L_2}(k;k_1,k_2)$, we only provide the estimate of $\mathcal{H}^{   \nu;\star;2}_{L;L_1,L_2}(k;k_1,k_2)$ in details here. 

 From the multi-linear estimates used in the endpoint case and the rough estimate for $   { Q_{vl}^{\tilde{h}_1^{L_1}}} $, we first rule out the case $k_2\geq -m+5H(N_0+3)\delta_0 m $ or $k\leq -5H(N_0+3)\delta_0 m$. It would be sufficient to consider the case  $k_2\leq -m+5H(N_0+3)\delta_0 m $ and $k\geq -5H(N_0+3)\delta_0 m$. Moreover, the rough estimate is only off from the desired estimate by at most $2^{3H(N_0+3)\delta_0 m}$.

 Recall  (\ref{2020aug10eqn21}). On the Fourier side, in terms of the profiles,  we have
 \[
  \big|\mathcal{H}^{  \nu;\star;2}_{L;L_1,L_2}(k;k_1,k_2)\big|\lesssim \sum_{\begin{subarray}{c}
   \tilde{L}_1\circ \tilde{L}_2 \preceq L, |\tilde{L}_2|\geq |L|-5\\
   h\in\{\tilde{L}_1 h_{\alpha\beta}, \widetilde{\tilde{L}_1 h_{\alpha\beta}}\}, \mu\in\{+,-\}
 \end{subarray}
 }  \big|   \int_{t_1}^{t_2}     \int_{ (\R^3)^5 }   e^{-it|\xi-\eta|+i\nu |\eta|+i t\hat{v}\cdot (\xi+\sigma+\kappa)}    \overline{\widehat{ \widetilde{   \mathcal{U}^{ \tilde{h}_1^{L_1 } }_{ } }  }}(t, \eta-\xi ) 
 \]
\[
\times    \widehat{(\widetilde{   U^{ \tilde{h}_2^{L_2 } }_{ } } )^{\nu }}(t, \eta )    \widehat{ V^{h} }(t, \kappa) \widehat{u^{\mathcal{\tilde{L}}_2}}(t, -\xi-\sigma-\kappa, v) 
  m^{\star}(\xi, \eta, \sigma)  m_{\tilde{L}_1, \tilde{L}_2}^L(-\xi-\sigma, v)\psi_{\leq -3m/4} (\kappa) d \kappa   d\eta d \sigma d \xi d v\big|.
\]

Note that, for the range of frequencies we are considering, we have 
\[
\forall v \in B(0, 2^{m/10})\subset \R^3, \quad \big||\xi-\eta|-\nu |\eta| -  \hat{v}\cdot (\xi+\sigma+\kappa)\big| \gtrsim 2^{k_1} \langle v \rangle^{-2}.
\]
Therefore,  either the corresponding phases are all relatively large or the size of $v$  is large. As a result, after doing normal form 
transformation once for the case $|v|\leq 2^{m/10}$ and using the rapidly decay rate of $|v|$ for the case $|v|\geq 2^{m/10}$, we have
\be\label{2020sep10eqn2}
  2^{-k_{-}+2\gamma k_{-}+ 2  (N_0-|L|)  k_{+}}  \big|\mathcal{H}^{  \nu;\star;2}_{L;L_1,L_2}(k;k_1,k_2)\big|\lesssim \epsilon_1^3. 
\ee

 $\oplus$\qquad The estimate of $\mathcal{H}^{    \nu;\star;3}_{L;L_1,L_2}(k;k_1,k_2)$.
 
  Recall (\ref{2020sep9eqn71}). After changing coordinates $(\xi,\kappa)\longrightarrow(\xi+\kappa, -\kappa)$ for the first integral and using the fact that $ \Sigma_{wa}(x, \xi) $ is a real valued symbol,  we have
\[
 | \mathcal{H}^{   \nu;\star;3}_{L;L_1,L_2}(k;k_1,k_2)| \lesssim   \big|\int_{t_1}^{t_2}  \int_{ \R^3 } \int_{ \R^3 } \int_{ \R^3 }\int_{ \R^3 }\overline{\widehat{ \widetilde{   \mathcal{U}^{ \tilde{h}_1^{L_1 } }_{ } }  }}(t, \eta-\xi-\kappa )     \widehat{ \widetilde{  \mathcal{U}^{ \tilde{h}^{L } }_{ } } }  (t, -\xi-\sigma  )
\]
\be
\times \widehat{(\widetilde{   U^{ \tilde{h}_2^{L_2 } }_{ } } )^{\nu}}(t, \eta ) M^{\nu;quasi}_{k;k_1,k_2}(\xi, \eta, \sigma, \kappa) d\kappa d \sigma d \eta d \xi d t\big|,
\ee
where
\[
 M^{\nu;quasi}_{k;k_1,k_2}(\xi, \eta, \sigma, \kappa):=im (\xi+\kappa, \eta, \sigma)\widehat{\Lambda_{\geq1}[\Sigma_{wa}]}(t, -\kappa, -\xi-\sigma )   \psi_{\leq -10}(|\kappa|/|\xi+\sigma |) 
\]
\be
- im (\xi, \eta, \sigma)  {\widehat{\Lambda_{\geq1}[\Sigma_{wa}]}}(t, -\kappa, \eta-\xi-  \kappa)  \psi_{\leq -10}(|\kappa|/|\eta-\xi +\kappa|).
\ee

 From the above formula of symbol, we know that, after symmetrization, the leading order symbol vanishes and it is a zero order symbol, i.e., 
  the above integral doesn't lose derivatives, see also \cite{IP}[Lemma 4.9] for this fact and  more detailed computations. 
 
  Hence, the estimate of $\mathcal{H}^{   \nu;\star;3}_{L;L_1,L_2}(k;k_1,k_2)$ is reduced to the estimate of semilinear type quadratic term with modified perturbed metric as inputs, which are good type. Therefore,  similar to  the estimate of   $\mathcal{H}^{   \nu;1}_{L;L_1,L_2}(k;k_1,k_2)$, we have 
 \be\label{2020sep10eqn11}
 \sum_{\begin{subarray}{c}
 k\in \Z, (k_1,k_2)\in \chi_k^3 \\
  k_2\geq-m+    \alpha_{inp}(|L|) \delta_1 m \end{subarray}} 2^{-k_{-}+2\gamma k_{-}+ 2  (N_0-|L|) k_{+}} |\mathcal{H}^{   \nu;\star;3}_{L;L_1,L_2}(k;k_1,k_2)|\lesssim  2^{2 e(L;\tilde{h})\delta_1 m  -  \delta_1 m/100 }\epsilon_1^3. 
\ee
Hence finishing the proof of our desired estimate \eqref{2022may3eqn31}. 

\qed

  \section{ Z-norm estimate of the perturbed metric component}\label{Znormestimate}

In this section, we will show that, same as in the Einstein-Klein-Gordon system, the perturbed metric components enjoy the modified scattering property. We first briefly introduce the correction of the phase function for the profile of the wave part, which is exactly same as in \cite{IP} and then show that  the Vlasov part only plays a minor role in the $Z$-norm analysis, which is the main goal of this section. Due to the almost same setting of the profile and the bootstrap assumption in \cite{IP} is satisfied, for simplicity, the increment of  $Z$-norm over time  contributes from the Einstein part will be taken as granted. The difference of the $Z$-normed space at the high frequency doesn't play much role in the analysis, only minor modifications are required.  Readers are refer to \cite{IP}[Chapter 5, section 3] for more details. 

For $h\in \{h_{\alpha\beta}\}$, we define the low frequency component of $h$, the phase correction function $\Theta(t,\xi)$, and the phase corrected profile $\widehat{V}_{\ast}^{h_{\alpha\beta}}(t, \xi)$ as follows,
\[
h^{low}(t,x):=\int_{\R^3} e^{i x\cdot\xi} \widehat{h}(t,\xi)\psi_{\leq 0}(\langle t\rangle^{0.68}\xi) d \xi, 
\] 
where
\be
\Theta(t,\xi)= \int_{0}^{t} h_{00}^{low}(s,s\frac{\xi}{|\xi|})\frac{|\xi|}{2}+ h_{0j}^{low}(s,s\frac{\xi}{|\xi|})\xi_j + h_{jk}^{low}(s,s\frac{\xi}{|\xi|}) \frac{\xi_j\xi_k}{2|\xi|} d s,
\ee
\be\label{phasecorrection}
  \widehat{V}_{\ast}^{h_{\alpha\beta}}(t, \xi) := e^{-i \Theta(t,\xi)} \widehat{V}_{ }^{h_{\alpha\beta}}(t, \xi),\quad \widehat{V}_{\ast}^{h_{ }}(t, \xi)=R_{h}^{\alpha\beta}(\xi)\widehat{V}_{\ast}^{h_{\alpha\beta}}(t, \xi), \quad h\in\{F, \omega_j, \vartheta_{mn}\}. 
\ee
In particular, from \cite{IP}[equality (5.3.8)], the following equality holds, 
\[
\p_t \Theta(t, \xi)= \frac{1}{(2\pi)^3} \int_{\R^3} e^{i t\xi\cdot \eta/|\xi|}\big( \widehat{h_{00}^{low}}(t, \eta)\frac{|\xi|}{2} +   \widehat{h_{0j}^{low}}(t, \eta) \xi_j +   \widehat{h_{jk}^{low}}(t, \eta)\frac{\xi_j\xi_k}{2|\xi|}\big) d \eta 
\]
 
As a result, from the equation satisfied by $h_{\alpha\beta}$ in (\ref{april3eqn2}), we have
\[
 \p_t \widehat{V}_{\ast}^{h_{\alpha\beta}}(t, \xi)=  e^{-i \Theta(t,\xi)}\big(\p_t \widehat{V}_{ }^{h_{\alpha\beta}}(t, \xi)- i \p_t \Theta(t, \xi)\widehat{V}_{ }^{h_{\alpha\beta}}(t, \xi) \big)  
=\sum_{i=1, 2,3,4} \mathcal{R}^{h_{\alpha\beta}}_i(t,\xi), 
\]
where
\[
 \mathcal{R}^{h_{\alpha\beta}}_1(t,\xi):=  e^{-i \Theta(t,\xi)}e^{it|\xi|}\big( \mathcal{F}[h_{00}\Delta h_{\alpha\beta}-2 h_{0i}\p_i \p_t h_{\alpha\beta} + h_{ij}\p_i\p_j h_{\alpha\beta}](t,\xi)- i \p_t \Theta(t, \xi)\widehat{V}_{ }^{h_{\alpha\beta}}(t, \xi) \big),
\]
\[
  \mathcal{R}^{h_{\alpha\beta}}_2(t,\xi):=e^{-i \Theta(t,\xi)}e^{it|\xi|}\mathcal{F}[\Lambda_{2}[\mathcal{N}_{\alpha\beta}^{wa}](t, \xi),
\]
\[
 \mathcal{R}^{h_{\alpha\beta}}_3(t,\xi):=e^{-i \Theta(t,\xi)}e^{it|\xi|}\mathcal{F}[\Lambda_{\geq 3}[H_{i\gamma}\p_i\p_{\gamma} h_{\alpha\beta}+ \mathcal{N}_{\alpha\beta}^{wa}](t, \xi),
\]
\be\label{correctedprofile}
  \mathcal{R}^{h_{\alpha\beta}}_4(t,\xi):=e^{-i \Theta(t,\xi)}e^{it|\xi|}\mathcal{F}[\mathcal{N}_{\alpha\beta}^{vl}](t, \xi). 
\ee

Due to the phase correction, the low frequency part $ {h_{\mu\nu}^{low}}$ of $h_{\mu\nu}$ is removed in the quasilinear type quadratic term $h_{\mu\nu}\p_{\mu}\p_{\nu} h_{\alpha\beta}$ in $\mathcal{R}^{h_{\alpha\beta}}_1(t,\xi)$.

\begin{proposition}
Under the bootstrap assumptions \textup{(\ref{BAmetricold})} and \textup{(\ref{BAvlasovori})}, the following estimate holds for any $t\in [2^{m-1}, 2^m]\subset [0,T], m\in \Z_+$, $k\in \Z$
\be\label{2020july27eqn20}
\sum_{h\in \{F, \omega_j, \vartheta_{mn}\}}    2^{1.001k_{-} + 40 k_{+}}\| \psi_k(\xi)\hat{V}^{h}_{\ast}(t, \xi)\|_{L^\infty_\xi}\lesssim   \epsilon_0, \quad     2^{1.001k_{-} + 40k_{+}}\| \psi_k(\xi)\hat{V}^{h_{\alpha\beta}}_{\ast}(t, \xi)\|_{L^\infty_\xi}\lesssim 2^{\delta_1 m  } \epsilon_0. 
\ee
\end{proposition}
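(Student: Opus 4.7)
The plan is to integrate the ODE \eqref{correctedprofile} for the phase-corrected profile in time and bound the $Z$-norm contribution of each of the four source terms $\mathcal{R}^{h_{\alpha\beta}}_i$, $i\in\{1,2,3,4\}$, after a dyadic-in-time decomposition. Writing
\begin{equation*}
\widehat V^{h_{\alpha\beta}}_\ast(t,\xi)=\widehat V^{h_{\alpha\beta}}_\ast(0,\xi)+\sum_{i=1}^4\int_0^t \mathcal R^{h_{\alpha\beta}}_i(s,\xi)\,ds,
\end{equation*}
the initial-data piece is bounded by $\epsilon_0$ directly from the data hypothesis of Theorem \ref{maintheorem} together with Sobolev embedding, so the task reduces to controlling the four source integrals uniformly in $k$ after the weight $2^{1.001k_-+40k_+}$.

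The purely metric contributions $\mathcal{R}^{h_{\alpha\beta}}_i$, $i=1,2,3$, are of the same form as those analyzed in \cite{IP}[Chapter~5, \S3] for the Einstein-Klein-Gordon system. Our bootstrap hypothesis \eqref{BAmetricold} is strictly stronger than the one used there and the $Z$-norm is identical, so I would import that analysis as a black box: the null structure isolated in Lemma \ref{nullstructurefact}, together with the phase correction $\Theta(t,\xi)$ (which removes the non-integrable quasilinear resonance of $h^{low}_{\mu\nu}\partial_\mu\partial_\nu h_{\alpha\beta}$), yields uniform-in-$t$ bounds of size $\epsilon_0$ for the good components $h\in\{F,\omega_j,\vartheta_{mn}\}$, and $\langle t\rangle^{\delta_1}$-growth for the bad mode $\underline F$ (hence for the generic $h_{\alpha\beta}$).

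The genuinely new ingredient is the Vlasov-type source $\mathcal{R}^{h_{\alpha\beta}}_4(t,\xi)$. At linear level, Lemma \ref{sep20eqn61} with $L=\mathrm{Id}$ expresses this term as $\int e^{-it\hat v\cdot\xi}b_{\alpha\beta}(v)\widehat{u}(t,\xi,v)\,dv$ plus lower-order pieces; after the decomposition \eqref{oct11eqn11} it splits into a zero-frequency profile term controlled by $Z^{vl}(t)\lesssim\epsilon_1$, a correction term, and a $\nabla_\xi$-weighted piece. Applying Lemma \ref{decayestimateofdensity} on each dyadic slice $t\in[2^{m-1},2^m]$ produces a pointwise bound of the form $\|\psi_k(\xi)\mathcal{R}^{h_{\alpha\beta}}_4(t,\xi)\|_{L^\infty_\xi}\lesssim\min\{2^{3k_-},\,2^{-3m-3k_-}\}\epsilon_1$ at linear level, which, after multiplication by the $Z$-norm weight and summation over $m$, gives a total contribution $\lesssim\epsilon_0$. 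The quadratic-and-higher part of $\mathcal{R}^{h_{\alpha\beta}}_4$ already satisfies the $L^2$-bound of Lemmas \ref{quadraticvlasovrough} and \ref{resonancevlasov} with a $2^{-m}$ gain; upgrading to $L^\infty_\xi$ by the volume-of-support factor $2^{3k_-/2}$ is harmless at this level and remains summable.

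The main obstacle I expect is the very-low-frequency window $k\lesssim -m+\delta_1 m$, where the weight $2^{1.001k_-}$ just barely fails to be compensated by the raw volume-of-support factor. Here I would exploit the non-time-resonance bound $||\xi|-\hat v\cdot\xi|\gtrsim|\xi|\langle v\rangle^{-2}$ valid for $\langle v\rangle\ll 2^{m/10}$, integrate by parts in time (or equivalently normal-form against the modified-profile construction \eqref{modifiedperturbedmetric}), and use the polynomial $v$-weights built into $Z^{vl}$ to recover the missing decay. This is parallel to the low-frequency analysis already carried out in Lemma \ref{lowfrequencyinputestimate}, so closing it should be a matter of combining those ingredients rather than introducing new tools.
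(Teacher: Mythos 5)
Your overall architecture is the same as the paper's: integrate the ODE for the phase-corrected profile, import the bounds for the purely metric sources $\mathcal{R}_1,\mathcal{R}_2,\mathcal{R}_3$ from \cite{IP} as a black box, and treat the Vlasov source $\mathcal{R}_4$ separately, with the linear-in-$f$ piece handled by the density decay estimate (Lemma \ref{decayestimateofdensity}).  The cubic-and-higher part is also handled in the paper essentially as you propose, via an $L^1_x$-type bound.

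The genuine gap is in your treatment of the quadratic part $\Lambda_2[\mathcal{R}_4]$.  You claim that upgrading the $L^2$-bound of Lemma \ref{quadraticvlasovrough} to $L^\infty_\xi$ by the volume-of-support factor $2^{3k_-/2}$ is ``harmless and remains summable''.  It is not.  First, the inequality $\|\widehat{P_kg}\|_{L^\infty_\xi}\lesssim 2^{3k/2}\|\widehat{P_kg}\|_{L^2_\xi}$ is false in general (Cauchy--Schwarz gives the reverse); the volume-of-support gain must be harvested inside the bilinear convolution integral, which is exactly what the paper's Lemma \ref{bilinearinXnormed} does.  Second, and more seriously, even if one accepts some bilinear volume-of-support bound, the quadratic $L^2$-estimate carries a $2^{-m+d(1,0)\delta_0 m}$ factor; after time-integration over each dyadic slice $[2^{n-1},2^n]$ one is left with $\sim 2^{d(1,0)\delta_0 n}\epsilon_1^2$ per slice, and $\sum_{n\le m}2^{d(1,0)\delta_0 n}\epsilon_1^2\sim 2^{d(1,0)\delta_0 m}\epsilon_1^2$ is unbounded in $m$.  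Since $\delta_0=100\delta_1$ in this paper, this overshoots even the allowed $2^{\delta_1 m}\epsilon_0$ growth of the generic mode, and certainly the uniform bound $\lesssim\epsilon_0$ required for $h\in\{F,\omega_j,\vartheta_{mn}\}$.

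What the paper actually does is prove a dedicated $L^\infty_\xi$-bilinear estimate, Lemma \ref{bilinearinXnormed}, which yields $\sup_k 2^{1.001k_-+40k_+}\|T^\mu(h,f)(t,\xi)\psi_k(\xi)\|_{L^\infty_\xi}\lesssim 2^{-3m/2}\epsilon_1^2$, a pointwise-in-time bound with absolutely summable decay.  The crucial mechanism is not integration by parts in time (as you propose as a fallback for low frequencies), but integration by parts in the momentum variable $v$, exploiting the oscillatory factor $e^{-it\hat v\cdot\eta}$ in the Fourier representation of the Vlasov profile.  Five $v$-integrations give the factor $2^{-5m-5k_2}$ at the cost of polynomial $v$-weights already built into the bootstrap, and interpolating this with the raw volume-of-support bound (effective for $k\le -m$) yields the uniform $2^{-3m/2}$.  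Your proposed time-IBP/normal-form would produce a boundary term containing $\partial_t\widehat{u^{\mathcal L_2}}$, i.e.\ the full Vlasov nonlinearity, and it is not clear this closes without precisely the $v$-IBP mechanism you omitted; in any case it is a more involved route than what the paper uses.
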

\begin{proof}
Recall (\ref{phasecorrection}) and (\ref{correctedprofile}). For any $t_1, t_2\in [2^{n-1}, 2^n]\subset[0,T], n\leq m$, as obtained in \cite{IP}[estimate 5.3.17], the following estimate holds, 
\be\label{2020july27eqn19}
    2^{1.001k_{-} + 40k_{+}}\big( \| \psi_k(\xi)\int_{t_1}^{t_2} \mathcal{R}_1^{h_{\alpha\beta}}(s,\xi) d s  \|_{L^\infty_\xi}+\| \psi_k(\xi)\int_{t_1}^{t_2} \mathcal{R}_3^{h_{\alpha\beta}}(s,\xi) d s  \|_{L^\infty_\xi}\big)\lesssim 2^{-\delta_1 n/2  } \epsilon_1^2.
\ee
As obtained in \cite{IP}[Lemma 5.20], the following estimates hold, 
\be\label{2020july27eqn10}
\begin{split}
    2^{1.001k_{-} + 40k_{+}}  \| \psi_k(\xi)\int_{t_1}^{t_2} \mathcal{R}_2^{h_{\alpha\beta}}(s,\xi) d s  \|_{L^\infty_\xi} &\lesssim 2^{ \delta_1 n/2   } \epsilon_1^2, \\ 
 \sum_{h\in \{F, \omega_j, \vartheta_{mn}\}}     2^{1.001k_{-} + 40k_{+}} \| \psi_k(\xi)\int_{t_1}^{t_2} \mathcal{R}_2^{h_{ }}(s,\xi) d s  \|_{L^\infty_\xi} &\lesssim 2^{ -\delta_1 n/2  } \epsilon_1^2.
\end{split}
\ee
It remains to estimate $\mathcal{R}^{h_{\alpha\beta}}_4(t,\xi)$. Recall (\ref{2020april3eqn1}). From the decay estimate of density type function (\ref{densitydecay}) in Lemma \ref{decayestimateofdensity}, and the improved bootstrap assumption for the Vlasov part  obtained in section \ref{energyestimateVlasov}, the following estimate holds for any $t\in[2^{n-1}, 2^n],$
\[
\big| \Lambda_{1}[\mathcal{F}[\mathcal{N}_{\alpha\beta}^{vl}]](t, \xi)\psi_k(\xi)\big|\lesssim    \min\{2^{-190k_{+}}  2^{ {d}(0, 190)\delta_0 n }, 2^{-3k-3n+ {d}(3, 0)\delta_0 n }\}\epsilon_0. 
\]
From the above estimate, for any $t_1, t_2\in[2^{n-1}, 2^n],$ we have
\be\label{2020july27eqn1}
 2^{1.001k_{-} + 40k_{+}}  \| \psi_k(\xi)\int_{t_1}^{t_2} \Lambda_{1}[ \mathcal{R}_4^{h_{\alpha\beta}}(s,\xi) ] d s  \|_{L^\infty_\xi} \lesssim 2^{- 1.001 n + {d}(3, 0) \delta_0  n} \epsilon_0.
\ee
From the $L^\infty_{x,v}-L^1_{x,v}$ type bilinear estimate and the $L^\infty_x$-type estimates of basic quantities in (\ref{june30eqn1}), (\ref{june29eqn71}), the following estimate holds for any $t\in[2^{n-1}, 2^n],$ 
\be\label{2020july27eqn2}
  2^{1.001k_{-} + 40k_{+}} \| \psi_k(\xi)  \Lambda_{\geq 3}[ \mathcal{R}_4^{h_{\alpha\beta}}(t,\xi) ]   \|_{L^\infty_\xi} \lesssim \sum_{\alpha\in \Z_{+}^3, |\alpha|\leq 42} \| \nabla_x^\alpha \Lambda_{\geq 3}[ \mathcal{N}_{\alpha\beta}^{vl}](t,x)\|_{L^1_{x}} \lesssim 2^{-3n/2}\epsilon_1^3. 
\ee
From the bilinear estimate (\ref{octe364}) in Lemma \ref{bilinearinXnormed}, the following estimate holds for any $t\in[2^{n-1}, 2^n],$
\be\label{2020july27eqn3}
   2^{1.001k_{-} +  40 k_{+}} \| \psi_k(\xi)  \Lambda_{2}[ \mathcal{R}_4^{h_{\alpha\beta}}(t,\xi) ]   \|_{L^\infty_\xi} \lesssim 2^{-3n/2}\epsilon_1^3.
\ee
After combining the estimates (\ref{2020july27eqn1}--\ref{2020july27eqn3}), we have
\be\label{2020july27eqn4}
   2^{1.001k_{-} +  40 k_{+}} \| \psi_k(\xi)\int_{t_1}^{t_2} \mathcal{R}_4^{h_{\alpha\beta}}(s,\xi)  d s  \|_{L^\infty_\xi} \lesssim 2^{- 1.001 n +  {d}(3, 0) \delta_0  n} \epsilon_0.
\ee
To sum up, recall the decomposition of $ \p_t \widehat{V}_{\ast}^{h_{\alpha\beta}}(t, \xi)$ in (\ref{correctedprofile}), our desired estimate (\ref{2020july27eqn1}) holds from the estimates   (\ref{2020july27eqn19}), (\ref{2020july27eqn10}), and (\ref{2020july27eqn4}). 
\end{proof}

Let $\mu\in\{+,-\}, $ $h\in \{U^{h_{\alpha\beta}}, U^{\tilde{h}}, \tilde{h}\in \{F, \underline{F}, \omega_j, \vartheta_{ij} \}\}$ and symbol $m(\xi-\eta,\eta, v) \in   P^{ 100}_v \mathcal{M}^{a;b,c} $, $(a,b,c)\in \{(-1,0,0), (0,-1,0),(0,0,-1)\}$, see \eqref{2022may22eqn31} for the definition of the symbol class $P^{d}_v \mathcal{M}^{a;b,c}$,  we consider the following bilinear form, 
\be\label{octeqn1923}
T^{\mu}  (h, u)(t, \xi):= \int_{\R^3}\int_{\R^3} e^{it|\xi|-i\mu t |\xi-\eta| - i  t \hat{v}\cdot \eta} m(\xi-\eta,\eta, v)\widehat{h^\mu}(t, \xi-\eta) \widehat{u}(t,\eta, v) d \eta d v. 
\ee
For the above defined wave-Vlasov type interaction, in the following Lemma, we prove a $L^\infty_\xi$-type bilinear estimate. 
  \begin{lemma}\label{bilinearinXnormed}
Let $t\in [2^{m-1}, 2^m]\subset [0, T]$ be fixed. Under the bootstrap assumptions \textup{(\ref{BAmetricold})} and \textup{(\ref{BAvlasovori})},  the following estimate holds for the bilinear operator defined in \textup{(\ref{octeqn1923})},
\be\label{octe364}
 \sup_{k\in \mathbb{Z}}    2^{1.001k_{-} + 40 k_{+}} \|   T^{\mu} (h, f)(t, \xi) \psi_{k}(\xi)\|_{L^\infty_\xi}\lesssim 2^{-3m/2}\epsilon_1^2. 
\ee
\end{lemma}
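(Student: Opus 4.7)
The approach is to dyadically decompose both inputs in frequency and invoke the bilinear $L^\infty_\xi$ bound \eqref{2020aug5eqn5} from Lemma \ref{wavevlasovbi1}. Writing $\psi_k(\xi)T^\mu(h,f)(t,\xi)=\sum_{(k_1,k_2)\in\chi^1_k\cup\chi^2_k\cup\chi^3_k}T^{k;\mu}_{k_1,k_2}(h,f)(t,\xi)$, where $k_1$ and $k_2$ localize the wave frequency $|\xi-\eta|$ and the Vlasov frequency $|\eta|$ respectively, Lemma \ref{wavevlasovbi1} yields for $m\in P_v^{100}\mathcal{M}^{a;b,c}$ with $a+b+c=-1$ the bound
\begin{equation*}
\|T^{k;\mu}_{k_1,k_2}(h,f)\|_{L^\infty_\xi}\lesssim 2^{ak+bk_1+ck_2+3\min\{k_1,k_2\}/2}A_{wa}(k_1)A_{vl}(k_2),
\end{equation*}
where $A_{wa}(k_1)$ and $A_{vl}(k_2)$ are the minima of the two alternative expressions supplied in \eqref{2020aug5eqn5}, the second alternative for $A_{vl}$ encoding a double $v$-integration by parts and producing a $2^{-2m-2k_2}$ gain at the price of two additional $v$-derivatives on $f$.

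The wave norms entering $A_{wa}$ are controlled via the $Z$-norm bound \eqref{2020july27eqn20} on the phase-corrected profile $\widehat{V}^{\tilde h}_\ast$ -- noting that $e^{i\Theta}$ is unimodular and so preserves $L^\infty_\xi$ -- combined with the recovery formula \eqref{connection} expressing $U^{h_{\alpha\beta}}$ through the double Hodge variables and the density-correction estimate \eqref{2020april16eqn51}, giving $\|\widehat{U^h}\psi_{k_1}\|_{L^\infty_\xi}\lesssim 2^{-1.001k_{1,-}-40k_{1,+}+\delta_1 m}\epsilon_1$. The Vlasov norms are controlled, for the first alternative of $A_{vl}$, by the $Z^{vl}$-component of the bootstrap \eqref{BAvlasovori} applied to the Taylor decomposition \eqref{oct11eqn11}, and for the second alternative by the weighted $L^2_{x,v}$ energy $E^{vl}_{\mathcal{L},\rho}$. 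Substituting these bounds and choosing the more favorable alternative regime by regime gives the target after using $a+b+c=-1$ to cancel the output weight $2^{40k_+}$ (since $k_+\le\max\{k_1,k_2\}+5$ in the regimes $\chi_k^2,\chi_k^3$ and the high regularity of both inputs absorbs it in $\chi_k^1$): in the low-frequency regime $\min\{k_1,k_2\}\le -m$ the volume factor $2^{3\min\{k_1,k_2\}/2}$ dominates the output weight $2^{1.001k_-}$ and the sum is comfortably $\lesssim 2^{-3m/2}\epsilon_1^2$; in the regime $\min\{k_1,k_2\}\ge -m+\delta_1 m$ the $2^{-2m-2k_2}$ gain from the second alternative of $A_{vl}$ dominates and closes the bound.

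The main technical obstacle will be the critical near-diagonal regime $k_1\sim k_2\sim -m+O(\delta_1 m)$ -- particularly within $\chi^1_k$ -- where neither alternative of $A_{vl}$ is individually sharp and the $2^{40k_+}$ weight on the output is taut against the wave input bound. In this regime I would follow the $v$-support partitioning of Lemma \ref{oscillationwavevlasov}: split the integration in $v$ into $|v|\le 2^{m/10}$ and $|v|\ge 2^{m/10}$. On the small-$v$ part the non-time-resonance lower bound $\bigl||\xi|-\mu|\xi-\eta|-\hat v\cdot\eta\bigr|\gtrsim 2^{k_2}\langle v\rangle^{-2}$ of \eqref{july21eqn16} holds, which is precisely the oscillation already exploited by the $v$-integration by parts built into the second alternative of \eqref{2020aug5eqn5}, and after interpolating optimally between the two alternatives one obtains a factor of $2^{-3m/2}$ with a small margin; on the large-$v$ part the high polynomial weight $(1+|v|)^{1/\delta_1}$ in $\omega^\mathcal{L}_\rho$ from \eqref{weightfunctions2} furnishes rapid decay in $m$, comfortably stronger than the target $2^{-3m/2}$. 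Summing the resulting dyadic bounds over all $(k_1,k_2)$ then yields the desired estimate $\lesssim 2^{-3m/2}\epsilon_1^2$ with a spare $2^{-\delta_1 m/10}$, completing the proof.
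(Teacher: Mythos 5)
You correctly set up the dyadic decomposition into $\chi_k^1\cup\chi_k^2\cup\chi_k^3$, and the idea of toggling between a volume-of-support estimate and a $v$-integration-by-parts estimate is the same basic shape as the paper's argument. However, the detailed route you take is different from the paper's, and it contains a genuine conceptual error.

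The paper does not invoke Lemma \ref{wavevlasovbi1} here. It records a bare-hands volume-of-support bound \eqref{2020july9eqn92}, which is used when the output frequency satisfies $k\le -m$, and for $k\ge -m$ it performs \emph{five} integrations by parts in $v$ directly (see \eqref{2020july9eqn93}), producing the gain $2^{-5m-5k_2}$ rather than the $2^{-2m-2k_2}$ that comes built into estimate \eqref{2020aug5eqn5}. With five IBP the summation closes immediately, with comfortable margin, and there is no ``critical near-diagonal regime'' requiring any interpolation. Nothing constrains you to two $v$-derivatives: the $v$-weight $\widetilde{\omega^{\mathcal{L}}_\alpha}(v)$ in the Vlasov bootstrap \eqref{weightfunctions3} controls of order $100 N_0$ velocity derivatives, so five IBP (or ten) cost almost nothing. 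By tying yourself to the literal form of Lemma \ref{wavevlasovbi1} you manufacture an obstacle that the paper simply steps around.

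More seriously, your proposed fix for that regime conflates two unrelated oscillatory mechanisms and appeals to a tool that cannot apply. The lower bound $\bigl||\xi|-\mu|\xi-\eta|-\hat v\cdot\eta\bigr|\gtrsim 2^{k_2}\langle v\rangle^{-2}$ of \eqref{july21eqn16} is a non-time-resonance bound: in Lemma \ref{oscillationwavevlasov} it is used to integrate by parts in $t$ in a trilinear time-integrated functional. The statement you are proving is a \emph{fixed-time} estimate (the time $t\in[2^{m-1},2^m]$ is held fixed), so there is no time integral and no normal form to perform. What the $v$-integration by parts in \eqref{2020aug5eqn5} actually exploits is $\nabla_v(\hat v\cdot\eta)$ inside $e^{-it\hat v\cdot\eta}$, producing $|t|^{-1}2^{-k_2}$ per step; this has nothing to do with the size of the full phase $\Phi$. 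Claiming that \eqref{july21eqn16} ``is precisely the oscillation already exploited by the $v$-integration by parts'' is incorrect, and the subsequent ``interpolating optimally between the two alternatives one obtains a factor of $2^{-3m/2}$'' is not justified: in the regime $k_1\sim k_2\sim -m$ the second $A_{vl}$ alternative degenerates to a constant, so there is no interpolation gain to be had there. (In fact, in that specific regime the no-IBP bound together with the output weight $2^{1.001k_-}$ already delivers roughly $2^{-2m}$; the truly dangerous regime is $k_2$ closer to zero, where more than two IBP are needed to recover $2^{-3m/2}$ uniformly.) The paper sidesteps all of this by simply taking enough $v$-derivatives.
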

\begin{proof}
After doing dyadic decompositions  for two inputs, we have
\[
 T_{\mu} (h, f)(t, \xi) \psi_{k}(\xi)=\sum_{(k_1, k_2)\in \chi_k^1\cup \chi_k^2\cup \chi_k^3}  T_{k,k_1,k_2}^{\mu} (h, f)(t, \xi),
\]
where
\[
  T_{k,k_1,k_2}^{\mu} (h, f)(t, \xi)=  \int_{\R^3}\int_{\R^3} e^{it|\xi|-i\mu t |\xi-\eta| - i  t \hat{v}\cdot \eta}  \widehat{h^\mu}(t, \xi-\eta) \widehat{u}(t,\eta, v)
\]
\be\label{2020july9eqn91}
\times m(\xi-\eta,\eta, v) \psi_{k}(\xi) \psi_{k_1}(\xi-\eta)\psi_{k_2}(\eta) d \eta d v.
\ee
Note that, from the volume of support of $\eta$, the following estimate holds 
\[
  2^{1.001k_{-} + 40k_{+}}  \| T_{k,k_1,k_2}^{\mu} (h, f)(t, \xi)\|_{L^\infty_\xi} 
\]
\be\label{2020july9eqn92}
\lesssim  2^{ 1.001k_{-}  + 3\min\{k_1,k_2\}-\min\{k,k_1,k_2\}-1.001k_{1,-}+2 H(  0)\delta_0 m }    2^{40(k_{+}- k_{1,+})-40k_{2,+}}\epsilon_1^2. 
\ee
Alternatively, the following estimate holds if we do integration by parts in $v$ five times,  
\[
   2^{1.001k_{-} + 40k_{+}} \| T_{k,k_1,k_2}^{\mu} (h, f)(t, \xi)\|_{L^\infty_\xi}  
\]
\be\label{2020july9eqn93}
  \lesssim   2^{ 1.001k_{-} + 3\min\{k_1,k_2\} -\min\{k,k_1,k_2\} -1.001k_{1,-}+2 {d}(5,160)\delta_0 m } 2^{-5m - 5k_2}  2^{40(k_{+}- k_{1,+})-150k_{2,+} }\epsilon_1^2.  
\ee
From the above two estimates and the $L^2-L^2$ type bilinear estimate at high frequency, we have
\[
\sum_{ (k_1, k_2)\in \chi_k^1\cup \chi_k^2\cup \chi_k^3 }      2^{1.001k_{-} + 40k_{+}}  \| T_{k,k_1,k_2}^{\mu} (h, f)(t, \xi)\|_{L^\infty_\xi} 
 \]
 \be
 \lesssim \sum_{k \leq -m} 2^{2k +2 H( 0)\delta_0 m } \epsilon_1^2  +\sum_{k \geq -m } 2^{-5m-3k   +2 {d }(5,160)\delta_0 m} \epsilon_1^2 \lesssim 2^{-3m/2}\epsilon_1^2. 
\ee
Hence finishing the proof of our desired estimate (\ref{octe364}). 
\end{proof}

\section{Energy estimate   for the Vlasov part}\label{energyestimateVlasov}
 
Recall (\ref{BAvlasovori}) and (\ref{2020april8eqn31}). Due to the small data regime, it would be sufficient to consider the Cauchy problem at time $t=1$. Note that,  for any $T^{\ast}\in[1, T)$, we have 
\[  
 \| \omega^{\mathcal{L}}_{\rho} (T^{\ast},x, v) \Lambda^{\rho}{u}^{\mathcal{L}}(T^{\ast} ,x,v)\|_{L^2_{x,v}}^2 -  \| \omega^{\mathcal{L}}_{\rho} (1,x, v) \Lambda^{\rho}{u}^{\mathcal{L}}(1 ,x,v)\|_{L^2_{x,v}}^2
 \]
 \[
  = \int_{1}^{T^{\ast}} \int_{\R^3} \int_{\R^3} 2\big( \omega^{\mathcal{L}}_{\rho} ( t, x, v)\big)^2 \Lambda^{\rho}{u}^{\mathcal{L}}(t,x,v)\p_t \Lambda^{\rho}{u}^{\mathcal{L}}(t ,x, v)  
 \]
\be\label{2020may2eqn1}
+   2 \omega^{\mathcal{L}}_{\rho} ( t, x, v)\big(\Lambda^{\rho}{u}^{\mathcal{L}}(t,x,v)\big)^2 \p_t \omega^{\mathcal{L}}_{\rho} ( t , x, v) d x d  v dt.
\ee
From the estimate (\ref{2020april27eqn61}), we have
\be\label{2020may2eqn2}
\int_{1}^{T^{\ast}} \int_{\R^3} \int_{\R^3} 2 \omega^{\mathcal{L}}_{\rho} ( t, x, v)\big(\Lambda^{\rho}{u}^{\mathcal{L}}(t,x,v)\big)^2 \p_t \omega^{\mathcal{L}}_{\rho} ( t, x, v) d x d  v d t\leq 0. 
\ee
Therefore, to control the weighted energy at time $T^{\ast}$, it would be sufficient to control the first integral in (\ref{2020may2eqn1}).

 For any $t_1, t_2\in [2^{m-1}, 2^m]\subset [0, T]$, from the estimate (\ref{aug9eqn87}) in Lemma \ref{estimateofremaindervlasov} and the equation satisfied by $\Lambda^\rho u^{\mathcal{L}}(t,x,v)$ in (\ref{2020april8eqn31}), after doing dyadic decomposition for the metric component and  classifying the nonlinearities based on the distribution of vector fields,  we have 
 \[
  \big| \int_{t_1}^{t_2} \int_{\R^3} \int_{\R^3} \big( \omega^{\mathcal{L}}_{\rho} (t, x, v)\big)^2 \Lambda^{\rho}{u}^{\mathcal{L}}(t,x,v)\p_t \Lambda^{\rho}{u}^{\mathcal{L}}(t,x, v) d x d  v d t\big|
 \]
\be\label{may7eqn1}
 \lesssim  \epsilon_1^3 + \sum_{i=1,\cdots,5} \big| \sum_{k\in\mathbb{Z}} I_{\rho;i}^{\mathcal{L}}(k,t_1, t_2)\big|  , 
\ee
 where
 \[
 I_{\rho;1}^{\mathcal{L}}(k,t_1, t_2)=  \int_{t_1}^{t_2} \int_{\R^3} \int_{\R^3}    \big(\Lambda^{\rho}{u}^{\mathcal{L}}(t,x,v) \big)^2 \omega^{\mathcal{L}}_{\rho} (t, x, v) v\cdot \nabla_x  \omega^{\mathcal{L}}_{\rho} (t, x, v)  
 \]
\be\label{2020april15eqn11}
 \times P_{k}\big(\Lambda_{ 1}[(v^0)^{-1}]\big)(t, x+t\hat{v})    d x d  v d t, 
\ee
\[
 I_{\rho;2}^{\mathcal{L}}(k,t_1, t_2)=  \int_{t_1}^{t_2} \int_{\R^3} \int_{\R^3}   \big(\Lambda^{\rho}{u}^{\mathcal{L}}(t,x,v) \big)^2 \omega^{\mathcal{L}}_{\rho} ( t,x, v) \langle v \rangle \hat{v}_\alpha \hat{v}_\beta
\]
\be\label{2020april15eqn12}
 \times \nabla_x P_k(h_{\alpha\beta})(t,x+t\hat{v})\cdot     D_v  \omega^{\mathcal{L}}_{\rho} (t, x, v)    d x d  v d t , 
\ee
\[
I_{\rho;3}^{\mathcal{L}}(k,t_1, t_2)=  \sum_{\begin{subarray}{c}
\tilde{L}\in\cup_{l +|\alpha|\leq N_0  }\nabla_x^\alpha P_l   \\ 
  |\tilde{L}|\leq \tilde{c}(\rho),   |\alpha| \leq |\rho|-\tilde{c}(\rho)\\
\end{subarray}}     \int_{t_1}^{t_2} \int_{\R^3} \int_{\R^3}   \big(\omega^{\mathcal{L}}_{\rho} (t, x, v)   \big)^2   \Lambda^{\rho}{u}^{\mathcal{L}}(t,x,v) C^{\tilde{L} }_{\mu\nu;\rho}(x,v) 
\]
\be\label{2020april15eqn13}
 \times \nabla_x P_k(  \tilde{L} \mathcal{L} h_{\mu\nu}) (t, x+t\hat{v}) \cdot D_v u^{ }(t,x,v) \mathbf{1}_{|  \rho |+|\mathcal{L}|\geq 2} d x d v d t, 
\ee
 \[
  I_{\rho;4}^{\mathcal{L}}(k,t_1, t_2)= \sum_{ \begin{subarray}{c}  ( \Gamma_1, \Gamma_2)\in P\times\{\p_{\alpha}\} \cup \{\p_{\alpha}\}\times \mathcal{P}\\ 
  (\tilde{L}, \rho_1, \rho_2, \mathcal{L}_1, \mathcal{L}_2)\in S_{\rho, \mathcal{L}}^1
  \end{subarray} }   \int_{t_1}^{t_2} \int_{\R^3} \int_{\R^3}  \big(\omega^{\mathcal{L}}_{\rho} (t,  x, v)   \big)^2   \Lambda^{\rho}{u}^{\mathcal{L}}(t,x,v)
 \]
 \[
  \times \big[P_k( \tilde{L}\mathcal{L}_2  h_{\mu\nu} ) (t, x+t\hat{v} ){C}_{\mathcal{L}_1\mathcal{L}_2;\tilde{L}, \mu, \nu }^{\mathcal{L};\rho,\rho_1,\rho_2}(x,v)  \cdot \nabla_x \Lambda^{\rho_1}  u^{\mathcal{L}_1}(t,x,v) 
 \]
 \be\label{2020april15eqn15}
  + C_{ \tilde{\mathcal{L}}_1 \tilde{\mathcal{L}}_2 ;\tilde{L},  \mu,\nu}^{  {\mathcal{L}};\Gamma_1,\Gamma_2;\rho, \rho_1, \rho_2}(x,v)    P_k(  \tilde{L}  \Gamma_2  {\mathcal{L}}_2) h_{\mu\nu}(t,x+t\hat{v})\Lambda^{\rho_1} u^{\Gamma_1  {\mathcal{L}}_1}(t,x,v)\big] d x d v d t, 
 \ee
  \[
  I_{\rho;5}^{\mathcal{L}}(k,t_1, t_2)=  \sum_{ \begin{subarray}{c}  ( \Gamma_1, \Gamma_2)\in P\times\{\p_{\alpha}\} \cup \{\p_{\alpha}\}\times \mathcal{P}\\ 
  (\tilde{L}, \rho_1, \rho_2, \mathcal{L}_1, \mathcal{L}_2)\in S_{\rho, \mathcal{L}}^2
  \end{subarray} }    \int_{t_1}^{t_2} \int_{\R^3} \int_{\R^3}  \big(\omega^{\mathcal{L}}_{\rho} ( t,  x, v)   \big)^2   \Lambda^{\rho}{u}^{\mathcal{L}}(t,x,v)
  \]
  \[
  \times \big[P_k( \tilde{L}\mathcal{L}_2  \Lambda_{1}[(v^0)^{-1}] ) (t, x+t\hat{v} )  {C}_{\mathcal{L}_1\mathcal{L}_2;\tilde{L} }^{\mathcal{L};\rho,\rho_1,\rho_2}(x,v)  \cdot \nabla_x \Lambda^{\rho_1}  u^{\mathcal{L}_1}(t,x,v) 
  \]
  \be\label{2020april15eqn51}
  + C_{ \tilde{\mathcal{L}}_1 \tilde{\mathcal{L}}_2 ;\tilde{L},  \mu,\nu}^{  {\mathcal{L}};\Gamma_1,\Gamma_2;\rho, \rho_1, \rho_2}(x,v)  P_k(  \tilde{L}  \Gamma_2  {\mathcal{L}}_2) h_{\mu\nu}(t,x+t\hat{v}) \Lambda^{\rho_1} u^{\Gamma_1  {\mathcal{L}}_1}(t,x,v)  \big]d x d v d t,
  \ee 
 where $\hat{v}_0:=1$ and the sets $S_{\rho, \mathcal{L}}^i, i\in\{1,2\},$ are defined as follows,  
\begin{multline}\label{2022may6eqn1}
S_{\rho, \mathcal{L}}^1:=\{( \tilde{L},   \rho_1, \rho_2, \mathcal{L}_1, \mathcal{L}_2):  \rho_1\circ\rho_2\preceq \rho, \mathcal{L}_1\circ\mathcal{L}_2\preceq \mathcal{L},      | \rho_1 | + |\mathcal{L}_1|< |\mathcal{L}| +  | \rho | -1, \\ 
   | \rho_2 | + |\mathcal{L}_2|< |\mathcal{L}|+| \rho |, \tilde{L}  \in\cup_{ l\in [0, \tilde{c}(\rho_2)]\cap \Z, |\alpha|\in [0, |\rho_2|-\tilde{c}(\rho_2)]\cap \Z } \nabla_x^\alpha P_l \},
 \end{multline}
 \begin{multline}\label{2022may6eqn2}
 S_{\rho, \mathcal{L}}^2:=\{( \tilde{L},   \rho_1, \rho_2, \mathcal{L}_1, \mathcal{L}_2): \rho_1\circ\rho_2\preceq \rho, \mathcal{L}_1\circ\mathcal{L}_2\preceq \mathcal{L},     | \rho_1 | + |\mathcal{L}_1|= |\mathcal{L}| +  | \rho  |-1, \\ 
   | \rho_2 | + |\mathcal{L}_2|\leq 1,  \tilde{L}  \in \cup_{ l\in [0, \tilde{c}(\rho_2)]\cap \Z, |\alpha|\in [0, |\rho_2|-\tilde{c}(\rho_2)]\cap \Z }  \nabla_x^\alpha P_l     \}. 
\end{multline}

The following estimate holds for the coefficients,
\be\label{2020sep10eqn51} 
\begin{split}
| C^{\tilde{L} }_{\mu\nu;\rho}(x,v) | & \lesssim (1+|x|)^{|\rho |-|\tilde{L}|}(1+|v|)^{|\rho |-|\tilde{L}|- c(\rho )},\\ 
|   {C}_{\mathcal{L}_1\mathcal{L}_2;\tilde{L} }^{\mathcal{L};\rho,\rho_1,\rho_2}(x,v) | & \lesssim (1+|x|)^{|\rho_2|-|\tilde{L}|}(1+|v|)^{ 1+ |\rho_2|-|\tilde{L}|- c(\rho_2)}, \\
  |   {C}_{\mathcal{L}_1\mathcal{L}_2;\tilde{L}, \mu, \nu }^{\mathcal{L};\rho,\rho_1,\rho_2}(x,v)  |+     |   C_{ \tilde{\mathcal{L}}_1 \tilde{\mathcal{L}}_2 ;\tilde{L},  \mu,\nu}^{  {\mathcal{L}};\Gamma_1,\Gamma_2;\rho, \rho_1, \rho_2}(x,v)  | & \lesssim (1+|x|)^{|\rho_2|-|\tilde{L}|}(1+|v|)^{ |\rho_2|-|\tilde{L}|- c(\rho_2)}. 
\end{split}
\ee

The main goal of this section is devoted to prove the following proposition. 

\begin{proposition}\label{energyvlasovest}
 Under the bootstrap assumptions \textup{(\ref{BAmetricold})} and \textup{(\ref{BAvlasovori})}, the following estimate holds for any   $t \in [2^{m-1}, 2^m]\subset [0, T]$, $\mathcal{L}\in \cup_{l\leq N_0}  \mathcal{P}_l$, $\rho\in \mathcal{S},$ s.t.,  $ | \mathcal{L}| +   | \rho |  \leq N_0,$  
 \be\label{may7eqn21}
  \| \omega^{\mathcal{L}}_{\rho} (t,x, v) \Lambda^{\rho}{u}^{\mathcal{L}}(t ,x,v)\|_{L^2_{x,v}}  \lesssim 2^{  H(\mathcal{L}, \rho)\delta_0 m  }\epsilon_0 . 
\ee
\end{proposition}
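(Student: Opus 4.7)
The plan is to establish \eqref{may7eqn21} by a bootstrap/continuity argument starting from the identity \eqref{2020may2eqn1}. Since $\p_t \omega^{\mathcal{L}}_{\rho}\le 0$ by \eqref{2020april27eqn61}, the energy increment is controlled solely by $\int (\omega^{\mathcal{L}}_{\rho})^2 \Lambda^{\rho}u^{\mathcal{L}}\,\p_t\Lambda^{\rho}u^{\mathcal{L}}\,dxdv\,dt$. After inserting the equation \eqref{2020april8eqn31} and discarding the cubic-and-higher remainder via \eqref{aug9eqn87} of Lemma \ref{estimateofremaindervlasov}, the problem reduces to proving $|\sum_k I^{\mathcal{L}}_{\rho;i}(k,t_1,t_2)|\lesssim 2^{2H(\mathcal{L},\rho)\delta_0 m- c\delta_0 m}\epsilon_1^3$ for $i=1,\dots,5$ and some $c>0$, since squaring the desired bound and using the improvement absorbs the loss from $\epsilon_0\to\epsilon_1=\epsilon_0^{5/6}$.

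I would first dispatch $I^{\mathcal{L}}_{\rho;1}$ and $I^{\mathcal{L}}_{\rho;2}$, the ``diagonal'' terms in which the Vlasov profile appears twice. For these, the metric enters only through $P_k\Lambda_1[(v^0)^{-1}]$ or $\nabla_x P_k h_{\alpha\beta}$ evaluated at $x+t\hat v$; the linear decay \eqref{densitydecay} of density-type objects in Lemma \ref{decayestimateofdensity} together with the pointwise identity $\langle |t|-|x+t\hat v|\rangle\langle|x|\rangle\langle v\rangle^{2}\gtrsim 2^m$ trades one factor of $\langle x\rangle^{-1}\langle v\rangle^{-2}$ — absorbed by the weight \eqref{weightfunctions2} — for $2^{-m}$ smallness, giving acceptable bounds. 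Next, for $I^{\mathcal{L}}_{\rho;3}$ the metric factor $\nabla_x P_k \tilde L\mathcal{L} h_{\mu\nu}$ may be top-order, so I would pair it in $L^2_x$ (invoking the fixed-time estimate \eqref{oct4eqn41} from Proposition \ref{fixedtimenonlinarityestimate}) with an $L^\infty_x L^2_v$ bound for $D_v u$ coming from the $Z$-norm via \eqref{julyeqn21}. The delicate point here is the correction $u^{\mathcal{L};\alpha}_{corr}$ introduced in \eqref{correctionterm}: when $|\alpha|+|\mathcal{L}|=N_0$ the apparent derivative loss is removed by subtracting the zero-frequency piece of $\nabla_v\cdot u^{\mathcal{L};\alpha}_{corr}$ inside $Z^{vl}$, so that the remainder is integrable in time. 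For very low metric frequencies $2^k\le 2^{-m+\alpha_{inp}(|L|)\delta_1 m}$ one cannot use decay; instead I would integrate by parts in time, exploiting the non-resonance $||\xi|-\hat v\cdot\xi|\gtrsim |\xi|\langle v\rangle^{-2}$ for $|v|\lesssim 2^{m/10}$, mirroring Lemma \ref{oscillationwavevlasov}, while using the rapid decay in $v$ from the weight for the complementary range $|v|\ge 2^{m/10}$.

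The terms $I^{\mathcal{L}}_{\rho;4}$ and $I^{\mathcal{L}}_{\rho;5}$ are genuinely trilinear. In $I^{\mathcal{L}}_{\rho;4}$ the index constraints \eqref{2022may6eqn1} force both lower factors to lie strictly below top order, so an $L^2_{x,v}\times L^\infty_{x}\times L^2_{x,v}$ split with density decay \eqref{densitydecay} on the metric factor applies; the room is provided by the hierarchy $d(n,l)=H(n+l)+10n$, which gives a gain of at least $2^{10\delta_0 m}$ per ``lost'' vector field. In $I^{\mathcal{L}}_{\rho;5}$, by contrast, the metric piece has at most one derivative, so $L^\infty_x$ decay from \eqref{2020julybasicestiamte} of Lemma \ref{basicestimates} is directly available, and the argument runs parallel to the cases in Lemmas \ref{estimateofremaindervlasov} and \ref{estimateofremaindervlasov3}. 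The main obstacle will be the quasilinear top-order interaction in $I^{\mathcal{L}}_{\rho;3}$ and the top-order piece of $I^{\mathcal{L}}_{\rho;5}$, where one must simultaneously avoid derivative loss from the Einstein quasilinearity and extract the $2^{-c\delta_0 m}$ margin; this is precisely what the correction term, the $10n$-gap in $d(n,l)$, and the time normal form at low frequency are designed to handle, and together they should close the bootstrap and yield \eqref{may7eqn21}.
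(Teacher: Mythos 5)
Your skeleton matches the paper's: you start from the energy identity \eqref{2020may2eqn1}, drop the sign-definite $\partial_t\omega$ term, discard the cubic remainder with \eqref{aug9eqn87}, and are left with bounding $\sum_k I^{\mathcal{L}}_{\rho;i}$ for $i=1,\dots,5$ on each dyadic time window, which is exactly how the paper reduces to Lemmas \ref{vlasovenergy1}, \ref{vlasovenergy2}, \ref{notbulkcase}, \ref{vlasovenergy4}, \ref{vlasovenergy5}. But your treatment of $I_2$ and $I_5$ has a genuine gap. For $I_5$, the coefficient $C_{\tilde{\mathcal{L}}_1\tilde{\mathcal{L}}_2;\tilde{L},\mu\nu}^{\mathcal{L};\Gamma_1,\Gamma_2;\rho,\rho_1,\rho_2}(x,v)$ can grow like $(1+|x|)$ (see \eqref{2020sep10eqn51}), and since $|\Gamma_1\mathcal{L}_1|+|\rho_1|=|\mathcal{L}|+|\rho|$, the weight exponents in $\omega^{\mathcal{L}}_{\rho}$ and $\omega^{\Gamma_1\mathcal{L}_1}_{\rho_1}$ are equal; a bare $L^2_{x,v}\times L^2_{x,v}\times L^\infty_x$ estimate with \eqref{2020julybasicestiamte} therefore leaves an unabsorbed $\langle x\rangle$ factor and cannot close. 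The paper's Lemma \ref{vlasovenergy4} handles this by isolating the leading coefficient $c_{\rho_2}^{\tilde L}(t,x,v)\hat v_{\mu}\hat v_{\nu}$ (the remainder $\tilde C$ in \eqref{2020april22eqn31} is bounded), and then crucially using the decomposition \eqref{april26eqn1}, in which $\hat v_{\mu}\hat v_{\nu}P_k(Lh_{\mu\nu})$ is written in the double-Hodge variables with symbols $T^{\tilde h}_{k;\mu}(v)(\xi)$ in \eqref{april26eqn2} that vanish on the resonance set $|\xi|=\mu\hat v\cdot\xi$; this null structure makes the normal form in time nonsingular. The same $\hat v_{\alpha}\hat v_{\beta}$-null-structure device is the heart of Lemma \ref{vlasovenergy5} for $I_2$, where $D_v\omega^{\mathcal{L}}_{\rho}$ carries a factor of order $t$ that you do not address; the paper decomposes $D_v\omega$ into the three pieces \eqref{2020may3eqn12}--\eqref{2020may3eqn15} and applies the null structure plus normal form on the critical bulk piece. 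Neither of these terms can be closed by $L^\infty_x$ decay alone.

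A secondary issue: you invoke the correction $u^{\mathcal{L};\alpha}_{corr}$ defined in \eqref{correctionterm} to repair the derivative loss in $I_3$ at top order, but that correction is part of the Vlasov $Z$-norm $Z^{vl}$ and enters only in Section 7 (Proposition \ref{estimateofzerofrequency}). In the energy estimate, the top-order $I_3$ and $I_4$ are treated in Lemma \ref{notbulkcase} by splitting the metric into its modified profile $\widetilde{U}^{\tilde L\mathcal{L}h}$ and the density remainder $\tilde\rho^{h}_{\tilde L\mathcal{L}}$ from \eqref{2022march19eqn16}--\eqref{modifiedperturbedmetric}, performing a normal form in time on the modified part, and using the density bilinear estimates \eqref{bilineardensitylargek}--\eqref{bilineardensity}; $u^{\mathcal{L};\alpha}_{corr}$ plays no role there. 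Finally, for $I_1$ the paper does not rely on the pointwise identity $\langle t-|x+t\hat v|\rangle\langle x\rangle\langle v\rangle^2\gtrsim 2^m$ as you suggest, but on the simpler observation \eqref{2020may2eqn15} that $|\nabla_x\omega^{\mathcal{L}}_{\rho}|/\omega^{\mathcal{L}}_{\rho}\lesssim 2^{-2H(4)\delta_1 m}$ thanks to the built-in cutoff at $|x|\sim(1+t)^{4H(4)\delta_1}$ in \eqref{weightfunctions2}; your route would also produce a gain, but the mechanism you should name is the support condition baked into the weight, not the light-cone identity.
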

\begin{proof}
Recall the equality (\ref{2020may2eqn1}), the estimates (\ref{2020may2eqn2}) and  (\ref{may7eqn1}). From the estimate (\ref{2020may2eqn11}) in Lemma \ref{vlasovenergy1}, the estimate  (\ref{2020april15eqn31}) in Lemma \ref{vlasovenergy2}, the estimate (\ref{2020april21eqn71}) in Lemma \ref{notbulkcase}, the estimate (\ref{2020april22eqn1}) in Lemma \ref{vlasovenergy4}, the estimate (\ref{2020june1eqn3}) in Lemma \ref{vlasovenergy5},  we have
\[
  \| \omega^{\mathcal{L}}_{\rho} (t,x, v) \Lambda^{\rho}{u}^{\mathcal{L}}(t ,x,v)\|_{L^2_{x,v}}^2\lesssim \epsilon_0^2 + \sum_{n\in [1,m]\cap \mathbb{Z}}   2^{ 2 H(\mathcal{L}, \rho)\delta_0 n  } \epsilon_1^3  \lesssim  2^{  2 H(\mathcal{L}, \rho) \delta_0 m }\epsilon_0^2.
\]
Hence finishing the proof of our desired estimate (\ref{may7eqn21}).
\end{proof}

 \begin{lemma}\label{vlasovenergy1}
 Under the bootstrap assumptions \textup{(\ref{BAmetricold})} and \textup{(\ref{BAvlasovori})}, the following estimate holds for any   $t_1, t_2\in [2^{m-1}, 2^m]\subset [0, T]$,   $\mathcal{L}\in \cup_{l\leq N_0 }\mathcal{P}_l$, $\rho\in \mathcal{S},$ s.t., $ | \mathcal{L}| +   | \rho |  \leq N_0,$  
 \be\label{2020may2eqn11}
 \big|\sum_{k\in \mathbb{Z}}  I_{\rho;1}^{\mathcal{L}}(k,t_1, t_2)  \big|   \lesssim  2^{2  H(\mathcal{L}, \rho)\delta_0 m- \delta_0 m }\epsilon_1^3.
 \ee
 \end{lemma}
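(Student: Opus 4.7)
The plan is to treat $I_{\rho;1}^{\mathcal{L}}(k,t_1,t_2)$ as a trilinear form in which two copies of the weighted Vlasov profile are paired, in the $L^2_{x,v}\times L^2_{x,v}$ sense, against the product of the low-frequency metric multiplier $P_{k}\Lambda_{1}[(v^0)^{-1}](t,x+t\hat{v})$ and the ratio $v\cdot\nabla_x\omega^{\mathcal{L}}_{\rho}/\omega^{\mathcal{L}}_{\rho}$. First I would establish a pointwise bound of the form
\[
\bigl|v\cdot\nabla_x\omega^{\mathcal{L}}_{\rho}(t,x,v)\bigr|\,\omega^{\mathcal{L}}_{\rho}(t,x,v)\lesssim \langle t\rangle^{-4H(4)\delta_1}\,\bigl(\omega^{\mathcal{L}}_{\rho}(t,x,v)\bigr)^{2}.
\]
This follows from the explicit formula \eqref{weightfunctions2}: differentiating $\omega^{\mathcal{L}}_{\rho}$ in $x$ produces a factor $N(1+|v|^{1/\delta_1}+|x|\phi)^{-1}\nabla_x(|x|\phi)$, and the cutoff $\phi(|x|/(1+t)^{4H(4)\delta_1})$ vanishes unless $|x|\geq(1+t)^{4H(4)\delta_1}$. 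On that support one has $|\nabla_x(|x|\phi)|\lesssim 1$, so $|\nabla_x\omega^{\mathcal{L}}_{\rho}|/\omega^{\mathcal{L}}_{\rho}\lesssim (1+|v|^{1/\delta_1}+(1+t)^{4H(4)\delta_1})^{-1}$. The stray factor of $|v|$ coming from $v\cdot\nabla_x$ is absorbed by trading an arbitrarily small portion of the weight, since the exponent $N=200N_0-10(|\mathcal{L}|+|\rho|)\geq 190N_0$ leaves ample room to bound $|v|\leq(1+|v|^{1/\delta_1})^{\delta_1}$ by a piece of $\omega^{\mathcal{L}}_{\rho}^{\delta_1/N}$.

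Second, I would invoke the pointwise decay estimate of Lemma \ref{basicestimates}. The identification $\Lambda_{1}[(v^0)^{-1}]=-\tfrac{1}{2}\langle v\rangle^{-1}(h_{00}-\hat{v}_{i}\hat{v}_{j}h_{ij})$ from \eqref{june16eqn51}, combined with the $L^{\infty}_{x}$ bounds $\|P_{k}h_{\alpha\beta}(t)\|_{L^{\infty}_{x}}\lesssim 2^{-k_{-}/2+\gamma k_{-}}\min\{2^{3k/2},\,2^{-m+H(1)\delta_{1}m+1.1\gamma m}\}2^{-(N_{0}-3)k_{+}}\epsilon_{1}$, gives $\sum_{k}\|P_{k}\Lambda_{1}[(v^0)^{-1}](t,\cdot+t\hat{v})\|_{L^{\infty}_{x,v}}\lesssim 2^{-m+H(1)\delta_{1}m+\gamma m}\epsilon_{1}$ (since the shift $x\mapsto x+t\hat{v}$ is harmless for $L^{\infty}_{x}$ norms).

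Combining the two ingredients with Cauchy--Schwarz in $(x,v)$ and the bootstrap assumption $\|\omega^{\mathcal{L}}_{\rho}\Lambda^{\rho}u^{\mathcal{L}}(t)\|_{L^{2}_{x,v}}^{2}\lesssim 2^{2H(\mathcal{L},\rho)\delta_{0}m}\epsilon_{1}^{2}$, and integrating in $t\in[t_1,t_2]\subset[2^{m-1},2^{m}]$, the total estimate becomes
\[
\Bigl|\sum_{k}I_{\rho;1}^{\mathcal{L}}(k,t_{1},t_{2})\Bigr|\lesssim 2^{m}\cdot 2^{2H(\mathcal{L},\rho)\delta_{0}m}\cdot 2^{-m+(H(1)\delta_{1}+1.1\gamma)m}\cdot 2^{-4H(4)\delta_{1}m}\epsilon_{1}^{3}.
\]
Since $4H(4)\delta_{1}=12800\,\delta_{1}$ while $H(1)\delta_{1}+1.1\gamma+\delta_{0}\leq 134\,\delta_{1}$, the net exponent beats $-\delta_{0}$ by a comfortable margin, yielding the desired estimate \eqref{2020may2eqn11}.

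The main obstacle I expect is the transition annulus $r:=|x|/(1+t)^{4H(4)\delta_{1}}\in(1,2)$, where $\phi(r)=(r-1)^{2}$ degenerates to zero and $|x|\phi$ need not dominate $1+|v|^{1/\delta_{1}}$ in the denominator $\omega_{1}:=1+|v|^{1/\delta_{1}}+|x|\phi$. There the stray factor of $|v|$ cannot be absorbed by $|x|\phi$ alone, so one has to split cases according as $|v|^{1/\delta_{1}}\lessgtr|x|\phi$ and use, in each case, either the $|v|^{1/\delta_{1}}$ part of $\omega_{1}$ or the $(1+t)^{4H(4)\delta_{1}}$ lower bound on $|x|$; this is a bookkeeping exercise but is the only subtle point in the argument.
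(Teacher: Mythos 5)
Your proposal follows the paper's approach exactly: derive a pointwise smallness for the logarithmic derivative $|\nabla_x\omega^{\mathcal{L}}_{\rho}|/\omega^{\mathcal{L}}_{\rho}$, pair it with the $L^\infty_x$ decay of $\Lambda_1[(v^0)^{-1}]$ from Lemma~\ref{basicestimates}, and close with Cauchy--Schwarz against the bootstrap bound on $\|\omega^{\mathcal{L}}_{\rho}\Lambda^{\rho}u^{\mathcal{L}}\|_{L^2_{x,v}}$.

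One quantitative point to fix: the intermediate bound you state, $|\nabla_x\omega^{\mathcal{L}}_{\rho}|/\omega^{\mathcal{L}}_{\rho}\lesssim\langle t\rangle^{-4H(4)\delta_1}$, does \emph{not} hold on the transition annulus $r:=|x|/(1+t)^{4H(4)\delta_1}\in(1,2)$ — exactly the region you flag as the main obstacle. There $\phi(r)=(r-1)^2$, so when $|r-1|\sim 2^{-2H(4)\delta_1 m}$, the numerator $\phi+r\phi'\sim|r-1|\sim 2^{-2H(4)\delta_1 m}$ while the denominator $1+|v|^{1/\delta_1}+|x|\phi\gtrsim 1$ (since $|x|\phi\sim 2^{4H(4)\delta_1 m}(r-1)^2\sim 1$), giving only $2^{-2H(4)\delta_1 m}$, not $2^{-4H(4)\delta_1 m}$. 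This is precisely the two-case split the paper performs after \eqref{2020may3eqn11}, leading to the weaker bound \eqref{2020may2eqn15}: $|\nabla_x\omega^{\mathcal{L}}_{\rho}/\omega^{\mathcal{L}}_{\rho}|\lesssim 2^{-2H(4)\delta_1 m}$. Since $2H(4)\delta_1=6400\delta_1$ still dominates $H(1)\delta_1+1.1\gamma+\delta_0\leq 134\delta_1$ by a wide margin, your final arithmetic survives unchanged once this is corrected; the proof stands.
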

 \begin{proof}
 Recall (\ref{2020april15eqn11}) and the choice of weight functions in (\ref{weightfunctions2}).   As a result of direct computations, we have
 \be\label{2020may3eqn11}
\big|\frac{  \nabla_x  \omega^{\mathcal{L}}_{\rho} (t, x, v)  }{ \omega^{\mathcal{L}}_{\rho} (t, x, v) }\big|\lesssim  \mathbf{1}_{|x|\geq 2^{  4H( 4)\delta_1 m-3}}\big[\frac{\phi(|x|/(1+|t|)^{ 4H( 4)\delta_1 }) +  2^{-4H(4)\delta_1 m }|x|\phi'(|x|/(1+|t|)^{4H( 4)\delta_1 })}{1+|v|^{1/\delta_1 }+|x|\phi(|x|/(1+|t|)^{4H( 4)\delta_1 })}  \big].
 \ee
 Recall (\ref{2020april27eqn61}). Note that the following estimate holds if $||x|/(1+|t|)^{4H(  4)\delta_1} -1|\leq 2^{- 2H(  4)\delta_1 m }, $
 \begin{multline}
|\phi'(|x|/(1+|t|)^{4H(4)\delta_1 })|\lesssim 2^{-2H( 4)\delta_1 m },\quad \\
 \Longrightarrow   \big|\frac{  \nabla_x  \omega^{\mathcal{L}}_{\rho} (t, x, v)  }{  \omega^{\mathcal{L}}_{\rho} (t, x, v) }\big|\lesssim \frac{1}{|x|}\mathbf{1}_{|x|\geq 2^{  4H( 4)\delta_1 m-3}}+ |\phi'(|x|/(1+|t|)^{ 4H( 4)\delta_1 })|\lesssim  2^{- 2H( 4 )\delta_1 m   }.
 \end{multline}
 If $||x|/(1+|t|)^{4H( 4)\delta_1 } -1|\geq 2^{-2H( 4)\delta_1 m }, $ then we have
 \[
  \big|\frac{  \nabla_x  \omega^{\mathcal{L}}_{\rho} (t, x, v)  }{  \omega^{\mathcal{L}}_{\rho} (t, x, v) }\big|\lesssim \mathbf{1}_{|x|\geq 2^{  4H( 4)\delta_1 m-3}}  \frac{1}{|x|} + 2^{-4H( 4)\delta_1 m }  |\phi (|x|/(1+|t|)^{4H( 4)\delta_1 })|^{-1} \lesssim 2^{- 2H( 4)\delta_1 m  }.
 \]
 Therefore, in whichever case, we have 
 \be\label{2020may2eqn15}
 \big|\frac{ \nabla_x  \omega^{\mathcal{L}}_{\rho} (t, x, v)  }{   \omega^{\mathcal{L}}_{\rho} (t, x, v) }\big|\lesssim 2^{-2H( 4)\delta_1 m  }.
 \ee

 The above    smallness of $ { \nabla_x  \omega^{\mathcal{L}}_{\rho} (t, x, v)  }{(     \omega^{\mathcal{L}}_{\rho} (t, x, v))^{-1} }$ fills the gap of the sharp decay rate for the perturbed metric. As a result, our desired estimate (\ref{2020may2eqn11}) holds from the above estimate, the $L^\infty_{x,v}-L^2_{x,v}-L^2_{x,v}$ type multilinear estimate, and the  decay estimates (\ref{2020julybasicestiamte}) in Lemma \ref{basicestimates}. 
 \end{proof}
   \begin{lemma}\label{vlasovenergy5}
 Under the bootstrap assumptions \textup{(\ref{BAmetricold})} and \textup{(\ref{BAvlasovori})}, the following estimate holds for any   $t_1, t_2\in [2^{m-1}, 2^m]\subset [0, T]$,  $\mathcal{L}\in \cup_{l\leq N_0}\mathcal{P}_l, \rho\in \mathcal{S}, $  s.t.,  $ | \mathcal{L}| +   | \rho |  \leq N_0,$  
\be\label{2020june1eqn3}
\big|  \sum_{ k\in \Z} I_{\rho;2}^{\mathcal{L}}(k,t_1, t_2)\big|  \lesssim 2^{2H( \mathcal{L},\rho )\delta_0 m }\epsilon_1^3 .
\ee
 \end{lemma}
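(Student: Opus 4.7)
The strategy is to follow the template used for $I_{\rho;1}^{\mathcal{L}}$ in Lemma~\ref{vlasovenergy1}, where one factors the integrand into $\bigl(\omega^{\mathcal{L}}_{\rho}\Lambda^{\rho}u^{\mathcal{L}}\bigr)^{2}$ times a multiplicative weight that is then estimated in $L^{\infty}_{x,v}$ and integrated in time against the bootstrap bound $\|\omega^{\mathcal{L}}_{\rho}\Lambda^{\rho}u^{\mathcal{L}}\|_{L^{2}}^{2}\lesssim 2^{2H(\mathcal{L},\rho)\delta_{0}m}\epsilon_{1}^{2}$. The new feature compared with Lemma~\ref{vlasovenergy1} is that the smallness now has to come from $D_{v}\omega^{\mathcal{L}}_{\rho}/\omega^{\mathcal{L}}_{\rho}$ combined with the special structure of $\hat v_{\alpha}\hat v_{\beta}\nabla_{x}P_{k}h_{\alpha\beta}$, rather than from $\nabla_{x}\omega^{\mathcal{L}}_{\rho}/\omega^{\mathcal{L}}_{\rho}$ alone.

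First I would analyse $D_{v}\omega^{\mathcal{L}}_{\rho}/\omega^{\mathcal{L}}_{\rho}$. The key is the decomposition \eqref{2020april21eqn11}, which writes $D_{v}=\sum_{|\iota|=1}c_{\iota}(t,x,v)\Lambda^{\iota}$ with $|c_{\iota}|\lesssim \langle v\rangle\,|{|t|-|x+t\hat v|}|$. The weight $\omega^{\mathcal{L}}_{\rho}$ in \eqref{weightfunctions2} has been designed so that $\Lambda^{\iota}$ produces only terms already present in $\omega^{\mathcal{L}}_{\rho}$; a direct differentiation analogous to the one giving \eqref{2020may2eqn15} shows $|\Lambda^{\iota}\omega^{\mathcal{L}}_{\rho}|/\omega^{\mathcal{L}}_{\rho}\lesssim 1$. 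Combined with the pointwise bound \eqref{2020aug31eqn31}, this yields $\langle v\rangle\,|D_{v}\omega^{\mathcal{L}}_{\rho}|/\omega^{\mathcal{L}}_{\rho}\lesssim \langle v\rangle^{2}\langle |t|-|x+t\hat v|\rangle\lesssim 2^{m}\langle x\rangle^{-1}$, and the extra $\langle x\rangle^{-1}$ is absorbed by the polynomial $(|x|\phi(\cdot))$ factor built into $\omega^{\mathcal{L}}_{\rho}$ (one power is freely available since the exponent $200N_{0}-10(|\mathcal{L}|+|\rho|)$ is large).

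Second, the rough bound $\|\nabla_{x}P_{k}h_{\alpha\beta}\|_{L^{\infty}}\lesssim 2^{-m+H(1)\delta_{1}m}\epsilon_{1}$ from \eqref{2020julybasicestiamte} is by itself too weak to produce the stated bound, because the time integration would leave an extra $2^{H(1)\delta_{1}m}$ factor. To eliminate this loss I would exploit the null structure of $\hat v_{\alpha}\hat v_{\beta}\nabla_{x}P_{k}h_{\alpha\beta}$ emphasised in the introduction: writing $h_{\alpha\beta}$ in terms of the double Hodge variables and their modified profiles, the symbol of $\hat v_{\alpha}\hat v_{\beta}P_{k}h_{\alpha\beta}$ vanishes on $|\xi|=\mu\hat v\cdot\xi$, so division by $|\xi|-\mu\hat v\cdot\xi$ keeps the symbol in the class $\langle v\rangle^{10}L^{\infty}_{v}\mathcal{S}^{\infty}$. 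This opens the door to a normal-form integration by parts in time, exactly as carried out in Lemma~\ref{oscillationwavevlasov} and in the treatment of $\mathcal{N}_{vl;m}^{\tilde h;L}$ in Proposition~\ref{waveVlaovestimate}, converting the borderline decay rate into an integrable one.

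After the normal form, I expect the boundary terms and the $\p_{t}$-falling-on-the-other-factor terms to be handled by the fixed-time estimate \eqref{oct4eqn41} in Proposition~\ref{fixedtimenonlinarityestimate}, combined with the equation \eqref{2020april2eqn1} satisfied by $\mathcal{L}f$ and the estimates for $\mathfrak{N}_{\rho,i}^{\mathcal{L}}$ in Lemma~\ref{estimateofremaindervlasov}--\ref{estimateofremaindervlasov3}. The main obstacle, and the delicate technical point of the argument, is precisely this normal form step: one must verify that the symbol after division by the phase genuinely lies in the class $P^{d}_{v}\mathcal{M}^{a;b,c}$ uniformly in $v$ on the support of the Vlasov weight (which restricts $\langle v\rangle\lesssim 2^{\delta_{0}m/\delta_{1}}$ through the bootstrap), and that the resulting $v$-integration against $(\Lambda^{\rho}u^{\mathcal{L}})^{2}\omega^{2}$ does not destroy the gain; exactly as in Lemma~\ref{oscillationwavevlasov}, the very large velocities are handled separately by using the high polynomial moments of $u^{\mathcal{L}}$ provided by $\omega^{\mathcal{L}}_{\rho}$. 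Once these steps are in place, summation over the dyadic frequencies $k\in\mathbb{Z}$ of the metric—paid for by the rapid decay $2^{-(N_{0}-3)k_{+}}$ at high frequency and by the $\langle x\rangle^{-1}$ absorption at low frequency—produces the claimed bound $\lesssim 2^{2H(\mathcal{L},\rho)\delta_{0}m}\epsilon_{1}^{3}$.
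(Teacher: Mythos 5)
Your first reduction contains a fatal sign error. You write $\langle v\rangle|D_v\omega^{\mathcal{L}}_{\rho}|/\omega^{\mathcal{L}}_{\rho}\lesssim \langle v\rangle^{2}\langle |t|-|x+t\hat v|\rangle\lesssim 2^{m}\langle x\rangle^{-1}$, invoking (\ref{2020aug31eqn31}). But (\ref{2020aug31eqn31}) says $\langle |t|-|x+t\hat v|\rangle^{-1}\langle x\rangle^{-1}\langle v\rangle^{-2}\lesssim 2^{-m}$, i.e.\ $\langle v\rangle^{2}\langle |t|-|x+t\hat v|\rangle\gtrsim 2^{m}\langle x\rangle^{-1}$: this is a \emph{lower} bound, not the upper bound you need. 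In fact $||t|-|x+t\hat v||$ genuinely can be of size $t$ (take $|x|,|v|=O(1)$), so the coefficient $c_{\iota}$ in $D_v=\sum c_{\iota}\Lambda^{\iota}$ has no useful uniform upper bound; $|D_v\omega^{\mathcal{L}}_{\rho}|/\omega^{\mathcal{L}}_{\rho}$ is not small. Moreover the claimed absorption of the leftover $\langle x\rangle^{-1}$ into the weight cannot work when $|x|\leq (1+t)^{4H(4)\delta_1}$, since there $\phi(|x|/(1+t)^{4H(4)\delta_1})=0$ and $\omega^{\mathcal{L}}_{\rho}$ carries no spatial decay at all.

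What the paper does instead is decompose $D_v\omega^{\mathcal{L}}_{\rho}$ into three pieces $\widetilde{\omega^{\mathcal{L}}_{\rho;0}},\widetilde{\omega^{\mathcal{L}}_{\rho;1}},\widetilde{\omega^{\mathcal{L}}_{\rho;2}}$ according to the sizes of $|v|$ and $|x|$, see (\ref{2020may3eqn12})--(\ref{2020may3eqn15}), and each piece is treated by a tailored argument: the bounded piece $\widetilde{\omega^{\mathcal{L}}_{\rho;0}}$ (bounded because for $|v|\geq 2^{m/10}$ the factor $|v|^{1/\delta_1}$ in the weight annihilates $\nabla_x\omega/\omega$) is handled by the double-Hodge decomposition (\ref{april26eqn1}), the resulting null symbols (\ref{april26eqn2}), and a normal-form integration by parts in time; the piece $\widetilde{\omega^{\mathcal{L}}_{\rho;1}}$ (small $|v|$, $|x|\geq 2^{3m/5}$) gains $|x|^{-1}\lesssim 2^{-3m/5}$ directly from the weight and is closed by a normal form; and the piece $\widetilde{\omega^{\mathcal{L}}_{\rho;2}}$ (small $|v|$, small $|x|$) is closed by trading spatial derivatives for vector fields via (\ref{definitionofcoefficient1})--(\ref{2020june20eqn24}) to gain two powers of $\langle |t|-|x+t\hat v|\rangle^{-1}$ from the metric factor, compensating exactly the loss from $c_{\iota}$. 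You did anticipate the role of null structure and normal form, which is the heart of the treatment of $\widetilde{\omega^{\mathcal{L}}_{\rho;0}}$, but without the region decomposition and, in the small-$x$ region, the light-cone trade-off, the argument does not close.
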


 \begin{proof}
 Recall (\ref{2020april15eqn12}). Let $\alpha :=  H(4)\delta_1    -2\delta_1.$ From the obtained  estimate (\ref{2020may2eqn15}),   the $L^\infty_{x,v}-L^2_{x,v}-L^2_{x,v}$ type multilinear estimate, and the $L^\infty\longrightarrow L^2$ type Sobolev embedding, we have
 \be\label{2020may2eqn20}
 \sum_{k\in \mathbb{Z}, k\leq -m +  \alpha m } \big| I_{\rho;2}^{\mathcal{L}}(k,t_1, t_2)  \big| \lesssim 2^{m+ 2 H( \mathcal{L},\rho )\delta_0m } 2^{2k+2\delta_1m } \big(1+2^{m- 2H(4)\delta_1 m } \big)\epsilon_1^3    \lesssim  2^{   2 H( \mathcal{L},\rho )\delta_0m }\epsilon_1^3. 
 \ee
 
 Now we restrict ourselves  to the case $k\geq -m +  \alpha m, k\in \Z$. 
Recall   the definition of the  weight function $  \omega^{\mathcal{L}}_{\rho} (t, x, v) $ in (\ref{weightfunctions2}).
Motivated from the   estimate (\ref{2020may3eqn11}), we decompose $D_v  \omega^{\mathcal{L}}_{\rho} (t, x, v)$ into three parts as follows, 
\be
D_v  \omega^{\mathcal{L}}_{\rho} (t, x, v)=   \sum_{i= 0,1,2 }   \widetilde{\omega^{\mathcal{L}}_{\rho;i}} (t, x, v),  
\ee
where
\be\label{2020may3eqn12}
 \widetilde{\omega^{\mathcal{L}}_{\rho;0}} (t, x, v):=  \nabla_v  \omega^{\mathcal{L}}_{\rho} (t, x, v)- \psi_{\geq m/10}(|v|) t \nabla_v \hat{v}\cdot \nabla_x \omega^{\mathcal{L}}_{\rho} (t, x, v),
\ee
\be\label{2020may3eqn14}
\widetilde{\omega^{\mathcal{L}}_{\rho;1}} (t, x, v):=-\psi_{< m/10}(|v|) \psi_{\geq 3m/5}(|x|) t \nabla_v \hat{v}\cdot \nabla_x \omega^{\mathcal{L}}_{\rho} (t, x, v),
\ee
\be\label{2020may3eqn15}
\widetilde{\omega^{\mathcal{L}}_{\rho;2 }} (t, x, v):=-\psi_{<  m/10}(|v|) \psi_{<  3m/5}(|x|)t \nabla_v \hat{v}\cdot \nabla_x \omega^{\mathcal{L}}_{\rho} (t, x, v).
\ee
As a result of direct computations and the estimate (\ref{2020may3eqn11}), we have
\be\label{2020may3eqn16}
 | \widetilde{\omega^{\mathcal{L}}_{\rho;0}} (t, x, v)| \lesssim (1+|v|)^{-1}\omega^{\mathcal{L}}_{\rho} (t, x, v),\qquad  | \widetilde{\omega^{\mathcal{L}}_{\rho;1 }} (t, x, v)| \lesssim 2^{ m/2} (1+|v|)^{-5}\omega^{\mathcal{L}}_{\rho} (t, x, v),
 \ee
 \be\label{2020may3eqn17}
   | \widetilde{\omega^{\mathcal{L}}_{\rho; 2 }} (t, x, v)| (1+|t-|x+t\hat{v}||)^{-2} \lesssim 2^{-m} (1+|v|)^{-5}\omega^{\mathcal{L}}_{\rho} (t, x, v). 
\ee

Correspondingly, we have
 \be\label{2020may3eqn18}
 I_{\rho;2  }^{\mathcal{L}}(k,t_1, t_2)=\sum_{i=0,1,2}  \tilde{I}_{\rho;i}^{\mathcal{L}}(k,t_1, t_2),
\ee
 \be\label{2020may3eqn19}
\tilde{I}_{\rho;i}^{\mathcal{L}}(k,t_1, t_2)=\int_{t_1}^{t_2} \int_{\R^3} \int_{\R^3}   \big(\Lambda^{\rho}{u}^{\mathcal{L}}(t,x,v) \big)^2 \omega^{\mathcal{L}}_{\rho} ( t,x, v) \langle v \rangle \hat{v}_\alpha \hat{v}_\beta \nabla_x P_k(h_{\alpha\beta})(t,x+t\hat{v})\cdot       \widetilde{\omega^{\mathcal{L}}_{\rho;i}} (t, x, v)     d x d  v d t .
\ee

\noindent $\bullet$\quad The estimate of $\tilde{I}_{\rho;0}^{\mathcal{L}}(k,t_1, t_2)$.

Recall the estimate (\ref{2020may3eqn16}). Due to the extra derivative $\nabla_x$ of the metric part $\nabla_x P_k(h_{\alpha\beta})(\cdot, \cdot)$ in \eqref{2020april15eqn12}, we  first rule out the   case $k\notin [-2H(4)\delta_1 m, H(4)\delta_1 m ]$ by using the $L^2_{x,v}-L^2_{x,v}-L^\infty_{x,v}$ type multilinear estimate. More precisely, the following estimate holds, 
\be\label{2020may3eqn41}
\sum_{k\notin [-2H(4)\delta_1 m, H(4)\delta_1 m ]} |\tilde{I}_{\rho;0}^{\mathcal{L}}(k,t_1, t_2)| \lesssim 2^{k-4k_{+}+2 H( \mathcal{L},\rho )\delta_0 m + H(2)\delta_1m } \epsilon_1^3 \lesssim 2^{ 2 H( \mathcal{L},\rho )\delta_0 m } \epsilon_1^3 .
\ee 

It remains to consider the case $k\in [-2H(4)\delta_1 m, H(4)\delta_1 m  ]$.  For this case, 
 we utilize the hidden null structure of $\hat{v}_\alpha \hat{v}_\beta  P_k( L h_{\alpha\beta})(\cdot), L\in \cup_{l\leq N_0}P_l.$ 

To better see the hidden null structure, we use the double null decomposition variables. Recall (\ref{june16eqn51}). From the equalities in  (\ref{april1eqn1}) and   (\ref{april1eqn11}), $\forall L\in \cup_{l\leq N_0}P_l$,  we have 
 \be\label{april26eqn1}
   \hat{v}_{\mu}\hat{v}_{\nu} P_k( L h_{\mu\nu}) = \sum_{\tilde{h}\in\{F, \underline{F}, \omega_l,  \vartheta_{mn}\}}
  \sum_{\mu\in\{+,-\}} \big(T_{k;\mu}^{\tilde{h}}(v) U^{\tilde{h}^{ L }  }\big)^{\mu}+ \mathcal{R}_k,
  \ee
where   the symbol $T_{k;\mu}^{\tilde{h}}(v)(\xi), \tilde{h}\in \{F, \underline{F}, \omega_l, \vartheta_{mn}\},   $ of the Fourier multiplier operator $T_{k;\mu}^{\tilde{h}}(v)$  and the quadratic and higher order error term ``$\mathcal{R}_k$'' are given as follows, 
 \be\label{april26eqn2}
 \begin{split}
T_{k;\mu}^{F}(v)(\xi) & = c_{\mu}|\xi|^{-1}( 1-(\hat{v}\cdot \xi/|\xi|)^2)\psi_k(\xi), \\ 
T_{k;\mu}^{\underline{F}}(v)(\xi)&= |\xi|^{-1}(1- 2\mu \hat{v}\cdot \xi/|\xi| + (\hat{v}\cdot \xi/|\xi|)^2)\psi_k(\xi),\\ 
  T_{k;\mu}^{\omega_l }(v)(\xi)&=  c_{\mu}\epsilon_{ikl} (1-\mu\hat{v}\cdot \xi/|\xi|) \psi_k(\xi)(\hat{v}_i \xi_k-\hat{v}_k \xi_i)/|\xi|^2,  \\  T_{k;\mu}^{\vartheta_{ab}}(v)(\xi)&=\frac{\epsilon_{mpa}\epsilon_{nqb}}{4|\xi|^3} ( \hat{v}_m \xi_p -\hat{v}_p\xi_m) (\hat{v}_n   \xi_q-\hat{v}_q   \xi_n) \psi_k(\xi),
\end{split}
\ee
\be\label{2022may7eqn61}
 \mathcal{R}_k= - 2\hat{v}_i P_k R_i\big(\rho^{\tilde{L}\tilde{\mathcal{L}}_2}   -R_0\underline{F}^{\tilde{L}\tilde{\mathcal{L}}_2}   \big) -\hat{v}_m \hat{v}_n  \big( \varepsilon_{mpq} R_n + \varepsilon_{npq}R_m \big) R_p P_k \big( \Omega_q^{\tilde{L}\tilde{\mathcal{L}}_2 }- R_0\omega_q^{\tilde{L}\tilde{\mathcal{L}}_2 }\big). 
 \ee
Moreover,   from (\ref{april26eqn1}), we have
 \be\label{april26eqn11}
 \hat{v}_{\mu}\hat{v}_{\nu}  P_k(\p_{t} L  h_{\mu\nu}) =    \sum_{\tilde{h}\in\{F, \underline{F}, \omega_k,  \vartheta_{mn}\},\mu\in\{+,-\}} - \big(i T_{k;\mu}^{\tilde{h} }(v)  \d U^{\tilde{h}^{ L }}\big)^{\mu}+ \p_t \mathcal{R}_k + \big(T_{k;\mu}^{\tilde{h}}(v)  \square     L h_{\mu\nu}\big)^{\mu}.
\ee

 After using the decomposition (\ref{april26eqn1}) for $ \hat{v}_\alpha \hat{v}_\beta \nabla_x P_k(h_{\alpha\beta})$ and have
\be\label{2022may7eqn42}
|\tilde{I}_{\rho;0}^{\mathcal{L}}(k,t_1, t_2)|\lesssim     \sum_{\tilde{h}\in\{F, \underline{F}, \omega_k,  \vartheta_{mn}\}}
  \sum_{\mu\in\{+,-\}}   J_k^{\tilde{h}, \mu} (t_1, t_2)  + \tilde{R}_k(t_1, t_2),
\ee
where
\be
J_k^{\tilde{h}, \mu} (t_1, t_2)=   \int_{t_1}^{t_2} \int_{\R^3} \int_{\R^3}   \big(\Lambda^{\rho}{u}^{\mathcal{L}}(t,x,v) \big)^2 \langle v \rangle  \omega^{\mathcal{L}}_{\rho} ( t,x, v)    \widetilde{\omega^{\mathcal{L}}_{\rho;0}} (t, x, v) \cdot \nabla_x    \big(T_{k;\mu}^{\tilde{h}}(v) U^{\tilde{h}^{ }  }\big)^{\mu} (t,x+t\hat{v}) d x d v d t,
\ee
\be\label{2020may3eqn21}
\tilde{R}_k(t_1, t_2) =  \int_{t_1}^{t_2} \int_{\R^3} \int_{\R^3}   \big(\Lambda^{\rho}{u}^{\mathcal{L}}(t,x,v) \big)^2   \langle v \rangle  \omega^{\mathcal{L}}_{\rho} ( t,x, v)      \widetilde{\omega^{\mathcal{L}}_{\rho;0}} (t, x, v) \cdot \nabla_x    \mathcal{R}_k (t, x+t\hat{v})    d x d v d t.
\ee

Recall (\ref{2022may7eqn61})  and  (\ref{april1eqn11}). Since there is no vector field act on the perturbed metric for the case we are considering, the low order commutator $E_{L,\alpha}^{comm}$ vanishes, the following improved decay estimate holds for $\mathcal{R}_k^m(t, x+t\hat{v})$ from the estimate (\ref{oct9eqn11}) in Lemma \ref{harmonicgaugehigher}, and the estimate (\ref{july27eqn4}) in Lemma \ref{superlocalizedaug},  
\be\label{2020may3eqn53}
\|\nabla_x \mathcal{R}_k (t, x ) \|_{L^\infty_{x,v}} \lesssim 2^{-2m+H(4)\delta_0m - 2k_{-}} \epsilon_1 \lesssim 2^{-2m+5 H(4)\delta_0m}\epsilon_1, \Longrightarrow |\tilde{R}_k(t_1, t_2)| \lesssim 2^{-m/2}\epsilon_1^3. 
\ee

  For $ J_k^{\tilde{h}, \mu} (t_1, t_2)$, $\tilde{h}\in \{F,\underline{F} ,  \omega_j, \vartheta_{mn}\}$,  we  do integration by parts in time  to utilize the null structure. As a result, on the Fourier side,  we have
  \[
  \big|J_k^{\tilde{h}, \mu} (t_1, t_2)\big|\lesssim \big|EndJ_k^{\tilde{h}, \mu} (t_1, t_2)\big|+ \big|\widetilde{J}_{k;1}^{\tilde{h}, \mu} (t_1, t_2)\big|+ \big|\widetilde{J}_{k;2}^{\tilde{h}, \mu} (t_1, t_2)\big|,
  \]
  where
   \[
   \begin{split}
    EndJ_k^{\tilde{h}, \mu} (t_1, t_2)&=\sum_{i=1,2}(-1)^{i} \int_{\R^3} \int_{\R^3}       \mathcal{H}_{\rho, \mathcal{L}}(t_i, x, v) \cdot    \big(\widetilde{T}_{k;\mu}^{\tilde{h}}(v) U^{\tilde{h}^{ }  }\big)^{\mu} (t_i,x+t_i\hat{v}) d x d v ,\\
    \widetilde{J}_{k;1}^{\tilde{h}, \mu} (t_1, t_2)& =   \int_{t_1}^{t_2} \int_{\R^3} \int_{\R^3}   \mathcal{H}_{\rho, \mathcal{L}}(t, x, v)\cdot    \big(\widetilde{T}_{k;\mu}^{\tilde{h}}(v) e^{-it  \d}\p_t V^{\tilde{h}^{ }  }\big)^{\mu} (t ,x+t  \hat{v})   d x d v dt ,\\
      \widetilde{J}_{k;2}^{\tilde{h}, \mu} (t_1, t_2)& = \int_{t_1}^{t_2} \int_{\R^3} \int_{\R^3}  \p_t  \mathcal{H}_{\rho, \mathcal{L}}(t, x, v)\cdot    \big(\widetilde{T}_{k;\mu}^{\tilde{h}}(v) U^{\tilde{h}^{ }  }\big)^{\mu} (t ,x+t  \hat{v})   d x d v dt,    
   \end{split}
\]
  where
  \[
  \mathcal{H}_{\rho, \mathcal{L}}(t, x, v):= \big(\Lambda^{\rho}{u}^{\mathcal{L}}(t ,x,v) \big)^2 \langle v \rangle  \omega^{\mathcal{L}}_{\rho} ( t ,x, v)    \widetilde{\omega^{\mathcal{L}}_{\rho;0}} (t , x, v),
  \]
and the operators $\widetilde{T}_{k;\mu}^{\tilde{h}}(v), \tilde{h}\in \{F,\underline{F} ,  \omega_j, \vartheta_{mn}\},$ are defined as follows, 
\[
\widetilde{T}_{k;\mu}^{\tilde{h}}(v)  f(x): =\int_{\R^3} e^{i x\cdot \xi} \frac{ i \xi  T_{k;\mu}^{\tilde{h} }(v)(\xi)}{ \hat{v}\cdot \xi -   \mu |\xi|}\widehat{f}(\xi) d\xi, 
\]
where the symbols $  T_{k;\mu}^{ \tilde{h} }(v)(\xi),  \tilde{h}\in \{F,\underline{F} ,  \omega_j, \vartheta_{mn}\},$ are  defined in \eqref{april26eqn2}. Thanks to the null structure of $T_{k;\mu}^{\tilde{h} }(v)(\xi)$, the symbols  of  the operators $\widetilde{T}_{k;\mu}^{\tilde{h}}(v), \tilde{h}\in \{F,\underline{F} ,  \omega_j, \vartheta_{mn}\},$ are regular.

From  the decay estimates in Lemma \ref{decayestimateofdensity} for the estimate of $e^{-it  \d}\Lambda_{1}[\p_t V^{\tilde{h}^{ }}]$ and Lemma \ref{superlocalizedaug} for the estimate of wave part, the estimate \eqref{oct4eqn41} in Proposition  \ref{fixedtimenonlinarityestimate} for the estimate of $e^{-it  \d}\Lambda_{\geq 2}[\p_t V^{\tilde{h}^{ }}]$, we have 
 \be\label{2020may3eqn54}
 |EndJ_k^{\tilde{h}, \mu} (t_1, t_2)| + | \widetilde{J}_{k;1}^{\tilde{h}, \mu} (t_1, t_2)| \lesssim 2^{-m/4}\epsilon_1^3. 
\ee

Recall the equation satisfied by $\Lambda^{\rho}{u}^{\mathcal{L}}(t ,x,v)$ in  \eqref{2020april8eqn31} and the detailed formulas of $\mathfrak{N}_{\rho,1}^{\mathcal{L}}(t,x,v)$ and $\mathfrak{N}_{\rho,2}^{\mathcal{L}}(t,x,v)$ in \eqref{2020april8eqn32} and \eqref{2020april8eqn33}. We have
\[
 \Lambda^{\rho}{u}^{\mathcal{L}}(t ,x,v) \p_t  \Lambda^{\rho}{u}^{\mathcal{L}}(t ,x,v)= \Lambda^{\rho}{u}^{\mathcal{L}}(t ,x,v)\big( \mathfrak{N}_{\rho,3}^{\mathcal{L}}(t,x,v) +  \mathfrak{N}_{\rho,4}^{\mathcal{L}}(t,x,v)\big) 
\]
\[
+ \nabla_x \cdot \widetilde{\mathfrak{N}}_{\rho,1}^{\mathcal{L}}(t,x,v)+ D_v \cdot \widetilde{\mathfrak{N}}_{\rho,2}^{\mathcal{L}}(t,x,v) + \widetilde{\mathfrak{N}}_{\rho,3}^{\mathcal{L}}(t,x,v),
\]
where
\[
\widetilde{\mathfrak{N}}_{\rho,1}^{\mathcal{L}}(t,x,v)=- \h  \Lambda_{\geq 1}[(v^0)^{-1}](t, x+t\hat{v}) v \big( \Lambda^{\rho} u^{\mathcal{L}}(t,x,v)\big)^2,
\]
\[
 \widetilde{\mathfrak{N}}_{\rho,2}^{\mathcal{L}}(t,x,v)= -  \frac{1}{4} \big( (v^0)^{-1}   v_{\alpha} v_{\beta} \nabla_x g^{\alpha\beta}\big)(t, x+t\hat{v})  \big( \Lambda^{\rho} u^{\mathcal{L}}(t,x,v)\big)^2,
\]
\[
\begin{split}
\widetilde{\mathfrak{N}}_{\rho,3}^{\mathcal{L}}(t,x,v)& = 
- \big[ \h v\cdot  \nabla_x  \big(   \Lambda_{\geq 1}[(v^0)^{-1}](t, x+t\hat{v}) \big)\\  &+
  \frac{1}{4}  D_v \cdot \big( \big( (v^0)^{-1}   v_{\alpha} v_{\beta} \nabla_x g^{\alpha\beta}\big)(t, x+t\hat{v})\big)\big]  \big( \Lambda^{\rho} u^{\mathcal{L}}(t,x,v)\big)^2. 
  \end{split}
\]

From the above equalities,  after   doing integration by parts in $x,v$  for   $ \widetilde{\mathfrak{N}}_{\rho,i}^{\mathcal{L}}(t,x,v),i\in\{1,2\}$, the following estimate holds from  the estimate of the weight function $\widetilde{\omega^{\mathcal{L}}_{\rho;0}} (t, x, v) $   in (\ref{2020may3eqn16}),  the estimate \eqref{aug9eqn87} in Lemma \ref{estimateofremaindervlasov}
, and the estimate \eqref{2020april9eqn21} in Lemma \ref{estimateofremaindervlasov3}, 
 \be\label{2022may22eqn51}
  | \widetilde{J}_{k;2}^{\tilde{h}, \mu} (t_1, t_2)| \lesssim 2^{-m/4}\epsilon_1^3. 
\ee
To sum up, from the estimates (\ref{2020may3eqn41}), (\ref{2020may3eqn53}),   (\ref{2020may3eqn54}), and \eqref{2022may22eqn51},    the following estimate holds, 
\be\label{2020may3eqn55}
\sum_{k\in \mathbb{Z}, k\geq -m + \alpha m } |\tilde{I}_{\rho;0}^{\mathcal{L}}(k,t_1, t_2)| \lesssim 2^{ 2 H( \mathcal{L},\rho )\delta_0 m } \epsilon_1^3 .
\ee
 
\noindent $\bullet$\quad The estimate of $\tilde{I}_{\rho;1}^{\mathcal{L}}(k,t_1, t_2)$.

From the estimate of the weight function $\widetilde{\omega^{\mathcal{L}}_{\rho;1}} (t, x, v) $   in (\ref{2020may3eqn16}) and the $L^2_{x,v}-L^2_{x,v}-L^\infty_{x,v}$ type multilinear estimate, we have
\[
\sum_{k\notin[-m/2-2\delta_1 m, m/10]} |\tilde{I}_{\rho;1}^{\mathcal{L}}(k,t_1, t_2)| \lesssim \sum_{k\notin[-m/2-2\delta_1 m, m/10]}  2^{2  H( \mathcal{L},\rho )\delta_0 m + 2\delta_1 m + m/2+k-10k_{+}} \epsilon_1^3
\]
\be\label{2020may3eqn56}
\lesssim  2^{2 H( \mathcal{L},\rho )\delta_0 m  }\epsilon_1^3 . 
\ee

It remains to consider the case $k\in[-m/2-2\delta_1 m, m/10]$. For this case, we do integration by parts in time once. As  a result, we have
 \[
 \tilde{I}_{\rho;1}^{\mathcal{L}}(k,t_1, t_2)= \sum_{l=1,2} 
 \int_{\R^3} \int_{\R^3}   \big(\Lambda^{\rho}{u}^{\mathcal{L}}(t_l,x,v) \big)^2 \omega^{\mathcal{L}}_{\rho} ( t_l ,x, v) \langle v \rangle \hat{v}_\alpha \hat{v}_\beta \widetilde{h_{\alpha\beta}^k}(t_l, x+t_l\hat{v}, v)\cdot       \widetilde{\omega^{\mathcal{L}}_{\rho;1}} (t_l, x, v)     d x d  v 
 \]
 \[
 + \int_{t_1}^{t_2} \int_{\R^3} \int_{\R^3}  \langle v \rangle \hat{v}_\alpha \hat{v}_\beta  \widetilde{h_{\alpha\beta}^k}(t_l, x+t \hat{v},v) \cdot  \p_t \big[(\Lambda^{\rho}{u}^{\mathcal{L}}(t,x,v)  )^2 \omega^{\mathcal{L}}_{\rho} ( t,x, v)       \widetilde{\omega^{\mathcal{L}}_{\rho;1}} (t, x, v)\big]     d x d  v d t, 
 \]
 \[
 + \int_{t_1}^{t_2} \int_{\R^3} \int_{\R^3}  \langle v \rangle \hat{v}_\alpha \hat{v}_\beta  \widetilde{h_{\alpha\beta}^{k;h}}(t_l, x+t \hat{v}, v) \cdot   (\Lambda^{\rho}{u}^{\mathcal{L}}(t,x,v)  )^2 \omega^{\mathcal{L}}_{\rho} ( t,x, v)       \widetilde{\omega^{\mathcal{L}}_{\rho;1}} (t, x, v)      d x d  v d t
 \]
where
\[
\widetilde{h_{\alpha\beta}^k}(t , x+t\hat{v}, v ):= \sum_{\mu\in\{+,-\}} \int_{\R^3} e^{i  (x+t\hat{v})\cdot \xi - i t\mu |\xi|} \frac{  c_{\mu}\xi}{|\xi|(\mu|\xi|- \hat{v}\cdot \xi) } \widehat{(V^{h_{\alpha\beta}})^{\mu}}(t, \xi ) \psi_{k}(\xi) d \xi , \quad c_{\mu}:=i\mu/2,
\]
\[
\widetilde{h_{\alpha\beta}^{k;h}}(t ,  x+t\hat{v}, v ):= \sum_{\mu\in\{+,-\}} \int_{\R^3} e^{i  (x+t\hat{v})\cdot \xi - i t\mu |\xi|} \frac{  c_{\mu}\xi}{|\xi|(\mu|\xi|- \hat{v}\cdot \xi) } \widehat{(\p_t V^{h_{\alpha\beta}})^{\mu}}(t, \xi ) \psi_{k}(\xi) d \xi .
\]
From the proof of the linear decay estimate (\ref{july27eqn4}) in Lemma \ref{superlocalizedaug},   the estimate (\ref{july27eqn4}) in Lemma \ref{superlocalizedaug}, and the estimate \eqref{2022march19eqn34} in Lemma \ref{basicestimates},  we have
\[
\|\langle v \rangle^{-2}\widetilde{h_{\alpha\beta}^k}(t_l, x, v )\|_{L^\infty_{x,v}} \lesssim 2^{-m+ H(2)\delta_1 m } \epsilon_1, \quad \|\langle v \rangle^{-2}\widetilde{h_{\alpha\beta}^{k;h}}(t_l, x , v)\|_{L^\infty_{x,v}} \lesssim 2^{-2m+ H(2)\delta_1 m } \epsilon_1. 
\]
From the above estimate, the estimate of the weight function $\widetilde{\omega^{\mathcal{L}}_{\rho;1}} (t, x, v) $   in (\ref{2020may3eqn16}) and the $L^2_{x,v}-L^2_{x,v}-L^\infty_{x,v}$ type multilinear estimate,  and the integration by parts in $v$ process used in (\ref{2020aug31eqn11}), we have 
\be\label{2020june1eqn5}
  |\tilde{I}_{\rho;1}^{\mathcal{L}}(k,t_1, t_2)| \lesssim 2^{-m+ H(2)\delta_1 m + m/2 + 2 H( \mathcal{L},\rho )\delta_0 m  } \epsilon_1^3 \lesssim 2^{-m/3}\epsilon_1^3. 
\ee
After combining the above estimate   and the estimate (\ref{2020may3eqn56}), we have
  \be\label{2020may3eqn58}
\sum_{k\in \mathbb{Z}, k\geq -m +  \alpha m } |\tilde{I}_{\rho;1}^{\mathcal{L}}(k,t_1, t_2)| \lesssim 2^{ 2 H( \mathcal{L},\rho )\delta_0 m } \epsilon_1^3 .
\ee
\noindent $\bullet$\quad The estimate of $\tilde{I}_{\rho;2}^{\mathcal{L}}(k,t_1, t_2)$.

Recall (\ref{2020may3eqn19}), the equalities in (\ref{definitionofcoefficient1}) and (\ref{2020june20eqn24}), and the estimate (\ref{2020aug31eqn31}). For this case, we trade spatial derivative for the vector fields and gain the decay with respect to the light cone twice. As a result, the following estimate holds from the decay  estimate of the perturbed metric    (\ref{2020julybasicestiamte}) in Lemma \ref{basicestimates} and the  estimate (\ref{oct4eqn41}) in Proposition \ref{fixedtimenonlinarityestimate},  
\[
(1+|t-|x+t\hat{v}||)^2 |\nabla_x P_k(h_{\alpha\beta})(t,x+t\hat{v})| \lesssim 2^{-m-k + H(3)\delta_1 m +\delta_1 m }\epsilon_1+ 2^{-2m-k+H(4)\delta_0 m}\epsilon_1. 
\]
Recall that $k\geq -m+ ( H(4)\delta_1    -2\delta_1)m$. From the above estimate,  
 the estimate of the weight function $\widetilde{\omega^{\mathcal{L}}_{\rho; 2 }} (t, x, v)$ in (\ref{2020may3eqn17}), and   the $L^2_{x,v}-L^2_{x,v}-L^\infty_{x,v}$ type multilinear estimate, we have
 \[
  \sum_{k\in \mathbb{Z}, k\geq -m + \alpha  m } |\tilde{I}_{\rho;3}^{\mathcal{L}}(k,t_1, t_2)| \lesssim \sum_{k\in \mathbb{Z}, k\geq -m +   \alpha m }  2^{-m-k + H( 3)\delta_1 m + 2 H( \mathcal{L},\rho )\delta_0 m +\delta_1 m  }\epsilon_1^3
 \]
\be\label{2020may3eqn61}
+ 2^{-2m-k+H(4)\delta_0 m+ 2 H( \mathcal{L},\rho )\delta_0 m}\epsilon_1^3\lesssim  2^{2 H( \mathcal{L},\rho )\delta_0 m  }\epsilon_1^3 . 
\ee
Recall the decomposition (\ref{2020may3eqn18}). To sum up, our desired estimate (\ref{2020june1eqn3}) holds from the estimates (\ref{2020may2eqn20}), (\ref{2020may3eqn55}), (\ref{2020may3eqn58}), and  (\ref{2020may3eqn61}).

 \end{proof}
 
 \begin{lemma}\label{vlasovenergy2}
Under the bootstrap assumptions \textup{(\ref{BAmetricold})} and \textup{(\ref{BAvlasovori})}, the following estimate holds for any   $t_1, t_2\in [2^{m-1}, 2^m]\subset [0, T]$,   $\mathcal{L}\in \cup_{l\leq N_0 }\mathcal{P}_l$, $\rho\in \mathcal{S},$   s.t., $ | \mathcal{L}| +   | \rho  |  \leq N_0,$    
\be\label{2020april15eqn31}
\sum_{k\leq -m +   c(\rho, \mathcal{L}) \delta_0 m , k\in \Z}\sum_{i= 3,4  }\big|  I_{\rho;i}^{\mathcal{L}}(k,t_1, t_2)\big|  \lesssim 2^{ 2 H(\mathcal{L}, \rho)\delta_0 m }  \epsilon_1^3,
\ee
where $c(\rho, \mathcal{L}):=\min\{(H(\mathcal{L}, \rho)-H(1))/3, \big(H(|\mathcal{L}+|\tilde{c}(\rho)| )-H(|\mathcal{L}+|\tilde{c}(\rho)|-1))/2 \}    $.
 
 \end{lemma}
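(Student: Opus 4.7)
The plan is to exploit the fact that the estimate is restricted to the very low output frequency regime $k\leq -m+c(\rho,\mathcal{L})\delta_0 m$, where Lemma~\ref{lowfrequencyinputestimate} improves the naive bootstrap bound on $P_k U^{Lh_{\alpha\beta}}$ from $\sim 2^{d(|L|,l)\delta_0 m}$ to something of size $(1+2^{-k-m})2^{H(|L|-1)\delta_1 m+\beta(|L|)\delta_1 m}+2^{H(|L|-1)\delta_1 m+m+k}$, gaining a factor roughly $2^{-c(\rho,\mathcal{L})\delta_0 m}$ over the top-order growth rate $2^{H(\mathcal{L},\rho)\delta_0 m}$. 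It is precisely this gap that will have to absorb the unavoidable $2^m$ loss coming from the time integration over $[t_1,t_2]\subset[2^{m-1},2^m]$.

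For $I_{\rho;3}^{\mathcal{L}}(k,t_1,t_2)$, I first rewrite $D_v u^{\mathcal{L}}=\sum_{|\iota|=1}c_\iota(t,x,v)\Lambda^\iota u^{\mathcal{L}}$ using \eqref{2020april21eqn11}, with $|c_\iota|\lesssim(1+|v|)\bigl||t|-|x+t\hat v|\bigr|$. The trilinear integral is then estimated by Cauchy--Schwarz in $(x,v)$: both Vlasov factors $\Lambda^\rho u^{\mathcal{L}}$ and $\Lambda^{\iota}u^{\mathcal{L}}$ go into the weighted $L^2_{x,v}$-spaces controlled by the bootstrap assumption \eqref{BAvlasovori}, while the metric factor $\nabla_x P_k(\tilde L\mathcal{L}h_{\mu\nu})(t,x+t\hat v)$ goes into $L^\infty_{x,v}$ via the super-localized decay estimates of Lemma~\ref{superlocalizedaug} applied to the improved low-frequency profile bound. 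The coefficient $C^{\tilde L}_{\mu\nu;\rho}(x,v)$ grows like $(1+|x|)^{|\rho|-|\tilde L|}(1+|v|)^{|\rho|-|\tilde L|-c(\rho)}$ by \eqref{2020sep10eqn51}; this growth is absorbed into the weight function $\omega^{\mathcal{L}}_\rho$ (which is designed to dominate such powers) after using the pointwise inequality \eqref{2020aug31eqn31}, $\langle|t|-|x+t\hat v|\rangle\langle x\rangle\langle v\rangle^2\gtrsim\langle t\rangle$, which together with the $(1+|v|)\bigl||t|-|x+t\hat v|\bigr|$ factor from $D_v$ cancels both the spatial and the velocity growth in $C^{\tilde L}_{\mu\nu;\rho}$.

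For $I_{\rho;4}^{\mathcal{L}}(k,t_1,t_2)$ the structural input is that in the admissible set $S^1_{\rho,\mathcal{L}}$ in \eqref{2022may6eqn1} one has $|\rho_1|+|\mathcal{L}_1|<|\rho|+|\mathcal{L}|-1$, so at least one Vlasov factor is strictly below top order and its weighted energy grows at rate $2^{H(\mathcal{L}',\rho')\delta_0 m}$ with $H(\mathcal{L}',\rho')\leq H(\mathcal{L},\rho)-(H(|\mathcal{L}|+|\tilde c(\rho)|)-H(|\mathcal{L}|+|\tilde c(\rho)|-1))$. This is exactly the second quantity entering the definition of $c(\rho,\mathcal{L})$. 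I then run the same scheme as for $I_{\rho;3}$: if the lower-order Vlasov factor has enough room I put it in $L^\infty_x L^2_v$ via the density-type decay estimate \eqref{densitydecay} of Lemma~\ref{decayestimateofdensity}; otherwise I keep it in $L^2_{x,v}$ and use instead the metric factor in $L^\infty$ through Lemma~\ref{superlocalizedaug} combined with Lemma~\ref{lowfrequencyinputestimate}.

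The main obstacle, and the only delicate point, is the bookkeeping of the hierarchy of growth rates. The index $\tilde c(\rho)$ counts how many velocity-type derivatives have been traded for classical ones, and every application of $D_v$ or a $\tilde L$ acting on $\mathcal{L}$ reshuffles this count; since the function $H(\mathcal{L},\rho)=d(|\mathcal{L}|+|\tilde c(\rho)|,|\rho|-|\tilde c(\rho)|)$ is not linear in its arguments, one must verify in each subcase the numerical inequality
\[
H(\mathcal{L}',\rho')\delta_0 m + H(|L|-1)\delta_1 m + \beta(|L|)\delta_1 m + m \;\leq\; 2H(\mathcal{L},\rho)\delta_0 m - c(\rho,\mathcal{L})\delta_0 m + O(\delta_0 m),
\]
where $(\mathcal{L}',\rho')$ is the lower-order Vlasov index in the decomposition. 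The precise choice of $c(\rho,\mathcal{L})=\min\bigl\{(H(\mathcal{L},\rho)-H(1))/3,\,(H(|\mathcal{L}|+|\tilde c(\rho)|)-H(|\mathcal{L}|+|\tilde c(\rho)|-1))/2\bigr\}$ is tailored exactly so that this inequality holds in both cases above (the first minimand handles the quasilinear factor $H(|L|-1)\delta_1 m+m$ in Lemma~\ref{lowfrequencyinputestimate}; the second handles the loss from $I_{\rho;4}$). Once this check is carried out, summation over $k\leq -m+c(\rho,\mathcal{L})\delta_0 m$ produces only a harmless factor of $m$, and the desired bound \eqref{2020april15eqn31} follows.
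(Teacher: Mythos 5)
Your strategy—low output frequency plus a trilinear $L^2\times L^2\times L^\infty$ split with the metric placed in $L^\infty$—is the right high-level idea and broadly what the paper does, but several concrete steps in your plan would not go through as written.

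First, the paper does not place the metric in $L^\infty$ via Lemma~\ref{superlocalizedaug}; it uses the plain $L^\infty\to L^2$ Sobolev/volume-of-support bound $\|P_k f\|_{L^\infty}\lesssim 2^{3k/2}\|P_k f\|_{L^2}$. This matters: Lemma~\ref{superlocalizedaug} requires $2^k\geq |t|^{-1}$, while in the range $k\leq -m + c(\rho,\mathcal{L})\delta_0 m$ one has $k<-m$ for almost all $k$, so the decay lemma simply does not apply there. Moreover, for $k\sim -m$ the volume-of-support gain $2^{3k/2}\sim 2^{-3m/2}$ is strictly better than the $|t|^{-1}\sim 2^{-m}$ decay, so trying to use the decay lemma would also lose the exponent you need. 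Second, Lemma~\ref{lowfrequencyinputestimate} as stated applies only to $L\in\cup_{l\leq N_0}P_l$ with $|L|\geq 1$ (a pure vector field of positive order), whereas the metric factor in $I_{\rho;3}$ is $\tilde L\mathcal{L}h_{\mu\nu}$ with $\tilde L\in\cup\nabla_x^\alpha P_l$ containing spatial derivatives, and $|\tilde L|+|\mathcal{L}|$ can vanish. The paper instead uses the raw decompositions \eqref{modifiedperturbedmetric} and \eqref{oct11eqn11} (the same ingredients Lemma~\ref{lowfrequencyinputestimate} is built from) in a form that does apply.

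Third, and most substantively, your treatment of $D_v$ is backwards. The coefficients $c_\iota$ in $D_v=\sum c_\iota\Lambda^\iota$ satisfy $|c_\iota|\lesssim(1+|v|)\big||t|-|x+t\hat v|\big|$, which near $|x|\sim 0$ is of size $\sim t/(1+|v|)\sim 2^m/(1+|v|)$; the inequality \eqref{2020aug31eqn31} is a \emph{lower} bound on $\langle|t|-|x+t\hat v|\rangle$ and cannot be used to cancel a factor that appears in the numerator. The weight $\omega^{\mathcal{L}}_\rho$ absorbs the spatial and velocity growth of the coefficients, but a genuine extra factor $\sim 2^m$ survives for the spatial component of $D_v$. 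The paper handles this by splitting over $\iota_1$ with $\tilde c(\iota_1)=0$ (which pays $2^m$) and $\iota_2$ with $\tilde c(\iota_2)=1$ (which does not), producing the term $2^{m+H(Id,\iota_1)\delta_0 m}$ in \eqref{2020may2eqn21}. Your ``bookkeeping'' inequality therefore misses one full power of $2^m$, and also has the $k$-dependence wrong: the correct scheme yields an explicit gain $2^{2k}\leq 2^{-2m+2c(\rho,\mathcal{L})\delta_0 m}$ that absorbs the two $2^m$ losses (time integral and $D_v$), leaving a residual $2^{2c\delta_0 m}$, and the first minimand of $c(\rho,\mathcal{L})$ is then exactly what makes $3c(\rho,\mathcal{L})+H(1)\leq H(\mathcal{L},\rho)$ hold after accounting for the density part of the metric contributing a further $2^{m+k}\lesssim 2^{c\delta_0 m}$; the $\delta_1$-terms from the modified profile are negligible since $\delta_1\ll\delta_0$.
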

\begin{proof}

We first estimate $I_{\rho;3}^{\mathcal{L}}(k,t_1, t_2)$.
Recall (\ref{2020april15eqn13}). Note that we have $| \rho |+|\mathcal{L}|\geq 2$ for this case.  From the decomposition of profile in (\ref{modifiedperturbedmetric}) and the decomposition of the density type function in  (\ref{oct11eqn11}). From the $L^2_{x,v}-L^\infty_{x,v}$ type bilinear estimate and the $L^\infty\rightarrow L^2$ type Sobolev embedding, we have
\[
\sum_{k\leq -m +  c(\rho, \mathcal{L}) \delta_0 m, k\in \Z} | I_{\rho;3}^{\mathcal{L}}(k,t_1, t_2)| \lesssim
\sum_{k\leq -m +   c(\rho, \mathcal{L}) \delta_0 m , k\in \Z}\sum_{\iota_1, \iota_2\in \mathcal{S},|\iota_1|=|\iota_2|=1, \tilde{c}(\iota_1)=0,\tilde{c}(\iota_2)=1 } 2^{ m +2k}  2^{    H(\mathcal{L}, \rho)\delta_0 m  }
\]
 \be\label{2020may2eqn21}
 \times(  2^{ H( |\mathcal{L}|+ |\rho|)\delta_1 m}+ 2^{m+k})   \big( 2^{H(Id, \iota_2)\delta_0 m } + 2^{m+H(Id, \iota_1)\delta_0 m} \big) \epsilon_1^3\lesssim 2^{2 H(\mathcal{L}, \rho)\delta_0 m }\epsilon_1^3 .
\ee

Now, we estimate $ I_{\rho;4}^{\mathcal{L}}(k,t_1, t_2)$. Recall (\ref{2020april15eqn15}) and (\ref{2022may6eqn2}). Note that,  we have $| \rho |+|\mathcal{L}| \geq 2 $. By using the same strategy used in the above estimate, we have 
\[
\sum_{k\leq -m + c(\rho, \mathcal{L})\delta_0 m, k\in \Z} | I_{\rho;4}^{\mathcal{L}}(k,t_1, t_2)| \lesssim  \sum_{\begin{subarray}{c} 
k\leq -m + c(\rho, \mathcal{L}) \delta_0 m \\
\rho_1 \circ \rho_2 \preceq \rho, \mathcal{L}_1 \circ \mathcal{L}_2\preceq \mathcal{L}
\end{subarray} } \sum_{\begin{subarray}{c}
\Gamma  \in P_1, \iota\in \mathcal{S},  |\iota|=1,\tilde{c}(\iota)=0\\ 
   | \rho_1 |  + |\mathcal{L}_1|< |\mathcal{L}| +  | \rho | -1, \\ 
 |  \rho_2 | + |\mathcal{L}_2| < |\mathcal{L}|+| \rho |
 \end{subarray} } 2^{m+k} 2^{ H( \mathcal{L}, \rho)\delta_0 m }
\]
 \be\label{2020may2eqn22}
\times   (2^{m+k} +  2^{H(1+|\rho_2|+|\mathcal{L}_2|)\delta_1 m }) \big(2^{H( \mathcal{L}_1,\iota\circ \rho_1)\delta_0 m     }+2^{k+ H( \Gamma \circ  \mathcal{L}_1, \rho_1)\delta_0 m     }\big)   \epsilon_1^3 \lesssim   2^{2 H(\mathcal{L}, \rho)\delta_0 m }\epsilon_1^3 .
\ee
Hence   our desired estimate (\ref{2020april15eqn31}) holds from the estimates  (\ref{2020may2eqn21}--\ref{2020may2eqn22}). 
\end{proof}
 \begin{lemma}\label{notbulkcase}
Under the bootstrap assumptions \textup{(\ref{BAmetricold})} and \textup{(\ref{BAvlasovori})}, the following estimate holds for any   $t_1, t_2\in [2^{m-1}, 2^m]\subset [0, T]$,   $\mathcal{L}\in \cup_{l\leq N_0}\mathcal{P}_l$, $\rho\in \mathcal{S},$ s.t.,  s.t., $  | \mathcal{L}| +   | \rho |  \leq N_0,$   
\be\label{2020april21eqn71}
\sum_{k\in \Z, k\geq -m + c(\rho, \mathcal{L}) \delta_0 m }\big|  I_{\rho;3 }^{\mathcal{L}}(k,t_1, t_2)\big| + \big|  I_{\rho;4}^{\mathcal{L}}(k,t_1, t_2)\big|\lesssim   2^{2 H(\mathcal{L}, \rho)\delta_0 m }\epsilon_1^3,
\ee
where $c(\rho, \mathcal{L}):=\min\{(H(\mathcal{L}, \rho)-H(1))/3, \big(H(|\mathcal{L}+|\tilde{c}(\rho)| )-H(|\mathcal{L}+|\tilde{c}(\rho)|-1))/2 \}    $.
 \end{lemma}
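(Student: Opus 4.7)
The plan is to parallel the strategy of Lemma \ref{vlasovenergy2}, but since we are now in the range $k \geq -m + c(\rho,\mathcal{L})\delta_0 m$ where the volume-of-support smallness $2^{3k/2}$ is no longer decisive, the decay of the wave component (instead of the frequency-volume) will carry the argument. The range restriction permits controlled losses of $2^{-jk} \leq 2^{jc(\rho,\mathcal{L})\delta_0 m}$ while keeping total growth strictly below $2^{2H(\mathcal{L},\rho)\delta_0 m}$, which is exactly the slack the definition of $c(\rho,\mathcal{L})$ is designed to provide.

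The first preparatory step is to rewrite the factor $D_v u^{(Id)}$ appearing in $\mathfrak{N}_{\rho,3}^{\mathcal{L}}$ via the identity \eqref{2020april21eqn11}, namely $D_v u^{(Id)} = \sum_{|\iota|=1} c_\iota(t,x,v)\Lambda^\iota u^{(Id)}$ with $|c_\iota(t,x,v)| \lesssim (1+|v|)|t-|x+t\hat{v}||$. The pointwise factor $|t-|x+t\hat{v}||$ is traded against one spatial derivative on the wave component through \eqref{definitionofcoefficient1}, while the second vector field $\Lambda^\iota u^{(Id)}$ remains controllable by the bootstrap \eqref{BAvlasovori} at the cheap index $(Id,\iota)$, whose growth $2^{H(Id,\iota)\delta_0 m}$ is essentially $2^{H(1)\delta_0 m}$ and is absorbed without loss.

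For $I_{\rho;3}^{\mathcal{L}}(k,t_1,t_2)$ I split on the total order of derivatives acting on the metric. When $|\tilde{L}|+|\mathcal{L}| \leq N_0 - 10$, I place $\nabla_x P_k(\tilde{L}\mathcal{L}h_{\mu\nu})$ in $L^\infty_x$ via the sharp decay \eqref{2020julybasicestiamte} of Lemma \ref{basicestimates}, giving a bound of size $2^{-m+k_{+}+H(|\tilde{L}|+|\mathcal{L}|+1)\delta_1 m + 1.1\gamma m}\epsilon_1$ (times appropriate frequency factors). Pairing this with $\|\omega_\rho^{\mathcal{L}} \Lambda^\rho u^{\mathcal{L}}\|_{L^2_{x,v}}$ and $\|\omega \Lambda^\iota u^{(Id)}\|_{L^2_{x,v}}$ and summing over $k \in [-m+c(\rho,\mathcal{L})\delta_0 m, N_0 m]$ gives the desired bound, since the $2^m$ from the time integration cancels the $2^{-m}$ decay of the wave, and the residual loss of $2^{-k}$ coming from the extra $\nabla_x$ on the wave is absorbed by the restricted $k$-range. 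In the complementary top-order subcase $|\tilde{L}|+|\mathcal{L}| \geq N_0 - 10$ I cannot afford $L^\infty$ on the wave; instead I place $\nabla_x P_k(\tilde{L}\mathcal{L}h_{\mu\nu})$ in $L^2_x$ via the energy bound following from Proposition \ref{fixedtimenonlinarityestimate} and the bootstrap \eqref{BAmetricold}, and apply the density-type decay of Lemma \ref{decayestimateofdensity} to the product of the two Vlasov factors (after integration by parts in $v$, which is permitted since one of them carries no vector fields in $\mathcal{P}$).

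For $I_{\rho;4}^{\mathcal{L}}(k,t_1,t_2)$ the constraint set $S_{\rho,\mathcal{L}}^1$ in \eqref{2022may6eqn1} forces $|\rho_1|+|\mathcal{L}_1|<|\rho|+|\mathcal{L}|-1$ and $|\rho_2|+|\mathcal{L}_2|<|\rho|+|\mathcal{L}|$, so neither factor reaches the top order. The estimate then follows by direct adaptation of the argument producing \eqref{2020may2eqn22}: the volume-of-support gain $2^{m+k}$ used there is replaced, in the present regime of larger $k$, by the sharp $L^\infty_x$ decay of one of the factors from Lemma \ref{basicestimates}, summed over $k$ in the allowed range. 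The hierarchy \eqref{july15eqn21} for $H$ then ensures the products of growth factors stay below $2^{2H(\mathcal{L},\rho)\delta_0 m}$.

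The main obstacle is the top-order subcase of $I_{\rho;3}^{\mathcal{L}}$: one has both the metric near top order and a $\nabla_x$-loss, so neither the wave $L^\infty$ decay nor the straightforward $L^2$ estimate is sharp enough on its own. The way through is the structural observation that the second Vlasov factor in this piece is $D_v u^{(Id)}$ — free of vector fields in $\mathcal{P}$ beyond the rewriting \eqref{2020april21eqn11} — so after reducing $D_v u^{(Id)}$ to $\Lambda^\iota u^{(Id)}$ with $|\iota|=1$ (growth $\lesssim 2^{H(1)\delta_0 m}$), the $L^2_x L^\infty_v - L^\infty_x L^2_v$ splitting with density decay on the Vlasov side and the wave in $L^2$ closes, and the precise value of $c(\rho,\mathcal{L})$ in the statement is exactly what is needed to absorb the final $2^{-k}$ loss from the frequency restriction.
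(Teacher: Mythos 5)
Your proposal diverges substantially from the paper's actual proof, and the divergence masks a real gap in the top-order case.

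The paper's proof does not estimate the three-linear form directly. It first invokes the decomposition \eqref{2022march19eqn16} to split $P_k(\tilde L\mathcal L h_{\mu\nu})$ into the \emph{modified} profile $\widetilde{V^{\tilde L\mathcal L h_{\mu\nu}}}$ and the density-type remainder $\tilde\rho^{h_{\mu\nu}}_{\tilde L\mathcal L}(f)$, and then performs an integration by parts in time (normal form transformation) against the modified-profile part, producing the pieces $\widetilde I$, $\widetilde{II}$, $\widetilde{III}$ as in \eqref{2020april21eqn72}--\eqref{2020april15eqn75}. The entire point of this maneuver is that $\widetilde{V^{\tilde L\mathcal L h}}$ grows only at rate $2^{H(|L|)\delta_1 m}$ with $\delta_1 \ll \delta_0$, whereas the unmodified $U^{\tilde L\mathcal L h}$ that you place in $L^2_x$ grows at $2^{d(|L|,l)\delta_0 m}$. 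Your top-order subcase works directly with the latter, so the combination $\text{(time)} \times \text{(wave $L^2$)} \times \text{(Vlasov $L^2$)}$ already saturates $2^{m+2H(\mathcal L,\rho)\delta_0 m}$, and you must recover a full factor of $2^{-m}$ (or better) from the remaining factor $D_v u^{(Id)}$ alone.

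That recovery is where the gap lies. After rewriting $D_v u^{(Id)} = \sum_{|\iota|=1} c_\iota\Lambda^\iota u^{(Id)}$ via \eqref{2020april21eqn11} one inherits a coefficient $|c_\iota| \lesssim (1+|v|)\,\big||t|-|x+t\hat v|\big|$. If you trade this factor against one spatial derivative on the wave using \eqref{definitionofcoefficient1}, the wave's vector-field count $|L|$ increases by one while $\|L\|-|L|$ decreases by one; since $d(n,l) = H(n+l)+10n$, this raises the wave's $L^2$ growth by an extra $2^{10\delta_0 m}$, destroying the closing estimate $\lesssim 2^{2H(\mathcal L,\rho)\delta_0 m}$. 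If instead you try to absorb $\big||t|-|x+t\hat v|\big|$ and then kill the time integral by applying Lemma \ref{decayestimateofdensity} to the product of the two Vlasov factors, the Leibniz rule forces $\nabla_v^\alpha$ (with $|\alpha|$ up to $3$--$5$) onto both factors, including $\Lambda^\rho u^{\mathcal L}$ — and when $|\rho|+|\mathcal L|=N_0$ this lands outside the bootstrap \eqref{BAvlasovori}. Your parenthetical remark that integration by parts in $v$ ``is permitted since one of them carries no vector fields'' does not resolve this: the lemma is applied to the product, not to the low-order factor in isolation, and the top-order factor cannot absorb the extra $v$-derivatives.

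The paper handles exactly this obstruction by routing the density-type piece $\tilde\rho$ through the dedicated bilinear estimates \eqref{bilineardensitylargek}--\eqref{bilineardensity} (Vlasov–density interaction, no wave-growth cost) and by using the normal-form transformation so that only the slowly-growing modified profile appears in $L^2_x$. Your low-order subcase (wave in $L^\infty_x$ via Lemma \ref{basicestimates}) is legitimate and corresponds, in spirit, to the paper's ruling out of the case $|\rho|+|\mathcal L| \le N_0-5$, $k\le 0$. But the top-order subcase, as you have written it, is not a small modification of the paper's argument — it drops the decomposition that makes the estimate close, and the compensation you propose either loses $2^{10\delta_0 m}$ against the metric energy hierarchy or loses derivatives against the Vlasov bootstrap.
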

 \begin{proof}
 Recall the detailed formulas of $ I_{\rho;3}^{\mathcal{L}}(k,t_1, t_2)$  and $ I_{\rho;4}^{\mathcal{L}}(k,t_1, t_2)$ in (\ref{2020april15eqn13}) and (\ref{2020april15eqn15}), the definition of index set $S_{\rho, \mathcal{L}}^1$ in (\ref{2022may6eqn1}), and  the profile of perturbed metric in (\ref{2020april15eqn35}). We have $ | \mathcal{L}| +   | \rho |\geq 2.$

Due to the fact that $| \rho_1 |+|\tilde{\mathcal{L}}_1|< |  {\mathcal{L}}|+| \rho |-1$, by taking the advantage of the different growth rates of the Vlasov part and the metric part, see  the estimate \eqref{2022march19eqn34} in Lemma \ref{basicestimates}, we can first rule out the case $| \rho |+|\mathcal{L}|\leq N_0-5$ and $k\leq 0$ by using the $L^\infty_{x,v}-L^2_{x,v}$ type bilinear estimate, i.e., using the decay estimate of the metric component. From now on, we restrict ourselves to the case $| \rho |+|\mathcal{L}|\geq N_0-5$ and the case $| \rho |+|\mathcal{L}|\leq  N_0-5$ and  $k\geq 0$.

 After  using the decomposition in (\ref{2022march19eqn16}) and doing integration by parts in time on the Fourier side for the modified perturbed metric part, we have
\be\label{2020april21eqn72}
I_{\rho;i}^{\mathcal{L}}(k,t_1, t_2)= \sum_{\kappa\in \{+,-\}} \widetilde{I}_{\rho;i}^{\mathcal{L};\kappa}(k,t_1, t_2)+ \widetilde{II}_{\rho;i}^{\mathcal{L};\kappa}(k,t_1, t_2)+  \widetilde{III}_{\rho;i}^{\mathcal{L};\kappa }(k,t_1, t_2), \quad i=3,4, 
\ee
where
\be\label{2020aug18eqn1}
 \widetilde{I}_{\rho;3}^{\mathcal{L};\kappa }(k,t_1, t_2)=   \sum_{\begin{subarray}{c}
  \tilde{L}\in\cup_{l+|\alpha|\leq N_0 }   \nabla_x^\alpha P_l \\
   |\tilde{L}|\leq \tilde{c}(\rho), |\alpha|\leq |\rho|-\tilde{c}(\rho)
   \end{subarray}}  \int_{t_1}^{t_2} \int_{\R^3} \int_{\R^3}    \mathcal{N}^{\tilde{L}\mathcal{L}}_{\rho;\mu\nu}(t,x,v) \cdot \nabla \d^{-1} \big(P_{k} ( ( \tilde{\rho}^{h_{\mu\nu  }}_{\tilde{L}\mathcal{L}}  )^{\kappa} )\big) (t, x+t\hat{v})    d x d  v d t, 
\ee
\[
\widetilde{II}_{\rho;3}^{\mathcal{L};\kappa}(k,t_1, t_2)=  \sum_{\begin{subarray}{c}
  \tilde{L}\in\cup_{l+|\alpha|\leq N_0 }   \nabla_x^\alpha P_l\\ 
   |\tilde{L}|\leq \tilde{c}(\rho), |\alpha|\leq |\rho|-\tilde{c}(\rho)
   \end{subarray}}      \int_{\R^3} \int_{\R^3}    e^{i t_i \kappa |\xi | + it_i\hat{v}\cdot \xi }       \frac{  (-1)^i \xi  \psi_k(\xi) \widehat{\mathcal{N}^{\tilde{L}\mathcal{L}}_{\rho;\mu\nu}}(t_i, -\xi, v) }{2 |\xi|(|\xi|+  \kappa  \hat{v}\cdot \xi )}  \widehat{ (\widetilde{V^{ \tilde{L} \mathcal{L} h_{\mu\nu}} } )^{\kappa}  }(t_i, \xi )     d \xi  d v  
\]
\be\label{2020april15eqn41}
+ \int_{t_1}^{t_2 } \int_{\R^3} \int_{\R^3}   \frac{ \xi  \psi_k(\xi)}{2 |\xi|(|\xi|+  \kappa  \hat{v}\cdot \xi )}   e^{i t  \kappa |\xi | + it \hat{v}\cdot \xi }   \widehat{ \mathcal{N}^{\tilde{L}\mathcal{L}}_{\rho;\mu\nu}}(t , -\xi, v)   \p_t \widehat{ (\widetilde{V^{ \tilde{L} \mathcal{L} h_{\mu\nu}}})^{\kappa}  }(t  , \xi )     d \xi  d v   d t, 
\ee
\[
\widetilde{III}_{\rho;3}^{\mathcal{L};\kappa}(k,t_1, t_2)=   \sum_{\begin{subarray}{c}
  \tilde{L}\in\cup_{l+|\alpha|\leq N_0 }   \nabla_x^\alpha P_l\\ 
   |\tilde{L}|\leq \tilde{c}(\rho), |\alpha|\leq |\rho|-\tilde{c}(\rho)
   \end{subarray}}  \int_{t_1}^{t_2 } \int_{\R^3} \int_{\R^3}   \frac{  e^{i t  \kappa |\xi | + it \hat{v}\cdot \xi } \xi  \psi_k(\xi)}{2 |\xi|(|\xi|+  \kappa  \hat{v}\cdot \xi )}
\]
\be\label{2020april15eqn71}
   \times \p_t \widehat{ \mathcal{N}^{\tilde{L}\mathcal{L}}_{\rho;\mu\nu}}(t ,- \xi, v)    \widehat{ (\widetilde{V^{ \tilde{L} \mathcal{L} h_{\mu\nu}}})^{\kappa}  }(t  , \xi )     d \xi  d v   d t, 
\ee
\[
\widetilde{I}_{\rho;4}^{\mathcal{L} }(k,t_1, t_2)= \sum_{  \begin{subarray}{c}  ( \Gamma_1, \Gamma_2)\in P\times\{\p_{\alpha}\} \cup \{\p_{\alpha}\}\times \mathcal{P}\\ 
  (\tilde{L}, \rho_1, \rho_2, \mathcal{L}_1, \mathcal{L}_2)\in S_{\rho, \mathcal{L}}^1
  \end{subarray}}  \int_{t_1}^{t_2} \int_{\R^3} \int_{\R^3}  \mathcal{N}_{ \tilde{\mathcal{L}}_1 \tilde{\mathcal{L}}_2 ;\tilde{L}, \mu\nu}^{  {\mathcal{L}};\Gamma_1,\Gamma_2;\rho, \rho_1, \rho_2} (t,x,v)  P_{k}\big( ( \tilde{\rho}^{h_{\mu\nu  }}_{  \tilde{L} \mathcal{L}_2 }   )^{\kappa} \big)(t, x+t\hat{v})   \]
\be\label{2020aug18eqn2}
 +  \mathcal{N}^{\tilde{L}\mathcal{L}\mathcal{L}_1\mathcal{L}_2}_{\rho\rho_1\rho_2;\mu\nu}(t,x,v)  \d^{-1} P_{k}\big( (\tilde{\rho}^{h_{\mu\nu  }}_{ \tilde{L}\Gamma_2 \tilde{\mathcal{L}} }   )^{\kappa} \big)(t, x+t\hat{v}) d x d  v dt , 
\ee 
\[
\widetilde{II}_{\rho;4}^{\mathcal{L};\kappa}(k,t_1, t_2)= \sum_{  \begin{subarray}{c}  ( \Gamma_1, \Gamma_2)\in P\times\{\p_{\alpha}\} \cup \{\p_{\alpha}\}\times \mathcal{P}\\ 
  (\tilde{L}, \rho_1, \rho_2, \mathcal{L}_1, \mathcal{L}_2)\in S_{\rho, \mathcal{L}}^1
  \end{subarray}}      (-1)^i \int_{\R^3} \int_{\R^3}      e^{i t_i \kappa |\xi | + it_i\hat{v}\cdot \xi }  \frac{   \psi_k(\xi)}{2 |\xi|(|\xi|+  \kappa  \hat{v}\cdot \xi )} \big[   \widehat{\mathcal{N}^{\tilde{L}\mathcal{L}\mathcal{L}_1\mathcal{L}_2}_{\rho\rho_1\rho_2;\mu\nu}}(t_i, -\xi, v)\]
 \[
\times  \widehat{ (\widetilde{V^{ \tilde{L} \mathcal{L}_2 h_{\mu\nu}}})^{\kappa}  }(t_i, \xi )   +   \widehat{ \mathcal{N}_{ \tilde{\mathcal{L}}_1 \tilde{\mathcal{L}}_2 ;\tilde{L}, \mu\nu}^{  {\mathcal{L}};\Gamma_1,\Gamma_2;\rho, \rho_1, \rho_2}}(t_i, -\xi, v) \widehat{ (\widetilde{V^{ \tilde{L}\Gamma_2 \tilde{\mathcal{L}}_2 h_{\mu\nu}}})^{\kappa}  }(t_i, \xi ) \big]   d \xi  d v 
\]
\[
+  \int_{t_1}^{t_2 } \int_{\R^3} \int_{\R^3}      e^{i t  \kappa |\xi | + it \hat{v}\cdot \xi }\frac{   \psi_k(\xi)}{2 |\xi|(|\xi|+  \kappa  \hat{v}\cdot \xi )}     \big(\widehat{\mathcal{N}^{\tilde{L}\mathcal{L}\mathcal{L}_1\mathcal{L}_2}_{\rho\rho_1\rho_2;\mu\nu}}(t , -\xi, v)  \p_t \widehat{ (\widetilde{V^{ \tilde{L} \mathcal{L}_2 h_{\mu\nu}}})^{\kappa}  }(t , \xi )
\]
 \be\label{2020june1eqn2}
 +   \widehat{ \mathcal{N}_{ \tilde{\mathcal{L}}_1 \tilde{\mathcal{L}}_2 ;\tilde{L}, \mu\nu}^{  {\mathcal{L}};\Gamma_1,\Gamma_2;\rho, \rho_1, \rho_2}}(t , -\xi, v) \p_t  \widehat{ (\widetilde{ V^{ \tilde{L}\Gamma_2 \tilde{\mathcal{L}}_2 h_{\mu\nu}}})^{\kappa}  }(t , \xi ) \big)  d \xi  d v d t,
\ee
\[
\widetilde{III}_{\rho;4}^{\mathcal{L};\kappa}(k,t_1, t_2)= \sum_{  \begin{subarray}{c}  ( \Gamma_1, \Gamma_2)\in P\times\{\p_{\alpha}\} \cup \{\p_{\alpha}\}\times \mathcal{P}\\ 
  (\tilde{L}, \rho_1, \rho_2, \mathcal{L}_1, \mathcal{L}_2)\in S_{\rho, \mathcal{L}}^1
  \end{subarray}} \int_{t_1}^{t_2 } \int_{\R^3} \int_{\R^3}      e^{i t  \kappa |\xi | + it \hat{v}\cdot \xi }  \frac{   \psi_k(\xi)}{2 |\xi|(|\xi|+  \kappa  \hat{v}\cdot \xi )} \big( \widehat{ (\widetilde{V^{ \tilde{L} \mathcal{L}_2 h_{\mu\nu}} } )^{\kappa}  }(t , \xi )   
  \]
 \be\label{2020april15eqn75}
  \times   \p_t  \widehat{\mathcal{N}^{\tilde{L}\mathcal{L}\mathcal{L}_1\mathcal{L}_2}_{\rho\rho_1\rho_2;\mu\nu}}(t , -\xi, v) +   \p_t\widehat{ \mathcal{N}_{ \tilde{\mathcal{L}}_1 \tilde{\mathcal{L}}_2 ;\tilde{L}, \mu\nu}^{  {\mathcal{L}};\Gamma_1,\Gamma_2;\rho, \rho_1, \rho_2}}(t , -\xi, v) \widehat{ (\widetilde{V^{ \tilde{L}\Gamma_2 \tilde{\mathcal{L}}_2 h_{\mu\nu}}})^{\kappa}  }(t , \xi ) \big)  d \xi  d v d t,
\ee
where
\be\label{2020april15eqn61}
 \mathcal{N}^{\tilde{L}\mathcal{L}}_{\rho;\mu\nu}(t,x,v):=    \big(\omega^{\mathcal{L}}_{\rho} ( x, v)   \big)^2   \Lambda^{\rho}{u}^{\mathcal{L}}(t,x,v) C^{\tilde{L}}_{\mu\nu;\rho}(x,v)D_v  u^{ }(t,x,v),
\ee
\be\label{2020april15eqn62}
 \mathcal{N}^{\tilde{L}\mathcal{L}\mathcal{L}_1\mathcal{L}_2}_{\rho\rho_1\rho_2;\mu\nu}(t,x,v):=     \big(\omega^{\mathcal{L}}_{\rho} ( x, v)   \big)^2   \Lambda^{\rho}{u}^{\mathcal{L}}(t,x,v) {C}_{\mathcal{L}_1\mathcal{L}_2;\tilde{L}, \mu, \nu}^{\mathcal{L};\rho,\rho_1,\rho_2}(x,v)  \cdot \nabla_x \Lambda^{\rho_1}  u^{\mathcal{L}_1}(t,x,v),
\ee
\be\label{2020april15eqn63}
\mathcal{N}_{ \tilde{\mathcal{L}}_1 \tilde{\mathcal{L}}_2 ;\tilde{L}, \mu\nu}^{  {\mathcal{L}};\Gamma_1,\Gamma_2;\rho, \rho_1, \rho_2}(t,x,v):=      \big(\omega^{\mathcal{L}}_{\rho} ( x, v)   \big)^2   \Lambda^{\rho}{u}^{\mathcal{L}}(t,x,v)  C_{ \tilde{\mathcal{L}}_1 \tilde{\mathcal{L}}_2 ;\tilde{L}, \mu\nu}^{  {\mathcal{L}};\Gamma_1,\Gamma_2;\rho, \rho_1, \rho_2}(x,v)  \Lambda^{\rho_1} u^{\Gamma_1 \tilde{\mathcal{L}}_1}(t,x,v). 
\ee

$\noindent$ $\bullet$\quad The estimate of $\widetilde{I}_{\rho;i}^{\mathcal{L}}(k,t_1, t_2), i\in\{3,4\}.$
\quad

Recall (\ref{2020aug18eqn1}) and (\ref{2020aug18eqn2}).  Note that, these two terms are Vlasov-density type interaction. We first introduce two bilinear estimates which have been studied in \cite{wang3} and will be used later. 

 For any fixed sign $\mu\in\{+,-\}$, any two distribution functions $f_1(t,x,v)$ and $f_2(t,x,v)$, any fixed $k\in\mathbb{Z}$, any symbol $m(\xi,v)\in L^\infty_v \mathcal{S}^\infty_k$, and  any differentiable coefficient $c(v)$,  we define a bilinear operator as follows, 
\be\label{noveq260}
B_k  (f_1,f_2)(t,x,v):= f_1(t,x,v) E (P_{k}[f_2(t)])(x+ \hat{v} t), 
\ee
where
\[
E(P_k[f])(t,x):=\int_{\R^3}\int_{\R^3} e^{i x \cdot \xi} e^{-i \mu t \hat{u}\cdot \xi} {c(u)  m(\xi,u) \psi_k(\xi)}  \widehat{f}(t, \xi, u) d\xi d u. 
\]
For the above defined bilinear operator, as stated in  \cite{wang3}[Lemma 2.4],  $\forall k\in \Z$, we have 
  \[
\| B_k (f_1,f_2)(t,x  ,v)\|_{L^2_x L^2_v}  \lesssim \sum_{|\alpha|\leq 5}\min\{ |t|^{-3}  2^{ k_{+}/2} ,2^{3k}\}   \|m(\xi,v)\|_{L^\infty_v \mathcal{S}^\infty_k}    
 \]
\be\label{bilineardensitylargek}
 \times  \|(1+ |v|+|x|)^{20}c(v) f_2(t,x,v) \|_{  L^2_x L^2_v}   \|(1+ |v|+|x|)^{20} \nabla_v^\alpha f_1(t,x,v)\|_{L^2_xL^2_v}.
\ee
Moreover, if $ k\in \mathbb{Z}, |t|^{-1}\lesssim  2^{k}\leq 1,$ we have 
\[
\| B_k (f_1,f_2)(t,x  ,v)\|_{L^2_x L^2_v}  \lesssim \sum_{|\alpha|\leq 5} \big( \|m(\xi,v)\|_{L^\infty_v\mathcal{S}^\infty_k} + \|m(\xi,v)\|_{L^\infty_v\mathcal{S}^\infty_k}\big) 
\]
\[
\times \big[ |t|^{-2}2^{k} \|\big(|c(v)|+|\nabla_v c(v)|\big) f_3(t,v) \|_{L^2_v} + |t|^{-3}  2^{ k }    \|(1+ |v|+|x|)^{20}c(v) f_2(t,x,v) \|_{  L^2_x L^2_v}
\]
 \be\label{bilineardensity}
 +  |t|^{-3} \| c(v)\big( \widehat{f_2}(t,0,v ) -\nabla_v\cdot f_3(t, v)\big)\|_{L^2_v}   \big] \|(1+ |v|+|x|)^{20}  \nabla_v^\alpha f_1(t,x,v)\|_{L^2_xL^2_v}, 
 \ee
 where  $f_3(t,v):\R_t\times \R_v^3\longrightarrow \mathbb{C}$, is 
 any localized differentiable function.

 Based on the possible distribution of vector fields and spatial derivatives, we separate into three subcases as follows.

\textbf{Subcase} $1$. \quad  If  $  |\tilde{L}| +|\mathcal{L}_2|\leq  | \rho |+|\mathcal{L}|  -5$. 

For this case,  we use  the $L^\infty_{x,v}-L^2_{x,v}$ type bilinear estimate. As a result,  from the decay estimate (\ref{densitydecay}) of the density type function in Lemma \ref{decayestimateofdensity}, we have
\[
\sum_{k\in \Z, k\geq -m + c(\rho, \mathcal{L}) \delta_0 m   }\sum_{i=3,4} | \widetilde{I }_{\rho;i}^{\mathcal{L} }(k,t_1, t_2)| 
\lesssim  \sum_{ k\geq -m + c(\rho, \mathcal{L}) \delta_0 m} 2^{-2m-2k}(1+2^{k+H(5)\delta_0 m })   2^{2 H(\mathcal{L}, \rho)\delta_0 m }\epsilon_1^3 
\]
\[
\lesssim   2^{2 H(\mathcal{L}, \rho)\delta_0 m }\epsilon_1^3.
\]
 
 \textbf{Subcase} $2$. \quad  If  $  |\tilde{L}| +|\mathcal{L}_2|>  | \rho |+|\mathcal{L}| -5$ and $ | \rho |+|\mathcal{L}|\leq   N_0-5$.  

 For this subcase, as the case $k\leq 0$ has been ruled out, it remains to consider the case $k\geq 0$. From the Cauchy-Schwarz inequality, the bilinear estimate (\ref{bilineardensitylargek}),   we have
\be
\sum_{k\in \Z, k\geq 0   }\sum_{i=3,4} | \widetilde{I }_{\rho;i}^{\mathcal{L} }(k,t_1, t_2)|  \lesssim \sum_{k\geq 0} 2^{   2 H(\mathcal{L}, \rho)\delta_0 m} 2^{-m-k/2+H(10)\delta_0 m }\epsilon_1^3 \lesssim \epsilon_1^3.
\ee

\textbf{Subcase} $3$. \quad      $  |\tilde{L}| +|\mathcal{L}_2|>   | \rho |+|\mathcal{L}|  -5$ and $  | \rho |+|\mathcal{L}|>  N_0-5$.

For this case, we have $ |\tilde{c}(\rho_1)|+|\mathcal{L}_1|\leq  5$.  From the Cauchy-Schwarz inequality, the bilinear estimate (\ref{bilineardensitylargek}) and (\ref{bilineardensity}),   we have
\[
 \sum_{k\in \Z, k\geq -m + c(\rho, \mathcal{L}) \delta_0 m   } \sum_{i=3,4} | \widetilde{I }_{\rho;i}^{\mathcal{L} }(k,t_1, t_2)|  \lesssim \sum_{k\geq 0} 2^{   2 H(\mathcal{L}, \rho)\delta_0 m} 2^{-m-k/2+H(10)\delta_0 m }
\]
\[
+ \sum_{k \in [-m + c(\rho, \mathcal{L}) \delta_0 m, 0 ]\cap \Z}  2^{    H(\mathcal{L}, \rho)\delta_0 m+H(10)\delta_0 m } \big( 1+ 2^{-m-k } + 2^{-m+   H(\mathcal{L}, \rho)\delta_0 m}\big)\epsilon_1^3 \lesssim 2^{ 2  H(\mathcal{L}, \rho)\delta_0 m- \delta_0 m } \epsilon_1^3. 
\]

$\noindent$ $\bullet$\quad The estimate of $\widetilde{II}_{\rho;i}^{\mathcal{L};\kappa}(k,t_1, t_2), i\in\{3,4\}.$\qquad Recall (\ref{2020april15eqn41}) and (\ref{2020june1eqn2}).

 \textbf{Subcase} $1$. \quad    If  $  |\tilde{L}| +|\mathcal{L}_2|\leq   | \rho |+|\mathcal{L}|  -5$  . 

From the $L^\infty_{x,v}-L^2_{x,v}$ type bilinear estimate, the estimate (\ref{oct4eqn41}) in Proposition \ref{fixedtimenonlinarityestimate}  and  the estimate (\ref{july27eqn4}) in Lemma \ref{superlocalizedaug},   we conclude that 
\[
\sum_{ k\in \Z, k\in [ -m +  c(\rho, \mathcal{L}) \delta_0 m, -d(N_0,0)\delta_0 m ], i=3,4} | \widetilde{II}_{\rho;i}^{\mathcal{L};\kappa}(k,t_1, t_2)| 
\]
\[
 \lesssim \sum_{\begin{subarray}{c}
 k\in \Z,  k\in [ -m +  c(\rho, \mathcal{L}) \delta_0 m, -d(N_0,0)\delta_0 m ]\\ 
 0\leq |\rho_1|+|\mathcal{L}_1|\leq |\mathcal{L}|+|\rho|-1
 \end{subarray} }  2^{-k}  2^{ H(\mathcal{L}, \rho)\delta_0m + H(\mathcal{L}_1, \rho_1)\delta_0 m }\big[   2^{ k/2+k_{-}/2} 2^{  H( |\mathcal{L}|+ |\rho |+1-|\rho_1|-|\mathcal{L}_1|)\delta_1 m+\delta_1 m }
\]
 \be\label{2020april21eqn41}
  +    2^{ k/2+k_{-}/2}  2^{  H(  |\mathcal{L}|+ |\rho |+1-|\rho_1|-|\mathcal{L}_1|)\delta_0 m +k_{-}}\big]\lesssim 2^{2 H(\mathcal{L}, \rho)\delta_0m - \delta_0 m } \epsilon_1^3. 
\ee

For $k$ relatively large, we first use the equality (\ref{2020june20eqn24}) and  the pointwise estimate (\ref{2020aug31eqn31}) and then rerun the above argument. As a result, we have
\be\label{2020sep22eqn1}
\sum_{ k\in \Z, k\geq -d(N_0,0)\delta_0 m , i=3,4} | \widetilde{II}_{\rho;i}^{\mathcal{L};\kappa}(k,t_1, t_2)|  \lesssim \sum_{ k\in \Z, k\geq -H(N_0,0)\delta_0 m }2^{-m-2k+3H(\mathcal{L}, \rho)\delta_0 m } 2^{k/2+k_{-}/2}\epsilon_1^3\lesssim \epsilon_1^3. 
\ee

\textbf{Subcase} $2$. \quad   If $  |\tilde{L}| +|\mathcal{L}_2|>  | \rho |+|\mathcal{L}| -5$,  $   | \rho |+|\mathcal{L}|\leq N_0-5$.

 For this case, we have  $k\geq 0$.   From the  the $L^2_x-L^\infty_x L^2_v$ type bilinear estimate,  the decay estimate (\ref{densitydecay}) of the density type function in Lemma \ref{decayestimateofdensity},   and the estimate (\ref{oct4eqn41}) in Proposition \ref{fixedtimenonlinarityestimate},   we have 
\[
\sum_{ k\in \Z,k\geq 0, i=3,4} | \widetilde{II}_{\rho;i}^{\mathcal{L};\kappa}(k,t_1, t_2)|   
\] 
\be\label{2020april21eqn42}
\lesssim \sum_{ k\in \Z,k\geq 0 }   2^{ H(\mathcal{L}, \rho)  \delta_0 m  +  H(|\mathcal{L}|+|\rho|+1,0)\delta_0m+ H(10)\delta_0 m }   ( 2^{-k/2} 2^{-m/2} + 2^{-3k/2-3m/2})     \epsilon_1^3 \lesssim   \epsilon_1^3. 
\ee

\textbf{Subcase} $3$. \quad     If $  |\tilde{L}| +|\mathcal{L}_2|>   | \rho |+|\mathcal{L}|-5$,  $  | \rho |+|\mathcal{L}|>  N_0-5$.

For this case, we use  the $L^2_x-L^\infty_x L^2_v$ type bilinear estimate.  As a result,   from    the decay estimate (\ref{densitydecay}) of the density type function in Lemma \ref{decayestimateofdensity},   and the estimate (\ref{oct4eqn41}) in Proposition \ref{fixedtimenonlinarityestimate},  we have
\[
\sum_{ k\in \Z, k\geq -m +  c(\rho, \mathcal{L}) \delta_0 m, i=3,4} | \widetilde{II}_{\rho;i}^{\mathcal{L};\kappa}(k,t_1, t_2)|  \lesssim  \sum_{ k\in \Z,k \geq -m +  c(\rho, \mathcal{L}) \delta_0 m  }   2^{ H(\mathcal{L}, \rho)\delta_0 m } 2^{H(10)\delta_0 m }  
\]
\[
   \times  ( 2^{-k/2} 2^{-m/2} + 2^{-3k/2-3m/2}) \big( 2^{  H(  |\mathcal{L}|+ |\rho|+1)\delta_1 m+\delta_1 m } + 2^{  H(|\mathcal{L}|+ |\rho|+1,0)\delta_0 m +k_{-}}\big)\epsilon_1^3
\]
\be\label{2020aug18eqn11}
\lesssim 2^{2  H(\mathcal{L}, \rho)  \delta_0 m - \delta_0 m } \epsilon_1^3.
\ee
 
 $\noindent$ $\bullet$\quad The estimate of $\widetilde{III}_{\rho;i}^{\mathcal{L};\kappa}(k,t_1, t_2), i\in\{3,4\}.$\quad 

 Recall the detailed formulas of $\widetilde{III}_{\rho;i}^{\mathcal{L};\kappa}(k,t_1, t_2), i\in\{3,4\},$ in (\ref{2020april15eqn71}) and (\ref{2020april15eqn75}) and the nonlinearities in 
 (\ref{2020april15eqn61}--\ref{2020april15eqn63}). 

We first show how to handle $\widetilde{III}_{\rho;3}^{\mathcal{L};\kappa}(k,t_1, t_2)$  in details as follows. Recall  the decomposition of ``$D_v$'' in (\ref{2020april21eqn11}), and the equation satisfied by $\Lambda^{\rho} u^{\mathcal{L}}$ in (\ref{2020april8eqn31}), we classify $\p_t \mathcal{N}^{\tilde{L}\mathcal{L}}_{\rho;\mu\nu}(t,x,v)$ as follows, 
\be\label{2020aug31eqn11}
 \p_t \mathcal{N}^{\tilde{L}\mathcal{L}}_{\rho;\mu\nu}(t,x,v)=  \nabla_x\cdot  \mathcal{N}^{\tilde{L}\mathcal{L};1}_{\rho;\mu\nu}(t,x,v)+  D_v\cdot  \mathcal{N}^{\tilde{L}\mathcal{L};2}_{\rho;\mu\nu}(t,x,v)+   \mathcal{N}^{\tilde{L}\mathcal{L};3}_{\rho;\mu\nu}(t,x,v) +  \mathcal{N}^{\tilde{L}\mathcal{L};4}_{\rho;\mu\nu}(t,x,v),
\ee
where
\[
 \mathcal{N}^{\tilde{L}\mathcal{L};1}_{\rho;\mu\nu}(t,x,v):=- v c_{\iota}(t,x,v)\big(\omega^{\mathcal{L}}_{\rho} ( t,x, v)   \big)^2 C^{\tilde{L}}_{\mu\nu;\rho}(x,v)  \Lambda_{\geq 1}[(v^0)^{-1}](t, x+t\hat{v})  \Lambda^{\rho} u^{\mathcal{L}}(t,x,v)  \Lambda^{\iota}   u^{ }(t,x,v),
\]
\[
\mathcal{N}^{\tilde{L}\mathcal{L};2}_{\rho;\mu\nu}(t,x,v):= -  \h \big( (v^0)^{-1}   v_{\alpha} v_{\beta} \nabla_x g^{\alpha\beta}\big)(t, x+t\hat{v})   \big(\omega^{\mathcal{L}}_{\rho} ( t,x, v)   \big)^2 
\]
\be
\times c_{\iota}(t,x,v) C^{\tilde{L}}_{\mu\nu;\rho}(x,v)   \Lambda^{\rho} u^{\mathcal{L}}(t,x,v)  \Lambda^{\iota}   u^{ }(t,x,v),
\ee
\[
\mathcal{N}^{\tilde{L}\mathcal{L};3}_{\rho;\mu\nu}(t,x,v)=\sum_{\iota\in \mathcal{S}, |\iota|=1} 2   \omega^{\mathcal{L}}_{\rho} ( t,x, v)    \p_t\omega^{\mathcal{L}}_{\rho} ( t,x, v)    \Lambda^{\rho}{u}^{\mathcal{L}}(t,x,v) C^{\tilde{L}}_{\mu\nu;\rho}(x,v) c_{\iota}(t,x,v) \Lambda^{\iota}u^{ }(t,x,v)
\]
\[
- \big(\omega^{\mathcal{L}}_{\rho} ( t,x, v)\big)^2  C^{\tilde{L}}_{\mu\nu;\rho}(x,v) \big(  \p_t c_{\iota}(t,x,v) \Lambda^{\iota}u^{ }(t,x,v)+  c_{\iota}(t,x,v)  (    {\mathfrak{N}}_{\iota,3}^{ }(t,x,v)+ {\mathfrak{N}}_{\iota,4}^{ }(t,x,v) ) \big)  \Lambda^{\rho}{u}^{\mathcal{L}}(t,x,v)
\]
\[
  +  \big(\omega^{\mathcal{L}}_{\rho} ( t,x, v)   \big)^2C^{\tilde{L}}_{\mu\nu;\rho}(x,v)     \mathfrak{N}_{\rho,4 }^{\mathcal{L}}(t,x,v)    c_{\iota}(t,x,v) \Lambda^{\iota}u^{ }(t,x,v) + \nabla_x\cdot\big(v  c_{\iota}(t,x,v)\big(\omega^{\mathcal{L}}_{\rho} ( t,x, v)   \big)^2 C^{\tilde{L}}_{\mu\nu;\rho}(x,v) \]
  \[
  \times  \Lambda_{\geq 1}[(v^0)^{-1}](t, x+t\hat{v})\big)\Lambda^{\rho} u^{\mathcal{L}}(t,x,v)  \Lambda^{\iota}   u^{ }(t,x,v)+  \Lambda^{\rho} u^{\mathcal{L}}(t,x,v)  \Lambda^{\iota}   u^{ }(t,x,v)
\]
\be
 \times D_v\cdot \big[  \h \big( (v^0)^{-1}   v_{\alpha} v_{\beta} \nabla_x g^{\alpha\beta}\big)(t, x+t\hat{v})   c_{\iota}(t,x,v)\big(\omega^{\mathcal{L}}_{\rho} ( t,x, v)   \big)^2 C^{\tilde{L}}_{\mu\nu;\rho}(x,v) \big],
\ee
\be
\mathcal{N}^{\tilde{L}\mathcal{L};4}_{\rho;\mu\nu}(t,x,v)=\sum_{\tilde{L}\in \cup_{\alpha\in \Z_+^3, l\in \Z_+}\nabla_x^\alpha P_l,  |\tilde{L}|\leq  \tilde{c}(\rho),  |
 \alpha|\leq |\rho|- \tilde{c}(\rho)} \mathfrak{N}_{\tilde{L}}^{\alpha\beta}(t,x,v) \nabla_x \tilde{L}\mathcal{L} g^{\alpha\beta} (t, x+t\hat{v}),  
 \ee
 where
\be\label{2020aug31eqn12}
  \mathfrak{N}_{\tilde{L}}^{\alpha\beta}(t,x,v):=c_{\tilde{L}}^{\rho}(t,x,v) (v^0)^{-1}   v_{\alpha} v_{\beta}   u(t,x,v)\cdot D_v \big( \big(\omega^{\mathcal{L}}_{\rho} ( t,x, v)   \big)^2C^{\tilde{L}}_{\mu\nu;\rho}(x,v)      D_v u(t,x,v)\big). 
\ee

From (\ref{2020april15eqn71}) and (\ref{2020aug31eqn11}), after doing integration by parts in ``$v$'' for $D_v\cdot  \mathcal{N}^{\tilde{L}\mathcal{L};2}_{\rho;\mu\nu}(t,x,v)$ and doing dyadic decomposition for the metric component in $\mathcal{N}^{\tilde{L}\mathcal{L};4}_{\rho;\mu\nu}(t,x,v)$, we have 
\[
\widetilde{III}_{\rho;3}^{\mathcal{L};\kappa}(k,t_1, t_2)= H_{\rho;1}^{\mathcal{L};\kappa}(k,t_1, t_2)+ \sum_{k_1\in \Z} H_{\rho;2}^{\mathcal{L};\kappa}(k,k_1;t_1, t_2),
\]
\[
H_{\rho;1}^{\mathcal{L};\kappa}(k,t_1, t_2)=   \sum_{\begin{subarray}{c}
  \tilde{L}\in\cup_{l+|\alpha|\leq N_0 }   \nabla_x^\alpha P_l, 
   |\tilde{L}|\leq \tilde{c}(\rho), |\alpha|\leq |\rho|-\tilde{c}(\rho)
   \end{subarray}}   \int_{t_1}^{t_2 } \int_{\R^3} \int_{\R^3}  e^{i t  \kappa |\xi | + it \hat{v}\cdot \xi }    \frac{ \xi  \psi_k(\xi)   \widehat{ (\widetilde{V^{ \tilde{L} \mathcal{L} h_{\mu\nu}}})^{\kappa}  }(t  , \xi )    }{2 |\xi|(|\xi|+  \kappa  \hat{v}\cdot \xi )} 
\]
\[
\times \big(   i \xi \cdot \widehat{ \widetilde{\mathcal{N}}^{\tilde{L}\mathcal{L};1}_{\rho;\mu\nu}}(t , \xi, v) +  \widehat{ \widetilde{\mathcal{N}}^{\tilde{L}\mathcal{L};3}_{\rho;\mu\nu}}(t , \xi, v)    \big)      -\nabla_v\big(   \frac{ \xi  \psi_k(\xi)}{2 |\xi|(|\xi|+  \kappa  \hat{v}\cdot \xi )} \big)\cdot  \widehat{ \widetilde{\mathcal{N}}^{\tilde{L}\mathcal{L};2}_{\rho;\mu\nu}}(t , \xi, v)     d \xi  d v   d t,  
\]
\[
H_{\rho;2}^{\mathcal{L};\kappa}(k,k_1;t_1, t_2)= \sum_{\begin{subarray}{c}
\tilde{L}_1,\tilde{L}_2 \in \cup_{\alpha\in \Z_+^3, l\in \Z_+}\nabla_x^\alpha P_l\\ 
 |\tilde{L}_1|,|\tilde{L}_2|\leq  \tilde{c}(\rho),  |
 \alpha|\leq |\rho|- \tilde{c}(\rho)
\end{subarray}
  } \int_{t_1}^{t_2 } \int_{\R^3} \int_{\R^3} \int_{\R^3}  e^{  -  it \hat{v}\cdot \eta }    \frac{ \xi  \psi_k(\xi)   \widehat{ (\widetilde{U^{ \tilde{L}_1 \mathcal{L} h_{\mu\nu}}})^{\kappa}  }(t  , \xi )    }{2 |\xi|(|\xi|+  \kappa  \hat{v}\cdot \xi )} 
\]
\be
\times i(-\xi-\eta)  \widehat{\mathfrak{N}_{\tilde{L}_2}^{\alpha\beta}}(t, \eta, v) \widehat{\tilde{L}_2\mathcal{L}g^{\alpha\beta}}(t, -\xi-\eta ) \psi_{k_1}(\xi-\eta) d \eta d \xi d v d t. 
\ee

Based on the possible size of $| \rho|+|\mathcal{L}|$, we separate into two subcases as follows.

\textbf{Subcase} $1$. If $| \rho |+|\mathcal{L}|\geq 10$.

From the estimate (\ref{aug9eqn87}) in Lemma \ref{estimateofremaindervlasov}, the estimate (\ref{2020april9eqn21}) in Lemma  \ref{estimateofremaindervlasov3},  the estimate (\ref{2020julybasicestiamte}) in Lemma \ref{basicestimates} and the decay estimate (\ref{densitydecay}) of the density type function in Lemma \ref{decayestimateofdensity},  the following estimate holds ,
 \[
 \sum_{i=1,2,3}\| \int_{\R^3}(1+|v|)^{4} | {\mathcal{N}}^{\tilde{L}\mathcal{L};i}_{\rho;\mu\nu} (t,x-t\hat{v},v)|  d v \|_{L^2_{x }} \lesssim 2^{-3m/2+ H(5)\delta_0m  }\epsilon_1^2. 
 \]
 From the above estimate, we have
 \[
 \sum_{k\geq -m + c(\rho, \mathcal{L})\delta_0 m}| H_{\rho;1}^{\mathcal{L};\kappa}(k,t_1, t_2) \lesssim \big[\big(\sum_{k\in  [-m + c(\rho, \mathcal{L})\delta_0 m, 0]\cap \Z} 2^{- k/2-\gamma k } \big) + 1 \big] 2^{- m/2 +H(5)\delta_0 m +    H(\mathcal{L}, \rho) \delta_0m}
 \]
 \be
 \times  2^{  H(|\mathcal{L}|+ |\tilde{c}(\rho)|)\delta_1m } \epsilon_1^3  \lesssim 2^{ 2   H(\mathcal{L}, \rho) \delta_0m- \delta_0 m}\epsilon_1^3.
 \ee
 
Recall (\ref{2020aug31eqn12}). From the decay estimate (\ref{densitydecay}) of the density type function in Lemma \ref{decayestimateofdensity} and the $L^2_x-L^2_x-L^\infty_x $ type multilinear estimate, we have
\[
\sum_{k, k_1\in \Z,k\geq -m + c(\rho, \mathcal{L}) \delta_0 m } |H_{\rho;2}^{\mathcal{L};\kappa}(k,k_1;t_1, t_2)|\lesssim  \sum_{k, k_1\in \Z,k\geq -m + c(\rho, \mathcal{L}) m } 2^{-k +(1/2-\gamma)k_{-}}  2^{ H(|\mathcal{L}|+|\tilde{c}(\rho)|)\delta_1 m + H(5)\delta_0 m }
\]
\be
\times 2^{ H(\mathcal{L}, \rho) \delta_0 m } 2^{(1/2-\gamma)k_{1,-} } \big(\mathbf{1}_{|k-k_1|\leq 10} + 2^{-m-\max\{k,k_1\}}\mathbf{1}_{|k-k_1|\geq 10}\big)\epsilon_1^4\lesssim  2^{ 2   H(\mathcal{L}, \rho) \delta_0m- \delta_0 m}\epsilon_1^4. 
\ee

\textbf{Subcase} $2$. If $| \rho |+|\mathcal{L}|< 10$. 

 Note that for this case  we have   $k\geq 0 $. Since the total number of vector fields is far less than $N_0$, we first use the equality (\ref{2020june20eqn24}) for the metric component to trade one spatial derivative for one vector field.

  Thanks to  the pointwise estimate (\ref{2020aug31eqn31}), which provides the smallness of $2^{-m}$, after rerunning the above argument, we have 
\be
\sum_{k\geq 0, k\in \Z}|\widetilde{III}_{\rho;3}^{\mathcal{L};\kappa}(k,t_1, t_2)|\lesssim  \sum_{k\geq 0, k\in \Z}  2^{-k } 2^{- m/2 +H(5)\delta_0 m +  2  H(\mathcal{L}, \rho) \delta_0m}  \epsilon_1^3\lesssim  \epsilon_1^3.
\ee
 
 Due to an obvious loss of ``$t$'' comes from the coefficient of $D_v$ in $\widetilde{III}_{\rho;3}^{\mathcal{L};\kappa}(k,t_1, t_2)$, which is not an issue for $\widetilde{III}_{\rho;4}^{\mathcal{L};\kappa}(k,t_1, t_2)$, the estimate of $\widetilde{III}_{\rho;4}^{\mathcal{L};\kappa}(k,t_1, t_2)$ is similar and also easier than $\widetilde{III}_{\rho;3}^{\mathcal{L};\kappa}(k,t_1, t_2)$. With minor modifications in the above argument, we have
 \be
 \sum_{k\geq 0, k\in \Z}|\widetilde{III}_{\rho;4}^{\mathcal{L};\kappa}(k,t_1, t_2)| \lesssim 2^{ 2   H(\mathcal{L}, \rho) \delta_0m- \delta_0 m}\epsilon_1^4. 
\ee
Hence finishing the proof of our desired estimate (\ref{2020april21eqn71}). 
 
 \end{proof}

 \begin{lemma}\label{vlasovenergy4}
 Under the bootstrap assumptions \textup{(\ref{BAmetricold})} and \textup{(\ref{BAvlasovori})}, the following estimate holds for any   $t_1, t_2\in [2^{m-1}, 2^m]\subset [0, T]$, $\mathcal{L}\in \cup_{l\leq N_0}\mathcal{P}_l, \rho\in \mathcal{S},  $  s.t., $   | \mathcal{L}| +   |  \rho |  \leq N_0,$ 
\be\label{2020april22eqn1}
\sum_{   k\in \Z}\big|  I_{\rho;5}^{\mathcal{L}}(k,t_1, t_2)\big|  \lesssim 2^{2  H(\mathcal{L}, \rho)\delta_0 m}\epsilon_1^3 
\ee
 \end{lemma}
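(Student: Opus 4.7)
The essential structural feature of $I_{\rho;5}^{\mathcal{L}}$, which distinguishes it from $I_{\rho;4}$ treated in Lemma \ref{notbulkcase}, is that the index set $S^2_{\rho,\mathcal{L}}$ forces $|\rho_2|+|\mathcal{L}_2|\leq 1$, whence $|\tilde L|\leq\tilde c(\rho_2)\leq 1$ and $|\alpha|\leq 1$, so the metric factor $P_k(\tilde L\mathcal{L}_2\Lambda_1[(v^0)^{-1}])$ or $P_k(\tilde L\Gamma_2\mathcal{L}_2 h_{\mu\nu})$ carries at most three vector fields or spatial derivatives in total. This places the metric factor well below top order, so its $L^\infty_x$-decay estimate \eqref{2020julybasicestiamte} in Lemma \ref{basicestimates} is of essentially optimal order $\epsilon_1\langle t\rangle^{-1+H(3)\delta_1 m}$, which together with the $\langle t\rangle$ from time integration exactly balances. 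In contrast, the Vlasov factor $\nabla_x \Lambda^{\rho_1}u^{\mathcal{L}_1}$ is effectively top order because $|\rho_1|+|\mathcal{L}_1|=|\mathcal{L}|+|\rho|-1$.

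The plan is to apply an $L^2_{x,v}\times L^2_{x,v}\times L^\infty_{x,v}$ H\"older estimate. The factor $\omega^{\mathcal{L}}_\rho\Lambda^\rho u^{\mathcal{L}}$ is controlled by the bootstrap \eqref{BAvlasovori} by $\epsilon_1 2^{H(\mathcal{L},\rho)\delta_0 m}$. For the coefficient-weighted factor $\omega^{\mathcal{L}}_\rho C^{\mathcal{L};\rho,\rho_1,\rho_2}_{\mathcal{L}_1\mathcal{L}_2;\tilde L}\cdot\nabla_x\Lambda^{\rho_1}u^{\mathcal{L}_1}$, I would absorb $\nabla_x$ into a suitable $\iota\in\mathcal{S}$ with $|\iota|=1$ (via the $\Gamma_{i+11}$ or $\Gamma_{i+5}$ identification in \eqref{dec26eqn5}), so that $\nabla_x\Lambda^{\rho_1}u^{\mathcal{L}_1}=\Lambda^{\iota\circ\rho_1}u^{\mathcal{L}_1}$ with the total order $|\iota\circ\rho_1|+|\mathcal{L}_1|=|\mathcal{L}|+|\rho|\leq N_0$ remaining within bootstrap reach. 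The polynomial $(x,v)$-growth of the coefficient, bounded by \eqref{2020sep10eqn51}, is absorbed by the excess decay factor $(1+|v|^{1/\delta_1}+|x|\phi)^{10}$ in $\omega^{\mathcal{L}_1}_{\iota\circ\rho_1}/\omega^{\mathcal{L}}_\rho$ arising from $|\mathcal{L}_1|+|\iota\circ\rho_1|\leq|\mathcal{L}|+|\rho|$; hence this factor is bounded by $\epsilon_1 2^{H(\mathcal{L}_1,\iota\circ\rho_1)\delta_0 m}$. The second integrand in \eqref{2020april15eqn51}, involving $P_k(\tilde L\Gamma_2\mathcal{L}_2 h_{\mu\nu})$ and $u^{\Gamma_1\mathcal{L}_1}$, is treated identically.

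For the low-order metric factor in $L^\infty_x$, Lemma \ref{basicestimates} gives $\|P_k\cdot\|_{L^\infty_x}\lesssim \epsilon_1 2^{k_-/2-\gamma k_-}\min\{2^{3k/2},2^{-m+k/2+H(3)\delta_1 m+\gamma m/100}\}2^{-(N_0-5)k_+}$, so $\sum_k\|P_k\cdot\|_{L^\infty_x}\lesssim\epsilon_1 2^{-m+H(3)\delta_1 m+\gamma m/100}$. Combining,
\[
\sum_{k\in\mathbb{Z}}|I_{\rho;5}^{\mathcal{L}}(k,t_1,t_2)|\lesssim 2^m\cdot\epsilon_1 2^{-m+H(3)\delta_1 m+\gamma m/100}\cdot\epsilon_1^2\,2^{(H(\mathcal{L},\rho)+H(\mathcal{L}_1,\iota\circ\rho_1))\delta_0 m}.
\]
The hierarchy built into $d(n,l)=H(n+l)+10n$ together with \eqref{july15eqn21} yields $H(\mathcal{L}_1,\iota\circ\rho_1)\leq H(\mathcal{L},\rho)$ since $|\mathcal{L}_1|+|\iota\circ\rho_1|=|\mathcal{L}|+|\rho|$ and $|\mathcal{L}_1|+\tilde c(\iota\circ\rho_1)\leq|\mathcal{L}|+\tilde c(\rho)$ (using $|\mathcal{L}_1|\leq|\mathcal{L}|$ and $\rho_1\preceq\rho$ so that $\tilde c(\rho_1)\leq\tilde c(\rho)$, while any additional $\tilde c$ contributed by $\iota$ is compensated by strict drop in $|\mathcal{L}_1|$). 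The residual $H(3)\delta_1 m+\gamma m/100$ is absorbed by the gap $\delta_0\gg\delta_1,\gamma$.

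The main obstacle is the case analysis for the various branches of the decomposition — the first vs.\ the second integrand of \eqref{2020april15eqn51}, and $\Gamma_2\in\{\p_\alpha\}$ vs.\ $\Gamma_1\in\{\p_\alpha\}$ — and verifying in each branch that the hierarchy inequality $H(\mathcal{L}_1,\iota\circ\rho_1)\leq H(\mathcal{L},\rho)$ persists after the appropriate identification of $\nabla_x$ with an element of $\mathcal{S}$, keeping careful track of whether $\iota$ is in the spatial or velocity radial/rotational class (since these contribute differently to $\tilde c$ and $c$). No new analytic tool beyond Lemma \ref{basicestimates}, the coefficient estimate \eqref{2020sep10eqn51}, and the bootstrap assumption \eqref{BAvlasovori} is required; in particular, no integration by parts in time is needed, because the low-order metric factor already provides sharp $\langle t\rangle^{-1}$ decay.
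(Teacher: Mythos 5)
Your proposal misses the essential difficulty and the proof would not close the bootstrap. You assert that ``no integration by parts in time is needed, because the low-order metric factor already provides sharp $\langle t\rangle^{-1}$ decay,'' but the paper's proof \emph{does} rely on a normal form transformation (integration by parts in time, combined with the null structure of the symbols $T_{k;\mu}^{\tilde h}(v)$ in \eqref{april26eqn2}), and for good reason. The concrete obstacle is that your final bound,
\[
2^m\cdot\epsilon_1 2^{-m+H(3)\delta_1 m+\gamma m/100}\cdot\epsilon_1^2\,2^{(H(\mathcal{L},\rho)+H(\mathcal{L}_1,\iota\circ\rho_1))\delta_0 m},
\]
exceeds the target $2^{2H(\mathcal{L},\rho)\delta_0 m}\epsilon_1^3$ by a factor $2^{H(3)\delta_1 m+\gamma m/100}$ even if $H(\mathcal{L}_1,\iota\circ\rho_1)\leq H(\mathcal{L},\rho)$, and this excess is \emph{not} ``absorbed by the gap $\delta_0\gg\delta_1,\gamma$'' — that is a comparison of slopes, not a spare negative power of $\langle t\rangle$. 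Moreover, in the extremal subcase of $S^2_{\rho,\mathcal{L}}$ where $\mathcal{L}_1=\mathcal{L}$ and $\rho_1$ is $\rho$ with one spatial-type derivative removed (so $\tilde c(\rho_1)=\tilde c(\rho)$) and $\iota$ is a spatial derivative with $\tilde c(\iota)=0$, a direct computation from $d(n,l)=H(n+l)+10n$ gives $H(\mathcal{L}_1,\iota\circ\rho_1)=H(\mathcal{L},\rho)$ \emph{with equality}, so there is no hierarchy reserve whatsoever.

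Your claimed absorption of the coefficient growth also fails in the same subcase. You write that the $(1+|x|)^{|\rho_2|-|\tilde L|}(1+|v|)^{\cdots}$ growth of the coefficient in \eqref{2020sep10eqn51} is ``absorbed by the excess decay factor $(1+|v|^{1/\delta_1}+|x|\phi)^{10}$ in $\omega^{\mathcal{L}_1}_{\iota\circ\rho_1}/\omega^{\mathcal{L}}_\rho$ arising from $|\mathcal{L}_1|+|\iota\circ\rho_1|\leq|\mathcal{L}|+|\rho|$,'' but since $|\rho_1|+|\mathcal{L}_1|=|\rho|+|\mathcal{L}|-1$ and $|\iota|=1$, one has $|\mathcal{L}_1|+|\iota\circ\rho_1|=|\mathcal{L}|+|\rho|$ exactly, and the powers $200N_0-10(\cdot)$ in \eqref{weightfunctions2} coincide; there is no excess. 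The paper's actual proof handles the genuinely easy branch $(\Gamma_1,\Gamma_2)\in\{\p_\alpha\}\times\mathcal{P}$, $\tilde c(\rho_2)+|\mathcal{L}_2|=1$ by the direct $L^2\times L^2\times L^\infty$ argument you describe (obtaining a $-\delta_0 m$ reserve), but for the remaining branch it restricts frequency to $k\in[-2H(4)\delta_0 m, H(4)\delta_0 m]$, isolates the dangerous combination $\hat v_\mu\hat v_\nu P_k(L h_{\mu\nu})$ via the double-Hodge decomposition \eqref{april26eqn1}, and does a normal form integration by parts in time using the fact that the transfer symbols $T^{\tilde h}_{k;\mu}(v)(\xi)/(\hat v\cdot\xi-\mu|\xi|)$ remain bounded thanks to null structure \eqref{april26eqn2}. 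That normal form is what produces the crucial extra $2^{-m/2}$ (see \eqref{2020april27eqn41}) and cannot be dispensed with.
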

 \begin{proof}
Recall (\ref{2020april15eqn51}) and the index set $ S_{\rho, \mathcal{L}}^2$ defined in (\ref{2022may6eqn2}). We have  $| \mathcal{L}| +   | \rho |\geq 1$, $| \rho_1 |+|\mathcal{L}_1|= | \rho  |+|\mathcal{L}|-1$ and $|\tilde{L}|+|\mathcal{L}_2|\leq 1$. From the $L^2_{x,v}-L^2_{x,v}-L^\infty_{x,v}$ type multilinear estimate and the estimate (\ref{2020julybasicestiamte}) in Lemma \ref{basicestimates}, the following estimate holds if $(\Gamma_1,\Gamma_2)\in   \{\p_\alpha\}\times \mathcal{P} $ and $\tilde{c}(\rho_2)+|\mathcal{L}_2|=1$ 
\[
\sum_{k\in \Z}\big| \int_{t_1}^{t_2} \int_{\R^3} \int_{\R^3}  \big(\omega^{\mathcal{L}}_{\rho} ( x, v)   \big)^2   \Lambda^{\rho}{u}^{\mathcal{L}}(t,x,v)P_k(\tilde{L}\mathcal{L}_2  \Lambda_{1}[(v^0)^{-1}] ) (t, x+t\hat{v} ) \]
\[
\times    {C}_{\mathcal{L}_1\mathcal{L}_2;\tilde{L} }^{\mathcal{L};\rho,\rho_1,\rho_2}(x,v)  \cdot \nabla_x \Lambda^{\rho_1}  u^{\mathcal{L}_1}(t,x,v) d x d v d t \big|+\big| \int_{t_1}^{t_2} \int_{\R^3} \int_{\R^3}  \big(\omega^{\mathcal{L}}_{\rho} ( x, v)   \big)^2   \Lambda^{\rho}{u}^{\mathcal{L}}(t,x,v)  
\]
\[
\times C_{  {\mathcal{L}}_1 {\mathcal{L}}_2 ;\tilde{L}, \mu\nu}^{  {\mathcal{L}};\Gamma_1,\Gamma_2;\rho, \rho_1, \rho_2}(x,v)  \Lambda^{\rho_1} u^{\Gamma_1  {\mathcal{L}}_1}(t,x,v)  P_k( \tilde{L}  \Gamma_2 {\mathcal{L}}_2  h_{\mu\nu})(t,x+t\hat{v})d x d v d t \big|
\]
\[
\lesssim \sum_{k\in \Z}\min\{2^{m+k+H(1)\delta_1 m }, 2^{H(3)\delta_1 m + \delta_1 m  -10k_{+} }\} 2^{       H(\mathcal{L}, \rho) \delta_0 m +   H(\Gamma_1\circ \mathcal{L}_1, \rho_1)\delta_0 m}\epsilon_1^3
\]
\[
\lesssim 2^{ 2  H(\mathcal{L}, \rho) \delta_0 m  -\delta_0 m  }\epsilon_1^3 .
\]

Now we consider the case $(\Gamma_1,\Gamma_2)\in P\times \{\p_\alpha\}  $ or $\tilde{c}(\rho_2)+|\mathcal{L}_2|=0$. For this case we have $|\rho_2|=1, i.e., \tilde{L}\in \{\nabla_x\}$.  In whichever case, the metric component is improved at the low frequency. More precisely, from the $L^2_{x,v}-L^2_{x,v}-L^\infty_{x,v}$ type multilinear estimate and  the decay estimate (\ref{aug16eqn1}), we have
\[
\sum_{  k\in \mathbb{Z}, k\notin [-2H( 4)\delta_0 m, H(  4)\delta_0 m ]} 
\big| \int_{t_1}^{t_2} \int_{\R^3} \int_{\R^3}  \big(\omega^{\mathcal{L}}_{\rho} ( x, v)   \big)^2   \Lambda^{\rho}{u}^{\mathcal{L}}(t,x,v) P_k(\tilde{L}\mathcal{L}_2  \Lambda_{1}[(v^0)^{-1}] ) (t, x+t\hat{v} )  
\]
\[
\times    {C}_{\mathcal{L}_1\mathcal{L}_2;\tilde{L} }^{\mathcal{L};\rho,\rho_1,\rho_2}(x,v)   \cdot \nabla_x \Lambda^{\rho_1}  u^{\mathcal{L}_1}(t,x,v) d x d v d t \big|+
\big| \int_{t_1}^{t_2} \int_{\R^3} \int_{\R^3}  \big(\omega^{\mathcal{L}}_{\rho} ( x, v)   \big)^2   \Lambda^{\rho}{u}^{\mathcal{L}}(t,x,v) 
\]
 \be\label{april26eqn9}
\times  C_{ \tilde{\mathcal{L}}_1 \tilde{\mathcal{L}}_2 ;\tilde{L}, \mu\nu}^{  {\mathcal{L}};\Gamma_1,\Gamma_2;\rho, \rho_1, \rho_2}(x,v)   \Lambda^{\rho_1} u^{\Gamma_1 \tilde{\mathcal{L}}_1}(t,x,v)  P_k( \tilde{L}  \Gamma_2 \tilde{\mathcal{L}}_2  h_{\mu\nu})(t,x+t\hat{v})d x d v d t \big| \lesssim 2^{  2   H(\mathcal{L}, \rho)\delta_0 m }\epsilon_1^3. 
\ee

From now on, we restrict ourself further to the case $k\in [-2H( 4)\delta_0 m, H( 4)\delta_0 m ]$. Moreover, from the pointwise  estimate (\ref{2020aug31eqn31}), we can rule out further the small $(x,v)$ case and the far away from the light cone case. More precisely the following estimate holds from the estimates (\ref{definitionofcoefficient1}) and (\ref{2020aug31eqn31}),
\[
\big| \int_{t_1}^{t_2} \int_{\R^3} \int_{\R^3}  \Lambda^{\rho}{u}^{\mathcal{L}}(t,x,v) P_k(\tilde{L}\mathcal{L}_2  \Lambda_{1}[(v^0)^{-1}] ) (t, x+t\hat{v} ){C}_{\mathcal{L}_1\mathcal{L}_2;\tilde{L} }^{\mathcal{L};\rho,\rho_1,\rho_2}(x,v)   \cdot \nabla_x \Lambda^{\rho_1}  u^{\mathcal{L}_1}(t,x,v) 
\]
\[
\times  \tilde{\varphi}(t,x,v)  \big(\omega^{\mathcal{L}}_{\rho} ( x, v)   \big)^2      d x d v d t \big|+ \big| \int_{t_1}^{t_2} \int_{\R^3} \int_{\R^3}  \big(\omega^{\mathcal{L}}_{\rho} ( x, v)   \big)^2   \Lambda^{\rho}{u}^{\mathcal{L}}(t,x,v)  C_{ \tilde{\mathcal{L}}_1 \tilde{\mathcal{L}}_2 ;\tilde{L}, \mu\nu}^{  {\mathcal{L}};\Gamma_1,\Gamma_2;\rho, \rho_1, \rho_2}(x,v)
\]
\be \label{2020sep1eqn61}
 \times   \Lambda^{\rho_1} u^{\Gamma_1 \tilde{\mathcal{L}}_1}(t,x,v)  P_k( \tilde{L}  \Gamma_2 \tilde{\mathcal{L}}_2  h_{\mu\nu})(t,x+t\hat{v}) \tilde{\varphi}(t,x,v) d x d v d t \big| \lesssim \epsilon_1^3 . 
\ee
where
\be
\tilde{\varphi}(t,x,v):=\psi_{\leq m/10}(|x|+|v|)+\psi_{ > m/10}(|x|+|v|) \psi_{\geq m-10}(|x+t\hat{v}|-|t|).
\ee

Now, we focus on the case when  $(x,v)$ is large    and close to  the light cone case. 
  From the estimate (\ref{2020april22eqn21}) and  the estimate of the coefficient (\ref{sepeqn88}) in Lemma \ref{decompositionofderivatives}, we identify the leading coefficients as follows, 
\be\label{2020april22eqn31}
|  \tilde{C}(t,x,v) |\lesssim (1+|v|)^{-1-c(\rho_2)},   \quad \tilde{C}(t,x,v):=  C_{ \tilde{\mathcal{L}}_1 \tilde{\mathcal{L}}_2 ;\tilde{L}, \mu\nu}^{  {\mathcal{L}};\Gamma_1,\Gamma_2;\rho, \rho_1, \rho_2}(x,v)- c_{\rho_2}^{\tilde{L}}(t,x,v) \hat{v}_{\mu} \hat{v}_{\nu},
\ee 
where $\hat{v}_0:=1.$

 Motivated from the above decompositions of coefficients and the decompositions of $ \hat{v}_{\mu}\hat{v}_{\nu} P_k( L h_{\mu\nu})$ in \eqref{april26eqn1} and $ \hat{v}_{\mu}\hat{v}_{\nu} P_k( \p_t L h_{\mu\nu})$  in \eqref{april26eqn11}, we have
\[
 \big| \int_{t_1}^{t_2} \int_{\R^3} \int_{\R^3}  \big(\omega^{\mathcal{L}}_{\rho} ( x, v)   \big)^2   \Lambda^{\rho}{u}^{\mathcal{L}}(t,x,v)  C_{ \tilde{\mathcal{L}}_1 \tilde{\mathcal{L}}_2 ;\tilde{L}, \mu\nu}^{  {\mathcal{L}};\Gamma_1,\Gamma_2;\rho, \rho_1, \rho_2}(x,v)  \Lambda^{\rho_1} u^{\Gamma_1 \tilde{\mathcal{L}}_1}(t,x,v)  P_k( \tilde{L}  \Gamma_2 \tilde{\mathcal{L}}_2h_{\mu\nu}) (t,x+t\hat{v}) 
  \]
  \[
   \times \psi_{> m/10}(|x|+|v|)   \psi_{< m-10}(|x+t\hat{v}|-|t|)    d x d v d t\big| 
  \]
\be\label{2020april22eqn32}
 \lesssim \sum_{\tilde{h}\in\{F, \underline{F}, \omega_k,  \vartheta_{mn}\}}
  \sum_{\mu\in\{+,-\}} \big| I_{\tilde{h};k}^\mu \big|  + \mathfrak{R}_k + \big|II_k \big| +  \big|III_k \big|, 
\ee
where
\[
  I_{\tilde{h};k}^\mu=  \int_{t_1}^{t_2} \int_{\R^3} \int_{\R^3}  \big(\omega^{\mathcal{L}}_{\rho} ( x, v)   \big)^2  c_{\rho_2}^{\tilde{L}}(t,x,v)  \Lambda^{\rho}{u}^{\mathcal{L}}(t,x,v)\Lambda^{\rho_1} u^{\Gamma_1 \tilde{\mathcal{L}}_1}(t,x,v)    \big(T_{k;\mu}^{\tilde{h};
 \alpha }(v) U^{\tilde{h}^{\tilde{L}\tilde{\mathcal{L}}_2}  }\big)^{\mu} (t,x+t\hat{v})
\]
\be\label{april26eqn14}
\times     \psi_{> m/10}(|x|+|v|)   \psi_{< m-10}(|x+t\hat{v}|-|t|) d x d v d t
\ee
\[
 \mathfrak{R}_k = \sum_{\mu\in \{+,-\}}
 \int_{t_1}^{t_2} \int_{\R^3} \int_{\R^3}  \big(\omega^{\mathcal{L}}_{\rho} ( x, v)   \big)^2  c_{\rho_2}^{\tilde{L}}(t,x,v)  \Lambda^{\rho}{u}^{\mathcal{L}}(t,x,v)    \Lambda^{\rho_1} u^{\Gamma_1 \tilde{\mathcal{L}}_1}(t,x,v) \big[  \p_\alpha \mathcal{R}_k 
\]
\be
+ a_\alpha \big(T_{k;\mu}^{\tilde{h}}(v)  \square    \tilde{L}  \tilde{\mathcal{L}}_2  h_{\mu\nu}\big)^{\mu}\big] \psi_{> m/10}(|x|+|v|)  \psi_{< m-10}(|x+t\hat{v}|-|t|)     d x d v d t,
\ee
\[
II_k =  \int_{t_1}^{t_2} \int_{\R^3} \int_{\R^3}  \big(\omega^{\mathcal{L}}_{\rho} ( x, v)   \big)^2   \Lambda^{\rho}{u}^{\mathcal{L}}(t,x,v)  \tilde{C}(t,x,v) \Lambda^{\rho_1} u^{\Gamma_1 \tilde{\mathcal{L}}_1}(t,x,v) \p_\alpha  P_k( \tilde{L} \tilde{\mathcal{L}}_2  h_{\mu\nu})(t,x+t\hat{v})
\]
\be\label{2020april22eqn34}
\times \psi_{\leq H(5)\delta_1 m}(v)\psi_{> m/10}(|x|+|v|)  \psi_{< m-10}(|x+t\hat{v}|-|t|)    d x d v d t.
\ee
\[
III_k =  \int_{t_1}^{t_2} \int_{\R^3} \int_{\R^3}  \big(\omega^{\mathcal{L}}_{\rho} ( x, v)   \big)^2   \Lambda^{\rho}{u}^{\mathcal{L}}(t,x,v)  \tilde{C}(t,x,v) \Lambda^{\rho_1} u^{\Gamma_1 \tilde{\mathcal{L}}_1}(t,x,v) \p_\alpha  P_k( \tilde{L} \tilde{\mathcal{L}}_2  h_{\mu\nu})(t,x+t\hat{v})
\]
\be\label{2020april27eqn34}
\times \psi_{> H(5)\delta_1 m }(v) \psi_{> m/10}(|x|+|v|)  \psi_{< m-10}(|x+t\hat{v}|-|t|)   d x d v d t,
\ee
 where $a_0=1$ and $a_i=0$ and the Fourier multiplier operators $T_{k;\mu}^{\tilde{h};
 \alpha }(v)$ are defined as follow, 
 \be\label{2020aug31eqn1}
 T_{k;\mu}^{\tilde{h};
0 }(v)= -i T_{k;\mu}^{\tilde{h} }(v)  \d , \quad T_{k;\mu}^{\tilde{h};
j}(v):= \p_j T_{k;\mu}^{\tilde{h} }(v) .
 \ee

Recall  (\ref{june16eqn51}). For the main term of  $\Lambda_{1}[(v^0)^{-1}]=-(1+|v|^2)^{-1}\Lambda_{1}[ v^0 ]$, we have 
 \be\label{2020sep1eqn41}
P_k h_{00} -  \hat{v}_i \hat{v}_j   P_k h_{ij}= G_k + B_k,\quad B_k:= P_k \underline{F} -\hat{v}_i \hat{v}_j  R_i R_j P_k \underline{F}= \sum_{\mu\in\{+,-\}} \big(\tilde{T}_{k;\mu}^{ \underline{F}}(v) U^{ \underline{F}}   \big)^{\mu},
  \ee
  where  $G_k$ denotes the good part of $P_k h_{00} -  \hat{v}_i \hat{v}_j   P_k h_{ij}$, which doesn't depend on $\underline{F}$, and the symbol of the Fourier multiplier operator is given as follows,
  \be\label{2022may30eqn1}
  \tilde{T}_{k;\mu}^{\underline{F}}(v)(\xi)= c_{\mu}|\xi|^{-1}( 1-(\hat{v}\cdot \xi/|\xi|)^2)\psi_k(\xi). 
  \ee

Similar to the decomposition \eqref{2020april22eqn32}, from \eqref{2020sep1eqn41}, we have 
\[
\big| \int_{t_1}^{t_2} \int_{\R^3} \int_{\R^3}  \big(\omega^{\mathcal{L}}_{\rho} ( x, v)   \big)^2   \Lambda^{\rho}{u}^{\mathcal{L}}(t,x,v) P_k(\tilde{L}\mathcal{L}_2  \Lambda_{1}[(v^0)^{-1}] ) (t, x+t\hat{v} )  {C}_{\mathcal{L}_1\mathcal{L}_2;\tilde{L} }^{\mathcal{L};\rho,\rho_1,\rho_2}(x,v)   \cdot \nabla_x \Lambda^{\rho_1}  u^{\mathcal{L}_1}(t,x,v)  
\]
\[
\times  \psi_{> m/10}(|x|+|v|)  \psi_{< m-10}(|x+t\hat{v}|-|t|)    d x d v d t \big| \lesssim  \big| J_k^1 \big|+ \big|J_k^2 \big|,
\]
where
\[
J_k^1=  \int_{t_1}^{t_2} \int_{\R^3} \int_{\R^3}  \big(\omega^{\mathcal{L}}_{\rho} ( x, v)   \big)^2   \Lambda^{\rho}{u}^{\mathcal{L}}(t,x,v)\langle v \rangle^{-1}  (\tilde{L}    G_k ) (t, x+t\hat{v} )  {C}_{\mathcal{L}_1\mathcal{L}_2;\tilde{L} }^{\mathcal{L};\rho,\rho_1,\rho_2}(x,v)   \cdot \nabla_x \Lambda^{\rho_1}  u^{\mathcal{L}_1}(t,x,v) 
\]
\be
\times\psi_{ > m/10}(|x|+|v|)    \psi_{< m-10}(|x+t\hat{v}|-|t|)     d x d v d t,
\ee
\[
 J_k^2=\sum_{\mu\in \{+,-\}}  \int_{t_1}^{t_2} \int_{\R^3} \int_{\R^3}  \big(\omega^{\mathcal{L}}_{\rho} ( x, v)   \big)^2   \Lambda^{\rho}{u}^{\mathcal{L}}(t,x,v)\langle v \rangle^{-1}  \tilde{L}\big(\tilde{T}_{k;\mu}^{ \underline{F}}(v) U^{ \underline{F}}   \big)^{\mu} (t, x+t\hat{v} )  
\]
\be
\times {C}_{\mathcal{L}_1\mathcal{L}_2;\tilde{L} }^{\mathcal{L};\rho,\rho_1,\rho_2}(x,v)   \cdot \nabla_x \Lambda^{\rho_1}  u^{\mathcal{L}_1}(t,x,v)  \psi_{> m/10}(|x|+|v|)  \psi_{< m-10}(|x+t\hat{v}|-|t|)      d x d v d t,
\ee

Recall (\ref{april1eqn11}) and  (\ref{aug25eqn2}). Note that, if $\tilde{c}(\rho_2)=0$, then the commutator $E_{\tilde{L},k}^{comm}$vanishes in $\mathcal{R}_k $.  From the estimate (\ref{april26eqn35}) in Lemma \ref{firstordercommutator}, the decay estimate of the density type function in Lemma \ref{decayestimateofdensity}, the decay estimate (\ref{july27eqn4}) in Lemma \ref{superlocalizedaug}, the estimate (\ref{oct7eqn73}) in Lemma \ref{harmonicgaugehigher}, and the estimate (\ref{oct4eqn41}) in Proposition \ref{fixedtimenonlinarityestimate}, we have
\be\label{2020sep1eqn42}
\begin{split}
\|\p_\alpha \mathcal{R}_k   \mathbf{1}_{\tilde{c}(\rho_2)\geq 1 }\|_{L^\infty_x} +\| \p_\alpha     G_k\|_{L^\infty} & \lesssim 2^{-m+0.999k-5k_{+}} \epsilon_1,\\ 
 \| \p_\alpha \mathcal{R}_k(t,x)\psi_{\geq m-100}(|x|)  \mathbf{1}_{\tilde{c}(\rho_2)=0}\|_{L^\infty_x} &\lesssim 2^{-10k_{+}}\min\{ 2^{-m + H(4)\delta_0m +k}, 2^{-2m + H(4 )\delta_0m   } \} \epsilon_1^2. 
\end{split}
\ee
 From the  $L^2_{x,v}-L^2_{x,v}-L^\infty_{x,v}$ type multilinear estimate, the estimate of coefficients in (\ref{sepeqn88}) and (\ref{2020april22eqn31}) and  the above estimate,  we have
 \be\label{april26eqn15}
\sum_{k\in [-2H( 4)\delta_0 m, H( 4)\delta_0 m ]\cap \mathbb{Z}}|\mathfrak{R}_k|+ |III_k|  +|J_k^1|\lesssim 2^{2H( \mathcal{L}, \rho)\delta_0 m } \epsilon_1^3.
 \ee
In the above estimate, we have  used the extra decay comes from  the smallness of coefficient $\tilde{C}(t,x,v)$ as $v$ is relatively large (see (\ref{2020april27eqn34})) in the above estimate of $III_k$. 

 It remains to estimate $ I_{\tilde{h};k}^\mu$, $II_k$,   and $J_k^2$ for the case $ k\in [-2H( 4)\delta_0 m, H(  4)\delta_0 m ]$.  We use the normal form transformation by  doing integration by parts in time  to utilize the null structure in $ I_{\tilde{h};k}^\mu$ and  $J_k^2$, see \eqref{april26eqn2} and \eqref{2022may30eqn1}.  As a result, on the Fourier side,  we have
 \[
 I_{\tilde{h};k}^\mu= \sum_{j=1,2} (-1)^{j-1}   \int_{\R^3} \int_{\R^3} \overline{\widehat{\mathcal{N}_{\tilde{h}}}( t_j, \xi, v) } e^{i  t_j\hat{v}\cdot \xi  - i t_j \mu|\xi|}  T_{k;\mu}^{\tilde{h};\alpha;1}(v,\xi)  i (\hat{v}\cdot \xi-  \mu |\xi|)^{-1}    \widehat{(V^{\tilde{h}^{\tilde{L}\tilde{\mathcal{L}}_2}})^{\mu}}( t_j, \xi)    d\xi d v  
 \]
  \be\label{2020may3eqn31}
 + \int_{t_1}^{t_2} \int_{\R^3} \int_{\R^3}  e^{i t\hat{v}\cdot \xi  - i t \mu|\xi|}     T_{k;\mu}^{\tilde{h};\alpha;1}(v,\xi) i ( \hat{v}\cdot \xi-  \mu |\xi|)^{-1}    \p_t\big(\overline{\widehat{\mathcal{N}_{\tilde{h}}}(t, \xi, v) } \widehat{(V^{\tilde{h}^{\tilde{L}\tilde{\mathcal{L}}_2}})^{\mu}}(t, \xi) \big)   d\xi d v d t
 \ee
 where
 \[
 \mathcal{N}_{\tilde{h}}(t,x,v):=\big(\omega^{\mathcal{L}}_{\rho} ( x, v)   \big)^2  c_{\rho_2}^{\tilde{L}}(x,v)  \Lambda^{\rho}{u}^{\mathcal{L}}(t,x,v)    \Lambda^{\rho_1} u^{\Gamma_1 \tilde{\mathcal{L}}_1}(t,x,v). 
 \]
Similar formula holds for $J_k^2$, we omit details here.

From the detailed formulas of symbols $ T_{k;\mu}^{\tilde{h}}(v)(\xi) $ in (\ref{april26eqn2}), after redoing the proof of lineaer decay estimate in Lemma \ref{superlocalizedaug}, we have 
\[
\sup_{v\in \R^3}\big| \int_{\R^3} e^{-i \mu t|\xi|} T_{k;\mu}^{\tilde{h};\alpha;1}(v,\xi)  i (\hat{v}\cdot \xi-  \mu |\xi|)^{-1}    \widehat{(V^{\tilde{h}^{\tilde{L}\tilde{\mathcal{L}}_2}})^{\mu}}( t_j, \xi) \psi_k(\xi) d \xi\big| \lesssim 2^{-m + H(2)\delta_1 m+ \delta_1 m   }  \epsilon_1. 
\]
From the above estimate, we know that we lose little but gain at least $2^{-m+H(N_0+3)\delta_0 m }$ from doing integration by parts in time. 

The estimate of $II_k$ is similar. Recall \eqref{2020april22eqn34}, by exploiting the fact   that $v$ is small in $II_k$, we also doing normal form transformation for $II_k$, we  lose at most $(1+|v|)^22^{|k|}$, which is less than $2^{10H(5)\delta_0 m}$ as $|v|\leq 2^{H(5)\delta_1  m}$ and $k\in  [-2H(  4)\delta_0 m, H( 4)\delta_0 m ] $ but gain at least $2^{-m+H(N_0+3)\delta_0 m }$.  

To sum up, similar to the procedure we did in the estimate of $I_{\rho;3}^{\mathcal{L}}(k,t_1, t_2)$ in Lemma \ref{notbulkcase}, we have 
\be\label{2020april27eqn41}
 \big|I_{\tilde{h};k}^\mu\big| + |II_k| +|J_k^2|\lesssim 2^{-m/2}\epsilon_1^3. 
\ee
Hence finishing the proof of our desired estimate (\ref{2020april22eqn1}).
 \end{proof}

\section{The $Z$-norm estimate for the Vlasov part}\label{ZnormVlas}
  
Recall the equation satisfied by $u^{\mathcal{L}}$  in (\ref{2020april7eqn6}) and the definition of the correction term in (\ref{correctionterm}). As a result of direct computations, after doing integration by parts in $x$ once for $\nabla_x \nabla_v^{\alpha_2}u^{\mathcal{L}_2}$,   based on the order of derivatives in $v$ and the number of vector fields in $\cup_{n}\mathcal{P}_n$ distributed,   we classify the nonlinearity of the equation satisfied by $\nabla_v^{\alpha} \widehat{u^{\mathcal{L}}}(t, 0,v)-    \nabla_v\cdot {u^{\mathcal{L};\alpha}_{corr}}(t,v) $ as follows, 
\be\label{2020may4eqn69}
\p_t \big( \nabla_v^{\alpha} \widehat{u^{\mathcal{L}}}(t, 0,v)-    \nabla_v\cdot {u^{\mathcal{L};\alpha}_{corr}}(t,v)  \big)=   \sum_{|\alpha_1|+|\alpha_2|\leq |\alpha|,\mathcal{L}_1\circ \mathcal{L}_2\preceq \mathcal{L}}  \int_{\R^3}\mathfrak{R}_{\mathcal{L};\mathcal{L}_1, \mathcal{L}_2}^{\alpha;\alpha_1, \alpha_2}(t,x,v) d x ,
\ee
where, if $ \alpha_2 = \alpha,\mathcal{L}_2=\mathcal{L}  $, we have  
\[\mathfrak{R}_{\mathcal{L};\mathcal{L}_1, \mathcal{L}_2}^{\alpha;\alpha_1, \alpha_2}(t,x,v)= \h   (\nabla_v -t \nabla_v \hat{v}\cdot \nabla_x)\cdot \big( (v^0)^{-1}   v_{\alpha} v_{\beta} \nabla_x g^{\alpha\beta}\big)(t, x+t\hat{v}, v)\nabla_v^{\alpha }u^{\mathcal{L}}(t,x,v)
\]
\[
+ v\cdot\nabla_x  \big( \Lambda_{\geq 1}[(v^0)^{-1}]\big)(t, x+t\hat{v})  \nabla_v^{\alpha }u^{\mathcal{L}}(t,x,v),
\]
and if $ |\alpha_2| +|\mathcal{L}_2|< |\alpha|+ |\mathcal{L}| $, then $\mathfrak{R}_{\mathcal{L};\mathcal{L}_1, \mathcal{L}_2}^{\alpha;\alpha_1, \alpha_2}(t,x,v)$ is defined as follows, 
\[
\mathfrak{R}_{\mathcal{L};\mathcal{L}_1, \mathcal{L}_2}^{\alpha;\alpha_1, \alpha_2}(t,x,v)=c_{1}(v) \nabla_v^{\alpha_1}( v\cdot\nabla_x  \big( \Lambda_{\geq 1}[\mathcal{L}_1(v^0)^{-1}]\big))(t, x+t\hat{v})  \nabla_v^{\alpha_2} u^{\mathcal{L}_2}(t,x,v) 
\]
\[
 + c_{2}(v) \nabla_v^{\alpha_1}\big( \mathcal{L}_1((v^0)^{-1}v_\alpha v_{\beta} \nabla_x g^{\alpha\beta} ) \big)(t,x+t\hat{v},v)\nabla_v \nabla_v^{\alpha_2} u^{\mathcal{L}_2}(t,x,v)
\]
\be\label{2020may4eqn11}
 + c_3(v)t(\nabla_v\hat{v}\cdot\nabla_x) \nabla_v^{\alpha_1}\big( \mathcal{L}_1((v^0)^{-1}v_\alpha v_{\beta} \nabla_x g^{\alpha\beta} ) \big)(t, x+t\hat{v}, v)  \nabla_v^{\alpha_2} u^{\mathcal{L}_2}(t,x,v).
 \ee
For simplicity of notation, we suppress the dependence of coefficients $c_i(v),i\in\{1,2,3\},$ with respect to $\alpha, \alpha_1, \alpha_2, \mathcal{L},\mathcal{L}_1,$ and $\mathcal{L}_2$.

\begin{proposition}\label{estimateofzerofrequency}
Under the bootstrap assumptions \textup{(\ref{BAmetricold})} and \textup{(\ref{BAvlasovori})}, the following estimate holds for any $\mathcal{L}\in \mathcal{P}_n, $ and any  time $\tau \in[0,T)$,  
\be\label{oct11eqn6}
\|  \widetilde{ \omega^{\mathcal{L}}_{\alpha} }(v) {u^{\mathcal{L};\alpha}_{corr}}(\tau ,v)  \|_{L^2_v}+\|   \widetilde{ \omega^{\mathcal{L}}_{\alpha} }(v)\big(\nabla_v^\alpha \widehat{u^{\mathcal{L} }}(\tau , 0, v) - \nabla_v\cdot {u^{\mathcal{L};\alpha}_{corr}}(\tau ,v)\big) \|_{L^2_v}   \lesssim \epsilon_0,
\ee
where $ {u^{\mathcal{L};\alpha}_{corr}}(t,v)$ was defined   in  \textup{(\ref{correctionterm})}.  
\end{proposition}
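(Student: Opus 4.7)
Since the initial data bound gives $\|\widetilde{\omega^{\mathcal{L}}_\alpha}(v)(\nabla_v^\alpha \widehat{u^{\mathcal{L}}}(0,0,v))\|_{L^2_v}\lesssim \epsilon_0$ and ${u^{\mathcal{L};\alpha}_{corr}}(0,v)=0$, by the fundamental theorem of calculus and equation \eqref{2020may4eqn69}, it suffices to prove
\[
\sum_{m\in[0,\log_2\tau]\cap\Z}\Big\|\widetilde{\omega^{\mathcal{L}}_\alpha}(v)\int_{2^{m-1}}^{2^m}\!\!\int_{\R^3}\mathfrak{R}^{\alpha;\alpha_1,\alpha_2}_{\mathcal{L};\mathcal{L}_1,\mathcal{L}_2}(t,x,v)\,dxdt\Big\|_{L^2_v}\lesssim \epsilon_0,
\]
together with the analogous estimate for ${u^{\mathcal{L};\alpha}_{corr}}(\tau,v)$ itself. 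The plan is to establish a per-scale gain $2^{-cm}\epsilon_1^2$ on each dyadic interval $[2^{m-1},2^m]$, with $c>0$, and then sum in $m$.

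The argument splits along the partition given in \eqref{2020may4eqn11}.  For the strictly subleading regime $|\alpha_2|+|\mathcal{L}_2|<|\alpha|+|\mathcal{L}|$, the correction term \eqref{correctionterm} is silent, so one writes the $x$-integral in Fourier variables as a bilinear object of the form $\int e^{-it\hat v\cdot\eta}\hat h(t,\eta)\widehat{u^{\tilde{\mathcal{L}}}}(t,\eta,v)m(\eta,v)d\eta$, evaluated at the zero output frequency $\xi=0$.  Because the output frequency vanishes, the metric and Vlasov inputs share the same frequency $\eta$, so the phase $|\eta|-\hat v\cdot\eta$ is non-resonant with the lower bound $|\eta|\langle v\rangle^{-2}$; a single normal form transformation in $t$ removes this contribution modulo endpoint terms and cubic remainders. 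The endpoint terms are bounded by Lemma \ref{basicestimates} together with the density-decay estimate \eqref{densitydecay} in Lemma \ref{decayestimateofdensity}, and the $\langle v\rangle^{2}$ loss from the normal form is absorbed by the symbol of the Vlasov side (which supplies $\nabla_x g$ and hence $v_\alpha v_\beta\langle v\rangle^{-3}$ type decay) and by the room in the polynomial weight $\widetilde{\omega^{\mathcal{L}}_\alpha}$.  The cubic remainder, coming from $\p_t$ hitting either profile, is handled by inserting the Vlasov equation \eqref{2020april8eqn31} (treated via Lemmas \ref{estimateofremaindervlasov} and \ref{estimateofremaindervlasov3}) or the paralinearized metric equation \eqref{2020july21eqn30}, both of which already carry a $2^{-m}$ gain.

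The genuine obstacle is the top-order term $(\alpha_2,\mathcal{L}_2)=(\alpha,\mathcal{L})$: here a naive bilinear estimate would lose a derivative on $u^{\mathcal{L}}$ because the kernel $(v^0)^{-1}v_\alpha v_\beta \nabla_x g^{\alpha\beta}$ is quasilinear.  This is precisely the term for which ${u^{\mathcal{L};\alpha}_{corr}}$ has been designed: by differentiating \eqref{correctionterm} directly in $t$, the $\nabla_v\cdot$ of the correction exactly reproduces the dangerous contribution, and what remains in $\mathfrak{R}$ after subtraction is the commutator piece $[\nabla_v-t\nabla_v\hat v\cdot\nabla_x,\cdot]$ together with the $v\cdot\nabla_x\Lambda_{\geq 1}[(v^0)^{-1}]$ term, both of which no longer contain top-order $v$-derivatives of $u^{\mathcal L}$ and therefore can be absorbed by the weighted energy bounds from Proposition \ref{energyvlasovest}. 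The same normal form / zero-frequency mechanism then gives the required decay.  Finally, the correction itself is estimated directly: writing it as a time integral of a wave-Vlasov bilinear form, one applies the $L^2_v$ analogue of Lemma \ref{wavevlabil6sep} together with the improved weighted bound \eqref{may7eqn21} to obtain $\|\widetilde{\omega^{\mathcal{L}}_\alpha}{u^{\mathcal{L};\alpha}_{corr}}(\tau,v)\|_{L^2_v}\lesssim \epsilon_0$.  The hard point is ensuring that the normal form transformation at zero output frequency does not consume the very room supplied by the Vlasov symbol; this requires the sharp balance between $\langle v\rangle^{-2}$ (from the resonance lower bound) and the $v_\alpha v_\beta$ structure of the metric-Vlasov coupling, which is the heart of the argument.
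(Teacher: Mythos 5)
Your high-level picture is right: the per-dyadic reduction via \eqref{2020may4eqn69}, the light-cone gain $\langle|t|-|x+t\hat v|\rangle\langle|x|\rangle\langle v\rangle^2\gtrsim\langle t\rangle$ for the correction term, the non-resonance $||\eta|-\hat v\cdot\eta|\gtrsim|\eta|\langle v\rangle^{-2}$ at zero output frequency, and the observation that the $Z$-weight $\widetilde{\omega^{\mathcal L}_\alpha}(v)$ is much weaker than the propagated energy weight $\omega^{\mathcal L}_\rho(t,x,v)$ (so normal-form losses in $v$ are affordable) are all the correct ingredients, and your account of the role of ${u^{\mathcal L;\alpha}_{corr}}$ is accurate in spirit. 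The estimate of the correction term itself matches the paper's route (one application of \eqref{definitionofcoefficient1} to trade a spatial derivative of $g$ for the light-cone gain \eqref{2020aug31eqn31}, then decay estimates from Lemma \ref{basicestimates}).

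However, your organization of the main term is not the paper's and it hides a nontrivial step. You split only between ``top order'' $\bigl(\alpha_2,\mathcal L_2\bigr)=(\alpha,\mathcal L)$ and ``subleading'' and propose to apply normal form in both, claiming the top-order case ``can be absorbed by Proposition \ref{energyvlasovest}'' and then handled by ``the same normal form / zero-frequency mechanism.'' The paper instead splits on the number of vector fields carried by the \emph{Vlasov} factor relative to the threshold $N_0/2$. When $|\alpha_2|+|\mathcal L_2|\geq N_0/2+5$ (which includes the corrected top-order case, since ${u^{\mathcal L;\alpha}_{corr}}$ is only nonzero at $|\alpha|+|\mathcal L|=N_0$), the metric factor carries at most $N_0/2-4$ vector fields, so one places the metric in $L^\infty_{x,v}$ and \emph{never} performs a normal form; this is estimate \eqref{2020may4eqn32}, which already yields $2^{-2m+3d(\cdot)\delta_0 m}\epsilon_1^3$ and sums trivially. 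Normal form is reserved for the complementary regime $|\alpha_2|+|\mathcal L_2|\leq N_0/2+4$ (Subcase 2), together with a dyadic decomposition in the metric frequency $k$ and a separate treatment of $k\notin[-m/2,m/10]$ via density decay \eqref{2020may4eqn66}. The reason this split matters is that in the cubic remainder of a normal form you must feed $\p_t$ into one of the profiles, and if that profile is a top-order $\Lambda^\rho u^{\mathcal L_2}$ with $|\rho|+|\mathcal L_2|=N_0$, inserting the Vlasov equation \eqref{2020april8eqn31} produces $D_v$ acting at top order again — the exact derivative-loss issue the correction was invented to sidestep. The paper's $N_0/2$ split guarantees that every profile touched by normal form sits far below $N_0$, so this loop never opens. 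Your proposal would need an explicit argument that the loop closes (e.g.\ that $\nabla_v$ can always be integrated by parts onto the symbol rather than the Vlasov profile), and as written it does not supply one; filling that in is not automatic, and the paper's dichotomy is the cleaner way around it.

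One further small imprecision: you say the remaining top-order $\mathfrak R$ ``no longer contains top-order $v$-derivatives of $u^{\mathcal L}$''. It still contains $\nabla_v^\alpha u^{\mathcal L}$ with $|\alpha|+|\mathcal L|=N_0$; what the correction removes is the \emph{super}-top-order term $\nabla_v(\nabla_v^\alpha u^{\mathcal L})$ coming from $D_v\cdot\mathfrak N_2$. The surviving term is still top order in total count, and that is why, in the paper, it is paired with a metric factor with few vector fields and handled without any normal form at all.
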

\begin{proof}

\noindent $\oplus$ The estimate of correction term ${u^{\mathcal{L};\alpha}_{corr}}(\tau ,v)$.

Recall (\ref{correctionterm}).  
From   the pointwise  estimate   (\ref{definitionofcoefficient1}) and (\ref{2020aug31eqn31}) and the decay estimate (\ref{2020julybasicestiamte}) in Lemma \ref{basicestimates}, the following estimate holds after  using the equality (\ref{definitionofcoefficient1}) once and gaining the decay rate of distance with respect to the light cone, 
\be\label{2020may4eqn2}
\| \widetilde{ \omega^{\mathcal{L}}_{\alpha} }(v) {u^{\mathcal{L};\alpha}_{corr}}(\tau ,v)  \|_{L^2_v}\lesssim \epsilon_0 + \sum_{|\mathcal{L}|+| \rho|\leq N_0}  \int_{0}^{\tau } (1+t)^{-2+2H(\mathcal{L}, \rho)\delta_0 } \epsilon_1^2 d t \lesssim  \epsilon_0. 
\ee

\noindent $\oplus$ The estimate of $ \nabla_v^\alpha \widehat{u^{\mathcal{L} }}(\tau, 0, v) - \nabla_v\cdot {u^{\mathcal{L};\alpha}_{corr}}(\tau,v) $.

Recall (\ref{2020may4eqn11}). Note that, from  (\ref{eqq10}),  we can represent $\nabla_v$ as a linear combination of second set of vector fields as follows, 
\be\label{2020may4eqn21}
\nabla_v = \sum_{\iota\in \mathcal{S}, |\iota|=1} c_{\iota}(x,v) \Lambda^{\iota}, \quad |c_{\iota}(x,v)|\lesssim   \langle x \rangle \langle  v\rangle.   
\ee
From the equality (\ref{sepeqn610}) in Lemma \ref{decompositionofderivatives}, if  $|\alpha_2|+|\mathcal{L}_2|< |\alpha| +|\mathcal{L}|$, we reduce $\mathfrak{R}_{\mathcal{L};\mathcal{L}_1, \mathcal{L}_2}^{\alpha;\alpha_1, \alpha_2}(t,x,v)$ further as follows, 
\[
\mathfrak{R}_{\mathcal{L};\mathcal{L}_1, \mathcal{L}_2}^{\alpha;\alpha_1, \alpha_2}(t,x,v)=\sum_{|\tilde{\mathcal{L}}_1|\leq |\mathcal{L}_1|+|\alpha_1|}\sum_{\rho, \iota\in \mathcal{S}, |\rho|\leq |\alpha_2|, |\iota|\leq 1} \tilde{c}^1_{\rho,\iota}(t,x,v) v\cdot \nabla_x  \tilde{\mathcal{L}}_1(\Lambda_{\geq 1}[(v^0)^{-1}])(t, x+t\hat{v},v) 
\]
\[
\times \Lambda^{\rho} u^{\mathcal{L}_2}(t,x,v) +  \tilde{c}^2_{\rho,\iota}(t,x,v) \tilde{\mathcal{L}}_1( (v^0)^{-1}v_\alpha v_{\beta} \nabla_x g^{\alpha\beta} )(t, x+t\hat{v},v) \Lambda^{\iota}  \Lambda^{\rho} u^{\mathcal{L}_2}(t,x,v)
 \]
 \be\label{2020may4eqn22}
  +  \tilde{c}^3_{\rho,\iota}(t,x,v) t\nabla_x  \tilde{\mathcal{L}}_1( (v^0)^{-1}v_\alpha v_{\beta} \nabla_x g^{\alpha\beta} )(t, x+t\hat{v},v) \Lambda^{\rho} u^{\mathcal{L}_2}(t,x,v),
 \ee
where in the above equality we suppress the dependence of coefficients with respect to the order of derivatives. From the estimate (\ref{sepeqn88}) in Lemma \ref{decompositionofderivatives} and the estimate (\ref{2020may4eqn21}), the rough estimate holds 
\be\label{2020may4eqn23} 
\sum_{i=1,2,3}| \tilde{c}^i_{\rho,\iota}(t,x,v)|\lesssim (\langle x \rangle +\langle v \rangle)^{2|\alpha |+10} .
\ee

Let $t\in[2^{m-1}, 2^m] \subset[0,T), m\in\mathbb{Z}_+$. Based on the total number of vector fields act on the Vlasov part, we separate into two cases as follows. 

\textbf{Subcase $1$:\qquad} If $|\alpha_2|+|\mathcal{L}_2|\geq N_0/2 + 5 $.

 For this case, we know that the total   number of vector fields act on the perturbed metric is less than $N_0/2-4$. From the equality (\ref{definitionofcoefficient1}),   the estimate  (\ref{2020aug31eqn31}), the estimate of coefficients in (\ref{2020may4eqn23}),   the decay estimate of perturbed metric (\ref{2020julybasicestiamte}) in Lemma \ref{basicestimates}, and the decay estimate of high order terms in  Lemma \ref{highorderterm2}
and  Lemma \ref{highorderterm5}, we have
\[
\| \int_{\R^3} \widetilde{ \omega^{\mathcal{L}}_{\alpha} }(v) \mathfrak{R}_{\mathcal{L};\mathcal{L}_1, \mathcal{L}_2}^{\alpha;\alpha_1, \alpha_2}(t,x,v) d x\|_{L^2_v}\lesssim \|\langle x \rangle^2  \widetilde{ \omega^{\mathcal{L}}_{\alpha} }(v)\mathfrak{R}_{\mathcal{L};\mathcal{L}_1, \mathcal{L}_2}^{\alpha;\alpha_1, \alpha_2}(t,x,v)\|_{L^2_{x,v}}
\]
 \be\label{2020may4eqn32}
\lesssim\sum_{\rho\in \mathcal{S}, |\rho|\leq |\alpha|} 2^{-2m+3d(|\mathcal{L}|+3, |\rho|)\delta_0 m }\epsilon_1^3. 
\ee

\textbf{Subcase $2$:\qquad} If $|\alpha_2|+|\mathcal{L}_2|\leq N_0/2 + 4 $.

From the $L^2$-estimate of the high order terms in Lemma \ref{highorderterm2} and Lemma \ref{highorderterm3}, and the decay estimate of density type function (\ref{densitydecay})  in Lemma \ref{decayestimateofdensity}, we have
\be\label{2020may4eqn65}
\|\langle x \rangle^2  \widetilde{ \omega^{\mathcal{L}}_{\alpha} }(v)\Lambda_{\geq 3}[\mathfrak{R}_{\mathcal{L};\mathcal{L}_1, \mathcal{L}_2}^{\alpha;\alpha_1, \alpha_2}](t,x,v)\|_{L^2_{x,v}} \lesssim 2^{-2m/3-m/2+3d(|\mathcal{L}|+3, |\rho|)\delta_0 m } \epsilon_1^3.
\ee

For the quadratic terms of $\mathfrak{R}_{\mathcal{L};\mathcal{L}_1, \mathcal{L}_2}^{\alpha;\alpha_1, \alpha_2}(t,x,v)$, after doing  dyadic decomposition for the metric component and using (\ref{2020may4eqn21}) and the equality (\ref{sepeqn610}) in Lemma \ref{decompositionofderivatives}, from the estimate \eqref{sepeqn88} in Lemma \ref{decompositionofderivatives}, we have 
\[
\Lambda_{2}[\mathfrak{R}_{\mathcal{L};\mathcal{L}_1, \mathcal{L}_2}^{\alpha;\alpha_1, \alpha_2}](t,x,v) = \sum_{k\in \mathbb{Z}} \sum_{i=1,2,3}\sum_{|\tilde{\mathcal{L}}_1|\leq |\mathcal{L}_1|+|\alpha_1|}   \sum_{\rho, \iota\in \mathcal{S}, |\rho|\leq |\alpha_2|, |\iota|\leq 1}   H_{k;i}^{\tilde{\mathcal{L}}_1, \rho, \iota}(t,x,v),
\]
where
\be\label{2020may4eqn48}
\begin{split}
 H_{k;1}^{\tilde{\mathcal{L}}_1, \rho, \iota}(t,x,v)&=  {c}_{\alpha\beta}^{\rho,\iota,1}(t,x,v) P_k(\nabla_x \tilde{\mathcal{L}}_1 h_{\alpha\beta})(t, x+t\hat{v}) \Lambda^{\rho} u^{\mathcal{L}_2}(t,x,v),\\ 
  H_{k;2}^{\tilde{\mathcal{L}}_1, \rho, \iota}(t,x,v)&={c}_{\alpha\beta}^{\rho,\iota,2}(t,x,v) P_k(\nabla_x \tilde{\mathcal{L}}_1 h_{\alpha\beta})(t, x+t\hat{v}) \Lambda^{\iota\circ \rho} u^{\mathcal{L}_2}(t,x,v),\\ 
   H_{k;3}^{\tilde{\mathcal{L}}_1, \rho, \iota}(t,x,v)&=t {c}_{\alpha\beta}^{\rho,\iota,3}(t,x,v) P_k(\nabla_x^2 \tilde{\mathcal{L}}_1 h_{\alpha\beta})(t, x+t\hat{v})    \Lambda^\rho  u^{\mathcal{L}_2}(t,x,v),\\ 
   \sum_{i=1,2,3}|{c}_{\alpha\beta}^{\rho,\iota,i}(t,x,v) | + |t\p_t{c}_{\alpha\beta}^{\rho,\iota,i}(t,x,v) |&\lesssim (\langle x \rangle +\langle v \rangle)^{2|\alpha |+10}
\end{split}
\ee

From the $L^2_{x,v}-L^2_xL^\infty_v-L^\infty_x L^2_v$ type multilinear estimate and the decay estimate of density type function (\ref{densitydecay})  in Lemma \ref{decayestimateofdensity}, we can rule out the case   $k\notin[-m/2, {m/10}]$ as follows, 
\be\label{2020may4eqn66}
\begin{split}
&\sum_{i=1,2,3}\sum_{k\in \mathbb{Z},  k\notin[-m/2, {m/10}]} \|\int_{\R^3} \widetilde{ \omega^{\mathcal{L}}_{\alpha} }(v)   H_{k;i}^{\tilde{\mathcal{L}}_1, \rho, \iota}(t,x,v) d x \|_{L^2_{v}}   \\
&  \lesssim \sum_{k\in \mathbb{Z},  k\notin[-m/2, {m/10}]} \big( 2^{-3m/2+k/2} + 2^{-m/2+3k/2}\big) 2^{-10k_{+}+ 3 d(|\mathcal{L}|+3, |\rho|)\delta_0 m }\epsilon_1^3 \\
& \lesssim 2^{-11m/10}\epsilon_1^3. 
\end{split}
\ee
 
For the case $k\in[-m/2, {m/10}]$, same as what we did in   the proof of Lemma \ref{notbulkcase} , we do normal form transformation.  Note that  the price of using normal form transformation is minor because the weight function we used in the low order energy is much weaker than the weight function  we propagated in the energy estimate, see \eqref{weightfunctions2} and \eqref{weightfunctions3}. 

Recall (\ref{2020may4eqn48}). As a typical example, which is also the most difficult case because of the extra loss of ``$t$'', we estimate $ H_{k;3}^{\tilde{\mathcal{L}}_1, \rho, \iota}(t,x,v)$ in details. With minor modifications, the estimate of $ H_{k;i}^{\tilde{\mathcal{L}}_1, \rho, \iota}(t,x,v), i\in\{1,2\}$ follows in the same way. 

For any $t_1, t_2\in [2^{m-1}, 2^m]\subset[0, T], m\in\mathbb{Z}_{+},k\in[-m/2, {m/10}]\cap \Z$, after doing normal form transformation,  we have
 \[
   \big|\int_{t_1}^{t_2}\int_{\R^3}  H_{k;3}^{\tilde{\mathcal{L}}_1, \rho, \iota}(t,x,v) d x d t \big|\lesssim 
   \sum_{  \mu\in\{+,-\}}  \big| End^{\tilde{\mathcal{L}}_1, \rho, \iota}_{k, \mu}(t_1, t_2 , v) \big|
    + \big| \mathcal{H}^{\tilde{\mathcal{L}}_1, \rho, \iota}_{k;  \mu;1}(t_1, t_2 , v) \big| + \big| \mathcal{H}^{\tilde{\mathcal{L}}_1, \rho, \iota}_{k;  \mu; 2}(t_1, t_2 , v) \big|,  
 \]
 where 
 \be\label{2022may23eqn31}
 \begin{split}
 End^{\tilde{\mathcal{L}}_1, \rho, \iota}_{k; \mu }(t_1, t_2 , v) &: =\sum_{l=1,2}(-1)^{l} \int_{\R^3}   t_l {c}_{\alpha\beta}^{\rho,\iota,3}(t_l,x,v) \widetilde{M}^\mu_k( (U^{\tilde{\mathcal{L}}_1 h_{\alpha\beta}})^{\mu} )(t_l, x+t_l\hat{v}, v)   \Lambda^\rho  u^{\mathcal{L}_2}(t_l,x,v)   d x,\\ 
 \mathcal{H}^{\tilde{\mathcal{L}}_1, \rho, \iota}_{k; \mu;1}(t_1, t_2 , v) &: =  \int_{t_1}^{t_2} \int_{\R^3} \Lambda^\rho u^{\mathcal{L}_2}(t ,x,v) \big[ t   {c}_{\alpha\beta}^{\rho,\iota,3}(t ,x,v) \widetilde{M}^\mu_k(v)( (e^{-it \d}\p_t V^{\tilde{\mathcal{L}}_1 h_{\alpha\beta}} )^{\mu} )(t , x+t \hat{v})\\
 &+ \p_t\big( t  {c}_{\alpha\beta}^{\rho,\iota,3}(t ,x,v)\big) \widetilde{M}^\mu_k(v)(   (U^{\tilde{\mathcal{L}}_1 h_{\alpha\beta}})^{\mu}  )(t , x+t \hat{v} )\big]d x dt, \\
 \mathcal{H}^{\tilde{\mathcal{L}}_1, \rho, \iota}_{k;2}(t_1, t_2 , v)&: = \int_{t_1}^{t_2}  \int_{\R^3}    t   {c}_{\alpha\beta}^{\rho,\iota,3} (t ,x,v)  \widetilde{M}^\mu_k(v)(   (U^{\tilde{\mathcal{L}}_1 h_{\alpha\beta}})^{\mu}  )(t , x+t \hat{v} )   \p_t     \Lambda^\rho  u^{\mathcal{L}_2}(t ,x,v)  d x d t,
 \end{split}
 \ee
 and the Fourier multiplier operator $\widetilde{M}^\mu_k(v)$ is given as follows, 
\[
\widetilde{M}^\mu_k(v)(f)(x):=\int_{\R^3} e^{i x\cdot \xi} \frac{ \xi \otimes \xi \psi_k(\xi) }{|\xi|\big(\mu |\xi| - \hat{v}\cdot \xi\big) } \widehat{f}(\xi) d \xi. 
\]

From the $L^2_{x,v}-L^2_xL^\infty_v-L^\infty_x L^2_v$ type multilinear estimate and the decay estimate of density type function (\ref{densitydecay})  in Lemma \ref{decayestimateofdensity}, we have
 \be\label{2022may23eqn32}
 \big\| \widetilde{ \omega^{\mathcal{L}}_{\alpha} }(v)   End^{\tilde{\mathcal{L}}_1, \rho, \iota}_{k, \mu}(t_1, t_2 , v) \big\|_{L^2_{v}} \lesssim 2^{-m/2+(1/2-\gamma)k_{-}-10k_{+} + 3 d(|\mathcal{L}|+3, |\rho|)\delta_0 m }\epsilon_1^3 . 
\ee

Similarly, by using the same strategy,  from the bilinear estimate \eqref{bilineardensitylargek} for $\Lambda_{1}[\p_t V^{\tilde{\mathcal{L}}_1 h_{\alpha\beta}}]$ and  the estimate \eqref{oct4eqn41} in Proposition   \ref{fixedtimenonlinarityestimate} for $\Lambda_{\geq 2}[\p_t V^{\tilde{\mathcal{L}}_1 h_{\alpha\beta}}]$, we have
 \be\label{2022may23eqn33}
 \big\| \widetilde{ \omega^{\mathcal{L}}_{\alpha} }(v)    \mathcal{H}^{\tilde{\mathcal{L}}_1, \rho, \iota}_{k; \mu;1}(t_1, t_2 , v) \big\|_{L^2_{v}} \lesssim 2^{-m/2+(1/2-\gamma)k_{-}-10k_{+} + 3 d(|\mathcal{L}|+3, |\rho|) \delta_0 m }\epsilon_1^3 + 2^{-m+k_{+}/2}\epsilon^3_1.
\ee

Lastly, we estimate $\mathcal{H}^{\tilde{\mathcal{L}}_1, \rho, \iota}_{k;2}(t_1, t_2 , v)$. Due to the fact that we have $|\rho|+|\mathcal{L}_2|\leq |\alpha_2|+|\mathcal{L}_2|\leq N_0/2+4$ for the case are considering. Recall  (\ref{july5eqn41}) and the equality (\ref{march31eqn1}) in Lemma \ref{innerproduct}. From the decay estimate \eqref{densitydecay} in Lemma \ref{decayestimateofdensity}, the following estimate holds after distributing derivative and put the metric part in $L^\infty$,
\[
\|   \langle v \rangle^{20}\widetilde{ \omega^{\mathcal{L}}_{\alpha} }(v)  {c}_{\alpha\beta}^{\rho,\iota,3} (t ,x-t\hat{v},v) \p_t     \Lambda^\rho  u^{\mathcal{L}_2}(t ,x-t\hat{v},v)  \|_{L^\infty_x L^2_v} \lesssim 2^{-5m/2+2 d(|\mathcal{L}|+3, |\rho|) \delta_0 m }\epsilon_1^2.
\]
From the above estimate and the Cauchy-Schwarz inequality, we have 
 \be\label{2022may23eqn35}
 \big\| \widetilde{ \omega^{\mathcal{L}}_{\alpha} }(v)    \mathcal{H}^{\tilde{\mathcal{L}}_1, \rho, \iota}_{k; \mu;2}(t_1, t_2 , v) \big\|_{L^2_{v}} \lesssim 2^{-m/2+(1/2-\gamma)k_{-}-10k_{+} + 3  d(|\mathcal{L}|+3, |\rho|)\delta_0 m }\epsilon_1^3  .
\ee
After combining the above obtained estimates, we have 
\be\label{2020may4eqn61}
\sum_{i=1,2,3} \sum_{  k\in[-m/2, {m/10}]\cap \mathbb{Z}}  \big\| \int_{t_1}^{t_2}\int_{\R^3}  \widetilde{ \omega^{\mathcal{L}}_{\alpha} }(v)  H_{k;i}^{\tilde{\mathcal{L}}_1, \rho, \iota}(t,x,v) d x d t \big\|_{L^2_{v}} \lesssim 2^{-m/3}\epsilon_1^3. 
\ee
To sum up, from the estimates (\ref{2020may4eqn32}), (\ref{2020may4eqn65}), (\ref{2020may4eqn66}), and (\ref{2020may4eqn61}), we have
\[
\sum_{|\alpha_1|+|\alpha_2|\leq |\alpha|, \mathcal{L}_1\circ\mathcal{L}_2\preceq \mathcal{L}}\| \int_{0}^{\tau}\int_{\R^3} \widetilde{ \omega^{\mathcal{L}}_{\alpha} }(v) \mathfrak{R}_{\mathcal{L};\mathcal{L}_1, \mathcal{L}_2}^{\alpha;\alpha_1, \alpha_2}(t,x,v) d x d \tau \|_{L^2_v} \lesssim \epsilon_1^3. 
\]
Recall (\ref{2020may4eqn69}). Our desired estimate (\ref{oct11eqn6}) holds from the above estimate and (\ref{2020may4eqn2}). 
\end{proof}

\end{document}